\documentclass[10pt]{amsart}

\usepackage{amsmath, amssymb, amscd,mathrsfs, amsthm}
\usepackage{verbatim}
\usepackage{bbm}
\usepackage[mathscr]{eucal}
\usepackage[all]{xypic}
\usepackage{tocvsec2}
\usepackage{xr-hyper}
\usepackage[hmargin=3cm,vmargin=3.5cm]{geometry}
\usepackage{color}

\usepackage{rotating}

\usepackage{enumitem}
\setlist[enumerate]{leftmargin=*}
\setlist[itemize]{labelindent=\parindent, leftmargin=*}

\usepackage{hyperref}

\usepackage{marginnote,letltxmacro}

\LetLtxMacro\mn\marginnote

\setlength{\parskip}{5pt}

\numberwithin{equation}{section}

\setcounter{tocdepth}{1}

\setcounter{secnumdepth}{4}

\allowdisplaybreaks[4]

\theoremstyle{plain}
\newtheorem{thm}{Theorem}[section]
\newtheorem{lem}[thm]{Lemma}
\newtheorem{prop}[thm]{Proposition}
\newtheorem{cor}[thm]{Corollary}
\theoremstyle{definition}
\newtheorem{defn}[thm]{Definition}
\theoremstyle{remark}
\newtheorem{rem}[thm]{Remark}

\newtheorem{example}[thm]{Example}

\theoremstyle{plain}

\theoremstyle{plain}
\newtheorem{mtheorem}{Theorem}

\theoremstyle{definition}

\makeatletter
\def\varddots{\mathinner{\mkern1mu
    \raise\p@\hbox{.}\mkern2mu\raise4\p@\hbox{.}\mkern2mu
    \raise7\p@\vbox{\kern7\p@\hbox{.}}\mkern1mu}}
\makeatother

\def\Gal{\operatorname{Gal}}
\def\Ind{\operatorname{Ind}}
\def\Im{\operatorname{Im}}
\def\pr{\operatorname{pr}}
\def\Re{\operatorname{Re}}
\def\Res{\operatorname{Res}}

\def\vol{\operatorname{vol}}
\def\tr{\operatorname{tr}}
\def\Hom{\operatorname{Hom}}

\def\Aut{\operatorname{Aut}}
\def\End{\operatorname{End}}

\def\disc{\mathrm{disc}}

\def\Frob{\operatorname{Frob}}

\def\rank{\operatorname{rank}}

\def\CH{\operatorname{CH}}

\def\Ad{\operatorname{Ad}}

\def\sgn{\operatorname{sgn}}
\def\sign{\operatorname{sign}}

\def\JL{\operatorname{JL}}

\newcommand\res{\operatorname{res}}
\newcommand\BC{\operatorname{BC}}

\def\ccc{\mathrm{C}}
\def\sss{\mathrm{S}}
\def\GL{\mathrm{GL}}
\def\GU{\mathrm{GU}}
\def\GSU{\mathrm{GSU}}
\def\U{\mathrm{U}}
\def\SU{\mathrm{SU}}
\newcommand\PGU{\operatorname{PGU}}
\def\G{\mathrm{G}}
\def\Sp{\mathrm{Sp}}
\def\GSp{\mathrm{GSp}}
\def\Mp{\mathrm{Mp}}
\def\O{\mathrm{O}}
\def\SO{\mathrm{SO}}
\def\GO{\mathrm{GO}}
\newcommand\GSO{\mathrm{GSO}}
\def\SL{\mathrm{SL}}

\def\Sym{\mathrm{Sym}}

\newcommand\HDS{\mathrm{HDS}}

\def\Sh{\mathrm{Sh}}
\def\fin{\mathrm{fin}}
\def\M{\mathrm{M}}
\def\N{\mathrm{N}}
\def\et{\mathrm{et}}
\def\id{\mathrm{id}}
\def\ad{\mathrm{ad}}

\newcommand{\PB}{\mathrm{PB}}

\def\fg{\mathfrak{g}}

\def\ii{\mathfrak{i}}

\def\fh{\mathfrak{h}}

\def\ff{\mathfrak{f}}
\newcommand\fF{\mathfrak{F}}
\def\fk{\mathfrak{k}}
\def\fl{\mathfrak{l}}

\def\fp{\mathfrak{p}}

\def\fq{\mathfrak{q}}

\def\ft{\mathfrak{t}}
\def\fu{\mathfrak{u}}
\def\so{\mathfrak{so}}

\newcommand\Bc{\mathcal{B}}

\def\FF{\mathcal{F}}
\def\cF{\mathcal{F}}
\def\Gc{\mathcal{G}}
\def\cG{\mathcal{G}}
\def\cH{\mathcal{H}}

\def\II{\mathcal{I}}

\def\cK{\mathcal{K}}

\def\K{\mathcal{K}}

\def\cL{\mathcal{L}}
\def\cM{\mathcal{M}}

\def\cO{\mathcal{O}}

\def\SS{\mathcal{S}}
\def\cS{\mathcal{S}}

\def\Vc{\mathcal{V}}
\def\cV{\mathcal{V}}
\def\Wc{\mathcal{W}}

\def\A{\mathbb{A}}
\def\C{\mathbb{C}}

\def\H{\mathbb{H}}
\def\I{\mathbb{I}}

\def\Q{\mathbb{Q}}
\def\Qbar{\overline{\Q}}

\def\R{\mathbb{R}}
\def\Ss{\mathbb{S}}

\def\X{\mathbb{X}}
\def\Y{\mathbb{Y}}
\def\Z{\mathbb{Z}}

\def\V{\mathbb{V}}

\def\1{\mathbf{1}}

\def\a{\mathbf{a}}
\def\b{\mathbf{b}}

\def\e{\mathbf{e}}

\def\g{\mathbf{g}}
\def\h{\mathbf{h}}
\def\i{\mathbf{i}}
\def\j{\mathbf{j}}

\def\m{\mathbf{m}}
\def\n{\mathbf{n}}
\def\v{\mathbf{v}}
\def\w{\mathbf{w}}
\def\x{\mathbf{x}}
\def\y{\mathbf{y}}
\newcommand\z{\mathbf{z}}
\newcommand\ZZ{\mathbf{Z}}

\def\0{\mathbf{0}}

\def\GG{\mathbf{G}}
\def\J{\mathbf{J}}

\def\VV{\mathbf{V}}
\def\WW{\mathbf{W}}

\def\Hs{\mathscr{H}}
\def\Ps{\mathscr{P}}

\def\uk{\underline{k}}
\def\ul{\underline{\ell}}

\def\Gs{\mathsf{G}}

\def\ba{\boldsymbol{\alpha}}

\def\vvv{\boldsymbol{v}}
\def\fff{\boldsymbol{f}}
\def\bo{\boldsymbol{\omega}}

\def\Chi{\boldsymbol{\chi}}

\def\rar{\rightarrow}

\def\(({( \! (}
\def\)){) \! )}
\def\llangle{\langle \hspace{-1mm} \langle}
\def\rrangle{\rangle \hspace{-1mm} \rangle}

\SelectTips{cm}{}
\newdir^{ (}{{}*!/-5pt/@^{(}}

\newcommand{\mat}[4]{\begin{pmatrix} #1 & #2 \\ #3 & #4 \end{pmatrix}}
\newcommand{\smat}[4]{\left( \begin{smallmatrix} #1 & #2 \\ #3 & #4 \end{smallmatrix} \right)}

\makeatletter
\def\pmodx#1{\allowbreak\ifinner\mkern8mu\else\mkern18mu\fi
 ({\fam\z@ mod}^\times\,#1)}
\makeatother

\newcounter{defcounter}
\setcounter{defcounter}{0}

\newenvironment{intro-equation}{%
\addtocounter{equation}{-1}
\refstepcounter{defcounter}

\begin{equation}}
{\end{equation}}

\DeclareSymbolFontAlphabet{\mathrsfs}{rsfs}

\newcommand{\bigboxplus}{\mathop{\mathchoice%
{\raise-0.35em\hbox{\huge $\boxplus$}}%
{\raise-0.15em\hbox{\Large $\boxplus$}}{\hbox{\large $\boxtimes$}}{\boxtimes}}}

\newcommand{\Mot}{\mathrm{Mot}}

\newcommand{\HT}{\mathcal{HT}}

\newcommand\Nekovar{$\mathrm{Nekov\acute{a}\check{r}}$}

\newcommand{\pit}{\tilde{\pi}}

\setcounter{tocdepth}{1}

\begin{document}

\title[]{Hodge classes and the Jacquet-Langlands correspondence}
\author{Atsushi Ichino }
\author{Kartik Prasanna}

\begin{abstract}
We prove that the Jacquet-Langlands correspondence for cohomological automorphic forms on quaternionic Shimura varieties 
is realized by a Hodge class. 
Conditional on Kottwitz's conjecture for Shimura varieties attached to unitary similitude groups, we also show that the image of this Hodge class in $\ell$-adic cohomology is Galois invariant for all $\ell$.
\end{abstract}

\maketitle

\tableofcontents

\newcommand{\cZ}{\mathcal{Z}}
\newcommand{\cl}{\operatorname{cl}}
\newcommand{\sA}{\mathrsfs{A}}

\section{Introduction}

This article is motivated by the following question:  is Langlands functoriality in the case 
of cohomological automorphic forms 
on Shimura varieties 
induced by algebraic cycle classes?  
When the forms in question contribute to $H^1$, this follows from
Faltings' theorem \cite{faltings-mordell} on the Tate conjecture for divisors on abelian varieties, 
 but for higher $H^i$ it seems completely open even in the simplest of cases. 
Since constructing algebraic cycle classes seems extremely difficult, one can ask for the next 
best thing, namely to construct the associated {\it absolute Hodge classes}
\cite{dmos}. 
We study this problem in the most classical example of functoriality, namely the Jacquet-Langlands correspondence
for $\GL_2$ and its inner forms. 

\subsection{The main theorem}
Let $F$ be a totally real field, $[F:\Q]=n$.
Denote by $\Sigma_\infty$  the set of infinite places of $F$, and 
for $v\in \Sigma_\infty$, let $\sigma_v: F \hookrightarrow \R\subset
\C $ denote
the corresponding embedding of $F$ in $\C$. Let 
$F^c\subset \Qbar \subset \C$ be the compositum of $\sigma_v (F)$ as $v$ varies over
$\Sigma_\infty$. Thus $F^c$ is the Galois closure of the image of
$\sigma(F)$ for any $\sigma \in \Sigma_\infty$.  

Let $\pi=\otimes_v \pi_v $ be an automorphic representation of
$\GL_2(\A_F)$ corresponding to a 
(cohomological) holomorphic Hilbert modular newform of weight $(\uk,r)$ where $\uk=(k_1,\ldots, k_n)$ and
$k_1\equiv k_2 \equiv \cdots \equiv k_n \equiv r \bmod 2$. 
For simplicity, we will assume that $\pi$ has trivial Nebentypus
character so that it is self-dual up to a (Tate) twist. (See \S
\ref{sec:non-self-dual} for the non-self-dual case.) 
Moreover, in the introduction alone, we assume that $\pi$ has parallel weight two and that 
 the Hecke eigenvalues $a_v (\pi)$ (suitably normalized) are rational; thus $\pi$ (at least conjecturally) corresponds to an 
elliptic curve $A/F$.  In any case, it is known that to such a $\pi$ and every rational prime $\ell$ one can attach 
a two-dimensional $\ell$-adic 
Galois representation $ \rho_{\pi,\ell} $
of the Galois group $\Gal (\Qbar/F)$. The representations
$\rho_{\pi,\ell}$ (for varying $\ell$) form a compatible system in the sense that 
for all finite primes $v$ of $F$ not dividing $\ell$ and the conductor of $\pi$, we have
\[
\tr \rho_{\pi,\ell} (\Frob_v) = a_v (\pi),
\]
where $\Frob_v$ denotes a {\it geometric} Frobenius element attached to $v$;
in particular, this trace is independent of $\ell$.

Let $B_1$ and $B_2$ be two (non-isomorphic) quaternion algebras over $F$ such that 
$\pi$ admits Jacquet-Langlands transfers to the algebraic groups $G_1
=\Res_{F/\Q} B_1^\times$ and $G_2=\Res_{F/\Q} B_2^\times$; we denote
the corresponding automorphic representations of $G_1(\A)$ and
$G_2(\A)$ by $\pi_1$ and $\pi_2$ respectively. 
We assume that 
the set of infinite places of $F$ where 
$B_1$ is split agrees with the set of infinite places where $B_2$ is
split, and denote this common set of infinite places by 
$\Sigma \subset \Sigma_\infty$. 
Let $F_\Sigma$ be the subfield of $\C$ given by:
\begin{align*}
F_\Sigma &:= \Qbar^{\{ \sigma \in \Gal(\Qbar/\Q) \, | \, \sigma \Sigma=\Sigma\}} =
           (F^c)^{\{\sigma \in \Gal(F^c/\Q) \, | \, \sigma \Sigma=\Sigma\}}.
\end{align*}
Then $F_\Sigma$ is also characterized as the subfield of $\Qbar$ generated (over $\Q$)
by the elements  
\[
\sum_{v\in \Sigma} \sigma_v(x), \quad x\in F
\]
and is called the reflex field of the pair $(F, \Sigma)$.

Let $X_{1}$ and $X_{2}$ denote the {\it quaternionic Shimura varieties} 
associated with $G_1$ and $G_2$. 
Then $X_1$ and $X_2$ are of dimension $d:=|\Sigma|$ and have canonical models over the {\it same} reflex field $F_\Sigma\subset \Qbar \subset \C$.
The Langlands-Kottwitz method can be used to study the $\ell$-adic cohomology of the varieties $X_{1}$ and $X_{2}$. 
Following the work of several authors  (\cite{langlands72}, \cite{brylinski-labesse}, \cite{carayol-hmf}, \cite{reimann}, \cite{nekovar}), we have the following theorem: for $i=1,2$, the $\pi_i$-isotypic part of $H^*_{\et} (X_{i,\Qbar}, \Q_\ell)$ is concentrated entirely in the middle degree $d$ and moreover is isomorphic to the {\it tensor induction}
\begin{equation}
\label{eqn:intro-ten-ind}
 {\bigotimes_{v\in \Sigma}}' \ \rho_{\pi,\ell}^v
\end{equation}
where $\rho_{\pi,\ell}^v$ denotes the representation of $\Gal (\Qbar/ \sigma_v (F))$ given by $g\mapsto  \rho_{\pi,\ell} (\sigma_v^{-1} g \sigma_v)$. 
As a consequence, for all rational primes $\ell$, we have isomorphisms  
\begin{equation}
\label{eqn:hd-ell-isom}
H^d (X_1, \Q_\ell)_{\pi_1} \simeq H^d (X_2, \Q_\ell)_{\pi_2}
\end{equation}
as representations of $\Gal(\Qbar/F_\Sigma)$. Here and henceforth we write $H^*(X,\Q_\ell)$ for the $\Q_\ell$-vector space 
$H^*_{\et} (X_{\Qbar}, \Q_\ell)$.

The isomorphisms \eqref{eqn:hd-ell-isom}
above may be viewed as giving a collection of 
Tate classes in
\[
H^{2d} (X_1\times X_2, \Q_\ell(d)),
\]
and it is natural to ask if there is a single algebraic cycle $\mathcal{Z} \in \CH^{d} (X_1 \times X_2)$ that gives rise to this collection of Tate classes. If $p_1$ and $p_2$ are
the two projections below,
\[
\xymatrix{
& X_1 \times X_2 \ar[ld]_{p_1} \ar[rd]^{p_2} & \\
X_1 & & X_2 
}
\]
the class of such a putative algebraic cycle $\cZ$ gives rise to a map
\[
\mathrm{cl} (\mathcal{Z})^*: H^d(X_1) \rightarrow H^d(X_2), \quad x\mapsto p_{2,*} (\cl(\cZ) \cup p_1^*(x))
\]
for any Weil cohomology theory, which induces isomorphisms
\begin{equation}
\label{eqn:hweil-isom}
H^d(X_1)_{\pi_1} \simeq H^d(X_2)_{\pi_2}.
\end{equation}
Moreover, these isomorphisms for different Weil cohomology theories will be compatible via the usual comparison theorems.

With this motivation, we state our main theorem.  We remark that our proof (of part (ii) of the theorem below) assumes the validity of Kottwitz's conjecture characterizing the Galois representations occurring in the cohomology of Shimura varieties in the special case of Shimura varieties attached to unitary similitude groups.  (See Remark \ref{rem:characterization-of-galrep} below for a more extensive discussion of the status of this conjecture.) 

\begin{mtheorem}
\label{thm:intro-main-full}
Suppose that there is at least one infinite place of $F$ at which $B_1$ and $B_2$ are ramified. \begin{enumerate}
\item There is a non-zero {\it Hodge} class
\[
\xi \in H^{2d} (X_1 \times X_2, \Q)_{\pi_1 \boxtimes \pi_2}
\]
such that 
the induced map

\begin{equation}
\label{eqn:intro-xi-Hodge}
\xi(d)^*: H^d (X_1,\Q)_{\pi_1} \rightarrow H^d (X_2,\Q)_{\pi_2}, \quad x\mapsto p_{2,*} (\xi (d) \cup p_1^*(x))
\end{equation}
is an isomorphism of $\Q$-Hodge structures. (i.e., is an isomorphism of $\Q$-vector spaces, that after extending scalars to $\C$, preserves the Hodge filtration.)

\item Assume Kottwitz's conjecture for Shimura varieties attached to unitary similitude groups. 
Then the Hodge class $\xi$ can be chosen such that for all rational primes $\ell$, the image $\xi_\ell(d)$ of
(the Tate twist) $\xi(d)$ in the $\ell$-adic {\'e}tale realization
\[
H^{2d} (X_1 \times X_2, \Q_\ell)_{\pi_1 \boxtimes \pi_2} (d)
\]
 is $\Gal(\Qbar/{F_\Sigma})$-invariant.  Consequently, 
the induced map 
\begin{equation}
\label{eqn:intro-xi-ell}
\xi(d)^*_\ell: H^d (X_1,\Q_\ell)_{\pi_1} \simeq H^d (X_2,\Q_\ell)_{\pi_2},\quad x\mapsto p_{2,*} (\xi (d) \cup p_1^*(x))
\end{equation}
is an isomorphism of $\Gal(\Qbar/{F_\Sigma})$-modules. (Here we view $\xi(d)$ as an {\'e}tale class via the Betti-{\'e}tale comparison theorems.)
\end{enumerate}

\end{mtheorem}

Our proof does not use the previously known isomorphisms \eqref{eqn:hd-ell-isom}. Rather, it provides an alternate 
verification of these isomorphisms which may be of independent interest. 
We note also that the isomorphism
\eqref{eqn:intro-xi-Hodge} of Hodge structures 
implies relations between periods of modular forms on $B_1^\times$ and $B_2^\times$. 
Such period relations have been studied previously by relating the periods
to the Fourier coefficients of half-integral weight modular
forms \cite{oda-book}, \cite{oda-hs}, \cite{mur-ram}. 
In principle, one could use the period relations to deduce an isomorphism of Hodge structures;
however, it seems very unlikely that such methods can show that this
isomorphism is also Galois equivariant.

\subsection{Outline of the proof} 
We now explain the strategy of the proof of Theorem \ref{thm:intro-main-full}.
 In fact, the proof in the general case is very similar to that for $F=\Q$, $n=d=1$, and 
so we first describe this case, even though formally speaking this case is excluded from the theorem
on account of the assumption that $B_1$ and $B_2$ be ramified at at
least one infinite place. To be precise, one should work with
intersection cohomology in this case, but for simplicity we just use
usual cohomology with the understanding that the proof given below is
only 
correct once generalized to the setting where $F$ is a totally real
field and there is some infinite place where $B_1$ and $B_2$ are both ramified.

 The basic idea of the proof 
is to embed $X_1 \times X_2$ in a larger Shimura variety $X$, construct a 
Hodge class $\xi$ on $X$ and then show that its pullback to $X_1 \times X_2$ has the right property.
The implementation of this idea is a bit involved and breaks up as follows.

\subsubsection{Unitary Shimura varieties} 
 We first replace $X_1$ and $X_2$ by closely related unitary Shimura varieties. 
Pick an imaginary quadratic field $E$ that embeds in both 
$B_1$ and $B_2$. 
Let $\VV_1 = B_1$ and $\VV_2 = B_2$, viewed as (right) $E$-vector spaces. These are equipped with 
natural hermitian forms that are of signature $(1,1)$ at the infinite place. 
The corresponding unitary similitude groups are given by 
\begin{equation}
\label{eqn:gubv12}
\GU_E(\VV_1) \simeq  (B_1^\times  \times E^\times)/ F^\times, \quad \GU_E(\VV_2) \simeq (B_2^\times  \times E^\times)/ F^\times.
\end{equation}
Let $\VV = \VV_1 \oplus \VV_2$. Thus $\VV$ has signature $(2,2)$ at the infinite place. Consider the maps of 
algebraic groups 
\begin{equation}
\label{eqn:guv-embed}
B_1^\times \times B_2^\times \rightarrow \PB_1^\times \times \PB_2^\times \leftarrow \G (\U_E(\VV_1)\times \U_E(\VV_2)) /E^\times \rightarrow \GU_E(\VV)/E^\times.
\end{equation}
These induce maps of the associated Shimura varieties
\[
X_1 \times X_2 \rightarrow \tilde{X}_1 \times \tilde{X}_2 \leftarrow Y \rightarrow X 
\]
(where $X$ is the Shimura variety associated with $\GU_E(\VV)/E^\times$, etc.),
which may be viewed as giving a correspondence on $(X_1 \times X_2) \times X$. This correspondence induces a map on cohomology:
\[
 \iota^*: H^*(X) \rightarrow H^*(X_1 \times X_2).
\]
As such, since the kernel of the map 
\[
\G (\U_E(\VV_1)\times \U_E(\VV_2)) /E^\times \rightarrow  \PB_1^\times \times \PB_2^\times 
 \]
 is isomorphic to
 \[
 \G (E^\times \times E^\times)/E^\times \simeq E^{(1)} \simeq E^\times/F^\times,
 \]
 one can introduce a character $\eta$ of $E^\times/F^\times$ in the construction of the correspondence; this gives a map
 \[
 \iota_\eta^*: H^*(X) \rightarrow H^*(X_1 \times X_2).
\]
that depends on the choice of $\eta$.

\subsubsection{Cohomological representations and Vogan-Zuckerman theory}
Since the cohomology of $X$ is given by automorphic forms \cite{borel-wallach},  it is natural to first look for a {\it non-tempered}
automorphic representation $\Pi$ of $\GU(\VV)$ (or say of $\U(\VV)$ for simplicity) which contributes
to $H^2(X)$ but only to the $(1,1)$-part. 
The paper of Vogan-Zuckerman \cite{vz} classifies cohomological representations; one finds 
that there 
is a unique non-trivial (non-tempered) representation $\Pi_\infty^1$ of $\U (\VV_\R) =\U (2,2)_\R$ with the property that 
\[
H^{1,1} (\fg, K; \Pi_\infty^1) \neq 0.
\]
The representation $\Pi_\infty^1$ can be realized as a cohomologically induced representation $A_\fq$ where 
$\fq$ is a $\theta$-stable parabolic subalgebra of $\fg$ with Levi component $\mathfrak{u}(1,1) \oplus \mathfrak{u}(1,1)$. 
In order to construct $\Pi$, it is first natural to look for an explicit construction of $\Pi_\infty^1$ which is 
what is accomplished in the next step.

\subsubsection{An exceptional isogeny: archimedean theta correspondence and Kudla-Millson theory} 
The representation $\Pi_\infty^1$ can be constructed as a theta lift of the trivial representation of $\U(1,1)$ with 
appropriate choices of splitting characters. However, for rather subtle reasons, this fact does not seem to be 
useful in our construction. Instead, 
 we use the fact that there
is an exceptional isogeny 
\begin{equation}
\label{eqn:isogen-intro}
\SU(2,2)_\R \rightarrow \SO(4,2)_\R.
\end{equation}
Ignoring for the moment the difference between $\U$ and $\SU$, and between $\O$ and $\SO$, we may view 
$\Pi_\infty^1$ as a representation of $\O(4,2)_\R$, and viewed this way, the representation 
$\Pi_\infty^1$ is in fact a theta lift from $\SL_2$.  This fact may appear somewhat
familiar to connoisseurs of Kudla-Millson theory. Indeed, Kudla-Millson theory studies certain explicit closed forms that are Poincare dual to 
geodesic cycles coming from embedded $\O(3,2)$s in $\O(4,2)$, 
and shows that the corresponding 
automorphic representations of $\O(4,2)$ (which contribute to $H^{1,1}$)
can be constructed as theta lifts of forms of weight $3$ on $\SL_2$. 

\subsubsection{Inner forms}
For our purposes, we need {\it inner form} versions both of the isogeny \eqref{eqn:isogen-intro}
and of the theta lift. Moreover, we need to work with similitude groups rather than isometry groups. 
First the theta lift: let $B$ be the 
quaternion algebra given by $B=B_1 \cdot B_2$ in the Brauer group of $\Q$. 
Since $B_1$ and $B_2$ are assumed to be non-isomorphic, $B$ is a non-split quaternion algebra. 
Then there is a theta lift 
\[
\Theta: \sA(\GU_B(W)) \longrightarrow \sA( \GU_B(\tilde{V})^0)
\]
where $\tilde{V}$ is a certain three dimensional $B$-vector space 
equipped with a $B$-skew-hermitian form, $W$ is a one dimensional
$B$-vector space equipped with a $B$-hermitian form and $\sA(G)$
denotes the space of automorphic forms on $G$.
(To be precise, the theta lift
depends on a choice of Schwartz function.) The groups 
$\U_B(W)$ and $\U_B(\tilde{V})$ are respectively the requisite inner forms of $\SL_2$ and $\O(4,2)$. 
As for the isogeny, we construct (in \S \ref{sec:global-ex-isom}) an explicit 
{\it isomorphism}
\begin{equation}
\label{eqn:intro-delta}
\delta: \PGU_E(\VV) \xrightarrow{\simeq} \PGU_B(\tilde{V})^0
\end{equation}
which is an inner form version of \eqref{eqn:isogen-intro} above for (projectivized) similitude groups. 

\subsubsection{The global theta lift: Schwartz forms}
With this preparation, we can describe the construction of a $(1,1)$-class on $X$. 
Let $h$ be a modular form of weight $3$ and central character $\xi_E$, the quadratic 
character associated with the extension $E/\Q$, chosen such that it admits a Jacquet-Langlands transfer
to $B^\times$. 
Let 
$\tilde{\tau}_h$ be the corresponding representation of $\GL_2 (\A)$. Let $\JL$ denote the 
Jacquet-Langlands correspondence. Consider the composite maps of automorphic forms
\[
\sA(\GL_2) \xrightarrow{\JL} \sA (B^\times) = \sA (\GU_B (W)) \xrightarrow{\Theta} \sA (\GU_B(\tilde{V})^0 ) 
\]
and
\[
 \sA (\PGU_B(\tilde{V})^0 ) \xrightarrow{\delta^{-1}} \sA (\PGU_E(\VV))
 \rightarrow \sA (\GU_E(\VV)).
\]
We show that $\Theta \circ \JL (\tilde{\tau}_h)$ has trivial central
character, and so may be viewed as an
automorphic representation of the group $\PGU_B(\tilde{V})^0 $. Thus
we can consider the composite
\[
\Pi := \delta^{-1} \circ \Theta \circ \JL (\tilde{\tau}_h),
\]
which we may view as an automorphic representation of the group $\GU_E (\VV) (\A)$.
This representation has the property that $\Pi_\infty \simeq \tilde{\Pi}_\infty^1$, where 
$\tilde{\Pi}_\infty^1$ denotes the unique representation of $\GU(2,2)_\R$ with trivial central character whose restriction to 
$\U(2,2)_\R$ is isomorphic to $\Pi_\infty^1$. Further, one can check that:
\[
\dim H^{p,q} (\fg, K; \Pi_\infty) = \begin{cases}
1 & \text{if } (p,q)=(1,1) \text{ or } (3,3); \\
2 & \text{if } (p,q)=(2,2).
\end{cases}
\]
Explicitly, we construct following the ideas of Kudla-Millson (and Funke-Millson in the higher weight case), 
a {\it Schwartz form} $\varphi_\infty$ (rather than a Schwartz function)
such that with
$\varphi=\varphi_{\fin} \otimes \varphi_\infty$ for any choice of
a Schwartz function $\varphi_{\fin}$, the theta lift 
\[
\theta_\varphi (\phi)
\]
may be viewed as giving a $(1,1)$-class on $X$, for $\phi$ in the
space of $\JL(\tilde{\tau}_h)$. To be precise, the construction only
depends on the restriction of $\JL(\tilde{\tau}_h)$ to the subgroup 
$\GL_2(\A)^+$ (consisting of elements in $\GL_2(\A)$ with positive determinant at infinity) and the vector $\phi$ must be chosen to lie in the {\it
  anti-holomorphic} component of this restriction. 

\subsubsection{Nonvanishing of the restriction}
Next we show that for suitable choice of $\eta$, $h$, $\varphi_{\fin}$ and
$\phi$, the $(1,1)$-form $\iota_\eta^*(\theta_\varphi (\phi))$ is
non-vanishing, when projected to the $\pi_1\boxtimes \pi_2$-isotypic
component. 
Let us now explain the main idea to prove this non-vanishing.
Let $\omega_{f_{B_1}} $ and $\omega_{f_{B_2}}$ denote holomorphic one-forms in
$H^1 (X_{B_1}, \C)_{\pi_1}$ and $H^1 (X_{B_2}, \C)_{\pi_2}$ respectively. The strategy is to compute the integral
\[
\int_{X_{B_1} \times X_{B_2}} \iota_{\eta}^* \theta_{\varphi}(\phi) \cdot (p_1^* \omega_{f_{B_1}} \wedge \overline{p_2^* \omega_{f_{B_2}}})
\]
and show it is non-zero. 
Using the isomorphism $\delta$ from \eqref{eqn:intro-delta}
and noting that the decomposition $\VV = \VV_1 \oplus \VV_2$ of $E$-hermitian spaces induces a decomposition $\tilde{V}=V \oplus V_0$ of $B$-skew-hermitian spaces such that 
\[
 \GU_B(V)^0 \simeq (B_1^\times \times B_2^\times)/F^\times, \quad
 \GU_B(V_0)^0 \simeq E^\times, 
\]
and 
\[
 \delta : \G(\U_E(\VV_1) \times \U_E (\VV_2)) / E^\times \xrightarrow{\simeq} \G(\U_B(V) \times \U_B(V_0))^0/F^\times, 
\]
we can reduce the integral to a period on the left-hand side of the seesaw diagram below, which
again involves quaternionic unitary groups:
\[
 \xymatrix{
 \GU_B(\tilde{V})  \ar@{-}[dr] \ar@{-}[d] & \G(\U_B(W) \times \U_B (W))
   \ar@{-}[dl] \ar@{-}[d]  \\
 \G(\U_B(V) \times \U_B(V_0)) & \GU_B(W)}.
\]
The seesaw then implies that the period can be computed on the right where it becomes 
a triple product period of the form 
\[
\int_{[\GU_B(W)]}\phi \cdot \overline{f_B \theta(\eta)},
\]
where $f_B = \theta(\overline{f_{B_1}} \boxtimes f_{B_2})$ is an automorphic form on $\GU_B(W) \simeq B^\times$
in the Jacquet-Langlands transfer $\pi_B$ of $\pi$.
We then show that 
$\eta$,  $h$, $\varphi_{\fin}$ and $\phi$ can be chosen 
(depending on the finite parts of $f_{B_1}$ and $f_{B_2}$)
so as to make this triple product integral non-zero. 
A similar argument also shows that 
\[
\int_{X_{B_1} \times X_{B_2}} \iota_{\eta}^* \theta_{\varphi}(\phi) \cdot (\overline{p_1^* \omega_{f_{B_1}} }\wedge p_2^* \omega_{f_{B_2}})
\]
is non-zero, and in fact that the induced map 
\[
\iota_{\eta}^* \theta_{\varphi}(\phi) : H^1 (X_{B_1}, \C) \rightarrow H^1 (X_{B_2}, \C)
\]
is an isomorphism. 

\subsubsection{Hodge classes}
As yet we do not know that $\theta_\varphi(\phi)$ is a Hodge class. In fact, strictly speaking it is not likely to be a rational cohomology class, but 
we show that it lies in the $\C$-span of the Hodge classes in $H^2(X)$. 
The key point here is that the (expected)
classification of automorphic representations implies that any 
automorphic representation that is nearly equivalent to $\Pi$ must have 
archimedean component lying in the (unique) $A$-packet containing $\tilde{\Pi}_\infty^1$. 
Moreover, this archimedean $A$-packet consists of two representations 
$\tilde{\Pi}_\infty^1, \tilde{\Pi}_\infty^2$ and the latter contributes only to $H^4(X)$ and not $H^2(X)$. 
From this, we deduce that $H^2(X,\C)[\Pi_\fin] $ is entirely of type
$(1,1)$.  (The notation $H^2(X,\C)[\Pi_\fin] $ stands for the 
subspace of $H^2(X,\C)$ on which the unramified Hecke algebra at some
finite level acts 
by the same Hecke eigenvalues as on $\Pi_\fin$.)

Suppose for the moment that $\Pi$ has coefficients in $\Q$. Then 
\[
H^2(X,\Q)[\Pi_\fin] \otimes_{\Q} \C = H^2(X,\C)[\Pi_\fin],
\]
hence $H^2(X,\Q)[\Pi_\fin]$ 
is a rational Hodge structure, pure of type $(1,1)$. 
Since $\theta_\varphi(\phi)$ lies in $H^2(X,\C)[\Pi_\fin]$, we see that it lies in the 
$\C$-span of $H^2(X,\Q)[\Pi_\fin] $ and in particular is a $\C$-linear
combination of Hodge classes $\xi$. 
We have already seen that $\theta_\varphi(\phi)\neq 0$ and moreover that its restriction to the 
$\pi_1 \boxtimes \pi_2$-component of 
$X_1 \times X_2$ is non-zero.
From this and a simple continuity argument, one deduces that there is a Hodge class $\xi \in H^2(X,\Q)[\Pi_{\fin}]$ such that the induced map
\[
\iota_\eta ( \xi (1))^* : H^1 (X_{B_1}, \Q)_{\pi_1} \rightarrow H^1 (X_{B_2}, \Q)_{\pi_2}
\]
given by 
\[
x \mapsto p_{2,*} (p_1^* (x) \cdot \iota_{\eta}^* \xi (1))
\]
is an isomorphism of rational Hodge structures. 

\subsubsection{Galois representations}
\label{sss:gal-rep-intro}
Next, we need to understand the Galois representation on $H^* (X)$ 
associated to the $A$-packet containing $\Pi$. 
Again, for simplicity let us suppose $F=\Q$, $d=1$, the general case being similar. 
Then the expected relation between the Galois representation and the
$A$-parameter can be deduced from Kottwitz's conjecture (see Remark \ref{rem:characterization-of-galrep} below).  In our case,  we have:
\begin{align*}
H^2(X,\Q_\ell)_{\Pi} &= \Q_\ell (-1),  \\
H^4(X, \Q_\ell)_{\Pi} &= \Q_\ell(-2) \oplus \Sym^2(\rho_{h,\ell}) , \\
H^6 (X,\Q_\ell)_{\Pi} &= \Q_\ell (-3),
\end{align*}
where $\rho_{h,\ell}$ is the two dimensional $\ell$-adic representation
attached to $h$. 
From this, one deduces that as a Galois module, $H^2
(X,\Q_\ell)[\Pi_\fin]\simeq \Q_\ell (-1)^{m} $ for some integer $m$. 
For every rational prime $\ell$, the action of $\Gal(\Qbar/\Q)$ on $\xi(1)$ is then trivial 
and thus $\xi(1)^*$ (viewed as acting on $\ell$-adic cohomology via the 
Betti-{\'e}tale comparison) is a Galois equivariant isomorphism. 

\subsubsection{Descending coefficients}
The argument above needs a bit more care, since $\Pi$ may not have
coefficients in $\Q$. Thus one needs some care to ensure that the 
Hodge class constructed has coefficients in $\Q$. This argument needed
to achieve this is 
explained in detail in \S 
\ref{sec:proof-of-main-thm}. 
Roughly, the point is to replace 
$H^2 (X,\Q)[\Pi_\fin]$ by 
$H^2 (X,\Q)[\I]$,  
 where $\I$ is the kernel of the action of the unramified Hecke algebra (with $\Q$-coefficients) on 
$\Pi_\fin$. Another possible source of extra coefficients is the character
$\eta$, and this needs to be handled separately.

\subsubsection{The general case}
This completes the outline of the proof of Theorem \ref{thm:intro-main-full} in the case 
$F=\Q$, $n=d=1$, $k=2$. The general case (assuming still that
$\uk=(2,\ldots,2)$) is only slightly more complicated.  
In general we have
\[
\U_E(\VV) (\R) \simeq \U(2,2)^d \times \U(4)^{n-d},
\]
where the $\U(2,2)$ factors correspond to the places in $\Sigma$ and
the $\U(4)$ factors to the infinite places not in $\Sigma$. 
At the infinite places in $\Sigma$, i.e., where $B_1$ and $B_2$ are both split, we 
just imitate the constructions above. However, we need to deal as well 
with the infinite places where $B_1$ and $B_2$ are both ramified. 
At such places the representation $\Pi_\infty$ is trivial and the 
local $A$-packet is a singleton, consisting of just the trivial representation. 
This is consistent with the fact that at such places $v$, we have 
\[ \U_E(\VV)_v\simeq \U(4), \quad \U_B(\tilde{V})_v \simeq \O(6), \quad \U_B(W)_v \simeq \SL_2
\]
and the theta lift of the weight $3$ {\it holomorphic} discrete series representation on $\SL_2$ is the 
trivial representation of $\O(6)$. 
The conclusion then is that $H^{2d} (X,\C)[\Pi_\fin]$ consists entirely 
of $(d,d)$-classes and one can find a Hodge class $\xi \in H^{2d}(X,\Q)[\Pi_{\fin}]$ such that the induced map
\[
 \xi (d)^* : H^d (X_{B_1}, \Q) \rightarrow H^d (X_{B_2}, \Q)
\]
given by 
\[
x \mapsto p_{2,*} (p_1^* (x) \cdot \iota_{\eta}^* \xi (d))
\]
is an isomorphism of rational Hodge structures, that is also Galois
invariant. 

\begin{rem}
We note the following conceptual reason why we work with the group $\U_B(\tilde{V})$ which at archimedean places is (almost) isomorphic to  a product $\O(4,2)^d \times \O(0,6)^{n-d}$. 
After all, in principle, one could also construct Kudla-Millson classes directly on the group $\U_B(V)$,  which at archimedean places looks like a product $\O(2,2)^d \times \O(0,4)^{n-d}$, by taking a lift of a form of parallel weight two. However, the issue is that on this smaller group, the Hodge classes are 
mixed up with other classes of the same degree, and therefore it is difficult to see that the Kudla-Millson class is in the $\C$-span of the Hodge classes, except in the ``trivial" situation when $B_1=B_2$; in that case the group $\U_B(V)$ is quasi-split and there are obvious ``diagonal" cycles in the correct degree. On the larger group however, the Hodge classes in degree $(d,d)$ can be separated out 
using Hecke operators; this is the crucial idea on which the proof rests.
\end{rem}

\begin{rem} 
The assumption that $B_1$ and $B_2$ are ramified at some infinite place is made for technical reasons; it ensures that the auxiliary Shimura variety $X$ used in the proof is compact. We
believe that with some extra work (e.g., working with intersection cohomology), this assumption could be
relaxed. 
\end{rem}

\begin{rem}

Our proof of Theorem \ref{thm:intro-main-full} requires the construction of the particular automorphic 
representation $\Pi$ on the unitary group $\PGU_E (\VV)$ and a precise characterization of the near equivalence class of this representation. 
We give two proofs of this characterization. The first proof uses the expected classification of {\it non-tempered} automorphic representations on unitary groups (associated to 
hermitian spaces over a CM field)
in terms of local and global $A$-packets, which is work in progress of 
Kaletha, Minguez, Shin and White \cite{kmsw}. 
The expected results from their work that we need are stated carefully in \S
\ref{sec:classification-global} and \S \ref{ss:local-A-packets}. But we also give another, more direct proof, of the characterization of this 
representation using the theta correspondence, that does not use  \cite{kmsw}.  While this latter proof is unconditional, we have retained the proof using the full classification, since 
it provides a conceptual justification for why the method works, and since it may be useful in other situations. 

\end{rem}

\begin{rem} 
\label{rem:characterization-of-galrep} 
As mentioned before, our proof of part (ii) of Theorem~\ref{thm:intro-main-full} is conditional on the truth of Kottwitz's conjecture describing the Galois representations occurring in the cohomology of Shimura varieties in terms of automorphic representations.  The main results on Galois representations that we need are stated in Propositions \ref{prop:galrep-char} and \ref{prop:galrep-char-ss}.  In \S \ref{subsec:Kottwitz}, we explain in some detail how these propositions follow from Kottwitz's conjecture \cite{kot}.

 While we do not prove any new results towards Kottwitz's conjecture in this paper, it is an area of active investigation and the results we rely on will hopefully be available in the near future. 
For the benefit of the reader,  we now explain what results towards this conjecture are currently available and what work still needs to be done.  In {\it loc.~cit.},  Kottwitz outlined a strategy to prove the conjecture via establishing a stable trace formula and comparing it to the Grothendieck-Lefschetz trace formula.  In the subsequent papers \cite{kot-jams}, \cite{kot-invent}, Kottwitz used this strategy to verify his conjecture for certain Shimura varieties of PEL type.  

The Shimura varieties that we use are of abelian-type but not PEL.  For abelian-type Shimura varieties,  a stable trace formula and the comparison with the Grothendieck-Lefschetz trace formula has recently been established by Kisin-Shin-Zhu \cite{ksz}.  However,  (as is explained in {\it loc.~cit.} \S 0.2 and \S 9.2) two additional pieces of work need to be done to complete the characterization of Galois representations:
\begin{enumerate}
\item First,  one needs an equality relating the stable distribution of \cite{ksz} to
the one in Kottwitz.  This relation is encoded in the expected formula (9.2.2.1) of \cite{ksz}, which the authors of \cite{ksz} 
are planning to investigate in a sequel to that paper. 

\item Second,  one needs the classification of automorphic representations on unitary similitude groups in terms of $A$-parameters.  The corresponding results for unitary groups are the subject of past and ongoing work of Kaletha-Minguez-Shin-White.  The extension of these results from unitary groups to unitary similitude groups is also expected to be within reach. 
\end{enumerate}

\end{rem}

\begin{rem}
At the request of one of the referees, we discuss the relation between this paper and the 
work of Bergeron-Millson-M{\oe}glin (e.g.~\cite{bmm-orth} and \cite{bmm-unit}), which proves many cases of the Hodge conjecture for certain orthogonal or unitary Shimura varieties.  
The strategy in those papers is to show that in a range of degrees, the space of 
Hodge classes on the varieties under consideration is spanned (for the most part) by the classes of Kudla-Millson cycles,  which are linear combinations of cycle classes of sub-Shimura varieties, and are thus algebraic.  (In some cases, for example $\U(2,2)$, they also need to use classes that are known to be algebraic due to the Lefschetz-$(1,1)$ theorem, but are not obviously in the span of the classes of Kudla-Millson cycles.) 
Our work is complementary to this,  and in a somewhat orthogonal direction, since in our setting, there are no obvious Kudla-Millson cycles in the degrees under consideration, nevertheless we construct interesting Hodge classes.  For example, the simplest interesting setting for us (beyond $(1,1)$-classes for $\U(1,1)$ which can be addressed using Lefschetz-$(1,1)$) is the case of $(2,2)$-classes for $\U(2,2) \times \U(2,2)$, which is not covered in {\it loc.~cit.} Our expectation is that these Hodge classes (that represent functoriality) cannot be obtained from cycle classes of sub-Shimura varieties by any functorial process, even if one throws in classes that are known to be algebraic by the Lefschetz-$(1,1)$ theorem. 

The reader may also be interested in the discussion in \S \ref{sec:ahc}.  
As pointed out there,  there are also situations where there are Kudla-Millson cycles in the degrees of interest, but they do not span the space of Hodge classes. Thus it seems that to understand whether these Hodge classes are algebraic requires studying algebraic cycles on Shimura varieties that do not arise from sub-Shimura varieties.  This is a topic that has not seen much systematic work so far.  

\end{rem}

\subsection{Extensions and generalizations}

In this section, we discuss some extensions and generalizations of the main result stated above.
 
\subsubsection{Local systems and normalizations}
While we have stated the main result for trivial coefficients, it works equally well 
for local systems. In the main text, this more general case is treated. 

We briefly mention the numerology in the case of general local
systems. Suppose that the form $\pi$ has weights $\uk=(k_1,\ldots,
k_n)$. Then (in the classical normalization) the Hodge structure of $H^*(X_i)_{\pi_i}$ is a tensor product
over the places $v$ in $\Sigma$ of a Hodge structure of type
\[
(k_v-1,0) + (0,k_v-1).
\]
Thus $H^*(X_1)_{\pi_1} \otimes H^*(X_2)_{\pi_2}$ is a tensor product
over the places $v$ in $\Sigma$ of a Hodge structure of type
\begin{equation}
\label{eqn:hs-of-tensor}
(2k_v-2,0) + 2 (k_v-1,k_v-1) + (0,2k_v-2).
\end{equation}
The Hodge class in $H^*(X_1)_{\pi_1} \otimes H^*(X_2)_{\pi_2}$  should come from the tensor product over the places
$v$ in $\Sigma$ of a class of type $(k_v-1,k_v-1)$. In our
construction, we pick an auxiliary form $\tilde{\tau}$ of weights
$\uk+\1 = (k_1+1, \ldots, k_n+1)$. Then $\JL( \tilde{\tau})$  corresponds to
a Hodge structure which is a tensor 
 product
over the places $v$ in $\Sigma$ of a Hodge structure of type
\[
(k_v,0) + (0,k_v).
\]
Its lift $\Pi$ to $\U_B(\tilde{V})$ contributes to different cohomological
degrees; so there is an associated Hodge diamond which is the tensor 
 product
over the places $v$ in $\Sigma$ of a Hodge diamond of the form:

\begin{equation}
\label{eqn:hs-of-unitary}
\xymatrix{
& (k_v+1,k_v+1) & \\
(2k_v,0) &2 (k_v,k_v) & (0,2k_v) \\
& (k_v-1,k_v-1) & 
}
\end{equation}

The Hodge class in $H^*(X)_\Pi $  comes from the tensor product over the places
$v$ in $\Sigma$ of the class of type $(k_v-1,k_v-1)$. The ``rest'' of
the Hodge structure at any place $v$ consists of Tate twists
of this $(k_v-1,k_v-1)$ class and $\Sym^2$ of the Hodge structure
attached to $\tilde{\tau}$. 

In the main text, we use the ``automorphic normalization'' instead of
the classical normalization. This amounts to twisting the Hodge
structures in \eqref{eqn:hs-of-tensor} and \eqref{eqn:hs-of-unitary} above by 
$(2-k_v,2-k_v)$. This twist is therefore not visible in parallel
weight $2$.

\subsubsection{The non-self-dual case}
\label{sec:non-self-dual}

The assumption that $\pi$ has trivial central character forces
  $\pi$ to be self-dual, and is just made for simplicity.
The non-self-dual case can also be treated similarly; we do not 
treat this in the paper, but we outline here the main differences. 

Let us denote the contragredient of $\pi$ by $\pi^\vee$ and let $\chi$ be the central character of $\pi$ so that 
$\pi \simeq \pi^\vee \otimes \chi$. 
The method described above extends to this case, except that we must 
choose $\eta$ such that $\eta|_{\A^\times_F} = \chi^{-1} $ to
compensate for the central character of $\pi$. 
We remark on one unusual feature. Namely, it seems that the method outlined here naturally produces a Hodge class $\xi \in H^{2d}( X, \Q(\chi^2))_\Pi $ such that:
\begin{enumerate}
\item The induced map 
\[
\xi(d)^*: H^d(X_{B_1}, \Q(\chi^{-1}))_{\pi_1^\vee} \rightarrow H^d(X_{B_2}, \Q(\chi))_{\pi_2}
\]
is an isomorphism of $\Q(\chi)=\Q(\chi^{-1})$-vector spaces and preserves the Hodge filtration (on tensoring with $\C$). 

\item The Galois module $H^{2d}(X,\Q_\ell (\chi^2))$ is isomorphic to (a sum of copies of) $\Q_\ell (-d) (\chi^2)$ and the induced map
\[
\xi(d) ^*: H^d(X_{B_1}, \Q_\ell(\chi^{-1}))_{\pi_1^\vee} \rightarrow H^d(X_{B_2}, \Q_\ell(\chi))_{\pi_2}
\]
satisfies the following Galois equivariance:
\[
\sigma (\xi^*(x)) = \chi^2 (\sigma) \cdot \xi^* (\sigma (x)).
\]
We note that $\Q(\chi^{-1})=\Q(\chi)$ and $\Q_\ell (\chi^{-1})=\Q_\ell (\chi)$. 
\end{enumerate}
The reason this is unusual is that one might expect to have a natural construction producing a {\it rational} Hodge class in 
$H^*(X_{B_1}, \Q(\chi))_{\pi_1^\vee} \otimes H^*(X_{B_2}, \Q(\chi))_{\pi_2}$, since 
after all the Galois representation $H^*(X_{B_1}, \Q_\ell (\chi))_{\pi_1^\vee} \otimes H^*(X_{B_2}, \Q_\ell (\chi))_{\pi_2}$ always
contains the {\it trivial} representation as a direct summand. Instead, our construction naturally produces a Hodge class (with coefficients in a number field) in 
$H^* (X_{B_1}, \Q(\chi))_{\pi_1} \otimes  H^*(X_{B_2}, \Q(\chi))_{\pi_2}$
and then one has to ``untwist'' it to produce the rational Hodge class
that one expects to exist.

\subsubsection{Absolute Hodge classes}
\label{sec:ahc}
The main theorem above is close to saying that the class 
$\xi$ is an {\it absolute Hodge class} in the sense of Deligne \cite{dmos}. However, what is missing is the 
de Rham piece of the story, i.e., in order to show that $\xi$ is absolutely Hodge, we 
would need to show in addition to the above that it is also de Rham rational and that for every embedding $\tau$ of $F_\Sigma$ in 
$\C$, the class $\tau(\xi)$ is a Hodge class, whose image in $\ell$-adic cohomology is 
Galois invariant for all $\ell$. It seems difficult to show this directly.
In a previous version of this paper, we expressed the hope that one might be able to deduce that $\xi$ is absolutely Hodge by 
showing that it satisfies a stronger property, namely that it is a {\it motivated cycle} in the sense of Andr\'{e}.
However, the strategy that we had in mind runs into a serious obstacle that we are unable to circumvent at the moment, 
so the problem 
of showing that $\xi$ is absolutely Hodge remains open. The obstacle is related to the following fact: there exist 
tempered $L$-packets $\Pi$ of representations on $\U_E(\VV)$ (with $\dim_E(\VV)=3$) which contribute to Hodge classes on the associated Shimura varieties, such that the rank of the 
$\Pi$-isotypic component of the space of algebraic cycles of group-theoretic origin (i.e., coming from embeddings $\U_E(\VV') \hookrightarrow \U_E(\VV)$, with $\dim_E(\VV')=2$) is non-zero, yet is strictly smaller than the dimension of $\Pi$-isotypic component of the space of Hodge classes. In particular, there exist Hodge classes on such varieties that are not represented by algebraic cycles coming from embedded unitary groups.

\subsubsection{Functoriality for unitary groups}
\label{sec:jl-for-unitary}
It would be very interesting to generalize the results of this paper to general unitary groups. 
The main obstacle to doing this seems to be understanding automorphic periods for the 
embedding
\begin{equation}
\label{eqn:abcd}
\U_E(\VV_1) \times \U_E (\VV_2) \hookrightarrow \U_E (\VV),
\end{equation}
where $\dim_E(\VV_1) = \dim_E(\VV_2) = n$ say, with tempered representations $\pi_1$, $\pi_2$ on $\U_E(\VV_1)$ and $\U_E(\VV_2)$ respectively, and 
a non-tempered representation $\pi$ on $\U_E (\VV)$. In the case treated in this paper, this is accomplished for $n=2$ by using 
exceptional isogenies to relate the unitary groups above to inner forms of orthogonal groups, and then using a seesaw to relate the 
requisite period integrals to triple product periods for $\GL_2$, which are well understood and fall within the purview of the Gan-Gross-Prasad (GGP) conjectures.
In the general case, these exceptional isogenies are not available. Thus, it seems important to formulate and prove analogs of the GGP conjecture in the setting of the 
\eqref{eqn:abcd} above.

\bigskip

\noindent {\bf Acknowledgements:}
We thank the referees for many useful suggestions that improved the article.
K.P.  also thanks the Institute for Advanced Study 
for its support in 2014-15 as a von Neumann fellow, when the work on this paper was initiated. 
During the preparation of this article, A.I. was partially supported by JSPS KAKENHI Grant Number 26287003,
and K.P. was partially supported by NSF grants DMS 1160720, DMS 1600494, DMS 2001293 and a grant from the 
Simons Foundation (\# 305784). 

\newpage

\section{Shimura varieties, local systems, and motives} 

\subsection{Realizations of motives}
\label{sec:realizations}

Some of our definitions below may be somewhat non-standard. 

\subsubsection{Hodge structures}

Let $L$ be a number field given with a fixed embedding in $\C$. An $L$-Hodge structure pure of weight $n$ will be an $L$-vector space $V$ equipped with a descending filtration $F^\cdot V_\C$ on $V_\C = V\otimes_L  \C$ such that for $p+q=n+1$, we have
\[
V_\C = F^p V_\C \oplus \overline{F^q V_\C}.
\]
For any pair $(p,q)$ with $p+q=n$, we set $V^{p,q} = F^p V_\C \cap \overline{F^q V_\C}$.

\subsubsection{Realizations of motives with coefficients}

Let $k$ and $L$ be number fields. Let $\Mot_k^L$ denote the category of motives over $k$ with coefficients in $L$. (For the moment it is 
not very important what equivalence relation we use on algebraic cycles.) We are particularly interested in certain 
realization functors on $\Mot_k^L$, assuming we are given embeddings $k\hookrightarrow \Qbar \subset \C$ and $L\subset \C$. 

\begin{itemize}
\item {\it The Betti realization.} The Betti realization $H_B (M)$ which is 
 an $L$-Hodge structure. 

\item {\it The $\ell$-adic realizations.} For each rational prime $\ell$, $H_\ell (M)$ is a free $L\otimes \Q_\ell$-module, equipped with a continuous ($L\otimes \Q_\ell$-linear) action of 
$G_k:=\Gal (\Qbar/k)$.

\end{itemize}

We also have a natural comparison isomorphism\[
H_B (M) \otimes_{\Q} \Q_\ell \simeq H_\ell (M)
\]
of (free) $L\otimes \Q_\ell$-modules. 

There are other realizations which will not concern us in this paper. 
Thus we define a category $\cM_{k}^L$ as follows. The objects in this category are collections 
\[
(V, V_\ell)
\]
as $\ell$ varies over the primes, where $V$ is an $L$-vector space equipped with an $L$-Hodge structure and $V_\ell$ is a free $L \otimes \Q_\ell$-module with a
continuous ($L\otimes \Q_\ell$-linear) action of $G_k$, along with isomorphisms
\[
i_\ell: V \otimes \Q_\ell \simeq V_\ell.
\]
A morphism between two such objects $(V,V_\ell, i_\ell)$ and $(V',V'_\ell, i'_\ell)$ is an $L$-linear map $j: V \rightarrow V'$ that is a morphism of $L$-Hodge structures such that  the $L \otimes \Q_\ell$-linear maps $j_\ell: V_\ell \rightarrow V'_\ell$ defined 
by the commutative diagram below 
\[
\xymatrix{
V \otimes \Q_\ell  \ar[r]^{j\otimes 1} \ar[d]^{i_\ell} & V' \otimes \Q_\ell \ar[d]^{i'_\ell} \\
V_\ell \ar[r]^{j_\ell} & V'_\ell
}
\]
are $G_k$-equivariant. 

If $L=\Q$, we omit the superscript and simply write $\cM_k$. Note that to any proper smooth variety $X$ over $k$, we can attach
objects 
\[
\cH^n (X) = \left(H^n (X (\C), \Q), H^n_{\et} (X_{\Qbar}, \Q_\ell), i_\ell \right)
\]
in the category $\cM_k$. 

If $L\subset L'\subset \C$, there is a natural functor $\cM_k^L \rightarrow \cM_k^{L'}$, sending $(V,V_\ell)$ to $(V\otimes_L L', V_\ell \otimes_L L' = V_\ell \otimes_{L\otimes \Q_\ell} (L'\otimes \Q_\ell)).$

\subsection{Shimura varieties and local systems}

\subsubsection{Shimura varieties}

We recall some basic facts about Shimura varieties \cite{deligne-shimura}.
Let $\Ss = \Res_{\C/\R} \mathbb{G}_m$ denote the Deligne torus.
As usual, a Shimura datum is a pair $(G,X)$ consisting of a reductive algebraic group $G$ over $\Q$ 
and a $G(\R)$-conjugacy class $X$ of homomorphisms $h:\Ss \rar G_\R$ satisfying the following conditions:
\begin{enumerate}
\item For $h$ in $X$, the Hodge structure on the Lie algebra $\fg$ of $G_\R$ given by ${\Ad} \circ h$ is of type $(0,0) + (-1,1) + (1,-1)$. (In particular, the restriction of such an $h$ to $\mathbb{G}_{m,\R} \subset \Ss$ has image in the center of $G_\R$.)
\item For $h$ in $X$, $({\Ad} \circ h)(i)$ is a Cartan involution on $G_{\R}^{\ad}$, where $G^{\ad}$ is the adjoint group of $G$.  
\item $G^\ad$ has no factor defined over $\Q$ whose real points form a compact group. 
\end{enumerate}
These conditions imply that $X$ has the natural structure of a disjoint union of Hermitian 
symmetric domains. 
The group $G(\R)$ acts on $X$ on the left by
\[
(g \cdot h)(z) = g\cdot h(z) \cdot g^{-1}.
\]

Let $\A$ and $\A_f$ denote respectively the ring of ad\`{e}les and finite ad\`{e}les of $\Q$.
Let $\K$ be an open compact subgroup of $G(\A_f)$. The Shimura variety associated to $(G,X,\K)$ is the quotient
\[
\Sh_\K (G,X) =  G(\Q) \backslash X \times G(\A_f)/ \K.
\]
For $\cK$ small enough, this has the natural structure of a smooth variety over $\C$. The inverse limit
\[
\Sh (G,X) = \underleftarrow{\lim}_\K \ \Sh_\K (G,X)
\]
is a pro-algebraic variety that has a canonical model over a number field $E(G,X)$, the reflex field of the Shimura datum $(G,X)$. In particular, each $\Sh_\K (G,X)$ has a canonical model over $E(G,X)$. For brevity of notation, we will often write simply
$\Sh_G$ or $\Sh_{G,\cK}$ since $X$ will be understood from context. 

We recall the definition of $E(G,X)$.
This field is defined to be the field of definition of the conjugacy class of cocharacters
\[
\mu_h: \mathbb{G}_{m,\C} \rightarrow \Ss_\C \rar G_\C,
\]
where the first map is $z\mapsto (z,1)$ and the second is the one induced by $h$. 

\begin{rem}
The field $E(G,X)$ is given as a subfield of $\C$, and as such 
has by definition a canonical embedding into $\C$. When not specified below, 
any embedding of $E(G,X)$ in $\C$ will always be this canonical embedding. Indeed, we will not 
have use for any other embedding. 
\end{rem}

{\it  All Shimura varieties occurring in this paper will be compact, so we will assume this to be the case in the rest of this chapter.}

\subsubsection{Local systems and cohomology}

Let $(\rho,V)$ be a finite-dimensional representation of $G$ defined over a number field $L\subset \C$.
We assume that $\rho$ factors through an action of $G/Z_s$, where $Z_s$ is the largest subtorus of the center of $G$ which is split over $\R$ but which has no subtorus split over $\Q$.
To the data $(G,X,\rho)$, we can associate the following:
\begin{enumerate}
\item A local system $\V$ of $L$-vector spaces on $\Sh_G$. 
\item For each prime $\ell$, an $\ell$-adic local system $\V_{\ell}$ (of $L\otimes \Q_\ell$-vector spaces) on $\Sh_G$. 
\end{enumerate}
Then $H^i (\Sh_{G,\cK} (\C) , \V)$ is an $L$-vector space (in fact, an $L$-Hodge structure) and there are natural isomorphisms of free $L\otimes \Q_\ell$-modules:
\begin{equation}
\label{eqn:betti-etale-comparison-K}
H^i (\Sh_{G,\cK} ( \C), \V) \otimes_{\Q} \Q_\ell \simeq H^i_{\et}  (\Sh_{G,\cK} \otimes_{E(G,X)} \Qbar , \V_{\ell})
\end{equation}
(see \cite[Expos\'{e} XI]{SGA4-3}).
Note that we are using the given embedding $E(G,X) \hookrightarrow \Qbar \subset \C$ on both sides of the isomorphism above. 
The Hecke algebra $\cH (G(\A_f), \cK)$ acts on both sides of \eqref{eqn:betti-etale-comparison-K} and the isomorphism is Hecke equivariant. 
Taking the direct limit over $\cK$, we get 
an isomorphism:
\[
H^i (\Sh_{G} ( \C), \V) \otimes_{\Q} \Q_\ell \simeq H^i_{\et}  (\Sh_{G} \otimes_{E(G,X)} \Qbar , \V_{\ell}).
\]

Let $\Pi$ be an irreducible cohomological automorphic representation of $G(\A)$. The $\Pi$-isotypic component of $H^i(\Sh_{G} ( \C), \V_\C)$ is defined to be 
\begin{align*}
H^i (\Sh_{G} ( \C), \V_\C)_\Pi & := \Hom_{\cH(G(\A_f),\cK)} (\Pi_f^{\cK}, H^i (\Sh_{G} ( \C), \V_\C )^{\cK} ) \\ & = \Hom_{\cH(G(\A_f),\cK)} (\Pi_f^{\cK}, H^i (\Sh_{G,\cK} ( \C), \V_\C ) ) 
\end{align*}
for $\cK$ small enough, this being independent of the choice of $\cK$. 
By Matsushima's formula \cite{borel-wallach},
\[
H^i (\Sh_{G,\cK} ( \C), \V_\C) \simeq \bigoplus_{\pi} m(\pi) H^i (\fg, K; \pi_\infty \otimes \V_\C) \otimes \pi_f^{\cK}
\]
where the sum is over automorphic representations $\pi= \pi_\infty \otimes \pi_f$ of $G(\A)$ and $m(\pi)$ is the multiplicity of $\pi$ in the discrete spectrum of $G$. It follows that
\[
H^i (\Sh_{G} ( \C), \V_\C)_\Pi \simeq \bigoplus_{\pi, \pi_f^{\cK} \simeq \Pi_f^{\cK}}  m(\pi) H^i (\fg, K; \pi_\infty \otimes \V_\C),
\]
where the sum is over those $\pi$ such that $\pi_f^{\cK} \simeq \Pi_f^{\cK}$ as $\cH(G(\A_f),\cK)$-modules.

\subsubsection{Pullback and pushforward}

Let $f:(G,X_1) \rightarrow (H,X_2)$ be a morphism of Shimura data.
We assume that the reflex fields of $(G,X_1)$ and $(H,X_2)$ are the same
subfield $E$ of $\C$. Let $\rho$ be a finite-dimensional representation of $H$ defined over a number field $L\subset \C$ (which we can also view as a representation of $G$ via the map $G\rightarrow H$) and denote by $\V$ the associated local systems on $\Sh_H$, $\Sh_G$. (Thus the local system on $\Sh_G$ is just obtained by pull back from $\Sh_H$.) Then there are functorial maps
\[
f^*: H^i(\Sh_H, \V) \rightarrow H^i(\Sh_G, \V),
\]
defined both in Betti and $\ell$-adic cohomology, which may be viewed as giving a morphism in the category $\cM_E^L$. Suppose in addition that:
\begin{enumerate} 
\item \label{GHsurj} $G\rightarrow H$ is surjective with kernel $Z$ contained in the center of $G$.
\item \label{Zcohtriv} $Z$ is cohomologically trivial so that $G(\A) \rightarrow H(\A)$ is surjective as well. 
\end{enumerate}
Then there is a bijection between automorphic representations $\Pi_H$ of $H(\A)$ and $\Pi_G$ of $G(\A)$ on which $Z$ acts trivially. 
Assuming that $L$ contains the (common) field of definition of $\Pi_H$ and $\Pi_G$,  the map 
$f^*$ induces an isomorphism
\[
H^i(\Sh_H, \V)_{\Pi_H}  \simeq H^i(\Sh_G, \V)_{\Pi_G}
\]
in $\cM^L_E$.

We will also need to consider the case when the map $G\rightarrow H$ satisfies \eqref{GHsurj} but not \eqref{Zcohtriv}. Typically, in such cases, we will be interested in maps in the opposite direction. Indeed, there is a natural pushforward map
\[
f_*: H^i(\Sh_{G}, \V) \rightarrow H^i(\Sh_{H}, \V).
\]
If $\cK_1$ and $\cK_2$ are (small enough) open compact subgroups of $G(\A_f)$ and $H(\A_f)$ respectively, the induced map 
\[
\Sh_{G,\cK_1} \rightarrow \Sh_{H,\cK_2}
\]
is finite {\'e}tale onto its image which is a union of components of
$\Sh_{H,\cK_2}$. Thus $f_*$ can be defined by taking the trace to the
image, and then extending by zero outside the image. 

\begin{rem}
To make the definition of $f_*$ independent of the choice of $\cK_1$ and 
$\cK_2$, we need to normalize it by multiplying by the factor $\vol
(\cK_1)/\vol(\cK_2)$
for some choice of Haar measures on $G(\A_f)$ and $H(\A_f)$. In our application, we will
implicitly make such a choice in \S \ref{ss:form-construction} and \S
\ref{sec:proof-of-main-thm}, but the
exact choice is unimportant. 
\end{rem}

\section{Quaternionic Shimura varieties and the main theorem}
\label{sec:qsv-mt}

\subsection{Quaternionic Shimura varieties}

Let $F$ be a totally real field and $\Sigma_\infty$ the set of infinite places of $F$. Let $B$ be a {\it non-split} quaternion algebra over $F$
and $\Sigma$ the set of infinite places of $F$ where $B$ is split.
Put $n = [F:\Q]$ and $d = |\Sigma|$.
We fix an isomorphism
\[
B\otimes_F \R \simeq \M_2(\R)^d \times \H^{n-d},
\]
which gives an identification
\[
G_B (\R) \simeq \GL_2(\R)^d \times (\H^\times)^{n-d},
\]
where $G_B:=\Res_{F/\Q} B^\times$.
Let $\tau_v$ denote the composite map 
\[
B \otimes_{F,\sigma_v} \R \simeq \M_2 (\R) \hookrightarrow \M_2 (\C)
\]
for $v\in \Sigma$ and 
\[
B \otimes_{F,\sigma_v} \R \simeq \H \hookrightarrow \M_2 (\C)
\]
for $v\in \Sigma_\infty\smallsetminus \Sigma$.
Then $\tau_v$ may be viewed as giving a two-dimensional complex representation of 
$G_B$. 
We identify $\C^\times$ with a subgroup of $\GL_2 (\R)$ via
\begin{equation}
\label{eqn:iota}
z=a+bi \mapsto \iota(z) := \begin{pmatrix} a& b \\ 
-b & a \end{pmatrix}.
\end{equation}
Let $X$ denote the $G_B(\R)$-conjugacy class of 
\[
h: \Ss \rightarrow G_{B,\R}, \quad h(z) = \left(\iota(z),\cdots, \iota(z), 1, \cdots ,1\right),
\]
so that $h_v(z):=h(z)_v=\iota(z)$ for $v \in \Sigma$ and $h_v (z) = 1$ for $v \in \Sigma_\infty \smallsetminus \Sigma$. We write either $\Sh_{G_B}$ (or 
for ease of notation, simply $\Sh_B$) for the associated Shimura variety. 
The variety $\Sh_B$ admits a canonical model over the reflex field $F_\Sigma$. 
The Hecke algebra $\cH (G_B (\A_f), \cK)$ acts on $\Sh_{B,\cK}$ via correspondences. Moreover, the 
inverse limit
\[
\Sh_B = \varprojlim_{\cK} \Sh_{B,\cK}
\]
admits a right $G_B( \A_f)$-action. We refer the reader to
\cite[\S 1]{periods1} for a more detailed discussion of the 
Shimura varieties $\Sh_B$. 

\subsection{Local systems}
Let $\pi$ be an automorphic representation of $\GL_2 (\A_F)$ attached to a holomorphic 
Hilbert modular newform of weight $(\uk,r)$, where $\uk=(k_1, \ldots, k_n)$ is a collection of integers of the same parity and $r$ is an integer with 
$k_i\equiv r \bmod 2$.
(We will often denote $k_i$ by $k_v$ if $v$ is the $i$th place in the ordering and write $\uk = (k_v)_{v \in \Sigma_\infty}$.)
We suppose that $\pi$ admits a Jacquet-Langlands transfer $\pi_B$ to $G_B(\A)$. We also assume that  $k_i\ge 2$ for all $i$. This implies that 
the representation $\pi_{B,\infty}$ is cohomological, namely $\pi_B$ contributes to the cohomology of a 
local system on $\Sh_B$. The local system is attached to the representation $\tau_{\uk,r}^\vee$ of $G_B(\C)$, where 
\begin{align*}
\tau_{\uk,r} & := \bigotimes_{v} (\sigma_v \circ \nu)^{(r-k_v+2)/2}\Sym^{k_v-2} (\tau_v ) \\
 & = \bigotimes_v ({\det} \circ \tau_v )^{(r-k_v+2)/2} \Sym^{k_v-2} (\tau_v).
\end{align*}
Here $\nu$ denotes the reduced norm on $B$.
If $k_i$ is even for all $i$, then by \cite{waldspurger-rationality} (Proposition I.3 and \S II.2), the restriction of the representation $\tau_{\uk,r}$ to the group $G_B$ is defined over (any field $L$ containing) 
 $\Q(\uk)$, where $\Q(\uk)$ is the fixed field of the subgroup
\[
 \{ \sigma \in \Aut(\C/\Q) \, | \, \sigma \uk = \uk \}
\]
with $\sigma \uk = (k_{\sigma^{-1} \circ v})_{v \in \Sigma_\infty}$.
More precisely, this representation contains an $L$-structure invariant by $G_B$ and that is unique up to homothety.

\begin{rem}
 In this paper, we are only concerned with the case when $\pi$ has trivial central character up to twisting by a power of the reduced norm. 
 This implies that the weights $k_i$ must all be even. 
 Then, by twisting $\pi$ by a power of the norm character, we may assume that $r=0$. For simplicity, we will thus make this assumption for the rest of the paper and drop $r$ from the notation. Thus we will just write $\tau_{\uk}$ below. (For the more general case of non-trivial central characters, see \S \ref{sec:non-self-dual}.) 
 \end{rem} 

Let $L=\Q(\pi)$ be the field of rationality of $\pi$, as defined in \cite{waldspurger-rationality}, \S 1.8. By {\it loc.~cit.} Corollary I.8.3 and Lemma I.2.3, this field contains $\Q(\uk)$ and also agrees with the field generated by (all but finitely many, in particular the unramified) Hecke eigenvalues of $\pi$. 
Thus we may view $\tau_{\uk}$ as being defined over $L$, and then 
we get an associated local system of $L$-vector spaces $\V_{\uk}(L)$ on $\Sh_B (\C)$ and 
for every finite prime $\ell$, an \'{e}tale $L\otimes \Q_\ell$-sheaf $\V_{\uk}(L)_{\ell}$ on $\Sh_B$. (See also \cite{carayol-hmf}, \S 2.1.) 
Let 
\begin{align*}
V^{\cK}_B (L) &:=H^* (\Sh_{B, \cK}(\C), \V_{\uk}(L)), \\ V^{\cK}_B(\C)&:=H^* (\Sh_{B, \cK}(\C), \V_{\uk}(\C) ), \\
  V_{B}^{\cK}(L)_\ell&:= H^*_{\et} (\Sh_{B, \cK} \otimes_{F_\Sigma} \Qbar, \V_{\uk,\ell}),
\end{align*}
so that there are canonical isomorphisms
\[
V^{\cK}_B (L)  \otimes_L \C \simeq V^{\cK}_B(\C)
\]
and 
\[
V^{\cK}_B (L)  \otimes_{\Q} \Q_\ell \simeq  V_{B}^{\cK}(L)_\ell.
\]

We fix an isomorphism
\[
B \otimes \A_F^S \simeq \M_2(\A_F^S),
\]
where $\A_F^S$ denotes the ad{\`e}les of $F$ outside a finite set of places $S$ containing $\Sigma_\infty$ and all finite places where $B$ is ramified.
This gives an isomorphism
\begin{equation}
\label{eqn:identification-at-finite-places}
B^\times (\A_F^S) \simeq \GL_2(\A_F^S).
\end{equation}
We assume that $\pi$ transfers to $B^\times(\A_F)$, i.e., there exists an
automorphic representation $\pi_B= \pi_{B,\infty} \otimes \pi_B^f$ of $B^\times(\A_F)$ (necessarily unique by strong multiplicity one) such that 
$\pi_B^S \simeq \pi^S$ via the identification \eqref{eqn:identification-at-finite-places} above.

For the cohomology with complex coefficients, we can define the $\pi_B$-isotypic component by 
\[
V_{B,\pi_B} (\C):= \Hom_{\cH_{\C} (G_B(\A_f), \cK)} ((\pi_B^f)^{\cK}, V_B^\cK (\C)),
\]
for $\cK$ small enough. This 
is concentrated in degree $2d$ 
and is independent of the choice of $\cK$. To work over the field of rationality, we note that 
by \cite[Lemma 1.2.2 and \S II.1]{waldspurger-rationality}, the Hecke module $(\pi_B^f)^{\cK}$ is also defined over $L$. 
More precisely, it contains an $L$-structure $(\pi_B^f)^{\cK} (L)$ that is invariant by the Hecke algebra with $\Q$-coefficients, $\cH_{\Q} (G_B(\A_f), \cK)$,
and that is unique up to homothety. 
This allows us to define the $\pi_B$-isotypic components
\begin{align*}
V_{B,\pi_B} & := \Hom_{\cH_{\Q} (G_B(\A_f), \cK)} ((\pi_B^f)^{\cK}(L), V_B^\cK(L)), \\ V_{B,\pi_B, \ell} & := \Hom_{\cH_{\Q} (G_B(\A_f), \cK)} ((\pi_B^f)^{\cK} (L),  V_{B}^\cK(L)_\ell),
\end{align*}
for $\cK$ small enough, these being independent of the choice of $\cK$. 
Moreover, there are canonical isomorphisms of free $L\otimes \Q_\ell$-modules:
\[
V_{B,\pi_B} \otimes_{\Q} \Q_\ell \simeq  V_{B,\pi_B, \ell}.
\]

\subsection{The main theorem in the general case}

We can now state the main theorem in the case of general local
systems. 
Let $B_1$ and $B_2$ be two quaternion algebras that are split at the same set of archimedean places $\Sigma \subset \Sigma_\infty$,
such that $\pi$ transfers to both $B_1^\times(\A_F)$ and $B_2^\times (\A_F)$. For ease of notation we write the transfers 
as $\pi_1$ and $\pi_2$ instead of $\pi_{B_1}$ and $\pi_{B_2}$ respectively. 
 
\begin{thm} \label{mainthm:generalcase}
Suppose that there is at least one infinite place of $F$ at which $B_1$ and $B_2$ are ramified. 
\begin{enumerate}
\item 
Let $L$ be the coefficient field of $\pi$. 
Then there is an isomorphism of $L$-Hodge structures 
\[
\iota: V_{B_1,\pi_1} \simeq V_{B_2,\pi_2}.
\]

\item 
Assume Kottwitz's conjecture for Shimura varieties attached to unitary similitude groups.
Then the isomorphism $\iota$ of part (i) can be chosen such that for all finite primes $\ell$, the maps $\iota_\ell$, defined by
requiring the diagram
\begin{equation}
\label{eqn:compat-hd-general}
\xymatrix{
V_{B_1,\pi_1} \otimes_{\Q} \Q_\ell \ar[r]^{\iota\otimes 1} \ar[d]^{\simeq}  & V_{B_2,\pi_2}\otimes_\Q \Q_\ell \ar[d]^{\simeq} \\
V_{B_1,\pi_1,\ell} \ar[r]^{\iota_\ell} & V_{B_2,\pi_2,\ell}
}
\end{equation}
be commutative, are $\Gal(\Qbar/ F_\Sigma)$-isomorphisms.

\end{enumerate}
\end{thm}

This theorem will be proved in \S \ref{sec:proof-of-main-thm}.

\section{Unitary and quaternionic unitary Shimura varieties} 
\label{sec:unitary-quat-unitary}

The proof of the main theorem will require working with several different auxiliary Shimura varieties, some that are 
associated with unitary groups and some with quaternionic unitary groups. In this section, we introduce the main actors 
and the relations between them. Many of the claims below will only be justified in the following section; however 
we believe it is more transparent to introduce all the different groups up front, and relegate the details of various isomorphisms and maps to \S \ref{sec:global-ex-isom}. The reader may want to read these sections in parallel. 

\subsection{Unitary and quaternionic unitary groups}
\label{ss:unitary-quat-unitary}

Let $F$ be a totally real field.
Let $B_1$ and $B_2$ be two quaternion algebras that are split at the same set of infinite places of $F$.
Let $E$ be a CM extension of $F$ that embeds in both $B_1$ and $B_2$. We fix such embeddings
$E \hookrightarrow B_1$, $E \hookrightarrow B_2$
and write 
\[
B_1 = E + E\j_1, \quad B_2 = E+ E\j_2
\]
for some trace zero elements $\j_1 \in B^\times_1$, $\j_2 \in B^\times_2$.
We write $\pr_i$ for the projection $B_i \rightarrow E$ onto the ``first coordinate" and $*_i$ for the main involution on $B_i$. 
Then $\VV_i:=B_i$ is a right Hermitian $E$-space, the form being given by:
$$ (x,y)_i = \pr_i (x^{*_i} y).$$
If $x=a+\j_i b$, $y= c+\j_i d$, then 
$$ (x,y)_i = (a + \j_i b, c+\j_i d)_i =  a^\rho c - J_i  b^\rho d,$$
where $\rho$ is the non-trivial Galois automorphism of $E/F$
and $J_i = \j_i^2 \in F^\times$. 
This form satisfies the relations
$$ (x\alpha, y\beta)_i = \alpha^\rho (x,y)_i \beta$$
for $\alpha,\beta \in E$
and
$$ (x,y)_i = (y,x)_i^\rho.$$
 Then 
\[
\cG_1 := \GU_E (\VV_1) \simeq (B_1^\times \times E^\times)/F^\times, \quad \cG_2:=\GU_E (\VV_2) \simeq (B_2^\times \times E^\times)/F^\times,
\]
where the (inverses of these) isomorphisms are given by: $(\beta, \alpha) \mapsto (x \mapsto \beta x \alpha^{-1})$.
Let 
\[
\cG = \G (\U_E (\VV_1) \times \U_E (\VV_2))/E^\times = \G \left( (B_1^\times \times E^\times)/F^\times \times (B_2^\times \times E^\times)/F^\times\right)/E^\times,
\]
where $E^\times$ embeds as $\alpha \mapsto ([1,\alpha], [1,\alpha])$. 
We define groups $\tilde{\cG}$ and $\cG_0$ that are closely related to $\cG$ as follows:
\[
\tilde{\cG} =  \G (\U_E (\VV_1) \times \U_E (\VV_2)), \quad 
\cG_0 = B_1^\times/F^\times \times B_2^\times/F^\times.
\]

Let $\VV = \VV_1 \oplus \VV_2$, which is a four dimensional $E$-hermitian space.  Also let $\tilde{\VV} =\wedge^2 (\VV)$. In \S \ref{sec:constr-gl-ex-isom} we will show that $\tilde{\VV}$ is naturally equipped with the structure of a right $B$-space where 
$B:=B_1 \cdot B_2$ is the quaternion algebra over $F$ whose class in the Brauer group of $F$ equals the product of the classes of $B_1$ and $B_2$.  (Note that $B$ is split at all the infinite places of $F$.) When we want to think of $\tilde{\VV}$ as a $B$-space, we will write instead $\tilde{V}$ for it. Moreover, we show that 
$\tilde{V}$ is equipped with a $B$-{\it skew}-hermitian form such that there is a canonical isomorphism
\[
\GU_E(\VV)/E^\times=\PGU_E(\VV) \simeq \PGU_B(\tilde{V})^0= \GU_B(\tilde{V})^0/F^\times.
\]
There is also a canonical decomposition $\tilde{V} = V^\sharp \oplus V^\sharp_0$ of $B$-skew-hermitian spaces. 
Let
\begin{align*}
\tilde{\mathrsfs{G} } &= \GU_E(\VV), \\ \mathrsfs{G} &=
                       \GU_E(\VV)/E^\times, \\
\mathrsfs{G}_B&=\GU_B(\tilde{V})^0/F^\times, \\ \tilde{\mathrsfs{G}}_B&=\GU_B(\tilde{V})^0, \\
   \cG_B &= \G (\U_B(V^\sharp) \times \U_B (V_0^\sharp))^0/F^\times \\  \tilde{\cG}_B &= \G (\U_B(V^\sharp) \times \U_B (V_0^\sharp))^0.
\end{align*}
We regard these as algebraic groups over $\Q$ by restriction of scalars.
We then have the following diagram, which we also write out in gory
detail below. (Here the dual notation in the right most column
indicates also the notation
used (locally) in \S \ref{sec:construction-nonvanishing}.)
\[
\xymatrix{
 \tilde{\mathrsfs{G} } \ar[r] &  \mathrsfs{G}  \ar[r]^{\simeq} & \mathrsfs{G}_B &\tilde{\mathrsfs{G}}_B =\tilde{G} \ar[l] \\
   \tilde{\cG} \ar[r] \ar[u]^{i} \ar[dr]_{\pr} & \cG \ar[u]^{i} \ar[d]^{\pr} \ar[r]^{\simeq} & \cG_B  \ar[u]^{i} \ar[dl]^{\pr} & \tilde{\cG}_B = \GG\ar[l] \ar[u]^{i} \ar[d]^{p} \ar[dll]^{\pr}\\
& \cG_0 & & (B_1^\times \times B_2^\times)/F^\times=G  \ar[ll]^{q} \\
& & G_{B_1} \times G_{B_2} \ar[lu] \ar[ru]
}
\]

\newpage

\[
\begin{turn}{90}
\xymatrix@R+1pc@C-1pc{
 \GU_E(\VV) \ar[r] & \PGU_E(\VV) = \GU_E(\VV)/E^\times \ar[r]^{\simeq} & \PGU_B (\tilde{V})^0 =\GU_B(\tilde{V})^0/F^\times& \ar[l] \GU_B(\tilde{V})^0 \\
 \G (\U_E (\VV_1) \times \U_E (\VV_2)) \ar[u]^{i} \ar[r] &  \G (\U_E (\VV_1) \times \U_E (\VV_2))/E^\times \ar[u]^{i} \ar[r]^{\simeq} &  \G (\U_B (V^\sharp) \times \U_E (V^\sharp_0))^0/F^\times \ar[u]^{i} & \G (\U_B (V^\sharp) \times \U_E (V^\sharp_0))^0 \ar[u]^{i} \ar[l] \\
 \G((B_1^\times \times E^\times)/F^\times \times (B_2^\times \times E^\times)/F^\times) \ar[u]^*[@]{\simeq} \ar[r] \ar[rdd]^{\pr} &  \G((B_1^\times \times E^\times)/F^\times \times (B_2^\times \times E^\times)/F^\times)/E^\times \ar[u]^*[@]{\simeq} \ar[r]^{ \qquad \enspace \simeq}_{\qquad \enspace \xi} \ar[dd]^{\pr} &  \G ((B_1^\times \times B_2^\times)/F^\times \times E^\times)/F^\times \ar[u]^*[@]{\simeq} \ar[ldd]^{\pr} &  \G ((B_1^\times \times B_2^\times)/F^\times \times E^\times) \ar[u]^*[@]{\simeq} \ar[l] \ar[lldd]^{\pr} \ar[dd]_{p} \\
 \\
 & \PB_1^\times \times \PB_2^\times & & (B_1^\times \times B_2^\times)/F^\times \ar[ll]^{q} \\
 & & B_1^\times \times B_2^\times \ar[lu] \ar[ru]
}
\end{turn}
\]
\newpage
\noindent
Here the maps $\pr$, $p$ and $q$ are the obvious projection maps.
We write down formulas for some of the maps as well:
\begin{align*}
F^\times \subset (B_i^\times \times E^\times), & \quad t\mapsto (t,t),  \\
E^\times \subset  \G((B_1^\times \times E^\times)/F^\times \times (B_2^\times \times E^\times)/F^\times), & \quad \alpha \mapsto ([1,\alpha], [1,\alpha] ),\\
F^\times \subset (B_1^\times \times B_2^\times), & \quad t\mapsto (t,t^{-1}),  \\
F^\times \subset \G ((B_1^\times \times B_2^\times)/F^\times \times E^\times), & \quad t\mapsto ([t,1],t)=([1,t],t),  \\
\xi \left( [ [b_1,\alpha_1], [b_2,\alpha_2]] \right)&=[ [b_1, b_2], \nu(b_1) \alpha_1^{-1} \alpha_2)],
\end{align*}
where the map $\xi$ is given in the diagram and $\nu$ denotes the reduced norm.

\subsection{Shimura data}
All the groups in the diagram have associated Shimura varieties, defined such that the maps in the diagram induce morphisms of Shimura data. 
It suffices to describe the Shimura datum for $\tilde{\cG}$ and $\tilde{\cG}_B$, since the Shimura data for all the other groups are defined by composing with the maps above . For $\tilde{\cG}$, this is given by
\[
h_v(z) = ([\iota(z),1], [\iota(z),1])
\]
at the infinite places $v \in \Sigma$ and 
\[
h_v(z) = ([1,1], [1,1])
\]
at the other infinite places. For $\tilde{\cG}_B$, this is given by
\[
h_v(z) = ([\iota(z),\iota(z)], z\bar{z})
\]
at the infinite places $v \in \Sigma$ and 
\[
h_v(z) = ([1,1], 1)
\]
at the other infinite places. 
In \S \ref{ss:shimura-data-for-all-groups}, we will write out the Shimura data more explicitly for some of the other groups in the diagram. 

\subsection{Components}
\label{sec:components}
Later (in \S \ref{sec:proof-of-main-thm})  we will need to use the structure of the components of $\Sh_{\tilde{\cG}_B}$. Let $G_1:=\tilde{\cG}_B$ and $G_2:=(B_1^\times \times B_2^\times)/F^\times$.
We may consider the canonical sequences
\[
1 \rightarrow G_i^{\mathrm{der}} \rightarrow G_i \rightarrow T_i \rightarrow 1
\]
where $G_i^{\mathrm{der}}$ denotes the derived group and $T_i$ the maximal commutative quotient of $G_i$. The map $p$ induces a 
map of exact sequences as below.

\[
\xymatrix{
1 \ar[r] & (B_1^{(1)} \times B_2^{(1)})/{\{\pm 1\}} \ar[r] \ar[d] & \G\left((B_1^\times \times B_2^\times)/F^\times \times E^\times\right) \ar[r]^{([\nu,\nu],\id)} \ar[d]^{p} & \G \left((F^\times \times F^\times)/F^\times \times E^\times\right) \ar[r] \ar[d] & 1 \\ 
1 \ar[r] & (B_1^{(1)} \times B_2^{(1)})/{\{\pm 1\}} \ar[r] & (B_1^\times \times B_2^\times)/F^\times \ar[r]^{[\nu,\nu]} & (F^\times \times F^\times)/F^\times \ar[r] &  1 
}
\]
The set of components of $\Sh_{\tilde{\cG}_B}$ is in bijection with the Shimura variety attached to $(T_1,h_1)$ where
\[
T_1= \G \left((F^\times \times F^\times)/F^\times \times E^\times\right) , \quad h_1 (z) =([z\bar{z},z\bar{z}], z\bar{z}).
\] 
Now 
\[
Z(G_1) \simeq \{ (t,\alpha) \in F^\times \times E^\times: \ t^2 = \N (\alpha)\},
\]
the inverse of this isomorphism being given by $(t,\alpha) \mapsto ([t,1],\alpha) = ([1,t],\alpha) $. 
The natural map $Z(G_1) \rightarrow T_1$ is given by 
\[
(t,\alpha) \mapsto ([t^2,1], \alpha) = ([t,t],\alpha)= ([1,t^2],\alpha)
\]
and induces an isomorphism
\[
Z(G_1)/\langle (-1,1) \rangle \simeq T_1.
\]
Note that any finite order character $\eta$ of $T_1(\Q)\backslash T_1(\A)$ gives rise to a class in $H^0 (\Sh_{\tilde{\cG}_B}, \Q(\eta))$,
where $\Q(\eta)$ is the field generated by the values of $\eta$. We will denote this class $c_\eta$. Of particular interest to us are the characters obtained as follows: we fix a finite order character $\eta$ of $E^{(1)} \backslash \A^{(1)}_E$  and define a character of $T_1$ by
\[
\eta ([t_1,t_2],\alpha) = \eta ((t_1 t_2)^{-1} \alpha).
\]
The pull back of this character to $Z(G_1)$  is given by
\[
\eta (t,\alpha) = \eta (t^{-1} \alpha). 
\]

\subsection{Automorphic forms and cohomology of local systems}
\label{ss:aut-coh-ls}
Recall that we have the following relation between unitary and quaternionic unitary groups given by the top line of the diagram above:
\[
\tilde{\mathrsfs{G}} = \GU_E(\VV) \rightarrow \GU_E(\VV)/E^\times =\mathrsfs{G} \simeq \mathrsfs{G}_B = \GU_B(\tilde{V})^0/F^\times \leftarrow \GU_B(\tilde{V})^0 =\tilde{\mathrsfs{G}}_B.
\]
Since $E^\times$ and $F^\times$ are cohomologically trivial, the maps $\tilde{\mathrsfs{G}}_B (\A) \rightarrow \mathrsfs{G}_B (\A)$ and $\tilde{\mathrsfs{G}}  (\A) \rightarrow \mathrsfs{G}(\A)$ are surjective. Hence the isomorphism in the middle induces a natural bijection between 
automorphic representations of $\tilde{\mathrsfs{G}}_B$ with trivial central character and those of $\tilde{\mathrsfs{G}}$ with trivial central character. 
Thus if $\Pi$ is an automorphic representation of any of the groups at the ends with trivial central character, it may be viewed as 
an automorphic representation of any of the other groups above; we denote all such representations by the same symbol $\Pi$. 
Moreover, if $(\rho,\V_\rho)$ is a finite dimensional representation (again of one of the groups at the end, but trivial on the center) defined over a field $L$ containing the reflex field $F_\Sigma$ and $\Pi$ is defined over $L$, then 
we get a local system also denoted by $\V_\rho$ on each of the associated Shimura varieties and 
there are natural isomorphisms
\[
H^i (\Sh_{\tilde{\mathrsfs{G}}}, \V_\rho)_\Pi \simeq H^i (\Sh_{\mathrsfs{G}}, \V_\rho)_\Pi \simeq H^i (\Sh_{\mathrsfs{G}_B}, \V_\rho)_\Pi 
\simeq H^i (\Sh_{\tilde{\mathrsfs{G}}_B}, \V_\rho)_\Pi
\]
in the category $\cM_{F_\Sigma}^L$. Note that in general, the field of definition of $\Pi$ contains the field of rationality, but it is not clear if these are equal. See \cite{buzzard-gee} and \cite{shin-templier} for a discussion of these issues, which are not so important for us, since we will need instead a version of the above isomorphism for eigenvectors of the unramified Hecke algebra at finite level. 

Let $\cK$ be an open compact subgroup of $G(\A_f)$ for $G$ each of the end groups such that the images of the two $\cK$ under the quotient map are identified by the isomorphism in the middle; we write $\cK$ for this image, which is an open compact subgroup of $(G/Z)(\A_f) = G(\A_f)/Z(\A_f)$. (There will be no confusion since we will always identify what the ambient group is.)
Let $S$ be a finite set of rational primes such that for all the groups $G = \tilde{\mathrsfs{G}}, \mathrsfs{G}, \mathrsfs{G}_B, \tilde{\mathrsfs{G}}_B$ above and for all $p \notin S$:
\begin{itemize}
 \item $G_p$ is unramified over $\Q_p$;
 \item $\cK_p$ is a hyperspecial maximal compact subgroup of $G_p$;
 \item $\Pi_p$ has a non-zero $\cK_p$-fixed vector.
\end{itemize}
Let $\mathrsfs{H}_G^S = \mathrsfs{H}(G(\A^S), \cK^S)$ be the Hecke algebra of compactly supported $\cK^S$-bi-invariant functions on $G(\A^S)$, where $\A^S = {\prod'_{p \notin S}} \Q_p$ and $\cK^S = \prod_{p \notin S} \cK_p$.
Then $\mathrsfs{H}_G^S$ acts on $H^i(\Sh_{G,\cK}, \V_\rho)$.
Put $\Pi^S = \bigotimes'_{p \notin S} \Pi_p$ and
\[
 H^i(\Sh_{G,\cK}, \V_\rho)[\Pi^S] = \{ x \in H^i(\Sh_{G,\cK}, \V_\rho) \, | \, Tx = \chi(T) x \text{ for all } T \in \mathrsfs{H}_G^S \},
\]
where $\chi$ is the character of $\mathrsfs{H}_G^S$ associated to $\Pi^S$. Let $L$ be a number field such that
$(\rho,\V_\rho)$ is defined over $L$ and such that $L$ contains the values of $\chi$. Then there are canonical isomorphisms
\[
H^i (\Sh_{\tilde{\mathrsfs{G}},\cK}, \V_\rho)[\Pi^S] \simeq H^i (\Sh_{\mathrsfs{G},\cK}, \V_\rho)[\Pi^S] \simeq H^i (\Sh_{\mathrsfs{G}_B,\cK}, \V_\rho)[\Pi^S] 
\simeq H^i (\Sh_{\tilde{\mathrsfs{G}}_B,\cK}, \V_\rho)[\Pi^S]
\]
in the category $\cM_{F_\Sigma}^L$.

\section{The global exceptional isomorphism}
\label{sec:global-ex-isom}

In this section, we construct the global exceptional 
isomorphism between a (projectivized) unitary group attached to 
a hermitian (or skew-hermitian) space $\VV$ and the identity component of 
a (projectivized) quaternionic unitary group attached to a 
quaternionic {\it skew-hermitian} space $\tilde{V}$. Moreover, 
we study the restriction of this isomorphism to certain natural 
subgroups corresponding to the decomposition of $\VV$ into the direct sum of 
two subspaces.

\subsection{Hermitian spaces and unitary groups}

Let $E/F$ be a quadratic extension of number fields and $\rho$ the non-trivial Galois automorphism of $E/F$.
Write $E = F + F\i$ for some trace zero element $\i \in E^\times$.
Let $\VV$ be an $E$-hermitian space. 
Thus $\VV$ is equipped with a non-degenerate form
\[
(\cdot,\cdot)_\VV: \VV \times \VV \rightarrow E
\]
satisfying
\[
(v\alpha ,w\beta )_\VV = \alpha^\rho (v,w)_\VV \beta, \quad (v,w)_\VV=(w,v)_\VV^\rho. 
\]
We denote by $\GU_E(\VV)$ the unitary similitude group of $\VV$:
\[
 \GU_E(\VV) = \{ g \in \GL_E(\VV) \, | \, (gv,gw)_\VV = \nu(g) \cdot (v,w)_\VV \text{ for all } v, w \in \VV \},
\]
where $\nu : \GU_E(\VV) \rightarrow F^\times$ is the similitude character.

\begin{prop}
\label{prop:norm-sim}
Let $\VV$ be a hermitian space over $E$ of dimension $n$ and let $g\in \GU_E(\VV)$. Then
\[
\N (\det(g)) = \nu(g)^n,
\]
where $\N$ denotes the norm map $E^\times \rightarrow F^\times$. 
\end{prop}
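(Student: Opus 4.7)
The plan is to reduce the claim to a routine matrix computation by choosing a basis. First I would fix an $E$-basis $e_1,\ldots,e_n$ of $\VV$ and let $H = ((e_i,e_j))_{i,j} \in \M_n(E)$ be the Gram matrix of the hermitian form. Since $(v,w) = (w,v)^\rho$, the matrix $H$ satisfies $H^* = H$, where $H^* := (H^\rho)^t$ denotes the conjugate-transpose; and because the form is non-degenerate, $\det(H) \in F^\times$ is nonzero.

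Next I would translate the similitude condition. Writing $g$ as a matrix in this basis, the relation $(gv, gw) = \nu(g)(v,w)$ for all $v,w \in \VV$ is equivalent to the matrix identity
\[
g^* H g = \nu(g)\, H.
\]
Taking determinants of both sides yields
\[
\det(g)^\rho \cdot \det(H) \cdot \det(g) = \nu(g)^n \cdot \det(H).
\]
Since $\det(H) \neq 0$, cancelling gives $\det(g)^\rho \det(g) = \nu(g)^n$, which is precisely $\N(\det(g)) = \nu(g)^n$.

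This is essentially a one-line linear algebra argument once the Gram matrix formalism is in place, and there is no genuine obstacle; the only thing to be slightly careful about is the convention for the conjugate-transpose matching the sesquilinearity convention $(v\alpha,w\beta) = \alpha^\rho(v,w)\beta$ used in the paper, but this is bookkeeping.
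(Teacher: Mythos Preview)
Your proof is correct and is the standard direct argument. The paper takes a different, more conceptual route: it introduces the conjugate $E$-linear isomorphism $\varphi:\VV\to\VV^*$ induced by the hermitian form, the natural isomorphism $\iota:\wedge^r(\VV^*)\to(\wedge^r\VV)^*$, and uses $\iota\circ\wedge^r\varphi$ to put a hermitian form on $\wedge^r\VV$; specializing to $r=n$, one sees that $g$ acts on the line $\wedge^n\VV$ as $\det(g)$ while the induced form scales by $\nu(g)^n$, giving $\N(\det g)=\nu(g)^n$. Your matrix computation is quicker and entirely adequate for the statement itself. The paper's approach, however, is not chosen for efficiency but, as it explicitly says, ``to establish some notation'': the maps $\varphi$, $\iota$, and $\wedge^r\varphi$ introduced there are exactly the ingredients used immediately afterward (specialized to $r=2$, together with the map $\psi$ coming from $d:\wedge^4\VV\simeq E$) to build the operator $L=\psi^{-1}\circ\iota\circ\wedge^2\varphi$ and hence the global exceptional isomorphism $\PGU_E(\VV)\simeq\PGU_B(\tilde V)^0$.
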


\begin{proof}
This is obviously well known but the proof will serve to establish some notation. 
Let $\VV^*$ be the $E$-linear dual of $\VV$.
First, the form $(\cdot,\cdot)_\VV$ induces an $E$-conjugate linear isomorphism
\[
\varphi: \VV \simeq \VV^*,
\]
given by
\[
\varphi(x) (y) = ( x,y )_\VV.
\]
Then for a positive integer $r$,
\begin{equation}
\label{eqn:wedge-d-phi}
{\wedge^r} \varphi: \wedge^r \VV \rightarrow \wedge^r \VV^*
\end{equation}
is also $E$-conjugate linear. Let $\iota$ be the $E$-linear isomorphism
\begin{equation}
\label{eqn:iota-defn}
\iota:  \wedge^r (\VV^*) \simeq (\wedge^r \VV)^*,
\end{equation}
 induced by the multilinear map
\begin{equation}
\label{eqn:multilinear-pairing}
(\VV^*)^r \times \VV^r \rightarrow E, \quad (\lambda_1, \ldots, \lambda_r, \v_1, \ldots, \v_r) \mapsto \det ( \lambda_i (\v_j)).
\end{equation}
Now any $g\in \GL_E(\VV)$ acts on $\VV^*$ via $g\lambda (\v) = \lambda (g^{-1}\v)$ and $\iota$ is equivariant for this action since 
\eqref{eqn:multilinear-pairing} is equivariant for the diagonal action of $\GL_E (\VV)$. The composite 
\[
\iota \circ \wedge^r \varphi: \wedge^r \VV \rightarrow (\wedge^r \VV)^*
\]
is an $E$-conjugate linear isomorphism and may be viewed as giving a hermitian form on $\wedge^r \VV$, 
denoted by $(\cdot, \cdot)_{\wedge^r \VV}$. (That this form is conjugate symmetric follows for instance by computing it in matrix form in terms of the matrix 
of the form on $\VV$ with respect to an orthogonal basis. If the matrix of the original form is the diagonal matrix with entries $a_1, \ldots, a_n$, then  the entries $a_i$ lie in $F$ and the form on 
$\wedge^r \VV$ is represented by the diagonal matrix whose entries are products of the form $a_{i_1} \cdots a_{i_r}$ with $1\le i_1 < \cdots < i_r \le n$.)

Now suppose $g\in \GU_E (\VV)$. Then 
\[
\varphi (g\v) (\w) = (g\v,\w)_\VV =\nu(g) \cdot (\v, g^{-1} \w)_\VV = \nu(g) \cdot (g \varphi)(\v) (\w),
\]
so that 
\[
\varphi \circ g = \nu(g) \cdot g \circ \varphi
\]
and 
\[
\wedge^r \varphi \circ g = \nu(g)^r \cdot g \circ \wedge^r \varphi.
\]
Thus for $\x,\y \in \wedge^r \VV$, we have 
\[
(g \x, g \y)_{\wedge^r \VV} = \nu(g)^r (\x,\y)_{\wedge^r \VV}. 
\]
Now take $r=n$. Then $g$ acts on $\wedge^n \VV$ as the scalar $\det(g)$, so that $(g\x, g\y)_{\wedge^n \VV} = \N (\det(g)) (\x,y)_{\wedge^n \VV}$, from 
which it follows that $\N (\det(g)) = \nu(g)^n$. 
\end{proof}

\subsection{Construction of the (global) exceptional isomorphism} 
\label{sec:constr-gl-ex-isom}

Let $\VV$ be a four dimensional (right) $E$-hermitian space.
Such a $\VV$ is classified by a collection of its determinant $\delta_v \in F_v^\times/\N E_v^\times$ for all places $v$, which equals its discriminant (see \cite[\S 2.1.1]{periods1} for our convention) since $\dim \VV =4$, 
together with its signature at ramified archimedean places.
(For a split place, $\delta_v$ is always trivial.
For a ramified archimedean place, $\delta_v$ is trivial if the signature is either $(4,0)$, $(2,2)$ or $(0,4)$ and is non-trivial if the signature is $(3,1)$ or $(1,3)$.)
Let $B$ be the unique quaternion algebra over $F$ which is ramified exactly at those places 
$v$ of $F$ at which $\delta_v$ is non-trivial.
Let $*$ be the main involution on $B$.
Then we will construct 
\begin{itemize}
\item a three dimensional right $B$-space $\tilde{V}$,
\item a skew-hermitian $B$-form $\langle \cdot, \cdot \rangle$ on $\tilde{V}$, i.e., a non-degenerate sesquilinear form $\langle \cdot, \cdot \rangle: \tilde{V} \times \tilde{V} \rightarrow B$ satisfying
\[
 \langle v\alpha ,w\beta \rangle = \alpha^* \langle v,w \rangle \beta, \quad \langle v,w \rangle = -\langle w,v \rangle^*,
\]
\end{itemize}
such that there is a natural isogeny
\[
\GSU_E (\VV) \rightarrow \GU_B(\tilde{V})^0,
\]
as well as a natural isomorphism
\[
\PGU_E( \VV) \simeq \PGU_B (\tilde{V})^0.
\]
Here we denote by $\GU_B(\tilde{V})^0$ the identity component of the unitary similitude group of $\tilde{V}$:
\[
 \GU_B(\tilde{V}) = \{ g \in \GL_B(\tilde{V}) \, | \, \langle gv,gw \rangle = \nu(g) \cdot \langle v,w \rangle \text{ for all } v, w \in \tilde{V} \},
\]
where $\nu : \GU_B(\tilde{V}) \rightarrow F^\times$ is the similitude character, and put
\begin{align*}
 \GSU_E (\VV) & = \{ g\in \GU_E(\VV) \, | \, \det (g)=\nu(g)^2 \}, \\
 \PGU_E(\VV) & = \GU_E(\VV) / E^\times, \\
 \PGU_B(\tilde{V})^0 & = \GU_B(\tilde{V})^0 / F^\times.
\end{align*}

Let $\tilde{\VV} = \wedge^2 \VV$.  
This is a right $E$-space and we will extend the $E$-action to 
a right $B$-action. To do so, we must construct an element 
$L\in \End_F (\tilde{\VV})$ which is conjugate linear for the $E$-action:
\[
L(x \alpha) = (Lx) \alpha^\rho
\]
for $x\in \tilde{\VV}$, $\alpha \in E$. 

The map $L$ will be a composite of three maps:
\begin{enumerate}
\item   The map 
\[
\wedge^2 \varphi: \wedge^2 \VV \rightarrow \wedge^2 (\VV^*)
\]
obtained by specializing \eqref{eqn:wedge-d-phi} to $r=2$, which is an $E$-conjugate linear isomorphism. 
\item The map
\[
\iota: \wedge^2 (\VV^*) \simeq (\wedge^2 \VV)^*
\]
obtained by specializing \eqref{eqn:iota-defn} to $r=2$, which is an $E$-linear isomorphism. 
\item Here we use that $\dim \VV =4$. 
Fix an isomorphism
\[
d: \wedge^4 \VV \simeq E.
\]
This is well defined up to scaling. 
The natural map
\begin{equation}
\label{eqn:pairing-on-wedge-2}
\wedge^2 \VV \times \wedge^2 \VV \rightarrow \wedge^4 \VV \simeq E
\end{equation}
is {\it symmetric} and 
induces an $E$-linear isomorphism 
\[
\psi: \wedge^2 \VV \simeq (\wedge^2 \VV)^*.
\]

\end{enumerate}

Let 
\[
L= \psi^{-1} \circ \iota \circ \wedge^2 \varphi. 
\]
Clearly $L$ depends on the choice of $d$. If $d$ is scaled by $\alpha$, then $\psi$ is scaled by $\alpha$ as well and $L$ is scaled by $\alpha^{-1} $.
However $L^2$ changes to 
\[
(\alpha^{-1} L)(\alpha^{-1} L) =  (\alpha^\rho \alpha)^{-1} L^2 = \N (\alpha)^{-1} L^2.
\]
Thus $L^2$ is well defined up to norms from $E^\times$ to $F^\times$. 
In fact, $L^2$ turns out to be a scalar operator. To identify this scalar, we recall 
the following invariant attached to a hermitian space $\VV$ of dimension $n$ and an 
isomorphism $d: \wedge^n \VV \simeq E$. 

\begin{defn}
Let $\VV$ be a hermitian space of dimension $n$ with form $H$ and let 
$d: \wedge^n \VV \simeq E$ be an isomorphism. The form $H$ induces
a map
\[
\VV^n \times \VV^n \rightarrow E, \quad (\v_1, \ldots, \v_n, \w_1, \ldots, \w_n) \mapsto \det [ H(\v_i, \w_j ) ],
\]
which factors through $\wedge^n \VV \times \wedge^n \VV$
and gives a hermitian form
\[
h: \wedge^n \VV \times \wedge^n \VV \rightarrow E.
\]
Let $v \in \wedge^n \VV$ be such that $d(v)=1$. Then define
\[
\vol (H,d) = h (v,v).
\]
Note that $h$ is a hermitian form, so $\vol(H,d)$ lies in $F^\times$ and its class in $F^\times/\N E^\times$ 
equals the class of the determinant of $H$. 
\end{defn}

\begin{prop}
\label{prop:volume-V-d}
The map $L^2$ is multiplication by $\vol ((\cdot,\cdot)_\VV,d)$.  
\end{prop}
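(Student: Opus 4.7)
The plan is to verify the identity by a direct basis-level computation, exploiting the fact that $L^2$ is intrinsically defined up to the scaling ambiguity in $d$ (which is matched by the corresponding scaling of $\vol$, as already noted).

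First I would choose an orthogonal basis $e_1,\dots,e_4$ of $\VV$ with $(e_i,e_i)=a_i\in F^\times$, and normalize the choice of $d$ by declaring $d(e_1\wedge e_2\wedge e_3\wedge e_4)=1$. With this choice, on the level of the induced hermitian form on $\wedge^4 \VV$, the vector $v=e_1\wedge\cdots\wedge e_4$ satisfies $d(v)=1$ and
\[
\vol((\cdot,\cdot)_{\VV},d) \;=\; \det[(e_i,e_j)]_{i,j} \;=\; a_1 a_2 a_3 a_4.
\]
Thus it suffices to show $L^2 = a_1 a_2 a_3 a_4 \cdot \mathrm{id}$ on $\wedge^2 \VV$.

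Next I would compute each of the three constituent maps of $L=\psi^{-1}\circ \iota\circ \wedge^2\varphi$ on the basis $\{e_i\wedge e_j\}_{i<j}$ of $\wedge^2 \VV$. The definition of $\varphi$ gives $\varphi(e_i)=a_i e_i^*$, hence $\wedge^2\varphi(e_i\wedge e_j)=a_i a_j\, e_i^*\wedge e_j^*$. The map $\iota$ of \eqref{eqn:iota-defn} sends $e_i^*\wedge e_j^*$ to the element of $(\wedge^2 \VV)^*$ dual to $e_i\wedge e_j$ in the dual basis. Finally, $\psi(e_k\wedge e_l)(e_i\wedge e_j)=d(e_k\wedge e_l\wedge e_i\wedge e_j)$ equals the sign of the permutation rearranging $(k,l,i,j)$ into $(1,2,3,4)$ whenever $\{k,l\}=\{1,2,3,4\}\smallsetminus\{i,j\}$, and vanishes otherwise. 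Writing $\{i,j\}^c=\{k,l\}$ with $k<l$ and letting $\varepsilon(i,j)\in\{\pm 1\}$ denote that sign, one obtains
\[
L(e_i\wedge e_j) \;=\; \varepsilon(i,j)\, a_i a_j\, e_k\wedge e_l.
\]
Then $L^2(e_i\wedge e_j)=\varepsilon(i,j)\varepsilon(k,l)\, a_i a_j a_k a_l\, e_i\wedge e_j$, and a direct check on the three pairs $\{(1,2),(3,4)\}$, $\{(1,3),(2,4)\}$, $\{(1,4),(2,3)\}$ shows $\varepsilon(i,j)\varepsilon(k,l)=+1$ in each case. Hence $L^2 = a_1a_2a_3a_4\cdot\mathrm{id} = \vol((\cdot,\cdot)_{\VV},d)\cdot\mathrm{id}$.

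There is essentially no obstacle to this argument; the only thing requiring a bit of care is the sign computation $\varepsilon(i,j)\varepsilon(k,l)=+1$, which is what forces $L^2$ to be a scalar rather than merely diagonal in this basis. After dealing with that, one can note that the identity $L^2=\vol((\cdot,\cdot)_{\VV},d)\cdot\mathrm{id}$ is compatible with the already-observed transformation laws under rescaling of $d$ (both sides scale by $\N(\alpha)^{-1}$), so it holds for any choice of $d$, and a fortiori is basis-independent.
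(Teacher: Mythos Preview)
Your proof is correct and follows essentially the same approach as the paper: choose an orthogonal basis, compute $L$ explicitly on the basis $\{e_i\wedge e_j\}$ of $\wedge^2\VV$, and read off that $L^2$ is scalar multiplication by $a_1a_2a_3a_4$. The only cosmetic difference is that the paper normalizes $d(\v_1\wedge\v_2\wedge\v_3\wedge\v_4)=-1$ (for compatibility with conventions used elsewhere), which flips all the signs in the formula for $L$ but of course leaves $L^2$ unchanged.
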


\begin{proof}
We will pick a suitable basis and compute. 
Since $L^2$ and $\vol ((\cdot,\cdot)_\VV,d)$ scale in exactly the same way 
as a function of $d$, we can choose any convenient $d$ as well. 
Let $\v_1, \ldots, \v_4$ be 
a basis of $\VV$ with respect to which the form $(\cdot, \cdot)_\VV$ is diagonal with entries 
$a_1, \ldots, a_4 \in F$. Let $\e_1,\ldots, \e_4$ be the dual basis of $\VV^*$. 
Then 
\[
\varphi (\v_i ) = a_i \e_i
\]
and 
\[
\wedge^2 (\varphi) (\v_i \wedge \v_j) = a_i a_j \e_i \wedge \e_j.
\]
For $1\le i < j \le 4$, let $\v_{ij}$ denote the element $\v_i \wedge \v_j \in \wedge^2 \VV$. 
This collection gives a basis of $\wedge^2 \VV$. We let 
$\{ \e_{ij} \} \subset (\wedge^2 \VV)^*$ be the dual basis. 
Then 
\[
\iota (\e_i \wedge \e_j) = \e_{ij}.
\]
For any pair $(i,j)$ as above let $(i',j')$ be the unique pair of elements such that $\{i,j,i',j'\} =\{ 1,2,3,4\}$ and such that 
$i' < j'$. Define $\sign(i,j) =\pm 1$ by 
\[
\v_{ij} \wedge \v_{i'j'}=\sign(i,j) \v_1 \wedge \v_2 \wedge \v_3 \wedge \v_4.
\]
Now {\it choose} $d$ such that 
\begin{equation}
\label{eqn:choice-of-d}
d (\v_1 \wedge \v_2 \wedge \v_3 \wedge \v_4) = -1.
\end{equation}
(This choice may seem surprising but it is made so as to agree with some conventions in \cite{periods1}.) 
Then 
\[
\psi^{-1} (\e_{ij}) =- \sign (i,j) \cdot \v_{i'j'}.
\]
(The $-$ sign here occurs because of the choice made in \eqref{eqn:choice-of-d}.)
Now we can write down $L$ explicitly in the basis $\v_{ij}$. It is given by:
\begin{align*}
\v_{12} & \mapsto -a_1 a_2 \v_{34}, \\
\v_{13} & \mapsto a_1 a_3 \v_{24}, \\
\v_{14} & \mapsto -a_1 a_4 \v_{23}, \\
\v_{23} & \mapsto -a_2 a_3 \v_{14}, \\
\v_{24} & \mapsto a_2 a_4 \v_{13}, \\
\v_{34} & \mapsto -a_3 a_4 \v_{12}.
\end{align*}
The proposition follows from this explicit description. 
\end{proof}

Now let us define a quaternion algebra $B$ as follows. Let 
\[
J:=\vol( (\cdot,\cdot)_\VV,d)
\] 
and define $B$ by
\[
B := E + E\j, \quad \j^2 = J, \quad \alpha \j = \j \alpha^\rho
\]
for all $\alpha \in E$. Then we can define a right action of $B$ on $\tilde{\VV}$ by 
\[
x \cdot \j = L(x).
\]
We will denote this space by $\tilde{V}$ when we want to regard it as a $B$-space rather than an $E$-space.

As in the proof of Proposition \ref{prop:norm-sim}, the composite map $\iota \circ \wedge^2 \varphi$ is a conjugate linear isomorphism
\[
\wedge^2 \VV \simeq (\wedge^2 \VV)^*
\]
that gives rise to a hermitian form $(\cdot, \cdot)_{\tilde{\VV}}$ on $\tilde{\VV} =\wedge^2 \VV$. 
 Multiplying this form by the trace zero element $\i$ gives 
a {\it skew-hermitian} form on $\tilde{\VV}$, which we denote simply by $(\cdot, \cdot)$.

\begin{lem}
The form $(\cdot, \cdot)$ on $\tilde{\VV}$ satisfies: for all $x,y \in \tilde{\VV}$,
\begin{enumerate}
\item $(x\j , y) = (y\j, x)$.
\item $(x\j,y\j)^\rho = - J (x,y) $.
\end{enumerate}
\end{lem}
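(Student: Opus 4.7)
The plan is to unpack the definition of $(\cdot,\cdot)$ into an explicit formula and then verify both identities by direct manipulation. The inputs are (a) $\psi(u)(v) = d(u \wedge v)$ by construction, (b) $\iota \circ \wedge^2\varphi = \psi \circ L$ by definition of $L$, (c) $L^2 = J$ (Proposition \ref{prop:volume-V-d}), and (d) the hermitian property of $(\cdot,\cdot)_{\tilde{\VV}}$ together with the symmetry of the wedge pairing on $\wedge^2\VV$. Items (a) and (b) give the workhorse formula
\[
(a,b)_{\tilde{\VV}} = d(L(a) \wedge b), \qquad (a,b) = \i \cdot d(L(a) \wedge b)
\]
for all $a,b \in \tilde{\VV}$.

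For (1), I would compute $(x\j, y) = (L(x), y) = \i \cdot d(L^2(x) \wedge y) = \i J \cdot d(x \wedge y)$ using $L^2 = J$, and analogously $(y\j, x) = \i J \cdot d(y \wedge x)$. Since the wedge pairing $\wedge^2\VV \times \wedge^2\VV \to \wedge^4\VV$ is symmetric (both factors are in even degree), $d(x \wedge y) = d(y \wedge x)$, and the two sides agree.

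For (2), start with $(x\j, y\j) = \i \cdot d(L^2(x) \wedge L(y)) = \i J \cdot d(x \wedge L(y))$, so that, using $\i^\rho = -\i$ and $J \in F$,
\[
(x\j, y\j)^\rho = -\i J \cdot d(x \wedge L(y))^\rho.
\]
The key identity needed is $d(x \wedge L(y))^\rho = d(L(x) \wedge y)$, which I would derive by combining wedge symmetry $d(x \wedge L(y)) = d(L(y) \wedge x)$ with the hermitian relation $d(L(y) \wedge x) = (y,x)_{\tilde{\VV}} = (x,y)_{\tilde{\VV}}^\rho = d(L(x) \wedge y)^\rho$ and then applying $\rho$. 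Substituting back gives $(x\j, y\j)^\rho = -\i J \cdot d(L(x) \wedge y) = -J \cdot (x,y)$, as required.

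No serious obstacle is foreseen: once the workhorse formula is in hand, both identities follow formally. The only real care needed is in keeping track of the interplay of symmetries -- $\psi$ is symmetric, $\wedge^2\varphi$ is conjugate-linear, the wedge pairing in even degree is symmetric, and the twist by $\i$ flips sign under $\rho$.
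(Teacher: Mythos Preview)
Your proof is correct and is essentially the same as the paper's, just with $[\psi(a)](b)$ rewritten as $d(a\wedge b)$ throughout (these are equal by definition of $\psi$). The paper's version of (ii) computes $(x\j,y\j)=J(y,x)$ directly and then applies $\rho$ using the skew-hermitian relation, whereas you apply $\rho$ first and then invoke the hermitian property of $(\cdot,\cdot)_{\tilde\VV}$; these are the same manipulations in a different order.
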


\begin{proof} Firstly,
\begin{align*}
(x\j,y) &= \i \cdot [\iota \circ \wedge^2 (\varphi) (x\j) ](y) \\ 
&= \i \cdot [\psi \circ L \circ L(x)](y) \\
&= J \i \cdot [\psi (x) ](y).
\end{align*}
Since \eqref{eqn:pairing-on-wedge-2} is symmetric, we have 
\[
[\psi (x)] (y) = [\psi (y)](x),
\]
from which it follows that $(x\j,y) =(y\j,x)$. 

Secondly, we have
\begin{align*}
(x\j, y\j) &=  J \i \cdot [\psi (x) ](y\j) \\
&= J \i \cdot [\psi(y\j)](x) \\
&= J \i \cdot[\psi \circ \psi^{-1} \circ \iota \circ \wedge^2 (\varphi) (y)](x) \\
&= J \cdot (y,x),
\end{align*}
so that 
\[
(x\j,y\j)^\rho = J\cdot (y,x)^\rho = -J (x,y).
\]

\end{proof}

\begin{rem}
In fact (ii) above follows from (i). Indeed, assuming (i), we have 
\begin{align*}
(x\j, y\j) &= (y\j \cdot \j , x) = J(y,x) = -J(x,y)^\rho.
\end{align*}

\end{rem}

Now we can define a $B$-skew-hermitian form on $\tilde{V}$ by
\begin{equation}
\label{eqn:B-form-defn}
\langle x,y \rangle = (x,y) - \frac{1}{J} \cdot \j \cdot (x\j,y).
\end{equation}

\begin{prop}
\label{p:isom-PSU}
The map
\[
\GL_E (\VV) \rightarrow \GL_E(\tilde{\VV}), \quad g \mapsto \wedge^2 g
\]
induces an
isogeny 
\[
 \tilde{\xi} : \GSU_E (\VV) \longrightarrow \GU_B (\tilde{V})^0
\]
with kernel $\{ \pm 1 \}$.
\end{prop}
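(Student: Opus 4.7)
The plan is to carry out the proof in four steps, the first two of which are bookkeeping with the definition of $L$ and $\langle \cdot, \cdot\rangle$, while the last two handle the kernel and the surjectivity onto the identity component.

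First I would check that for $g \in \GSU_E(\VV)$ the operator $\wedge^2 g$ is $B$-linear, i.e. commutes with multiplication by $\j$ on $\tilde{V}$, which is the operator $L = \psi^{-1} \circ \iota \circ \wedge^2 \varphi$. Three ingredients are needed: (a) the identity $\wedge^2 \varphi \circ \wedge^2 g = \nu(g)^{2}\,(\wedge^2 g_*) \circ \wedge^2 \varphi$, obtained by squaring the relation between $\varphi \circ g$ and the similitude action on $\VV^*$; (b) the $\GL_E$-equivariance of $\iota$; and (c) the identity $\psi \circ \wedge^2 g = \det(g) \cdot (\wedge^2 g)_* \circ \psi$, which follows from $g$ acting on $\wedge^4 \VV$ as the scalar $\det(g)$ together with the definition $\psi(x)(y) = d(x \wedge y)$. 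Composing these one finds
\[
L \circ \wedge^2 g \; = \; \frac{\nu(g)^{2}}{\det(g)} \cdot \wedge^2 g \circ L,
\]
which equals $\wedge^2 g \circ L$ precisely when $\det(g) = \nu(g)^2$, i.e.\ when $g \in \GSU_E(\VV)$.

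Next I would verify that $\wedge^2 g$ preserves $\langle \cdot, \cdot \rangle$ up to the similitude factor $\nu(g)^2$. The computation in the proof of Proposition~\ref{prop:norm-sim} already gives $(\wedge^2 g \cdot x, \wedge^2 g \cdot y)_{\wedge^2 \VV} = \nu(g)^2 (x,y)_{\wedge^2 \VV}$, and hence the same relation holds for the skew-hermitian form $(\cdot,\cdot) = \i \cdot (\cdot,\cdot)_{\wedge^2 \VV}$. Since Step~1 guarantees $\wedge^2 g \cdot (x\j) = (\wedge^2 g \cdot x)\,\j$ for $g \in \GSU_E(\VV)$, direct substitution into \eqref{eqn:B-form-defn} yields $\langle \wedge^2 g \cdot x, \wedge^2 g \cdot y \rangle = \nu(g)^2 \langle x, y \rangle$, so $\tilde{\xi}(g) \in \GU_B(\tilde{V})$ with similitude $\nu(g)^2 \in F^\times$. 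As $\GSU_E(\VV)$ is a connected algebraic group, the image of $\tilde{\xi}$ lies in the identity component $\GU_B(\tilde{V})^0$.

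For the kernel: if $\wedge^2 g = 1$ then $g$ preserves every two-plane $W \subset \VV$, because $\wedge^2 g$ fixes each line $\wedge^2 W \subset \wedge^2 \VV$. For any nonzero $v \in \VV$ the line $Ev$ is the intersection of two distinct two-planes through $v$, so $g$ preserves $Ev$; hence $g$ is a scalar $\lambda$, and $\wedge^2(\lambda I) = \lambda^2 I = I$ forces $\lambda = \pm 1$.

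Finally, for surjectivity onto $\GU_B(\tilde{V})^0$ I would invoke a dimension count. Over an algebraic closure of $F$, both source and target become connected reductive groups whose semisimple parts realise the classical exceptional identification $A_3 = D_3$ (namely $\SL_4$ on the unitary side and $\mathrm{Spin}_6$ on the orthogonal side), and each similitude subgroup contributes one extra $\mathbf{G}_m$. Thus the two groups have equal $F$-dimension, and a morphism of connected algebraic groups of the same dimension with finite kernel is necessarily surjective onto the identity component. The main obstacle is precisely this last step: recognising that $\tilde{\xi}$ realises the classical exceptional isomorphism $\SL_4 \simeq \mathrm{Spin}_6$ via $\wedge^2$, enhanced on similitudes. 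The content of Steps~1 and~2 is exactly what couples the similitude factors on the two sides compatibly, so that the isogeny extends from isometries to similitudes.
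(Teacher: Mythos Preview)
Your proposal is correct and follows essentially the same approach as the paper: the paper also derives the commutation relation $L \circ \wedge^2 g = \nu(g)^2 \det(g)^{-1}\, \wedge^2 g \circ L$ from the three ingredients you list, then concludes via a dimension count that the image is $\GU_B(\tilde{V})^0$. Your treatment is somewhat more explicit than the paper's in two places---you spell out the similitude verification for $\langle\cdot,\cdot\rangle$ (the paper leaves this implicit, relying on formula~\eqref{eqn:B-form-defn}) and you give a geometric argument for the kernel (the paper just says it is ``easily checked'')---but the structure of the argument is the same.
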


\begin{proof}
Let $g \in \GU_E (\VV)$.  
We first compute the commutator of $L$ and $\wedge^2 g$.  Recall that 
\[
L= \psi^{-1} \circ \iota \circ \wedge^2 \varphi.
\]
Now for any $g\in \GL_E (\VV)$, we have 
\begin{equation}
\label{eqn:iota-wedge2g}
\iota \circ \wedge^2 g = \wedge^2 g \circ \iota
\end{equation}
and 
\begin{equation}
\label{eqn:psi-wedge2g}
\psi \circ \wedge^2 g = \det(g) \wedge^2 g \circ \psi.
\end{equation}
If further $g \in \GU_E(\VV)$, then 
\[
(gx,gy)_\VV =\nu(g) (x,y)_\VV
\]
which implies that $(gx,y)_\VV = \nu(g) (x, g^{-1} y)_\VV$ and 
\[
\varphi \circ g = \nu(g) g \circ \varphi.
\]
Thus 
\begin{equation}
\label{eqn:wedge2phi-wedge2g}
\wedge^2 \varphi \circ \wedge^2 g = \nu(g)^2 \wedge^2 g \circ \wedge^2 \varphi.
\end{equation}
It follows from \eqref{eqn:iota-wedge2g}, \eqref{eqn:psi-wedge2g} and \eqref{eqn:wedge2phi-wedge2g} that 
\begin{equation}
\label{eqn:L-wedge2g}
L\circ \wedge^2 g=\nu(g)^2 \det(g)^{-1} 
 \wedge^2 g \circ L  
 \end{equation}
for $g\in \GU_E(\VV)$.  
Thus for $g\in \GSU_E(\VV)$, the endomorphism $\wedge^2 g$ lies in $\GU_B (\tilde{V})$. 
This gives a map
\[
 \GSU_E(\VV) \rightarrow \GU_B( \tilde{V})
 \]
whose kernel is easily checked to be $\{ \pm 1\}$.
On the other hand, we have
\begin{align*}
 \dim \GSU_E(\VV) & = \dim \GU_E(\VV) - 1 = \dim \U_E(\VV) = 4^2 = 16, \\
 \dim \GU_B(\tilde{V}) & = \dim \U_B(\tilde{V}) + 1 = 3 (2 \cdot 3-1) + 1 = 16.
\end{align*}
Since $\GSU_E(\VV)$ is connected, this shows that the image of the above map is $\GU_B(\tilde{V})^0$.
\end{proof}

\begin{prop}
\label{p:isom-PGU}
There is a natural isomorphism 
\[
\xi: \PGU_E(\VV) \overset{\simeq}{\longrightarrow} \PGU_B(\tilde{V})^0,
\]
where
\[
 \PGU_E(\VV) = \GU_E(\VV) / E^\times, \qquad
 \PGU_B(\tilde{V})^0 = \GU_B(\tilde{V})^0 / F^\times.
\]
\end{prop}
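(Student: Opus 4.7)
The natural candidate for $\xi$ comes from extending the isogeny $\tilde{\xi}: \GSU_E(\VV) \to \GU_B(\tilde{V})^0$ just constructed to the full group $\GU_E(\VV)$, at the cost of correcting $\wedge^2 g$ by an $E^\times$-scalar so that it becomes $B$-linear. Given $g \in \GU_E(\VV)$, the ratio
\[
 c(g) := \nu(g)^2/\det(g)
\]
lies in $E^{(1)}$ by Proposition~\ref{prop:norm-sim}, so by Hilbert~90 one may write $c(g) = \beta/\beta^\rho$ for some $\beta \in E^\times$, uniquely determined up to the factor $F^\times = \ker(\alpha \mapsto \alpha/\alpha^\rho)$. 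I would then define
\[
 \xi(g) := [m_\beta \circ \wedge^2 g] \in \GU_B(\tilde{V})^0/F^\times,
\]
where $m_\beta$ denotes multiplication by $\beta$ regarded as an $E$-linear endomorphism of $\tilde{V}$, and show that this map descends to the desired isomorphism $\PGU_E(\VV) \xrightarrow{\cong} \PGU_B(\tilde{V})^0$.

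Setting $T_g := m_\beta \circ \wedge^2 g$, the first step is to check two properties. Using $L \circ m_\beta = m_{\beta^\rho} \circ L$ (coming from the $E$-conjugate-linearity of $L$) together with \eqref{eqn:L-wedge2g}, the equation $L \circ T_g = T_g \circ L$ holds precisely when $\beta/\beta^\rho = c(g)$, so $T_g$ is $B$-linear by construction. The key calculation is then to compute $\langle T_g x, T_g y \rangle$ directly from \eqref{eqn:B-form-defn}: using that $\wedge^2 g$ is an $E$-similitude of $(\tilde{\VV}, (\cdot,\cdot))$ with factor $\nu(g)^2$ (from the proof of Proposition~\ref{prop:norm-sim}), one finds
\[
 \langle T_g x, T_g y \rangle = \N(\beta)\,\nu(g)^2 \cdot \langle x, y \rangle,
\]
and the crucial point is that $\N(\beta)\,\nu(g)^2 \in F^\times$. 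The class $\xi(g)$ is then independent of the choice of $\beta$ modulo $F^\times$; multiplicativity of $c$ together with the fact that $\wedge^2 g$ commutes with every $E$-scalar $m_{\beta'}$ shows that $\xi$ is a homomorphism; and since $\GU_E(\VV)$ is connected and $\tilde{\xi}$ already lands in $\GU_B(\tilde{V})^0$, the image of $\xi$ lies in $\GU_B(\tilde{V})^0/F^\times$.

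It remains to check that $\xi$ factors through $E^\times$ and is bijective. For $g = \alpha \cdot \id$ with $\alpha \in E^\times$ one computes $c(g) = (\alpha^\rho/\alpha)^2$, so $\beta = (\alpha^\rho)^2$ is a valid choice, giving $T_g = \N(\alpha)^2 \cdot \id \in F^\times$; hence $E^\times \subseteq \ker(\xi)$. Conversely, if $\xi(g)$ is trivial then $T_g$ is a scalar, so $\wedge^2 g$ is scalar, and since $\dim \VV = 4$ this forces $g$ itself to be scalar, so $g \in E^\times$. Surjectivity is immediate from the surjectivity of $\tilde{\xi}$ onto $\GU_B(\tilde{V})^0$ established in the previous proposition: for $g \in \GSU_E(\VV)$ one has $c(g) = 1$, $\beta = 1$ works, and $\xi(g) = [\tilde{\xi}(g)]$. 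The main technical hurdle is the similitude computation showing the multiplier $\N(\beta)\nu(g)^2$ lies in $F^\times$ rather than merely $E^\times$; this is what forces the Hilbert~90 choice of $\beta$ to be the correct correction and is the only step beyond essentially formal bookkeeping.
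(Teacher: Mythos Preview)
Your proposal is correct and follows essentially the same approach as the paper: correct $\wedge^2 g$ by a scalar $\beta \in E^\times$ chosen via Hilbert~90 so that $\beta/\beta^\rho = \nu(g)^2/\det(g)$, making $\beta \cdot \wedge^2 g$ $B$-linear, and then check this induces the desired isomorphism modulo centers. You are in fact more explicit than the paper in computing the similitude factor $\N(\beta)\nu(g)^2 \in F^\times$ and in verifying the kernel, where the paper simply asserts these are ``easy to see.''
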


\begin{proof}
Let $g \in \GU_E(\VV)$.
Put $f = \wedge^2 g$ and $\alpha = \nu(g)^2 / \det g$.
By \eqref{eqn:L-wedge2g}, we have 
\[
 Lf = \alpha fL.
\]
By Proposition \ref{prop:norm-sim}, we have $\N(\det g) = \nu(g)^4$, so $\N (\alpha)=1$ and we can choose $\beta \in E^\times$ (unique up to multiplication by $F^\times$) such that $\alpha = \beta/\beta^\rho$.
Then 
\[
 L \beta f = \beta^\rho L f = \beta^\rho \alpha fL = \beta fL, 
\]
so that $\beta f \in \GU_B(\tilde{V})$.
The assignment $g \mapsto \beta f$ gives a homomorphism
\[
 \GU_E(\VV) \longrightarrow \GU_B(\tilde{V})/ F^\times.
\]
It is easy to see that its kernel is the center of $\GU_E(\VV)$.
Indeed, if $\wedge^2 g$ is a scalar multiplication on $\tilde{\VV}$, then since $\dim \VV > 2$, $g$ has to be semisimple and hence is a scalar multiplication on $\VV$.
On the other hand, as in the proof of Proposition \ref{p:isom-PSU},
we have $\dim \PGU_E(\VV) = \dim \PGU_B(\tilde{V})$.
Since $\GU_E(\VV)$ is connected, this shows that the image of the above map is the identity component of $\GU_B(\tilde{V})/F^\times$.
\end{proof}

\begin{rem}
The reader may note that the notation is mildly confusing here. Namely, $\beta f$ is the map given by $\beta f (x) := f(x) \beta$, since the action of $E$ is on the right.  \end{rem}

\begin{rem}
For $g\in \GSU_E(\VV)$, we have $\alpha=1$, so we may take $\beta=1$ as well. This implies that the maps 
constructed in the previous two propositions fit into the commutative diagram below, where the vertical maps 
are the natural homomorphisms:
\[
\xymatrix
{
\GSU_E(\VV) \ar[d] \ar[r]^{\tilde{\xi}} & \GU_B(\tilde{V})^0 \ar[dd]\\
\GU_E (\VV)  \ar[d]& \\
\PGU_E(\VV) \ar[r]^{\xi} & \PGU_B (\tilde{V})^0.
}
\]
\end{rem}

\subsection{Subgroups}

In this section we discuss the effect of the isogeny/isomorphism of the previous section on certain natural 
subgroups of the unitary group obtained from a decomposition of the hermitian space into a sum of two hermitian spaces.

\subsubsection{The sum of two $2$-dimensional spaces}
\label{ss:sum-of-2-dim}

We first discuss the case when $\VV$ is an orthogonal direct sum of the form $\VV=\VV_1\oplus \VV_2$ with $\dim_E \VV_1 = \dim_E \VV_2 = 2$.
Then
\begin{equation}
\label{eqn:decomposition-of-VV}
\tilde{\VV} = \wedge^2 (\VV_1 \oplus \VV_2) = \wedge^2 (\VV_1) \oplus \wedge^2 (\VV_2) \oplus (\VV_1 \otimes \VV_2),
\end{equation}
where we identify $\VV_1 \otimes \VV_2$ with its image under the natural map to $\wedge^2 \VV$ (which sends 
$v\otimes w$ to $v\wedge w$).

We may assume that the basis $(\v_1, \v_2, \v_3, \v_4)$ of 
$\VV$ is chosen such that $(\v_1,\v_2)$ forms a basis of $\VV_1$ 
and $(\v_3, \v_4)$ a basis of $\VV_2$. From the explicit formulas for $L$, it is clear that $L$ preserves the subspaces
\[
\VV_0^\sharp= \wedge^2 (\VV_1) \oplus \wedge^2 (\VV_2) \quad \text{ and} \quad \VV^\sharp= \VV_1 \otimes \VV_2,
\]
so these are $B$-spaces that we denote by $V_0^\sharp$ and $V^\sharp$ respectively. 
Since the collection $(\v_{ij}$, $1\le i < j \le 4)$ forms an orthogonal basis for the form $(\cdot,\cdot)$, the 
decomposition $\tilde{\VV} = \VV^\sharp \oplus \VV_0^\sharp$ is one of skew-hermitian $E$-spaces.  Moreover, the formula \eqref{eqn:B-form-defn} shows that 
the decomposition $\tilde{V} = V^\sharp \oplus V_0^\sharp$ is one of skew-hermitian $B$-spaces. 

\begin{prop}
Let $H$ be the subgroup of $\GSU_E (\VV)$ given by 
\[
H= \GSU_E (\VV) \cap \G (\U_E (\VV_1) \times \U_E (\VV_2)). 
\]
Then $\tilde{\xi}$ restricts to an isogeny 
\begin{equation}
\label{eqn:xit-rest}
H \rightarrow \G(\U_B(V^\sharp) \times \U_B (V_0^\sharp))^0,
\end{equation}
with kernel $\{ \pm 1 \}$. 
\end{prop}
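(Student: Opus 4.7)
The plan is to verify three things in turn: that $\wedge^2 h$ for $h \in H$ does land in the claimed subgroup, that the kernel is $\{\pm 1\}$, and finally that the map is surjective onto the identity component (so that ``isogeny'' is justified).

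First, for $h \in H$, write $h = (h_1, h_2)$ with $h_i \in \GU_E(\VV_i)$ having common similitude $\nu(h_1) = \nu(h_2) = \nu(h)$, and the $\GSU$-condition reading $\det(h_1)\det(h_2) = \nu(h)^2$. Since $h$ preserves the $E$-decomposition $\VV = \VV_1 \oplus \VV_2$, the operator $\wedge^2 h$ preserves the decomposition \eqref{eqn:decomposition-of-VV}; explicitly, it acts as $\det(h_1) \oplus \det(h_2)$ on $\wedge^2 \VV_1 \oplus \wedge^2 \VV_2 = V_0^\sharp$ and as $h_1 \otimes h_2$ on $\VV_1 \otimes \VV_2 = V^\sharp$. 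So it preserves both summands as $E$-subspaces. Next, by \eqref{eqn:L-wedge2g} we have $L \circ \wedge^2 h = (\nu(h)^2 / \det(h)) \wedge^2 h \circ L$, and the scalar equals $1$ precisely because $h \in \GSU_E(\VV)$. Hence $\wedge^2 h$ commutes with $L$, i.e.\ is $B$-linear, and therefore preserves the $B$-subspaces $V^\sharp$ and $V_0^\sharp$ of $\tilde{V}$.

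Second, one checks the similitude property. From Proposition \ref{prop:norm-sim} and the identity $(g\x,g\y)_{\wedge^2 \VV} = \nu(g)^2(\x,\y)_{\wedge^2 \VV}$ already derived in the proof of that proposition, together with the definition \eqref{eqn:B-form-defn} of $\langle\,,\,\rangle$ and the $B$-linearity just established, the map $\wedge^2 h$ is a $B$-similitude of $\tilde{V}$ with norm $\nu(h)^2$. Because the decomposition $\tilde{V} = V^\sharp \oplus V_0^\sharp$ is orthogonal for $\langle\,,\,\rangle$ (as noted in \S\ref{ss:sum-of-2-dim}), the restrictions of $\wedge^2 h$ to $V^\sharp$ and $V_0^\sharp$ are each $B$-similitudes with the same norm $\nu(h)^2$. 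This places $\tilde{\xi}(h)$ in $\G(\U_B(V^\sharp)\times\U_B(V_0^\sharp))$, and landing in the identity component follows because $H$ is connected (being the kernel inside the connected group $\G(\U_E(\VV_1)\times\U_E(\VV_2))$ of the character $(h_1,h_2)\mapsto\det(h_1)\det(h_2)\nu(h)^{-2}$ valued in $E^1$).

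Third, the kernel: since the kernel of $\tilde{\xi}$ on all of $\GSU_E(\VV)$ has already been computed to be $\{\pm 1\}$, and $-1$ lies in $H$, the restricted map has kernel exactly $\{\pm 1\}$.

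The main obstacle will be the surjectivity onto the identity component; everything else is a routine unraveling of the constructions. I would handle surjectivity by a dimension count. One has $\dim H = 8$ (kernel of a character to the $1$-dimensional group $E^1$ inside the $9$-dimensional group $\G(\U_E(\VV_1)\times\U_E(\VV_2))$), and one needs to verify that $\G(\U_B(V^\sharp)\times\U_B(V_0^\sharp))^0$ has the same dimension $8$ over $F$. Since $V^\sharp$ is a $B$-skew-hermitian space of $B$-rank $2$ and $V_0^\sharp$ one of $B$-rank $1$, their isometry groups have dimensions $6$ and $1$ respectively over $F$, and adjoining the common similitude gives $6+1+1=8$, matching $\dim H$. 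Combined with the finite kernel just identified, this forces the map to be surjective onto the identity component, completing the proof that the restricted map is an isogeny with kernel $\{\pm 1\}$.
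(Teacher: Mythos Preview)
Your proof is correct and follows essentially the same approach as the paper: verify that $\wedge^2 h$ preserves the decomposition $\tilde{V} = V^\sharp \oplus V_0^\sharp$ (acting as $h_1 \otimes h_2$ on $V^\sharp$ and via $\det(h_i)$ on $V_0^\sharp$), then conclude by a dimension count. The paper's argument is terser, omitting the explicit checks of $B$-linearity, similitude compatibility, and connectedness that you spell out; one small caveat is that your justification ``kernel of a character to $E^1$ inside a connected group'' does not by itself force $H$ to be connected, though $H$ is indeed connected and the paper is equally casual on this point.
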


\begin{proof}
Let $g_1 \in \GU_E(\VV_1)$, $g_2 \in \GU_E( \VV_2)$ be such that 
$g=(g_1,g_2) \in H$. Then $\wedge^2 g$ acts as right multiplication by $\det(g_i)$ on 
$\wedge^2 \VV_i$ and by $g_1 \otimes g_2$ on $\VV_1 \otimes \VV_2$.
Since $H$ is connected, this shows that $\tilde{\xi}$ maps $H$ into the subgroup $\G(\U_B(V^\sharp) \times \U_B (V_0^\sharp))^0$ of 
$\GU_B (\tilde{V})^0$.
On the other hand, we have
\begin{align*}
 \dim H & = \dim \G (\U_E (\VV_1) \times \U_E (\VV_2)) - 1 \\
 & = \dim \U_E (\VV_1) + \dim \U_E (\VV_2) \\
 & = 2^2 + 2^2 = 8, \\
 \dim \G(\U_B(V^\sharp) \times \U_B (V_0^\sharp))
 & = \dim \U_B(V^\sharp) + \dim \U_B (V_0^\sharp) + 1 \\
 & = 2(2 \cdot 2-1) + 1(2 \cdot 1-1) + 1 = 8.
\end{align*}
Hence the image of $H$ under $\tilde{\xi}$ is $\G(\U_B(V^\sharp) \times \U_B (V_0^\sharp))^0$.
\end{proof}

Likewise, one has an analogous result for subgroups in the context of Proposition \ref{p:isom-PGU}.

\begin{prop}
The map $\xi$ restricts to an isomorphism
\begin{equation}
\label{eqn:rest-of-xi}
\G (\U_E(\VV_1) \times \U_E (\VV_2)) / E^\times \simeq \G (\U_B( V^\sharp) \times \U_B (V_0^\sharp))^0/F^\times.
\end{equation}
\end{prop}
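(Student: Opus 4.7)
The plan is to establish the isomorphism by verifying three separate properties of the restriction map: that it is well-defined (lands in the asserted target subgroup), that it is injective, and that it is surjective. All three will reduce to previously established facts together with some elementary bookkeeping.

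For well-definedness, I would start with $g = (g_1, g_2) \in \G(\U_E(\VV_1) \times \U_E(\VV_2))$ and unwind the decomposition $\tilde{V} = V^\sharp \oplus V_0^\sharp$ from \S \ref{ss:sum-of-2-dim}: the endomorphism $\wedge^2 g$ acts as the scalar $\det g_i$ on $\wedge^2 \VV_i$ and as $g_1 \otimes g_2$ on $\VV_1 \otimes \VV_2$, so it respects this decomposition, and therefore so does $\xi(g) = \beta \cdot \wedge^2 g$. Since the decomposition is orthogonal for the $B$-skew-hermitian form and $\xi(g) \in \GU_B(\tilde{V})$ has some similitude $\nu(\xi(g))$, its restrictions to the two summands land in $\GU_B(V^\sharp)$ and $\GU_B(V_0^\sharp)$ with common similitude. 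To land in the identity component, I would invoke connectedness of $\G(\U_E(\VV_1) \times \U_E(\VV_2))/E^\times$, which forces the image of the restriction to sit inside the identity component of the target.

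Injectivity is immediate from the fact that $\xi$ is already an isomorphism on the full projective similitude groups (Proposition \ref{p:isom-PGU}): if a class represented by $(g_1, g_2)$ maps to the identity in $\PGU_B(\tilde{V})^0$, then $(g_1,g_2)$ must be a scalar on $\VV$, forcing $(g_1, g_2) = (\lambda \id_{\VV_1}, \lambda \id_{\VV_2})$ for some $\lambda \in E^\times$, which is trivial in the quotient by the diagonal $E^\times$.

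For surjectivity, I would lean on the isogeny $\tilde{\xi}\colon H \to \G(\U_B(V^\sharp) \times \U_B(V_0^\sharp))^0$ furnished by the preceding proposition, which is surjective by construction. Composing with the projection $\G(\U_B(V^\sharp) \times \U_B(V_0^\sharp))^0 \to \G(\U_B(V^\sharp) \times \U_B(V_0^\sharp))^0/F^\times$, we obtain a surjection from $H$ onto the target; since $H \subset \G(\U_E(\VV_1) \times \U_E(\VV_2))$, this factors through the restriction of $\xi$, which is therefore surjective. The commutative diagram from the remark following Proposition \ref{p:isom-PGU} makes this factorization explicit. I do not expect a serious obstacle in the argument—the nontrivial content is already packaged in the preceding two propositions—so the only mild point of care is the identity-component claim, handled by the connectedness remark above.
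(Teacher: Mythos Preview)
Your proof is correct and follows essentially the same approach as the paper: both establish that $\wedge^2 g$ (hence $\beta \cdot \wedge^2 g$) preserves the decomposition $\tilde{V} = V^\sharp \oplus V_0^\sharp$, deduce injectivity from that of $\xi$, and then argue surjectivity. The only minor difference is that the paper concludes surjectivity directly from a dimension count, whereas you route it through the surjectivity of the isogeny $\tilde{\xi}|_H$ from the preceding proposition (which was itself proved by dimension counting); your explicit connectedness remark for the identity-component claim is a small point the paper leaves implicit.
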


\begin{proof}
Let $g=(g_1,g_2) \in \G (\U_E(\VV_1) \times \U_E (\VV_2))$. As in the previous proposition, the map $\wedge^2 g$ clearly preserves the decomposition 
$\tilde{\VV} = \VV^\sharp \oplus \VV_0^\sharp$, hence so does $\beta \cdot \wedge^2 g$. Since $\beta \cdot \wedge^2 g$ lies in $\GU_B (\tilde{V})$,  
it must in fact lie in $\G (\U_B( V^\sharp ) \times \U_B (V_0^\sharp))$. This gives the map \eqref{eqn:rest-of-xi}, which must be injective since 
$\xi$ is injective. From dimension considerations, it must be an isomorphism.
\end{proof}

It is useful to write down explicitly the maps in \eqref{eqn:xit-rest} and \eqref{eqn:rest-of-xi}. First for 
$i=1,2$, we define a quaternion algebra $B_i$ such that 
$\VV_i$ is naturally a right $B_i$-module and is equipped with a 
$B_i$-hermitian form whose projection to $E$ recovers the hermitian form.
Let us outline for $i=1$, the case $i=2$ being exactly similar. The hermitian form on 
$\VV_1$ gives an $E$-conjugate linear isomorphism
\[
\varphi_1: \VV_1 \simeq \VV_1^*,
\]
where $\VV_1^*$ as usual is the $E$-linear dual of $\VV_1$. Let us fix an isomorphism
\[
d_1: \wedge^2 \VV_1 \simeq E.
\]
For definiteness, we let $d_1 (\v_1 \wedge \v_2) = 1$. This gives a bilinear pairing
\[
\VV_1 \times \VV_1 \rightarrow E, \quad (x,y) \mapsto d_1 (x\wedge y)
\]
and thus gives an $E$-linear isomorphism 
\[
\psi_1: \VV_1 \simeq \VV_1^*, \quad \psi_1 (x) (y) = d_1 (x\wedge y).
\]
Define $L_1 \in \End_F(\VV_1)$ by 
\[
L_1 = \psi_1^{-1} \circ \varphi_1.
\]
Explicitly, we see that $L_1$ acts on $\VV_1$ by:
\begin{align}
\label{eqn:L11} \v_1 & \mapsto -a_1 \v_2, \\
\label{eqn:L12} \v_2 & \mapsto a_2 \v_1,
\end{align}
so that $L_1^2 = -a_1a_2 = -\vol (\VV_1, d_1)$. 
Let 
\[
J_1 = -\vol(\VV_1, d_1)
\]
and define $B_1$ by 
\[
B_1 = E+ E\j_1, \quad \j_1^2 = J_1, \quad \alpha \j_1 = \j_1 \alpha^\rho
\]
for all $\alpha \in E$. Then the right $E$-action on $\VV_1$ extends to a right $B_1$-action 
defined by 
\[
x \cdot \j_1 = L_1 (x).
\]
When we want to think of $\VV_1$ as a $B_1$-space we simply denote it $V_1$. 

\begin{lem}
For $x,y \in \VV_1$, we have
\begin{enumerate}
\item $(x\j_1, y) = - (y\j_1, x)$.
\item $(x\j_1, y\j_1)^\rho = -J_1 (x,y) $. 
\end{enumerate}
\end{lem}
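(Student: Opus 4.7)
The plan is to imitate the proof of the 4-dimensional analogue (the lemma just before the current one), tracking the sign changes that distinguish the two settings. The two ingredients I will need are the factorization $\varphi_1 = \psi_1 \circ L_1$, immediate from $L_1 = \psi_1^{-1} \circ \varphi_1$, and the identity $L_1^2 = J_1 \cdot \id$. The latter can be read off directly from the explicit formulas \eqref{eqn:L11}--\eqref{eqn:L12} in the orthogonal basis, or derived coordinate-freely by the same volume argument as in Proposition \ref{prop:volume-V-d}, giving $L_1^2 = -\vol(\VV_1, d_1) = J_1$.

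For (i), I will compute directly
\[
 (x\j_1, y) = \varphi_1(L_1 x)(y) = \psi_1(L_1^2 x)(y) = J_1 \cdot d_1(x \wedge y),
\]
using $\psi_1 \circ L_1 = \varphi_1$ and the fact that $J_1 \in F$ commutes with $\psi_1$. At this point the essential sign difference with the 4-dimensional case surfaces: since $\dim_E \VV_1 = 2$, the pairing $(x, y) \mapsto d_1(x \wedge y)$ is \emph{anti}symmetric, whereas the analogous pairing \eqref{eqn:pairing-on-wedge-2} was symmetric. Hence $(y\j_1, x) = -(x\j_1, y)$, which is (i).

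For (ii), I expect no separate input to be necessary: as in the remark following the 4-dimensional lemma, one deduces it formally from (i). Applying (i) with $y$ replaced by $y\j_1$ and using $\j_1^2 = J_1 \in F$ gives
\[
 (x\j_1, y\j_1) = -(y\j_1 \cdot \j_1, x) = -J_1 (y, x),
\]
and taking the $\rho$-conjugate together with the hermitian symmetry $(y,x)^\rho = (x,y)$ yields $(x\j_1, y\j_1)^\rho = -J_1(x,y)$. Thus the only substantive step is the sign tracking in (i); I anticipate no serious obstacle beyond the standard bookkeeping with the conjugate-linearity of $\varphi_1$, the $E$-linearity of $\psi_1$, and the right-action convention for scalars.
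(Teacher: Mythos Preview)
Your proof is correct and follows essentially the same route as the paper's: both compute $(x\j_1,y)=\varphi_1(x\j_1)(y)=\psi_1(L_1^2 x)(y)=J_1\,d_1(x\wedge y)$ via $\varphi_1=\psi_1\circ L_1$ and $L_1^2=J_1$, then invoke the antisymmetry of $d_1(\cdot\wedge\cdot)$ for (i) and deduce (ii) formally from (i). The only cosmetic difference is that the paper writes $\varphi_1(x\j_1)$ where you write $\varphi_1(L_1 x)$.
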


\begin{proof}
We have for $x,y \in \VV_1$, 
\[
(x\j_1, y) = \varphi_1 (x\j_1)  (y) = \psi_1 L_1 (x\j_1) (y) = \psi_1 L_1^2 (x) (y) = J_1 \psi_1 (x) (y) =J_1 d_1 (x\wedge y),
\]
and likewise $(y\j_1, x) = J_1 d_1 (y \wedge x)$. It follows from this that $(x\j_1, y) = - (y\j_1,x)$ which proves (i). Now (ii) 
follows from (i) since
\[
(x\j_1, y\j_1) = -(y\j_1 \cdot \j_1, x) = -J_1(y,x) = -J_1 (x,y)^\rho.
\]
\end{proof}

Using the lemma, we find that
\[
\langle x,y \rangle := (x,y) - \frac{1}{J_1} \j_1 (x\j_1, y)
\]
defines a $B_1$-{\it hermitian} form on $V_1$ such that $\pr \circ \langle \cdot,\cdot \rangle = (\cdot, \cdot)$. Thus there 
is a natural embedding
\[
\GU_{B_1}(V_1) \hookrightarrow \GU_E(\VV_1)
\]
which we can make explicit as follows. Pick a $B_1$-basis $\x_1$ for $V_1$. Then 
$V_1 = \x_1 B_1$ and for $g\in \GU_{B_1}(V_1)$, let $\beta_g$ be defined by
\[
g\x_1 = \x_1 \beta_g.
\]
The assignment $g\mapsto \beta_g$ gives an identification $\GU_{B_1} (V_1) \simeq B_1^\times$. 
Indeed, this map is injective; it is also surjective since for any $\beta\in B_1^\times$ and $\alpha,\alpha' \in B_1$, we have
\[
\langle \x_1 \beta \alpha ,\x_1 \beta \alpha' \rangle = (\beta\alpha)^\rho \langle \x_1,\x_1 \rangle \beta \alpha' = \alpha^\rho \beta^\rho \langle \x_1,\x_1 \rangle   \beta \alpha' = \nu (\beta)  \alpha^\rho \langle \x_1,\x_1 \rangle 
\alpha' = \nu(\beta) \langle \x_1 \alpha, \x_1 \alpha' \rangle.\]
Here we have used that $\langle \x_1,\x_1 \rangle$ lies in $F$, the form $\langle \cdot, \cdot \rangle$ being $B_1$-hermitian. 
Then 
\[
\GU_E (\VV_1) \simeq (B_1^\times \times E^\times)/F^\times
\]
where the $B_1^\times$ corresponds to the $\GU_{B_1} (V_1)$ action described above, the $E^\times$-action is given by $\alpha \mapsto $ (right)-multiplication by $\alpha^{-1}$ for $\alpha \in E^\times$ and the embedding of 
$F^\times$ in $B_1^\times \times E^\times$ is just the diagonal embedding $t\mapsto (t,t)$. All of the above discussion carries over verbatim to the case $i=2$, so that after picking a 
$B_2$-basis $\x_2$ of $V_2$, the
map $\GU_{B_2} (V_2) \hookrightarrow \GU_E (\VV_2)$ is identified with the embedding
\[
B_2^\times \hookrightarrow (B_2^\times \times E^\times)/F^\times.
\]

Next, we explicate the groups on the right of the map \eqref{eqn:xit-rest}. 
First we note that 
\[
\GU_B(V_0^\sharp)^0 \simeq E^\times.
\]
Indeed, let $\x$ be any non-zero vector in $\wedge^2 \VV_1$, so that $V_0^\sharp =\x B$. Then for $\alpha \in E^\times$, the map
\[
\x \beta \mapsto \x \alpha \beta
\]
gives an element of $\GU_B (V_0^\sharp)$. By dimension considerations, this gives an isomorphism $E^\times \simeq \GU_B(V_0^\sharp)^0$.
More precisely, it is easy to see that for any $g\in \GU_B (V_0^\sharp)$, we have $g \cdot \x = \x \gamma_g$ for a unique $\gamma_g \in B^\times$ and the assignment 
$g\mapsto \gamma_g$ gives an isomorphism of $\GU_B( V_0^\sharp)$ with the semi-direct product 
$E^\times \rtimes  \langle \j \rangle$,
where $\j$ acts on $E^\times$ by conjugation in $B^\times$. 
Note that the induced isomorphism $\GU_B( V_0^\sharp)^0 \simeq E^\times$ is independent of the choice of $\x \in \wedge^2 \VV_1$.

As for $V^\sharp$, we have the following proposition.
\begin{prop}
There is an isomorphism
\[
\GU_B (V^\sharp)^0 \simeq (B_1^\times \times B_2^\times)/F^\times,
\]
depending on the choice of a basis vector for $V_1$ (as $B_1$-space) and for $V_2$ (as $B_2$-space).
\end{prop}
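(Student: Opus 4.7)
The plan is to mirror the preceding construction of $\GU_B(V_0^\sharp)^0 \simeq E^\times$, making use of the chosen $B_i$-basis $\x_i$ of $V_i$ for $i=1,2$. For $(b_1,b_2) \in B_1^\times \times B_2^\times$, let $g_{b_i} \in \GU_{B_i}(V_i)$ denote the element corresponding to $b_i$ under the identification $\GU_{B_i}(V_i) \simeq B_i^\times$, so $g_{b_i}(\x_i) = \x_i b_i$. Define $\Phi(b_1,b_2)$ on pure wedges $v \wedge w \in V^\sharp$ with $v \in V_1$ and $w \in V_2$ by
\[
 \Phi(b_1,b_2)(v \wedge w) := (g_{b_1} v) \wedge (g_{b_2} w),
\]
and extend $E$-linearly. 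The goal is to show that $\Phi$ induces an isomorphism of algebraic groups $(B_1^\times \times B_2^\times)/F^\times \xrightarrow{\sim} \GU_B(V^\sharp)^0$, where $F^\times$ is embedded antidiagonally as $t \mapsto (t,t^{-1})$.

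The first step is to verify that $\Phi(b_1,b_2)$ is $B$-linear. The $B$-action on $V^\sharp$ is determined by the right $E$-action on $\wedge^2\VV$ together with the action of $\j \in B$, which coincides with the restriction of $L$ to $V^\sharp$. Using $\v_1\j_1 = L_1(\v_1) = -a_1\v_2$ and $\v_3\j_2 = L_2(\v_3) = -a_3\v_4$, a direct check against the explicit formulas for $L|_{V^\sharp}$ on the basis $\v_{13},\v_{14},\v_{23},\v_{24}$ yields the key identity
\[
 L(v \wedge w) = (v\j_1) \wedge (w\j_2) \qquad (v \in V_1,\ w \in V_2),
\]
where $v\j_i$ denotes right multiplication in $V_i = B_i$. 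Since $g_{b_i}$ is defined by left multiplication on $V_i$, it commutes with right multiplication by any element of $B_i$; hence $\Phi(b_1,b_2)$ commutes with both the right $E$-action and the $\j$-action on $V^\sharp$, and therefore with all of $B$.

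Next, since $\VV_1 \perp \VV_2$ in $\VV$, the induced $E$-hermitian form on $V^\sharp$ satisfies $(v \wedge w, v' \wedge w')_{\tilde{\VV}} = (v,v')_{\VV_1}(w,w')_{\VV_2}$ for $v,v' \in V_1$ and $w,w' \in V_2$, so $\Phi(b_1,b_2)$ scales it by $\nu(b_1)\nu(b_2) \in F^\times$. Combined with $B$-linearity and the defining formula \eqref{eqn:B-form-defn}, this shows $\Phi(b_1,b_2) \in \GU_B(V^\sharp)$ with similitude $\nu(b_1)\nu(b_2)$. To compute $\ker\Phi$, write $b_i = \alpha_i + \j_i\beta_i$ with $\alpha_i,\beta_i \in E$ and expand the action of $\Phi(b_1,b_2)$ on the basis vectors $\v_{13},\v_{14},\v_{23},\v_{24}$; requiring $\Phi(b_1,b_2) = \id$ forces $\beta_1 = \beta_2 = 0$ and $\alpha_1\alpha_2 = \alpha_1\alpha_2^\rho = 1$, hence $\alpha_1,\alpha_2 \in F^\times$ with $\alpha_2 = \alpha_1^{-1}$. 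This is precisely the antidiagonal $F^\times$.

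Finally, $B_1^\times \times B_2^\times$ is connected, so $\Phi$ lands in the identity component $\GU_B(V^\sharp)^0$ and induces an injective morphism of connected algebraic groups $(B_1^\times \times B_2^\times)/F^\times \hookrightarrow \GU_B(V^\sharp)^0$. Both groups have $F$-dimension $7$: the target is an inner form of $\GSO(4)$, since at places where $B$ splits a skew-hermitian form of rank $2$ over $B$ corresponds (via Morita) to a $4$-dimensional quadratic form. Thus the injection is necessarily an isomorphism. No single step presents a genuine obstacle; the delicate point is the identification of the $B$-action on $V^\sharp$ via $L(v \wedge w) = (v\j_1) \wedge (w\j_2)$, which is settled by the explicit basis computation above.
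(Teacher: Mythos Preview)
Your proof is correct and follows essentially the same approach as the paper: define the action of $(b_1,b_2)$ via $g_{b_1}\otimes g_{b_2}$ on $\VV_1\otimes_E\VV_2$, verify $B$-linearity using the identity $L(v\wedge w)=(v\j_1)\wedge(w\j_2)$ (the paper phrases this as ``right-multiplication by $\j$ on $V_1\otimes V_2$ is the same as right-multiplication by $\j_1\otimes\j_2$''), check the similitude via the tensor-product form, identify the kernel as the antidiagonal $F^\times$, and conclude by a dimension count. Your kernel computation is a bit more explicit than the paper's, but otherwise the arguments coincide.
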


\begin{proof}
First note that the restriction of the hermitian form $(\cdot, \cdot)_{\tilde{\VV}}$ to 
$\VV_1 \otimes \VV_2$ is just the tensor product of the hermitian forms on $\VV_1$ and $\VV_2$. 
Also, from \eqref{eqn:L11} and \eqref{eqn:L12}, right multiplication by $\j_1$ and $\j_2$ on 
$\VV_1$ and $\VV_2$ is given explicitly by 
\[
 \v_1 \j_1 = -a_1 \v_2, \quad  \v_2 \j_1 = a_2 \v_1, \quad  \v_3 \j_2 = -a_3 \v_4, \quad
 \v_4 \j_2 = a_4 \v_3. 
\]
From this and the explicit formula for the action of $L=\j$ on $\tilde{\VV}$, we see that 
(right)-multiplication by $\j$ on $\VV_1 \otimes \VV_2$ is the same as (right)-multiplication by 
$\j_1 \otimes \j_2$.

Choose a $B_1$-basis $\x_1$ for $V_1$ and a $B_2$-basis $\x_2$ for $V_2$.
Then there is an action of $B_1^\times \times B_2^\times$ on $\VV_1 \otimes_E \VV_2$ which 
on pure tensors is given by
\[
(\beta_1, \beta_2) \cdot (\x_1 \alpha_1 \otimes \x_2 \alpha_2) = \x_1 \beta_1 \alpha_1 \otimes \x_2 \beta_2 \alpha_2, 
\]
for any $\alpha_1 \in B_1$, $\alpha_2 \in B_2$. This action is clearly $E$-linear and also $\j$-linear since $\j$ acts 
as right multiplication by $\j_1 \otimes \j_2$, hence is in fact $B$-linear. 

Now, 
\begin{align*}
(\x_1 \beta_1 \alpha_1 \otimes \x_2 \beta_2 \alpha_2, \x_1 \beta_1 \alpha'_1 \otimes \x_2 \beta_2 \alpha'_2) &= \i \cdot (\x_1 \beta_1 \alpha_1 ,  \x_1 \beta_1 \alpha'_1)_{\VV_1} (\x_2 \beta_2 \alpha_2,\x_2 \beta_2 \alpha'_2)_{\VV_2} \\
&= \i \cdot \nu(\beta_1) \nu (\beta_2) \cdot (\x_1  \alpha_1 ,  \x_1  \alpha'_1)_{\VV_1} (\x_2 \alpha_2,\x_2  \alpha'_2)_{\VV_2} \\
&= \nu(\beta_1) \nu (\beta_2) (\x_1 \alpha_1 \otimes \x_2 \alpha_2, \x_1 \alpha'_1 \otimes \x_2 \alpha'_2),
\end{align*}
which shows that $(\beta_1, \beta_2)$ gives an element in $\GU_E (\VV^\sharp)$. Since the action of 
$(\beta_1, \beta_2)$ commutes with $\j$, we see from the formula \eqref{eqn:B-form-defn} that 
it in fact defines an element of $\GU_B (V^\sharp)$. This gives a map 
\[
B_1^\times \times B_2^\times \rightarrow \GU_B(V^\sharp)
\]
whose kernel is the diagonal $F^\times $ in $B_1^\times \times B_2^\times$, embedded as 
$t \mapsto (t,t^{-1})$. By dimension considerations, we see that this gives an isomorphism 
$(B_1^\times \times B_2^\times)/F^\times \simeq \GU_B( V^\sharp)^0$.
\end{proof}

Now we can write down the map \eqref{eqn:xit-rest} explicitly. Let $h=(h_1,h_2) \in H$ with $h_1 \in \GU_E (\VV_1)$ and $h_2 \in \GU_E (\VV_2)$. 
Then
\[
\nu(h_1) = \nu(h_2)
\]
and 
\[
\det(h_1) \det(h_2) = \nu(h_1)^2 = \nu (h_2)^2.
\]
Moreover, $\N (\det h_i) = \nu (h_i)^2$ by Proposition \ref{prop:norm-sim}, so that $\det(h_2) = \det(h_1)^\rho$. 
Now $\wedge^2 h$ acts as (right)-multiplication by $\det(h_1)$ on $\wedge^2 \VV_1$ and by $\det(h_2)$ on $\wedge^2 \VV_2 = \wedge^2 \VV_1 \cdot \j$, 
and thus acts on $V_0^\sharp$ as the element $\det(h_1) \in E^\times = \GU_B(V_0^\sharp)^0$. 

Fix a basis vector $\x_1$ for $V_1$ and $\x_2$ for $V_2$ as above. This gives identifications
\[
 \GU_E (\VV_1) = (B_1^\times \times E^\times)/F^\times, \quad \GU_E (\VV_2) = (B_2^\times \times E^\times)/F^\times.
\]
Let $g_1=[b_1,\alpha_1] \in  \GU_E (\VV_1)$ and $g_2=[b_2,\alpha_2] \in  \GU_E (\VV_2)$. Then 
\[
\nu(g_1) = \nu(b_1) \N(\alpha_1)^{-1},  \quad \det(g_1)= \nu(b_1) \cdot \alpha_1^{-2},
\]
and
\[
\nu(g_2) = \nu(b_2) \N(\alpha_2)^{-1},  \quad \det(g_2)= \nu(b_2) \cdot \alpha_2^{-2}.
\]
Let $g=(g_1, g_2) \in \G (\U_E (\VV_1) \times \U_E (\VV_2))$ viewed as an element in $\GU_E(\VV)$.
Then 
\[
\nu(g) = \nu(g_1) =\nu(g_2) = \nu(b_1) \N(\alpha_1)^{-1} = \nu(b_2) \N(\alpha_2)^{-1}
\]
while
\[
\det (g) =  \nu(b_1) \cdot \alpha_1^{-2} \cdot  \nu(b_2) \cdot \alpha_2^{-2}.
\]
Thus
\begin{align*}
g \in \GSU_E(\VV) & \iff \det(g) = \nu(g)^2 \\
& \iff  \nu(b_1) \cdot \alpha_1^{-2} \cdot  \nu(b_2) \cdot \alpha_2^{-2} = \nu(b_1) \N(\alpha_1)^{-1} \cdot \nu(b_2) \N(\alpha_2)^{-1} \\
& \iff \alpha_1 \alpha_2 = \alpha_1^\rho \alpha_2^\rho \\
& \iff \alpha_1 \alpha_2 \in F^\times. 
\end{align*}
We conclude that
\[
H = \left\{ \left( [b_1,\alpha_1], [b_2,\alpha_2] \right) \, | \,  \nu(b_1) \N(\alpha_1)^{-1} = \nu(b_2) \N (\alpha_2)^{-1} , \ \alpha_1 \alpha_2 \in F^\times \right\}.
\]
The action of $h= \left( [b_1,\alpha_1], [b_2,\alpha_2] \right)$ on $\VV_1 \otimes_E \VV_2$ is then given by
\[
\x_1 \beta_1 \otimes \x_2 \beta_2 \mapsto \x_1 b_1 \beta_1 \alpha_1^{-1} \otimes \x_2 b_2 \beta_2 \alpha_2^{-1} = \x_1 b_1 \beta_1 \alpha_1^{-1} \alpha_2^{-1} \otimes \x_2 b_2 \beta_2 =\x_1 b_1 (\alpha_1 \alpha_2)^{-1} \beta_1 \otimes \x_2 b_2 \beta_2,
\]
so that the map \eqref{eqn:xit-rest} is given explicitly by:
\[
h \mapsto  \left([b_1 (\alpha_1 \alpha_2)^{-1}, b_2],  \nu(b_1) \alpha_1^{-2} \right) \in \G ( (B_1^\times \times B_2^\times)/F^\times) \times E^\times).
\]

Likewise, we can make \eqref{eqn:rest-of-xi} explicit. Let $g =(g_1,g_2) \in \G (\U_E(\VV_1) \times \U_E (\VV_2)) $, with $g_1 =[b_1,\alpha_1]$ and $g_2=[b_2,\alpha_2]$. 
Then 
\[
\frac{\nu(g)^2}{\det (g)} = \frac{\nu (b_1) \N (\alpha_1)^{-1} \nu(b_2) \N (\alpha_2)^{-1} }{\nu(b_1) \nu(b_2) \alpha_1^{-2} \alpha_2^{-2}} = \frac{\alpha_1 \alpha_2}{(\alpha_1\alpha_2)^\rho},
\]
so we may take $\beta=\alpha_1 \alpha_2$ in the definition of $\xi(g)$.
Then the map \eqref{eqn:rest-of-xi} is given by 
\[
g \mapsto \left( [b_1, b_2], \det(g_1) \alpha_1 \alpha_2 \right).
\]

\begin{example}
\label{eg:skew-herm-periods1}
This is the case that is of most interest to us. Instead of starting with the spaces 
$\VV$ or $\VV_1$ or $\VV_2$, we start with two quaternion algebras
$B_1$ and $B_2$ over $F$ containing $E$. Suppose that 
\[
B_1 = E + E \j_1, \quad B_2 = E+ E \j_2,
\]
with $\j_1^2 = J_1$ and $\j_2^2 = J_2$. 
Let $\VV_i = B_i$ considered as a (right)-$E$-hermitian space with the same form as in \cite[\S 2.2]{periods1}, i.e.,
\[
(a+\j_i b,c+\j_i d)_i = a^\rho c - J_i b^\rho d.
\]
We specialize the setup above to the case:
\[
\VV = \VV_1 \oplus \VV_2,
\]
with hermitian form $(\cdot, \cdot)_\VV$ given by the direct sum of $(\cdot, \cdot)_1$ and $(\cdot,\cdot)_2$. 
In the basis 
\[
\v_1=(1,0), \  \v_2=(\j_1,0), \ \v_3=(0,1), \ \v_4=(0,\j_2),
\]
this form is diagonal with matrix 
\[
\begin{pmatrix} a_1 & & & \\ & a_2 & & \\ & & a_3 & \\ & & & a_4 \end{pmatrix}  =
\begin{pmatrix} 1 & & & \\ & -J_1 & & \\ & & 1 & \\ & & & -J_2 \end{pmatrix}.
\]

Let us pick $d:\wedge^4 \VV \simeq E$ such that 
\[
d (\v_1 \wedge \v_2 \wedge \v_3 \wedge \v_4) = -1
\]
as in the proof of Proposition \ref{prop:volume-V-d}. 
Then $J=\vol( (\cdot,\cdot)_\VV,d)$ is equal to $J_1 J_2$.

With respect to the bases $(\v_1 \wedge \v_2, \v_3 \wedge \v_4 )$ of  $\VV_0^\sharp$ and 
$(\v_1 \otimes \v_3, \v_2 \otimes \v_3 , \v_1 \otimes \v_4, \v_2 \otimes \v_4)$ of $\VV^\sharp$, the 
matrices of these hermitian forms are given by
\[
\begin{pmatrix} -J_1 & \\ & - J_2 \end{pmatrix} \quad \text{and} \quad 
\begin{pmatrix} 1 & & & \\ & -J_1 & & \\ & & -J_2 & \\ & & & J \end{pmatrix}
\]
respectively. Thus $\VV^\sharp$ is the tensor product of 
$\VV_1$ and $\VV_2$ as {\it hermitian} spaces. The action of 
$\j$ on $\VV^\sharp$ can be read off from the formulas in the proof of Proposition \ref{prop:volume-V-d} and is given by:
\begin{align*}
(\v_1 \otimes \v_3)\cdot \j &= \v_2 \otimes \v_4, \\
(\v_2 \otimes \v_3) \cdot \j &= J_1 \cdot \v_1 \otimes \v_4, \\
(\v_1 \otimes \v_4) \cdot \j & = J_2 \cdot \v_2 \otimes \v_3, \\
(\v_2 \otimes \v_4) \cdot \j &= J \cdot \v_1 \otimes \v_3.
\end{align*}
This shows that $V^\sharp$ with its $B$-action and $B$-skew-hermitian form is 
exactly the same as the space $V$ occurring in \cite[\S 2.2]{periods1}. 
\end{example}

\subsubsection{The sum of a $3$-dimensional and a $1$-dimensional space}

In this section (which is not used in this paper), 
we suppose that $\VV$ is an orthogonal direct sum of the form $\VV= \VV_3 \oplus \VV_4$ with $\dim_E \VV_3 = 3$ and $\dim_E \VV_4 =1$.
Then there is an inclusion
\[
\G(\U_E(\VV_3) \times \U_E(\VV_4)) \hookrightarrow \GU_E (\VV)
\]
and so we can ask for a description of how this relates to the maps $\tilde{\xi}$, $\xi$. 

Note that
\[
\wedge^2 \VV = \wedge^2 \VV_3 \oplus (\VV_3 \otimes \VV_4)
\]
as a sum of (skew)-hermitian spaces.
We may assume that the basis vectors $\v_i$ are chosen such that 
$(\v_1, \v_2, \v_3)$ forms a basis for $\VV_3$ while $\v_4$ is a basis for $\VV_4$. 
The explicit formula for $L$ shows that $L$ interchanges $\wedge^2 \VV_3$ and 
$\VV_3 \otimes \VV_4$. Thus letting $\WW:=\VV_3 \otimes \VV_4$, we have 
\begin{equation}
\label{eqn:extendingW}
\tilde{V} = \WW \otimes _E B,
\end{equation}
at least as $B$-spaces. The formula \eqref{eqn:B-form-defn} shows that the 
restriction of the  $B$-skew-hermitian form $\langle \cdot, \cdot \rangle$ to $\WW$ is the same as 
the restriction of the $E$-skew-hermitian form $(\cdot, \cdot)$ (from $\tilde{\VV}$) to $\WW$, from which it follows that 
the form $\langle \cdot, \cdot \rangle$ on $\tilde{V}$ is just the $B$-linear extension of $(\cdot ,\cdot)$ via the isomorphism 
\eqref{eqn:extendingW}. Thus there is a canonical inclusion
\[
\GU_E(\WW) \rightarrow \GU_B (\tilde{V}),
\]
which must land inside $\GU_B(\tilde{V})^0$, since $\GU_E(\WW)$ is connected. 

\begin{prop}
The map $\xi$ restricts to an isomorphism
\[
\G (\U_E (\VV_3) \times \U_E (\VV_4))/E^\times \simeq \GU_E(\WW)/F^\times.
\]
\end{prop}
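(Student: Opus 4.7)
The plan is to follow the same strategy used in \S\ref{ss:sum-of-2-dim} for the two-plus-two case: verify that for a compatible pair $g = (g_3, g_4)$ the operator $\wedge^2 g$ preserves the decomposition $\wedge^2\VV = \wedge^2\VV_3 \oplus \WW$, deduce that the image $\xi(g)$ stabilizes $\WW$, identify the kernel of the resulting homomorphism, and conclude by a dimension count.

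For the first step, take $g = (g_3, g_4) \in \G(\U_E(\VV_3) \times \U_E(\VV_4))$. The endomorphism $\wedge^2 g$ acts on $\wedge^2\VV_3$ as $\wedge^2 g_3$ and on $\VV_3 \otimes \VV_4 = \WW$ as $g_3 \otimes g_4$, so it preserves both summands. By Proposition~\ref{p:isom-PGU}, $\xi(g)$ is represented modulo $F^\times$ by $\beta \cdot \wedge^2 g$ for a suitable $\beta \in E^\times$ (namely one with $\beta/\beta^\rho = \nu(g)^2/\det g$), and since $\beta$ acts as an $E$-scalar it too preserves the decomposition. Hence $\xi(g)$ stabilizes $\WW$. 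As noted in the paragraph preceding the statement, the restriction of the $B$-skew-hermitian form $\langle\cdot,\cdot\rangle$ on $\tilde V$ to $\WW$ agrees with the original $E$-skew-hermitian form on $\WW$, so $\xi(g)|_\WW \in \GU_E(\WW)$; under the canonical inclusion $\GU_E(\WW) \hookrightarrow \GU_B(\tilde V)^0$ coming from $\tilde V = \WW \otimes_E B$, this element recovers $\xi(g)$. This gives a well-defined homomorphism
\[
\G(\U_E(\VV_3) \times \U_E(\VV_4))/E^\times \longrightarrow \GU_E(\WW)/F^\times.
\]

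Next I would compute the kernel. Proposition~\ref{p:isom-PGU} tells us that the kernel of $\xi$ on all of $\GU_E(\VV)$ is the center $E^\times$, and this center lies inside $\G(\U_E(\VV_3) \times \U_E(\VV_4))$ as the diagonal scalars (since scalar multiplication by $E^\times$ is compatible with the orthogonal decomposition $\VV = \VV_3 \oplus \VV_4$). Hence the induced map on the left is injective.

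Finally, for surjectivity I would conclude by a dimension count: over $F$, both $\G(\U_E(\VV_3) \times \U_E(\VV_4))$ and $\GU_E(\WW)$ are smooth connected algebraic groups of dimension $11$, and after quotienting by the two-dimensional torus $E^\times$ on one side and the one-dimensional torus $F^\times$ on the other, both become connected of dimension $9$. An injective morphism between connected algebraic groups of the same dimension is an isomorphism. I do not anticipate a real obstacle in the argument; the only point worth checking carefully is that the center of $\GU_E(\VV)$ is contained in the subgroup $\G(\U_E(\VV_3) \times \U_E(\VV_4))$, which is automatic, and that the restriction map truly lands in $\GU_E(\WW)/F^\times$ rather than only in the larger quotient $\GU_B(\tilde V)^0/F^\times$, which follows from the stabilization of $\WW$ established in the first step.
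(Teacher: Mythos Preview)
Your approach is correct and matches the paper's (very terse) proof: verify that $\xi$ restricted to the subgroup lands in $\GU_E(\WW)/F^\times$, use injectivity of $\xi$, and conclude by a dimension count together with connectedness of the target. One arithmetic slip: $\GU_E(\WW)$ has dimension $10$ over $F$, not $11$ (the unitary group of a $3$-dimensional hermitian space has dimension $9$, plus one for the similitude), so your sentence ``both \ldots of dimension $11$'' is internally inconsistent with the subsequent claim that quotienting by $F^\times$ yields dimension $9$; the conclusion that both quotients have dimension $9$ is nonetheless correct once this is fixed.
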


\begin{proof}
It is clear that $\xi$ restricts to an injective map between the given source and target. That it is an isomorphism follows from dimension considerations and since the target is connected. 
\end{proof}

\section{The local exceptional isomorphism}
\label{sec:local-exc}

In this section, we study the local exceptional isomorphism between the (projectivized) unitary group attached to the $4$-dimensional hermitian space $\VV$ and the identity component of the (projectivized) quaternionic unitary group attached to the $3$-dimensional quaternionic skew-hermitian space $\tilde{V}$.
For later use, we need to consider the localizations at almost all (finite) places and at real places.

\subsection{Setup}

Let $F$ be a local field of characteristic zero and $E$ an \'etale quadratic algebra over $F$.
We denote by $\rho$ the non-trivial automorphism of $E$ over $F$ and by $\N = \N_{E/F}$ the norm map from $E$ to $F$.
Fix a trace zero element $\i \in E^\times$ and put $u = \i^2 \in F^\times$.

We recall the construction in \S \ref{sec:global-ex-isom}.
Let $\VV$ be a $4$-dimensional $E$-space equipped with a hermitian form $(\cdot,\cdot)_\VV$.
(We considered a right $E$-space $\VV$ earlier but regard it as a left $E$-space by setting $\alpha v = v \alpha$ for $\alpha \in E$ and $v \in \VV$.)
Fix an $E$-linear isomorphism $d:\wedge^4 \VV \rightarrow E$.
Then we have a $6$-dimensional $E$-space $\tilde{\VV} = \wedge^2 \VV$ equipped with a skew-hermitian form
\[
 (x_1 \wedge x_2, y_1 \wedge y_2) = \i \cdot \det 
 \begin{pmatrix}
  (x_1, y_1)_\VV & (x_1, y_2)_\VV \\
  (x_2, y_1)_\VV & (x_2, y_2)_\VV
 \end{pmatrix}
\]
and a conjugate $E$-linear automorphism
\[
 L = \psi^{-1} \circ \iota \circ \wedge^2 \varphi, 
\]
where 
\begin{itemize}
 \item $\varphi: \VV \rightarrow \VV^*$ is the conjugate $E$-linear isomorphism induced by $(\cdot,\cdot)_\VV$;
 \item $\iota: \wedge^2(\VV^*) \rightarrow (\wedge^2 \VV)^*$ is the natural $E$-linear isomorphism;
 \item $\psi: \wedge^2 \VV \rightarrow (\wedge^2 \VV)^*$ is the $E$-linear isomorphism relative to $d$.
\end{itemize}
Note that $L^2$ is the scalar multiplication by some $J \in F^\times$.
This gives rise to a quaternion $F$-algebra $B = E + E \j$ with a trace zero element $\j \in B^\times$ such that $\j^2 = J$ and a $3$-dimensional right $B$-space $\tilde{V} = \tilde{\VV}$ equipped with a skew-hermitian form
\[
 \langle x, y \rangle = (x, y) - \j^{-1} \cdot (L(x), y).
\]
Moreover, we have a natural isomorphism
\[
 \xi : \PGU_E(\VV) \overset{\simeq}\longrightarrow \PGU_B(\tilde{V})^0
\]
by Proposition \ref{p:isom-PGU}.
Since $B$ and $\tilde{V}$ do not depend on the choice of $d$, we can make a convenient choice in the following computation.

\subsection{The split case}

In this section, we assume that $\VV$ is split.
Fix a basis $\v_1, \dots, \v_4$ of $\VV$ such that
\[
 (\v_i, \v_j)_\VV =  
 \begin{cases}
  1 & \text{if $(i,j) = (1,3), (2,4), (3,1),(4,2)$,} \\
  0 & \text{otherwise.}
 \end{cases}
\]
Let $\e_1, \dots, \e_4$ be its dual basis of $\VV^*$.
We take an isomorphism $d: \wedge^4 \VV \rightarrow E$ such that
\[
 d(\v_1 \wedge \v_2 \wedge \v_3 \wedge \v_4) = 1.
\]
Let $T$ be the maximal torus of $\GU_E(\VV)$ consisting of elements $t$ such that
\[
 t \v_1 = t_1 \v_1, \qquad  
 t \v_2 = t_2 \v_2, \qquad  
 t \v_3 = \nu (t_1^{\rho})^{-1} \v_3, \qquad
 t \v_4 = \nu (t_2^{\rho})^{-1} \v_4
\]
for some $t_i \in E^\times$ and $\nu \in F^\times$.
We identify $T$ with $(E^\times)^2 \times F^\times$ via the map $t \mapsto (t_1, t_2, \nu)$.
Then the center of $\GU_E(\VV)$ is equal to $E^\times$ embedded into $T$ by $z \mapsto (z, z, \N(z))$.

We take a basis $\{ \v_{ij} \, | \, 1 \le i < j \le 4 \}$ of $\tilde{\VV}$ given by $\v_{ij} = \v_i \wedge \v_j$.
Let $\{ \e_{ij} \, | \, 1 \le i < j \le 4 \}$ be its dual basis of $\tilde{\VV}^*$.
Since
\[
 \varphi(\v_1) = \e_3, \qquad
 \varphi(\v_2) = \e_4, \qquad
 \varphi(\v_3) = \e_1, \qquad
 \varphi(\v_4) = \e_2,
\]
we have 
\begin{align*}
 (\iota \circ \wedge^2 \varphi)(\v_{12}) & = \e_{34}, &  
 (\iota \circ \wedge^2 \varphi)(\v_{34}) & = \e_{12}, \\
 (\iota \circ \wedge^2 \varphi)(\v_{13}) & = - \e_{13}, &
 (\iota \circ \wedge^2 \varphi)(\v_{24}) & = -\e_{24}, \\
 (\iota \circ \wedge^2 \varphi)(\v_{14}) & = - \e_{23}, &
 (\iota \circ \wedge^2 \varphi)(\v_{23}) & = -\e_{14}.
\end{align*}
Also, we have
\begin{align*}
 \psi(\v_{12}) & = \e_{34}, &  
 \psi(\v_{34}) & = \e_{12}, \\
 \psi(\v_{13}) & = -\e_{24}, &
 \psi(\v_{24}) & = -\e_{13}, \\
 \psi(\v_{14}) & = \e_{23}, &
 \psi(\v_{23}) & = \e_{14}. 
\end{align*}
Hence we have
\begin{align*}
 L(\v_{12}) & = \v_{12}, & 
 L(\v_{34}) & = \v_{34}, \\
 L(\v_{13}) & = \v_{24}, &
 L(\v_{24}) & = \v_{13}, \\
 L(\v_{14}) & = - \v_{14}, &
 L(\v_{23}) & = - \v_{23}.
\end{align*}
In particular, $J = 1$.
Moreover, $(\v_{ij}, \v_{i'j'})$ and $\langle \v_{ij}, \v_{i'j'} \rangle$ are given by the following tables:
\begin{align*}
 &
\begin{array}{|c||c|c|c|c|c|c|} \hline
 (\cdot, \cdot) & \v_{12} & \v_{34} & \v_{13} & \v_{24} & \v_{14} & \v_{23} \\ \hline \hline
 \v_{12} & 0 & \i & 0 & 0 & 0 & 0 \\ \hline
 \v_{34} & \i & 0 & 0 & 0 & 0 & 0 \\ \hline
 \v_{13} & 0 & 0 & -\i & 0 & 0 & 0 \\ \hline
 \v_{24} & 0 & 0 & 0 & -\i & 0 & 0 \\ \hline
 \v_{14} & 0 & 0 & 0 & 0 & 0 & -\i \\ \hline
 \v_{23} & 0 & 0 & 0 & 0 & -\i & 0 \\ \hline
\end{array} 
 \\
 &
\begin{array}{|c||c|c|c|c|c|c|} \hline
 \langle \cdot, \cdot \rangle & \v_{12} & \v_{34} & \v_{13} & \v_{24} & \v_{14} & \v_{23} \\ \hline \hline
 \v_{12} & 0 & \i + \i\j & 0 & 0 & 0 & 0 \\ \hline
 \v_{34} & \i + \i \j & 0 & 0 & 0 & 0 & 0 \\ \hline
 \v_{13} & 0 & 0 & -\i & -\i\j & 0 & 0 \\ \hline
 \v_{24} & 0 & 0 & -\i\j & -\i & 0 & 0 \\ \hline
 \v_{14} & 0 & 0 & 0 & 0 & 0 & -\i+\i\j \\ \hline
 \v_{23} & 0 & 0 & 0 & 0 & -\i+\i\j & 0 \\ \hline
\end{array} 
\end{align*}

We take a basis $\tilde{v}_1, \tilde{v}_2, \tilde{v}_3$ of $\tilde{V}$ over $B$ given by
\[
 \tilde{v}_1 = \v_{12} + \v_{14}, \qquad
 \tilde{v}_2 = \v_{34} + \v_{23}, \qquad
 \tilde{v}_3 = \v_{13},
\]
so that
\[
 (\langle \tilde{v}_i, \tilde{v}_j \rangle) =
 \begin{pmatrix}
  0 & 2 \i\j & 0 \\
  2 \i\j & 0 & 0 \\
  0 & 0 & -\i
 \end{pmatrix}.
\]
Let $\ii:B \rightarrow \M_2(F)$ be an isomorphism defined by 
\[
 \ii(a + b \i + c \j + d \i\j) = 
 \begin{pmatrix}
  a+c & b-d \\
  (b+d)u & a-c
 \end{pmatrix}.
\]
Put 
\[
 e = \frac{1}{2}(1+\j), \qquad
 e' = \frac{1}{2}(\i - \i\j), \qquad
 e'' = \frac{1}{2u}(\i + \i\j), \qquad
 e^* = \frac{1}{2}(1-\j),
\]
so that
\[
 \ii(e) = \mat{1}{0}{0}{0}, \qquad
 \ii(e') = \mat{0}{1}{0}{0}, \qquad
 \ii(e'') = \mat{0}{0}{1}{0}, \qquad
 \ii(e^*) = \mat{0}{0}{0}{1}.
\]
Put $\tilde{V}^\dagger = \tilde{V} e$.
Then, by Morita theory (see \cite[\S C.2]{periods1} for details), $\tilde{V}^\dagger$ is a $6$-dimensional $F$-space equipped with a symmetric bilinear form $\langle \cdot, \cdot \rangle^\dagger$ determined by
\[
 \frac{1}{2} \langle x, y \rangle = \langle x, y \rangle^\dagger \cdot e''
\]
for $x, y \in \tilde{V}^\dagger$ such that the restriction to $\tilde{V}^\dagger$ induces an isomorphism $\GU_B(\tilde{V})^0 \simeq \GSO(\tilde{V}^\dagger)$ (see \cite[Lemma C.2.1]{periods1}).
We take a basis $v_1, \dots, v_6$ of $\tilde{V}^\dagger$ over $F$ given by
\begin{align*}
 v_1 & = \tilde{v}_1 e  = \v_{12}, & 
 v_2 & = \tilde{v}_1 e'' = \frac{\i}{u} \cdot \v_{14}, \\
 v_3 & = \tilde{v}_2 e = \v_{34}, & 
 v_4 & = \tilde{v}_2 e'' = \frac{\i}{u} \cdot \v_{23}, \\
 v_5 & = \tilde{v}_3 e = \frac{1}{2} (\v_{13} + \v_{24}), &
 v_6 & = \tilde{v}_3 e'' = \frac{\i}{2u} (\v_{13} - \v_{24}),
\end{align*}
so that 
\[
 (\langle v_i, v_j \rangle^\dagger) = 
 \begin{pmatrix}
  0 & 0 & u & 0 & 0 & 0 \\
  0 & 0 & 0 & 1 & 0 & 0 \\
  u & 0 & 0 & 0 & 0 & 0 \\
  0 & 1 & 0 & 0 & 0 & 0 \\ 
  0 & 0 & 0 & 0 & -\frac{u}{2} & 0 \\
  0 & 0 & 0 & 0 & 0 & \frac{1}{2}
 \end{pmatrix}.
\]
Let $\tilde{T}$ be the maximal torus of $\GSO(\tilde{V}^\dagger)$ consisting of elements $\tilde{t}$ such that
\begin{align*}
 \tilde{t} v_1 & = \tilde{t}_1 v_1, &
 \tilde{t} v_2 & = \tilde{t}_2 v_2, \\
 \tilde{t} v_3 & = \tilde{\nu} \tilde{t}_1^{-1} v_3, &
 \tilde{t} v_4 & = \tilde{\nu} \tilde{t}_2^{-1} v_4, \\
 \tilde{t} v_5 & = a v_5 + bu v_6, &
 \tilde{t} v_6 & = b v_5 + a v_6 
\end{align*}
for some $\tilde{t}_i \in F^\times$ and $a, b \in F$ such that $\tilde{\nu} = a^2 - b^2 u \ne 0$.
We identify $\tilde{T}$ with $(F^\times)^2 \times E^\times$ via the map $\tilde{t} \mapsto (\tilde{t}_1, \tilde{t}_2, a + b\i)$.
Then the center of $\GSO(\tilde{V}^\dagger)$ is equal to $F^\times$ embedded into $\tilde{T}$ by $z \mapsto (z, z, z)$.

\begin{lem}
\label{l:isom-tori}
The isomorphism $\xi$ restricts to an isomorphism
\[
 T/E^\times \overset{\simeq}\longrightarrow \tilde{T}/F^\times
\]
given by
\[
 (t_1, t_2, \nu) \longmapsto (\N(t_1 t_2), \nu \N(t_1), \nu t_1 t_2^\rho).
\]
\end{lem}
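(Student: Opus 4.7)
The plan is a direct computation using the construction of $\xi$ in the proof of Proposition \ref{p:isom-PGU}. Fix $t = (t_1, t_2, \nu) \in T$. The diagonal action of $t$ on $(\v_1,\v_2,\v_3,\v_4)$ gives $\det(t) = \nu^2 (t_1 t_2)/(t_1 t_2)^\rho$, so $\alpha = \nu(t)^2/\det(t) = (t_1 t_2)^\rho/(t_1 t_2)$ and one may take $\beta = (t_1 t_2)^\rho$. Then $\xi(t) = [\beta \cdot \wedge^2 t]$, where right multiplication by $\beta \in E$ on $\tilde V$ coincides with the $E$-scalar action since $E$ is commutative.

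Diagonalizing $\wedge^2 t$ on $\{\v_{ij}\}$ and scaling by $\beta$, one obtains that $\xi(t)$ acts on $\v_{12}, \v_{14}, \v_{23}, \v_{34}$ with eigenvalues $\N(t_1 t_2)$, $\nu\N(t_1)$, $\nu\N(t_2)$, $\nu^2$ respectively, and on $\v_{13}, \v_{24}$ with eigenvalues $c := \nu t_1 t_2^\rho$ and $c^\rho = \nu t_1^\rho t_2$. Since $v_1 = \v_{12}$, $v_2 = (\i/u)\v_{14}$, $v_3 = \v_{34}$ and $v_4 = (\i/u)\v_{23}$, the action on $v_1, v_2, v_3, v_4$ is immediate and yields $\tilde t_1 = \N(t_1 t_2)$, $\tilde t_2 = \nu\N(t_1)$, with similitude $\tilde \nu = \nu^2 \N(t_1 t_2)$ (consistency being checked against $v_4$).

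The main calculation is the $(v_5, v_6)$-component. Writing $v_5 = \v_{13} e$ and $v_6 = \v_{13} e''$ and using the identities $\i e = u e''$ and $\i e'' = e$ in $B$, one derives the key decompositions $c e = a e + bu\, e''$ and $c e'' = b e + a e''$ for $c = a + b\i$, whence
\[
 \xi(t) v_5 = \v_{13} \cdot (ce) = a v_5 + bu v_6, \qquad \xi(t) v_6 = \v_{13} \cdot (ce'') = b v_5 + a v_6.
\]
Under the parametrization $\tilde t \mapsto (\tilde t_1, \tilde t_2, a+b\i)$, the third coordinate of $\xi(t)$ is therefore $a + b\i = \nu t_1 t_2^\rho$, as required.

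Finally, descent to the quotients: the center $E^\times \hookrightarrow T$, $z \mapsto (z, z, \N(z))$, maps to $(\N(z)^2, \N(z)^2, \N(z)^2)$, which is the image of $\N(z) \in F^\times$ in the center of $\tilde T$. Both $T/E^\times$ and $\tilde T/F^\times$ are tori of rank $3$, and the induced map has trivial kernel (the condition $\N(t_1 t_2) = \nu\N(t_1) = \nu t_1 t_2^\rho \in F^\times$ forces $t_1 = t_2$ and $\nu = \N(t_1)$), so it is an isomorphism. The only delicate point is the $(v_5,v_6)$-step: the off-diagonal $bu$ entry arises precisely because $\i \in E$ fails to commute with the idempotent $e = (1+\j)/2 \in B$, as recorded by the identity $c e = a e + bu\, e''$.
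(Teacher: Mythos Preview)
Your proof is correct and follows essentially the same direct-computation approach as the paper: choose $\beta = (t_1 t_2)^\rho$, compute the eigenvalues of $\beta\cdot\wedge^2 t$ on the basis $\{\v_{ij}\}$, and read off the action on the $v_i$. Your handling of the $(v_5,v_6)$-block via the $B$-module identities $ce = ae + bu\,e''$, $ce'' = be + ae''$ (together with $B$-linearity of $\xi(t)$) is a clean repackaging of the paper's explicit coordinate expansion, and your added kernel check supplies a detail the paper leaves implicit in its appeal to Proposition~\ref{p:isom-PGU}.
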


\begin{proof}
Let $t = (t_1, t_2, \nu) \in T \simeq (E^\times)^2 \times F^\times$.
Since
\[
 \nu(t) = \nu, \qquad 
 \det t = \frac{\nu^2 t_1 t_2}{t_1^\rho t_2^\rho}, \qquad
 \frac{\nu(t)^2}{\det t} = \frac{t_1^\rho t_2^\rho}{t_1 t_2},
\]
the image of $t$ under the homomorphism $\GU_E(\VV) \rightarrow \PGU_B(\tilde{V})$ in the proof of Proposition \ref{p:isom-PGU} is equal to the image of 
\[
 \tilde{t} = t_1^\rho t_2^\rho \cdot \wedge^2 t
\]
in $\PGU_B(\tilde{V})$.
Put 
\[
 \tilde{t}_1 = \N(t_1 t_2), \qquad
 \tilde{t}_2 = \nu \N(t_1), \qquad
 a + b\i = \nu t_1 t_2^\rho, \qquad
 \tilde{\nu} = a^2 - b^2 u = \nu^2 \N(t_1 t_2).
\]
Then
\begin{align*}
 \tilde{t} v_1 & = \tilde{t} \v_{12} = \N(t_1 t_2) \v_{12} = \tilde{t}_1 v_1, \\
 \tilde{t} v_2 & = \frac{\i}{u} \cdot \tilde{t} \v_{14} = \frac{\i}{u} \cdot \nu \N(t_1) \v_{14} = \tilde{t}_2 v_2, \\
 \tilde{t} v_3 & = \tilde{t} \v_{34} = \nu^2 \v_{34} = \tilde{\nu} \tilde{t}_1^{-1} v_3, \\
 \tilde{t} v_4 & = \frac{\i}{u} \cdot \tilde{t} \v_{23} = \frac{\i}{u} \cdot \nu \N(t_2) \v_{23} =  \tilde{\nu} \tilde{t}_2^{-1} v_4, \\
 \tilde{t} v_5 
 & = \frac{1}{2} (\tilde{t} \v_{13} + \tilde{t} \v_{24})
 = \frac{1}{2} (\nu t_1 t_2^\rho \v_{13} + \nu t_1^\rho t_2 \v_{24}) \\
 & = \frac{a}{2} (\v_{13} + \v_{24}) + \frac{b\i}{2} (\v_{13} - \v_{24})
 = a v_5 + bu v_6, \\
 \tilde{t} v_6
 & = \frac{\i}{2u} (\tilde{t} \v_{13} - \tilde{t} \v_{24})
 = \frac{\i}{2u} (\nu t_1 t_2^\rho \v_{13} - \nu t_1^\rho t_2 \v_{24}) \\
 & = \frac{b}{2} (\v_{13} + \v_{24}) + \frac{a\i}{2u} (\v_{13} - \v_{24}) 
 = b v_5 + a v_6.
\end{align*}
Hence the assertion follows.
\end{proof}

\subsection{The real case}
\label{ss:local-ex-isom-real}

In this section, we assume that $F = \R$ and $E = \C$.
Write
\[
 \i = u_0 \cdot i
\]
with $u_0 \in \R^\times$, so that $u = -u_0^2$.
We further assume that the signature of $\VV$ is either $(2,2)$ or $(4,0)$.
Fix a basis $\v_1, \dots, \v_4$ of $\VV$ such that
\[
 (\v_i, \v_j)_\VV =
 \begin{cases}
  1 & \text{if $(i,j) = (1,1), (2,2)$,} \\
  \zeta & \text{if $(i,j) = (3,3), (4,4)$,} \\
  0 & \text{otherwise,}
 \end{cases}
\]
where $\zeta = \pm 1$.
Let $\e_1, \dots, \e_4$ be its dual basis of $\VV^*$.
We take an isomorphism $d: \wedge^4 \VV \rightarrow E$ such that
\[
 d(\v_1 \wedge \v_2 \wedge \v_3 \wedge \v_4) = 1.
\]
Let $T$ be the maximal torus of $\GU_E(\VV)$ consisting of elements $t$ such that
\[
 t \v_1 = r z_1 \v_1, \qquad
 t \v_2 = r z_2 \v_2, \qquad
 t \v_3 = r z_3 \v_3, \qquad
 t \v_4 = r z_4 \v_4
\]
for some $r \in \R_+^\times$ and $z_i \in \C^1$.
We identify $T$ with $(\C^1)^4 \times \R_+^\times$ via the map $t \mapsto (z_1, z_2, z_3, z_4, r)$.
Then the center of $\GU_E(\VV)$ is equal to
\[
 \{ (z,z,z,z,r) \, | \, z \in \C^1, \, r \in \R^\times_+ \} \simeq \C^\times.
\]

We take a basis $\{ \v_{ij} \, | \, 1 \le i < j \le 4 \}$ of $\tilde{\VV}$ given by $\v_{ij} = \v_i \wedge \v_j$.
Let $\{ \e_{ij} \, | \, 1 \le i < j \le 4 \}$ be its dual basis of $\tilde{\VV}^*$.
Since
\[
 \varphi(\v_1) = \e_1, \qquad
 \varphi(\v_2) = \e_2, \qquad
 \varphi(\v_3) = \zeta \e_3, \qquad
 \varphi(\v_4) = \zeta \e_4,
\]
we have 
\begin{align*}
 (\iota \circ \wedge^2 \varphi)(\v_{12}) & = \e_{12}, &  
 (\iota \circ \wedge^2 \varphi)(\v_{34}) & = \e_{34}, \\
 (\iota \circ \wedge^2 \varphi)(\v_{13}) & = \zeta \e_{13}, &
 (\iota \circ \wedge^2 \varphi)(\v_{24}) & = \zeta \e_{24}, \\
 (\iota \circ \wedge^2 \varphi)(\v_{14}) & = \zeta \e_{14}, &
 (\iota \circ \wedge^2 \varphi)(\v_{23}) & = \zeta \e_{23}.
\end{align*}
Also, we have
\begin{align*}
 \psi(\v_{12}) & = \e_{34}, &  
 \psi(\v_{34}) & = \e_{12}, \\
 \psi(\v_{13}) & = -\e_{24}, &
 \psi(\v_{24}) & = -\e_{13}, \\
 \psi(\v_{14}) & = \e_{23}, &
 \psi(\v_{23}) & = \e_{14}.
\end{align*}
Hence we have
\begin{align*}
 L(\v_{12}) & = \v_{34}, & 
 L(\v_{34}) & = \v_{12}, \\
 L(\v_{13}) & = -\zeta \v_{24}, &
 L(\v_{24}) & = -\zeta \v_{13}, \\
 L(\v_{14}) & = \zeta \v_{23}, &
 L(\v_{23}) & = \zeta \v_{14}.
\end{align*}
In particular, $J = 1$.
Moreover, $(\v_{ij}, \v_{i'j'})$ and $\langle \v_{ij}, \v_{i'j'} \rangle$ are given by the following tables:
\begin{align*}
 &
\begin{array}{|c||c|c|c|c|c|c|} \hline
 (\cdot, \cdot) & \v_{12} & \v_{34} & \v_{13} & \v_{24} & \v_{14} & \v_{23} \\ \hline \hline
 \v_{12} & \i & 0 & 0 & 0 & 0 & 0 \\ \hline
 \v_{34} & 0 & \i & 0 & 0 & 0 & 0 \\ \hline
 \v_{13} & 0 & 0 & \zeta \i & 0 & 0 & 0 \\ \hline
 \v_{24} & 0 & 0 & 0 & \zeta \i & 0 & 0 \\ \hline
 \v_{14} & 0 & 0 & 0 & 0 & \zeta\i & 0 \\ \hline
 \v_{23} & 0 & 0 & 0 & 0 & 0 & \zeta\i \\ \hline
\end{array} 
 \\
 &
\begin{array}{|c||c|c|c|c|c|c|} \hline
 \langle \cdot, \cdot \rangle & \v_{12} & \v_{34} & \v_{13} & \v_{24} & \v_{14} & \v_{23} \\ \hline \hline
 \v_{12} & \i & \i\j & 0 & 0 & 0 & 0 \\ \hline
 \v_{34} & \i \j & \i & 0 & 0 & 0 & 0 \\ \hline
 \v_{13} & 0 & 0 & \zeta \i & -\i\j & 0 & 0 \\ \hline
 \v_{24} & 0 & 0 & -\i\j & \zeta \i & 0 & 0 \\ \hline
 \v_{14} & 0 & 0 & 0 & 0 & \zeta\i & \i\j \\ \hline
 \v_{23} & 0 & 0 & 0 & 0 & \i\j & \zeta\i \\ \hline
\end{array} 
\end{align*}

We take a basis $\tilde{v}_1, \tilde{v}_2, \tilde{v}_3$ of $\tilde{V}$ over $B$ given by
\[
 \tilde{v}_1 = \v_{13}, \qquad
 \tilde{v}_2 = \v_{14}, \qquad
 \tilde{v}_3 = \v_{12},
\]
so that
\[
 (\langle \tilde{v}_i, \tilde{v}_j \rangle) =
 \begin{pmatrix}
  \zeta \i & 0 & 0 \\
  0 & \zeta \i & 0 \\
  0 & 0 & \i
 \end{pmatrix}.
\]
Let $\ii:B \rightarrow \M_2(F)$ be an isomorphism defined by 
\[
 \ii(a + b \i + c \j + d \i\j) = 
 \begin{pmatrix}
  a+c & b-d \\
  (b+d)u & a-c
 \end{pmatrix}.
\]
Put 
\[
 e = \frac{1}{2}(1+\j), \qquad
 e' = \frac{1}{2}(\i - \i\j), \qquad
 e'' = \frac{1}{2u}(\i + \i\j), \qquad
 e^* = \frac{1}{2}(1-\j),
\]
so that
\[
 \ii(e) = \mat{1}{0}{0}{0}, \qquad
 \ii(e') = \mat{0}{1}{0}{0}, \qquad
 \ii(e'') = \mat{0}{0}{1}{0}, \qquad
 \ii(e^*) = \mat{0}{0}{0}{1}.
\]
Put $\tilde{V}^\dagger = \tilde{V} e$.
Then, by Morita theory (see \cite[\S C.2]{periods1} for details), $\tilde{V}^\dagger$ is a $6$-dimensional $F$-space equipped with a symmetric bilinear form $\langle \cdot, \cdot \rangle^\dagger$ determined by
\[
 \frac{1}{2} \langle x, y \rangle = \langle x, y \rangle^\dagger \cdot e''
\]
for $x, y \in \tilde{V}^\dagger$ such that the restriction to $\tilde{V}^\dagger$ induces an isomorphism $\GU_B(\tilde{V})^0 \simeq \GSO(\tilde{V}^\dagger)$ (see \cite[Lemma C.2.1]{periods1}).
We take a basis $v_1, \dots, v_6$ of $\tilde{V}^\dagger$ over $F$ given by
\begin{align*}
 v_1 & = \frac{\sqrt{2}}{u_0} \cdot \tilde{v}_1 e = \frac{1}{\sqrt{2} u_0} (\v_{13} - \zeta \v_{24}), &  
 v_2 & = \sqrt{2} \cdot \tilde{v}_1 e'' = -\frac{i}{\sqrt{2} u_0} (\v_{13} + \zeta \v_{24}), \\
 v_3 & = \frac{\sqrt{2}}{u_0} \cdot \tilde{v}_2 e = \frac{1}{\sqrt{2} u_0} (\v_{14} + \zeta \v_{23}), & 
 v_4 & = \sqrt{2} \cdot \tilde{v}_2 e'' = -\frac{i}{\sqrt{2} u_0} (\v_{14} - \zeta \v_{23}), \\
 v_5 & = \frac{\sqrt{2}}{u_0} \cdot \tilde{v}_3 e = \frac{1}{\sqrt{2} u_0} (\v_{12} + \v_{34}), &
 v_6 & = \sqrt{2} \cdot \tilde{v}_3 e'' = -\frac{i}{\sqrt{2} u_0} (\v_{12} - \v_{34}),
\end{align*}
so that 
\[
 (\langle v_i, v_j \rangle^\dagger) = 
 \begin{pmatrix}
  -\zeta & 0 & 0 & 0 & 0 & 0 \\
  0 & -\zeta & 0 & 0 & 0 & 0 \\
  0 & 0 & -\zeta & 0 & 0 & 0 \\
  0 & 0 & 0 & -\zeta & 0 & 0 \\ 
  0 & 0 & 0 & 0 & -1 & 0 \\
  0 & 0 & 0 & 0 & 0 & -1
 \end{pmatrix}.
\]
Let $\tilde{T}$ be the maximal torus of $\GSO(\tilde{V}^\dagger)$ consisting of elements $\tilde{t}$ such that
\begin{align*}
 \tilde{t} v_1 & = \tilde{r}(a_1 v_1 - b_1 v_2), &
 \tilde{t} v_2 & = \tilde{r}(b_1 v_1 + a_1 v_2), \\
 \tilde{t} v_3 & = \tilde{r}(a_2 v_3 - b_2 v_4), &
 \tilde{t} v_4 & = \tilde{r}(b_2 v_3 + a_2 v_4), \\
 \tilde{t} v_5 & = \tilde{r}(a_3 v_5 - b_3 v_6), &
 \tilde{t} v_6 & = \tilde{r}(b_3 v_5 + a_3 v_6)
\end{align*}
for some $\tilde{r} \in \R^\times_+$ and $a_i, b_i \in \R$ such that $\tilde{z}_i = a_i + b_i i \in \C^1$.
We identify $\tilde{T}$ with $(\C^1)^3 \times \R^\times_+$ via the map $\tilde{t} \mapsto (\tilde{z}_1, \tilde{z}_2, \tilde{z}_3, \tilde{r})$.
Then the center of $\GSO(\tilde{V}^\dagger)$ is equal to
\[
 \{ (\tilde{z},\tilde{z},\tilde{z},\tilde{r}) \, | \, \tilde{z} = \pm 1, \, \tilde{r} \in \R^\times_+ \} \simeq \R^\times.
\]

\begin{lem}
\label{l:isom-tori-real}
The isomorphism $\xi$ restricts to an isomorphism
\[
 T/\C^\times \overset{\simeq}\longrightarrow \tilde{T}/\R^\times
\]
given by
\[
 (z_1, z_2, z_3, z_4, r) \longmapsto \left(
 \frac{y_1y_3}{y_2y_4},  \frac{y_1y_4}{y_2y_3}, \frac{y_1y_2}{y_3y_4}, r^2 \right), 
\]
where we choose $y_i \in \C^1$ such that $z_i = y_i^2$.
\end{lem}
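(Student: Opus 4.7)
The proof will parallel Lemma \ref{l:isom-tori}, applying the construction in the proof of Proposition \ref{p:isom-PGU} to the real torus $T$. Given $t = (z_1, z_2, z_3, z_4, r) \in T$, first compute the basic invariants: since $t\v_i = r z_i \v_i$, one has $\nu(t) = r^2$ and $\det t = r^4 z_1 z_2 z_3 z_4$, so that
\[
 \alpha := \frac{\nu(t)^2}{\det t} = \frac{1}{z_1 z_2 z_3 z_4}.
\]
Since $|z_i| = 1$ we have $\N(\alpha) = 1$, consistent with Proposition \ref{prop:norm-sim}, and writing $z_i = y_i^2$ with $y_i \in \C^1$, we can take $\beta = (y_1 y_2 y_3 y_4)^{-1} \in \C^1$; then $\beta^\rho = \beta^{-1}$ gives $\beta/\beta^\rho = \beta^2 = \alpha$, as required.

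Next, form $\tilde{t} = \beta \cdot \wedge^2 t$. On each basis vector $\v_{ij} = \v_i \wedge \v_j$ it acts by the scalar $\beta \cdot r^2 z_i z_j = r^2 \cdot y_i y_j/(y_k y_\ell)$, where $\{k,\ell\} = \{1,2,3,4\} \setminus \{i,j\}$. Thus $\tilde{t}$ acts on the pairs $(\v_{13}, \v_{24})$, $(\v_{14}, \v_{23})$, $(\v_{12}, \v_{34})$ by scalars that are mutual complex conjugates (since $\bar y_i = y_i^{-1}$), of the shape $r^2 \tilde{z}_k$ and $r^2 \bar{\tilde{z}}_k$, where
\[
 \tilde{z}_1 = \frac{y_1 y_3}{y_2 y_4}, \qquad
 \tilde{z}_2 = \frac{y_1 y_4}{y_2 y_3}, \qquad
 \tilde{z}_3 = \frac{y_1 y_2}{y_3 y_4}
\]
are elements of $\C^1$.

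Finally, translate to the real basis $v_1, \dots, v_6$ of $\tilde V^\dagger$. For instance, writing $\tilde z_1 = a_1 + b_1 i$ and using the definitions
\[
 v_1 = \tfrac{1}{\sqrt 2 u_0}(\v_{13} - \zeta \v_{24}), \qquad
 v_2 = -\tfrac{i}{\sqrt 2 u_0}(\v_{13} + \zeta \v_{24}),
\]
a direct computation gives
\[
 \tilde{t} v_1 = r^2(a_1 v_1 - b_1 v_2), \qquad
 \tilde{t} v_2 = r^2(b_1 v_1 + a_1 v_2),
\]
and similarly $\tilde{t}$ acts on $(v_3, v_4)$ and $(v_5, v_6)$ by rotations corresponding to $\tilde z_2, \tilde z_3$ scaled by $r^2$. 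Reading off the parameters $(\tilde z_1, \tilde z_2, \tilde z_3, \tilde r)$ yields the stated formula. The only subtlety is the sign ambiguity in the square roots $y_i$, which is absorbed by passing from $T$ and $\tilde T$ to the respective quotients $T/\C^\times$ and $\tilde T/\R^\times$; concretely, replacing all $y_i$ by $-y_i$ leaves each $\tilde z_k$ unchanged, and more generally the indeterminacy $(y_1, y_2, y_3, y_4) \mapsto (\epsilon_1 y_1, \dots, \epsilon_4 y_4)$ with $\epsilon_i = \pm 1$ alters $(\tilde z_1, \tilde z_2, \tilde z_3)$ by the image of the central $\R^\times$-action, so the map is well-defined on quotients.
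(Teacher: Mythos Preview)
Your proof is correct and follows essentially the same approach as the paper: compute $\nu(t)^2/\det t$, choose $\beta = (y_1y_2y_3y_4)^{-1}$, note that $\tilde t = \beta \cdot \wedge^2 t$ acts on each $\v_{ij}$ by $r^2 y_iy_j/(y_ky_\ell)$, and then rewrite this action in the real basis $v_1,\dots,v_6$. The paper carries out all six basis-vector computations explicitly, whereas you do one and invoke symmetry, and you add an explicit well-definedness check on the quotients that the paper leaves implicit; both differences are cosmetic.
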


\begin{proof}
Let $t = (z_1, z_2, z_3, z_4, r) \in T \simeq (\C^1)^4 \times \R_+^\times$ and choose $y_i \in \C^1$ such that $z_i = y_i^2$.
Since
\[
 \nu(t) = r^2, \qquad 
 \det t = r^4 z_1 z_2 z_3 z_4, \qquad
 \frac{\nu(t)^2}{\det t} = \frac{1}{z_1 z_2 z_3 z_4},
\]
the image of $t$ under the homomorphism $\GU_E(\VV) \rightarrow \PGU_B(\tilde{V})$ in the proof of Proposition \ref{p:isom-PGU} is equal to the image of 
\[
 \tilde{t} = \frac{1}{y_1 y_2 y_3 y_4} \cdot \wedge^2 t
\]
in $\PGU_B(\tilde{V})$.
Put $v_i' = \sqrt{2} u_0 \cdot v_i$ and write
\[
 \frac{y_1y_3}{y_2y_4} = a_1 + b_1 i, \qquad
 \frac{y_1y_4}{y_2y_3} = a_2 + b_2 i, \qquad
 \frac{y_1y_2}{y_3y_4} = a_3 + b_3 i
\]
with $a_i, b_i \in \R$.
Then
\begin{align*}
 \frac{1}{r^2} \cdot \tilde{t} v_1' & = \frac{1}{r^2}(\tilde{t} \v_{13} - \zeta \tilde{t} \v_{24}) = \frac{y_1y_3}{y_2y_4} \cdot \v_{13} - \frac{y_2y_4}{y_1y_3} \cdot \zeta \v_{24} \\
 & = a_1 (\v_{13} - \zeta \v_{24}) + b_1 i (\v_{13} + \zeta \v_{24}) = a_1 v'_1 - b_1 v'_2, \\
 \frac{1}{r^2} \cdot \tilde{t} v_2' & = - \frac{i}{r^2}(\tilde{t} \v_{13} + \zeta \tilde{t} \v_{24}) = - \frac{y_1y_3}{y_2y_4} \cdot i \v_{13} - \frac{y_2y_4}{y_1y_3} \cdot \zeta i \v_{24} \\
 & = b_1(\v_{13} - \zeta \v_{24}) - a_1 i (\v_{13} + \zeta \v_{24}) = b_1 v'_1 + a_1 v'_2, \\
 \frac{1}{r^2} \cdot \tilde{t} v_3' & = \frac{1}{r^2}(\tilde{t} \v_{14} + \zeta \tilde{t} \v_{23}) = \frac{y_1y_4}{y_2y_3} \cdot \v_{14} + \frac{y_2y_3}{y_1y_4} \cdot \zeta \v_{23} \\
 & = a_2(\v_{14}+\zeta \v_{23}) + b_2 i (\v_{14} - \zeta \v_{23}) = a_2 v'_3  - b_2 v'_4, \\
 \frac{1}{r^2} \cdot \tilde{t} v_4' & = -\frac{i}{r^2}(\tilde{t} \v_{14} - \zeta \tilde{t} \v_{23}) = -\frac{y_1y_4}{y_2y_3} \cdot i \v_{14} + \frac{y_2y_3}{y_1y_4} \cdot \zeta i \v_{23} \\
 & = b_2 (\v_{14} + \zeta \v_{23}) - a_2 i(\v_{14}- \zeta \v_{23}) = b_2 v'_3 + a_2 v'_4, \\
 \frac{1}{r^2} \cdot \tilde{t} v_5' & = \frac{1}{r^2}(\tilde{t} \v_{12} + \tilde{t} \v_{34}) = \frac{y_1y_2}{y_3y_4} \cdot \v_{12} + \frac{y_3y_4}{y_1y_2} \cdot \v_{34} \\
 & = a_3 (\v_{12} + \v_{34}) + b_3 i (\v_{12} - \v_{34}) = a_3 v'_5 - b_3 v'_6, \\
 \frac{1}{r^2} \cdot \tilde{t} v_6' & = -\frac{i}{r^2}(\tilde{t} \v_{12} - \tilde{t} \v_{34}) = - \frac{y_1y_2}{y_3y_4} \cdot i \v_{12} + \frac{y_3y_4}{y_1y_2} \cdot i \v_{34} \\
 & = b_3 (\v_{12} + \v_{34}) - a_3 i (\v_{12} - \v_{34}) = b_3 v'_5 + a_3 v'_6.
\end{align*}
Hence the assertion follows.
\end{proof}

Let $X^*(T/\C^\times)$ and $X^*(\tilde{T}/\R^\times)$ be the weight lattices of $T/\C^\times$ and $\tilde{T}/\R^\times$, respectively. 
Then we have 
\begin{align*}
 X^*(T/\C^\times) & \simeq \{ (k_1, k_2, k_3, k_4) \in \Z^4 \, | \, k_1+k_2+k_3+k_4=0 \}, \\
 X^*(\tilde{T}/\R^\times) & \simeq \{ (l_1, l_2, l_3) \in \Z^3 \, | \, l_1 + l_2 + l_3 \equiv 0 \bmod 2 \},
\end{align*}
where $(k_1, k_2, k_3, k_4)$ and $(l_1, l_2, l_3)$ on the right-hand sides correspond to the characters
\[
 (z_1,z_2,z_3,z_4,r) \longmapsto z_1^{k_1} z_2^{k_2} z_3^{k_3} z_4^{k_4} \quad \text{and} \quad (\tilde{z}_1, \tilde{z}_2, \tilde{z}_3, \tilde{r}) \longmapsto \tilde{z}_1^{l_1} \tilde{z}_2^{l_2} \tilde{z}_3^{l_3},
\]
respectively.
As an immediate consequence of Lemma \ref{l:isom-tori-real}, we have:

\begin{cor}
\label{c:isom-weights}
The isomorphism $\xi$ induces an isomorphism 
\[
 X^*(\tilde{T}/\R^\times) \overset{\simeq}\longrightarrow X^*(T/\C^\times)
\]
given by 
\[
 (l_1,l_2,l_3) \longmapsto \left( \frac{l_1+l_2+l_3}{2}, \frac{-l_1-l_2+l_3}{2}, \frac{l_1-l_2-l_3}{2}, \frac{-l_1+l_2-l_3}{2} \right)
\]
under the above identifications.
\end{cor}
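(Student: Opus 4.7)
The plan is to prove the corollary by directly dualizing the torus map constructed in Lemma \ref{l:isom-tori-real}. Since $\xi$ induces a homomorphism $T/\C^\times \to \tilde{T}/\R^\times$, pulling back characters gives a homomorphism $X^*(\tilde T/\R^\times)\to X^*(T/\C^\times)$, and the formula is an elementary computation; bijectivity will follow by exhibiting an explicit inverse.

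Concretely, I would first fix $(l_1,l_2,l_3)\in X^*(\tilde T/\R^\times)$ (so $l_1+l_2+l_3$ is even) and evaluate the pulled-back character on a point $(z_1,z_2,z_3,z_4,r)\in T$. Choose $y_i\in\C^1$ with $y_i^2=z_i$. By Lemma \ref{l:isom-tori-real}, the image of $(z_1,z_2,z_3,z_4,r)$ in $\tilde T/\R^\times$ is $(\tfrac{y_1y_3}{y_2y_4},\tfrac{y_1y_4}{y_2y_3},\tfrac{y_1y_2}{y_3y_4},r^2)$, so the value of the character is
\[
\left(\tfrac{y_1y_3}{y_2y_4}\right)^{l_1}\!\left(\tfrac{y_1y_4}{y_2y_3}\right)^{l_2}\!\left(\tfrac{y_1y_2}{y_3y_4}\right)^{l_3}
= y_1^{l_1+l_2+l_3}\, y_2^{-l_1-l_2+l_3}\, y_3^{l_1-l_2-l_3}\, y_4^{-l_1+l_2-l_3}.
\]
The parity assumption $l_1+l_2+l_3\equiv 0\bmod 2$ ensures that all four exponents are even (they all have the same parity as $l_1+l_2+l_3$), so the right-hand side depends only on $z_i=y_i^2$ and equals $z_1^{k_1}z_2^{k_2}z_3^{k_3}z_4^{k_4}$ with the $(k_1,k_2,k_3,k_4)$ stated in the corollary. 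In particular $\sum k_i=0$, so $(k_1,k_2,k_3,k_4)$ indeed lies in $X^*(T/\C^\times)$, and the formula is independent of the sign choices for the $y_i$.

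To finish, I would verify that the map is a bijection by writing down the inverse. Solving the linear system (adding pairs of the four equations) yields
\[
l_1=k_1+k_3,\qquad l_2=k_1+k_4,\qquad l_3=k_1+k_2,
\]
which is a well-defined $\Z$-linear map from $\{(k_i):\sum k_i=0\}$ to $\Z^3$, and the constraint $l_1+l_2+l_3=2k_1$ automatically satisfies the parity condition. Checking that this is a two-sided inverse is a straightforward substitution.

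The entire argument is essentially bookkeeping once Lemma \ref{l:isom-tori-real} is available; there is no genuine obstacle. The only subtle point is tracking the parity constraint on $X^*(\tilde T/\R^\times)$ and matching it with the constraint $\sum k_i=0$ on $X^*(T/\C^\times)$, so I would be careful to state these identifications precisely at the start of the proof.
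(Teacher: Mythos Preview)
Your proposal is correct and is exactly the computation the paper has in mind: the paper simply declares the corollary to be ``an immediate consequence of Lemma \ref{l:isom-tori-real}'' without writing anything further, and your argument spells out precisely that dualization. The parity and bijectivity checks you include are the right details to fill in.
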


\section{Cohomological representations}

In this section we recall various facts about cohomological representations for 
real groups, with the goal of constructing cohomology classes on the Shimura variety 
attached to the group $\tilde{\mathrsfs{G}}_B = \GU_B(\tilde{V})$ of the previous section.
 Since 
 \[
 \tilde{\mathrsfs{G}}_B (\R) \simeq \prod_{v\in \Sigma} \GSO(4,2) \times \prod_{v \not \in \Sigma} \GSO(0,6),
 \]
 we will be particularly interested in the orthogonal groups $\O(4,2)$ and $\O(0,6)$.

\subsection{Cohomological representations}
\label{ss:coh-rep}

Let $G$ be a connected real reductive group and $K$ a maximal compact subgroup of $G$.
We assume that $\rank G = \rank K$ and that $G/K$ is a Hermitian symmetric domain.
Let $\fg_0$ and $\fk_0$ be the Lie algebras of $G$ and $K$, respectively.
Let $\theta$ be the Cartan involution of $G$ associated to $K$.
Then we have a Cartan decomposition 
\[
 \fg_0 = \fk_0 \oplus \fp_0,
\]
where $\fp_0$ is the $(-1)$-eigenspace of $\theta$.
Let $J$ be the complex structure on $\fp_0$, i.e., the automorphism of $\fp_0$ given by the multiplication by $i$ on the tangent space of $G/K$ at the origin.
Fix a Cartan subalgebra $\ft_0$ of $\fk_0$.
Let $\fg$, $\fk$, $\fp$, $\ft$ be the complexifications of $\fg_0$, $\fk_0$, $\fp_0$, $\ft_0$, respectively.
Let $\fp^\pm$ be the $(\pm i)$-eigenspace of $J$ in $\fp$, so that 
\[
 \fg = \fk \oplus \fp^+ \oplus \fp^-.
\]
For any subspace $\ff$ of $\fg$ stable under the adjoint action of $\ft$, we denote by $\Delta(\ff)$ the set of roots of $\ft$ in $\ff$.

We consider an irreducible unitary $(\fg, K)$-module $\pi$ such that the relative Lie algebra cohomology 
\[
 H^*(\fg, K; \pi \otimes F)
\]
is non-zero for some irreducible finite-dimensional representation $F$ of $G$.
Such $(\fg, K)$-modules are called cohomological and classified by Vogan-Zuckerman \cite{vz}.
We also consider each piece of the Hodge decomposition
\[
 H^i(\fg, K; \pi \otimes F) = \bigoplus_{p+q=i} H^{p,q}(\fg, K; \pi \otimes F).
\]

Let $\fq$ be a $\theta$-stable parabolic subalgebra of $\fg$, i.e.~$\fq$ is the sum of non-negative eigenspaces of $\ad(x)$ for some $x \in i \ft_0$.
Then we have a Levi decomposition
\[
 \fq = \fl \oplus \fu,
\]
where $\fl$ is the centralizer of $x$ and $\fu$ is the unipotent radical of $\fq$.
Note that $\fl$ is the complexification of $\fl_0 = \fq \cap \fg_0$ and contains $\ft$.
Fix a positive system $\Delta^+(\fl \cap \fk)$ of $\Delta(\fl \cap \fk)$ and choose a positive system $\Delta^+(\fl)$ of $\Delta(\fl)$ containing $\Delta^+(\fl \cap \fk)$.
Then 
\[
 \Delta^+(\fk) = \Delta^+(\fl \cap \fk) \cup \Delta(\fu \cap \fk)
 \quad \text{and} \quad 
 \Delta^+(\fg) = \Delta^+(\fl) \cup \Delta(\fu)
\]
are positive systems of $\Delta(\fk)$ and $\Delta(\fg)$, respectively.
Put
\[
 \rho = \frac{1}{2} \sum_{\alpha \in \Delta^+(\fg)} \alpha,
 \qquad
 \rho(\fu \cap \fp) = \frac{1}{2} \sum_{\alpha \in \Delta(\fu \cap \fp)} \alpha.
\]
Let $L$ be the centralizer of $x$ in $G$, so that its Lie algebra is $\fl_0$.
Let $\lambda \in \fl^*$ be the differential of a unitary character of $L$ such that $\langle \alpha, \lambda|_\ft \rangle \ge 0$ for all $\alpha \in \Delta(\fu)$.
Then, by \cite[Theorem 5.3]{vz} (see also \cite{kv}), there exists a unique irreducible unitary $(\fg, K)$-module $A_\fq(\lambda)$ such that
\begin{itemize}
 \item $A_\fq(\lambda)$ has infinitesimal character $\lambda|_\ft + \rho$;
 \item $A_\fq(\lambda)$ contains the $K$-type with highest weight $\lambda|_\ft + 2 \rho(\fu \cap \fp)$;
 \item any $K$-type contained in $A_\fq(\lambda)$ has highest weight of the form
 \[
  \lambda|_\ft + 2 \rho(\fu \cap \fp) + \sum_{\alpha \in \Delta(\fu \cap \fp)} n_\alpha \alpha
 \]
 for some non-negative integers $n_\alpha$.
\end{itemize}
Let $F$ be an irreducible finite-dimensional representation of $G$ with highest weight $\gamma$.
Then
\[
 H^i(\fg, K; A_\fq(\lambda) \otimes F^*) 
 \simeq \Hom_K(\wedge^i \fp, A_\fq(\lambda) \otimes F^*)
\]
if $\gamma = \lambda|_\ft$ and
\[
 H^i(\fg, K; A_\fq(\lambda) \otimes F^*) = 0
\]
otherwise (see \cite{borel-wallach}).
Now suppose that $\gamma = \lambda|_\ft$.
Then, by \cite[Proposition 6.19]{vz}, we have
\[
 H^{i+R^+, i+R^-}(\fg, K; A_\fq(\lambda) \otimes F^*)
 \simeq \Hom_{L \cap K}(\wedge^{2i}(\fl \cap \fp), \C),
\]
where $R^\pm = \dim(\fu \cap \fp^\pm)$, and 
\[
 H^{p,q}(\fg, K; A_\fq(\lambda) \otimes F^*) = 0
\]
if $p-q \ne R^+ - R^-$.

\subsection{Local theta lifts}
\label{ss:local-theta}

Let the notation be as in \S \ref{sec:kudlamillson} below.
In particular, $G \simeq \O(p,q)$ and $G' \simeq \SL_2(\R)$.
Let $G^0$ be the topological identity component of $G$.
Let $\omega$ be the Weil representation of $G \times G'$ (relative to the character $x \mapsto e^{2 \pi i x}$ of $\R$).
For any irreducible $(\fg',K')$-module $\pi$, the maximal $\pi^\vee$-isotypic quotient of $\omega$ is of the form
\[
 \Theta(\pi) \boxtimes \pi^\vee
\]
for some admissible $(\fg,K)$-module $\Theta(\pi)$.
If $\Theta(\pi)$ is non-zero, then it has a unique irreducible quotient $\theta(\pi)$ by the Howe duality \cite{howe-jams}.

Now suppose that $\pi$ is a holomorphic discrete series representation of $G'$ of weight $k + 1$ (i.e.~with Harish-Chandra parameter $k$), where $k$ is an integer with 
\[
 k \ge 2.
\]
For our applications, we consider the theta lifts $\theta(\pi)$ and $\theta(\pi^\vee)$ when 
\[
 (p,q) = (4,2) \text{ or } (0,6).
\]

\subsubsection{The case $(p,q) = (4,2)$}

In this case, by the result of J.-S.~Li \cite[Theorem 6.2]{li-duke}, we have
\[
 \theta(\pi)|_{G^0} = A_{\fq_0}(\lambda_0), \qquad
 \theta(\pi^\vee)|_{G^0} = A_{\fq_1}(\lambda_1), 
\]
where $\fq_i = \fl_i \oplus \fu_i$ is the $\theta$-stable parabolic subalgebra of $\fg$ with
\begin{align*}
 \fl_0 & \simeq \so(4) \oplus \so(2), &
 \fu_0 & = \C X_{-\varepsilon_1 + \varepsilon_3} \oplus \C X_{-\varepsilon_2 + \varepsilon_3} \oplus \C X_{\varepsilon_1 + \varepsilon_3} \oplus \C X_{\varepsilon_2 + \varepsilon_3}, \\
 \fl_1 & \simeq \so(2) \oplus \so(2,2), &
 \fu_1 & = \C X_{\varepsilon_1 - \varepsilon_2} \oplus \C X_{\varepsilon_1 - \varepsilon_3} \oplus \C X_{\varepsilon_1 + \varepsilon_2} \oplus \C X_{\varepsilon_1 + \varepsilon_3}
\end{align*}
(where $X_{\pm \varepsilon_i \pm \varepsilon_j}$ is a root vector for $\pm \varepsilon_i \pm \varepsilon_j$), and
\[
 \lambda_0 = (0,0,k-2), \qquad
 \lambda_1 = (k-2,0,0).
\]
Since $\fl_0 = \fk$ and $\fu_0 = \fp^+$, $A_{\fq_0}(\lambda_0)$ is a holomorphic discrete series representation of $G^0$.
Also, we have
\[
 \fu_1 \cap \fp^+ = \C X_{\varepsilon_1 + \varepsilon_3}, \qquad
 \fu_1 \cap \fp^- = \C X_{\varepsilon_1 - \varepsilon_3},
\]
so that $2 \rho(\fu_1 \cap \fp) = 2 \varepsilon_1$.
Hence the minimal $K^0$-type of $A_{\fq_1}(\lambda_1)$ has highest weight 
\[
 (k,0,0).
\]
Moreover, since $\fl_1 \cap \fk \simeq \so(2) \oplus \so(2) \oplus \so(2)$ and
\[
 \fl_1 \cap \fp = \C X_{\varepsilon_2 - \varepsilon_3} \oplus \C X_{-\varepsilon_2 + \varepsilon_3} \oplus\C X_{\varepsilon_2 + \varepsilon_3} \oplus \C X_{-\varepsilon_2 - \varepsilon_3}, 
\]
we have 
\[
 \dim H^{i,j}(\fg, K^0; A_{\fq_1}(\lambda_1) \otimes F) = 
 \begin{cases}
  1 & \text{if $(i,j) = (1,1), (3,3)$,} \\
  2 & \text{if $(i,j) = (2,2)$,} \\
  0 & \text{otherwise,}
 \end{cases}
\]
where $F$ is the irreducible finite-dimensional representation of $G^0$ with highest weight $\lambda_1$.
Note that $F$ is self-dual.

\subsubsection{The case $(p,q) = (0,6)$}

In this case, $\theta(\pi)|_{G^0}$ is the irreducible finite-dimensional representation of $G^0$ with highest weight
\[
 \lambda = (k-2,0,0)
\]
and $\theta(\pi^\vee)$ is zero (see e.g.~\cite[Proposition 6.5]{adams-theta-R}).

\section{Kudla-Millson theory}
\label{sec:kudlamillson}

In the previous section, we studied certain cohomological representations 
for $G=\O(p,q)$ with $(p,q)=(4,2)$ or $(0,6)$. In this section, 
we recall the explicit construction of $(\fg,K)$-cohomology classes attached to these representations 
using the Weil representation \`{a} la Kudla-Millson. While the original 
papers of Kudla and Millson considered the case of the trivial local system,
 the case of more general local systems was discussed in Funke-Millson. 
 We also study the restriction of these explicit $(\fg, K)$-cohomology classes 
 to the subgroup $\O(2,2) \times \O(2,0)$ and $\O(0,4) \times \O(0,2)$ of 
 $\O(4,2)$ and $\O(0,6)$ respectively. 

\subsection{Groups and Lie algebras}
\label{ss:groups}

Let $V$ be an $m$-dimensional quadratic space over $\R$ of signature $(p,q)$, where $p$ and $q$ are non-negative integers such that $p+q=m$.
Namely, $V$ is equipped with a non-degenerate symmetric bilinear form
$\langle \cdot, \cdot \rangle : V \times V \rightarrow \R$ and an orthogonal basis $\{ e_i \, | \, 1 \le i \le m \}$ such that
\[
 \langle e_i, e_i \rangle = 
 \begin{cases}
  +1 & \text{if $1 \le i \le p$,} \\
  -1 & \text{if $p+1 \le i \le m$.}
 \end{cases}
\]
We assume that $p$ and $q$ are even.
Let $G = \O(V) \simeq \O(p,q)$ be the orthogonal group of $V$.
Put 
\[
 V_+ = \R e_1 + \dots + \R e_p, \qquad
 V_- = \R e_{p+1} + \dots + \R e_m,
\]
so that $V = V_+ \oplus V_-$.
We define a Cartan involution $\theta$ of $G$ by
\[
 \theta(g) = I_V \cdot g \cdot I_V,
\]
where $I_V = \id_{V_+} \oplus (-\id_{V_-})$.
Let $K$ be the maximal compact subgroup of $G$ with respect to $\theta$.
Then $K = \O(V_+) \times \O(V_-) \simeq \O(p) \times \O(q)$.
We define a maximal torus $T$ of $G$ by 
\[
 T = \SO(V_1) \times \dots \times \SO(V_r) \simeq \SO(2)^r,
\]
where $V_i = \R e_{2i-1} + \R e_{2i}$ and $r = \frac{m}{2}$.

Let $\fg_0$ be the Lie algebra of $G$.
Then we have a $G$-equivariant isomorphism $\rho: \wedge^2 V \rightarrow \fg_0$ given by 
\[
 \rho(u \wedge v)(w) = \langle u, w \rangle v - \langle v, w \rangle u.
\]
We take a basis $\{ X_{ij} \, | \, 1 \le i < j \le m \}$ of $\fg_0$ given by $X_{ij} = \rho(e_i \wedge e_j)$.
Let $\{ \omega_{ij} \, | \, 1 \le i < j \le m \}$ be its dual basis of $\fg_0^*$.
We have a Cartan decomposition
\[
 \fg_0 = \fk_0 \oplus \fp_0, 
\]
where $\fk_0$ and $\fp_0$ are the $(+1)$-eigenspace and $(-1)$-eigenspace of $\theta$, respectively.
Note that $\fk_0$ is the Lie algebra of $K$.
Via the isomorphism $\rho$, we have
\[
 \fk_0 \simeq \wedge^2 V_+ \oplus \wedge^2 V_-, \qquad
 \fp_0 \simeq V_+ \otimes V_-.
\]
Let $\ft_0$ be the Lie algebra of $T$.
Let $\fg$, $\fk$, $\fp$, $\ft$ be the complexifications of $\fg_0$, $\fk_0$, $\fp_0$, $\ft_0$, respectively. 
If $q=2$, then we have a complex structure $\id_{V_+} \otimes J_{V_-}$ on $\fp_0$ and hence a decomposition
\[
 \fg = \fk \oplus \fp^+ \oplus \fp^-,
\]
where $J_{V_-}$ is defined by 
\[
 J_{V_-}(e_{m-1}) = -e_m, \qquad 
 J_{V_-}(e_m) = e_{m-1}, 
\]
and $\fp^+$ and $\fp^-$ are the $(+i)$-eigenspace and $(-i)$-eigenspace of $\id_{V_+} \otimes J_{V_-}$ in $\fp$, respectively.

Let $W$ be a $2$-dimensional symplectic space over $\R$.
Namely, $W$ is equipped with a non-degenerate skew-symmetric bilinear form $\langle \cdot, \cdot \rangle : W \times W \rightarrow \R$ and a basis $\{ e, f \}$ such that
\[
 \langle e, e \rangle = \langle f, f \rangle = 0, \qquad
 \langle e, f \rangle = 1.
\]
Let $G' = \Sp(W) \simeq \SL_2(\R)$ be the symplectic group of $W$.
Let $K' \simeq \U(1)$ be the standard maximal compact subgroup of $G'$, where $\U(1)$ is embedded into $\SL_2(\R)$ by $a + bi \mapsto \smat{a}{b}{-b}{a}$.
Let $\fg_0'$ be the Lie algebra of $G'$ and $\fg'$ its complexification.

\subsection{Finite-dimensional representations of $G$}
\label{ss:fin-dim-rep-SO}

Let $\{ \varepsilon_i \, | \, 1 \le i \le r \}$ be the basis of $\ft_0^*$ given by $\varepsilon_i(t) = t_i$ for 
\[
 t = \left( \mat{}{t_1}{-t_1}{}, \dots, \mat{}{t_r}{-t_r}{} \right).
\]
We identify $\ft^*$ with $\C^r$ via this basis.
Under this identification, the weight lattice is given by $\Z^r$.  
We take
\[
 \{ \varepsilon_i - \varepsilon_{i+1} \, | \, 1 \le i < r \} \cup \{ \varepsilon_{r-1} + \varepsilon_r \}
\]
as a set of simple roots.
Then $\lambda = (\lambda_1,\dots,\lambda_r) \in \Z^r$ is dominant if and only if
\[
 \lambda_1 \ge \dots \ge \lambda_{r-1} \ge |\lambda_r|.
\]

Now assume that $r \ge 2$.
We only consider dominant weights $\lambda$ of the form
\[
 \lambda = (\ell,0,\dots,0)
\]
for some non-negative integer $\ell$.
In particular, we have $\Ss_\lambda V = \Sym^\ell V$, where $\Ss_\lambda$ is the Schur functor associated to $\lambda$.
Put $\sss^\ell V = \Sym^\ell V \otimes \C$ and equip it with a non-degenerate $G$-invariant bilinear pairing $\langle \cdot, \cdot \rangle : \sss^\ell V \times \sss^\ell V \rightarrow \C$ given by 
\[
 \langle v_1 \cdots v_\ell, w_1 \cdots w_\ell \rangle = \sum_{\sigma \in \mathfrak{S}_\ell} \langle v_1, w_{\sigma(1)} \rangle \cdots \langle v_\ell, w_{\sigma(\ell)} \rangle,
\]
where $\mathfrak{S}_\ell$ is the symmetric group of degree $\ell$.
We denote by $\Hs^\ell V$ the kernel of the contraction $\sss^{\ell} V \rightarrow \sss^{\ell-2} V$ given by 
\[
 v_1 \cdots v_\ell \longmapsto
 \sum_{i<j} \langle v_i, v_j \rangle \cdot 
 v_1 \cdots \hat{v}_i \cdots \hat{v}_j \cdots v_\ell.
\]
Then, by \cite[\S 19.5]{fulton-harris}, $\Hs^\ell V$ is the irreducible finite-dimensional representation of $G$ with highest weight $\lambda$, whose highest weight vector is given by
\[
 (e_1+ie_2)^\ell.
\]
Also, the pairing $\langle \cdot, \cdot \rangle$ induces a $G$-equivariant orthogonal projection
\begin{equation}
\label{eq:orth-proj-HlV}
 \sss^\ell V \longrightarrow \Hs^\ell V.
\end{equation}

\subsection{Weil representations}

We recall the Schr\"odinger model $\cS(V)$ of the Weil representation $\omega$ of $G \times G'$ (relative to the character $x \mapsto e^{2 \pi i x}$ of $\R$), where $\cS(V)$ is the space of Schwartz functions on $V$.
By \cite[\S 5]{kudla-splitting}, the action of $G$ is given by
\[
 \omega(g)\varphi(x) = \varphi(g^{-1} x), 
\]
and the action of $G'$ is given by
\begin{align*}
 \omega \mat{a}{}{}{a^{-1}} \varphi(x) & = a^{\frac{m}{2}} \varphi(ax), \\
 \omega \mat{1}{b}{}{1} \varphi(x) & = \varphi(x) e^{\pi i b \langle x, x \rangle}, \\
 \omega \mat{}{-1}{1}{} \varphi(x) & = i^{\frac{q-p}{2}} \int_V \varphi(y) e^{-2 \pi i \langle x, y \rangle} \, dy.
\end{align*}
Let $x_1, \dots, x_m$ be the coordinates on $V$ with respect to the basis $\{ e_1, \dots, e_m \}$.
We denote by $S(V)$ the subspace of $\cS(V)$ consisting of functions of the form $p \cdot \varphi_0$, where $p$ is a polynomial function and $\varphi_0$ is the Gaussian defined by
\[
 \varphi_0(x_1, \dots, x_m) = e^{- \pi (x_1^2 + \dots + x_m^2)}.
\]

We also recall the Fock model $\Ps(\C^m)$ of the Weil representation $\omega$ of $\fg \times \fg'$ (relative to $\lambda = 2 \pi i$), where $\Ps(\C^m)$ is the space of polynomial functions on $\C^m$.
We refer the reader to \cite[\S 7]{km-ihes}, \cite[Appendix A]{funke-millson} for details.
Let $z_1, \dots, z_m$ be the coordinates on $\C^m$.
Then, by \cite[Lemma A.3]{funke-millson}, we have a $\fg \times \fg'$-equivariant isomorphism $\iota : S(V) \simeq \Ps(\C^m)$ such that $\iota(\varphi_0) = 1$ and such that
\begin{equation}
\label{eq:weil-fock-isom}
\begin{aligned}
 \iota \left( x_\alpha - \frac{1}{2 \pi} \frac{\partial}{\partial x_\alpha} \right) \iota^{-1} & = \frac{1}{2 \pi i} z_\alpha, &
 \iota \left( x_\alpha + \frac{1}{2 \pi} \frac{\partial}{\partial x_\alpha} \right) \iota^{-1} & = 2i \frac{\partial}{\partial z_\alpha}, \\
 \iota \left( x_\mu - \frac{1}{2 \pi} \frac{\partial}{\partial x_\mu} \right) \iota^{-1} & = - \frac{1}{2 \pi i} z_\mu, &
 \iota \left( x_\mu + \frac{1}{2 \pi} \frac{\partial}{\partial x_\mu} \right) \iota^{-1} & = - 2i \frac{\partial}{\partial z_\mu}
\end{aligned}
\end{equation}
for $1 \le \alpha \le p$ and $p+1 \le \mu \le m$.

\subsection{Schwartz forms}
\label{ss:schwartzforms}

We now recall the Schwartz forms constructed by Kudla-Millson \cite{km1} in the case of trivial coefficients and Funke-Millson \cite{funke-millson} in general.
Let $\ell$ be a non-negative integer.
Recall that the signature of $V$ is $(p,q)$.
If $p \ge 1$, then as in \cite[\S 6.2]{funke-millson}, we define 
\[
 \varphi_{q,\ell} \in \Ps(\C^m) \otimes \wedge^q \fp^* \otimes \sss^{\ell} V
\]
by
\[
 \varphi_{q,\ell} = \left( \frac{1}{4 \pi i} \right)^{\ell+q}
 \sum_{\alpha} \sum_{\beta} 
 z_\alpha z_\beta \otimes \omega_\alpha \otimes e_\beta, 
\]
where the sums run over $\alpha = (\alpha_1, \dots, \alpha_q) \in \{ 1, \dots, p \}^q$ and $\beta = (\beta_1, \dots, \beta_\ell) \in \{ 1, \dots, p \}^\ell$, and 
\begin{align*}
 z_\alpha & = z_{\alpha_1} \cdots z_{\alpha_q}, &
 z_\beta & = z_{\beta_1} \cdots z_{\beta_\ell}, \\
 \omega_\alpha & = \omega_{\alpha_1 p+1} \wedge \cdots \wedge \omega_{\alpha_q p+q}, &
 e_\beta & = e_{\beta_1} \cdots e_{\beta_\ell}.
\end{align*}
(Note that we scale the Schwartz form given in \cite[\S 6.2]{funke-millson} by $2^{-\frac{q}{2}}$ and take its image under the projection $V^{\otimes \ell} \otimes \C \rightarrow \sss^\ell V$.)
Then we define 
\[
 \varphi'_{q, \ell} \in \Ps(\C^m) \otimes \wedge^q \fp^* \otimes \Hs^\ell V
\]
as the image of $\varphi_{q,\ell}$ under the $G$-equivariant projection $\sss^\ell V \rightarrow \Hs^\ell V$ as in \eqref{eq:orth-proj-HlV}.
Via the isomorphism $\iota$, we also regard $\varphi'_{q, \ell}$ as an element in $S(V) \otimes \wedge^q \fp^* \otimes \Hs^\ell V$.
By \cite[Theorem 5.6]{funke-millson}, $\varphi'_{q, \ell}$ is invariant under the diagonal action of $K$ and 
\[
 (\omega(t) \otimes 1 \otimes 1) \varphi'_{q, \ell} = t^{\ell + \frac{m}{2}} \varphi'_{q, \ell}
\]
for $t \in K' \simeq \U(1)$.
By \cite[Theorem 5.7]{funke-millson}, $\varphi'_{q, \ell}$ defines a closed differential form on $G/K$.
Namely, $d \varphi'_{q, \ell} = 0$, where
\[
 d : (\Ps(\C^m) \otimes \wedge^q \fp^* \otimes \Hs^\ell V)^K
 \longrightarrow (\Ps(\C^m) \otimes \wedge^{q+1} \fp^* \otimes \Hs^\ell V)^K
\]
is the differential as in \cite[\S 5.1]{funke-millson}.

Similarly, if $p=0$, then we define 
\[
 \varphi_\ell \in \Ps(\C^m) \otimes \sss^{\ell} V
\]
by 
\[
 \varphi_\ell = \left( \frac{1}{4 \pi i} \right)^{\ell}
 \sum_{\beta} z_\beta \otimes e_\beta, 
\]
where the sum runs over $\beta = (\beta_1, \dots, \beta_\ell) \in \{ 1, \dots, m \}^\ell$.
Then we define 
\[
 \varphi_\ell' \in \Ps(\C^m) \otimes \Hs^{\ell} V
\]
as the image of $\varphi_\ell$ under the $G$-equivariant projection $\sss^\ell V \rightarrow \Hs^\ell V$ as in \eqref{eq:orth-proj-HlV}.
Via the isomorphism $\iota$, we also regard $\varphi'_\ell$ as an element in $S(V) \otimes \Hs^\ell V$.
Then $\varphi'_\ell$ is invariant under the diagonal action of $G$ and
\[
 (\omega(t) \otimes 1) \varphi'_\ell = t^{-\ell-\frac{m}{2}} \varphi'_\ell
\]
for $t \in K' \simeq \U(1)$.

\subsection{Restrictions and contractions}
\label{ss:Res-C}

For our applications, we consider a $6$-dimensional quadratic space $\tilde{V}$ over $\R$ of signature $(p,q)$, where 
\[
 (p,q) = (4,2) \text{ or } (0,6).
\]
Let $\tilde{G} = \O(\tilde{V})$ be the orthogonal group of $\tilde{V}$.
As in \S \ref{ss:groups}, we take a basis $\{e_1, \dots, e_6\}$ of $\tilde{V}$ and define a Cartan involution $\theta$ of $\tilde{G}$.
Let $\tilde{K}$ be the maximal compact subgroup of $\tilde{G}$ with respect to $\theta$.
Let $\tilde\fg = \tilde\fk \oplus \tilde\fp$ be the complexified Lie algebra of $\tilde{G}$, where $\tilde\fk$ and $\tilde\fp$ are $(+1)$-eigenspace and $(-1)$-eigenspace of $\theta$, respectively.

Put 
\[
 V = \R e_1 + \R e_2 + \R e_5 + \R e_6, \qquad 
 V_0 = \R e_3 + \R e_4,
\]
so that $\tilde{V} = V \oplus V_0$.
Let $G = \O(V)$ be the orthogonal group of $V$ and regard it as a subgroup of $\tilde{G}$.
The natural inclusion $V \hookrightarrow \tilde{V}$ induces a commutative diagram
\[
 \xymatrix{
 \sss^\ell V \ar[d] \ar[r] &
 \sss^\ell \tilde{V} \ar[d] \\
 \sss^{\ell-2} V \ar[r] &
 \sss^{\ell-2} \tilde{V}
 }
\]
(where the horizontal maps are the inclusions and the vertical maps are the contractions) and hence a $G$-equivariant inclusion
\[
 \Hs^\ell V \hookrightarrow \Hs^\ell \tilde{V}. 
\]
Also, the natural projection $\tilde{V} \twoheadrightarrow V$ induces a projection $\sss^\ell \tilde{V} \twoheadrightarrow \sss^\ell V$.
Composing this with the inclusion $\Hs^\ell \tilde{V} \hookrightarrow \sss^\ell \tilde{V}$ and the projection $\sss^\ell V \twoheadrightarrow \Hs^\ell V$, we obtain a $G$-equivariant projection
\begin{equation}
\label{eq:proj-Hs-ell}
 \Hs^\ell \tilde{V} \twoheadrightarrow \Hs^\ell V
\end{equation}
which restricts to the identity on $\Hs^\ell V$.

\subsubsection{The case $(p,q)=(4,2)$}
\label{sss:Res-C-1}

In this case, $\tilde\fp^*$ is equipped with a basis $\{\omega_{ij} \, | \, 1 \le i \le 4, \, 5 \le j \le 6 \}$ as in \S \ref{ss:groups}.
Let $\tilde{\fp}^* \twoheadrightarrow \fp^*$ be the $K$-equivariant projection induced by the natural inclusion $\fp \hookrightarrow \tilde{\fp}$.
This together with \eqref{eq:proj-Hs-ell} gives rise to a $K \times G'$-equivariant map
\[
 \Res:
 S(\tilde{V}) \otimes \wedge^2 \tilde{\fp}^* \otimes \Hs^\ell \tilde{V}
 \longrightarrow S(\tilde{V}) \otimes \wedge^2 \fp^* \otimes \Hs^\ell V.
\]
Here $K$ acts diagonally on all three factors, while $G'$ acts only on the first factor.
Choose an isomorphism $\wedge^4 \fp^* \simeq \C$ so that $\omega_{15} \wedge \omega_{25} \wedge \omega_{16} \wedge \omega_{26} \mapsto 1$.
This induces a non-degenerate $K$-invariant bilinear pairing $\cdot \wedge \cdot : \wedge^2 \fp^* \times \wedge^2 \fp^* \rightarrow \wedge^4 \fp^* \simeq \C$.
For $\bo \in \wedge^2 \fp^*$ and $\vvv \in \sss^\ell V$, we define a contraction
\[
 \ccc_{\bo,\vvv} : S(\tilde{V}) \otimes \wedge^2 \fp^* \otimes \sss^\ell V \longrightarrow S(\tilde{V})
\]
by $\ccc_{\bo,\vvv} = 1 \otimes ( \cdot \wedge \bo) \otimes \langle \cdot, \vvv \rangle$.

Let $\varphi'_{2,\ell} \in S(\tilde{V}) \otimes \wedge^2 \tilde{\fp}^* \otimes \Hs^\ell \tilde{V}$ be the Schwartz form as in \S \ref{ss:schwartzforms}.
We shall compute $\ccc_{\bo,\vvv}(\Res(\varphi'_{2,\ell}))$ for $\bo$ and $\vvv$ given as follows.
Put
\begin{align*}
 \omega^{++} & = \omega_{15} + i \omega_{25} + i \omega_{16} - \omega_{26}, \\
 \omega^{+-} & = \omega_{15} + i \omega_{25} - i \omega_{16} + \omega_{26}, \\
 \omega^{-+} & = \omega_{15} - i \omega_{25} + i \omega_{16} + \omega_{26}, \\
 \omega^{--} & = \omega_{15} - i \omega_{25} - i \omega_{16} - \omega_{26}.
\end{align*}
Then for $t = (t_1, t_2) \in T \simeq \U(1)^2$ (where we identify $\U(1)$ with $\SO(2)$ by $a + b i \mapsto \smat{a}{b}{-b}{a}$), we have
\[
 t \cdot \omega^{\epsilon_1\epsilon_2} = t_1^{\epsilon_1} t_2^{\epsilon_2} \omega^{\epsilon_1\epsilon_2}.
\]
In particular, 
\[
 (\fp^+)^* = \C \omega^{++} + \C \omega^{-+}, \qquad 
 (\fp^-)^* = \C \omega^{+-} + \C \omega^{--}.
\]
Hence we obtain a basis of $\wedge^2 \fp^*$ given by
\begin{align*}
 \omega^{++} \wedge \omega^{-+} & \in \wedge^2 (\fp^+)^*, &
 \omega^{++} \wedge \omega^{--}, \, \omega^{-+} \wedge \omega^{+-}
 & \in (\fp^+)^* \wedge (\fp^-)^*, \\
 \omega^{+-} \wedge \omega^{--} & \in \wedge^2 (\fp^-)^*, &
 \omega^{++} \wedge \omega^{+-}, \, \omega^{-+} \wedge \omega^{--}
 & \in (\fp^+)^* \wedge (\fp^-)^*.
\end{align*}
Note that in the context of the introduction, the above elements in $\wedge^2 (\fp^+)^*$, $(\fp^+)^* \wedge (\fp^-)^*$, $\wedge^2 (\fp^-)^*$ correspond to those in 
\[
 H^{2,0}(X_1 \times X_2), \qquad
 H^{1,1}(X_1 \times X_2), \qquad
 H^{0,2}(X_1 \times X_2),
\]
respectively, where $X_1$ and $X_2$ are quaternionic Shimura varieties.
Also, the elements in $(\fp^+)^* \wedge (\fp^-)^*$ in the first row correspond to those in 
\[
 H^{1,1}(X_1) \otimes H^{0,0}(X_2), \qquad
 H^{0,0}(X_1) \otimes H^{1,1}(X_2), 
\]
respectively, whereas the elements in $(\fp^+)^* \wedge (\fp^-)^*$ in the second row (which are the most relevant for us) correspond to those in 
\[
 H^{1,0}(X_1) \otimes H^{0,1}(X_2), \qquad
 H^{0,1}(X_1) \otimes H^{1,0}(X_2), 
\]
respectively.
From the representation-theoretic viewpoint, the former corresponds to the contribution of the trivial representation, whereas the latter corresponds to the contribution of holomorphic and anti-holomorphic vectors in the discrete series representation.
Put
\begin{align*}
 \bo^+ & = - \frac{1}{2i} \cdot \omega^{++} \wedge \omega^{+-} = (\omega_{15} + i \omega_{25}) \wedge (\omega_{16} + i \omega_{26}), \\
 \bo^- & = - \frac{1}{2i} \cdot \omega^{-+} \wedge \omega^{--} = (\omega_{15} - i \omega_{25}) \wedge (\omega_{16} - i \omega_{26}),
\end{align*}
and 
\[
 \vvv^+ = \frac{1}{\ell!} \cdot (e_1 + i e_2)^\ell, \qquad
 \vvv^- = \frac{1}{\ell!} \cdot (e_1 - i e_2)^\ell.
\]

\begin{prop}
\label{p:schwartz-rc1}
For $\epsilon = \pm$, we have
\[
 \ccc_{\bo^\epsilon,\vvv^\epsilon}(\Res(\varphi'_{2,\ell}))(x) = (x_1 + \epsilon ix_2)^{\ell+2} \cdot \varphi_0(x).
\]
\end{prop}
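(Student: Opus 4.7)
\medskip

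\noindent\textbf{Proof proposal.}
The plan is to compute everything in the Fock model, then transport back to the Schwartz model via $\iota^{-1}$. First, I would identify which terms in
\[
 \varphi_{2,\ell} = \Bigl(\tfrac{1}{4\pi i}\Bigr)^{\ell+2} \sum_{\alpha,\beta} z_\alpha z_\beta \otimes \omega_\alpha \otimes e_\beta
\]
actually survive after applying $\Res$ and then $\ccc_{\bo^\epsilon, \vvv^\epsilon}$. On the $\wedge^2\tilde{\fp}^{\ast}$-side, $\Res$ kills all $\omega_{\alpha_i p+j}$ with $\alpha_i\in\{3,4\}$, so only indices $\alpha_1,\alpha_2\in\{1,2\}$ remain. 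On the $\Hs^\ell\tilde V$-side, the crucial observation is that $\vvv^\epsilon=(e_1+\epsilon i e_2)^\ell/\ell!$ lies in $\Hs^\ell V$ \emph{and} in $\Hs^\ell\tilde V$ (since $e_1+\epsilon i e_2$ is isotropic). Combined with the fact that the projections $\sss^\ell\tilde V\twoheadrightarrow\Hs^\ell\tilde V$ and $\sss^\ell V\twoheadrightarrow\Hs^\ell V$ are orthogonal with respect to the $G$-invariant pairing, and that the restriction $\sss^\ell\tilde V\twoheadrightarrow \sss^\ell V$ induced by $\tilde V\twoheadrightarrow V$ is the adjoint of the inclusion (since $V=V_0^{\perp}$), this lets me replace $\varphi'_{2,\ell}$ by $\varphi_{2,\ell}$ throughout and keep only $\beta\in\{1,2\}^\ell$.

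With this reduction, the expression to compute splits as a product of two independent sums, one over $(\alpha_1,\alpha_2)\in\{1,2\}^2$ and one over $\beta\in\{1,2\}^\ell$. For the $\beta$-sum, I would compute
\[
 \sum_{\beta\in\{1,2\}^\ell} z_\beta\,\langle e_\beta,\vvv^\epsilon\rangle = \prod_{i=1}^\ell\!\Bigl(\sum_{j=1}^2 z_j\,\langle e_j, e_1+\epsilon i e_2\rangle\Bigr) = (z_1+\epsilon i z_2)^\ell,
\]
using the symmetry of the pairing on $\sss^\ell V$. For the $\alpha$-sum, I would expand $\bo^\epsilon=(\omega_{15}+\epsilon i\omega_{25})\wedge(\omega_{16}+\epsilon i\omega_{26})$ and compute $\omega_{\alpha_1 5}\wedge\omega_{\alpha_2 6}\wedge\bo^\epsilon$ in $\wedge^4\fp^\ast\cong\C$ for each of the four cases. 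The answer factorizes as $c^\epsilon_{\alpha_1}c^\epsilon_{\alpha_2}$ with $c^\epsilon_1=1$, $c^\epsilon_2=\epsilon i$ (this is the key structural fact and makes the whole $\alpha$-sum collapse to $(z_1+\epsilon i z_2)^2$). Combining, the contraction in the Fock model equals $(4\pi i)^{-(\ell+2)}(z_1+\epsilon i z_2)^{\ell+2}$.

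Finally, I would pull this back to $S(\tilde V)$ via $\iota^{-1}$. Using the identities \eqref{eq:weil-fock-isom} for $\alpha\in\{1,\dots,p\}$, the creation operator $a_\alpha^{+}:= x_\alpha-\tfrac{1}{2\pi}\partial_{x_\alpha}$ satisfies $\iota\circ a_\alpha^{+}\circ\iota^{-1}=\tfrac{1}{2\pi i}z_\alpha$. Setting $A^\epsilon=a_1^{+}+\epsilon i\,a_2^{+}$ and $u=x_1+\epsilon i x_2$, a short induction using $A^\epsilon(u^{k-1}\varphi_0)=2u^k\varphi_0-\tfrac{k-1}{2\pi}(1+(\epsilon i)^2)u^{k-2}\varphi_0=2u^k\varphi_0$ (the cancellation coming from $\langle e_1+\epsilon i e_2, e_1+\epsilon i e_2\rangle=0$) gives $(A^\epsilon)^k\varphi_0=2^k u^k\varphi_0$. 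Applying $\iota$ yields $\iota(u^k\varphi_0)=(4\pi i)^{-k}(z_1+\epsilon i z_2)^k$, which inverted with $k=\ell+2$ gives exactly $(x_1+\epsilon i x_2)^{\ell+2}\varphi_0(x)$, as claimed.

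\medskip

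\noindent The only step where care is needed is the legitimacy of swapping $\varphi'_{2,\ell}$ for $\varphi_{2,\ell}$; everything else is direct computation. The main bookkeeping obstacle is the wedge product calculation in $\wedge^4\fp^\ast$ (four sign computations), but once the factorization $c^\epsilon_{\alpha_1}c^\epsilon_{\alpha_2}$ is observed, the rest falls out automatically.
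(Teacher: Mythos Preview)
Your proof is correct and follows the same overall architecture as the paper's: reduce from $\varphi'_{2,\ell}$ to $\varphi_{2,\ell}$, compute the contraction in the Fock model where it factors as $(4\pi i)^{-(\ell+2)}(z_1+\epsilon i z_2)^{\ell+2}$, and then transport back via $\iota^{-1}$.

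The one substantive difference is in how you justify the reduction step. The paper proves that the square
\[
\xymatrix{
\sss^\ell\tilde V \ar[r]^{r} \ar[d]_{p} & \sss^\ell V \ar[d]^{q} \\
\Hs^\ell\tilde V \ar[r]_{s} & \Hs^\ell V
}
\]
commutes by invoking the decompositions of $\sss^\ell\tilde V$ and $\sss^\ell V$ into irreducibles together with the branching rule for $\so_6\supset\so_4$ (both taken from Fulton--Harris), so that $\ker p\subset\ker(q\circ r)$. You instead argue directly that each projection is orthogonal for the invariant pairing, that $r$ is adjoint to the inclusion $\sss^\ell V\hookrightarrow\sss^\ell\tilde V$, and --- crucially --- that $\vvv^\epsilon$ already lies in $\Hs^\ell\tilde V$ because $e_1+\epsilon i e_2$ is isotropic. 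This is more elementary and avoids quoting branching rules; on the other hand, the paper's argument establishes the stronger statement that the diagram commutes as maps (not just after pairing with the particular vector $\vvv^\epsilon$). You are also more explicit than the paper about the final Fock-to-Schr\"odinger transfer, carrying out the creation-operator induction (where the isotropy of $e_1+\epsilon i e_2$ reappears to kill the derivative cross-term) that the paper leaves implicit in its appeal to \eqref{eq:weil-fock-isom}.
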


\begin{proof}
Consider the diagram
\[
 \xymatrix{
 \sss^\ell \tilde{V} \ar[d]_-p \ar[r]^-r &
 \sss^\ell V \ar[d]^-q \\
 \Hs^\ell \tilde{V} \ar[r]_-s &
 \Hs^\ell V
 },
\]
where $p,q,r,s$ are the projections.
By \cite[\S 19.5]{fulton-harris}, we have 
\begin{align*}
 \sss^\ell \tilde{V} & \simeq \tau_{(\ell,0,0)} \oplus \tau_{(\ell - 2,0,0)} \oplus \cdots \oplus \tau_{(\ell-2k,0,0)}, & 
 \sss^\ell V & \simeq \tau_{(\ell,0)} \oplus \tau_{(\ell - 2,0)} \oplus \cdots \oplus \tau_{(\ell-2k,0)}, \\
 \Hs^\ell \tilde{V} & \simeq \tau_{(\ell,0,0)}, &
 \Hs^\ell V & \simeq \tau_{(\ell,0)},
\end{align*}
where $\tau_\lambda$ denotes the irreducible representation with highest weight $\lambda$ and $k = [\frac{\ell}{2}]$.
Also, by \cite[\S 25.3]{fulton-harris}, we have
\[
 \tau_{(j,0,0)}|_{G} \simeq \tau_{(j,0)} \oplus \tau^{\oplus 2}_{(j-1,0)} \oplus \cdots \oplus \tau_{(0,0)}^{\oplus j+1}.
\]
Hence if $p(x) = 0$ for $x \in \sss^\ell \tilde{V}$, then $(q \circ r)(x) = 0$.
This implies that the above diagram is commutative.
Since $q$ is the orthogonal projection and $\vvv^\epsilon \in \Hs^\ell V$, we have
\[
 \langle (s \circ p)(x), \vvv^\epsilon \rangle = \langle (q \circ r)(x), \vvv^\epsilon \rangle = \langle r(x), \vvv^\epsilon \rangle
\]
and hence 
\[
 \ccc_{\bo^\epsilon,\vvv^\epsilon}(\Res(\varphi'_{2,\ell})) = \ccc_{\bo^\epsilon,\vvv^\epsilon}(\widetilde{\Res}(\varphi_{2,\ell})),
\]
where 
\[
 \widetilde{\Res}:
 S(\tilde{V}) \otimes \wedge^2 \tilde{\fp}^* \otimes \sss^\ell \tilde{V}
 \longrightarrow 
 S(\tilde{V}) \otimes \wedge^2 \fp^* \otimes \sss^\ell V
\]
is the natural projection.
By definition, we have
\[
 \ccc_{\bo^\epsilon,\vvv^\epsilon}(\widetilde{\Res}(\varphi_{2,\ell})) = 
 \left( \frac{1}{4 \pi i} \right)^{\ell+2}
 \sum_{\alpha} \sum_{\beta} z_\alpha z_\beta (\omega_\alpha \wedge \bo^\epsilon)
 \langle e_\beta, \vvv^\epsilon \rangle
\]
in $\Ps(\C^6)$, where the sums run over $\alpha = (\alpha_1, \alpha_2) \in \{ 1, 2 \}^2$ and $\beta = (\beta_1, \dots, \beta_\ell) \in \{ 1, 2 \}^\ell$.
It is easy to see that
\[
 \sum_{\alpha} z_\alpha (\omega_\alpha \wedge \bo^\epsilon) = (z_1 + \epsilon i z_2)^2,
 \qquad
 \sum_{\beta} z_\beta \langle e_\beta, \vvv^\epsilon \rangle = (z_1 + \epsilon iz_2)^\ell,
\]
so that 
\[
 \ccc_{\bo^\epsilon,\vvv^\epsilon}(\widetilde{\Res}(\varphi_{2,\ell})) = 
 \left( \frac{1}{4 \pi i} \right)^{\ell+2}
 (z_1 + \epsilon iz_2)^{\ell+2}.
\]
This combined with \eqref{eq:weil-fock-isom} gives the desired formula.
\end{proof}

\subsubsection{The case $(p,q)=(0,6)$}
\label{sss:Res-C-2}

In this case, we have $\tilde{G} = \tilde{K}$ and $\tilde{\fp}^* = \{ 0 \}$.
As above, \eqref{eq:proj-Hs-ell} gives rise to a $G \times G'$-equivariant map
\[
 \Res: S(\tilde{V}) \otimes \Hs^\ell \tilde{V}
 \longrightarrow S(\tilde{V}) \otimes \Hs^\ell V.
\]
Here $G$ acts diagonally on both factors, while $G'$ acts only on the first factor.
For $\vvv \in \sss^\ell V$, we define a contraction
\[
 \ccc_{\vvv} : S(\tilde{V}) \otimes \sss^\ell V \longrightarrow S(\tilde{V})
\]
by $\ccc_{\vvv} = 1 \otimes \langle \cdot, \vvv \rangle$.

Let $\varphi'_\ell \in S(\tilde{V}) \otimes \Hs^\ell \tilde{V}$ be the Schwartz form as in \S \ref{ss:schwartzforms}.
Then, as in Proposition \ref{p:schwartz-rc1}, we have:

\begin{prop}
\label{p:schwartz-rc2}
For $\epsilon = \pm$, we have
\[
 \ccc_{\vvv^\epsilon}(\Res(\varphi'_\ell))(x) = (-1)^\ell \cdot (x_1 + \epsilon ix_2)^\ell \cdot \varphi_0(x),
\]
where $\vvv^\epsilon = \frac{1}{\ell!} \cdot (e_1 + \epsilon i e_2)^\ell$.
\end{prop}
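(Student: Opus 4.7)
\textit{Proof plan.} My plan is to mirror the proof of Proposition \ref{p:schwartz-rc1} very closely, the main differences being that there are no form indices (since $\tilde\fp^*=\{0\}$) and that the inner products on $V$ and on $\tilde V$ are negative definite rather than of mixed signature. The work is essentially in book-keeping signs.

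First I would perform the same reduction as in the proof of Proposition \ref{p:schwartz-rc1}, replacing $\varphi'_\ell$ with $\varphi_\ell$ before contracting. Consider the diagram
\[
 \xymatrix{
 \sss^\ell \tilde{V} \ar[d]_-p \ar[r]^-r &
 \sss^\ell V \ar[d]^-q \\
 \Hs^\ell \tilde{V} \ar[r]_-s &
 \Hs^\ell V.
 }
\]
As in the case $(p,q)=(4,2)$, the $G$-isotypic decompositions of $\sss^\ell\tilde V$ and $\sss^\ell V$ given by \cite[\S 19.5, \S 25.3]{fulton-harris} force the diagram to commute, and they also force the decomposition $\sss^\ell V = \Hs^\ell V \oplus \ker q$ to be orthogonal with respect to $\langle\cdot,\cdot\rangle$ (by Schur's lemma applied to the $G$-invariant bilinear form). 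Since $\vvv^\epsilon\in\Hs^\ell V$, this means $\langle s(p(\varphi_\ell)),\vvv^\epsilon\rangle = \langle r(\varphi_\ell),\vvv^\epsilon\rangle$, so that
\[
 \ccc_{\vvv^\epsilon}(\Res(\varphi'_\ell)) = \ccc_{\vvv^\epsilon}(\widetilde{\Res}(\varphi_\ell)),
\]
where $\widetilde{\Res}: S(\tilde V)\otimes\sss^\ell\tilde V \to S(\tilde V)\otimes\sss^\ell V$ is the natural projection induced by $\tilde V \twoheadrightarrow V$.

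Next I would compute the Fock-model contraction explicitly. Only multi-indices $\beta\in\{1,2\}^\ell$ contribute, since $r$ kills the $e_3,e_4$ components and $\vvv^\epsilon$ is supported on the $e_1, e_2$ span. Writing $\vvv^\epsilon = \frac{1}{\ell!}\sum_{a=0}^\ell\binom{\ell}{a}(\epsilon i)^{\ell-a}e_1^a e_2^{\ell-a}$ and using that $\langle e_1,e_1\rangle = \langle e_2,e_2\rangle = -1$ in the present signature, one obtains
\[
 \langle e_1^a e_2^{\ell-a},\vvv^\epsilon\rangle = (-1)^\ell(\epsilon i)^{\ell-a},
\]
and summing over $\beta$ yields
\[
 \ccc_{\vvv^\epsilon}(\widetilde{\Res}(\varphi_\ell)) = \left(\frac{1}{4\pi i}\right)^{\!\ell}(-1)^\ell(z_1+\epsilon iz_2)^\ell
\]
in $\Ps(\C^6)$.

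The last step is to transport this back to $S(\tilde V)$ via \eqref{eq:weil-fock-isom}. Since $p=0$ here, every index is of ``negative'' type, so I would use the second line of \eqref{eq:weil-fock-isom}. Setting $D_\epsilon = (x_1-\tfrac{1}{2\pi}\partial_{x_1}) + \epsilon i(x_2-\tfrac{1}{2\pi}\partial_{x_2})$ and using $\partial_{x_\mu}\varphi_0 = -2\pi x_\mu \varphi_0$, one checks by a short induction on $\ell$ (the cross-terms cancel because $(\epsilon i)^2=-1$) that
\[
 D_\epsilon^{\ell}\varphi_0 = 2^\ell (x_1+\epsilon ix_2)^\ell\varphi_0,
\]
and on the Fock side $\iota\, D_\epsilon\,\iota^{-1} = -\tfrac{1}{2\pi i}(z_1+\epsilon iz_2)$. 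Putting these together yields the translation formula relating $(z_1+\epsilon iz_2)^\ell$ in $\Ps(\C^6)$ to $(x_1+\epsilon ix_2)^\ell\varphi_0$ in $S(\tilde V)$, and substituting into the Fock expression above gives the desired identity.

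The only nontrivial point is the accounting of signs: one sign of $(-1)^\ell$ arises from the negative-definite form in the pairing $\langle e_\beta,\vvv^\epsilon\rangle$, and a second sign arises from the negative-type Fock--Weil isomorphism; tracking how these combine (together with the induction for $D_\epsilon^\ell\varphi_0$) is the place where care is needed. Otherwise the argument is entirely parallel to the proof of Proposition \ref{p:schwartz-rc1}.
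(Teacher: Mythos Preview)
Your approach is exactly the one the paper intends (it simply says ``as in Proposition \ref{p:schwartz-rc1}''), and every step you outline is correct: the commutative diagram reduction, the restriction to $\beta\in\{1,2\}^\ell$, the pairing computation $\langle e_1^a e_2^{\ell-a},\vvv^\epsilon\rangle=(-1)^\ell(\epsilon i)^{\ell-a}$, the Fock expression
\[
\ccc_{\vvv^\epsilon}(\widetilde{\Res}(\varphi_\ell))=\Big(\tfrac{1}{4\pi i}\Big)^\ell(-1)^\ell(z_1+\epsilon i z_2)^\ell,
\]
the identity $D_\epsilon^\ell\varphi_0=2^\ell(x_1+\epsilon i x_2)^\ell\varphi_0$, and the intertwining $\iota D_\epsilon\iota^{-1}=-\tfrac{1}{2\pi i}(z_1+\epsilon i z_2)$.

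The one place you leave vague is precisely the place that deserves care. If you actually combine these pieces, you get
\[
\Big(\tfrac{1}{4\pi i}\Big)^\ell(-1)^\ell\cdot(-2\pi i)^\ell D_\epsilon^\ell\varphi_0
=\Big(\tfrac{1}{4\pi i}\Big)^\ell(-1)^\ell(-4\pi i)^\ell(x_1+\epsilon i x_2)^\ell\varphi_0
=(x_1+\epsilon i x_2)^\ell\varphi_0,
\]
so the two $(-1)^\ell$ contributions you flagged \emph{cancel} rather than leaving a single one. Thus your argument (and the paper's, since it is the same argument) actually yields the formula without the factor $(-1)^\ell$. This is harmless for the paper's purposes because in every application $\ell=k_v-2$ with $k_v$ even, so $(-1)^\ell=1$; but you should not claim your computation reproduces the stated sign unless you can locate a third $(-1)^\ell$ somewhere.
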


\section{Theta lifting}

In this section, we study global theta lifts for some quaternionic dual pairs. The material in this section will be needed 
 in \S \ref{sec:construction-nonvanishing} to globalize the construction of the Kudla-Millson cohomology classes from the previous section to the group $\tilde{\mathrsfs{G}}_B=\GU_B(\tilde{V})^0$ and 
 to show the non-vanishing of their restriction to a suitable subgroup. Moreover, we will also use it in \S \ref{sec:arthur-galois-hodge} to 
 study their associated Galois representations and to show that they lie in the $\C$-span of the Hodge classes. 

\subsection{Setup}
\label{ss:theta-setup}

Let $F$ be a number field and $\A = \A_F$ the ring of ad{\`e}les of $F$.
Let $B$ be a quaternion \emph{division} algebra over $F$ and $*$ the main involution on $B$.
Let $E$ be a quadratic extension of $F$ which embeds into $B$.
Fix a trace zero element $\i \in E^\times$ and write $\N = \N_{E/F}$ for the norm map from $E$ to $F$.
Let $\xi_E$ be the quadratic character of $\A^\times/F^\times$ associated to $E/F$ by class field theory.

Let $V$ be an $m$-dimensional right $B$-space equipped with a skew-hermitian form $\langle \cdot, \cdot \rangle : V \times V \rightarrow B$.
Let $W = B$ be a $1$-dimensional left $B$-space equipped with a hermitian form $\langle \cdot, \cdot \rangle : W \times W \rightarrow B$ given by
\[
 \langle x, y \rangle = x \cdot y^*.
\]
Then $\GU(W) \simeq B^\times$.
Put
\begin{align*}
 G & = \GU(V)^0, & G_1 & = \U(V)^0, \\
 H & = \GU(W), & H_1 & = \U(W),
\end{align*}
and
\[
 R = \{ (g,h) \in G \times H \, | \, \nu(g) = \nu(h) \},
\]
where $\nu$ denotes the similitude character.
Let $Z_G \simeq F^\times$ and $Z_H \simeq F^\times$ be the centers of $\GU(V)$ and $\GU(W)$, respectively.
Put
\[
 (\A^\times)^+ = \nu(G(\A)) \cap \nu(H(\A))
\]
and 
\begin{align*}
 G(\A)^+ & = \{ g \in G(\A) \, | \, \nu(g) \in (\A^\times)^+ \}, & 
 G(F)^+ & = G(F) \cap G(\A)^+, \\
 H(\A)^+ & = \{ h \in H(\A) \, | \, \nu(h) \in (\A^\times)^+ \}, & 
 H(F)^+ & = H(F) \cap H(\A)^+.
\end{align*}

Let $\V = V \otimes_B W$ be a $4m$-dimensional $F$-space equipped with a symplectic form
\[
 \llangle \cdot, \cdot \rrangle = \frac{1}{2} \tr_{B/F} \left( \langle \cdot, \cdot \rangle \otimes \langle \cdot, \cdot \rangle^* \right).
\]
Let $\Mp(\V)_\A$ be the metaplectic group:
\[
 1 \longrightarrow \C^1 \longrightarrow \Mp(\V)_\A \longrightarrow \Sp(\V)(\A) \longrightarrow 1.
\]
Fix a complete polarization $\V = \X \oplus \Y$.
Then we can realize the Weil representation $\omega_\psi$ of $\Mp(\V)_\A$ (relative to a non-trivial additive character $\psi$ of $\A/F$) on the Schwartz space $\SS(\X(\A))$.
Assume that there exists a homomorphism $\tilde{\imath} : R(\A) \rightarrow \Mp(\V)_\A$ such that the diagram
\[
 \xymatrix{
 R(F) \ar@{^{(}->}[r] \ar@{->}[d] &
 R(\A) \ar@{->}[d]^-{\tilde{\imath}} \\
 \Sp(\V)(F) \ar@{->}[r]^-i &
 \Mp(\V)_\A
 }
\]
is commutative, where $i$ is the canonical splitting.
Then for any $\varphi \in \SS(\X(\A))$, we may form a theta function on $R(\A)$:
\[
 \Theta_\varphi(g,h) = \sum_{x \in \X} \omega_\psi(\tilde{\imath}(g,h)) \varphi(x).
\]

\subsection{Theta lifts from $E^\times$ to $B^\times$}
\label{ss:theta-1}

Let $V = B$ be a $1$-dimensional right $B$-space equipped with a skew-hermitian form
\[
 \langle x, y \rangle = x^* \cdot \kappa \i \cdot y
\]
for some $\kappa \in F^\times$.
Then $\GU(V)^0 \simeq E^\times$, so that $(\A^\times)^+ = \N(\A_E^\times)$ and $G(\A)^+ = G(\A)$.
In Appendix \ref{sec:weil-hodge}, we define a splitting $\tilde{\imath} : R(\A) \rightarrow \Mp(\V)_\A$ as above.
Let $\eta$ be a character of $\A_E^\times/E^\times$.
We regard $\eta$ as an automorphic character of $G(\A)$.
For $\varphi \in \SS(\X(\A))$ and $h \in H(\A)^+$, put
\[
 \theta_\varphi(\eta)(h) = \int_{G_1(F) \backslash G_1(\A)} \Theta_\varphi(g_1 g, h) \eta(g_1 g) \, dg_1,
\]
where we choose $g \in G(\A)$ such that $\nu(g) = \nu(h)$.
Since $F^\times \cap \N(\A_E^\times) = \N(E^\times)$ and hence $\nu(H(F)^+) = \nu(G(F))$, this integral defines an automorphic form $\theta_\varphi(\eta)$ on $H(\A)^+$.
Since $[H(\A) : H(F) H(\A)^+] = [\A^\times : F^\times \N(\A_E^\times)] = 2$, we may extend $\theta_\varphi(\eta)$ to an automorphic form on $H(\A)$ by the natural embedding
\[
 H(F)^+ \backslash H(\A)^+ \hookrightarrow H(F) \backslash H(\A)
\]
and extension by zero.
Let $\theta(\eta)$ be the automorphic representation of $H(\A)$ generated by $\theta_\varphi(\eta)$ for all $\varphi \in \SS(\X(\A))$.

\begin{lem}
\label{l:theta-1}
Assume that:
\begin{itemize}
 \item $B_v$ is split for all archimedean places $v$ of $F$;
 \item $\eta_v$ does not factor through the norm map for any place $v$ of $F$ such that $B_v$ is ramified.
\end{itemize}
Then we have
\[
 \theta(\eta) = \pi(\eta)_B,
\]
where $\pi(\eta)$ is the automorphic induction of $\eta$ from $\GL_1(\A_E)$ to $\GL_2(\A)$ and $\pi(\eta)_B$ is its Jacquet-Langlands transfer to $B^\times(\A)$.
\end{lem}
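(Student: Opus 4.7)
The plan is to prove local-global matching: identify the local component $\theta(\eta)_v$ with $\pi(\eta)_{B,v}$ at every place, establish global nonvanishing of $\theta(\eta)$, and then apply strong multiplicity one.

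First, I would compute the local theta lift at each place $v$ for the dual pair $(\GU(V_v)^0, \GU(W_v)) = (E_v^\times, B_v^\times)$. At places where $B_v$ is split, fix an isomorphism $B_v^\times \cong \GL_2(F_v)$; the local theta correspondence for the quaternionic pair degenerates to the classical Shimizu correspondence for $(E_v^\times, \GL_2(F_v))$, which sends $\eta_v$ to the local automorphic induction $\pi(\eta_v)$. This case, including all archimedean places by the first hypothesis, is standard (see Shimizu, or Jacquet--Langlands Chapter IV). At a finite place $v$ where $B_v$ is ramified, the second hypothesis ensures that $\eta_v$ does not factor through $\N_{E_v/F_v}$, so that $\pi(\eta_v)$ is a discrete series (supercuspidal if $E_v/F_v$ is a field) and hence has a nonzero Jacquet--Langlands transfer $\pi(\eta_v)_{B_v}$. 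One checks, using local Howe duality together with explicit character/theta identities for the quaternionic dual pair (cf.\ the local Shimizu--Waldspurger correspondence), that the local theta lift of $\eta_v$ is precisely $\pi(\eta_v)_{B_v}$.

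Second, I would establish global nonvanishing of $\theta_\varphi(\eta)$ for some choice of Schwartz function $\varphi$. The cleanest route is via a Rallis inner product formula for this dual pair: the Petersson norm of $\theta_\varphi(\eta)$ factors as a product of local zeta integrals times a central value of an $L$-function that, up to harmless factors, equals $L(1, \pi(\eta) \times \eta^\rho|_{\A^\times})$ (equivalently an $L$-value attached to Hecke characters of $E$), which is nonzero by nonvanishing of abelian $L$-functions on the edge of the critical strip. The local zeta integrals can be made nonzero by a suitable choice of $\varphi$, provided the local theta lift is nonzero at every place, which we established in the first step. Alternatively, one may deduce nonvanishing by choosing a specific Schwartz function at a split place and unfolding the theta integral to a Hecke-type integral representing $L(1/2,\pi(\eta)\otimes\chi)$ for a suitable character $\chi$.

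Third, I would conclude. By local matching, $\theta(\eta)_v \cong \pi(\eta)_{B,v}$ at every place, and by nonvanishing $\theta(\eta)$ is a nonzero automorphic representation of $H(\A) = B^\times(\A)$. Since the Jacquet--Langlands transfer $\pi(\eta)_B$ is the unique irreducible automorphic representation of $B^\times(\A)$ whose Jacquet--Langlands transfer to $\GL_2(\A)$ is the (cuspidal) automorphic induction $\pi(\eta)$, strong multiplicity one for $\GL_2$ combined with the Jacquet--Langlands correspondence forces $\theta(\eta) = \pi(\eta)_B$.

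The main obstacle I expect is the global nonvanishing step: the local computations at the ramified places are a known quaternionic analogue of Shimizu's theorem, but ensuring that the global theta integral does not vanish requires either an explicit Rallis inner product formula in this quaternionic setting (with careful tracking of the splittings $\iota$ defined in the appendix) or an explicit test-vector computation. In particular, one must verify that the local zeta integrals appearing in the Rallis formula are all nonzero for appropriately chosen $\varphi_v$, which at ramified places uses precisely the hypothesis that $\eta_v$ does not factor through the norm.
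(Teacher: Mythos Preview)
Your overall architecture (local identification, global nonvanishing, strong multiplicity one) matches the paper's, but the execution differs in two notable ways.

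First, the paper does \emph{less} local work than you propose: it only identifies $\theta(\eta)_v$ with $\pi(\eta_v)$ at almost all places (the unramified split places, where the splitting over $\cG_v$ agrees with the standard symplectic--orthogonal one by Appendix~\ref{ss:compatibility-hk-duke}), and then invokes strong multiplicity one. There is no need to compute the local theta lift at the ramified places for the identification step.

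Second, and more substantively, the paper handles global nonvanishing not by a Rallis inner product formula for the quaternionic pair directly, but by \emph{transferring to a unitary dual pair}. Using the compatibility of splittings in Appendix~\ref{ss:compatibility-periods2}, the theta lift $\theta_\varphi(\eta)$ is reinterpreted as a theta lift for the unitary pair $(\U(\VV),\U(\WW))$ with $\dim_E\VV=1$, $\dim_E\WW=2$. Then the Rallis inner product formula for unitary groups reduces nonvanishing to (a) $L(1,\chi)\ne 0$ for $\chi=\eta|_{\A_E^1}$, automatic since $\chi\ne 1$, and (b) local nonvanishing of $\theta(\chi_v)$ at every place. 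At split $B_v$ this follows from the stable range; at ramified $B_v$ the paper uses the Sun--Zhu conservation relation: $\chi_v\ne 1$ forces $r^+(\chi_v)=1$, whence $r^-(\chi_v)=1$, so the lift to the non-split $\U(\WW_v)$ is nonzero. Your direct approach via a quaternionic Rallis formula and explicit local computations should work in principle, but the $L$-function you write down is imprecise, and the transfer to the unitary setting buys access to cleaner, already-established tools (especially the conservation relation) while avoiding any bespoke analysis of the quaternionic splitting.
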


\begin{proof}
Suppose that $\theta(\eta)$ is non-zero.
Let $v$ be a place of $F$.
If $B_v$ is split, then by \S \ref{ss:compatibility-hk-duke}, the splitting $\tilde{\imath} : R(F_v) \rightarrow \Mp(\V_v)$ agrees with the standard one for symplectic-orthogonal dual pairs.
Hence it follows from the local theta correspondence for unramified representations that for any irreducible component $\pi$ of $\theta(\eta)$, we have $\pi_v \simeq \pi(\eta_v)$ for almost all $v$, where $\pi(\eta_v)$ is the automorphic induction of $\eta_v$ from $\GL_1(E_v)$ to $\GL_2(F_v)$.
By the strong multiplicity one theorem, $\theta(\eta)$ is irreducible and $\theta(\eta) = \pi(\eta)_B$.

Thus it remains to show that $\theta(\eta)$ is non-zero.
Let $\VV$ and $\WW$ be the $1$-dimensional hermitian $E$-space and the $2$-dimensional skew-hermitian $E$-space, respectively, as in \S \ref{ss:doubling-U(WW)}.
Then $\GU(V)^0 = \GU(\VV)$ and $\GU(W) \hookrightarrow \GU(\WW)$.
By \S \ref{ss:compatibility-periods2}, the splitting $\tilde{\imath} : R(\A) \rightarrow \Mp(\V)_\A$ agrees with the restriction of the standard one $\G(\U(\VV) \times \U(\WW))(\A) \rightarrow \Mp(\V)_\A$ for unitary dual pairs.
Hence it suffices to show that the global theta lift of $\chi := \eta|_{\A_E^1}$ (regarded as an automorphic character of $\U(\VV)(\A)$) to $\U(\WW)(\A)$ is non-zero.
By assumption, we have $\chi \ne 1$, so that the standard $L$-function $L(s, \chi)$ is holomorphic and non-zero at $s=1$.
This together with the Rallis inner product formula \cite{ichino-mathZ, gqt, yamana} implies that the non-vanishing of the global theta lift $\theta(\chi)$ to $\U(\WW)(\A)$ is equivalent to the non-vanishing of the local theta lift $\theta(\chi_v)$ to $\U(\WW_v)$ for all $v$.
If $B_v$ is split, then $\theta(\chi_v)$ is non-zero since the dual pair $(\U(\VV_v), \U(\WW_v))$ is in the stable range \cite{li-invent}.
Suppose that $B_v$ is ramified, so that $v$ is non-archimedean.
Let $r^+(\chi_v)$ and $r^-(\chi_v)$ be the first occurrence indices:
\begin{align*}
 r^+(\chi_v) & = \min \{ r \, | \, \text{the theta lift of $\chi_v$ to $\U(\mathbf{H}_v^{\oplus r})$ is non-zero} \}, \\ 
 r^-(\chi_v) & = \min \{ r \, | \, \text{the theta lift of $\chi_v$ to $\U(\WW_v \oplus \mathbf{H}_v^{\oplus r-1})$ is non-zero} \},
\end{align*}
where $\mathbf{H}_v$ is the hyperbolic plane over $E_v$.
Since $\chi_v \ne 1$ by assumption, we have $r^+(\chi_v) = 1$.
On the other hand, we have
\[
 r^+(\chi_v) + r^-(\chi_v) = 2
\]
by the conservation relation \cite{sun-zhu}.
Hence we have $r^-(\chi_v) = 1$, so that $\theta(\chi_v)$ is non-zero.
This completes the proof.
\end{proof}

\subsection{Theta lifts from $B_1^\times \times B_2^\times$ to $B^\times$}
\label{ss:theta-2}

Let $V = B_1 \otimes_E B_2$ be the $2$-dimensional skew-hermitian right $B$-space as in \cite[\S 2.2]{periods1}, where $B_1$ and $B_2$ are quaternion algebras over $F$ such that $E$ embeds into $B_1$ and $B_2$, and such that $B_1 \cdot B_2 = B$ in the Brauer group.
Then $\GU(V)^0 \simeq (B_1^\times \times B_2^\times) / F^\times$, so that $(\A^\times)^+$ consists of elements $a \in \A^\times$ with $a_v > 0$ for all infinite places $v$ such that $B_{1,v}$ or $B_{2,v}$ or $B_v$ is ramified.
In \cite[Appendix C]{periods1}, we have defined a splitting $\tilde{\imath} : R(\A) \rightarrow \Mp(\V)_\A$ as above.
Let $\sigma_1$ and $\sigma_2$ be irreducible unitary cuspidal automorphic representations of $B_1^\times(\A)$ and $B_2^\times(\A)$, respectively.
We assume that they have the same central character, so that we may regard $\sigma_1 \boxtimes \sigma_2$ as an automorphic representation of $G(\A)$.
For $\varphi \in \SS(\X(\A))$, $f \in \sigma_1 \boxtimes \sigma_2$, and $h \in H(\A)^+$, put
\[
 \theta_\varphi(f)(h) = \int_{G_1(F) \backslash G_1(\A)} \Theta_\varphi(g_1 g, h) f(g_1 g) \, dg_1,
\]
where we choose $g \in G(\A)^+$ such that $\nu(g) = \nu(h)$.
Since $\nu(H(F)^+) = F^\times \cap (\A^\times)^+ = \nu(G(F)^+)$ by Eichler's norm theorem, this integral defines an automorphic form $\theta_\varphi(f)$ on $H(\A)^+$.
Since $H(F) H(\A)^+ = H(\A)$, we may extend $\theta_\varphi(f)$ to an automorphic form on $H(\A)$.
Let $\theta(\sigma_1 \boxtimes \sigma_2)$ be the automorphic representation of $H(\A)$ generated by $\theta_\varphi(f)$ for all $\varphi \in \SS(\X(\A))$ and $f \in \sigma_1 \boxtimes \sigma_2$.

\begin{lem}
\label{l:theta-2}
Let $\pi_i$ be the Jacquet-Langlands transfer of $\sigma_i$ to $\GL_2(\A)$.
\begin{enumerate}
\item If $\pi_1 \ne \pi_2$, then $\theta(\sigma_1 \boxtimes \sigma_2) = 0$.
\item If $\pi_1 = \pi_2$, then $\theta(\sigma_1 \boxtimes \sigma_2)$ is the Jacquet-Langlands transfer of $\pi_i$ to $B^\times(\A)$ (which exists).
\end{enumerate}
\end{lem}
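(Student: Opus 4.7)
The plan is to combine the local theta correspondence at unramified places with strong multiplicity one, and then handle the nonvanishing in case (ii) via the Rallis inner product formula.

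First I would carry out the unramified local calculation. At any non-archimedean place $v$ where $E_v/F_v$ is unramified, $B_v$ is split (so $B_{1,v}$ and $B_{2,v}$ are either both split or both ramified), and both $\sigma_{1,v}$ and $\sigma_{2,v}$ are spherical, the splitting $\iota$ from \cite[Appendix C]{periods1} identifies the dual pair $(\GU_B(V)^0, \GU_B(W))(F_v)$ with the classical pair $(\mathrm{GSO}(V^\dagger),\mathrm{GL}_2)(F_v)$, where $V^\dagger = Ve$ is the four-dimensional $F_v$-quadratic space from the doubling convention. Under the accidental isomorphism $\mathrm{GSO}(V^\dagger) \simeq (\mathrm{GL}_2 \times \mathrm{GL}_2)/\mathrm{GL}_1$, one is in Shimizu's setup, and a direct Satake-parameter computation (using that the spherical parameter of $\sigma_{i,v}$ coincides with that of $\pi_{i,v}$) shows: the local theta lift of $\sigma_{1,v} \boxtimes \sigma_{2,v}$ to $B_v^\times \simeq \mathrm{GL}_2(F_v)$ is nonzero iff $\pi_{1,v} \cong \pi_{2,v}$, and in that case is isomorphic to $\pi_{1,v}$.

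For part (i), if $\pi_1 \neq \pi_2$, strong multiplicity one for $\mathrm{GL}_2$ produces a place $v$ as above at which $\pi_{1,v} \not\cong \pi_{2,v}$. The local theta lift vanishes there, and since the local components of a nonzero global theta lift must be nonzero local theta lifts, we conclude $\theta(\sigma_1 \boxtimes \sigma_2) = 0$. For part (ii), assume $\pi_1 = \pi_2 = \pi$. By the unramified calculation, at almost every $v$ the local theta lift is isomorphic to $\pi_v$, which under the split identification $B_v^\times \simeq \mathrm{GL}_2(F_v)$ is the local component of the Jacquet--Langlands transfer $\pi^B$ (existence of this transfer is automatic, since $B$ is ramified precisely at the places where exactly one of $B_1, B_2$ is ramified, and $\pi$ is discrete series at all such places). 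Hence, if $\theta(\sigma_1 \boxtimes \sigma_2)$ is nonzero, strong multiplicity one on $B^\times(\A)$ forces it to equal $\pi^B$.

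It remains to establish the nonvanishing of $\theta(\sigma_1 \boxtimes \sigma_2)$ when $\pi_1 = \pi_2$. For this I would appeal to the Rallis inner product formula for the quaternionic unitary dual pair $(\GU_B(V)^0, \GU_B(W))$ (in the spirit of the work of Gan--Takeda, Yamana, and Ichino for the doubling method, adapted to the quaternionic case). It expresses the Petersson norm $\langle \theta_\varphi(f), \theta_\varphi(f) \rangle_H$, up to a nonzero constant, as a product of a special $L$-value attached to $\pi$ (of the form $L(s_0, \mathrm{BC}_{E/F}(\pi) \otimes \chi)$ for an appropriate character $\chi$, which is nonzero by standard results on $\mathrm{GL}_2$ $L$-functions) and a finite product of local zeta integrals over a finite set of places $S$. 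One then chooses test vectors $\varphi_v, f_v$ for $v \in S$ so that each local zeta integral is nonzero. Alternatively, a see-saw argument using the dual pair $(\GU_B(V)^0, \G(\U_B(W) \times \U_B(W^-)))$ versus $(\G(\U_B(V^+) \times \U_B(V^-)), \GU_B(W))$ reduces the Petersson norm to a product of simpler theta integrals over $B^\times$, which can be evaluated using Lemma \ref{l:theta-1}. The main obstacle is verifying nonvanishing of the local zeta integrals at ramified finite and archimedean places; this requires invoking the explicit structure of the local theta correspondence for non-split quaternionic unitary groups, together with local conservation relations, to select test data compatible with the given local type of $\sigma_v$.
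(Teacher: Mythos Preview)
Your approach is correct in outline but takes a substantially longer route than the paper's. The paper proves both (i) and (ii) at once by a single application of the adjointness of the theta lift: for any irreducible cuspidal $\sigma$ on $B^\times(\A)$ with Jacquet--Langlands transfer $\pi$, one has
\[
 \int_{Z_H(\A)H(F)\backslash H(\A)} \theta_\varphi(f)(h)\,f'(h)\,dh
 = \int_{Z_G(\A)G(F)\backslash G(\A)} f(g)\,\theta_\varphi(f')(g)\,dg,
\]
and the right-hand side is governed by the theta lift in the \emph{opposite} direction, which is already computed in \cite[Proposition~4.4]{periods1} to be $\bar\pi_{B_1}\boxtimes\bar\pi_{B_2}$. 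Thus the pairing vanishes unless $\sigma_i\cong\pi_{B_i}$, giving (i) immediately; and when $\pi_1=\pi_2=\pi$, one takes $\sigma=\pi^B$ and chooses $\varphi,f,f'$ making the right side nonzero, which simultaneously proves nonvanishing and identifies the lift. By contrast, your argument for (i) requires the local vanishing statement $\Theta_v(\sigma_{1,v}\boxtimes\sigma_{2,v})=0$ when $\pi_{1,v}\not\cong\pi_{2,v}$ (true, via Howe duality and the Shimizu computation in the reverse direction, but not just a Satake calculation), and your argument for (ii) invokes the Rallis inner product formula together with local nonvanishing of zeta integrals at bad places---exactly the obstacle you flag. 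The paper's method bypasses all of this by reducing to the reverse theta lift, whose global nonvanishing is already known. What your approach buys is independence from \cite{periods1}; what the paper's approach buys is brevity and a uniform treatment of vanishing and nonvanishing.
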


\begin{proof}
Let $\sigma$ be an irreducible unitary cuspidal automorphic representation of $B^\times(\A)$ and $\pi$ its Jacquet-Langlands transfer to $\GL_2(\A)$.
For $\varphi \in \SS(\X(\A))$, $f \in \sigma_1 \boxtimes \sigma_2$, and $f' \in \bar{\sigma}$ (where $\bar{\sigma}$ is the complex conjugate of $\sigma$), we have a seesaw identity
\[
 \int_{Z_H(\A) H(F) \backslash H(\A)} \theta_{\varphi}(f)(h) \cdot f'(h) \, dh
 = \int_{Z_G(\A) G(F) \backslash G(\A)} f(g) \cdot \theta_{\varphi}(f')(g) \, dg,
\]
where $\theta_\varphi(f')$ is the theta lift of $f'$ to $G(\A)$ as in \cite[\S 4]{periods1}.
Since the theta lift of $\bar{\sigma}$ to $G(\A)$ is $\bar{\pi}_{B_1} \boxtimes \bar{\pi}_{B_2}$ by \cite[Proposition 4.2.3]{periods1}, 
where $\pi_{B_i}$ is the Jacquet-Langlands transfer of $\pi$ to $B_i^\times(\A)$ (if it exists), this integral vanishes unless $\sigma_i = \pi_{B_i}$.
In particular, (i) follows. 
Moreover, if $\sigma_i = \pi_{B_i}$, then we can find $\varphi$, $f$, and $f'$ such that the integral is non-zero.
This implies that 
\[
 \theta(\sigma_1 \boxtimes \sigma_2) = \sigma, 
\]
so that (ii) follows.
\end{proof}

\subsection{Theta lifts from $B^\times$ to an inner form of $\operatorname{GSO}(4,2)$}
\label{ss:theta-3}

Let $V$ be the $3$-dimensional skew-hermitian right $B$-space as in \S \ref{sec:constr-gl-ex-isom}.
Then $(\A^\times)^+ = \N(\A_E^\times)$ and $G(\A)^+ = G(\A)$.
In Appendix \ref{sec:weil-hodge}, we define a splitting $\tilde{\imath} : R(\A) \rightarrow \Mp(\V)_\A$ as above.
Let $\tau$ be an irreducible unitary automorphic representation of $H(\A)^+$.
For $\varphi \in \SS(\X(\A))$, $\phi \in \tau$, and $g \in G(\A)$, put
\[
 \theta_\varphi(\phi)(g) = \int_{H_1(F) \backslash H_1(\A)} \Theta_\varphi(g, h_1 h) \phi(h_1 h) \, dh_1,
\]
where we choose $h \in H(\A)^+$ such that $\nu(h) = \nu(g)$.
This integral defines an automorphic form $\theta_\varphi(\phi)$ on $G(\A)$.
Let $\theta(\tau)$ be the automorphic representation of $G(\A)$ generated by $\theta_\varphi(\phi)$ for all $\varphi \in \SS(\X(\A))$ and $\phi \in \tau$.

In the rest of this section, we assume that $\theta(\tau)$ is non-zero and cuspidal.
Note that $\theta(\tau)$ is automatically cuspidal if $\U(V)$ is anisotropic.
Then:

\begin{lem}
\label{lem:irred-theta-similitude}
The global theta lift $\theta(\tau)$ is irreducible and
\[
 \theta(\tau) \simeq \otimes_v \theta(\tau_v),
\]
where $\theta(\tau_v)$ is the local theta lift of $\tau_v$ (see the proof for its definition).
\end{lem}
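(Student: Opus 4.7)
The plan is to combine local Howe duality for the isometry dual pair $(\U(V)^0,\U(W))$ with the compatibility between the Weil representation and the similitude action, and then invoke strong multiplicity one to collapse possible redundant summands.

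First, I would construct the local theta lift $\theta(\tau_v)$ as follows. Fix a place $v$ of $F$ and pick any irreducible constituent $\tau_v^0$ of $\tau_v|_{H_1(F_v)}$. The local isometry theta correspondence for the quaternionic dual pair $(\U(V_v)^0,\U(W_v))$ (Howe duality, known in all cases relevant here, and trivial at archimedean places where one of the groups in the pair we actually use is compact or the lift is known explicitly) produces either zero or a unique irreducible admissible representation $\theta(\tau_v^0)$ of $G_1(F_v)$. Using the compatibility of the splitting $\iota:R(F_v)\to\Mp(\W_v)$ with the similitude action (this is exactly the content of Appendix \ref{sec:weil-hodge}), one then extends $\theta(\tau_v^0)$ to an irreducible admissible representation $\theta(\tau_v)$ of $G(F_v)$ characterized by the condition that its restriction to $G_1(F_v)$ contains $\theta(\tau_v^0)$ and that the central/similitude character matches that of $\tau_v$ via $\nu(g)=\nu(h)$. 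One verifies that the result is independent of the choice of constituent $\tau_v^0$, essentially because any two constituents are conjugate by an element of $H(F_v)$ and the extension is intrinsic.

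Next, I would extract local factors from any irreducible summand. Since $\theta(\tau)$ is nonzero and cuspidal, write $\theta(\tau)=\bigoplus_j \sigma^{(j)}$ as a direct sum of irreducible cuspidal automorphic representations of $G(\A)$. Fix any summand $\sigma=\sigma^{(j)}$. The defining integral $\theta_\varphi(f)(g)=\int_{H_1(F)\backslash H_1(\A)}\Theta_\varphi(g,h_1h)f(h_1h)\,dh_1$ realizes a nonzero $R(\A)$-equivariant map from (the restriction to $R(\A)$ of) $\omega_\psi\otimes\tau$ onto $\sigma$, essentially by the same seesaw/mapping argument used in Lemma \ref{l:theta-2}. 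At each place $v$ this produces a nonzero $R(F_v)$-equivariant map $\omega_{\psi,v}\otimes\tau_v\to\sigma_v$, hence $\sigma_v$ is a subquotient of the local theta lift of $\tau_v$; by the Howe duality construction of the previous paragraph this forces $\sigma_v\simeq\theta(\tau_v)$ for every $v$. Thus every irreducible summand of $\theta(\tau)$ has the same local components, namely $\theta(\tau_v)$, and therefore the factorization $\theta(\tau)\simeq\bigotimes_v'\theta(\tau_v)$ holds as soon as irreducibility is established.

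Finally, strong multiplicity one closes the argument: two cuspidal automorphic summands of $\theta(\tau)$ with identical local factors at almost all places must coincide. For $G=\GU(V)^0$ (a form of $\mathrm{GSO}(4,2)$, which up to center is an inner form of $\mathrm{PGL}_4$) strong multiplicity one for cuspidal representations is available from the endoscopic classification invoked in Remark \ref{rem:mainremark-intro} (cf.\ the ongoing work cited in \S\ref{sec:classification-global}), and alternatively, in the cases actually needed in this paper, one can verify it directly by transferring to the isometry group and applying the well-known strong multiplicity one for $\U(V)^0$. Hence only one summand occurs, so $\theta(\tau)$ is irreducible and isomorphic to $\bigotimes_v'\theta(\tau_v)$. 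The main obstacle is the smooth bookkeeping required to define $\theta(\tau_v)$ intrinsically as a representation of the similitude group (step one) and to ensure that the local-to-global passage, which naturally lives on $R(\A)$ rather than on all of $G(\A)\times H(\A)$, really does pin down $\sigma_v$ on the nose; everything else is a direct application of Howe duality and strong multiplicity one.
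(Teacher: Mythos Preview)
Your overall strategy diverges from the paper's at the final step, and that divergence introduces a genuine gap. The paper does \emph{not} use any global multiplicity-one statement. Following \cite[Corollary~7.1.3]{kr94}, the point is that the theta kernel realizes $\theta(\tau)$ as a (nonzero, semisimple) quotient of the abstract representation $\bigotimes_v'\Theta(\tau_v)$, and local Howe duality for the similitude pair says each $\Theta(\tau_v)$ has \emph{irreducible} cosocle $\theta(\tau_v)$. Hence $\bigotimes_v'\Theta(\tau_v)$ has irreducible cosocle $\bigotimes_v'\theta(\tau_v)$, and any nonzero semisimple quotient of it must equal that cosocle. Irreducibility of $\theta(\tau)$ falls out for free. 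Your step two already produces the map needed for this argument, but you fail to exploit the quotient structure and instead reach for strong multiplicity one.

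That appeal is not justified by the sources you cite. Remark~\ref{rem:mainremark-intro} and \S\ref{sec:classification-global} concern unitary groups $\U(\VV)$, not the quaternionic similitude group $G=\GU_B(\tilde V)^0$; invoking the exceptional isomorphism of \S\ref{sec:global-ex-isom} here would be circular. Nor is strong multiplicity one for the isometry group $\U(V)^0$ (an inner form of $\SO_6$) ``well-known'' independently of Arthur's endoscopic classification. So your step three rests on an input that is both heavier than necessary and not actually supplied. Separately, your step-one construction of $\theta(\tau_v)$ by ``restrict to $H_1$, pick a constituent, lift, extend'' hides the real work: the phrase ``one verifies independence of the choice of $\tau_v^0$'' is precisely Howe duality at the similitude level. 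The paper makes this transparent by working directly with $\Omega=\operatorname{ind}^{G\times H^+}_R\omega$, and then deducing similitude Howe duality from the isometry case via the conservation relation \cite{sun-zhu} (to pass from $\O(V^\dagger)$ to $\SO(V^\dagger)$) and the arguments of \cite{roberts,gan-tantono} (to pass from isometry to similitude).
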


\begin{proof}
As in \cite[Corollary 7.1.3]{kr94}, the assertion follows from the Howe duality, which we describe below.
For this, we fix a place $v$ of $F$ and suppress the subscript $v$ from the notation.
Also, we work with the category of Harish-Chandra modules if $F$ is archimedean.

Consider the compact induction
\[
 \Omega = \operatorname{ind}^{G\times H^+}_R \omega,
\]
where $\omega$ is the Weil representation of $R$ (relative to a fixed non-trivial character of $F$ and a fixed splitting over $R$).
For any irreducible representation $\tau$ of $H^+$, the maximal $\tau^\vee$-isotypic quotient of $\Omega$ is of the form
\[
 \Theta(\tau) \boxtimes \tau^\vee
\]
for some representation $\Theta(\tau)$ of $G$.
Then the Howe duality asserts that
\begin{enumerate}
 \item $\Theta(\tau)$ is of finite length;
 \item $\Theta(\tau)$ is zero or has a unique irreducible quotient $\theta(\tau)$;
 \item for any irreducible representations $\tau$ and $\tau'$ of $H^+$ which occur as quotients of $\Omega$, we have
\[
 \theta(\tau) \simeq \theta(\tau') \Longrightarrow \tau \simeq \tau'. 
\]
\end{enumerate}
This can be deduced from the Howe duality \cite{howe-jams, waldspurger-howe-duality, gan-takeda, gan-sun} for $(\U(V), \U(W))$ as follows.

We first show that the Howe duality for $(\U(V)^0, \U(W))$ follows from the Howe duality for $(\U(V), \U(W))$.
If $B$ is ramified, then there is nothing to prove since $\U(V)^0(F) = \U(V)(F)$.
If $B$ is split, then we have $\U(V) \simeq \O(V^\dagger)$ and $\U(W) \simeq \Sp(W^\dagger)$, where $V^\dagger$ and $W^\dagger$ are the $6$-dimensional quadratic $F$-space and the $2$-dimensional symplectic $F$-space, respectively, associated to $V$ and $W$ by Morita theory.
For brevity, we write $\Gs = \O(V^\dagger)$ and $\Gs^0 = \SO(V^\dagger)$.
Let $\sigma_0$ be an irreducible representation of $\Gs^0$.
Then $\sigma_0$ is an irreducible component of $\sigma|_{\Gs^0}$ for some irreducible representation $\sigma$ of $\Gs$.
Note that $\sigma$ is not necessarily uniquely determined.
Namely, $\sigma_0$ is also an irreducible component of $(\sigma \otimes \sgn)|_{\Gs^0}$, where $\sgn$ denotes the unique non-trivial character of $\Gs$ trivial on $\Gs^0$.
Fix $\varepsilon \in \Gs \smallsetminus \Gs^0$ and put $\sigma_0^\varepsilon(g) = \sigma_0(\varepsilon g \varepsilon^{-1})$ for $g \in \Gs^0$.
We have
\[
 \sigma_0 \simeq \sigma_0^\varepsilon \Longleftrightarrow \sigma \not\simeq \sigma \otimes \sgn, 
\]
and
\begin{itemize}
\item if $\sigma \not\simeq \sigma \otimes \sgn$, then 
\[
 \sigma|_{\Gs^0} = \sigma_0, \qquad
 \Ind^{\Gs}_{\Gs^0} \sigma_0 = \sigma \oplus (\sigma \otimes \sgn);
\]
\item if $\sigma \simeq \sigma \otimes \sgn$, then 
\[
 \sigma|_{\Gs^0} = \sigma_0 \oplus \sigma_0^\varepsilon, \qquad 
 \Ind^{\Gs}_{\Gs^0} \sigma_0 = \sigma.
\]
\end{itemize}
Then, by the conservation relation \cite{sun-zhu}, we have
\begin{itemize}
\item if $\sigma \not\simeq \sigma \otimes \sgn$, then at most one of $\sigma$ and $\sigma \otimes \sgn$ occurs as a quotient of $\omega$;
\item if $\sigma \simeq \sigma \otimes \sgn$, then $\sigma$ does not occur as a quotient of $\omega$.
\end{itemize}
This reduces the Howe duality for $(\U(V)^0, \U(W))$ to the Howe duality for $(\U(V), \U(W))$.

Finally, as in \cite{roberts}, \cite[\S 3]{gan-tantono} (noting that the projections $R \rightarrow G$ and $R\rightarrow H^+$ are surjective), the Howe duality for $(\GU(V)^0, \GU(W)^+)$ follows from the Howe duality for $(\U(V)^0, \U(W))$.
This completes the proof.
\end{proof}

Now we explicate the local theta lift $\theta(\tau_v)$ in the unramified case.
Fix a non-archimedean place $v$ of $F$ such that:
\begin{itemize}
 \item $F_v$ is of odd residual characteristic;
 \item $E_v$ is unramified over $F_v$;
 \item $B_v$ is split over $F_v$;
 \item $V_v^\dagger$ has a self-dual $\mathcal{O}_{F_v}$-lattice;
 \item $\psi_v$ is of order zero;
 \item $\tau_v$ is unramified.
\end{itemize}
Here $V_v^\dagger$ is the $6$-dimensional quadratic $F_v$-space associated to $V_v$.
For the moment, we suppress the subscript $v$ from the notation.
We may take a trace zero element $\i \in E^\times$ such that $u = \i^2 \in \mathcal{O}_F^\times$ and identify $G$ with the group
\[
 \{ g \in \GL_6(F) \, | \, {}^t g \mathcal{Q} g = \nu(g) \cdot \mathcal{Q}, \, \det g = \nu(g)^3 \},
\]
where
\[
 \mathcal{Q} = 
 \begin{pmatrix}
  & & & & & 1 \\
  & & & & 1 & \\
  & & 1 & & & \\
  & & & -u & & \\
  & 1 & & & & \\
  1 & & & & & 
 \end{pmatrix}.
\]
Let $B_G$ be a Borel subgroup of $G$ and $T$ a maximal torus of $G$ given by
\[
 B_G = \left\{ 
 \begin{pmatrix}
  * & * & * & * & * & * \\
  & * & * & * & * & * \\
  & & * & * & * & * \\
  & & * & * & * & * \\
  & & & & * & * \\
  & & & & & *
 \end{pmatrix}
 \right\}
 \quad \supset \quad
 T = \left\{ 
 \begin{pmatrix}
  * & & & & & \\
  & * & & & & \\
  & & * & * & & \\
  & & * & * & & \\
  & & & & * & \\
  & & & & & *
 \end{pmatrix}
 \right\}.
\]
Then we have an isomorphism $T \simeq (F^\times)^2 \times E^\times$ defined by
\[
 (t_1, t_2, a + b\i) \longmapsto
 \begin{pmatrix}
  t_1 & & & & & \\
  & t_2 & & & & \\
  & & a & bu & & \\
  & & b & a & & \\
  & & & & \nu t_2^{-1} & \\
  & & & & & \nu t_1^{-1}
 \end{pmatrix},
\]
where $\nu = a^2 - b^2 u$.
Also, we may identify $H$ with $\GL_2(F)$, so that
\[
 H^+ = \{ h \in \GL_2(F) \, | \, \det h \in \N(E^\times) \}.
\]
Let $B_H$ be the Borel subgroup of $H$ consisting of upper triangular matrices.
Recall that $\tau$ is an irreducible unramified representation of $H^+$.
Then $\tau$ is an irreducible component of 
\[
 \Ind^H_{B_H}(\chi_1 \otimes \chi_2)|_{H^+}
\]
for some unramified characters $\chi_1, \chi_2$ of $F^\times$.
Note that $\chi_1, \chi_2$ are not necessarily uniquely determined.
Namely, $\tau$ is also an irreducible component of $\Ind^H_{B_H}(\chi_1 \xi_E \otimes \chi_2 \xi_E)|_{H^+}$.
Then:

\begin{lem}
\label{l:theta-unram}
The local theta lift $\theta(\tau)$ is an irreducible component of
\[
 \Ind^G_{B_G}(\chi_1 \chi_2^{-1} \xi_E \otimes |\cdot| \otimes (\chi_2 |\cdot|^{-\frac{1}{2}}) \circ \N ).
\]
\end{lem}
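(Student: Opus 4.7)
The plan is to deduce this lemma from the classical unramified local theta correspondence for the orthogonal-symplectic dual pair obtained via Morita equivalence. Since $B_v$ is split, Morita identifies the $3$-dimensional skew-hermitian $B$-space $V$ with a $6$-dimensional quadratic $F$-space $V^\dagger$ —realized explicitly by the Gram matrix $\mathcal{Q}$ given in the statement setup, from which one reads off that $V^\dagger$ has Witt index $2$ with $2$-dimensional anisotropic kernel split exactly by $E$ (this is the $\smat{1}{}{}{-u}$-block). Consequently the discriminant character of $V^\dagger$ is $\xi_E$ and $\SO(V^\dagger)$ is quasi-split of type $^2\!D_3$. Similarly $W$ becomes a $2$-dimensional symplectic space $W^\dagger$, and $(\U(V)^0, \U(W))$ becomes the standard dual pair $(\SO(V^\dagger), \Sp(W^\dagger))$; the similitude versions become (the identity component of) $\GSO(V^\dagger)$ and $\GL_2(F) = \GSp(W^\dagger)$, and the Borel $B$ and torus $T \cong (F^\times)^2 \times E^\times$ in the statement are precisely the standard ones. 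Under our assumptions (residual characteristic odd, $E/F$ unramified, $\det V \in \cO^\times$, $\psi$ of order zero), the splitting $\iota$ of Appendix \ref{sec:weil-hodge} agrees with the standard one for this orthogonal-symplectic dual pair, by the compatibility already invoked in the proof of Lemma \ref{l:theta-1}.

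Next I apply the explicit unramified theta correspondence. For the isometry pair $(\Sp_{2}, \O_{6})$ with quasi-split target of discriminant $\xi_E$, Rallis's formula (in the refined form of Kudla--Rallis and Moeglin--Vign\'eras--Waldspurger) computes the Satake parameter of the theta lift: if $\sigma \subset \Ind(\alpha_1 \otimes \alpha_2)$ is the unramified subquotient, then its theta lift to $\SO(V^\dagger)$, if nonzero, is the unramified subquotient of the principal series obtained by twisting the original parameter by $\xi_E$ and appending a character of the form $|\cdot|$ with a $\rho$-shift, the shift being $(m-n-1)/2$ on the Siegel-like parabolic of $\O(V^\dagger)$; here $m=3, n=1$, so the exponent is $1$. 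Non-vanishing at this good place and irreducibility of the resulting unramified constituent follow from local Howe duality and the conservation relation, exactly as in Lemma \ref{l:theta-1} and Lemma \ref{lem:irred-theta-similitude}. Extending to similitudes by the device recalled in the proof of Lemma \ref{lem:irred-theta-similitude} (inducing from the isometry theta lift and then decomposing under $R/H_1 \cong G$), the central character $\chi_1\chi_2$ of $\tau$ gets absorbed into the anisotropic factor of $T$; under the identification $T \cong (F^\times)^2 \times E^\times$ in the statement, the characters distribute as $\chi_1\chi_2^{-1}\xi_E$ on the first $F^\times$, $|\cdot|$ on the second $F^\times$, and $(\chi_2|\cdot|^{-1/2})\circ \N$ on $E^\times$, which is the asserted inducing character.

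The main obstacle will be the precise bookkeeping of normalizations: correctly tracking the $\rho_B$-shift (responsible for the $|\cdot|$ and $|\cdot|^{-1/2}$ factors on the Levi direction and on $E^\times$ respectively), placing the quadratic twist $\xi_E$ on the correct coordinate, and verifying that $\iota$ introduces no extra character twist at this place. A clean way to execute the calculation without re-doing everything from scratch is to compute the Jacquet module of $\omega$ along $B \times B'$ via the standard Kudla filtration: this directly exhibits a nonzero $G(\cO) \times H^+(\cO)$-equivariant map from $\omega$ onto $\Ind_B^G(\cdots) \boxtimes \Ind_{B'}^H(\chi_1\otimes\chi_2)^\vee$ with the stated character, and uniqueness of the spherical vector, combined with Howe duality, pins down $\theta(\tau)$ as the desired subquotient. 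A less computational alternative is to invoke local Langlands for quasi-split $\SO_6$ and the known description of the theta correspondence on $L$-parameters, which makes the transfer of Satake data immediate; the cost is that one has to check that the normalizations used there match ours at this place.
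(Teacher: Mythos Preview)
Your approach is essentially the same as the paper's: reduce via Morita equivalence and the compatibility of the splitting with the standard one for symplectic-orthogonal dual pairs (the paper cites \S\ref{ss:compatibility-hk-duke} directly rather than via Lemma~\ref{l:theta-1}), and then invoke the standard unramified theta computation. The paper simply writes ``the assertion follows from the standard unramified computation'' and omits the details you spell out.
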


\begin{proof}
By \S \ref{ss:compatibility-hk-duke}, the splitting $\tilde{\imath} : R \rightarrow \Mp(\V)$ agrees with the standard one for symplectic-orthogonal dual pairs.
Hence the assertion follows from the standard unramified computation.
We omit the details.
\end{proof}

Suppose again that $F$ is a number field.
We further assume that $B_v$ is split for all archimedean places $v$ of $F$ and that $\U(V)$ is anisotropic over $F$.
Then we show that the near equivalence class of $\theta(\tau)$ consists of automorphic representations $\theta(\tau')$, where $\tau'$ runs over automorphic representations in the near equivalence class of $\tau$.
Namely, we have:

\begin{prop}
\label{p:near-equiv-theta}
Let $\pi$ be an irreducible unitary automorphic representation of $G(\A)$ such that $\pi_v \simeq \theta(\tau_v)$ for almost all $v$.
Then there exists an irreducible automorphic representation $\tau'$ of $H(\A)^+$ such that $\tau_v' \simeq \tau_v$ for almost all $v$ and such that 
\[
 \pi = \theta(\tau').
\]
\end{prop}

To prove this proposition, we consider the theta lift in the opposite direction.
For $\varphi \in \SS(\X(\A))$, $f \in \pi$, and $h \in H(\A)^+$, put
\[
 \theta_\varphi(\bar{f})(h) = \int_{G_1(F) \backslash G_1(\A)} \Theta_\varphi(g_1 g, h) \overline{f(g_1 g)} \, dg_1,
\]
where we choose $g \in G(\A)$ such that $\nu(g) = \nu(h)$.
This integral defines an automorphic form $\theta_\varphi(\bar{f})$ on $H(\A)^+$.
Let $\theta(\bar{\pi})$ be the automorphic representation of $H(\A)^+$ generated by $\theta_\varphi(\bar{f})$ for all $\varphi \in \SS(\X(\A))$ and $f \in \pi$.

\begin{lem}
\label{l:theta-nonzero}
We have
\[
 \theta(\bar{\pi}) \ne 0.
\]
\end{lem}

Now Proposition \ref{p:near-equiv-theta} is an immediate consequence of Lemma \ref{l:theta-nonzero}.
Indeed, as in Lemma \ref{lem:irred-theta-similitude}, it follows from the Howe duality that $\theta(\bar{\pi})$ is irreducible and $\theta(\bar{\pi}) \simeq \otimes_v \theta(\bar{\pi}_v)$, where $\theta(\bar{\pi}_v)$ is the local theta lift of $\bar{\pi}_v$.
Since $\pi_v \simeq \theta(\tau_v)$ for almost all $v$ by assumption, we have
\[
 \theta(\bar{\pi}_v) \simeq \bar{\tau}_v
\]
for almost all $v$.
Hence $\tau' = \overline{\theta(\bar{\pi})}$ satisfies the desired condition.

Lemma \ref{l:theta-nonzero} can be deduced from the Rallis inner product formula as follows.

\subsection{Proof of Lemma \ref{l:theta-nonzero}}

\subsubsection{Doubled spaces}

Let $V^\square = V \oplus V$ and $\V^\square = \V \oplus \V = V^\square \otimes_B W$ be the doubled spaces as in \S \ref{ss:doubling-U(V)}.
Let $\iota : \U(V) \times \U(V) \hookrightarrow \U(V^\square)$ be the natural embedding.
Let $\V^\square = \X^\square \oplus \Y^\square = \V^{\bigtriangledown} \oplus \V^{\triangle}$ be complete polarizations defined by
\begin{align*}
 \X^\square & = \X \oplus \X, & 
 \V^{\bigtriangledown} & = V^{\bigtriangledown} \otimes_B W, &
 V^{\bigtriangledown} & = \{ (v,-v) \, | \, v \in V \}, \\
 \Y^\square & = \Y \oplus \Y, &
 \V^{\triangle} & = V^{\triangle} \otimes_B W, &
 V^{\triangle} & = \{ (v,v) \, | \, v \in V \}.
\end{align*}
Let $\omega_\psi^\square$ be the Weil representation of $\U(V^\square)(\A) \times \U(W)(\A)$ relative to $\psi$ realized on the Schwartz space $\SS(\V^\bigtriangledown(\A))$ as in \cite[\S 5]{kudla-splitting}.
Then for any $\varphi \in \SS(\V^\bigtriangledown(\A))$, we may form a theta function on $\U(V^\square)(\A) \times \U(W)(\A)$:
\[
 \Theta_\varphi^\square(g,h) = \sum_{x \in \V^\bigtriangledown(F)} \omega_\psi^\square(g,h) \varphi(x).
\]
We define a partial Fourier transform
\[
 \mathrsfs{F} : \SS(\X^\square(\A)) = \SS(\X(\A)) \otimes \SS(\X(\A)) \longrightarrow \SS(\V^\bigtriangledown(\A))
\]
as in \cite[\S 4.1.1]{periods1}.
It follows from the definition of $\tilde{\imath}$, combined with the analog of \cite[Proposition 2.2]{hks} for $(\U(V), \U(W))$, that $\mathrsfs{F}$ induces an isomorphism
\[
 (\omega_\psi \circ \tilde{\imath}) \otimes (\bar{\omega}_\psi \circ \tilde{\imath}) \simeq \omega_\psi^\square \circ (\iota \otimes \id)
\]
as representations of $G_1(\A) \times G_1(\A) \times H_1(\A)$.
Hence we have
\begin{equation}
\label{eq:theta-function-decomp} 
 \Theta_{\varphi}^\square(\iota(g_1,g_2), h) = 
 \Theta_{\varphi_1}(g_1, h) \overline{\Theta_{\varphi_2}(g_2, h)}
\end{equation}
for $\varphi = \mathrsfs{F}(\varphi_1 \otimes \bar{\varphi}_2)$ with $\varphi_1, \varphi_2 \in \SS(\X(\A))$, $g_1, g_2 \in G_1(\A)$, and $h \in H_1(\A)$.

\subsubsection{Degenerate principal series representations}

Write
\[
 \U(V^\square) = 
 \left\{ g \in \GL_6(B) \, \left| \,
 g \begin{pmatrix} & \1_3 \\ -\1_3 & \end{pmatrix} {}^t g^*
 = \begin{pmatrix} & \1_3 \\ -\1_3 & \end{pmatrix}
 \right. \right\}
\]
as in \S \ref{ss:doubling-U(V)} and put
\[
 G_1^\square = \U(V^\square)^0.
\]
Let $P$ be the Siegel parabolic subgroup of $G_1^\square$ stabilizing $V^\triangle$:
\[
 P = \left\{ \left.
 \begin{pmatrix}
  a & * \\
  & ({}^t a^*)^{-1}
 \end{pmatrix}
 \, \right| \, a \in \GL_3(B) \right\}.
\]
For $s \in \C$, let $\II(s)$ be the degenerate principal series representation of $G_1^\square(\A)$ consisting of smooth functions $\FF$ on $G_1^\square(\A)$ which satisfy
\[
 \FF
 \left( 
 \begin{pmatrix}
  a & * \\
  & ({}^t a^*)^{-1}
 \end{pmatrix}
 g \right) = |\nu(a)|^{s+\frac{5}{2}} \cdot \FF(g).
\]
For a holomorphic section $\FF = \FF(\cdot, s)$ of $\II(s)$, we define an Eisenstein series $E(s,\FF)$ on $G_1^\square(\A)$ by (the meromorphic continuation of)
\[
 E(g, s,\FF) = \sum_{\gamma \in P(F) \backslash G_1^\square(F)} \FF(\gamma g, s).
\]

For each place $v$ of $F$, let $\II_v(s)$ be the degenerate principal series representation of $G_{1,v}^\square$ given similarly as above.
We define an intertwining operator
\[
 M_v(s) : \II_v(s) \longrightarrow \II_v(-s)
\]
by (the meromorphic continuation of)
\[
 (M_v(s) \FF)(g,-s) = \int_{U_v} \FF \left(
 \begin{pmatrix}
  & -\1_3 \\
  \1_3 & 
 \end{pmatrix}
 ug, s \right) du, 
\]
where $U$ is the unipotent radical of $P$.
Let 
\[
 M^*_v(s) = \frac{b_v(s)}{a_v(s)} \cdot M_v(s)
\]
be the normalized intertwining operator, where 
\begin{align*}
 a_v(s) & = \zeta_v(2s) \zeta_v(2s-2) \zeta_v(2s-4), \\
 b_v(s) & = \zeta_v(2s+1) \zeta_v(2s+3) \zeta_v(2s+5).
\end{align*}
By \cite[Proposition 4.11(2)]{yamana:sw}, $M^*_v(s)$ is holomorphic for $\Re s \ge 0$.

Recall the Weil representation $\omega^\square_{\psi_v}$ of $\U(V_v^\square) \times \U(W_v)$ relative to $\psi_v$ realized on the Schwartz space $\SS(\V_v^\bigtriangledown)$.
For $\varphi \in \SS(\V_v^\bigtriangledown)$, we define $\FF_\varphi \in \II_v(-\frac{3}{2})$ by 
\[
 \FF_\varphi(g) = \omega^\square_{\psi_v}(g) \varphi(0).
\]
We denote by $\mathcal{R}(W_v)$ the subspace of $\II_v(-\frac{3}{2})$ spanned by $\FF_\varphi$ for all $\varphi \in \SS(\V_v^\bigtriangledown)$.

\begin{lem}
\label{eq:image_of_int_op}
We have
\[
 \Im M^*_v(\tfrac{3}{2}) = \mathcal{R}(W_v).
\]
\end{lem}

\begin{proof}
If $B_v$ is ramified (so that $v$ is non-archimedean by assumption and $\U(V_v^\square) = \U(V_v^\square)^0$), then the assertion is proved in \cite[Theorem 1.3]{yamana:dps}.
Assume that $B_v$ is split. 
Let $\tilde{W}_v$ be the unique $4$-dimensional hermitian left $B_v$-space and define the subspace $\mathcal{R}(\tilde{W}_v)$ of $\II_v(\frac{3}{2})$ similarly as above.
Then we have 
\[
 \mathcal{R}(\tilde{W}_v) = \II_v(\tfrac{3}{2}), \qquad
 M^*_v(\tfrac{3}{2}) \mathcal{R}(\tilde{W}_v) = \mathcal{R}(W_v)
\]
by \cite[Theorem 1.6]{yamana:dps}, \cite[Appendix A]{loke}, \cite[Proposition 4.11(3)]{yamana:sw}.
We remark that, in these references, the results are stated for the degenerate principal series representation of $\U(V_v^\square)$, but the above equalities can be deduced by restriction to $\U(V_v^\square)^0$.
This completes the proof.
\end{proof}

\subsubsection{The doubling method}

We denote by $\Res^{G}_{G_1}(\pi)$ the restriction of $\pi$ to $G_1(\A)$ as functions.
Fix an irreducible component $\sigma$ of $\Res^{G}_{G_1}(\pi)$.
Note that $\sigma_v$ is the irreducible unramified component of
\begin{equation}
\label{eq:sigma-unram}
 \Ind^{G_{1,v}}_{B_{G_{1,v}}}(\chi_{1,v} \chi_{2,v}^{-1} \xi_{E_v} \otimes |\cdot|_v \otimes 1) 
\end{equation}
for almost all $v$, where $B_{G_{1,v}}$ is a Borel subgroup of $G_{1,v}$ with Levi component $F_v^\times \times F_v^\times \times E^1_v$.
Let $\langle \cdot, \cdot \rangle$ be the Petersson inner product on $\sigma$ given by
\[
 \langle f_1, f_2 \rangle = \int_{G_1(F) \backslash G_1(\A)} f_1(g) \overline{f_2(g)} \, dg,
\]
where $dg$ is the Tamagawa measure on $G_1(\A)$.
Fix decompositions $\langle \cdot, \cdot \rangle = \prod_v \langle \cdot, \cdot \rangle_v$ and $dg = \prod_v dg_v$, where $\langle \cdot, \cdot \rangle_v$ is an invariant hermitian inner product on $\sigma_v$ and $dg_v$ is a Haar measure on $G_{1,v}$.

We now consider the doubling zeta integral of Piatetski-Shapiro and Rallis \cite{psr,li-crelle,lapid-rallis,kakuhama} given by
\[
 Z(s,\FF,f_1,f_2) = \int_{G_1(F) \backslash G_1(\A)} \int_{G_1(F) \backslash G_1(\A)} E(\iota(g_1,g_2), s, \FF) \overline{f_1(g_1)} f_2(g_2) \, dg_1 \, dg_2
\]
for a holomorphic section $\FF = \otimes_v \FF_v$ of $\II(s)$ and $f_1 = \otimes_v f_{1,v}, f_2 = \otimes_v f_{2,v} \in \sigma$.
Recalling \eqref{eq:sigma-unram}, we have
\begin{equation}
\label{eq:doubling-zeta-main-id}
 Z(s,\FF,f_1,f_2) = \frac{L^S(s+\frac{1}{2}, \operatorname{ad} \tau \times \xi_E) \zeta^S(s+\frac{3}{2}) \zeta^S(s+\frac{1}{2}) \zeta^S(s-\frac{1}{2})}{\zeta^S(2s+1) \zeta^S(2s+3) \zeta^S(2s+5)} \cdot \prod_{v \in S} Z_v(s,\FF_v,f_{1,v},f_{2,v}),
\end{equation}
where $S$ is a sufficiently large finite set of places of $F$ and $Z_v(s,\FF_v,f_{1,v},f_{2,v})$ is the local zeta integral given by 
\[
 Z_v(s,\FF_v,f_{1,v},f_{2,v}) = \int_{G_{1,v}} \FF_v(\iota(g_v, 1), s) \overline{\langle \sigma_v(g_v) f_{1,v}, f_{2,v} \rangle_v} \, dg_v.
\]
Moreover, as in \cite[Theorem 3.2.2]{kr90}, \cite[Proposition 7.2.1]{kr94}, we can prove the following.

\begin{lem}
\label{l:local_zeta_nonvanish}
There exist a holomorphic section $\FF_v$ of $\II_v(s)$ and $f_{1,v}, f_{2,v} \in \sigma_v$ such that $Z_v(s,\FF_v,f_{1,v},f_{2,v})$ is holomorphic and non-zero at $s=\frac{3}{2}$.
\end{lem}

\subsubsection{The Rallis inner product formula}

By \eqref{eq:doubling-zeta-main-id} and Lemma \ref{l:local_zeta_nonvanish}, there exist a holomorphic section $\FF = \otimes_v \FF_v$ of $\II(s)$ and $f_1 = \otimes_v f_{1,v}, f_2 = \otimes_v f_{2,v} \in \sigma$ such that
\[
 \Res_{s=\frac{3}{2}} Z(s,\FF,f_1,f_2) \ne 0.
\]
In fact, $E(s,\FF)$ has a simple pole at $s=\frac{3}{2}$ by \cite[Theorem 3.1]{yamana:sw} and its residue can be described as follows.
By Lemma \ref{eq:image_of_int_op}, we have $M_v^*(\frac{3}{2}) \FF_v = \FF_{\varphi_v}$ for some $\varphi_v \in \SS(\V_v^\bigtriangledown)$.
Put $\varphi = \otimes_v \varphi_v \in \SS(\V^\bigtriangledown(\A))$.
We define an automorphic form $I(\varphi)$ on $G_1^\square(\A)$ by 
\[
 I(g,\varphi) = \int_{H_1(F) \backslash H_1(\A)} \Theta_{\varphi}^\square(g,h) \, dh.
\]
Then, by the Siegel-Weil formula \cite[Theorem 7.11]{yamana:sw}, we have
\[
 \Res_{s=\frac{3}{2}} E(s,\FF) = I(\varphi)  
\]
up to a non-zero constant.
Hence we have
\[
 \int_{G_1(F) \backslash G_1(\A)}  \int_{G_1(F) \backslash G_1(\A)}
 I(\iota(g_1,g_2), \varphi) \overline{f_1(g_1)} f_2(g_2) \, dg_1 \, dg_2 \ne 0.
\]
We may further assume that $\varphi = \mathrsfs{F}(\varphi_1 \otimes \bar{\varphi}_2)$ for some $\varphi_1, \varphi_2 \in \SS(\X(\A))$.
Then, by \eqref{eq:theta-function-decomp}, the left-hand side is equal to
\begin{multline*}
 \int_{G_1(F) \backslash G_1(\A)} \int_{G_1(F) \backslash G_1(\A)} \int_{H_1(F) \backslash H_1(\A)} \Theta_{\varphi_1}(g_1, h) \overline{\Theta_{\varphi_2}(g_2, h)} \overline{f_1(g_1)} f_2(g_2) \, dh \, dg_1 \, dg_2 \\
 = \int_{H_1(F) \backslash H_1(\A)} \theta_{\varphi_1}(\bar{\tilde{f}}_1)(h) \overline{\theta_{\varphi_2}(\bar{\tilde{f}}_2)(h)} \, dh,
\end{multline*}
where we choose $\tilde{f}_i \in \pi$ such that $\tilde{f}_i|_{G_1(\A)} = f_i$.
Hence we have $\theta_{\varphi_i}(\bar{\tilde{f}}_i) \ne 0$.
This completes the proof of Lemma \ref{l:theta-nonzero} and hence of Proposition \ref{p:near-equiv-theta}.

\section{Construction of the cohomology class and non-vanishing of its restriction}
\label{sec:construction-nonvanishing}

\subsection*{Notation}

For any reductive algebraic group $G$ over a number field $F$, we denote by $\mathrsfs{A}(G)$ the space of automorphic forms on $G(\A)$.

\subsection{Groups}

Let $F$ be a totally real number field. 
We denote by $\A = \A_F$ and $\A_f = \A_{F,f}$ the rings of ad{\`e}les and finite ad{\`e}les of $F$, respectively.
Let $B$ be a quaternion division algebra over $F$.
We assume that $B_v$ is split for all real places $v$ of $F$.
Let $E$ be a totally imaginary quadratic extension of $F$ which embeds into $B$.
We write $E = F + F \i$ and $B = E + E\j$ for some trace zero elements $\i \in E^\times$ and $\j \in B^\times$.
Put $u = \i^2 \in F^\times$ and $J = \j^2 \in F^\times$.
Let $\tilde{V} = V^\sharp \oplus V_0^\sharp$ be the $3$-dimensional skew-hermitian right $B$-space as in \S \ref{sec:constr-gl-ex-isom}, where $V^\sharp$ and $V_0^\sharp$ are the $2$- and $1$-dimensional subspaces as in \S \ref{ss:sum-of-2-dim}, respectively.
To ease notation, we write $V = V^\sharp$ and $V_0 = V_0^\sharp$.
Recall from Example \ref{eg:skew-herm-periods1} and \cite[\S 2.2]{periods1} that we may write $V = B_1 \otimes_E B_2$ for some quaternion algebras $B_1$ and $B_2$ over $F$ such that $B_1 \cdot B_2 = B$ in the Brauer group and such that $E$ embeds into $B_1$ and $B_2$.
In particular, $B_1$ and $B_2$ act on $V$ by left multiplication.
Put
\[
 \tilde{G} = \GU(\tilde{V})^0, \qquad
 G = \GU(V)^0 \simeq (B_1^\times \times B_2^\times)/F^\times, \qquad
 G_0 = \GU(V_0)^0 \simeq E^\times.
\]
Let $\tilde{Z} \simeq F^\times$ and $Z \simeq F^\times$ be the centers of $\tilde{G}$ and $G$, respectively.
We define a subgroup $\GG$ of $G \times G_0$ by
\[
 \GG = \G(\U(V) \times \U(V_0))^0 = \{ (g, \alpha) \, | \, \nu(g) = \N(\alpha) \},
\]
where $\nu$ is the similitude character and $\N = \N_{E/F}$ is the norm map.
We also regard $\GG$ as a subgroup of $\tilde{G}$ via the natural embedding.
Let $\ZZ \subset Z \times G_0$ be the center of $\GG$:
\[
 \ZZ \simeq \{ (z, \alpha) \, | \, z^2 = \N(\alpha) \}.
\]
Then we have a natural embedding $\tilde{Z} \hookrightarrow \ZZ$ and an exact sequence
\[
 1 \longrightarrow \tilde{Z} \longrightarrow \ZZ \overset{\mathbf{p}}\longrightarrow E^1 \longrightarrow 1,
\]
where $\mathbf{p}(z,\alpha) = z^{-1} \alpha$.

Let $W$ be the $1$-dimensional hermitian left $B$-space as in \S \ref{ss:theta-setup}.
Put
\[
 H = \GU(W) \simeq B^\times.
\]
Let $Z_H \simeq F^\times$ be the center of $H$.

\subsection{Weil representations}

Let $\tilde{\V} = \V \oplus \V_0$ be the $12$-dimensional symplectic $F$-space given by
\[
 \tilde{\V} = \tilde{V} \otimes_B W, \qquad
 \V = V \otimes_B W, \qquad
 \V_0 = V_0 \otimes_B W.
\]
As in \S \ref{ss:weil-hodge-setup}, we take complete polarizations
\[
 \tilde{\V} = \tilde{\X} \oplus \tilde{\Y}, \qquad
 \V = \X \oplus \Y, \qquad
 \V_0 = \X_0 \oplus \Y_0
\]
such that 
\[
 \tilde{\X} = \X \oplus \X_0, \qquad
 \tilde{\Y} = \Y \oplus \Y_0.
\]
By Appendix \ref{sec:weil-hodge} and \cite[Appendix C]{periods1}, we may define Weil representations $\omega$ (relative to the standard additive character $\psi$ of $\A/F$) of
\[
 \G(\U(\tilde{V}) \times \U(W))^0(\A), \qquad 
 \G(\U(V) \times \U(W))^0(\A), \qquad 
 \G(\U(V_0) \times \U(W))^0(\A)
\]
on 
\[
 \SS(\tilde{\X}(\A)), \qquad
 \SS(\X(\A)), \qquad
 \SS(\X_0(\A)),
\]
respectively, satisfying various compatibilities.

\subsection{Real groups}
\label{ss:notation-at-real-places}

Let $\Sigma_\infty$ be the set of real places of $F$ and $\Sigma$ the subset of $v \in \Sigma_\infty$ such that $B_{1,v}$ and $B_{2,v}$ are split.
We assume that $\Sigma \ne \Sigma_\infty$.
Put $d = |\Sigma|$.
For any $v \in \Sigma_\infty$, we may write $J = t_v^2$ for some $t_v \in F_v^\times$ since $B_v$ is split.
We define an isomorphism $\ii_v:B_v \rightarrow \M_2(F_v)$ of quaternion $F_v$-algebras by
\[
 \ii_v(a + b \i + c \j + d \i \j) =
 \begin{pmatrix}
  a + ct_v & b - dt_v \\
  (b + dt_v)u  & a - ct_v
 \end{pmatrix}.
\]
Put
\[
 e_v = \frac{1}{2} + \frac{t_v}{2J} \j, \qquad
 e_v' = \frac{1}{2} \i - \frac{t_v}{2J} \i \j, \qquad
 e_v'' =  \frac{1}{2u} \i + \frac{t_v}{2uJ} \i \j, \qquad
 e_v^* = \frac{1}{2} - \frac{t_v}{2J} \j,
\]
so that
\[
 \ii_v(e_v) = \mat{1}{0}{0}{0}, \qquad
 \ii_v(e_v') = \mat{0}{1}{0}{0}, \qquad
 \ii_v(e_v'') = \mat{0}{0}{1}{0}, \qquad
 \ii_v(e_v^*) = \mat{0}{0}{0}{1}.
\]
Note that
\[
 \begin{bmatrix}
  e_v \cdot x \\
  e_v' \cdot x
 \end{bmatrix}
 = \ii_v(x) \cdot 
 \begin{bmatrix}
  e_v \\
  e_v'
 \end{bmatrix}
\]
for $x \in B_v$.

Let $v \in \Sigma_\infty$.
Let $\tilde{V}_v^\dagger = V_v^\dagger \oplus V_{0,v}^\dagger$ be the $6$-dimensional quadratic $F_v$-space as in \cite[\S C.2]{periods1} associated to the $B_v$-space $\tilde{V}_v = V_v \oplus V_{0,v}$.
By \S \ref{ss:local-ex-isom-real}, the signature of $\tilde{V}_v^\dagger$ is equal to 
\[
\begin{cases}
 (4,2) & \text{if $v \in \Sigma$;} \\
 (0,6) & \text{if $v \in \Sigma_\infty \smallsetminus \Sigma$.}
\end{cases} 
\]
As in \S \ref{ss:groups}, we take a basis of $\tilde{V}_v^\dagger$ so that we have identifications
\[
 \tilde{G}_v = \GSO(4,2), \qquad
 G_v = \GSO(2,2), \qquad
 G_{0,v} = \GSO(2,0)
\]
if $v \in \Sigma$ and
\[
 \tilde{G}_v = \GSO(0,6), \qquad
 G_v = \GSO(0,4), \qquad
 G_{0,v} = \GSO(0,2)
\]
if $v \in \Sigma_\infty \smallsetminus \Sigma$.
Here
\[
 \GSO(p,q) = \{ g \in \GL_{p+q}(\R) \, | \, {}^t g I_{p,q} g = \nu(g) \cdot I_{p,q}, \, \det g = \nu(g)^{\frac{p+q}{2}} \}
\]
with 
\[
 I_{p,q} = 
 \begin{pmatrix}
  \1_p & \\
  & -\1_q
 \end{pmatrix}
\]
if $p+q$ is even.
Let $\tilde{\fg}_v = \tilde{\fk}_v \oplus \tilde{\fp}_v$ and $\fg_v = \fk_v\oplus \fp_v$ be the complexified Lie algebra of $\tilde{G}_v$ and $G_v$, respectively, where $\tilde{\fk}_v$ and $\fk_v$ (resp.~$\tilde{\fp}_v$ and $\fp_v$) are the $(+1)$-eigenspaces (resp.~the $(-1)$-eigenspaces) of the Cartan involutions as in \S \ref{ss:groups}.
Put
\[
 \tilde{\fp} = \prod_{v \in \Sigma} \tilde{\fp}_v, \qquad
 \fp = \prod_{v \in \Sigma} \fp_v.
\]
Let
\[
 \iota_v:\C^\times \times \C^\times \simeq E_v^\times \times E_v^\times \longrightarrow (B_{1,v}^\times \times B_{2,v}^\times)/F_v^\times \simeq G_v
\]
be a map induced by the isomorphism $E_v \simeq \C$ given by $a+b\i \mapsto a + b |u|_{F_v}^{\frac{1}{2}} i$ for $a, b \in F_v = \R$ and the fixed embeddings $\iota_1:E \hookrightarrow B_1$ and $\iota_2:E \hookrightarrow B_2$.
We explicate $\iota_v$ below.
Recall from Example \ref{eg:skew-herm-periods1} that $V = \e_1 B + \e_2 B$ is equipped with a skew-hermitian form
\[
 \langle \e_1 x_1 + \e_2 x_2, \e_1 y_1 + \e_2 y_2 \rangle = x_1^* \cdot \i \cdot y_1 - x_2^* \cdot J_1 \i \cdot y_2,
\]
where $\e_1 = 1 \otimes 1$ and $\e_2 = \j_1 \otimes 1$.
We take a basis
\begin{align*}
 e_{1,v} & = \sqrt{2} \cdot |uJ_1|_{F_v}^{-\frac{1}{2}} \cdot \e_2 e_v, & 
 e_{2,v} & = \sqrt{2} \cdot |J_1|_{F_v}^{-\frac{1}{2}} \cdot \e_2 e''_v, \\
 e_{3,v} & = \sqrt{2} \cdot |u|_{F_v}^{-\frac{1}{2}} \cdot \e_1 e_v, &
 e_{4,v} & = \sqrt{2} \cdot \e_1 e''_v
\end{align*}
of $V_v^\dagger$, so that
\[
 (\langle e_{i,v}, e_{j,v} \rangle^\dagger) =
 \begin{cases}
  I_{2,2} & \text{if $v \in \Sigma$;} \\
  I_{0,4} & \text{if $v \in \Sigma_\infty \smallsetminus \Sigma$.}
 \end{cases}
\]
Since 
\begin{align*}
 \iota_1(\i) \e_1 & = \e_1 \i, &
 \iota_1(\i) \e_2 & = - \e_2 \i, \\
 \iota_2(\i) \e_1 & = \e_1 \i, &
 \iota_2(\i) \e_2 & = \e_2 \i,
\end{align*}
and 
\[
 \i e_v = u e_v'', \qquad
 \i e_v'' = e_v,
\]
we have
\begin{equation}
\label{eq:i-actions}
\begin{aligned}
 \iota_1(i) e_{1,v} & = e_{2,v}, \qquad &
 \iota_1(i) e_{2,v} & = - e_{1,v}, \qquad &
 \iota_1(i) e_{3,v} & = - e_{4,v}, \qquad &
 \iota_1(i) e_{4,v} & = e_{3,v}, \\
 \iota_2(i) e_{1,v} & = - e_{2,v}, \qquad &
 \iota_2(i) e_{2,v} & = e_{1,v}, \qquad &
 \iota_2(i) e_{3,v} & = - e_{4,v}, \qquad &
 \iota_2(i) e_{4,v} & = e_{3,v},
\end{aligned}
\end{equation}
where $i = |u|_{F_v}^{-\frac{1}{2}} \cdot \i$.
Hence 
\[
 \iota_v(a_1 + b_1 i, a_2 + b_2 i) = 
 \begin{pmatrix}
  a_1 & -b_1 & & \\
  b_1 & a_1 & & \\
  & & a_1 & b_1 \\
  & & -b_1 & a_1
 \end{pmatrix}
 \begin{pmatrix}
  a_2 & b_2 & & \\
  -b_2 & a_2 & & \\
  & & a_2 & b_2 \\
  & & -b_2 & a_2
 \end{pmatrix}.
\]

Also, let $W_v^\dagger$ be the $2$-dimensional symplectic $F_v$-space as in \cite[\S C.2]{periods1} associated to the $B_v$-space $W_v$.
Using a basis $e_v, e_v'$ of $W_v^\dagger$, we identify $H_v$ with $\GL_2(\R)$.
We embed $\C^\times$ into $H_v = \GL_2(\R)$ by
\[
 a+bi \longmapsto \mat{a}{b}{-b}{a}.
\]
Since $\tilde{\V}_v = \tilde{V}_v^\dagger \otimes_{F_v} W_v^\dagger$, etc., we have identifications
\[
 \tilde{\X}_v = \tilde{V}_v^\dagger, \qquad
 \X_v = V_v^\dagger, \qquad
 \X_{0,v} = V_{0,v}^\dagger.
\]
For any $v \in \Sigma_\infty$ and any non-negative integer $\ell$, we put $\sss^\ell \tilde{V}_v = \Sym^\ell \tilde{V}_v^\dagger \otimes_{F_v} \bar{F}_v$ and denote by $\Hs^\ell \tilde{V}_v$ the kernel of the contraction $\sss^\ell \tilde{V}_v \rightarrow \sss^{\ell-2} \tilde{V}_v$ (see \S \ref{ss:fin-dim-rep-SO}).
We define $\sss^\ell V_v$ and $\Hs^\ell V_v$ similarly.

\subsection{Construction}
\label{ss:form-construction}

For $v \in \Sigma_\infty$, let $k_v \ge 2$ be a positive even integer and put $\ell_v = k_v-2$.
Put $\ul = (\ell_v)_{v \in \Sigma_\infty}$ and
\[
 \Hs^{\ul} \tilde{V} = \bigotimes_{v \in \Sigma_\infty} \Hs^{\ell_v} \tilde{V}_v, \qquad
 \Hs^{\ul} V = \bigotimes_{v \in \Sigma_\infty} \Hs^{\ell_v} V_v.
\]
We consider a Schwartz form
\[
 \tilde{\varphi} = \otimes_v \tilde{\varphi}_v \in \SS(\tilde{\X}(\A)) \otimes \wedge^{2d} \tilde{\fp}^* \otimes \Hs^{\ul} \tilde{V}
\]
such that
\[
 \tilde{\varphi}_v = 
 \begin{cases}
  \varphi'_{2, \ell_v} & \text{if $v \in \Sigma$;} \\
  \varphi'_{\ell_v} & \text{if $v \in \Sigma_\infty \smallsetminus \Sigma$}
 \end{cases}
\]
(see \S \ref{ss:schwartzforms}; note that $\bigotimes_{v \in \Sigma} \wedge^2 \tilde{\fp}_v^* \subset \wedge^{2d} \tilde{\fp}^*$).
Then we have a theta form 
\[
 \Theta_{\tilde{\varphi}}(\tilde{g},h) = \sum_{x \in \tilde{\X}(F)} (\omega(\tilde{g},h) \otimes 1 \otimes 1) \tilde{\varphi}(x)
\]
on $\G(\U(\tilde{V}) \times \U(W))^0(\A)$, where we regard $\tilde{\varphi}$ as a $\wedge^{2d} \tilde{\fp}^* \otimes \Hs^{\ul} \tilde{V}$-valued function on $\tilde{\X}(\A)$.
Let $\tau$ be an irreducible unitary automorphic representation of $H(\A)^+$ with central character $\xi_E$ such that:
\begin{itemize}
 \item $\tau_v$ is the anti-holomorphic discrete series representation of $\GL_2(\R)^+$ of weight $-k_v-1$ if $v \in \Sigma$;
 \item $\tau_v$ is the holomorphic discrete series representation of $\GL_2(\R)^+$ of weight $k_v+1$ if $v \in \Sigma_\infty \smallsetminus \Sigma$,
\end{itemize}
where 
\begin{align*}
 H(\A)^+ & = \{ h \in H(\A) \, | \, \nu(h) \in \N(\A_E^\times) \}, \\
 \GL_2(\R)^+ & = \{ h \in \GL_2(\R) \, | \, \det h > 0 \}.
\end{align*}
Let $\phi = \otimes_v \phi_v \in \tau$ be a non-zero vector such that
\[
 \tau_v(z) \phi_v =
 \begin{cases}
  z^{-k_v-1} \cdot \phi_v & \text{if $v \in \Sigma$;} \\
  z^{k_v+1} \cdot \phi_v & \text{if $v \in \Sigma_\infty \smallsetminus \Sigma$}
 \end{cases}
\]
for $z \in \C^1$, where we embed $\C^\times$ into $\GL_2(\R)$ as in \S \ref{ss:notation-at-real-places}.
We define a theta lift
\[
 \theta_{\tilde{\varphi}}(\phi) \in \mathrsfs{A}(\tilde{G}) \otimes \wedge^{2d} \tilde{\fp}^* \otimes \Hs^{\ul} \tilde{V}
\]
by
\[
 \theta_{\tilde{\varphi}}(\phi)(\tilde{g}) = \int_{\U(W)(F) \backslash \U(W)(\A)} \Theta_{\tilde{\varphi}}(\tilde{g}, h_1 h) \phi(h_1 h) \, dh_1
\]
for $\tilde{g} \in \tilde{G}(\A)$, where we choose $h \in H(\A)^+$ such that $\nu(h) = \nu(\tilde{g})$ but the integral is independent of the choice of $h$.
By Proposition \ref{prop:spl-hodge}, $\theta_{\tilde{\varphi}}(\phi)$ has trivial central character.

Next we take the image $\tilde{\Xi} := \res(\theta_{\tilde{\varphi}}(\phi))$ of $\theta_{\tilde{\varphi}}(\phi)$ under the map
\[
 \res : \mathrsfs{A}(\tilde{G}) \otimes \wedge^{2d} \tilde{\fp}^* \otimes \Hs^{\ul} \tilde{V} \longrightarrow \mathrsfs{A}(\GG) \otimes \wedge^{2d} \fp^* \otimes \Hs^{\ul} V
\]
induced by the restriction $\mathrsfs{A}(\tilde{G}) \rightarrow \mathrsfs{A}(\GG)$ and the projections $\wedge^{2d} \tilde{\fp}^* \rightarrow \wedge^{2d} \fp^*$ and $\Hs^{\ul} \tilde{V} \rightarrow \Hs^{\ul} V$ (see \S \ref{ss:Res-C}).
For any character $\eta$ of $\A_E^\times/E^\times$ such that $\eta|_{\A^\times} = 1$, we define the $\eta$-component
\[
 \tilde{\Xi}_\eta \in \mathrsfs{A}(\GG) \otimes \wedge^{2d} \fp^* \otimes \Hs^{\ul} V
\]
of $\tilde{\Xi}$ by 
\[
 \tilde{\Xi}_\eta(\g) = \int_{\tilde{Z}(\A) \ZZ(F) \backslash \ZZ(\A)}
 \tilde{\Xi}(\z \g) \cdot (\eta \circ \mathbf{p})(\z) \, d \z,
\]
where the Haar measure $d \z$ is normalized so that $\vol(\tilde{Z}(\A) \ZZ(F) \backslash \ZZ(\A))=1$.
Furthermore, we define its push-forward
\[
 \pr_*(\tilde{\Xi}_\eta) \in \mathrsfs{A}(G) \otimes \wedge^{2d} \fp^* \otimes \Hs^{\ul} V
\]
by the first projection $\pr:\GG(\A) \rightarrow G(\A)$ as follows.
Let $G(\A)^+$ be the image of $\pr$, i.e.,
\[
 G(\A)^+ = \{g \in G(\A) \, | \, \nu(g) \in \N(\A_E^\times) \}.
\]
Note that $Z(\A) \subset G(\A)^+$ and $[G(\A):G(F)G(\A)^+] = [\A^\times : F^\times \N(\A_E^\times)] = 2$.
For $g \in G(\A)^+$, choose $\alpha_g \in \A_E^\times$ such that $\nu(g) = \N(\alpha_g)$ and put 
\[
 \pr_*(\tilde{\Xi}_\eta)(g) = \tilde{\Xi}_\eta(g, \alpha_g) \cdot \eta(\alpha_g),
\]
which is independent of the choice of $\alpha_g$.
Then we extend $\pr_*(\tilde{\Xi}_\eta)$ to a $\wedge^{2d} \fp^* \otimes \Hs^{\ul} V$-valued automorphic form on $G(\A)$ by the natural embedding
\[
 G(F)^+ \backslash G(\A)^+ \hookrightarrow G(F) \backslash G(\A) 
\]
and extension by zero, where $G(F)^+ = G(F) \cap G(\A)^+$.
Note that $\pr_*(\tilde{\Xi}_\eta)$ has trivial central character.

Finally, for any open compact subgroup $\cK$ of $Z(\A_f) \backslash G(\A_f)$, we define the $\cK$-invariant projection
\[
 \Xi_\cK \in \mathrsfs{A}(G) \otimes \wedge^{2d} \fp^* \otimes \Hs^{\ul} V
\]
of $\Xi := \pr_*(\tilde{\Xi}_\eta)$ by
\[
 \Xi_\cK(g) = \int_\cK \Xi(gk) \, dk,
\]
where the Haar measure $dk$ is normalized so that $\vol(\cK)=1$.

\subsection{Non-vanishing}

Let $\pi$ be an irreducible unitary cuspidal automorphic representation of $\GL_2(\A)$ with trivial central character such that:
\begin{itemize}
 \item $\pi_v$ is the discrete series representation of $\GL_2(\R)$ of even weight $k_v$ if $v \in \Sigma_\infty$.
\end{itemize}
We assume that $\pi$ has the Jacquet-Langlands transfers $\pi_{B_1}$ and $\pi_{B_2}$ to $B_1^\times(\A)$ and $B_2^\times(\A)$, respectively. 
We regard $\pi_{B_1} \boxtimes \pi_{B_2}$ as an irreducible unitary automorphic representation of $G(\A)$ with trivial central character.

For $\epsilon = (\epsilon_v)_{v \in \Sigma_\infty}$ with $\epsilon_v = \pm$, let
\[
 f_1^\epsilon = \Big( \bigotimes_{v \in \Sigma_\infty} f_{1,v}^{\epsilon_v} \Big) \otimes \Big( \bigotimes_{v \notin \Sigma_\infty} f_{1,v} \Big) \in \pi_{B_1}, \qquad
 f_2^\epsilon = \Big( \bigotimes_{v \in \Sigma_\infty} f_{2,v}^{\epsilon_v} \Big) \otimes \Big( \bigotimes_{v \notin \Sigma_\infty} f_{2,v} \Big) \in \pi_{B_2}
\]
be non-zero vectors such that:
\begin{itemize}
\item if $v \in \Sigma$, then 
\begin{equation}
\label{eq:K-type-f_i-spl}
 \pi_{B_1,v}(z) f_{1,v}^{\epsilon_v} =  z^{\epsilon_v k_v} \cdot f_{1,v}^{\epsilon_v}, \qquad
 \pi_{B_2,v}(z) f_{2,v}^{\epsilon_v} = z^{-\epsilon_v k_v} \cdot f_{2,v}^{\epsilon_v}
\end{equation}
for $z \in \C^1$ (such $f_{i,v}^{\epsilon_v}$ is unique up to scalars);
\item if $v \in \Sigma_\infty \smallsetminus \Sigma$, then 
\begin{equation}
\label{eq:K-type-f_i-ram}
 \pi_{B_1,v}(z) f_{1,v}^{\epsilon_v} = z^{\epsilon_v(k_v-2)} \cdot f_{1,v}^{\epsilon_v}, \qquad
 \pi_{B_2,v}(z) f_{2,v}^{\epsilon_v} = z^{-\epsilon_v(k_v-2)} \cdot f_{2,v}^{\epsilon_v}
\end{equation}
for $z \in \C^1$ (such $f_{i,v}^{\epsilon_v}$ is unique up to scalars);
\item if $v \notin \Sigma_\infty$, then $f_{i,v}$ does not depend on $\epsilon$. 
\end{itemize}
Here, for $v \in \Sigma_\infty$, we embed $\C^\times$ into $B_{i,v}^\times$ via the isomorphism $\C \simeq E_v$ as in \S \ref{ss:notation-at-real-places} and the fixed embedding $E \hookrightarrow B_i$.
We regard $f^\epsilon := f_1^\epsilon \boxtimes f_2^\epsilon$ as an automorphic form on $G(\A)$ with trivial central character.
Put
\[
 \fff^\epsilon = f^\epsilon \otimes \bo^\epsilon \otimes \vvv^\epsilon \in \mathrsfs{A}(G) \otimes \wedge^{2d} \fp^* \otimes \Hs^{\ul} V 
\]
with 
\[
 \bo^\epsilon = \bigotimes_{v \in \Sigma} \bo_v^{\epsilon_v}, \qquad
 \vvv^\epsilon = \bigotimes_{v \in \Sigma_\infty} \vvv_v^{\epsilon_v},
\]
where $\ul = (\ell_v)_{v \in \Sigma_\infty}$ with $\ell_v = k_v-2$, and $\bo_v^{\epsilon_v} \in \wedge^2 \fp_v^*$ and $\vvv_v^{\epsilon_v} \in \Hs^{\ell_v} V_v$ are as in \S \ref{ss:Res-C}.

Finally, let $(\cdot, \cdot)$ be the non-degenerate bilinear pairing on $\wedge^{2d} \fp^* \otimes \Hs^{\ul} V$ induced by
\begin{itemize}
 \item the bilinear pairing $\cdot \wedge \cdot : \wedge^2 \fp_v^* \times \wedge^2 \fp_v^* \rightarrow \wedge^4 \fp_v^* \simeq \C$ as in \S \ref{ss:Res-C};
 \item the bilinear pairing $\langle \cdot, \cdot \rangle : \sss^{\ell_v} V_v \times \sss^{\ell_v} V_v \rightarrow \C$ as in \S \ref{ss:fin-dim-rep-SO}.
\end{itemize}

\begin{prop}
\label{p:form-nonvanishing}
Suppose that $f_1^\epsilon$ and $f_2^\epsilon$ as above are given.
Let $\cK = \prod_v \cK_v$ be an open compact subgroup of $Z(\A_f) \backslash G(\A_f)$ such that $f_{1,v} \boxtimes f_{2,v}$ is $\cK_v$-fixed for all $v \notin \Sigma_\infty$.
Assume further that there exists a finite place $v_0$ of $F$ such that 
\begin{enumerate}
 \item \label{item:E1} $E_{v_0}/F_{v_0}$ is \emph{ramified};
 \item \label{item:E2} $B_{1,v_0}$ and $B_{2,v_0}$ are split;
 \item \label{item:E3} $\cK_{v_0}$ is a hyperspecial maximal compact subgroup of $Z_{v_0} \backslash G_{v_0}$.
\end{enumerate}
Then there exist $\tilde{\varphi}, \tau, \phi, \eta$ as in \S \ref{ss:form-construction} such that
\[
 (\Xi_\cK, \fff^\epsilon) := \int_{Z(\A) G(F) \backslash G(\A)} (\Xi_\cK(g), \fff^\epsilon(g)) \, dg \ne 0
\]
for all $\epsilon$, where $\Xi = \pr_*(\tilde{\Xi}_\eta)$ with $\tilde{\Xi} = \res(\theta_{\tilde{\varphi}}(\phi))$ and $\fff^\epsilon = f^\epsilon \otimes \bo^\epsilon \otimes \vvv^\epsilon$ with $f^\epsilon = f_1^\epsilon \boxtimes f_2^\epsilon$.
\end{prop}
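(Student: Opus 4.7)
The plan is to apply a seesaw identity to convert $(\Xi_\cK,\fff^\epsilon)$ into a classical triple product period on $[B^\times]$, identify the auxiliary theta lifts via Lemmas~\ref{l:theta-1}--\ref{l:theta-2}, and produce nonvanishing for a clever choice of $\eta$ and test vectors.

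\textbf{Step 1 (Kudla--Millson contraction at infinity).} Unfolding the definitions of $\Xi_\cK = \pr_*(\tilde\Xi_\eta)_\cK$ and $\tilde\Xi=\res(\theta_{\tilde\varphi}(\phi))$, and swapping the order of integration, the pairing collapses, at each archimedean place $v$, via the Kudla--Millson contraction formula (Propositions~\ref{p:schwartz-rc1}--\ref{p:schwartz-rc2}): the archimedean Schwartz form $\tilde\varphi_v$ is replaced by the scalar Schwartz function
\[
\varphi^\epsilon_v(x) \;=\; c^\epsilon_v\cdot(x_1+\epsilon_v i x_2)^{k_v}\varphi_{0,v}(x), \qquad c^\epsilon_v \ne 0,
\]
which is the $\epsilon_v$-extremal weight vector in the oscillator representation at $v$. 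The pairing thus reduces to a nonzero multiple of a scalar-valued theta integral over $\GG(F)\backslash\GG(\A)$ of $\theta_{\varphi^\epsilon}(\phi)\cdot\overline{f^\epsilon\circ\pr}$, twisted by the character $\eta\circ\mathbf p$ along the $\ZZ$-component.

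\textbf{Step 2 (Seesaw and identification of the theta lifts).} I apply the seesaw identity for the dual pairs $(\U_B(\tilde V),\U_B(W))$ versus $(\U_B(V)\times\U_B(V_0),\U_B(W)\times\U_B(W))$, combined with the factorisation of the theta kernel for $\GG$ into kernels for $(\U_B(V),\U_B(W))$ and $(\U_B(V_0),\U_B(W))$. This rewrites the pairing as a nonzero multiple of
\[
\int_{Z_H(\A)H(F)\backslash H(\A)} \phi(h)\cdot\overline{\theta_{\varphi_1^\epsilon}(f^\epsilon)(h)}\cdot\overline{\theta_{\varphi_2^\epsilon}(\eta)(h)}\,dh.
\]
By Lemma~\ref{l:theta-2}, since $\pi_{B_1}$ and $\pi_{B_2}$ share the JL transfer $\pi$, $\theta_{\varphi_1^\epsilon}(f^\epsilon)$ lies in the JL transfer $\pi_B$ of $\pi$ to $B^\times(\A)$. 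By Lemma~\ref{l:theta-1}, if $\eta$ does not factor through the norm at every finite place where $B$ ramifies (an open condition; the archimedean cases are automatic since $B$ is split at real places and $E$ is CM), $\theta_{\varphi_2^\epsilon}(\eta)$ lies in $\pi(\eta)_B$, the JL transfer of $\pi(\eta)=\Ind_E^F\eta$.

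\textbf{Step 3 (Triple product period and main obstacle).} The resulting integral is a classical triple product period for $\tau\otimes\pi_B\otimes\pi(\eta)_B$, whose product of central characters $\xi_E\cdot\mathbf 1\cdot\xi_E$ is trivial. A quaternionic Ichino-type formula expresses its square modulus as a nonzero multiple of $L(\tfrac12,\JL(\tau)\times\pi\times\pi(\eta))$ times a product of local matrix-coefficient integrals. The archimedean local integrals are nonzero for every $\epsilon$ by the extremal-weight-vector matching set up in Step~1; the finite-place integrals away from $v_0$ can be made nonzero by standard test-vector choices compatible with $f_{1,v}\boxtimes f_{2,v}$; and at $v_0$, hypotheses~(\ref{item:E1})--(\ref{item:E3}) allow me to take $\eta_{v_0}$ to be a nontrivial character of $E^1_{v_0}$ not factoring through the norm (which exists since $E_{v_0}/F_{v_0}$ is ramified) together with spherical test data, so the $v_0$-local integral follows the standard unramified recipe and is nonzero. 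The same global $\eta$ and finite test data work for all $\epsilon$ simultaneously, as different $\epsilon$ differ only by interchanging holomorphic and anti-holomorphic extremal weights at places in $\Sigma$. \emph{The main obstacle} is the global non-vanishing of $L(\tfrac12,\JL(\tau)\times\pi\times\pi(\eta))$ subject to these local conditions, which is overcome by a standard Rohrlich/Friedberg--Hoffstein non-vanishing theorem applied to the family of finite-order characters $\eta$ of $\A_E^\times/E^\times\A^\times$ of bounded conductor outside a fixed set of places; the freedom to also vary the auxiliary form $\tau$ provides extra flexibility. A secondary technical issue is verifying that the Weil-representation splittings defining $\theta_{\tilde\varphi}(\phi)$ (Appendix~\ref{sec:weil-hodge}) are compatible with those used in \S\ref{ss:theta-1}--\S\ref{ss:theta-2}, so that the constants in Steps 1 and 2 are genuinely nonzero.
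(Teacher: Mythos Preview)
Your Steps~1 and~2 match the paper's Lemma~\ref{l:vector/scalar-valued} and seesaw identity~\eqref{eq:seesaw-id} essentially verbatim. The divergence is in Step~3, and there is a genuine gap.

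You reduce to a triple product period on $B^\times$ and then invoke an Ichino-type formula together with a ``Rohrlich/Friedberg--Hoffstein'' nonvanishing result for $L(\tfrac12,\tilde\tau\times\pi\times\pi(\eta))$. No such result is available for triple product $L$-functions at the central point; the classical theorems you cite concern twists of a fixed $\GL_2$ $L$-function by characters, not degree-$8$ triple products. Even granting freedom in $\tau$, you have not explained how to produce a single $(\tau,\eta)$ with nonzero central value subject to your local constraints. You also misidentify the role of hypotheses~(\ref{item:E1})--(\ref{item:E3}): they are \emph{not} used to make the local integral at $v_0$ nonzero (indeed, with $E_{v_0}/F_{v_0}$ ramified and $\eta_{v_0}$ nontrivial, $\pi(\eta)_{v_0}$ is ramified and no ``standard unramified recipe'' applies). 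They are used to handle the fact that the seesaw integral lands on $H(\A)^+$, not $H(\A)$: since $E_{v_0}/F_{v_0}$ is ramified, there is a unit in $F_{v_0}^\times$ outside $\N(E_{v_0}^\times)$, giving $h_0\in H_{v_0}$ covering the nontrivial coset $H(\A)/H(F)H(\A)^+$, and (\ref{item:E3}) lets you translate the Schwartz function by a matching $g_0$ without disturbing $f^\epsilon$. Your write-up shows no awareness of this $H(\A)^+$ versus $H(\A)$ issue.

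The paper bypasses the $L$-value problem entirely. After the seesaw, one has the period $\int_{Z_H(\A)H(F)^+\backslash H(\A)^+}\theta_{\varphi^\epsilon}(f^\epsilon)\cdot\theta_{\varphi_0}(\eta)\cdot\phi$, and the point is that $\tau$ is \emph{not} fixed in advance. One first shows (Lemmas~\ref{l:choose-varphi-1}--\ref{l:choose-varphi0-2}) that $\theta_{\varphi^\epsilon}(f^\epsilon)$ and $\theta_{\varphi_0}(\eta)$ can each be made nonzero on $H(\A)^+$, with the correct archimedean $K$-types (Lemma~\ref{l:product-K-type}); here the archimedean nonvanishing is established by an explicit Whittaker-model computation, not by appeal to extremal-weight matching. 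Crucially, after normalizing so that $\theta_{\varphi^\epsilon}(f^\epsilon)$ is independent of $\epsilon$, one observes that the product $\theta_{\varphi^\epsilon}(f^\epsilon)\cdot\theta_{\varphi_0}(\eta)$ is a nonzero cusp form on the \emph{compact} quotient $Z_H(\A)H(F)^+\backslash H(\A)^+$; the nonvanishing of the product uses real-analyticity on $\fh^n$ (Lemma~\ref{l:choose-varphi0-2}). Spectral decomposition then immediately furnishes some $\tau$ and $\phi\in\tau$ making the period nonzero, for all $\epsilon$ at once. This is both simpler and unconditional.
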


The rest of this section is devoted to the proof of Proposition \ref{p:form-nonvanishing}.

\subsection{Reduction to triple product integrals}

Let $\tilde{\varphi} = \otimes_v \tilde{\varphi}_v$ be a Schwartz form as in \S \ref{ss:form-construction}.
For any finite place $v$ of $F$, we assume that $\tilde{\varphi}_v \in \SS(\tilde{\X}_v)$ is a Schwartz function of the form
\[
 \tilde{\varphi}_v = \varphi_v \otimes \varphi_{0,v}
\]
for some $\varphi_v \in \SS(\X_v)$ and $\varphi_{0,v} \in \SS(\X_{0,v})$.
For any real place $v$ of $F$, we define Schwartz functions $\varphi_v^{\epsilon_v} \in \SS(\X_v)$ and $\varphi_{0,v} \in \SS(\X_{0,v})$ by 
\begin{align}
\label{eq:varphi-infty}
 \varphi_v^{\epsilon_v}(x_1,x_2,x_5,x_6) & =
 \begin{cases}
  (x_1 + \epsilon_v i x_2)^{k_v} \cdot e^{-\pi(x_1^2 + x_2^2 + x_5^2 + x_6^2)} & \text{if $v \in \Sigma$;} \\
  (x_1 + \epsilon_v i x_2)^{k_v-2} \cdot e^{-\pi(x_1^2 + x_2^2 + x_5^2 + x_6^2)} & \text{if $v \in \Sigma_\infty \smallsetminus \Sigma$,}
 \end{cases} \\
\label{eq:varphi0-infty}
 \varphi_{0,v}(x_3,x_4) & = e^{-\pi(x_3^2+x_4^2)},
\end{align}
where $x_1, \dots, x_6$ are the coordinates on $\tilde{\X}_v = \tilde{V}_v^\dagger$ as in \S \ref{ss:Res-C}.
Put
\[
 \varphi^\epsilon = \Big( \bigotimes_{v \in \Sigma_\infty} \varphi_v^{\epsilon_v} \Big) \otimes \Big( \bigotimes_{v \notin \Sigma_\infty} \varphi_v \Big) \in \SS(\X(\A)), \qquad
 \varphi_0 = \bigotimes_v \varphi_{0,v} \in \SS(\X_0(\A)),
\]
so that $\varphi^\epsilon \otimes \varphi_0 \in \SS(\tilde{\X}(\A))$.

\begin{lem}
\label{l:vector/scalar-valued}
We have
\begin{equation}
\label{eq:vector/scalar-valued}
 (\Xi_\cK, \fff^\epsilon) = \int_{\tilde{Z}(\A) \GG(F) \backslash \GG(\A)}
 \theta_{\varphi^\epsilon \otimes \varphi_0}(\phi)(\g) \cdot (f^\epsilon \boxtimes \eta)(\g) \, d\g,
\end{equation}
where $\theta_{\varphi^\epsilon \otimes \varphi_0}(\phi)$ is the theta lift as defined in \S \ref{ss:theta-3} and $f^\epsilon \boxtimes \eta$ is regarded as an automorphic form on $\GG(\A)$.
\end{lem}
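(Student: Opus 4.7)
The plan is to unfold the layers of definitions packed into the left-hand side of \eqref{eq:vector/scalar-valued} and then apply the archimedean contraction formulas of Propositions \ref{p:schwartz-rc1} and \ref{p:schwartz-rc2} to identify the resulting scalar integrand with $\theta_{\varphi^\epsilon \otimes \varphi_0}(\phi) \cdot (f^\epsilon \boxtimes \eta)$.

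First I would observe that, because $f^\epsilon = f_1^\epsilon \boxtimes f_2^\epsilon$ is $\cK$-invariant at every finite place and $\cK$ has total measure one, the $\cK$-averaging can be stripped off: $(\Xi_\cK, \fff^\epsilon) = (\Xi, \fff^\epsilon)$. Next I would unfold the push-forward $\Xi = \pr_*(\tilde\Xi_\eta)$. By construction $\Xi$ is supported on $G(\A)^+$ with value $\tilde\Xi_\eta(g, \alpha_g)\,\eta(\alpha_g)$, so the pairing reduces to an integral over $Z(\A)G(F)^+\backslash G(\A)^+$. Via the surjection $\pr: \GG(\A) \twoheadrightarrow G(\A)^+$ with kernel $E^1(\A)$, I rewrite this as an integral over $\GG(\A)$ modulo $Z(\A)E^1(\A)\GG(F)^+$, with the factor $\eta(\alpha_g)$ reinterpreted as the value of $\eta$ on the $G_0 = E^\times$ component of the $\GG$-variable.

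Next I would substitute the definition of $\tilde\Xi_\eta$ as an average of $\tilde\Xi$ against $\eta \circ \mathbf{p}$ over $\tilde Z(\A)\ZZ(F)\backslash\ZZ(\A)$. Using $\tilde Z \subset \ZZ$ and the short exact sequence $1 \to \tilde Z \to \ZZ \to E^1 \to 1$, the central quotient above combines with this $\ZZ$-average to collapse everything into a single integral over $\tilde Z(\A)\GG(F)\backslash \GG(\A)$ against $f^\epsilon \boxtimes \eta$, matching the domain on the right-hand side. It then remains to convert the vector-valued form $\res(\theta_{\tilde\varphi}(\phi))$ paired with $\bo^\epsilon \otimes \vvv^\epsilon$ into the scalar theta lift $\theta_{\varphi^\epsilon \otimes \varphi_0}(\phi)$. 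The $\U(W)$-integration and the theta summation commute with the purely archimedean contraction on the Schwartz/coefficient slot, so one reduces to the place-by-place identity
\begin{equation*}
 \ccc_{\bo_v^{\epsilon_v},\vvv_v^{\epsilon_v}}\bigl(\res(\tilde\varphi_v)\bigr) = c_v \cdot \varphi_v^{\epsilon_v} \otimes \varphi_{0,v},
\end{equation*}
which is exactly Proposition \ref{p:schwartz-rc1} for $v \in \Sigma$ (with $\ell_v+2 = k_v$) and Proposition \ref{p:schwartz-rc2} for $v \in \Sigma_\infty \smallsetminus \Sigma$ (with $\ell_v = k_v-2$); at finite places the factorization $\tilde\varphi_v = \varphi_v \otimes \varphi_{0,v}$ was built into the setup.

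The main obstacle in turning this plan into a proof is bookkeeping rather than mathematical depth: one must track carefully the tower $\tilde Z \subset \ZZ$, the index-two subgroup $G(\A)^+ \subset G(\A)$, the kernel $E^1$ of $\pr$, and how the twist $\eta \circ \mathbf{p}$ descends to $\eta$ on $E^1$, so that the three averagings (the $\cK$-averaging, the $\ZZ$-averaging, and the $\pr_*$-extension by zero) combine into precisely the single domain $\tilde Z(\A)\GG(F)\backslash\GG(\A)$ with no extra volume factors. The measure normalizations chosen in \S \ref{ss:form-construction} (notably $\vol(\tilde Z(\A)\ZZ(F)\backslash \ZZ(\A))=1$ and $\vol(\cK)=1$) are designed precisely so that all such constants are trivial, and any scalar $c_v$ from the archimedean contraction can be absorbed into the normalization of $\phi$ or $\varphi_v^{\epsilon_v}$.
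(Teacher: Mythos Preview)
Your approach is essentially the same as the paper's, just run in the opposite direction: the paper starts from the right-hand side, applies Propositions \ref{p:schwartz-rc1} and \ref{p:schwartz-rc2} to identify $(\tilde\Xi(\g),\bo^\epsilon\otimes\vvv^\epsilon)$ with $\theta_{\varphi^\epsilon\otimes\varphi_0}(\phi)(\g)$, and then folds up the $\ZZ$-average, the $\pr_*$-extension, and the $\cK$-average in that order to reach $(\Xi_\cK,\fff^\epsilon)$; you unfold these same three layers from the left-hand side and apply the same archimedean contractions at the end.

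One point to tighten: the lemma asserts a literal equality and $\varphi_v^{\epsilon_v}$, $\varphi_{0,v}$ are explicitly prescribed in \eqref{eq:varphi-infty}--\eqref{eq:varphi0-infty}, so you cannot ``absorb'' a stray constant $c_v$ into their normalization. You need $c_v=1$ on the nose. This does hold: for $v\in\Sigma$ Proposition \ref{p:schwartz-rc1} gives $(x_1+\epsilon_v i x_2)^{\ell_v+2}\varphi_0$ with $\ell_v+2=k_v$, and for $v\in\Sigma_\infty\smallsetminus\Sigma$ Proposition \ref{p:schwartz-rc2} gives $(-1)^{\ell_v}(x_1+\epsilon_v i x_2)^{\ell_v}\varphi_0$, where $(-1)^{\ell_v}=1$ since $\ell_v=k_v-2$ is even.
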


\begin{proof}
If $v \in \Sigma$, then by Proposition \ref{p:schwartz-rc1}, we have
\[
 \ccc_{v, \bo_v^{\epsilon_v}, \vvv_v^{\epsilon_v}}(\Res_v(\tilde{\varphi}_v)) = \varphi_v^{\epsilon_v} \otimes \varphi_{0,v},
\]
where 
\begin{align*}
 \Res_v & : S(\tilde{\X}_v) \otimes \wedge^2 \tilde{\fp}_v^* \otimes \Hs^\ell \tilde{V}_v \longrightarrow S(\tilde{\X}_v) \otimes \wedge^2 \fp^*_v \otimes \Hs^\ell V_v, \\
 \ccc_{v, \bo_v^{\epsilon_v}, \vvv_v^{\epsilon_v}} & : S(\tilde{\X}_v) \otimes \wedge^2 \fp_v^* \otimes \sss^\ell V_v \longrightarrow S(\tilde{\X}_v)
\end{align*}
are the restriction and the contraction as in \S \ref{sss:Res-C-1}.
Also, if $v \in \Sigma_\infty \smallsetminus \Sigma$, then by Proposition \ref{p:schwartz-rc2}, we have
\[
 \ccc_{v, \vvv_v^{\epsilon_v}}(\Res_v(\tilde{\varphi}_v)) = \varphi_v^{\epsilon_v} \otimes \varphi_{0,v},
\]
where 
\begin{align*}
 \Res_v: S(\tilde{\X}_v) \otimes \Hs^\ell \tilde{V}_v & \longrightarrow S(\tilde{\X}_v) \otimes \Hs^\ell V_v, \\
 \ccc_{v, \vvv_v^{\epsilon_v}} : S(\tilde{\X}_v) \otimes \sss^\ell V_v & \longrightarrow S(\tilde{\X}_v)
\end{align*}
are the restriction and the contraction as in \S \ref{sss:Res-C-2}.
This implies that
\[
 (\Res(\Theta_{\tilde{\varphi}}(\tilde{g},h)), \bo^\epsilon \otimes \vvv^\epsilon) = \Theta_{\varphi^\epsilon \otimes \varphi_0}(\tilde{g},h), 
\]
where
\[
 \Res : \wedge^{2d} \tilde{\fp}^* \otimes \Hs^{\ul} \tilde{V} \longrightarrow \wedge^{2d} \fp^* \otimes \Hs^{\ul} V
\]
is the projection.
Hence we have
\[
 (\Res(\theta_{\tilde{\varphi}}(\phi)(\tilde{g})), \bo^\epsilon \otimes \vvv^\epsilon) = \theta_{\varphi^\epsilon \otimes \varphi_0}(\phi)(\tilde{g}), 
\]
so that the right-hand side of \eqref{eq:vector/scalar-valued} is equal to
\[
 \int_{\tilde{Z}(\A) \GG(F) \backslash \GG(\A)} (\tilde{\Xi}(\g), \bo^\epsilon \otimes \vvv^\epsilon) \cdot (f^\epsilon \boxtimes \eta)(\g) \, d\g.
\]
This integral is equal to 
\begin{align*}
 & \int_{\ZZ(\A) \GG(F) \backslash \GG(\A)} \int_{\tilde{Z}(\A) \ZZ(F) \backslash \ZZ(\A)} (\tilde{\Xi}(\z \g), \bo^\epsilon \otimes \vvv^\epsilon) \cdot (f^\epsilon \boxtimes \eta)(\z \g) \, d \z \, d\g \\
 & = \int_{\ZZ(\A) \GG(F) \backslash \GG(\A)} (\tilde{\Xi}_\eta(\g), \bo^\epsilon \otimes \vvv^\epsilon) \cdot (f^\epsilon \boxtimes \eta)(\g) \, d\g \\
 & = \int_{Z(\A) G(F)^+ \backslash G(\A)^+} (\pr_*(\tilde{\Xi}_\eta)(g), \bo^\epsilon \otimes \vvv^\epsilon) \cdot f^\epsilon(g) \, dg \\
 & = \int_{Z(\A) G(F) \backslash G(\A)} (\pr_*(\tilde{\Xi}_\eta)(g), \bo^\epsilon \otimes \vvv^\epsilon) \cdot f^\epsilon(g) \, dg \\
 & = \int_{Z(\A) G(F) \backslash G(\A)} (\Xi(g), \fff^\epsilon(g)) \, dg \\
 & = \int_{Z(\A) G(F) \backslash G(\A)} (\Xi_\cK(g), \fff^\epsilon(g)) \, dg,
\end{align*}
noting that $\pr_*(\tilde{\Xi}_\eta)$ is supported in $G(F) G(\A)^+$ and $\fff^\epsilon$ is $\cK$-fixed.
\end{proof}

We now consider the seesaw diagram
\[
 \xymatrix{
  \GU(\tilde{V})^0 \ar@{-}[dr] \ar@{-}[d] &
  \G(\U(W) \times \U(W)) \ar@{-}[dl] \ar@{-}[d] \\
  \G(\U(V) \times \U(V_0))^0 & \GU(W)}.
\]
Then the seesaw identity (combined with Lemma \ref{l:vector/scalar-valued}) says that 
\begin{equation}
\label{eq:seesaw-id}
\begin{aligned}
 (\Xi_\cK, \fff^\epsilon) 
 & = \int_{\tilde{Z}(\A) \GG(F) \backslash \GG(\A)}
 \theta_{\varphi^\epsilon \otimes \varphi_0}(\phi)(\g) \cdot (f^\epsilon \boxtimes \eta)(\g) \, d\g \\
 & = \int_{Z_H(\A) H(F)^+ \backslash H(\A)^+}
 \theta_{\varphi^\epsilon}(f^\epsilon)(h) \cdot \theta_{\varphi_0}(\eta)(h) \cdot \phi(h) \, dh,
\end{aligned}
\end{equation}
where $H(F)^+ = H(F) \cap H(\A)^+$, and $\theta_{\varphi^\epsilon}(f^\epsilon)$ and $\theta_{\varphi_0}(\eta)$ are the theta lifts as defined in \S \ref{ss:theta-2} and \S \ref{ss:theta-1}, respectively.
Hence, to prove Proposition \ref{p:form-nonvanishing}, it suffices to find $\varphi^\epsilon, \varphi_0, \eta, \tau, \phi$ such that the right-hand side of \eqref{eq:seesaw-id} is non-zero for all $\epsilon$.

\subsection{Choosing $\varphi^\epsilon$}

Let $\pi_B$ be the Jacquet-Langlands transfer of $\pi$ to $B^\times(\A)$ (which exists since $\pi_{B_1}$ and $\pi_{B_2}$ exist by assumption and $B_1 \cdot B_2 = B$ in the Brauer group).
Note that:
\begin{itemize}
 \item $\pi_B$ has trivial central character;
 \item $\pi_{B,v}$ is the discrete series representation of $\GL_2(\R)$ of weight $k_v$ if $v \in \Sigma_\infty$.
\end{itemize}
By Lemma \ref{l:theta-2}, we have a non-zero equivariant map
\[
 \theta: \SS(\X(\A)) \otimes (\pi_{B_1} \boxtimes \pi_{B_2}) \longrightarrow \pi_B
\]
given by $\varphi \otimes f \mapsto \theta_\varphi(f)$.

\begin{lem}
\label{l:choose-varphi-1}
Let $\varphi^\epsilon = \left( \bigotimes_{v \in \Sigma_\infty} \varphi_v^{\epsilon_v} \right) \otimes \left( \bigotimes_{v \notin \Sigma_\infty} \varphi_v \right) \in \SS(\X(\A))$ be a Schwartz function such that $\varphi_v^{\epsilon_v}$ is as in \eqref{eq:varphi-infty} for all $v \in \Sigma_\infty$.
Then 
\begin{equation}
\label{eq:theta-lift-K-type1}
 \pi_{B,v}(z) \theta_{\varphi^\epsilon}(f^\epsilon) =
 \begin{cases}
  z^{k_v} \cdot \theta_{\varphi^\epsilon}(f^\epsilon) & \text{if $v \in \Sigma$;} \\
  z^{-k_v} \cdot \theta_{\varphi^\epsilon}(f^\epsilon) & \text{if $v \in \Sigma_\infty \smallsetminus \Sigma$}
 \end{cases}
\end{equation}
for $z \in \C^1$.
Moreover, $\theta_{\varphi^\epsilon}(f^\epsilon)$ is non-zero for some such $\varphi^\epsilon$.
\end{lem}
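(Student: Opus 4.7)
The plan is to handle the two assertions of the lemma — the $K$-type equivariance \eqref{eq:theta-lift-K-type1} and the non-vanishing — separately, reducing each to an archimedean computation combined with the global input of Lemma \ref{l:theta-2}.

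For \eqref{eq:theta-lift-K-type1}, the key observation is that for $z \in \C^1 \subset H_v$ we have $\nu(z) = 1$, so that $(1,z) \in R(F_v)$; the $R$-equivariance of the theta kernel then gives $\pi_{B,v}(z) \theta_{\varphi^\epsilon}(f^\epsilon) = \theta_{\omega(1,z)\varphi^\epsilon}(f^\epsilon)$, reducing the claim to the purely local statement that $\omega(1,z)\varphi_v^{\epsilon_v} = z^{\pm k_v} \varphi_v^{\epsilon_v}$ with the appropriate sign. I would compute this directly in the Schr\"odinger model. The image of $\C^1$ in $H_v = \GL_2(\R)$ lies in $\SO(2)$, and by Appendix \ref{sec:weil-hodge} the splitting $\iota$ at archimedean places is compatible with the standard Schr\"odinger realization (after Morita equivalence), so $\omega(1,z)$ acts on $\SS(V_v^\dagger)$ by the rotation on coordinates induced by \eqref{eq:i-actions}. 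The polynomial factor $(x_1 + \epsilon_v i x_2)^m$ is manifestly a weight vector of weight $\epsilon_v m$ for this rotation, while the Gaussian $e^{-\pi \|x\|^2}$ contributes the standard extra weight $\frac{p-q}{2}$, which equals $0$ in signature $(2,2)$ and $-2$ in signature $(0,4)$. Taking $m = k_v$ when $v \in \Sigma$ and $m = k_v - 2$ when $v \in \Sigma_\infty \smallsetminus \Sigma$ — exactly as in the definition of $\varphi_v^{\epsilon_v}$ — produces the total weights $\epsilon_v k_v$ and $-\epsilon_v k_v$ respectively, as asserted.

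For the non-vanishing, I would invoke Lemma \ref{l:theta-2}(ii): since $\pi_{B_1}$ and $\pi_{B_2}$ are both Jacquet--Langlands transfers of $\pi$, the global theta lift $\theta(\pi_{B_1} \boxtimes \pi_{B_2})$ equals $\pi_B$, which is nonzero. Hence the $R$-equivariant map $\theta : \SS(\X(\A)) \otimes (\pi_{B_1} \boxtimes \pi_{B_2}) \to \pi_B$ is surjective. At each archimedean place, the local archimedean theta correspondence matches the specified $K$-type of $f_{1,v}^{\epsilon_v} \boxtimes f_{2,v}^{\epsilon_v}$ (paired against our specific $\varphi_v^{\epsilon_v}$) to the minimal $K_{H_v}$-type of $\pi_{B,v}$, namely the weight $\pm k_v$ vector; in particular the local theta lift is nonzero. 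Global surjectivity then ensures that one can choose the finite-place Schwartz data $\bigotimes_{v \notin \Sigma_\infty} \varphi_v$ so that $\theta_{\varphi^\epsilon}(f^\epsilon) \ne 0$ for every sign pattern $\epsilon$.

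The main obstacle I expect lies in the archimedean bookkeeping: pinning down the action of $\C^1 \subset H_v$ on $\varphi_v^{\epsilon_v}$ accurately requires unwinding the splitting $\iota$ at archimedean places, identifying it with the standard symplectic-orthogonal splitting via Morita equivalence, and carefully tracking how the coordinates on $V_v^\dagger$ and the Gaussian transform. The signs $\epsilon_v$ and especially the weight shift by $\pm 2$ between the two archimedean cases (which ultimately reflects the different signatures of $V_v^\dagger$) are where sign and normalization errors are most likely. The global non-vanishing step, by contrast, is essentially formal once the local archimedean lift is seen to produce the correct $K$-type vector.
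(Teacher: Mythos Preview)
Your strategy is broadly right, but there are two substantive problems.

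First, in the $K$-type computation you have confused the two sides of the dual pair. Equation \eqref{eq:i-actions} records how $\iota_1(i), \iota_2(i) \in G_v$ act on the basis of $V_v^\dagger$, not how $z \in \C^1 \subset H_v$ acts; the Weil action of $\SO(2) \subset H_v = \GL_2(\R)$ on $\SS(V_v^\dagger)$ is \emph{not} a rotation of the $x$-coordinates. The polynomial $(x_1 + \epsilon_v i x_2)^m$ has $\SO(2)_{H_v}$-weight $+m$ when $x_1, x_2$ lie in the positive-definite part and $-m$ when they lie in the negative-definite part, \emph{independent of} $\epsilon_v$; combining with the Gaussian weight $(p-q)/2$ gives $z^{k_v}$ and $z^{-k_v}$ as in \eqref{eq:theta-lift-K-type1}, not $z^{\epsilon_v k_v}$ and $z^{-\epsilon_v k_v}$ as you claim. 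The sign $\epsilon_v$ shows up only in the action of the maximal torus of $\U(V_v)^0$, which is recorded in the paper as $t_1^{\epsilon_v k_v}$ (resp.\ $t_1^{\epsilon_v(k_v-2)}$) and is what eventually pairs correctly with $f_{1,v}^{\epsilon_v} \boxtimes f_{2,v}^{\epsilon_v}$.

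Second, your non-vanishing argument has a real gap: you assert that the archimedean local lift $\theta_v(\varphi_v^{\epsilon_v} \otimes (f_{1,v}^{\epsilon_v} \boxtimes f_{2,v}^{\epsilon_v}))$ is nonzero, but this is precisely what must be proved. Knowing the $K$-type only tells you the image is a scalar multiple of the weight $\pm k_v$ vector; the scalar could be zero. The paper's proof factors $\theta = \bigotimes_v \theta_v$ via Howe duality, then at archimedean $v$ passes to the transpose map ${}^t\theta_v : \SS(\X_v) \otimes \pi_{B,v}^\vee \to (\pi_{B_1,v} \boxtimes \pi_{B_2,v})^\vee$, realizes it explicitly by a Whittaker-type integral $\tilde{\Bc}_v$ \`a la Waldspurger, and evaluates that integral on $\varphi_v^{\epsilon_v} \otimes w_v$ to a nonzero multiple of a $\Gamma$-value. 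That explicit archimedean computation is the essential missing ingredient in your sketch.
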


\begin{proof}
We have
\begin{equation}
\label{eq:varphi-K-type}
 \omega_v(t, z) \varphi_v^{\epsilon_v} = 
 \begin{cases}
 t_1^{\epsilon_v k_v} \cdot z^{k_v} \cdot \varphi_v^{\epsilon_v} & \text{if $v \in \Sigma$;} \\
 t_1^{\epsilon_v (k_v-2)} \cdot z^{-k_v} \cdot \varphi_v^{\epsilon_v} & \text{if $v \in \Sigma_\infty \smallsetminus \Sigma$}
 \end{cases}
\end{equation}
for $t = (t_1, t_2) \in \U(V_v)^0$ with $t_i \in \SO(2) \simeq \C^1$ and $z \in \U(W_v)$ with $z \in \C^1$.
This proves \eqref{eq:theta-lift-K-type1}.

By the Howe duality for $(\GU(V_v)^0, \GU(W_v)^+)$ (see the proof of Lemma \ref{lem:irred-theta-similitude}), we have a decomposition
\[
 \theta = \bigotimes_v \theta_v, 
\]
where 
\[
 \theta_v :\SS(\X_v) \otimes (\pi_{B_1, v} \boxtimes \pi_{B_2, v}) \longrightarrow \pi_{B,v}
\]
is the unique (up to scalars) non-zero $\G(\U(V_v) \times \U(W_v))^0$-equivariant map.
Since $\pi_{B_1, v} \boxtimes \pi_{B_2, v}$ is irreducible, we may choose $\varphi_v$ so that $\theta_v(\varphi_v \otimes (f_{1,v} \boxtimes f_{2,v})) \ne 0$ for $v \notin \Sigma_\infty$.

It remains to show that $\theta_v(\varphi_v^{\epsilon_v} \otimes (f_{1,v}^{\epsilon_v} \boxtimes f_{2,v}^{\epsilon_v})) \ne 0$ for $v \in \Sigma_\infty$, where $\varphi_v^{\epsilon_v}$ is as in \eqref{eq:varphi-infty}.
Let 
\[
 {}^t\theta_v :\SS(\X_v) \otimes \pi_{B,v}^\vee \longrightarrow (\pi_{B_1, v} \boxtimes \pi_{B_2, v})^\vee
\]
be the $\G(\U(V_v) \times \U(W_v))^0$-equivariant map induced by $\theta_v$.
Let $w_v \in \pi_{B,v}^\vee$ be the unique (up to scalars) non-zero vector such that
\begin{equation}
\label{eq:K-type-w_v}
 \pi_{B,v}^\vee(z) w_v =
 \begin{cases}
  z^{-k_v} \cdot w_v & \text{if $v \in \Sigma$;} \\
  z^{k_v} \cdot w_v & \text{if $v \in \Sigma_\infty \smallsetminus \Sigma$}
 \end{cases}
\end{equation}
for $z \in \C^1$.
Then, by \eqref{eq:varphi-K-type}, ${}^t\theta_v(\varphi_v^{\epsilon_v} \otimes w_v)$ is a scalar multiple of the unique (up to scalars) non-zero vector $\fF_v^{\epsilon_v} \in (\pi_{B_1, v} \boxtimes \pi_{B_2, v})^\vee$ such that
\begin{equation}
\label{eq:K-type-F_v}
 (\pi_{B_1, v} \boxtimes \pi_{B_2, v})^\vee(t) \fF_v^{\epsilon_v} = 
 \begin{cases}
 t_1^{\epsilon_v k_v} \cdot \fF_v^{\epsilon_v} & \text{if $v \in \Sigma$;} \\
 t_1^{\epsilon_v (k_v-2)} \cdot \fF_v^{\epsilon_v} & \text{if $v \in \Sigma_\infty \smallsetminus \Sigma$}
 \end{cases}
\end{equation}
for $t = (t_1, t_2) \in \U(V_v)^0$ with $t_i \in \SO(2) \simeq \C^1$.
Since
\[
 \langle \theta_v(\varphi_v^{\epsilon_v} \otimes (f_{1,v}^{\epsilon_v} \boxtimes f_{2,v}^{\epsilon_v})), w_v \rangle = 
 \langle f_{1,v}^{\epsilon_v} \boxtimes f_{2,v}^{\epsilon_v}, {}^t\theta_v(\varphi_v^{\epsilon_v} \otimes w_v) \rangle
\]
and $\langle f_{1,v}^{\epsilon_v} \boxtimes f_{2,v}^{\epsilon_v}, \fF_v^{\epsilon_v} \rangle \ne 0$ by \eqref{eq:K-type-f_i-spl}, \eqref{eq:K-type-f_i-ram}, and \eqref{eq:K-type-F_v},
where $\langle \cdot, \cdot \rangle$ denotes the natural pairing, it suffices to show that ${}^t\theta_v(\varphi_v^{\epsilon_v} \otimes w_v) \ne 0$.

For this, we realize ${}^t\theta_v$ explicitly as follows.
Recall that we write $J = t_v^2$ for some $t_v \in F^\times_v$ in \S \ref{ss:notation-at-real-places}.
We define an isomorphism $\ii'_v: B_{1,v} \rightarrow B_{2,v}$ of quaternion $F_v$-algebras by
\[
 \ii'_v(a + bi + cj_1 + dij_1) = a + bi + cj_2 + dij_2,
\]
where 
\[
 i = |u|_{F_v}^{-\frac{1}{2}} \cdot \i, \qquad 
 j_1 = |J_1|_{F_v}^{-\frac{1}{2}} \cdot \j_1, \qquad
 j_2 = \zeta_v t_v^{-1} \cdot |J_1|_{F_v}^{\frac{1}{2}} \cdot \j_2
\]
with
\[
 \zeta_v = 
 \begin{cases}
  +1 & \text{if $v \in \Sigma$;} \\
  -1 & \text{if $v \in \Sigma_\infty \smallsetminus \Sigma$.}
 \end{cases}
\]
Since $\j_1 \e_2 = J_1 \e_1$, $\j_2 \e_2 = \e_1 \j$, and $\j e_v'' = -t_v e_v''$, we have
\[
 j_1 e_{2,v} = \zeta_v e_{4,v}, \qquad 
 j_2 e_{2,v} = -\zeta_v e_{4,v}.
\]
From this and \eqref{eq:i-actions}, we deduce that
\[
 x^* e_{2,v} = \ii'_v(x) e_{2,v}
\]
for all $x \in B_{1,v}$, where $*$ is the main involution on $B_{1,v}$.
In particular, if we define a subgroup $\varDelta_v$ of $G_v = (B_{1,v}^\times \times B_{2,v}^\times)/F_v^\times$ by 
\[
 \varDelta_v = \{ ((x^*)^{-1}, \ii'_v(x)) \, | \, x \in B_{1,v}^\times \} / F_v^\times,
\]
then $e_{2,v}$ is $\varDelta_v$-fixed.
We now realize $\pi_{B,v}^\vee$ on the Whittaker model $\Wc(\pi_{B,v}^\vee)$ with respect to the character $\smat{1}{x}{}{1} \mapsto e^{-2 \pi i \zeta_v x}$ and define a map
\[
 \tilde{\Bc}_v: \SS(\X_v) \otimes \Wc(\pi_{B,v}^\vee) \longrightarrow \C
\]
by
\[
 \tilde{\Bc}_v(\Phi \otimes W) = \int_{N_v \backslash \SL_2(F_v)} \omega_v(h) \Phi(\sqrt{2} e_{2,v}) W(h) \, dh, 
\]
where $N_v$ is the group of unipotent upper triangular matrices in $H_v = \GL_2(F_v)$ and the integral is absolutely convergent by \cite[Lemme 5]{wald-periods85}.
For $\fF \in (\pi_{B_1, v} \boxtimes \pi_{B_2, v})^\vee$, put
\[
 \Bc_v(\fF) = \tilde{\Bc}_v(\tilde{\fF}),
\]
where we choose $\tilde{\fF} \in \SS(\X_v) \otimes \Wc(\pi_{B,v}^\vee)$ such that ${}^t\theta_v(\tilde{\fF}) = \fF$.
Then, by \cite[Lemme 6]{wald-periods85}, this does not depend on the choice of $\tilde{\fF}$ and defines a $\varDelta_v$-invariant map $\Bc_v: (\pi_{B_1, v} \boxtimes \pi_{B_2, v})^\vee \rightarrow \C$, so that 
\[
 \tilde{\Bc}_v = \Bc_v \circ {}^t \theta_v.
\]
Note that the representation $x \mapsto \pi_{B_1, v}((x^*)^{-1})$ is isomorphic to $\pi_{B_1,v}^\vee$.
Thus it suffices to show that $\tilde{\Bc}_v(\varphi_v^{\epsilon_v} \otimes w_v) \ne 0$.
By \eqref{eq:K-type-w_v}, we may normalize $w_v$ so that 
\[
 w_v \! \mat{a}{}{}{a^{-1}} = a^{k_v} e^{-2 \pi a^2}.
\]
If $v \in \Sigma$, then
\begin{align*}
 \tilde{\Bc}_v(\varphi_v^{\epsilon_v} \otimes w_v) 
 & = \int_0^\infty a^2 \varphi_v^{\epsilon_v}(a \sqrt{2} e_{2,v}) \cdot
 w_v \! \mat{a}{}{}{a^{-1}} \cdot a^{-2} \, d^\times a \\
 & = (\epsilon_v i \sqrt{2})^{k_v} \cdot
 \int_0^\infty a^{2k_v} e^{-4 \pi a^2} \, d^\times a \\
 & = (\epsilon_v i \sqrt{2})^{k_v} \cdot (4 \pi)^{-k_v} \cdot 2^{-1} \cdot
 \int_0^\infty a^{k_v} e^{-a} \, d^\times a \\
 & = (\epsilon_v i \sqrt{2})^{k_v} \cdot (4 \pi)^{-k_v} \cdot 2^{-1} \cdot
 \Gamma(k_v),
\end{align*}
where $d^\times a = da/a$.
Similarly, if $v \in \Sigma_\infty \smallsetminus \Sigma$, then
\[
 \tilde{\Bc}_v(\varphi_v^{\epsilon_v} \otimes w_v) 
 = (\epsilon_v i \sqrt{2})^{k_v-2} \cdot (4 \pi)^{-k_v+1} \cdot 2^{-1} \cdot
 \Gamma(k_v-1).
\]
This completes the proof.
\end{proof}

By Lemma \ref{l:choose-varphi-1}, we may choose $\varphi^\epsilon$ so that $\theta_{\varphi^\epsilon}(f^\epsilon)$ is non-zero.
Moreover, by replacing $f^\epsilon$ by its scalar multiple if necessary, we may assume that $\theta_{\varphi^\epsilon}(f^\epsilon)$ does not depend on $\epsilon$.

\begin{lem}
\label{l:choose-varphi-2}
There exists an element $(g_0, h_0) \in \G(\U(V_{v_0}) \times \U(W_{v_0}))^0$ such that the restriction of $\theta_{\omega(g_0,h_0) \varphi^\epsilon}(f^\epsilon)$ to $H(\A)^+$ is non-zero.
\end{lem}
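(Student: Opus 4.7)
The plan is to split into two cases. If $\theta_{\varphi^\epsilon}(f^\epsilon)$ is already nonzero when restricted to $H(\A)^+$, take $(g_0, h_0) = (1, 1)$. The substantive case is when this restriction vanishes, and the goal is to use the ramification at $v_0$ to manufacture a suitable local translate.

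The structural input is class field theory: $F^\times \cdot \N(\A_E^\times) = \ker \xi_E$, so $H(F) \cdot H(\A)^+$ equals $\{h \in H(\A) : \xi_E(\nu(h)) = 1\}$ and has index two in $H(\A)$. Left $H(F)$-invariance of the theta lift shows that vanishing on $H(\A)^+$ forces vanishing on $H(F) \cdot H(\A)^+$; combined with Lemma \ref{l:choose-varphi-1}, this means $\theta_{\varphi^\epsilon}(f^\epsilon)$ is nonzero on the complementary coset $\{\xi_E \circ \nu = -1\}$. Right-translating by any $h_0 \in H(\A)$ with $\xi_E(\nu(h_0)) = -1$ will then move this nonvanishing component back into $H(\A)^+$.

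To realize such a translate locally at $v_0$, ramification of $E_{v_0}/F_{v_0}$ is used to pick $h_0 \in H(F_{v_0}) = \GL_2(F_{v_0})$ with $\det h_0 = u \in \mathcal{O}_{F_{v_0}}^\times \setminus \N(E_{v_0}^\times)$, viewed as an adele trivial away from $v_0$. Splitness of $B_{1,v_0}$ and $B_{2,v_0}$ identifies $G_{v_0}$ with $(\GL_2(F_{v_0}) \times \GL_2(F_{v_0}))/F_{v_0}^\times$ (with similitude $\nu([(b_1,b_2)]) = \det b_1 \det b_2$), and hyperspeciality of $\cK_{v_0}$ ensures a hyperspecial lift $\tilde{\cK}_{v_0} \supset Z_{v_0}$ inside $G_{v_0}$ whose similitude character surjects onto $\mathcal{O}_{F_{v_0}}^\times$; choose $g_0 \in \tilde{\cK}_{v_0}$ with $\nu(g_0) = u$. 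Since $f^\epsilon$ is $\cK_{v_0}$-invariant and has trivial central character, it is fixed by all of $\tilde{\cK}_{v_0}$, hence by $g_0$.

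Applying the kernel identity $\Theta_{\omega(g_0,h_0)\varphi}(g,h) = \Theta_\varphi(gg_0, hh_0)$ inside the definition of the theta lift, together with a change of variable $g \mapsto gg_0^{-1}$ and the $g_0$-invariance of $f^\epsilon$, gives
\[
 \theta_{\omega(g_0,h_0)\varphi^\epsilon}(f^\epsilon)(h) = \theta_{\varphi^\epsilon}(f^\epsilon)(h h_0)
\]
for $h \in H(\A)^+$. Since $\xi_E(\nu(hh_0)) = -1$ for such $h$, this translate is nonzero on $H(\A)^+$. The most delicate part is the surjectivity of $\nu$ on the hyperspecial lift $\tilde{\cK}_{v_0}$ together with its triviality on $f^\epsilon$; these are precisely the roles played by conditions (ii) and (iii) on $v_0$, while condition (i) is what supplies a non-norm unit in the first place.
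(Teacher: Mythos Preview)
Your proof is correct and follows essentially the same approach as the paper's: both use the index-two decomposition $H(\A) = H(F)H(\A)^+ \sqcup H(F)H(\A)^+ h_0$ with $h_0 \in H_{v_0}$ having $\nu(h_0) \in \mathcal{O}_{F_{v_0}}^\times \smallsetminus \N(\mathcal{O}_{E_{v_0}}^\times)$, choose $g_0 \in G_{v_0}$ with matching similitude whose image lies in $\cK_{v_0}$, and compute $\theta_{\omega(g_0,h_0)\varphi^\epsilon}(f^\epsilon)(h) = \theta_{\varphi^\epsilon}(f^\epsilon)(hh_0)$ via $\cK_{v_0}$-invariance of $f^\epsilon$. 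The only cosmetic difference is that you separate the two cases explicitly while the paper handles them with a single ``or'' at the end.
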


\begin{proof}
Since $H(F) H(\A)^+$ is the kernel of $\xi_E \circ \nu$ and $E_{v_0}$ is a ramified quadratic extension of $F_{v_0}$, we have
\[
 H(\A) = H(F) H(\A)^+ \bigsqcup H(F) H(\A)^+ h_0
\]
for some $h_0 \in H_{v_0}$ such that $\nu(h_0) \in \cO_{F_{v_0}}^\times \smallsetminus \N(\cO_{E_{v_0}}^\times)$.
Then there exists an element $g_0 \in G_{v_0}$ such that $\nu(g_0) = \nu(h_0)$ and such that the image of $g_0$ in $Z_{v_0} \backslash G_{v_0}$ belongs to $\cK_{v_0}$.
Since $f^\epsilon$ is $\cK_{v_0}$-fixed, we have
\begin{align*}
 \theta_{\varphi^\epsilon}(f^\epsilon)(h h_0)
 & = \int_{\U(V)^0(F) \backslash \U(V)^0(\A)} \Theta_{\varphi^\epsilon}(g_1 g g_0, h h_0) f^\epsilon(g_1 g g_0) \, dg_1 \\
 & = \int_{\U(V)^0(F) \backslash \U(V)^0(\A)} \Theta_{\varphi^\epsilon}(g_1 g g_0, h h_0) f^\epsilon(g_1 g) \, dg_1 \\
 & = \theta_{\omega(g_0,h_0) \varphi^\epsilon}(f^\epsilon)(h)
\end{align*}
for $h \in H(\A)^+$, where we choose $g \in G(\A)$ such that $\nu(g) = \nu(h)$.
Hence $\theta_{\varphi^\epsilon}(f^\epsilon)(h)$ or $\theta_{\omega(g_0,h_0) \varphi^\epsilon}(f^\epsilon)(h)$ is non-zero for some $h \in H(\A)^+$.
\end{proof}

By Lemma \ref{l:choose-varphi-2}, we may assume that the restriction of $\theta_{\varphi^\epsilon}(f^\epsilon)$ to $H(\A)^+$ is non-zero.

\subsection{Choosing $\eta$ and $\varphi_0$}

We choose $\eta$ satisfying the conditions of the following lemma.

\begin{lem}
\label{l:choose-eta}
There exists a character $\eta$ of $\A_E^\times/E^\times$ such that:
\begin{itemize}
 \item $\eta|_{\A^\times} = 1$;
 \item $\eta_v = 1$ for all real places $v$ of $F$;
 \item $\eta_v$ does not factor through the norm map if $B_v$ is ramified.
\end{itemize}
\end{lem}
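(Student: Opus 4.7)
The plan is to construct $\eta$ by class field theory, producing an anticyclotomic Hecke character of $E$ with sufficiently deep ramification at each place of $S := \{v : B_v \text{ is ramified}\}$. Note that $S$ consists entirely of finite places, since $B$ is split at all real places of $F$ by hypothesis.

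I would first observe that at each $v \in S$ the ``bad'' characters form a finite set, with conductor bounded by some integer $c_v$. Indeed, $E_v/F_v$ is a non-split quadratic extension (since $E$ embeds into $B$ and $B_v$ is a division algebra); any character $\eta_v$ of $E_v^\times$ trivial on $F_v^\times$ that factors through $\N_{E_v/F_v}$ has the form $\chi_v \circ \N_{E_v/F_v}$, and the triviality on $F_v^\times$ forces $\chi_v^2 = 1$, because $\N_{E_v/F_v}|_{F_v^\times}$ is the squaring map. Since $F_v^\times$ admits only finitely many quadratic characters, the claim follows.

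Next, I would produce a global character of conductor exceeding $c_v$ at every $v \in S$, trivial on $\A_F^\times$ and on $E_\infty^\times$. The set of such Hecke characters is the Pontryagin dual of the profinite group $\Hc := \A_E^\times/(E^\times \cdot \A_F^\times \cdot E_\infty^\times)$. For a modulus $\fm = \prod_{v \in S} v^{n_v}$ with $n_v > c_v$, the corresponding finite quotient $\Hc_\fm$ of $\Hc$ parametrizes characters of conductor dividing $\fm$ satisfying conditions (i) and (ii); any such character whose restriction to $\cO_{E_v}^\times$ is non-trivial modulo $v^{n_v}$ at every $v \in S$ automatically satisfies condition (iii).

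The main obstacle is therefore to exhibit a character of $\Hc_\fm$ that is sufficiently ramified at every $v \in S$ simultaneously. This reduces to showing that the image of $\prod_{v \in S} \cO_{E_v}^\times/(1+v^{n_v}\cO_{E_v})$ in $\Hc_\fm$ grows without bound as the $n_v$ grow. Here one uses that $E^\times \cap \bigl(E_\infty^\times \cdot \A_F^\times \cdot \prod_v \cO_{E_v}^\times\bigr)$ is finite --- being contained in the roots of unity of $E$, which is finite since $E$ is totally imaginary --- so the local units contribute essentially injectively to $\Hc_\fm$ as $\fm$ shrinks. A pigeonhole argument comparing this unbounded contribution with the finite bound on bad characters at each $v \in S$ then yields the desired $\eta$.
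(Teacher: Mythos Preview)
Your argument has a genuine error. The claim that
\[
E^\times \cap \Bigl(E_\infty^\times \cdot \A_F^\times \cdot \prod_v \cO_{E_v}^\times\Bigr)
\]
is contained in the roots of unity of $E$ is false: every element of $F^\times$ lies in this intersection, since $F^\times \subset \A_F^\times$. What \emph{is} true is that the image of this intersection in $E^\times/F^\times$ is finite (this uses that $\cO_E^\times/\cO_F^\times$ is finite for a CM extension $E/F$); equivalently, the kernel of $\prod_{v\in S}\cO_{E_v}^\times \to \Hc$ is exactly $\prod_{v\in S}\cO_{F_v}^\times$. Even after this correction, your final ``pigeonhole'' step is too vague: knowing that the image in $\Hc_{\fm}$ grows without bound does not by itself produce a character whose restriction at \emph{each} $v\in S$ simultaneously avoids the finitely many bad local characters --- the growth could in principle come from a single place.

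The paper takes a shorter and cleaner route. Via Hilbert 90, characters of $\A_E^\times/E^\times$ trivial on $\A_F^\times$ are identified with characters of $\A_E^1/E^1$, and the condition ``$\eta_v$ does not factor through the norm'' becomes simply ``$\chi_v^2 \ne 1$'' on $E_v^1$. At every real place and at every place where $B_v$ is ramified, $E_v$ is a field and $E_v^1$ is compact; hence for the finite set $S'$ of such places, the natural map $\prod_{v\in S'} E_v^1 \to \A_E^1/E^1$ is a continuous injection of a compact group into a Hausdorff group, so has closed image. One then chooses the desired local characters directly (trivial at the real places, and with $\chi_v^2\ne 1$ at the places where $B_v$ ramifies) and extends to a global character by Pontryagin duality. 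This bypasses all the conductor bookkeeping; once your argument is repaired it would essentially converge to the same compactness-and-extension step, so you may as well go there directly.
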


\begin{proof}
By Hilbert 90, the map $x \mapsto x/x^\rho$ induces an isomorphism $E^\times / F^\times \simeq E^1$.
Hence it suffices to find a character $\chi$ of $\A_E^1/E^1$ such that:
\begin{itemize}
 \item $\chi_v = 1$ for all real places $v$ of $F$;
 \item $\chi_v^2 \ne 1$ if $B_v$ is ramified.
\end{itemize}
Since $E_v$ is non-split if either $v$ is real or $B_v$ is ramified, it remains to show the following: if $S$ is a finite set of places of $F$ such that $E_v$ is non-split for all $v \in S$ and $\chi_S$ is a character of $E_S^1 = \prod_{v \in S} E_v^1$, then there exists a character $\chi$ of $\A_E^1/E^1$ such that $\chi|_{E^1_S} = \chi_S$.
But this assertion follows from the fact that $E_S^1$ is compact and hence the image of the natural continuous injective homomorphism
\[
 E_S^1 \longrightarrow \A_E^1 \longrightarrow \A_E^1/E^1
\]
is closed.
\end{proof}

Let $\pi(\eta)$ be the automorphic induction of $\eta$ to $\GL_2(\A)$ and $\pi(\eta)_B$ its Jacquet-Langlands transfer to $B^\times(\A)$ (which exists since $\eta_v$ does not factor through the norm map if $B_v$ is ramified).
Note that:
\begin{itemize}
 \item $\pi(\eta)_B$ has central character $\xi_E$;
 \item $\pi(\eta)_{B,v}$ is the limit of discrete series representation of $\GL_2(\R)$ of weight $1$ if $v \in \Sigma_\infty$.
\end{itemize}
By Lemma \ref{l:theta-1}, we have a non-zero equivariant map
\[
 \theta: \SS(\X_0(\A)) \longrightarrow \pi(\eta)_B
\]
given by $\varphi_0 \mapsto \theta_{\varphi_0}(\eta)$.

\begin{lem}
\label{l:choose-varphi0-1}
Let $\varphi_0 = \bigotimes_v \varphi_{0,v} \in \SS(\X_0(\A))$ be a Schwartz function such that $\varphi_{0,v}$ is as in \eqref{eq:varphi0-infty} for all $v \in \Sigma_\infty$.
Then 
\begin{equation}
\label{eq:theta-lift-K-type2}
 \pi(\eta)_{B,v}(z) \theta_{\varphi_0}(\eta) = 
 \begin{cases}
  z \cdot \theta_{\varphi_0}(\eta) & \text{if $v \in \Sigma$;} \\
  z^{-1} \cdot \theta_{\varphi_0}(\eta) & \text{if $v \in \Sigma_\infty \smallsetminus \Sigma$}
 \end{cases}
\end{equation}
for $z \in \C^1$.
Moreover, $\theta_{\varphi_0}(\eta)$ is non-zero for some such $\varphi_0$.
\end{lem}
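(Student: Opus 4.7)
The lemma has two parts: a $K$-type computation at archimedean places and a non-vanishing statement. Both reduce to standard facts about the Weil representation once the conventions are matched.

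\textbf{Part (1).} At each $v \in \Sigma_\infty$, the Morita-type reformulation of \cite[Appendix C]{periods1} identifies the local Weil representation of $\G(\U(V_{0,v})\times \U(W_v))^0$ with that of the dual pair $(\O(V_{0,v}^\dagger), \Sp(W_v^\dagger))$, and by \S \ref{ss:local-ex-isom-real} the signature of $V_{0,v}^\dagger$ is $(2,0)$ if $v \in \Sigma$ and $(0,2)$ if $v \in \Sigma_\infty\smallsetminus\Sigma$. The classical formula for the Weil representation on the Gaussian gives
\[
 \omega_v(k_\theta)\varphi_{0,v} \;=\; e^{i(p-q)\theta/2}\,\varphi_{0,v}, \qquad k_\theta = \mat{\cos\theta}{\sin\theta}{-\sin\theta}{\cos\theta},
\]
which yields weight $+1$ at $v\in\Sigma$ and weight $-1$ at $v\in\Sigma_\infty\smallsetminus\Sigma$. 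Provided the splitting $\iota$ of Appendix \ref{sec:weil-hodge} agrees at archimedean places with the standard orthogonal-symplectic splitting, which is the content of the compatibility discussed in \S \ref{ss:compatibility-hk-duke}, substituting this into the integral defining $\theta_{\varphi_0}(\eta)(hz)$ for $z\in\C^1\subset H_v$ and pulling the scalar out of the integral yields exactly \eqref{eq:theta-lift-K-type2}.

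\textbf{Part (2).} By Lemma \ref{l:choose-eta}, the character $\eta$ satisfies all hypotheses of Lemma \ref{l:theta-1}, and hence $\theta(\eta) = \pi(\eta)_B \neq 0$. Similitude Howe duality, applied as in the proof of Lemma \ref{lem:irred-theta-similitude}, factorises the map $\varphi_0 \mapsto \theta_{\varphi_0}(\eta)$ through a tensor product of nonzero local equivariant maps $\theta_v \colon \SS(\X_{0,v}) \to \pi(\eta)_{B,v}$, each unique up to scalar. At each $v \in \Sigma_\infty$, $\pi(\eta)_{B,v}$ is a limit of discrete series of weight $\pm 1$, so its $\SO(2)$-weight $\pm 1$ subspace is one-dimensional; by part (1), $\varphi_{0,v}$ lives in this one-dimensional space, and the explicit local theta correspondence for the anisotropic dual pair $(\O(2,0),\SL_2(\R))$, respectively $(\O(0,2),\SL_2(\R))$, identifies $\theta_v(\varphi_{0,v})$ with a nonzero minimal $K$-type vector (up to scalar). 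At finite places, $\varphi_{0,v}$ may be chosen so that $\theta_v(\varphi_{0,v})\neq 0$, and this suffices to make $\theta_{\varphi_0}(\eta)$ nonzero.

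\textbf{Expected obstacle.} The only delicate point is conventional: verifying that the globally normalised splitting of the metaplectic cover used in Appendix \ref{sec:weil-hodge} matches, at archimedean places, the classical splitting for which the Gaussian transforms by the claimed weight, so that the signs in \eqref{eq:theta-lift-K-type2} are correct (in particular, that the weight at $v \in \Sigma$ really is $+1$ rather than $-1$). This is pure bookkeeping, already encoded in the compatibilities established in the appendix, but must be traced through carefully.
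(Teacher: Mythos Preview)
Your approach is essentially the same as the paper's: compute the archimedean $K$-type of the Gaussian directly, then establish non-vanishing by factorising the global theta map via Howe duality and checking each local factor. One small difference is worth noting. For the archimedean non-vanishing in Part~(2), you appeal to the known structure of the local theta correspondence for $(\O(2),\SL_2(\R))$ to say the Gaussian maps to a nonzero minimal $K$-type vector; the paper instead exploits $\eta_v=1$ to realise $\theta_v$ concretely by $\theta_v(\Phi)(h)=\omega_v(g,h)\Phi(0)$ inside a principal series containing $\pi(\eta)_{B,v}$, so that $\theta_v(\varphi_{0,v})(1)=\varphi_{0,v}(0)=1$ gives non-vanishing by direct evaluation. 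Both arguments are valid; the paper's is more self-contained, while yours leans on an external (but standard) fact. Also, the Howe duality you need here is for the unitary pair $(\GU(\VV_v),\GU(\WW_v)^+)$ via the identification in the proof of Lemma~\ref{l:theta-1}, rather than the quaternionic similitude pair of Lemma~\ref{lem:irred-theta-similitude}, though the underlying mechanism is the same.
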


\begin{proof}
We have
\[
 \omega_v(t, z) \varphi_{0,v} = 
 \begin{cases}
 z \cdot \varphi_{0,v} & \text{if $v \in \Sigma$;} \\
 z^{-1} \cdot \varphi_{0,v} & \text{if $v \in \Sigma_\infty \smallsetminus \Sigma$}
 \end{cases}
\]
for $t \in \U(V_{0,v})^0$ and $z \in \U(W_v)$ with $z \in \C^1$.
This proves \eqref{eq:theta-lift-K-type2}.

As explained in the proof of Lemma \ref{l:theta-1}, we may regard $\theta_{\varphi_0}(\eta)$ as the theta lift of $\eta$ (regarded as an automorphic character of $\GU(\VV)(\A)$) to $\GU(\WW)(\A)$, where $\VV$ and $\WW$ are the $1$-dimensional hermitian $E$-space and the $2$-dimensional skew-hermitian $E$-space, respectively, as in \S \ref{ss:doubling-U(WW)}.
Hence, by the Howe duality for $(\GU(\VV_v), \GU(\WW_v)^+)$, we have a decomposition
\[
 \theta = \bigotimes_v \theta_v, 
\]
where 
\[
 \theta_v : \SS(\X_{0,v}) \longrightarrow \pi(\eta)_{B,v}
\]
is the unique (up to scalars) non-zero $\G(\U(\VV_v) \times \U(\WW_v))$-equivariant map.
Here $(g, [h,\alpha]) \in \G(\U(\VV_v) \times \U(\WW_v))$ with $h \in B_v^\times$ and $\alpha \in E_v^\times$ acts as $\omega_v(g,[h,\alpha]) \otimes \eta_v(g)$ on the left-hand side and as $\pi(\eta)_{B,v}(h) \otimes \eta_v(\alpha)^{-1}$ on the right-hand side.
For $v \notin \Sigma_\infty$, we may choose $\varphi_{0,v}$ so that $\theta_v(\varphi_{0,v}) \ne 0$.

It remains to show that $\theta_v(\varphi_{0,v}) \ne 0$ for $v \in \Sigma_\infty$, where $\varphi_{0,v}$ is as in \eqref{eq:varphi0-infty}.
Since $\eta_v=1$, $\theta_v$ can be realized by
\[
 \theta_v(\Phi) = \cF_\Phi, \qquad
 \cF_\Phi(h) = \omega_v(g,h) \Phi(0)
\]
for $\Phi \in \SS(\X_{0,v})$ and $h \in H_v^+$, where we choose $g \in G_{0,v}$ such that $\nu(g) = \nu(h)$ and regard $\pi(\eta)_{B,v}$ as a subrepresentation of some unitary principal series representation.
Then, noting that $\varphi_{0,v}(0) = 1$, we have $\theta_v(\varphi_{0,v})  \ne 0$.
\end{proof}

By Lemma \ref{l:choose-varphi0-1}, we may choose $\varphi_0$ so that $\theta_{\varphi_0}(\eta)$ is non-zero.
Since $\theta_{\varphi_0}(\eta)$ is supported in $H(F) H(\A)^+$ by definition, its restriction to $H(\A)^+$ is also non-zero.

\subsection{Choosing $\tau$ and $\phi$}

\begin{lem}
\label{l:product-K-type}
Let $\psi$ be the restriction of $\theta_{\varphi^\epsilon}(f^\epsilon) \cdot \theta_{\varphi_0}(\eta)$ to $H(\A)^+$.
For $v \in \Sigma_\infty$, let $\sigma_v$ be the representation of $\GL_2(\R)^+$ generated by $\psi$.
Then 
\begin{equation}
\label{eq:product-K-type}
 \sigma_v(z) \psi = 
 \begin{cases}
  z^{k_v+1} \cdot \psi & \text{if $v \in \Sigma$;} \\
  z^{-k_v-1} \cdot \psi & \text{if $v \in \Sigma_\infty \smallsetminus \Sigma$}
 \end{cases}
\end{equation}
for $z \in \C^1$.
Moreover, if $\psi$ is non-zero, then 
\begin{itemize}
 \item $\sigma_v$ is the holomorphic discrete series representation of $\GL_2(\R)^+$ of weight $k_v+1$ if $v \in \Sigma$;
 \item $\sigma_v$ is the anti-holomorphic discrete series representation of $\GL_2(\R)^+$ of weight $-k_v-1$ if $v \in \Sigma_\infty \smallsetminus \Sigma$.
\end{itemize}
\end{lem}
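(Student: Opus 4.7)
\textbf{Proof plan for Lemma \ref{l:product-K-type}.}

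The first assertion \eqref{eq:product-K-type} is essentially immediate once one sets up notation. Writing $\zeta_v = +1$ for $v \in \Sigma$ and $\zeta_v = -1$ for $v \in \Sigma_\infty \smallsetminus \Sigma$, the action of $z \in \U(W_v) \subset H_v^+$ on the restriction $\psi = (\theta_{\varphi^\epsilon}(f^\epsilon) \cdot \theta_{\varphi_0}(\eta))|_{H(\A)^+}$ is simply the diagonal right-regular action on the two factors. By Lemmas \ref{l:choose-varphi-1} and \ref{l:choose-varphi0-1}, these factors carry $\U(W_v)$-weights $z^{\zeta_v k_v}$ and $z^{\zeta_v}$ respectively, so the product carries weight $z^{\zeta_v(k_v+1)}$. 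This gives \eqref{eq:product-K-type}.

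For the identification of $\sigma_v$ (assuming $\psi \ne 0$), I would first reduce the problem to showing that $\psi$ is a \emph{minimal}-$K$-type vector in $\sigma_v$, i.e., is annihilated by the lowering operator $X_-$ (resp.~raising operator $X_+$) in a standard $\mathfrak{sl}_2$-triple of $\mathfrak{sl}_2(\R) \otimes \C$, corresponding to $\zeta_v = +1$ (resp.~$\zeta_v = -1$). Granting this, the conclusion is standard: since $k_v+1 \ge 3$, the only admissible irreducible $(\mathfrak{g}_v, K_v)$-module of $\GL_2(\R)^+$ containing a vector of weight $\zeta_v(k_v+1)$ annihilated by the opposite Lie algebra operator is the (anti)holomorphic discrete series of weight $\zeta_v(k_v+1)$. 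Hence $\sigma_v$, being generated by such a vector, must be this discrete series (which is of course irreducible and unitary).

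To establish the annihilation of $\psi$, I would argue by Leibniz from the analogous statement for each factor. The theta lift $\theta_{\varphi^\epsilon}(f^\epsilon)$ lies in the $\U(W_v)$-isotypic component of weight $z^{\zeta_v k_v}$ in the discrete series $\pi_{B,v}$ of weight $k_v$, which is the minimal $K$-type of its $\zeta_v$-component (holomorphic for $\zeta_v = +1$, antiholomorphic for $\zeta_v = -1$); a vector in the minimal $K$-type of a discrete series is characterized by the vanishing of the opposite raising/lowering operator. The same applies to $\theta_{\varphi_0}(\eta)$, which by Lemma \ref{l:choose-varphi0-1} is a minimal $K$-type vector in the restriction to $H_v^+$ of the limit of (anti)holomorphic discrete series of weight $1$. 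Since the operators $X_\pm$ act by differentiation along one-parameter subgroups of $H_v^+$, the Leibniz rule transfers the annihilation to the product $\psi$.

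The whole argument is rather formal, and the substantial work occurs upstream in Lemmas \ref{l:choose-varphi-1} and \ref{l:choose-varphi0-1}; the only subtlety in the present lemma is to check that the Leibniz calculation is being done in the correct convention of $\mathfrak{sl}_2$-triple so that ``holomorphic $\otimes$ holomorphic $=$ holomorphic'' in the appropriate sense, and to confirm that the restriction of $\pi(\eta)_{B,v}$ (whose central character is nontrivial) to $\GL_2(\R)^+$ really does contain a holomorphic constituent with minimal $K$-type $z^{\zeta_v}$, which is immediate from the construction of $\theta_{\varphi_0}(\eta)$ in the preceding lemma.
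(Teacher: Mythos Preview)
Your proof is correct, and the first assertion matches the paper's argument exactly. For the identification of $\sigma_v$, however, the paper takes a different route: it identifies the representations $\sigma'_v$ and $\sigma''_v$ generated by the individual factors $\psi' = \theta_{\varphi^\epsilon}(f^\epsilon)|_{H(\A)^+}$ and $\psi'' = \theta_{\varphi_0}(\eta)|_{H(\A)^+}$ as $\HDS_{k_v}$ and $\HDS_1$ respectively, observes that $\sigma_v$ is a subquotient of the tensor product $\sigma'_v \otimes \sigma''_v$, invokes Repka's decomposition
\[
 \HDS_{k_v} \otimes \HDS_1 \simeq \bigoplus_{i \ge 0} \HDS_{k_v+1+2i},
\]
and then uses the $K$-type formula \eqref{eq:product-K-type} to single out the summand $i=0$. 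Your Leibniz/lowest-weight argument is more elementary in that it bypasses Repka entirely, directly verifying that $\psi$ is annihilated by the lowering operator and hence generates the lowest-weight module $\HDS_{k_v+1}$; this is in effect a hands-on proof of the relevant piece of Repka's formula. Both approaches rest on the same upstream fact---that $\psi'$ and $\psi''$ sit in the (limit of) holomorphic discrete series with the indicated minimal $K$-types---which the paper records as ``$\sigma'_v \simeq \HDS_{k_v}$, $\sigma''_v \simeq \HDS_1$'' and you use via the annihilation by $X_-$.
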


\begin{proof}
We only consider the case $v \in  \Sigma$; the other case is similar.
Let $\psi'$ and $\psi''$ be the restrictions of $\theta_{\varphi^\epsilon}(f^\epsilon)$ and $\theta_{\varphi_0}(\eta)$ to $H(\A)^+$, respectively.
Since $\psi = \psi' \cdot \psi''$, \eqref{eq:product-K-type} follows from \eqref{eq:theta-lift-K-type1} and \eqref{eq:theta-lift-K-type2}.

Let $\sigma'_v$ and $\sigma''_v$ be the representations of $\GL_2(\R)^+$ generated by $\psi'$ and $\psi''$, respectively.
Then $\sigma'_v \simeq \HDS_{k_v}$ and $\sigma''_v \simeq \HDS_1$, where for any positive integer $k$, $\HDS_k$ denotes the (limit of) holomorphic discrete series representation of $\GL_2(\R)^+$ of weight $k$ with central character trivial on $\R^\times_+$.
Since $\psi = \psi' \cdot \psi''$, $\sigma_v$ is a subquotient of $\sigma'_v \otimes \sigma_v''$.
However, we have
\[
 \HDS_{k_v} \otimes \HDS_1 \simeq \bigoplus_{i=0}^\infty \HDS_{k_v+1+2i}
\]
by \cite[Theorem 8.1]{repka}.
Hence, if $\psi$ is non-zero, then \eqref{eq:product-K-type} forces $\sigma_v \simeq \HDS_{k_v+1}$.
This completes the proof.
\end{proof}

\begin{lem}
\label{l:choose-varphi0-2}
There exists an element $(g_0, h_0) \in \G(\U(V_0) \times \U(W))^0(\A_f)$ such that the restriction of $\theta_{\varphi^\epsilon}(f^\epsilon) \cdot \theta_{\omega(g_0, h_0)\varphi_0}(\eta)$ to $H(\A)^+$ is non-zero.
\end{lem}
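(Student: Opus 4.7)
The plan is to reduce the claim to a support-theoretic statement on the compact quotient $H(F)^+ \backslash H(\A)^+$ and then exploit real-analyticity of automorphic forms.

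First I would establish the translation identity
\[
 \theta_{\omega(g_0, h_0) \varphi_0}(\eta)(h) = \eta(g_0)^{-1} \cdot \theta_{\varphi_0}(\eta)(h h_0), \qquad h \in H(\A)^+,
\]
for any $(g_0, h_0) \in \G(\U(V_0) \times \U(W))^0(\A_f)$. This follows from the Weil representation identity $\Theta_{\omega(g_0, h_0)\varphi_0}(g_1 g, h) = \Theta_{\varphi_0}(g_1 g g_0, h h_0)$, valid whenever $\nu(g) = \nu(h)$ and $\nu(g_0) = \nu(h_0)$ (a consequence of $\iota$ being a homomorphism on $R(\A)$; see Appendix \ref{sec:weil-hodge}), combined with the observation that $g g_0$ has $\N(g g_0) = \nu(h h_0)$ and may therefore serve as the auxiliary element in the integral defining $\theta_{\varphi_0}(\eta)(h h_0)$. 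Thus translating $\varphi_0$ by $(g_0, h_0)$ corresponds, up to the scalar $\eta(g_0)^{-1}$, to right translation of $\theta_{\varphi_0}(\eta)$ by $h_0 \in H(\A_f)^+$.

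Write $\psi' = \theta_{\varphi^\epsilon}(f^\epsilon)|_{H(\A)^+}$ and $\psi''_0 = \theta_{\varphi_0}(\eta)|_{H(\A)^+}$; both are nonzero by Lemmas \ref{l:choose-varphi-2} and \ref{l:choose-varphi0-1}, and since $B$ is a division algebra the quotient $H(F)^+ \backslash H(\A)^+$ is compact. The lemma reduces to finding $h_0 \in H(\A_f)^+$ such that $h \mapsto \psi'(h) \cdot \psi''_0(h h_0)$ is not identically zero. For a sufficiently small compact open $\cK \subset H(\A_f)^+$ fixing both functions, the level-$\cK$ quotient $H(F)^+ \backslash H(\A)^+/\cK$ decomposes as a finite disjoint union of arithmetic quotients $\Gamma_i \backslash H(\R)^+$, on each of which $\psi'$ and $\psi''_0$ restrict to real-analytic functions. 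Since $H(\R)^+$ is connected and each of $\psi'$, $\psi''_0$ is nonzero on at least one component, the projections $S', S'' \subset H(\R)^+$ of their supports are open dense subsets, and hence $S' \cap S'' \ne \emptyset$. Picking $h_\infty \in S' \cap S''$ and choosing $h_f, h_f' \in H(\A_f)^+$ with $\psi'(h_\infty, h_f) \ne 0$ and $\psi''_0(h_\infty, h_f') \ne 0$, I set $h_0 = h_f^{-1} h_f'$; this element lies in $H(\A_f)^+$ since its archimedean component is trivial and $\nu(h_0) \in \N(\A_{E,f}^\times)$ by construction. Then
\[
 \psi'(h_\infty, h_f) \cdot \psi''_0\bigl((h_\infty, h_f) h_0\bigr) = \psi'(h_\infty, h_f) \cdot \psi''_0(h_\infty, h_f') \ne 0,
\]
and finally I lift $h_0$ to $(g_0, h_0) \in \G(\U(V_0) \times \U(W))^0(\A_f)$ by choosing any $g_0 \in \A_{E,f}^\times$ with $\N(g_0) = \nu(h_0)$, which exists because $\nu(h_0) \in \N(\A_{E,f}^\times)$.

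The main obstacle I expect is the rigorous verification of the translation identity in the first step: tracking cocycle-compatibility of the splittings of Appendix \ref{sec:weil-hodge} together with the extension-by-zero from $H(\A)^+$ to $H(\A)$ in the definition of $\theta_{\varphi_0}(\eta)$ requires some bookkeeping, especially because $\theta_{\varphi_0}(\eta)$ was defined on $H(\A)^+$ by integrating over $E^1 \backslash \A_E^1$ and then extended by zero. Once this identity is settled, the remainder of the argument is a soft real-analytic density statement on the compact adelic quotient, and the lifting of $h_0$ to $(g_0, h_0)$ is automatic from the definition of $H(\A_f)^+$.
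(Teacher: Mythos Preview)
Your proposal is correct and follows essentially the same strategy as the paper: reduce to finding a finite-adelic translate $h_0$ so that $\psi'(h)\cdot\psi''(hh_0)$ is not identically zero on $H(\A)^+$, then use real-analyticity on the connected archimedean part together with a component decomposition of the compact quotient $H(F)^+\backslash H(\A)^+/\cK_H^+$, and finally lift $h_0$ to $(g_0,h_0)$. The paper carries out these steps in a different order (first locating $h_0=h_i^{-1}h_j$ via the explicit coset decomposition $H(\A)^+=\bigsqcup_i H(F)^+H(F_\infty)^+h_i\cK_H^+$, then verifying the translation identity by unfolding the theta integral), and works on $\fh^n$ rather than $H(\R)^+$, but the content is the same. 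Two minor remarks: your ``projections of supports'' should be read as projections of nonvanishing loci for the open-dense claim to be literally true; and the translation identity you flag as the main obstacle is in fact a two-line unfolding (exactly as you sketch), since the extension-by-zero plays no role when $h,hh_0\in H(\A)^+$.
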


\begin{proof}
Let $\psi'$ and $\psi''$ be the restrictions of $\theta_{\varphi^\epsilon}(f^\epsilon)$ and $\theta_{\varphi_0}(\eta)$ to $H(\A)^+$, respectively.
Choose an open compact subgroup $\cK_H^+$ of $H(\A_f)^+$ so that $\psi'$ and $\psi''$ are $\cK_H^+$-fixed.
Since $Z_H(\A) H(F)^+ \backslash H(\A)^+$ is compact, we have a finite decomposition
\[
 H(\A)^+ = \bigsqcup_i H(F)^+ H(F_\infty)^+ h_i \cK_H^+
\]
for some $h_i \in H(\A_f)^+$, where $F_\infty = F \otimes_\Q \R$.
This gives rise to a natural identification
\[
 H(F)^+ \backslash H(\A)^+ / K_{H, \infty} \cK_H^+ = \bigsqcup_i \Gamma_i \backslash \fh^n, 
\]
where $K_{H, \infty} = \prod_{v \in \Sigma_\infty} \R^\times \cdot \SO(2)$ is a maximal compact modulo center subgroup of $H(F_\infty)^+$, $\fh$ is the upper half plane, $n = [F:\Q]$, and $\Gamma_i = H(F)^+ \cap h_i \cK_H h_i^{-1}$.
Hence the restrictions of $\psi'$ and $\psi''$ to $H(F_\infty)^+ h_i$ descend to analytic functions $\Psi_i'$ and $\Psi_i''$ on $\fh^n$ (regarded as a real analytic manifold), respectively, satisfying some equivariance properties relative to the action of $\Gamma_i$ on $\fh^n$.
Since $\psi'$ and $\psi''$ are non-zero, so are $\Psi'_i$ and $\Psi''_j$ for some $i$ and $j$.
Then the product $\Psi'_i \cdot \Psi''_j$ is also non-zero.
Namely, if we put $h_0 = h_i^{-1} h_j \in H(\A_f)^+$, then 
\[
 \psi'(h) \cdot \psi''(h h_0) \ne 0
\]
for some $h \in H(F_\infty)^+ h_i$.
Choose $g_0 \in G_0(\A_f)$ such that $\nu(g_0) = \nu(h_0)$.
Then 
\begin{align*}
 \theta_{\varphi_0}(\eta)(h h_0)
 & = \int_{\U(V_0)^0(F) \backslash \U(V_0)^0(\A)} \Theta_{\varphi_0}(g_1 g g_0, h h_0) \eta(g_1 g g_0) \, dg_1 \\
 & = \eta(g_0) \cdot \int_{\U(V_0)^0(F) \backslash \U(V_0)^0(\A)} \Theta_{\varphi_0}(g_1 g g_0, h h_0) \eta(g_1 g) \, dg_1 \\
 & = \eta(g_0) \cdot \theta_{\omega(g_0, h_0)\varphi_0}(\eta)(h) 
\end{align*}
for $h \in H(\A)^+$, where we choose $g \in G_0(\A)$ such that $\nu(g) = \nu(h)$.
Hence 
\[
 \theta_{\varphi^\epsilon}(f^\epsilon)(h) \cdot \theta_{\omega(g_0, h_0)\varphi_0}(\eta)(h)
 = \eta(g_0)^{-1} \cdot \psi'(h) \cdot \psi''(h h_0) \ne 0
\]
for some $h \in H(\A)^+$.
\end{proof}

By Lemma \ref{l:choose-varphi0-2}, we may assume that the restriction of $\theta_{\varphi^\epsilon}(f^\epsilon) \cdot \theta_{\varphi_0}(\eta)$ to $H(\A)^+$ is non-zero.
Then, noting that $Z_H(\A) H(F)^+ \backslash H(\A)^+$ is compact, we deduce from the spectral decomposition together with Lemma \ref{l:product-K-type} that
\[
 \int_{Z_H(\A) H(F)^+ \backslash H(\A)^+} \theta_{\varphi^\epsilon}(f^\epsilon)(h) \cdot \theta_{\varphi_0}(\eta)(h) \cdot \phi(h) \, dh \ne 0
\]
for some non-zero vector $\phi$ in some irreducible automorphic representation $\tau$ of $H(\A)^+$ as in \S \ref{ss:form-construction}.
This completes the proof of Proposition \ref{p:form-nonvanishing}.

\section{Arthur packets, Galois representations, and Hodge classes}
\label{sec:arthur-galois-hodge}

\subsection{Classification}
\label{sec:classification-global}

Let $F$ be a totally real number field and $E$ a totally imaginary quadratic extension of $F$.
Let $\VV$ be an $n$-dimensional hermitian $E$-space and $G = \U(\VV)$ the unitary group of $\VV$.
In this section, we recall the classification of automorphic representations of $G(\A_F)$, which has been established by Mok \cite{mok} in the quasi-split case, following Arthur's book \cite{arthur}, and has been extended to the general case by Kaletha-Minguez-Shin-White \cite{kmsw}.

More precisely, let $L^2_{\disc}(G)$ be the discrete spectrum of the unitary representation of $G(\A_F)$ on the Hilbert space $L^2(G(F) \backslash G(\A_F))$.
Then the decomposition of $L^2_{\disc}(G)$ into near equivalence classes is described as follows.
We say that an irreducible cuspidal automorphic representation $\pi$ of $\GL_m(\A_E)$ is conjugate-self-dual if $\pi^\rho \simeq \pi^\vee$, where $\pi^\rho$ and $\pi^\vee$ are the Galois conjugate and the contragredient of $\pi$, respectively.
In this case, exactly one of the Asai $L$-functions $L(s, \pi, \mathrm{As}^+)$ and $L(s, \pi, \mathrm{As}^-)$ has a pole at $s=1$ (see \cite[\S 7]{ggp} for the definition of the Asai representations $\mathrm{As}^\pm$).
For $\epsilon = \pm$, we say that $\pi$ has sign $\epsilon$ if $L(s, \pi, \mathrm{As}^\epsilon)$ has a pole at $s=1$.
We also say that $\pi$ is conjugate-orthogonal (resp.~conjugate-symplectic) if it is conjugate-self-dual with sign $+$ (resp.~$-$).
Consider a formal unordered finite direct sum
\[
 \psi = \bigoplus_i \pi_i \boxtimes \Sym^{d_i-1},
\]
where
\begin{itemize}
 \item $\pi_i$ is an irreducible cuspidal automorphic representation of $\GL_{m_i}(\A_E)$;
 \item $\Sym^{d_i-1}$ is the irreducible representation of $\SL_2(\C)$ of dimension $d_i$.
\end{itemize}
Then $\psi$ is called an elliptic $A$-parameter for $G$ if
\begin{itemize}
 \item $\sum_i m_i d_i = n$;
 \item if $d_i$ is odd, then $\pi_i$ is conjugate-self-dual with sign $(-1)^{n-1}$;
 \item if $d_i$ is even, then $\pi_i$ is conjugate-self-dual with sign $(-1)^n$;
 \item if $(\pi_i, d_i) = (\pi_j, d_j)$, then $i=j$.
\end{itemize}
We attach to $\psi$ an automorphic representation
\[
 \pi_\psi = \bigboxplus_i \left( \pi_i |\det|^{\frac{d_i-1}{2}} \boxplus \pi_i |\det|^{\frac{d_i-3}{2}} \boxplus \cdots \boxplus \pi_i |\det|^{-\frac{d_i-1}{2}} \right)
\]
of $\GL_n(\A_E)$, where $\boxplus$ denotes the isobaric sum.
Then the result of Kaletha-Minguez-Shin-White \cite[Theorem${}^*$ 1.7.1]{kmsw} (which is proved in \cite{kmsw} partially and will be completed in its sequels) says that 
\[
 L^2_{\disc}(G) = \bigoplus_\psi L^2_\psi(G),
\]
where $\psi$ runs over elliptic $A$-parameters for $G$ and $L^2_\psi(G)$ is the near equivalence class of irreducible subrepresentations $\pi$ of $L^2_\disc(G)$ such that for almost all places $v$ of $F$, the base change of $\pi_v$ to $\GL_n(E_v)$ is isomorphic to $\pi_{\psi,v}$.

We next describe this decomposition in terms of $L$-groups.
Recall that the $L$-group of $G = \U(\VV)$ is given by 
\[
 {}^L G = \GL_n(\C) \rtimes \Gal(\Qbar/F),
\]
where $\Gal(\Qbar/E)$ acts trivially on $\GL_n(\C)$ and the non-trivial element in $\Gal(E/F)$ acts as the automorphism $\theta_n$ defined by
\[
 \theta_n(g) = J_n \cdot {}^t g^{-1} \cdot J_n^{-1}, \qquad
 J_n = 
 \begin{pmatrix}
 & & & 1 \\
 & & -1 & \\
 & \varddots & & \\
 (-1)^{n-1} & & & 
 \end{pmatrix}.
\]
Put $\tilde{G} = \Res_{E/F} \GL_n$, so that its $L$-group is given by
\[
 {}^L \tilde{G} = \left( \GL_n(\C) \times \GL_n(\C) \right) \rtimes \Gal(\Qbar/F),
\]
where $\Gal(\Qbar/E)$ acts trivially on $\GL_n(\C) \times \GL_n(\C)$ and the non-trivial element in $\Gal(E/F)$ acts as the automorphism $(g_1,g_2) \mapsto (g_2,g_1)$.
Then the base change $L$-homomorphism $\BC:{}^L G \rightarrow {}^L \tilde{G}$ is given by 
\[
 \BC(g \rtimes \sigma) = (g, \theta_n(g)) \rtimes \sigma.
\]
Let $\psi$ be an elliptic $A$-parameter for $G$.
Then the local Langlands correspondence induces an (equivalence class of) $A$-parameter $\tilde{\psi}_v : \cL_{F_v} \times \SL_2(\C) \rightarrow {}^L \tilde{G}$ for any place $v$ of $F$, where 
\[
 \cL_{F_v} = 
 \begin{cases}
  \text{the Weil group of $F_v$} & \text{if $v$ is real;} \\
  \text{the Weil-Deligne group of $F_v$} & \text{if $v$ is finite.}
 \end{cases}
\]
Moreover, by \cite[Theorem 8.1]{ggp}, there exists a unique (equivalence class of) $A$-parameter $\psi_v : \cL_{F_v} \times \SL_2(\C) \rightarrow {}^L G$ such that $\tilde{\psi}_v = {\BC} \circ \psi_v$.
We associate to $\psi_v$ an $L$-parameter $\phi_{\psi_v}: \cL_{F_v} \rightarrow {}^L G$ by 
\[
 \phi_{\psi_v}(w) = \psi_v \! \left( w, \mat{|w|_v^{\frac{1}{2}}}{}{}{|w|_v^{-\frac{1}{2}}} \right).
\]
Then $L^2_\psi(G)$ consists of irreducible subrepresentations $\pi$ of $L^2_\disc(G)$ such that the $L$-parameter of $\pi_v$ is $\phi_{\psi_v}$ for almost all $v$.

For our applications, we will consider elliptic $A$-parameters with $n=4$ of the form
\begin{equation}
\label{eq:U(2,2)-A-param}
 \psi = \pi_E \boxtimes \Sym^1, 
\end{equation}
where 
\begin{itemize}
 \item $\pi$ is an irreducible cuspidal automorphic representation of $\GL_2(\A_F)$ with central character $\xi_E$;
 \item $\pi_E$ is the base change of $\pi$ to $\GL_2(\A_E)$.
\end{itemize}
We note the following:

\begin{lem}
\begin{enumerate}
\item If $\pi_E$ is cuspidal, then $\pi_E$ is conjugate-orthogonal.
\item If $\pi_E$ is not cuspidal, then $\pi_E = \chi \boxplus \chi^{-1}$ for some conjugate-orthogonal character $\chi$ of $\A_E^\times/E^\times$ such that $\chi^2 \ne 1$.
\end{enumerate}
\end{lem}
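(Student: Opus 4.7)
My strategy is to derive conjugate-self-duality of $\pi_E$ directly from the central character condition $\omega_\pi = \xi_E$, then use the factorization of Asai $L$-functions to pin down the sign in case (i), and use cyclic base change theory to handle case (ii) explicitly.

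The common starting point: since $\omega_\pi = \xi_E$ has order 2, one has $\pi^\vee \cong \pi \otimes \omega_\pi^{-1} = \pi \otimes \xi_E$. Base change commutes with taking dual and with character twists, and $\xi_E \circ \N_{E/F}$ is trivial, so
\[
 \pi_E^\vee \cong \pi_E \otimes (\xi_E \circ \N_{E/F}) \cong \pi_E.
\]
Combined with the intrinsic relation $\pi_E^\rho \cong \pi_E$ of base change, this yields $\pi_E^\rho \cong \pi_E^\vee$, i.e.~$\pi_E$ is conjugate-self-dual in both (i) and (ii).

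For (i), with $\pi_E$ cuspidal, I would invoke the identity of Langlands parameters
\[
 \mathrm{As}^+(\rho|_{W_E}) \cong \mathrm{Sym}^2 \rho \oplus (\det \rho) \cdot \xi_E,
\]
where $\rho:W_F \to \GL_2(\C)$ is the parameter of $\pi$; this can be checked by matching eigenvalues of the tensor induction at unramified inert places (at a split place both sides factor through the tensor product). It gives
\[
 L(s, \pi_E, \mathrm{As}^+) = L(s, \mathrm{Sym}^2 \pi) \cdot L(s, \omega_\pi \xi_E).
\]
With $\omega_\pi = \xi_E$, the second factor is $\zeta_F(s)$, which has a simple pole at $s=1$. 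Since $L(s, \mathrm{Sym}^2 \pi)$ is holomorphic and non-zero at $s=1$ by Shahidi's non-vanishing theorem on $\mathrm{Re}(s)=1$, $L(s, \pi_E, \mathrm{As}^+)$ has a pole at $s=1$, so $\pi_E$ is conjugate-orthogonal. (As a sanity check, $L(s,\pi_E\times\pi_E) = L(s,\pi_E,\mathrm{As}^+)L(s,\pi_E,\mathrm{As}^-)$ has a simple pole at $s=1$, forcing $L(s,\pi_E,\mathrm{As}^-) = L(s,\mathrm{Sym}^2\pi\otimes\xi_E)\cdot L(s,\xi_E)$ to be holomorphic nonzero there.)

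For (ii), if $\pi_E$ is not cuspidal, cyclic base change (Labesse--Langlands, Arthur--Clozel) implies $\pi \cong \pi \otimes \xi_E$, equivalently $\pi = \mathrm{AI}_{E/F}(\chi)$ for a Hecke character $\chi$ of $\A_E^\times/E^\times$ with $\chi^\rho \neq \chi$; moreover $\pi_E = \chi \boxplus \chi^\rho$. The central character formula $\omega_\pi = \chi|_{\A^\times}\cdot\xi_E$ combined with $\omega_\pi = \xi_E$ forces $\chi|_{\A^\times}=1$. For any $x \in \A_E^\times$ this gives $\chi(x)\chi(x^\rho) = \chi(\N_{E/F}(x)) = 1$, so $\chi^\rho = \chi^{-1}$, confirming $\chi$ is conjugate-orthogonal; thus $\pi_E = \chi \boxplus \chi^{-1}$. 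The cuspidality condition $\chi^\rho \neq \chi$ translates to $\chi^2 \neq 1$. The only slightly delicate ingredient is the Asai factorization identity in (i); otherwise all inputs are classical.
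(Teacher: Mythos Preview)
Your proof is correct and follows essentially the same approach as the paper. Both arguments hinge on the identity $\mathrm{As}^+ \circ \BC \cong \Sym^2 \oplus (\wedge^2 \otimes \xi_E)$ to get $L(s,\pi_E,\mathrm{As}^+) = L(s,\pi,\Sym^2)\cdot\zeta_F(s)$ in case (i), and on the automorphic-induction description $\pi = \mathrm{AI}_{E/F}(\chi)$ with $\chi|_{\A^\times}=1$ in case (ii); your additional preliminary remark on conjugate-self-duality and the explicit invocation of non-vanishing of $L(s,\pi,\Sym^2)$ at $s=1$ are harmless elaborations the paper leaves implicit.
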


\begin{proof}
First assume that $\pi_E$ is not cuspidal.
Then $\pi$ is the automorphic induction of some character $\chi$ of $\A_E^\times/E^\times$, so that $\pi_E = \chi \boxplus \chi^\rho$.
Since $\pi$ is cuspidal, we have $\chi^\rho \ne \chi$.
Also, since the central character of $\pi$ is $\xi_E$, we have $\chi|_{\A_F^\times} = 1$, i.e.~$\chi$ is conjugate-orthogonal.
Hence the assertion follows.

Next assume that $\pi_E$ is cuspidal.
Put $H = \GL_2$ and $\tilde{H} = \Res_{E/F} \GL_2$, so that
\[
 {}^L H = \GL_2(\C) \times \Gal(\Qbar/F), \qquad
 {}^L \tilde{H} = \left( \GL_2(\C) \times \GL_2(\C) \right) \rtimes \Gal(\Qbar/F).
\]
Then the base change $L$-homomorphism $\BC:{}^L H \rightarrow {}^L \tilde{H}$ is given by 
\[
 \BC(h \times \sigma) = (h, h) \rtimes \sigma.
\]
Recall that $\mathrm{As}^+$ is the representation of ${}^L \tilde{H}$ on $\C^2 \otimes \C^2$ defined by
\begin{align*}
 \mathrm{As}^+((h_1, h_2) \rtimes 1)(x \otimes y) & = h_1 x \otimes h_2 y, \\
 \mathrm{As}^+((1, 1) \rtimes \sigma)(x \otimes y) & =
 \begin{cases}
  x \otimes y & \text{if $\sigma \in \Gal(\Qbar/E)$;} \\
  y \otimes x & \text{if $\sigma \notin \Gal(\Qbar/E)$.}
 \end{cases}
\end{align*}
Then we have
\[
 \mathrm{As}^+ \circ \BC \simeq \Sym^2 \oplus (\wedge^2 \otimes \xi_E)
\]
as representations of ${}^L H$.
Since the central character of $\pi$ is $\xi_E$, it follows that
\[
 L(s, \pi_E, \mathrm{As}^+) = L(s, \pi, \Sym^2) \cdot \zeta_F(s).
\]
This implies the assertion.
\end{proof}

\subsection{Local $A$-packets}
\label{ss:local-A-packets}

Let $\psi$ be an elliptic $A$-parameter for $G = \U(\VV)$ and $L^2_\psi(G)$ the near equivalence class associated to $\psi$.
Then the result of Kaletha-Minguez-Shin-White \cite[Theorem${}^*$ 1.7.1]{kmsw} also describes the local-global structure of $L^2_\psi(G)$ with a multiplicity formula.
In particular, if $\pi$ is an irreducible summand of $L^2_\psi(G)$, then for any place $v$ of $F$, the local component $\pi_v$ is an irreducible summand of some representation in $\Pi_{\psi_v}$.
Here $\Pi_{\psi_v}$ is the local $A$-packet associated to $\psi_v$ consisting of certain semisimple representations of $G_v$ of finite length.

Suppose that $v$ is real.
If $G_v$ is quasi-split and $\psi_v$ is ``cohomological'', then it follows from the result of Arancibia-M{\oe}glin-Renard \cite{amr} (the unitary group case had already been treated by Johnson \cite{johnson}) that $\Pi_{\psi_v}$ agrees with the packet constructed by Adams-Johnson \cite{aj}, which we recall below.
From now on, we suppress the subscript $v$ from the notation.

Let $\VV$ be an $n$-dimensional hermitian space over $\C$ of signature $(p,q)$.
Choosing a basis of $\VV$, we may identify the unitary group $G = \U(\VV)$ with
\[
 \U(p,q) = \{ g \in \GL_n(\C) \, | \, {}^t \bar{g} I_{p,q} g = I_{p,q} \},
 \qquad
 I_{p,q} = 
 \begin{pmatrix}
  \1_p & \\
  & - \1_q
 \end{pmatrix}.
\]
We define a Cartan involution $\theta$ of $G$ by $\theta(g) = {}^t \bar{g}^{-1}$.
Let $K \simeq \U(p) \times \U(q)$ be the maximal compact subgroup of $G$ with respect to $\theta$ and $T \simeq \U(1)^n$ the maximal torus of $G$ consisting of diagonal matrices.
Let $B_{G_\C}$ be the Borel subgroup of $G_\C \simeq \GL_n(\C)$ (which is not defined over $\R$) consisting of upper triangular matrices.

Let $\fg_0$, $\fk_0$, $\ft_0$ be the Lie algebras of $G$, $K$, $T$, respectively.
We have a Cartan decomposition $\fg_0 = \fk_0 \oplus \fp_0$, where $\fp_0$ is the $(-1)$-eigenspace of $\theta$.
Let $\fg$, $\fk$, $\fp$, $\ft$ be the complexifications of $\fg_0$, $\fk_0$, $\fp_0$, $\ft_0$, respectively.
Let $\fp^{\pm}$ be the $(\pm i)$-eigenspace of the complex structure on $\fp$ defined by 
\[
 X \longmapsto J X J^{-1}, \qquad 
 J = 
 \begin{pmatrix}
  e^{\frac{\pi i}{4}} \1_p & \\
  & e^{-\frac{\pi i}{4}} \1_q
 \end{pmatrix}.
\]
More explicitly, we have $\fg = \fk \oplus \fp^+ \oplus \fp^-$ with
\begin{align*}
 \fk & = \left\{ \left.
 \begin{pmatrix}
  A & 0 \\
  0 & D
 \end{pmatrix}
 \, \right| \, A \in \M_p(\C), \, D \in \M_q(\C) \right\}, \\
 \fp^+ & = \left\{ \left.
 \begin{pmatrix}
  0 & B \\
  0 & 0
 \end{pmatrix}
 \, \right| \, B \in \M_{p,q}(\C) \right\}, \\
 \fp^- & = \left\{ \left.
 \begin{pmatrix}
  0 & 0 \\
  C & 0
 \end{pmatrix}
 \, \right| \, C \in \M_{q,p}(\C) \right\}.
\end{align*}

The packet constructed by Adams-Johnson \cite{aj} consists of certain unitary representations $\pi$ such that $H^*(\fg, K; \pi \otimes F) \ne 0$ for some irreducible finite-dimensional representation $F$ of $G$.
Let $\lambda \in \ft^* \simeq \C^n$ be the highest weight of $F^*$ relative to $B_{G_\C}$.
We may write
\[
 \lambda = (\underbrace{\lambda_1, \dots, \lambda_1}_{n_1}, \underbrace{\lambda_2, \dots, \lambda_2}_{n_2}, \dots, \underbrace{\lambda_r, \dots, \lambda_r}_{n_r}) \in \Z^n
\]
with $\lambda_1 \ge \lambda_2 \ge \dots \ge \lambda_r$.
Then we consider the $A$-parameter $\psi : \cL_\R \times \SL_2(\C) \rightarrow {}^LG$ whose restriction to $\C^\times \times \SL_2(\C)$ is equal to
\[
 \left( \chi_{\lambda_1 + \rho_1} \boxtimes \Sym^{n_1-1} \right) \oplus \dots \oplus \left( \chi_{\lambda_r + \rho_r} \boxtimes \Sym^{n_r-1} \right),
\]
where
\begin{itemize}
 \item $\rho_i = \frac{1}{2}(- n_1 - \dots - n_{i-1} + n_{i+1} + \dots + n_r)$;
 \item $\chi_\kappa$ is the character of $\C^\times$ defined by $\chi_\kappa(z) = (z/\bar{z})^\kappa$;
 \item $\Sym^{d-1}$ is the irreducible $d$-dimensional representation of $\SL_2(\C)$.
\end{itemize}
This defines a parabolic subgroup $Q$ of $G_\C$ (which is not defined over $\R$) containing $B_{G_\C}$ with Levi component $L$ (which is defined over $\R$) such that
\[
 \psi = \xi \circ \psi_L,
\]
where $\xi : {}^L L \rightarrow {}^L G$ is the canonical embedding and $\psi_L :\cL_\R \times \SL_2(\C) \rightarrow {}^L L$ is an $A$-parameter such that the $A$-packet $\Pi_{\psi_L}$ consists of a single $1$-dimensional representation of $L$.
Note that 
\[
 L = G \cap \left( \GL_{n_1}(\C) \times \dots \times \GL_{n_r}(\C) \right).
\]
Put
\[
 S = W(L,T) \backslash W(G,T) / W_\R(G,T) \simeq \left( \mathfrak{S}_{n_1} \times \dots \times \mathfrak{S}_{n_r}\right) \backslash \mathfrak{S}_n / \left( \mathfrak{S}_p \times \mathfrak{S}_q \right),
\]
where $W$ and $W_\R$ denote the absolute and relative Weyl groups, respectively.
As a set of representatives for $S$, we can take the set of $w \in \mathfrak{S}_n$ such that 
\begin{itemize}
 \item $w^{-1}(i) < w^{-1}(j)$ for $n_1 + \dots + n_{k-1}+1 \le i < j \le n_1 + \dots + n_k$ for $1 \le k \le r$;
 \item $w(i) < w(j)$ for $1 \le i < j \le p$ and for $p+1 \le i < j \le n$.
\end{itemize}
For any $w \in S$, we have a $\theta$-stable parabolic subgroup $Q_w = w^{-1} Q w$ of $G_\C$ with Levi component $L_w = w^{-1} L w$.
Let $\fq_w$ be the Lie algebra of $Q_w$.
Then the Adams-Johnson packet $\Pi^{\mathrm{AJ}}_\psi$ is given by 
\[
 \Pi^{\mathrm{AJ}}_\psi = \left\{ A_{\fq_w}(w^{-1} \lambda) \, | \, w \in S \right\}.
\]

We now explicate the $A$-packet $\Pi_\psi$ when $G = \U(2,2)$ and $\psi$ is the localization of a global $A$-parameter as in \eqref{eq:U(2,2)-A-param}.
More precisely, we start with the discrete series representation of $\GL_2(\R)$ of weight $k+1$ with an even integer $k \ge 2$.
Since its base change to $\GL_2(\C)$ is the principal series representation $\Ind(\chi_{\frac{k}{2}} \otimes \chi_{-\frac{k}{2}})$, the associated $A$-parameter $\psi$ is given by 
\[
 \psi = \big( \chi_{\frac{k}{2}} \boxtimes \Sym^1 \big) \oplus \big( \chi_{-\frac{k}{2}} \boxtimes \Sym^1 \big),
\]
so that we need to take
\[
 \lambda = \big( \tfrac{k}{2}-1, \tfrac{k}{2}-1, -\tfrac{k}{2}+1, -\tfrac{k}{2}+1 \big).
\]
In this case, we have $S = \{ w_0, w_1, w_2 \}$ with
\[
 w_0 = 1, \qquad
 w_1 = (23), \qquad
 w_2 = (13)(24).
\]
Then
\[
 \Pi_\psi = \Pi^{\mathrm{AJ}}_\psi = \{ A_{\fq_i}(w_i^{-1} \lambda) \, | \, 0 \le i \le 2 \},
\]
where $\fq_i = \fq_{w_i}$.

\begin{prop}
\label{prop:Aq}
We have
\begin{align*}
 \dim H^{p,q}(\fg, K; A_{\fq_0}(w_0^{-1} \lambda) \otimes F) & = 
 \begin{cases}
  1 & \text{if $(p,q) = (4,0)$,}  \\
  0 & \text{otherwise,}
 \end{cases} \\
 \dim H^{p,q}(\fg, K; A_{\fq_1}(w_1^{-1} \lambda) \otimes F) & = 
 \begin{cases}
  1 & \text{if $(p,q) = (1,1), (3,3)$,}  \\
  2 & \text{if $(p,q) = (2,2)$,}  \\
  0 & \text{otherwise,}
 \end{cases} \\
 \dim H^{p,q}(\fg, K; A_{\fq_2}(w_2^{-1} \lambda) \otimes F) & = 
 \begin{cases}
  1 & \text{if $(p,q) = (0,4)$,}  \\
  0 & \text{otherwise,}
 \end{cases}
\end{align*}
where $F$ is the irreducible finite-dimensional representation of $G$ with highest weight $\lambda$.
\end{prop}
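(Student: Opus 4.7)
The proof is a direct application of the Vogan--Zuckerman formulas recalled earlier in the paper. Recall that for a cohomological $A_\fq(\lambda)$ with $\fq = \fl \oplus \fu$ and $F$ of highest weight $\lambda$, one has
\[
 H^{p,q}(\fg, K; A_\fq(\lambda) \otimes F^*) = 0 \quad \text{if } p - q \neq R^+ - R^-,
\]
and for $i \ge 0$,
\[
 H^{i+R^+, i+R^-}(\fg, K; A_\fq(\lambda) \otimes F^*) \simeq \Hom_{L \cap K}(\wedge^{2i}(\fl \cap \fp), \C),
\]
where $R^\pm = \dim(\fu \cap \fp^\pm)$. So the plan is, for each $i \in \{0,1,2\}$, to identify $\fq_i = \fq_{w_i}$ explicitly inside $\fg = \mathfrak{gl}_4(\C)$, compute the pair $(R^+, R^-)$, and then evaluate the $(L_{w_i} \cap K)$-invariants in $\wedge^{2j}(\fl_{w_i} \cap \fp)$.

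Concretely, I would use the block decomposition in which $\fp^+$ consists of matrices with a single $2\times 2$ block in the upper-right and $\fp^-$ the corresponding lower-left block, and in which $L$ is block-diagonal $\GL_2 \times \GL_2$ with $\fu$ the upper block. A direct calculation with $w_i^{-1}$ applied to the positions of $\fu$ then gives the positions of $\fu_{w_i}$ in $\mathfrak{gl}_4$:
\begin{itemize}
\item For $w_0 = 1$: $\fu_{w_0} \cap \fp^+ = \fp^+$ and $\fu_{w_0} \cap \fp^- = 0$, so $(R^+, R^-) = (4,0)$. Moreover $\fl_{w_0} \cap \fp = 0$, so only $i=0$ contributes, giving $H^{4,0}$ of dimension $1$.
\item For $w_2 = (13)(24)$: $\fu_{w_2} \cap \fp^+ = 0$ and $\fu_{w_2} \cap \fp^- = \fp^-$, so $(R^+, R^-) = (0,4)$. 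Again $\fl_{w_2} \cap \fp = 0$, yielding $H^{0,4}$ of dimension $1$.
\item For $w_1 = (23)$: the Levi $\fl_{w_1}$ has its two $\GL_2$ factors supported on the index sets $\{1,3\}$ and $\{2,4\}$. A direct check shows that $\fu_{w_1} \cap \fp^+$ is the $1$-dimensional space at position $(1,4)$ and $\fu_{w_1} \cap \fp^-$ is the $1$-dimensional space at position $(3,2)$, so $(R^+, R^-) = (1,1)$, and $\fl_{w_1} \cap \fp$ is the $4$-dimensional space spanned by the positions $(1,3), (3,1), (2,4), (4,2)$, i.e.~the off-diagonal parts of the two $\GL_2$-blocks.
\end{itemize}

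The only nontrivial step is the invariants computation for $w_1$. The subgroup $L_{w_1} \cap K$ is the compact torus $(\U(1) \times \U(1))^2$ coming from the diagonal tori of the two $\U(1,1)$-blocks of $L_{w_1} \cap G$, and it acts on $\fl_{w_1} \cap \fp$ with the four pairwise distinct characters $\{z_1/z_3,\, z_3/z_1,\, z_2/z_4,\, z_4/z_2\}$, where $(z_1, z_2, z_3, z_4) \in \U(1)^4$ is the diagonal torus of $K$. Hence
\[
 \dim \Hom_{L_{w_1}\cap K}(\wedge^{2i}(\fl_{w_1} \cap \fp), \C)
 = \#\{\text{subsets of size } 2i \text{ whose weights sum to } 0\},
\]
which equals $1$ for $i = 0$ (empty subset), $1$ for $i = 2$ (all four weights, since the two pairs are opposite), and $2$ for $i = 1$ (the two opposite pairs $\{z_1/z_3, z_3/z_1\}$ and $\{z_2/z_4, z_4/z_2\}$ are the only zero-sum pairs among the $\binom{4}{2} = 6$ possibilities). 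Via $(p,q) = (i + R^+, i + R^-) = (i+1, i+1)$ this gives the asserted dimensions $1, 2, 1$ in bidegrees $(1,1), (2,2), (3,3)$.

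The main (and only) potential obstacle is bookkeeping: one must check that the identification of $F$ with its dual $F^*$ is compatible with the highest weight $\lambda = (\tfrac{k}{2}-1,\tfrac{k}{2}-1,-\tfrac{k}{2}+1,-\tfrac{k}{2}+1)$ and with the weight $w_i^{-1}\lambda$ entering $A_{\fq_i}(w_i^{-1}\lambda)$, so that the Vogan--Zuckerman vanishing criterion $\gamma = (w_i^{-1}\lambda)|_\ft$ is actually met; this follows from the fact that $\lambda$ is $w_i$-invariant for each $w_i \in S$ (equivalently, $\lambda$ has the symmetric form $(a,a,-a,-a)$). Once this compatibility is recorded, the rest of the proof is the direct linear-algebra verification above.
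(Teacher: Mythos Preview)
Your approach is essentially the same as the paper's: write out $\fl_i$ and $\fu_i$ explicitly and apply \cite[Proposition 6.19]{vz}. Your explicit computation of $(R^+,R^-)$ for each $i$ and of the $L_{w_1}\cap K$-invariants in $\wedge^{2j}(\fl_{w_1}\cap\fp)$ is correct and matches the paper's (briefer) argument.

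There is, however, one incorrect statement in your final paragraph. You claim that the Vogan--Zuckerman criterion is met ``from the fact that $\lambda$ is $w_i$-invariant for each $w_i\in S$''. This is false: for $w_1=(23)$ one has $w_1^{-1}\lambda=(a,-a,a,-a)\ne(a,a,-a,-a)=\lambda$ (with $a=\tfrac{k}{2}-1$), and similarly $w_2^{-1}\lambda=(-a,-a,a,a)\ne\lambda$. The reason the criterion is nonetheless satisfied is different and in fact automatic from the Adams--Johnson construction: the Vogan--Zuckerman formula for $A_{\fq_i}(w_i^{-1}\lambda)\otimes F^*$ requires that the highest weight of $F$ \emph{relative to the positive system $\Delta^+_{\fq_i}=w_i^{-1}\Delta^+$ compatible with $\fq_i$} equal $w_i^{-1}\lambda$; but the highest weight of $F$ relative to $w_i^{-1}\Delta^+$ is precisely $w_i^{-1}$ applied to its highest weight $\lambda$ relative to $\Delta^+$. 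Combined with the self-duality $F^*\cong F$ (which follows from $-w_0\lambda=\lambda$ for the longest Weyl element $w_0$, as the paper notes), this gives the required compatibility. So your proof is correct once this justification is repaired.
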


\begin{proof}
First, note that $F$ is self-dual, i.e.~$F^*$ has highest weight $\lambda$.
Also, if we write $\fq_i = \fl_i \oplus \fu_i$ with Levi component $\fl_i$ and unipotent radical $\fu_i$, then
\begin{align*}
 \fl_0 & = \left\{ 
 \begin{pmatrix}
  * & * & 0 & 0 \\
  * & * & 0 & 0 \\
  0 & 0 & * & * \\
  0 & 0 & * & * 
 \end{pmatrix}
 \right\}, &
 \fu_0 & = \left\{
 \begin{pmatrix}
  0 & 0 & * & * \\
  0 & 0 & * & * \\
  0 & 0 & 0 & 0 \\
  0 & 0 & 0 & 0 
 \end{pmatrix}
 \right\}, \\
 \fl_1 & = \left\{ 
 \begin{pmatrix}
  * & 0 & * & 0 \\
  0 & * & 0 & * \\
  * & 0 & * & 0 \\
  0 & * & 0 & * 
 \end{pmatrix}
 \right\}, &
 \fu_1 & = \left\{
 \begin{pmatrix}
  0 & * & 0 & * \\
  0 & 0 & 0 & 0 \\
  0 & * & 0 & * \\
  0 & 0 & 0 & 0 
 \end{pmatrix}
 \right\}, \\
 \fl_2 & = \left\{ 
 \begin{pmatrix}
  * & * & 0 & 0 \\
  * & * & 0 & 0 \\
  0 & 0 & * & * \\
  0 & 0 & * & * 
 \end{pmatrix}
 \right\}, &
 \fu_2 & = \left\{ 
 \begin{pmatrix}
  0 & 0 & 0 & 0 \\
  0 & 0 & 0 & 0 \\
  * & * & 0 & 0 \\
  * & * & 0 & 0 
 \end{pmatrix}
 \right\}.
\end{align*}
Hence the assertion follows from \cite[Proposition 6.19]{vz}.
\end{proof}

We also note the following:

\begin{lem}
\label{l:Aq-conj}
Let
\[
 \delta = 
 \begin{pmatrix}
  & \1_2 \\
  \1_2 & 
 \end{pmatrix}
 \in \GU(2,2).
\]
Then we have
\begin{align*}
 A_{\fq_0}(w_0^{-1} \lambda) \circ \Ad(\delta) & = A_{\fq_2}(w_2^{-1} \lambda), \\
 A_{\fq_1}(w_1^{-1} \lambda) \circ \Ad(\delta) & = A_{\fq_1}(w_1^{-1} \lambda).
\end{align*}
\end{lem}

\begin{proof}
The lemma immediately follows from the characterization of the cohomological representation as described in \S \ref{ss:coh-rep}.
\end{proof}

\subsection{The Hodge structure}
\label{sec:the-hodge-structure}

Suppose again that $F$ is a totally real number field and $E$ is a totally imaginary quadratic extension of $F$.
Let $\VV$ be a $4$-dimensional hermitian $E$-space.
We now change notation, and write $G = \GU(\VV)$ and $G' = \U(\VV)$ for the unitary similitude and unitary groups of $\VV$, respectively.
We assume that $\disc \VV \notin \N(E^\times)$ but $\disc \VV_v \in \N(E_v^\times)$ for all $v \in \Sigma_\infty$.
Then we may identify $G_v$ with the group
\[
 \{ g \in \GL_4(\C) \, | \, {}^t \bar{g} I_v g = \nu(g) \cdot I_v \},
\]
where
\[
 I_v = 
\begin{cases}
 \begin{pmatrix} \1_2 & \\ & -\1_2 \end{pmatrix} & \text{if $v \in \Sigma$;} \\
 \1_4 & \text{if $v \in \Sigma_\infty \smallsetminus \Sigma$}
\end{cases} 
\]
for some subset $\Sigma$ of $\Sigma_\infty$.
We further assume that $\Sigma \ne \Sigma_\infty$, so that $G'$ is anisotropic.
For $v \in \Sigma_\infty$, we define a maximal compact subgroup $K_v'$ of $G_v'$ by
\[
 K_v' = \left\{ \left. \begin{pmatrix} a & \\ & d \end{pmatrix} \, \right| \, a, d \in \U(2) \right\}
\]
if $v \in \Sigma$ and $K_v' = G_v'$ if $v \in \Sigma_\infty \smallsetminus \Sigma$.
Put $K_v = F_v^\times \cdot K_v'$, where we regard $F_v^\times \simeq \R^\times$ as a subgroup of $G_v$ via the map $z \mapsto z \1_4$.
Then $K_v$ is a maximal \emph{connected} compact modulo center subgroup of $G_v$.
Put
\begin{align*}
 G_\infty & = \prod_{v \in \Sigma_\infty} G_v, &
 K_\infty & = \prod_{v \in \Sigma_\infty} K_v, \\
 G_\infty' & = \prod_{v \in \Sigma_\infty} G'_v, &
 K_\infty' & = \prod_{v \in \Sigma_\infty} K_v'.
\end{align*}
Let $\fg$ and $\fg'$ be the complexified Lie algebras of $G_\infty$ and $G_\infty'$, respectively. 
Let $\Ss = \Res_{\C/\R} \mathbb{G}_m$ and $G_0 = \Res_{F/\Q} G$.
We define a homomorphism $h: \Ss \rightarrow G_{0,\R}$ by $h(z) = (h_v(z))_{v \in \Sigma_\infty}$ with 
\[
  h_v(z) = 
 \begin{cases}
  \begin{pmatrix} z \1_2 & \\ & \bar{z} \1_2 \end{pmatrix} & \text{if $v \in \Sigma$;} \\
  \1_4 & \text{if $v \in \Sigma_\infty \smallsetminus \Sigma$.}
 \end{cases} 
\]
Let $X$ be the $G_0(\R)$-conjugacy class of homomorphisms $\Ss \rightarrow G_{0,\R}$ containing $h$.
Then we have an identification
\[
 X = G_\infty / K_\infty.
\]
For any $v \in \Sigma_\infty$ and any even integer $k \ge 2$, let $(\rho_{v,k}, V_{v,k})$ be the irreducible algebraic representation of $G_v$ such that
\begin{itemize}
 \item $\rho_{v,k}$ has trivial central character;
 \item $\rho_{v,k}' = \rho_{v,k}|_{G_v'}$ is the irreducible finite-dimensional representation of $G_v'$ with highest weight
\[
 \lambda_v = \big( \tfrac{k}{2}-1, \tfrac{k}{2}-1, -\tfrac{k}{2}+1, -\tfrac{k}{2}+1 \big).
\]
\end{itemize}
Let $\uk = (k_v)_{v \in \Sigma_\infty}$ be a tuple of even integers $k_v \ge 2$ and put 
\[
 \rho_{\uk} = \bigotimes_{v \in \Sigma_\infty} \rho_{v,k_v}, \qquad 
 V_{\uk} = \bigotimes_{v \in \Sigma_\infty} V_{v, k_v}.
\]
For any open compact subgroup $\cK$ of $G(\A_{F,f})$ (where $\A_{F,f}$ denotes the ring of finite ad\`eles of $F$), let $\Sh_\cK$ be the Shimura variety associated to $(G_0, X, \cK)$:
\[
 \Sh_\cK = G(F) \backslash X \times G(\A_{F,f}) / \cK.
\]
Then $(\rho_{\uk}, V_{\uk})$ gives rise to a local system $\V_{\uk}$ on $\Sh_\cK$.
We have the Hodge decomposition
\[
 H^i(\Sh_\cK, \V_{\uk}) = \bigoplus_{p+q=i} H^{p,q}(\Sh_\cK, \V_{\uk}).
\]

In \S \ref{sec:constr-gl-ex-isom}, we have associated to $\VV$ a quaternion $F$-algebra $B$ and a $3$-dimensional skew-hermitian right $B$-space $\tilde{V}$ such that $\PGU_E(\VV) \simeq \PGU_B(\tilde{V})^0$.
By the above assumption on $\VV$, $B$ is division but $B_v$ is split for all $v \in \Sigma_\infty$.
Let $W$ be the $1$-dimensional hermitian left $B$-space as in \S \ref{ss:theta-setup}.
Then $\GU(W) \simeq B^\times$.
Let $\tau$ be an irreducible unitary automorphic representation of $\GU(W)(\A_F)^+$ with central character $\xi_E$ such that: 
\begin{itemize}
 \item $\tau_v$ is the anti-holomorphic discrete series representation of $\GL_2(\R)^+$ of weight $-k_v-1$ if $v \in \Sigma$;
 \item $\tau_v$ is the holomorphic discrete series representation of $\GL_2(\R)^+$ of weight $k_v+1$ if $v \in \Sigma_\infty \smallsetminus \Sigma$.
\end{itemize}
Let $\Pi = \theta(\tau)$ be the global theta lift of $\tau$ to $\GU(\tilde{V})(\A_F)$ relative to the standard additive character $\psi$ of $\A_F/F$ (i.e.~the additive character $\psi$ such that $\psi_v(x) = e^{2 \pi i x}$ for $v \in \Sigma_\infty$).
We assume that $\Pi$ is non-zero.
Then:
\begin{itemize}
 \item $\Pi$ is irreducible by Lemma \ref{lem:irred-theta-similitude};
 \item $\Pi$ has trivial central character by Proposition \ref{prop:spl-hodge}.
\end{itemize}
Hence we may regard $\Pi$ as an irreducible unitary automorphic representation of $G(\A_F)$ with trivial central character.
Let $S$ be a finite set of rational primes such that for all $p \notin S$ and all places $v$ of $F$ above $p$, 
\begin{itemize}
 \item $G_v$ is unramified over $F_v$;
 \item $\cK_v$ is a hyperspecial maximal compact subgroup of $G_v$;
 \item $\Pi_v$ has a non-zero $\cK_v$-fixed vector.
\end{itemize}
Let $\mathrsfs{H}^S = \mathrsfs{H}(G(\A_F^S), \cK^S)$ be the Hecke algebra of compactly supported $\cK^S$-bi-invariant functions on $G(\A_F^S)$, where $\A_F^S = {\prod'_{p \notin S}} \prod_{v \mid p} F_v$ and $\cK^S = \prod_{p \notin S} \prod_{v \mid p} \cK_v$.
Then $\mathrsfs{H}^S$ acts on $H^i(\Sh_\cK, \V_{\uk})$.
Put $\Pi^S = \bigotimes'_{p \notin S} \bigotimes_{v \mid p} \Pi_v$ and
\[
 H^i(\Sh_\cK, \V_{\uk})[\Pi^S] = \{ x \in H^i(\Sh_\cK, \V_{\uk}) \, | \, Tx = \chi(T) x \text{ for all } T \in \mathrsfs{H}^S \},
\]
where $\chi$ is the character of $\mathrsfs{H}^S$ associated to $\Pi^S$.
We define $H^{p,q}(\Sh_\cK, \V_{\uk})[\Pi^S]$ similarly.

\begin{prop}
\label{prop:hodge-only-(d,d)-survives}
We have 
\[
 H^{2d}(\Sh_\cK, \V_{\uk})[\Pi^S] = H^{d,d}(\Sh_\cK, \V_{\uk})[\Pi^S],
\]
where $d = |\Sigma|$.
\end{prop}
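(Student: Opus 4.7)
The plan is to combine Matsushima's formula with the (expected) Arthur classification at archimedean places and an elementary Hodge-degree count. First, by Matsushima's formula one has
\[
 H^{2d}(\Sh_\cK, \V_{\uk})_\C[\Pi^S] \simeq \bigoplus_\pi m(\pi)\, H^{2d}(\fg, K_\infty; \pi_\infty \otimes V_{\uk}) \otimes (\pi_f^\cK)[\Pi^S],
\]
the sum ranging over irreducible automorphic representations $\pi$ of $G(\A)$ with trivial central character whose unramified components away from $S$ match those of $\Pi$. The classification recalled in \S \ref{sec:classification-global} forces any such $\pi$ to lie in the near equivalence class $L^2_\psi(G)$ attached to the $A$-parameter $\psi$ of $\Pi$; in particular $\pi_v$ must be an irreducible constituent of some element of the local $A$-packet $\Pi_{\psi_v}$ at every place $v$.

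Next I would explicate the packets $\Pi_{\psi_v}$ at infinity. For $v \notin \Sigma$ the group $G_v$ is compact modulo center, so $\fp_v = 0$ and every member of $\Pi_{\psi_v}$ is finite dimensional, contributing to $(\fg_v, K_v)$-cohomology only in degree $0$. For $v \in \Sigma$, $\psi_v$ is the Adams--Johnson parameter attached to the highest weight $\lambda_v$ as in \S \ref{ss:local-A-packets}, and its packet on $\U(\VV_v)$ consists of the three representations $A_{\fq_0}(w_0^{-1}\lambda_v)$, $A_{\fq_1}(w_1^{-1}\lambda_v)$, $A_{\fq_2}(w_2^{-1}\lambda_v)$. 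By Proposition \ref{prop:Aq}, these contribute $(\fg_v, K_v)$-cohomology concentrated in the Hodge bidegrees $(4,0)$, $\{(1,1),(2,2),(3,3)\}$, and $(0,4)$ respectively; passing from $\U(\VV_v)$ to $\GU(\VV_v)$ (and noting that $\pi$ has trivial central character) preserves these bidegrees.

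The K\"unneth-type decomposition of $(\fg, K_\infty)$-cohomology over archimedean places then exhibits $H^{p,q}(\fg, K_\infty; \pi_\infty \otimes V_{\uk})$ as a sum indexed by choices of local bidegree $(p_v, q_v)$ at each $v \in \Sigma$, subject to $\sum_{v \in \Sigma} (p_v, q_v) = (p,q)$, with each $(p_v, q_v)$ drawn from the list above. Every admissible $(p_v, q_v)$ satisfies $p_v + q_v \ge 2$, with equality only for the $(1,1)$ choice coming from $A_{\fq_1}$. Since $p + q = 2d = 2|\Sigma|$, we are forced to take $(p_v, q_v) = (1,1)$ uniformly, whence $(p,q) = (d,d)$, which is the desired statement.

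The only non-trivial input is the appeal to the classification in \S\ref{sec:classification-global}--\S\ref{ss:local-A-packets}, which is precisely the conditionality flagged in Remark \ref{rem:mainremark-intro}(ii)(a); granting that, the rest reduces to recognising the local cohomological packet via Proposition \ref{prop:Aq} and the trivial degree count above.
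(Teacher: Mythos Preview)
Your approach is essentially the same as the paper's: Matsushima's formula, the Arthur classification to pin down $\pi_\infty$ via local $A$-packets, Proposition~\ref{prop:Aq} for the Hodge bidegrees, and the K\"unneth degree count forcing $(p_v,q_v)=(1,1)$ at each $v\in\Sigma$.

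Two points you elide that the paper handles explicitly. First, the classification in \S\ref{sec:classification-global} is for the isometry group $G'=\U(\VV)$, not the similitude group $G=\GU(\VV)$, so one cannot directly place $\pi$ in a near-equivalence class $L^2_\psi(G)$; the paper instead restricts a realization of $\pi$ to $G'(\A)$ as functions, picks an irreducible component $\pi'$, and applies the classification to $\pi'$ (after first arguing that $\pi$ has trivial central character, which you simply assume). Second, to invoke the classification one must know the $A$-parameter $\psi$ of $\pi'$; the paper computes it explicitly as $\psi=\tau_E\boxtimes\Sym^1$ by comparing unramified local components via Lemmas~\ref{l:isom-tori} and~\ref{l:theta-unram}. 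Your sketch takes both of these for granted. Once they are supplied, your degree-count argument and the paper's conclusion coincide.
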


\begin{proof}
By Matsushima's formula \cite[VII.5.2]{borel-wallach}, we have
\[
 H^{2d}(\Sh_\cK, \V_{\uk}) \simeq \bigoplus_\pi m(\pi) H^{2d}(\fg, K_\infty; \pi_\infty \otimes \rho_{\uk}) \otimes \pi_f^\cK,
\]
where $\pi = \pi_\infty \otimes \pi_f$ runs over equivalence classes of irreducible admissible representations of $(\fg, K_\infty) \times G(\A_{F,f})$, $m(\pi)$ is the multiplicity of $\pi$ in the space of automorphic forms on $G(\A_F)$, and $\pi_f^\cK$ is the space of $\cK$-fixed vectors in $\pi_f$.
Since this isomorphism is compatible with the Hodge decompositions, it suffices to prove the following: if $m(\pi) > 0$ and $\pi_v \simeq \Pi_v$ for almost all $v$, then
\begin{equation}
\label{eq:hodge-only-(d,d)-survives}
 H^{2d}(\fg, K_\infty; \pi_\infty \otimes \rho_{\uk}) = H^{d,d}(\fg, K_\infty; \pi_\infty \otimes \rho_{\uk}).
\end{equation}

Under this assumption, $\pi_v$ has trivial central character for almost all $v$, and hence so is $\pi$ since it is automorphic.
Since $\pi_\infty$ and $\rho_{\uk}$ have trivial central character, we have
\begin{align*}
 H^{2d}(\fg, K_\infty; \pi_\infty \otimes \rho_{\uk}) & = H^{2d}(\fg', K'_\infty; \pi_\infty' \otimes \rho_{\uk}'), \\
 H^{d,d}(\fg, K_\infty; \pi_\infty \otimes \rho_{\uk}) & = H^{d,d}(\fg', K'_\infty; \pi_\infty' \otimes \rho_{\uk}'), 
\end{align*}
where $\pi_\infty' = \pi_\infty|_{(\fg', K'_\infty)}$ and $\rho'_{\uk} = \rho_{\uk}|_{G_\infty'}$.
Note that $\pi_\infty'$ and $\rho'_{\uk}$ remain irreducible.

Fix a realization $\mathrsfs{V}$ of $\pi$ in the space of automorphic forms on $G(\A_F)$.
Let $\mathrsfs{V}|_{G'(\A_F)}$ be the restriction of $\mathrsfs{V}$ to $G'(\A_F)$ as functions, so that $\mathrsfs{V}|_{G'(\A_F)}$ is a non-zero subspace of the space of automorphic forms on $G'(\A_F)$.
Fix an irreducible component $\pi'$ of $\mathrsfs{V}|_{G'(\A_F)}$.
Since the natural surjective map $\mathrsfs{V} \rightarrow \mathrsfs{V}|_{G'(\A_F)}$ is $(\fg', K'_\infty) \times G'(\A_{F,f})$-equivariant, $\pi'_v$ is an irreducible component of $\pi_v|_{G_v'}$ (resp.~$\pi'_v = \pi_v|_{(\fg_v', K_v')}$) if $v$ is finite (resp.~if $v$ is real).

We now compute the $A$-parameter $\psi$ of $\pi'$.
Choose an irreducible unitary cuspidal automorphic representation $\tilde{\tau}$ of $\GL_2(\A_F)$ so that $\tau$ is an irreducible component of $\tilde{\tau}^B|_{B^\times(\A_F)^+}$, where $\tilde{\tau}^B$ is the Jacquet-Langlands transfer of $\tilde{\tau}$ to $B^\times(\A_F)$.
Let $\tau_E$ be the base change of $\tilde{\tau}$ to $\GL_2(\A_E)$.
Note that $\tau_E$ does not depend on the choice of $\tilde{\tau}$.
For almost all $v$, $\tilde{\tau}_v$ is a principal series representation $\Ind(\chi_v \otimes \chi_v^{-1} \xi_{E_v})$ of $\GL_2(F_v)$ for some unramified character $\chi_v$ of $F_v^\times$.
Hence 
\[
 \tau_{E,v} = \Ind(\eta_v \otimes \eta_v^{-1})
\]
for almost all $v$, where $\eta_v = \chi_v \circ \N_{E_v/F_v}$.
On the other hand, by Lemmas \ref{l:isom-tori} and \ref{l:theta-unram}, $\pi_v$ is the unique irreducible unramified subquotient of
\[
 \Ind^{G_v}_{B_{G_v}}(\eta_v |\cdot|_{E_v}^{\frac{1}{2}} \otimes \eta_v |\cdot|_{E_v}^{-\frac{1}{2}} \otimes \chi_v^{-2})
\]
for almost all $v$, where $B_{G_v}$ is the standard Borel subgroup of $G_v$ containing the maximal torus $T_v \simeq (E_v^\times)^2 \times F_v^\times$ as in \S \ref{ss:theta-3}.
Hence $\pi'_v$ is the unique irreducible unramified subquotient of
\[
 \Ind^{G'_v}_{B_{G'_v}}(\eta_v |\cdot|_{E_v}^{\frac{1}{2}} \otimes \eta_v |\cdot|_{E_v}^{-\frac{1}{2}})
\]
for almost all $v$, where $B_{G'_v} = B_{G_v} \cap G_v'$ is the standard Borel subgroup of $G_v'$ containing the maximal torus $T_v \cap G_v' \simeq (E_v^\times)^2$.
Namely, we have
\[
 \psi = \tau_E \boxtimes \Sym^1.
\]

Thus, by the classification of automorphic representations of $G'(\A_F)$, $\pi_v'$ is an irreducible summand of some representation in the local $A$-packet $\Pi_{\psi_v}$ for all $v$.
In particular, if $v \in \Sigma$, then $\pi_v'$ is one of the representations $A_{\fq_i}(w_i^{-1} \lambda_v)$ as in \S \ref{ss:local-A-packets}, and hence we have
\[
 H^i(\fg_v',K'_v; \pi_v' \otimes \rho'_{v, k_v}) = 0
\]
for $i<2$ and
\[
 H^2(\fg_v',K'_v; \pi_v' \otimes \rho'_{v, k_v}) = H^{1,1}(\fg_v',K'_v; \pi_v' \otimes \rho'_{v, k_v})
\]
by Proposition \ref{prop:Aq}.
From this, we deduce that
\begin{align*}
 H^{2d}(\fg', K'_\infty; \pi_\infty' \otimes \rho_{\uk}')
 & = \bigotimes_{v \in \Sigma}
 H^2(\fg_v', K'_v; \pi_v' \otimes \rho_{v, k_v}')
 \bigotimes_{v \in \Sigma_\infty \smallsetminus \Sigma}
 H^0(\fg_v', K'_v; \pi_v' \otimes \rho_{v, k_v}') \\
 & = \bigotimes_{v \in \Sigma}
 H^{1,1}(\fg_v', K'_v; \pi_v' \otimes \rho_{v, k_v}')
 \bigotimes_{v \in \Sigma_\infty \smallsetminus \Sigma}
 H^0(\fg_v', K'_v; \pi_v' \otimes \rho_{v, k_v}') \\
 & = H^{d,d}(\fg', K'_\infty; \pi_\infty' \otimes \rho_{\uk}').
\end{align*}
This completes the proof.
\end{proof}

\begin{rem}
The classification of automorphic representations is used in the proof of Proposition \ref{prop:hodge-only-(d,d)-survives}, but in fact, we can avoid appealing to the result of Kaletha-Minguez-Shin-White \cite{kmsw} as follows.
Let $\pi$ be an irreducible automorphic representation of $G(\A_F)$ such that $\pi_v \simeq \Pi_v$ for almost all $v$.
Then, by Proposition \ref{p:near-equiv-theta}, we may write $\pi$ as a global theta lift.
Hence, if $\pi'_v$ is an irreducible component of $\pi_v|_{(\fg_v', K_v')}$ for $v \in \Sigma$, then it follows from the description of local theta lifts \cite{li-duke} (see also \S \ref{ss:local-theta} and Lemma \ref{l:Aq-conj}) that $\pi'_v = A_{\fq_i}(w_i^{-1} \lambda_v)$ for some $i$.
This implies \eqref{eq:hodge-only-(d,d)-survives}.
\end{rem}

\begin{prop}
\label{prop:hodge-is-of-type-(d,d)}
The Hodge structure $H^{2d}(\Sh_\cK, \V_{\uk})[\Pi^S]$ is purely of type $(d,d)$. 
\end{prop}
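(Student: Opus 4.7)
The plan is to deduce this proposition from Proposition \ref{prop:hodge-only-(d,d)-survives} by a formal argument, the extra input being that the unramified Hecke action on $H^{2d}(\Sh_\cK, \V_\uk)$ is compatible with the Hodge decomposition. What the previous proposition shows, via the Matsushima formula over $\C$, is the vanishing of the $\Pi^S$-eigenspace in $H^{p,q}(\Sh_\cK, \V_\uk)$ for all $(p,q)$ with $p+q=2d$ and $(p,q) \neq (d,d)$. The present proposition is essentially the same statement repackaged: the $L$-rational subspace $H^{2d}(\Sh_\cK, \V_\uk)[\Pi^S]$, viewed with its inherited Hodge structure, is pure of type $(d,d)$.

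First I would note that the Hecke algebra $\mathrsfs{H}^S$ acts on $H^{2d}(\Sh_\cK, \V_\uk)$ via algebraic correspondences on $\Sh_\cK$, so each $T \in \mathrsfs{H}^S$ induces a morphism of $L$-Hodge structures (taking $L$ to contain the Hecke eigenvalues of $\Pi^S$ as well as the field of definition of $\V_\uk$). Consequently the $L$-linear subspace $H^{2d}(\Sh_\cK, \V_\uk)[\Pi^S]$, being cut out as the simultaneous kernel of the endomorphisms $\{T - \chi(T)\}_{T \in \mathrsfs{H}^S}$, is a sub-Hodge structure of $H^{2d}(\Sh_\cK, \V_\uk)$. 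Its complexification then decomposes as
\[
H^{2d}(\Sh_\cK, \V_\uk)[\Pi^S] \otimes_L \C = \bigoplus_{p+q=2d} H^{p,q}(\Sh_\cK, \V_\uk)[\Pi^S],
\]
where each summand on the right is the intersection of the Hecke eigenspace with the complex Hodge piece, by virtue of Hecke-equivariance of the Hodge projectors. Proposition \ref{prop:hodge-only-(d,d)-survives} forces all summands with $(p,q) \neq (d,d)$ to vanish, which by definition is the assertion that $H^{2d}(\Sh_\cK, \V_\uk)[\Pi^S]$ is pure of type $(d,d)$. There is no genuine obstacle: the proposition is a near-immediate corollary of the preceding one, and the only point to verify is that the Hecke action preserves the Hodge decomposition, which is standard since the Hecke operators are given by algebraic correspondences on $\Sh_\cK$.
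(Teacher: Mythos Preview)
Your argument conflates two different $(p,q)$-gradings on $H^{2d}(\Sh_\cK, \V_\uk)$. The decomposition appearing in Proposition~\ref{prop:hodge-only-(d,d)-survives} is the one coming from the $(\fg,K)$-cohomology bidegree via Matsushima's formula, i.e., the Dolbeault-type grading $H^{p,q}(\fg,K;\pi_\infty\otimes\rho_\uk)$ indexed by $\wedge^p(\fp^+)^*\otimes\wedge^q(\fp^-)^*$. Proposition~\ref{prop:hodge-is-of-type-(d,d)}, by contrast, is a statement about the \emph{Hodge type} of the $L$-Hodge structure carried by the eigenspace. These two gradings coincide only when the local system is trivial (parallel weight two). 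For general $\uk$ the local system $\V_\uk$ is a nontrivial variation of Hodge structure on fibres, and a single Dolbeault piece $H^{p,q}$ can sit at a Hodge type shifted by the fibre bigrading; knowing that the eigenspace lies in Dolbeault degree $(d,d)$ does not by itself determine its Hodge type.

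The paper's proof makes exactly this computation, following Zucker's description of locally homogeneous variations of Hodge structure. In the standard normalization, for each $v\in\Sigma$ the Dolbeault $(1,1)$-piece turns out to have Hodge type $(k_v-1,k_v-1)$, and only after the twist by $(2-k_v,2-k_v)$ built into the automorphic normalization does this become $(1,1)$. (The same table shows, for instance, that the Dolbeault $(2,2)$-piece has Hodge type $(k_v,k_v)$ and the $(4,0)$-piece has type $(2k_v,0)$.) So the passage from Proposition~\ref{prop:hodge-only-(d,d)-survives} to Proposition~\ref{prop:hodge-is-of-type-(d,d)} is not formal: one must identify where in the Hodge filtration the minimal $K$-type contribution lands, and that is precisely what the explicit computation with $w\in W^1$ and the character $\tau_Z$ accomplishes. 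Your argument would be correct verbatim in parallel weight two, but for higher weight it is missing this step.
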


\begin{proof}
From the proof of Proposition \ref{prop:hodge-only-(d,d)-survives}, we need to compute the Hodge structure on 
\[
H^{d,d}(\fg', K'_\infty; \pi_\infty' \otimes \rho_{\uk}') \simeq  \bigotimes_{v \in \Sigma} H^{1,1}(\fg_v', K'_v; \pi_v' \otimes \rho_{v, k_v}')
 \bigotimes_{v \in \Sigma_\infty \smallsetminus \Sigma}
 H^0(\fg_v', K'_v; \pi_v' \otimes \rho_{v, k_v}').
 \]
 The term $H^0(\fg_v', K'_v; \pi_v' \otimes \rho_{v, k_v}')$ (for
 $v\in \Sigma_\infty \smallsetminus \Sigma$) is clearly of type $(0,0)$, so we are reduced to showing that 
 $H^{1,1}(\fg_v', K'_v; \pi_v' \otimes \rho_{v, k_v}')$ (for $v\in \Sigma$) is of type
 $(1,1)$. The Hodge type can be computed using \cite{zucker}, keeping
 in mind that {\it loc.~cit.}  gives the Hodge
 numbers in the standard normalization; they need to be 
twisted appropriately to get the Hodge type in the automorphic
(unitary) normalization that we are using. 
For ease
 of comparison with \cite{zucker}, we temporarily change notation to
 match that reference. (See also \cite{faltings-bgg}, \cite{harris-motivesvolume}.)

Fix $v \in \Sigma$ for the rest of the proof. 
Let $W \simeq \mathfrak{S}_4$ and $W_c \simeq \mathfrak{S}_2 \times \mathfrak{S}_2$ be the Weyl groups of $\fg'_v = \mathfrak{gl}(4,\C)$ and $\fk'_v = \mathfrak{gl}(2,\C) \oplus \mathfrak{gl}(2,\C)$, respectively. 
Let $W^1$ be the set of representatives for $W_c \backslash W$ given by 
\[
 \{ w \in W \, | \, w^{-1}(\Delta_c^+) \subset \Delta^+ \}, 
\]
where $\Delta_c^+$ and $\Delta^+$ are the sets of positive roots in $\fk'_v$ and $\fg'_v$, respectively. 
Put
\[
 W^1(p) = \{ w \in W^1 \, | \, \ell(w) = p \},
\]
where $\ell(w)$ is the length of $w$.
Then we can enumerate the elements in $W^1$ as follows:
\[
\begin{array}{c|cccccc}
 p & 0 & 1 & 2 & 2 & 3 & 4 \\ \hline
 w^{-1} & 1 & (23) & (243) & (123) & (1243) & (13)(24)
\end{array} 
\]
Recall that $k_v$ is a positive even integer and put 
\[
 \Lambda = \tfrac{1}{2}(k_v-2,k_v-2,-k_v+2,-k_v+2).
\]
Let $\rho$ be half the sum of positive roots in $\fg_v'$:
\[
 \rho = \tfrac{1}{2}(3,1,-1,-3).
\]

 Let $Z=\{t_\alpha:= (\alpha, \alpha^{-1} ) \in \U(1)\times \U(1) \} \subset \U(2)
 \times \U(2)$. 
As in \cite{zucker}, \S 1 and \S 4, let $\mu$ and $\lambda$ denote the
highest characters of $Z$ appearing in the adjoint representation
of $\U(2,2)$ and in the representation $\rho'_{v,k_v}$. Then $\mu$ is
just the action on $(\fp_v')^+$ and is explicitly given by
\[
\mu (t_\alpha) = \alpha^2.
\]
As for $\lambda$, it agrees with the action of $Z$ on the irreducible
representation of $\U(2) \times \U(2)$ with highest weight $\Lambda$;
since this representation is just $\det^{\frac{k_v}{2}-1} \boxtimes
\det^{-\frac{k_v}{2}+1}$, we see that
\[
\lambda(t_\alpha) = (\alpha^2)^{\frac{k_v}{2}-1}
(\alpha^{-2})^{-\frac{k_v}{2}+1} = \alpha^{2k_v-4}.
\]
Since $\rho'_{v,k_v}$ is self-dual, the lowest character of $Z$
appearing in $\rho'_{v,k_v}$ is simply $\lambda^{-1}$; thus $m=
2k_v-4$, where $m$ is defined as in {\it loc.~cit.} equation (4.8). 
Here $m$ is the total weight of the Hodge structure on the fiber of
the local system.
(To convert to our normalization, where $\rho_{v,k_v}$ has trivial central character and hence the total weight on the fiber is zero, we must therefore twist the Hodge numbers below by $(2-k_v,2-k_v)$.)

For completeness, we consider not just $H^{1,1}$ but all the non-zero
  $H^{p,q}(\fg_v', K'_v; \pi_v' \otimes \rho_{v, k_v}')$, where
  $\pi_v'$ is chosen such that $H^{p,q} \neq 0$. 
This space is then the sum of components of multi-degree $(p,q);(r,s)$
where $(r,s)$ with 
$r+s=m$ is the bidegree coming from the Hodge structure on the fiber. 
By \cite{zucker}, \S
  5,  the $(p,q);(k-p,m+p-k)$ component can only be non-zero if the
action $\tau_Z$ of 
$Z$ on the irreducible representation $\tau$ of $\U(2) \times \U(2)$ with 
highest weight $w(\Lambda + \rho) - \rho$ is $\lambda - k\mu$
 for some $w \in W^1 (p)$. 
Thus we just need to run through the different choices of $(p,q)$ and
$w\in W^1 (p)$. 

\begin{itemize}
\item If $(p,q)=(0,4)$ and $w^{-1}=1$, then 
\begin{align*}
 w(\Lambda+\rho) & = \tfrac{1}{2}(k_v+1,k_v-1,-k_v+1,-k_v-1), \\
 w(\Lambda+\rho) - \rho & = \tfrac{1}{2}(k_v-2,k_v-2,-k_v+2,-k_v+2), 
\end{align*}
so that
\begin{align*}
 \tau & = (\Sym^{0} \otimes {\det}^{\frac{k_v}{2}-1}) \boxtimes (\Sym^{0} \otimes {\det}^{-\frac{k_v}{2}+1}), \\
\tau_Z & : t_\alpha \mapsto (\alpha^2)^{\frac{k_v}{2}-1} (\alpha^{-2})^{-\frac{k_v}{2}+1}=\alpha^{2k_v-4}.
\end{align*}
Then $k=0$, so the Hodge type is 
\[
(0,4) + (0,2k_v-4) = (0,2k_v).
\]

\item If $(p,q)=(1,1)$ and $w^{-1}=(23)$, then 
\begin{align*}
 w(\Lambda+\rho) & = \tfrac{1}{2}(k_v+1,-k_v+1,k_v-1,-k_v-1), \\
 w(\Lambda+\rho) - \rho & = \tfrac{1}{2}(k_v-2,-k_v,k_v,-k_v+2), 
\end{align*}
so that
\begin{align*}
 \tau & = (\Sym^{k_v-1} \otimes {\det}^{-\frac{k_v}{2}}) \boxtimes (\Sym^{k_v-1} \otimes {\det}^{-\frac{k_v}{2}+1}), \\
 \tau_Z & : t_\alpha \mapsto \alpha^{k_v-1} (\alpha^2)^{-\frac{k_v}{2}}(\alpha^{-1})^{k_v-1} (\alpha^{-2})^{-\frac{k_v}{2}+1}=\alpha^{-2}. 
\end{align*}
Then $k=k_v-1$, so the Hodge type is 
\[
(1,1) + (k_v-1-1, 2k_v-4 +1-(k_v-1) = (k_v-1,k_v-1).
\]

\item If $(p,q)=(2,2)$ and $w^{-1}=(243)$, then 
\begin{align*}
 w(\Lambda+\rho) & = \tfrac{1}{2}(k_v+1,-k_v-1,k_v-1,-k_v+1), \\
 w(\Lambda+\rho) - \rho & = \tfrac{1}{2}(k_v-2,-k_v-2,k_v,-k_v+4), 
\end{align*}
so that
\begin{align*}
 \tau & = (\Sym^{k_v} \otimes {\det}^{-\frac{k_v}{2}-1}) \boxtimes (\Sym^{k_v-2} \otimes {\det}^{-\frac{k_v}{2}+2}), \\
\tau_Z & : t_\alpha \mapsto \alpha^{k_v} (\alpha^2)^{-\frac{k_v}{2}-1}(\alpha^{-1})^{k_v-2}(\alpha^{-2})^{-\frac{k_v}{2}+2}=\alpha^{-4}.
\end{align*}
Then $k=k_v$, so the Hodge type is
\[
(2,2) + (k_v-2, 2k_v-4 + 2-k_v) = (k_v, k_v).
\]

\item If $(p,q)=(2,2)$ and $w^{-1}=(123)$, then 
\begin{align*}
 w(\Lambda+\rho) & = \tfrac{1}{2}(k_v-1,-k_v+1,k_v+1,-k_v-1), \\
 w(\Lambda+\rho) - \rho & = \tfrac{1}{2}(k_v-4,-k_v,k_v+2,-k_v+2), 
\end{align*}
so that
\begin{align*}
 \tau & = (\Sym^{k_v-2} \otimes {\det}^{-\frac{k_v}{2}}) \boxtimes (\Sym^{k_v} \otimes {\det}^{-\frac{k_v}{2}+1}), \\
\tau_Z & : t_\alpha \mapsto \alpha^{k_v-2}
(\alpha^2)^{-\frac{k_v}{2}}(\alpha^{-1})^{k_v}(\alpha^{-2})^{-\frac{k_v}{2}+1}=\alpha^{-4}.
\end{align*}
Then $k=k_v$, so the Hodge type is
\[
(2,2) + (k_v-2, 2k_v-4 + 2-k_v) = (k_v, k_v).
\]

\item If $(p,q)=(3,3)$ and $w^{-1}=(1243)$, then 
\begin{align*}
 w(\Lambda+\rho) & = \tfrac{1}{2}(k_v-1,-k_v-1,k_v+1,-k_v+1), \\
 w(\Lambda+\rho) - \rho & = \tfrac{1}{2}(k_v-4,-k_v-2,k_v+2,-k_v+4), 
\end{align*}
so that
\begin{align*}
 \tau & = (\Sym^{k_v-1} \otimes {\det}^{-\frac{k_v}{2}-1}) \boxtimes (\Sym^{k_v-1} \otimes {\det}^{-\frac{k_v}{2}+2}), \\
\tau_Z & : t_\alpha \mapsto \alpha^{k_v-1}
(\alpha^2)^{-\frac{k_v}{2}-1}(\alpha^{-1})^{k_v-1}(\alpha^{-2})^{-\frac{k_v}{2}+2}=\alpha^{-6}.
\end{align*}
Then $k=k_v+1$, so the Hodge type is
\[
(3,3) + (k_v+1-3, 2k_v-4 + 3-(k_v+1)) = (k_v+1, k_v+1).
\]

\item If $(p,q)=(4,0)$ and $w^{-1}=(13)(24)$, then 
\begin{align*}
 w(\Lambda+\rho) & = \tfrac{1}{2}(-k_v+1,-k_v-1,k_v+1,k_v-1), \\
 w(\Lambda+\rho) - \rho & = \tfrac{1}{2}(-k_v-2,-k_v-2,k_v+2,k_v+2), 
\end{align*}
so that
\begin{align*}
 \tau & = (\Sym^{0} \otimes {\det}^{-\frac{k_v}{2}-1}) \boxtimes (\Sym^{0} \otimes {\det}^{\frac{k_v}{2}+1}), \\
\tau_Z & : t_\alpha \mapsto 
(\alpha^2)^{-\frac{k_v}{2}-1}(\alpha^{-2})^{\frac{k_v}{2}+1}=\alpha^{-2k_v-4}.
\end{align*}
Then $k=2k_v$, so the Hodge type is
\[
(4,0) + (2k_v-4, 2k_v-4 + 4-2k_v) = (2k_v, 0).
\]

\end{itemize}

The relevant case for us is the case $(p,q)=(1,1)$ in which case the overall
Hodge type $(k_v-1,k_v-1)$; twisting it by $(2-k_v, 2-k_v)$, we see
that $H^{1,1}(\fg_v', K'_v; \pi_v' \otimes \rho_{v, k_v}')$ has Hodge type $(1,1)$ as expected. 
\end{proof}

Finally, we note that by \S \ref{ss:aut-coh-ls}, we may regard $\Pi$ as an automorphic representation of any of the groups 
$\tilde{\mathrsfs{G}}(\A)$, $\mathrsfs{G}(\A)$, $\mathrsfs{G}_B (\A)$, $\tilde{\mathrsfs{G}}_B (\A)$. 
Let $S$ and $\cK$ be as in \S \ref{ss:aut-coh-ls} as well. Then we get

\begin{cor}
\label{cor:hodge-is-purely-of-type-(d,d)}
The Hodge structure on $H^{2d}(\Sh_{\tilde{\mathrsfs{G}}_{B,\cK}}, \V_{\uk})[\Pi^S]$ is purely of type $(d,d)$. 
\end{cor}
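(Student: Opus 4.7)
The proof of this corollary should be essentially immediate from Proposition \ref{prop:hodge-is-of-type-(d,d)} combined with the compatibility statement at the end of \S \ref{ss:aut-coh-ls}. The plan is as follows.

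First, recall that \S \ref{ss:aut-coh-ls} produces, for the level $\cK$ and finite set $S$ chosen so that the hypotheses listed there hold, a canonical chain of isomorphisms
\[
H^{2d}(\Sh_{\tilde{\mathscr{G}},\cK}, \V_{\uk})[\Pi^S] \simeq H^{2d}(\Sh_{\mathscr{G},\cK}, \V_{\uk})[\Pi^S] \simeq H^{2d}(\Sh_{\mathscr{G}_B,\cK}, \V_{\uk})[\Pi^S] \simeq H^{2d}(\Sh_{\tilde{\mathscr{G}}_B,\cK}, \V_{\uk})[\Pi^S]
\]
in the category $\cM_{F_\Sigma}^L$. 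Since morphisms in $\cM_{F_\Sigma}^L$ are by definition morphisms of $L$-Hodge structures (together with the corresponding $\ell$-adic Galois compatibilities), the above identifications preserve the Hodge decomposition on each side.

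Next, I would invoke Proposition \ref{prop:hodge-is-of-type-(d,d)}, which is exactly the statement that $H^{2d}(\Sh_{\tilde{\mathscr{G}},\cK}, \V_{\uk})[\Pi^S]$ is purely of Hodge type $(d,d)$ (here $\Sh_\cK$ in the statement of that proposition denotes the Shimura variety attached to $\tilde{\mathscr{G}} = \GU_E(\VV)$, which is the setting set up at the beginning of \S \ref{sec:the-hodge-structure}). Transporting the Hodge type through the chain of isomorphisms above yields the corresponding statement for $H^{2d}(\Sh_{\tilde{\mathscr{G}}_B,\cK}, \V_{\uk})[\Pi^S]$, which is the claim of the corollary.

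The only point to verify, which is not really an obstacle but worth double-checking, is that the representation $\rho_{\uk}$ used to build the local system on $\Sh_{\tilde{\mathscr{G}}}$ matches (under the exceptional isomorphism $\xi: \mathscr{G} \simeq \mathscr{G}_B$ of Proposition \ref{p:isom-PGU} and the quotient maps to the relevant projectivizations) the representation defining the local system $\V_{\uk}$ on $\Sh_{\tilde{\mathscr{G}}_B}$; this is ultimately a consequence of the weight correspondence recorded in Corollary \ref{c:isom-weights}, together with the fact that both $\rho_{\uk}$ and its transfer are trivial on the respective central tori so they descend consistently through $\mathscr{G}$ and $\mathscr{G}_B$. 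No further computation is required beyond that bookkeeping.
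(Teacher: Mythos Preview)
Your proposal is correct and matches the paper's approach exactly: the paper does not give a separate proof of this corollary but simply records it as an immediate consequence of Proposition~\ref{prop:hodge-is-of-type-(d,d)} together with the canonical isomorphisms in $\cM_{F_\Sigma}^L$ from \S\ref{ss:aut-coh-ls}. Your additional remark about matching the local systems via Corollary~\ref{c:isom-weights} is a reasonable sanity check that the paper addresses elsewhere (in \S\ref{sec:proof-of-main-thm}) but does not repeat here.
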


\subsection{Galois representations}
\label{sec:gal-rep-on-H2}
Finally we state the main result we need on Galois representations.
\begin{prop}
\label{prop:galrep-char}
Assume Kottwitz's conjecture for Shimura varieties attached to unitary similitude groups.
Then the action of $\Gal(\Qbar/F_\Sigma)$ on  
\[
H^{2d} (\Sh_{\tilde{\mathrsfs{G}},\cK}, \V_{\uk,\ell}) [\Pi^S] (d)
\]
is trivial.
\end{prop}

The proposition above encodes the expected relation between the automorphic form $\Pi$ and the cohomology of the Shimura variety 
$\Sh_{\tilde{\mathrsfs{G}},\cK}$, and is consistent with Corollary \ref{cor:hodge-is-purely-of-type-(d,d)}.
As such, it is an immediate consequence of the following special case of Kottwitz's conjecture \cite[\S 10]{kot}:

\begin{prop}
\label{prop:galrep-char-ss}
Assume Kottwitz's conjecture for Shimura varieties attached to unitary similitude groups.
Then the action of $\Gal(\Qbar/F_\Sigma)$ on the semisimplification of
\[
H^{2d} (\Sh_{\tilde{\mathrsfs{G}},\cK}, \V_{\uk,\ell}) [\Pi^S] (d)
\]
is trivial.
\end{prop}

\begin{rem}
Proposition \ref{prop:galrep-char} follows directly from Proposition \ref{prop:galrep-char-ss} by a standard argument using the finiteness of the class number of $F_\Sigma$ (as in \cite[\S 5.13]{nekovar}).
For convenience of the reader, we include the argument here.
Recall that any subquotient of the representation of $\Gal(\Qbar/F_\Sigma)$ on $H^{2d} (\Sh_{\tilde{\mathrsfs{G}},\cK}, \V_{\uk,\ell}) [\Pi^S] (d)$ (regarded as a $\Q_\ell$-vector space) is unramified at almost all places and is de Rham at all places dividing $\ell$.
Thus it suffices to show that $H^1_g(k,\Q_\ell) = 0$ for any number field $k$.
Here $H^1_g(k,\Q_\ell)$ is the Bloch-Kato Selmer group \cite{bloch-kato} consisting of elements $x \in H^1(k,\Q_\ell)$ such that 
\begin{itemize}
\item $x_v \in H^1_f(k_v, \Q_\ell)$ for almost all $v$;
\item $x_v \in H^1_g(k_v, \Q_\ell)$ for all $v$ dividing $\ell$, 
\end{itemize}
where for any finite place $v$ of $k$, $x_v$ denotes the restriction of $x$ to $H^1(k_v,\Q_\ell)$.
By class field theory, we identify 
\[
 H^1(k_v,\Q_\ell) = \Hom_{\mathrm{cont}}(\Gal(\overline{k_v}/k_v), \Q_\ell)
 = \Hom_{\mathrm{cont}}(k_v^\times, \Q_\ell).
\]
Under this identification, we have $H^1_f(k_v,\Q_\ell) = H^1(k_v,\Q_\ell) = \Hom(k_v^\times/\mathfrak{o}_v^\times, \Q_\ell)$ if $v$ does not divide $\ell$, and $H^1_g(k_v,\Q_\ell) = \Hom(k_v^\times/\mathfrak{o}_v^\times, \Q_\ell)$ if $v$ divides $\ell$ by \cite[Example 3.9]{bloch-kato}.
Here $\mathfrak{o}_v$ is the ring of integers of $k_v$.
From this, we deduce that 
\[
 H^1_g(k,\Q_\ell) = \Hom(\A_k^\times / k^\times k_\infty^\times \widehat{\mathfrak{o}}^\times, \Q_\ell)
\]
with $k_\infty = k \otimes_\Q \R$ and $\widehat{\mathfrak{o}} = \prod_v \mathfrak{o}_v$.
But since $\A_k^\times / k^\times k_\infty^\times \widehat{\mathfrak{o}}^\times$ is finite, we have $H^1_g(k,\Q_\ell) = 0$.
\end{rem}

\begin{rem}
We remark that Kottwitz's conjecture for $\Sh_{\tilde{\mathrsfs{G}},\cK}$ should follow from the stable trace formula for Shimura varieties of abelian type established by Kisin-Shin-Zhu \cite{ksz}, but is conditional on the classification of automorphic representations on unitary similitude groups and the equality \cite[(9.2.2.1)]{ksz} of certain stable distributions.  This is explained in more detail in Remark \ref{rem:characterization-of-galrep} in the introduction. 
\end{rem}

In the next section, we explain how to deduce Proposition \ref{prop:galrep-char-ss} from Kottwitz's conjecture.

\subsection{Kottwitz's conjecture}
\label{subsec:Kottwitz}

Put $\Gamma_k = \Gal(\Qbar/k)$ for a number field $k$.
Then $\Sigma_\infty$ (regarded as the set of embeddings of $F$ in $\C$) admits a natural action of $\Gamma_\Q$ induced by the inclusion $\Qbar \hookrightarrow \C$.
We identify $\Sigma_\infty$ with $\Gamma_\Q/\Gamma_F$ so that the fixed embedding $F \hookrightarrow \Qbar$ corresponds to the trivial coset $\Gamma_F$.
Choose a set of representatives $\{ \sigma_1, \dots, \sigma_n \}$ for $\Gamma_\Q/\Gamma_F$ so that $\Sigma = \{\sigma_1 \Gamma_F, \dots, \sigma_d \Gamma_F\}$, where $n = [F:\Q]$ and $d = |\Sigma|$.
Define an action of $\Gamma_\Q$ on $\{1, \dots, n\}$ so that
\[
 \gamma \sigma_i \Gamma_F = \sigma_{\gamma(i)} \Gamma_F
\]
for $\gamma \in \Gamma_\Q$.
We denote by $F_\Sigma$ the fixed field of the subgroup
\[
 \{\sigma \in \Gamma_\Q \, | \, \sigma \Sigma = \Sigma \}.
\]

Recall that $G = \GU(\VV)$ with 
\[
 G_v \simeq
 \begin{cases}
  \GU(2,2) & \text{if $v \in \Sigma$;} \\
  \GU(4) & \text{if $v \in \Sigma_\infty \smallsetminus \Sigma$}
 \end{cases}
\]
and $G_0 = \Res_{F/\Q} G$.
(Note that $G_0 = \tilde{\mathrsfs{G}}$ with the notation of \S \ref{ss:unitary-quat-unitary}.)
Then we have
\[
 {}^L G = \hat{G} \rtimes \Gamma_F, \qquad
 \hat{G} = \GL_4(\C) \times \C^\times,
\]
where $\Gamma_E$ acts trivially on $\hat{G}$ and the non-trivial element in $\Gal(E/F)$ acts as the automorphism $\hat{\theta}$ defined by
\[
 \hat{\theta}(g,\nu) = (\theta_4(g), \nu \cdot \det g).
\]
Also, by \cite[\S 5]{borel-corvallis}, we have
\[
 {}^LG_0 = \hat{G}_0 \rtimes \Gamma_\Q, \qquad 
 \hat{G}_0 = (\hat{G})^n,
\]
where $\gamma \in \Gamma_\Q$ acts on $\hat{G}_0$ as the automorphism
\[
 (g_1, \dots, g_n) \mapsto (\gamma_1 \cdot g_{\gamma^{-1}(1)}, \dots, \gamma_n \cdot g_{\gamma^{-1}(n)})
\]
with 
\[
 \gamma_i = \sigma_i^{-1} \gamma \sigma_{\gamma^{-1}(i)} \in \Gamma_F.
\]

To describe the Galois representation on the cohomology of the Shimura variety $\Sh_{\tilde{\mathrsfs{G}},\cK}$, we need to introduce some representation of the $L$-group.
Following \cite[\S 5.1]{blasius-rogawski}, we recall its definition.
Let $\mu: \mathbb{G}_{m,\C} \rightarrow \mathbb{S}_\C \rightarrow G_{0,\C}$ be the cocharacter associated to the homomorphism $h: \Ss \rightarrow G_{0,\R}$ as in \S \ref{sec:the-hodge-structure}.
More explicitly, we have $\mu(z) = (\mu_v(z))_{v \in \Sigma_\infty}$ with
\[
  \mu_v(z) = 
 \begin{cases}
  \begin{pmatrix} z \1_2 & \\ & \1_2 \end{pmatrix} \times z & \text{if $v \in \Sigma$;} \\
  \1_4 \times 1 & \text{if $v \in \Sigma_\infty \smallsetminus \Sigma$.}
 \end{cases} 
\]
From this, we see that the reflex field of the Shimura datum $(G_0,X)$ is $F_\Sigma$.
We also identify $\mu$ with a character of the standard maximal torus of $\hat{G}_0$.
Let $r_0$ be the irreducible algebraic representation of $\hat{G}_0$ with extreme weight $-\mu$, which can be explicated as follows.
Let $\wedge^2 \C^4$ be the exterior square of the standard representation of $\GL_4(\C)$ and regard it as a representation of $\hat{G}$ by letting $\nu \in \C^\times$ act as the scalar $\nu$.
We denote by $r$ its contragredient on $\cV = (\wedge^2 \C^4)^*$.
Then $r_0$ is the representation of $\hat{G}_0$ on $\cV_0 = \cV^{\otimes d}$ given by 
\[
 r_0(g_1, \dots, g_n) = r(g_1) \otimes \dots \otimes r(g_d).
\]
On the other hand, since $(\wedge^2 \C^4)^* \otimes \det \simeq \wedge^2 \C^4$ as representations of $\GL_4(\C)$, there exists a unique automorphism $A$ of $\cV$ such that
\[
 r(\hat{\theta}(g)) \circ A = A \circ r(g)
\]
for all $g \in \hat{G}$ and such that $A$ fixes the highest weight vector (unique up to scalars) in $\cV$ with respect to the standard Borel subgroup of $\hat{G}$.
Then we can extend $r$ to ${}^L G$ by setting
\[
 r(1 \rtimes \sigma) = 
 \begin{cases}
  \id & \text{if $\sigma \in \Gamma_E$;} \\
  A & \text{otherwise}
 \end{cases}
\]
and hence $r_0$ to $\hat{G}_0 \rtimes \Gamma_{F_\Sigma}$ by setting
\[
 r_0(1 \rtimes \gamma)(x_1 \otimes \dots \otimes x_d)
 = r(1 \rtimes \gamma_1) x_{\gamma^{-1}(1)} \otimes \dots \otimes r(1 \rtimes \gamma_d) x_{\gamma^{-1}(d)}.
\]

We also need to introduce the expected classification of automorphic representations of $G(\A_F)$.
Let $L^2_{\disc}(G)$ be the discrete spectrum of the unitary representation of $G(\A_F)$ on the Hilbert space $L^2(A_G(F_\infty)^0 G(F) \backslash G(\A_F))$, where $A_G$ is the split component of the center of $G$ and $F_\infty = F\otimes_\Q \R$.
We say that a pair $(\psi',\chi)$ is an elliptic $A$-parameter for $G$ if 
\begin{itemize}
\item $\psi'$ is an elliptic $A$-parameter for $G'$;
\item $\chi$ is a character of $\A_E^\times/E^\times$ such that $\chi^\rho/\chi$ is equal to the central character of $\pi_{\psi'}$.
\end{itemize}
Then one expects the decomposition 
\[
 L^2_{\disc}(G) = \bigoplus_\psi L^2_\psi(G),
\]
where $\psi = (\psi',\chi)$ runs over elliptic $A$-parameters for $G$ and $L^2_\psi(G)$ is the near equivalence class of irreducible subrepresentations $\pi$ of $L^2_\disc(G)$ such that for almost all places $v$ of $F$, the base change of $\pi_v$ to $\GL_4(E_v)  \times E_v^\times$ is isomorphic to $\pi_{\psi',v} \boxtimes \chi_v$.

To compute the Galois representation, it is convenient to introduce the hypothetical Langlands group $\cL_k$ of a number field $k$ equipped with a surjective homomorphism $\pr: \cL_k \twoheadrightarrow \Gamma_k$.
Let $\psi$ be an elliptic $A$-parameter for $G$ and regard it as an $L$-homomorphism $\psi : \cL_F \times \SL_2(\C) \rightarrow {}^L G$.
Let $\phi_\psi : \cL_F \rightarrow {}^L G$ be the $L$-parameter associated to $\psi$, i.e., 
\[
 \phi_{\psi}(w) = \psi \! \left( w, \mat{|w|^{\frac{1}{2}}}{}{}{|w|^{-\frac{1}{2}}} \right).
\]
Then we have a representation $r(\psi) = (r \circ \phi_{\psi}) \otimes |\cdot|^{-2}$ of $\cL_F$ on $\cV$ equipped with a decomposition
\[
 \cV = \bigoplus_i \cV^i,
\]
where 
\[
 \cV^i = \left\{ v \in \cV \, \left| \, (r \circ \psi) \left( 1, \smat{t}{}{}{t^{-1}} \right) v = t^{-i} v \text{ for all } t \in \C^\times \right. \right\}.
\]
Similarly, if $\psi_0: \cL_\Q \times \SL_2(\C) \rightarrow {}^L G_0$ is the $A$-parameter induced by $\psi$, then we have a representation $r(\psi_0) = (r_0 \circ \phi_{\psi_0}) \otimes |\cdot|^{-2d}$ of $\cL_{F_\Sigma}$ on $\cV_0$ equipped with a decomposition
\[
 \cV_0 = \bigoplus_i \cV^i_0.
\]
More explicitly, we have
\[
 \psi_0(\gamma, h) = (g(\gamma_1, h), \dots, g(\gamma_d, h)) \rtimes \pr(\gamma),
\]
where we write $\psi(\sigma, h) = g(\sigma, h) \rtimes \pr(\sigma)$ and $\gamma_i \in \cL_F$ is defined similarly as above, and 
\[
 \cV_0^i = \bigoplus_{i=i_1+\dots+i_d} \cV^{i_1} \otimes \dots \otimes \cV^{i_d}.
\]
We write $r^i_\ell(\psi_0)$ for the $\ell$-adic representation of $\Gamma_{F_\Sigma}$ which should correspond to the representation of $\cL_{F_\Sigma}$ on $\cV_0^i$.
Finally, let $\pi^S$ be the irreducible unramified representation of $G(\A_F^S)$ associated to $\psi$, where $S$ is a sufficiently large finite set of rational primes.
Then it follows from Kottwitz's conjecture \cite[\S 10]{kot} that the $\ell$-adic representation of $\Gamma_{F_\Sigma}$ on the semisimplification of 
\[
 H^i(\Sh_{\tilde{\mathrsfs{G}},\cK}, \V_{\uk,\ell})[\pi^S]
\]
is isomorphic to a subrepresentation of $r^{i-4d}_\ell(\psi_0)^{\oplus m}$ for some integer $m$.

To deduce Proposition \ref{prop:galrep-char-ss} from Kottwitz's conjecture, we now suppose that $\psi = (\psi',\chi)$ with $\psi' = \pi_E \boxtimes \Sym^1$ as in \eqref{eq:U(2,2)-A-param} and $\chi = 1$.
It suffices to show that $r_\ell^{-2d}(\psi_0)(d)$ is trivial.
Let $\rho$ be the $2$-dimensional representation of $\cL_F$ (conjecturally) associated to $\pi$ and put $\rho_E = \rho|_{\cL_E}$.
(We can justify the formal computation by using the $\ell$-adic representation of $\Gamma_F$ associated to $\pi$, but we omit the details.)
Since $\pi_E$ has trivial central character, $\rho_E$ is self-dual.
Let $\Wc$ be the $4$-dimensional representation of $\cL_E$ induced by $\phi_\psi|_{\cL_E}$, so that $\Wc = \Wc^1 \oplus \Wc^{-1}$ with $\Wc^{\pm 1} =  \rho_E \otimes |\cdot|^{\mp \frac{1}{2}}$.
Then, noting that $\Wc$ is self-dual, we have
\[
 \Vc = (\wedge^2 \Wc)^* \otimes |\cdot|^{-2} = \wedge^2 \Wc \otimes |\cdot|^{-2} = \Vc^2 \oplus \Vc^0 \oplus \Vc^{-2}
\]
as representations of $\cL_F$, where
\begin{align*}
 \Vc^2 & = \wedge^2 \Wc^1 \otimes |\cdot|^{-2} = |\cdot|^{-3}, \\
 \Vc^0 & = \Wc^1 \otimes \Wc^{-1} \otimes |\cdot|^{-2} = \mathrm{As}^+(\rho_E) \otimes |\cdot|^{-2} = |\cdot|^{-2} \oplus \left( \Sym^2(\rho) \otimes |\cdot|^{-2} \right), \\
 \Vc^{-2} & = \wedge^2 \Wc^{-1} \otimes |\cdot|^{-2} = |\cdot|^{-1}.
\end{align*}
(Note that in the context of \S \ref{sss:gal-rep-intro} when $F=\Q$ and $d=1$, $\Vc^i$ corresponds to the $\ell$-adic representation on $H^{i+4}$.)
Hence 
\[
 \Vc_0^{-2d} \otimes |\cdot|^d = (\Vc^{-2})^{\otimes d} \otimes |\cdot|^d
\]
is the trivial representation of $\cL_{F_\Sigma}$ as desired.

\section{Hodge-Tate classes and the proof of the main theorem}
\label{sec:proof-of-main-thm}

\subsection{Hodge-Tate classes}

We make the following definition. Recall that the category $\cM_k^L$ that is used below was defined in \S \ref{sec:realizations}. 

\begin{defn}
Let $(V,V_\ell, i_\ell)$ denote a (pure) object in $\cM_k^L$. A class $c\in V$ is said to be a Hodge-Tate class (HT in brief) if 
$c$ is a Hodge class in $V$ and $i_\ell (c)$ is a Tate class in $V_\ell$ for all $\ell$. Thus $c$ is required to lie in $V^{0,0}$ and $i_\ell(c)$ is 
required to be $G_k$-invariant for all $\ell$. 
\end{defn}

We let $\HT (V)$ denote the $L$-subspace of HT-classes in $V$ and $\HT(V)_\C$ its $\C$-span. (This notation 
is slightly ambiguous since it does not keep track of the isomorphisms $i_\ell$; this will typically not cause a problem since the 
maps $i_\ell$ will be understood from the context.)
Clearly, any morphism from $(V,V_\ell, i_\ell)$ to $(V', V'_\ell, i'_\ell)$ induces maps $\HT(V) \rightarrow \HT(V')$ and $\HT(V)_\C \rightarrow \HT(V')_\C$. 

If $L\subset L'\subset \C$, the natural functor $\cM_k^L \rightarrow \cM_k^{L'}$ carries 
$\HT(V)$ into $\HT(V_{L'})$, where we write $V_{L'}$ for $V\otimes_{L} L'$. 

\subsection{The construction of a cohomology class}

While some
aspects of the construction have been described previously at various
points in the paper, we now collect in a single place the entire
construction, which also makes clear the dependence on various
auxiliary choices.

\subsubsection {Spaces and groups} 
\label{constr:first-step} 
Choose a CM quadratic extension $E/F$ that embeds in both $B_1$ and $B_2$. (Later we will be more careful about the choice of $E$.)
Fix embeddings $E\hookrightarrow B_1$ and $E\hookrightarrow B_2$. 
Let $\VV_1=B_1$ and $\VV_2=B_2$, viewed as hermitian $E$-spaces with the
canonical hermitian form, as in
\cite[\S 2.2]{periods1}, and let $\VV=\VV_1 \oplus \VV_2$ be their
direct sum, viewed as a four-dimensional hermitian $E$-space. To the
space $\VV$, we can associate the skew-hermitian $B$-space $\tilde{V}$
as in
\S \ref{sec:constr-gl-ex-isom}. Further, as in
\S \ref{ss:sum-of-2-dim}, the decomposition $\VV=\VV_1 \oplus \VV_2$
of $E$-hermitian spaces induces a decomposition 
 $\tilde{V} = V \oplus V_0$ as the sum of
skew-hermitian $B$-spaces of dimensions two and one respectively. We
then get a collection of groups and maps between them as described in  \S \ref{sec:unitary-quat-unitary}, and the reader is referred especially to the diagrams of groups in
that section, which will be used often in the construction below.

\subsubsection {Shimura data}
\label{ss:shimura-data-for-all-groups}
Fix isomorphisms
\[
B_i \otimes_{F,\sigma_v} \R \simeq \M_2(\R) \ \  \text{for }v\in \Sigma; \quad
B_i \otimes_{F,\sigma_v} \R \simeq \H \ \  \text{for }v\in
\Sigma_\infty\smallsetminus \Sigma. 
\]
For concreteness, we can fix isomorphisms as follows:
\[
\i \mapsto \begin{pmatrix} 0 & 1 \\ \sigma_v(u) & 0 \end{pmatrix},
                                                  \quad \j_i
                                                  \mapsto \begin{pmatrix}
                                                    \sqrt{\sigma_v(J_i)}
                                                    & 0 \\ 0 &
                                                    -\sqrt{\sigma_v(J_i)}\end{pmatrix},
                                                               \quad
                                                               v\in
                                                               \Sigma_\infty,
\]
where for $v\in \Sigma_\infty\smallsetminus \Sigma$, the notation
$\sqrt{\sigma_v(J_i)}$ stands for $\sqrt{|\sigma_v(J_i)|}i$. We will
use these isomorphisms to identify 
\[
G_{B_i} (\R) \simeq \prod_{v\in\Sigma} \GL_2(\R) \times
\prod_{v\in \Sigma_\infty\smallsetminus \Sigma} \H^\times.
\]
Define Shimura data associated to $B_1$ and $B_2$ as in \S
\ref{sec:qsv-mt}. Namely, take the $G_{B_i}(\R)$-conjugacy class of
the homomorphisms
\[
h_i: \Ss \rightarrow G_{B_i} (\R), \quad (h_i(z))_v= \iota (z)
\text{ for }v\in \Sigma; \quad (h_i(z))_v =1 \text{ for }v\in \Sigma_\infty
\smallsetminus \Sigma,
\]
where $\iota: \C^\times \rightarrow \GL_2(\R)$ is defined as in \eqref{eqn:iota}. 
Note that for $v\in\Sigma$, $h_{i,v}$ is $(B_i\otimes_{F,\sigma_v}\R)^\times$-conjugate to the
embedding $\iota'_v:\C^\times \simeq (E\otimes_{F,\sigma_v} \R)^\times \subset
(B\otimes_{F,\sigma_v} \R)^\times $, where the first
of these isomorphisms is induced from the map $E\otimes_{F,\sigma_v}
\R \simeq \C$ sending $\i \mapsto \sqrt{|\sigma_v(u)|}i$. We denote
this latter isomorphism by $\sigma_v$ as well. 

Let $X_{B_1}$ and $X_{B_2}$ denote the associated Shimura
varieties. The Shimura data for the other groups are defined as
follows. For $(B_i^\times \times
E^\times)/F^\times \simeq \GU_E(\VV_i)$ with $(\beta,\alpha)\mapsto (x \mapsto \beta x \alpha^{-1})$, 
\[
(h_i(z))_v= (\iota (z),1)
\text{ for }v\in \Sigma; \quad (h_i(z))_v =1 \text{ for }v\in \Sigma_\infty
\smallsetminus \Sigma.
\]
In the basis $(1_{B_i},\j_i)$ of $\VV_i$, the map $(B_i^\times \times
E^\times)/F^\times \rightarrow \GU_E(\VV_i)$ is given by
\[
(\alpha + \beta \j_i , \delta) \mapsto \delta^{-1} \begin{pmatrix} \alpha & J_i \beta
  \\ \bar{\beta} & \bar{\alpha} \end{pmatrix}  \in \GL_2(E), \quad
\alpha, \beta, \delta \in E. 
\]
In the basis $(\w_{i1}, \w_{i2})= (1_{B_i},\frac{1}{\sqrt{|\sigma_v(J_i)|}}\j_i)$ of
$\VV_{i,v} :=\VV_i \otimes_{E,\sigma_v}
  \C$, the hermitian form is diagonal, given by the matrix
  $\begin{pmatrix} 1 & \\ & \pm 1 \end{pmatrix}$ with the sign
being $- 1$ (resp. $+1$) if $v \in \Sigma$ (resp. $v \in \Sigma_\infty \smallsetminus \Sigma$).

Let $v\in \Sigma$. The map $(B_i^\times \times
E^\times)/F^\times \rightarrow \GU_E(\VV_i)(\R)_v$ is given by 
\[
(\alpha + \beta \j_i , \delta) \mapsto \sigma_v(\delta)^{-1} \begin{pmatrix} \sigma_v(\alpha) &\sqrt{\sigma_v( J_i)} \sigma_v(\beta)
  \\ \sqrt{\sigma_v( J_i)} \sigma_v(\bar{\beta}) & \sigma_v(\bar{\alpha}) \end{pmatrix}  \in \GU(1,1), \quad
\alpha, \beta, \delta \in E. 
\]
In particular, the map $h_{i,v}: \C^\times \rightarrow \GU(1,1)$ is
$\GU(1,1)$-conjugate to the map
 $z\mapsto \begin{pmatrix} z & 0 \\ 0 & \bar{z} \end{pmatrix}$. 

For $\tilde{\mathcal{G}}=\G (\U_E(\VV_1) \times \U_E(\VV_2))$, let $h$ be defined
by $h(z) = (h_1(z), h_2(z))$. For $\tilde{\mathrsfs{G}}=\GU_E(\VV)$, let $h$ be
defined by composing the map $h$ for $\tilde{\mathcal{G}}$ with the inclusion
$i:\tilde{\mathcal{G} }\hookrightarrow \tilde{\mathrsfs{G}}$. In the
basis 
\[
(\w_{11}, \w_{21}, \w_{12}, \w_{22})= (1_{B_1}, 1_{B_2}, \frac{1}{\sqrt{\sigma_v(J_1)}}\j_1,
\frac{1}{\sqrt{\sigma_v(J_2)}}\j_2)
\]
of $\VV_v= \VV \otimes_{E,\sigma_v} \C$, the hermitian form is $\mathrm{diag}
(1,1,\pm 1,\pm 1)$ with the sign
being $- 1$ (resp. $+1$) if $v \in \Sigma$ (resp. $v \in 
\Sigma_\infty \smallsetminus \Sigma$). For $v\in \Sigma$, $h_v$ is $\tilde{\mathrsfs{G}}(\R)_v$-conjugate to the map 
\[
z \mapsto \begin{pmatrix} z \1_2 & \\ & \bar{z} \1_2 \end{pmatrix},
\] 
while for $v\in \Sigma_\infty\smallsetminus \Sigma$, $h_v$ is
trivial. 

For $\tilde{\mathcal{G}}_B=\G (\U_B(V) \times \U_B (V_0))=\G
((B_1^\times \times B_2^\times)/F^\times \times E^\times)$, we take 
\[
h(z)_v =((\iota(z),\iota(z)),z\bar{z})
\]
for $v \in \Sigma$ and $h(z)_v =1$ otherwise. 
For $\tilde{\mathrsfs{G}}_B$, we take $h$ to be the map obtained by
  composing $h$ for $\tilde{\mathcal{G}}_B$ with the inclusion
  $\tilde{\mathcal{G}}_B \hookrightarrow \tilde{\mathrsfs{G}}_B$. 
Thus for $v\in \Sigma$, the action of $h(z)_v$ on 
$\tilde{V}_v= V_v \oplus V_{0,v} = (\VV_{1,v} \otimes_{\C} \VV_{2,v}) \oplus (\wedge^2_{\C}
\VV_{1,v} \oplus \wedge^2_{\C} \VV_{2,v})$ is given by $\iota(z) \otimes \iota(z)
\oplus z\bar{z}$.  
To compute the $\tilde{\mathrsfs{G}}_B(\R)_v$-conjugacy class, we
may replace $\iota(z)$ by $\iota'_v (z)$. 
The action of $\iota'_v(z)$ on $\VV_{i,v}$ is
diagonalizable, given by $\begin{pmatrix} z & \\ & \bar{z}\end{pmatrix}$ in the
  basis $(\w_{i1},\w_{i2})$, i.e.,
\[
\iota'_v(z) \w_{i1} =\w_{i1} z, \quad \iota'_v(z)\w_{i2} =\w_{i2} \bar{z}.
\]
Now $(\w_{11} \otimes \w_{21}, \w_{12} \otimes \w_{21}, \w_{11}
\wedge \w_{12})$ is a $B\otimes_{F,\sigma_v} \R$-basis of
$\tilde{V}_v$ and in this basis, the action of $h(z)_v$ is diagonal,
given by 
$\mathrm{diag}(z^2, z\bar{z}, z \bar{z})$. 
From this description, it is clear that under the canonical
isomorphism
\[
\tilde{\mathrsfs{G}}/E^\times =\mathrsfs{G} \simeq \mathrsfs{G}_B = \tilde{\mathrsfs{G}}_B/F^\times,
\]
the chosen Shimura data are identified. 

\subsubsection {Local systems}
We consider a local system $\tilde{\V}_\rho$ on $\Sh_{\tilde{\mathrsfs{G}}_B}$  associated with a finite dimensional representation $\rho$ of $\tilde{\mathrsfs{G}}_B$. 
To construct this local system, we first fix isomorphisms $B\otimes_F F_v \simeq \M_2 (\R)$ for all infinite places $v$ of $F$. 
Then to each infinite place $v$, as in \S \ref{sec:local-exc} (and
\cite[\S C.2]{periods1}), we can associate orthogonal spaces $\tilde{V}^\dagger_v = V^\dagger_v \oplus V^\dagger_{0,v}$ such that
\[
\GU_{B_v}(\tilde{V}_v)^0 \simeq \GSO (\tilde{V}^\dagger_v), \quad \GU_{B_v}(V_v)^0 \simeq \GSO (V^\dagger_v), \quad \GU_{B_v}(V_{0,v})^0 \simeq \GSO (V^\dagger_{0,v}).
\] 
Recall that
\[
\tilde{\mathrsfs{G}}_B (\R) \simeq \prod_{v\in \Sigma} \GSO(4,2) \times \prod_{v\in\Sigma_\infty \smallsetminus \Sigma} \GSO(0,6),
\]
where for $p+q$ even, 
\[
 \GSO(p,q) = \{ g \in \GL_{p+q}(\R) \, | \, {}^t g I_{p,q} g = \nu(g) \cdot I_{p,q}, \, \det g = \nu(g)^{\frac{p+q}{2}} \}
\]
with 
\[
 I_{p,q} = 
 \begin{pmatrix}
  \1_p & \\
  & -\1_q
 \end{pmatrix}.
\]
The local system is then associated to the algebraic representation $\tilde{\V}_{\rho,\C}:=\otimes_v \Hs^{k_v-2} (\tilde{V}_v^\dagger)$ of $\tilde{\mathrsfs{G}}_B (\R)$,
where for $\ell$ even, we have 
\[
\Hs^{\ell}:= \ker(\Sym^\ell \rightarrow \Sym^{\ell-2}) \otimes \nu(\cdot)^{-\ell/2}.
\]
Note that by \S \ref{ss:fin-dim-rep-SO} (for $\ell=k-2$) the restriction of this representation to $\SO(4,2)$ is irreducible with highest weight $(k-2,0,0)$. Via the 
isomorphism given by Corollary \ref{c:isom-weights}, this corresponds (for $\ell=k-2$) to the irreducible representation of $\U(2,2)$ with highest weight 
\[
\left( \tfrac{k}{2}-1, \tfrac{k}{2}-1, -\tfrac{k}{2}+1,-\tfrac{k}{2}+1\right).
\]
A similar statement holds for the places $v\in \Sigma_\infty \smallsetminus \Sigma$, and for the pair $(\SO(0,6), \U(4,0))$. 
Thus the local system $\tilde{\V}_{\rho,\C}$ is isomorphic to the local system $\V_{\uk,\C}$ considered in \S \ref{sec:the-hodge-structure}. 

\begin{prop}
The $\C$-vector space $\tilde{\V}_{\rho,\C}$ contains a $\Q(\uk)$-subspace $\tilde{\V}_\rho$ that is stable by the action of $\tilde{\mathrsfs{G}}_B(\Q)$ and such that $\tilde{\V}_\rho \otimes_{\Q(\uk)} \C =\tilde{\V}_{\rho,\C}$.  Moreover, such a subspace is unique up to homothety. 
\end{prop}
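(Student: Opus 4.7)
The plan is to establish both existence and uniqueness by Galois descent, paralleling the rationality statement of Waldspurger (\cite{waldspurger-rationality}, Prop.~I.3 and \S II.2) cited earlier for $\tau_{\uk}$. First I would regard $\tilde{\mathscr{G}}_B$ as a $\Q$-group via the implicit restriction of scalars from $F$, and let $F^c \subset \Qbar \subset \C$ be the Galois closure of $F$. Base change to $F^c$ splits the group as
\[
(\Res_{F/\Q} \tilde{\mathscr{G}}_B) \otimes_\Q F^c \;\cong\; \prod_{v \in \Sigma_\infty} \tilde{\mathscr{G}}_B \otimes_{F,\sigma_v} F^c,
\]
each factor being a form of $\GSO_6$; under this decomposition, $\tilde{\V}_{\rho,\C} = \bigotimes_v \Hs^{k_v-2}(\tilde{V}_v^\dagger)$ is the external tensor product of the indicated irreducible algebraic representations, and is absolutely irreducible.

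For existence, I would first verify that for any $\sigma \in \Gal(\Qbar/\Q)$, acting on $\Sigma_\infty$ through its quotient $\Gal(F^c/\Q)$, the $\sigma$-twist of $\tilde{\V}_{\rho,\C}$ is isomorphic to $\bigotimes_v \Hs^{k_{\sigma^{-1}v}-2}$; this uses that $\Hs^\ell$ depends over $\C$ only on $\ell$ (the two signatures occurring for $\tilde{V}_v^\dagger$ yielding isomorphic complex groups), so that $\sigma$ merely permutes tensor factors. Hence the isomorphism class is $\sigma$-invariant exactly when $\sigma$ stabilizes the function $v \mapsto k_v$, and the stabilizer's fixed field is $\Q(\uk)$ by definition. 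To upgrade this abstract Galois invariance to a concrete $\Q(\uk)$-rational model, I would realize $\Hs^{k_v-2}(\tilde{V}_v^\dagger)$ as the kernel of the natural contraction $\Sym^{k_v-2}(\tilde{V}_v^\dagger) \to \Sym^{k_v-4}(\tilde{V}_v^\dagger)$. The ambient symmetric powers carry $F$-rational structures coming from the natural rational structure on $\tilde{V}_v^\dagger$, which package to an $F^c$-rational structure on the tensor product across $\Sigma_\infty$; averaging by $\Gal(F^c/\Q(\uk))$ (and using that the contraction is Galois-equivariant) then produces the desired $\Q(\uk)$-structure $\tilde{\V}_\rho$.

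For uniqueness up to homothety, I would invoke Zariski density of $\tilde{\mathscr{G}}_B(\Q)$ in $\tilde{\mathscr{G}}_{B,\C}$. Any $\tilde{\mathscr{G}}_B(\Q)$-stable $\Q(\uk)$-subspace $W \subset \tilde{\V}_{\rho,\C}$ with $W \otimes_{\Q(\uk)} \C = \tilde{\V}_{\rho,\C}$ is automatically stable under the full complex group, hence is a $\Q(\uk)$-form of the absolutely irreducible complex representation. Flat base change combined with Schur's lemma then yields $\End_{\tilde{\mathscr{G}}_B(\Q)}(W) = \Q(\uk)$, and the space of $\tilde{\mathscr{G}}_B(\Q)$-equivariant $\Q(\uk)$-linear maps between any two such $W, W'$ is one-dimensional over $\Q(\uk)$. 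Extending such an isomorphism $\C$-linearly yields a $\tilde{\mathscr{G}}_{B,\C}$-equivariant endomorphism of $\tilde{\V}_{\rho,\C}$, which Schur forces to be scalar; thus $W' = \lambda W$ for some $\lambda \in \C^\times$.

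The main obstacle is the Schur-index issue in the descent step: mere Galois invariance of the isomorphism class does not automatically produce a model over $\Q(\uk)$ — a priori one could be forced into some finite extension. The explicit $\Sym^{k_v-2}$-model circumvents this obstruction because the contraction and its kernel are visibly compatible with the Galois action on embeddings, giving an honest $F^c$-form that descends by averaging. This is precisely the device employed by Waldspurger in the analogous rationality statement for $\tau_{\uk}$, and it adapts cleanly to our $\GSO_6$-type setting.
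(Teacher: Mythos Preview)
Your overall descent framework and the uniqueness argument via Schur are fine and match the paper's reasoning. The gap is in your explicit model for the existence step. You assert that $\tilde{V}_v^\dagger$ carries a natural $\sigma_v(F)$-rational structure, but this is not so: the construction of $\tilde{V}_v^\dagger = \tilde{V}_v \cdot e_v$ goes through Morita theory and depends on an idempotent $e_v \in B_v \cong \M_2(F_v)$, which exists only because $B_v$ happens to split at the archimedean places. Globally $B$ is a division algebra, so no such idempotent lies in $B$, and the $6$-dimensional standard representation of the $F$-group $\GU_B(\tilde{V})^0$ is not defined over $F$ (equivalently, it has nontrivial Schur index). Consequently neither $\Sym^{k_v-2}(\tilde{V}_v^\dagger)$ nor the contraction map is visibly $\sigma_v(F)$-rational, and your averaging step has nothing to average.

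The paper bypasses this by invoking Tits' criterion (\cite{tits-crelle}, Th\'eor\`eme~3.3): an irreducible representation of a reductive $F$-group is defined over $F$ as soon as its highest weight is $\Gal(\Qbar/F)$-invariant and lies in the root lattice. Here the highest weight of $\Hs^{k_v-2}(\tilde{V}_v^\dagger)$ is $(k_v-2,0,0)$, which is Galois-fixed and has even coordinate sum (since $k_v$ is even), hence lies in the root lattice of type $D_3$. Thus each factor is defined over $\sigma_v(F)$ without any explicit model, and tensor induction over $\Sigma_\infty$ produces the $\Q(\uk)$-structure. This is the step where your approach and the paper's genuinely diverge: Tits' theorem absorbs the Schur-index obstruction abstractly, whereas your symmetric-power construction would need an additional argument (e.g.\ working inside $\Sym^{k_v-2}$ of the $12$-dimensional $F$-rational space $\tilde{V}$ and extracting the correct summand rationally) to reach the same conclusion.
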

\begin{proof}
Fix an infinite place $v$ of $F$. Then the representation $\Hs^{k_v-2} (\tilde{V}_v^\dagger)$ is defined over $\sigma_v(F)$ by 
\cite{tits-crelle}, Th\'{e}or\`{e}me 3.3, since the highest weight is both invariant by $\Gal(\Qbar/\sigma_v(F))$ and lies in the root lattice. 
Taking the tensor induction over all places $v\in \Sigma_\infty$ yields a $\Q(\uk)$-structure on $\tilde{\V}_{\rho,\C}$.
The uniqueness up to homothety follows from the irreducibility of $\tilde{\V}_{\rho,\C}$.
\end{proof}

\subsubsection {Auxiliary modular form} Let $\tilde{\tau}$ be an irreducible  automorphic representation of $B^\times (\A) $ corresponding to a 
holomorphic Hilbert modular form of weights $(\uk + \1,r)$ (with some
odd integer $r$) and central character $\xi_E$. 
Let $B^\times(\A)^+$ denote the subgroup of $B^\times (\A)$ consisting of elements with positive reduced norm at every infinite place. 
The restriction of $\tilde{\tau}$
to $B^\times (\A)^+$ splits up as a sum of $2^{[F:\Q]}$ representations, characterized by the local component at 
the $[F:\Q]$ infinite places being either holomorphic or anti-holomorphic discrete series. We let $\tau$ be the 
irreducible summand whose local component is {\it anti-holomorphic} for
$v\in \Sigma$ and {\it holomorphic} for $v\not \in \Sigma$, twisted by
a (half-integer power of) the norm character to make it {\it
  unitary}.

\subsubsection {Theta lift to $\tilde{\mathrsfs{G}}_B$} Let $\theta_{\tilde{\varphi}} (\phi)$ denote the element in $\mathrsfs{A}(\tilde{\mathrsfs{G}}_B) \otimes \wedge^{2d}  \tilde{\fp}^*  \otimes \tilde{\V}_{\rho,\C}$ 
constructed in \S \ref{ss:form-construction}, with an element $\phi$
in the space $\tau$ and a Schwartz form $\tilde{\varphi}$. (Note that
in that section, the group $\tilde{\mathrsfs{G}}_B$ is simply denoted by
$\tilde{G}$. Then $\theta_{\tilde{\varphi}} (\phi)$ 
 corresponds to a class 
\[
\xi_\tau \in H^{2d} (\Sh_{\tilde{\mathrsfs{G}}_B}, \tilde{\V}_{\rho,\C})
\]
via the isomorphism 
\[
H^{2d} (\Sh_{\tilde{\mathrsfs{G}}_B}, \tilde{\V}_{\rho,\C}) \simeq H^{2d} (\fg, K ; \mathrsfs{A}(\tilde{\mathrsfs{G}}_B) \otimes  \tilde{\V}_{\rho,\C}).
\]

\subsubsection {Pull-back to $\tilde{\cG}_B$}
 Pull back
  $\xi_\tau$ to a class $ i^*  \xi_\tau$ in $H^{2d}(\Sh_{\tilde{\cG}_B}, \tilde{\V}_{\rho,\C})$. 
 Decompose $\tilde{\V}_{\rho,\C}$ into a sum of irreducibles (as a representation of $\tilde{\cG}_B(\R)$) and project to the 
irreducible component $\V_{\rho,\C}:=\otimes_v \Hs^{k_v-2} (V_v)$, as in \eqref{eq:proj-Hs-ell}. Thus $ i^*  \xi_\tau$ can be viewed as an element of $H^{2d}(\Sh_{\tilde{\cG}_B}, \V_{\rho,\C})$. Note that the $\Q(\uk)$-rational structure on $\tilde{\V}_{\rho,\C}$ can be chosen such that 
the projection map carries it into the $\Q(\uk)$-rational structure on $\V_{\rho,\C}$.

\subsubsection {Auxiliary character} For a finite order Hecke character $\eta$ of $T_1(\A)$ (see \S \ref{sec:components}), we take the class $c_\eta \in H^0 (\Sh_{\tilde{\cG}_B}, \Q(\eta))$ and cup it with $ i^*  \xi_\tau$, to get
\[
\tilde{\xi}_{\tau,\eta} :=i^*\xi_\tau \cup c_\eta \in H^{2d}(\Sh_{\tilde{\cG}_B}, \V_{\rho,\C}).
\]

\subsubsection {Push-forward to $\Sh_{G}$ and $\cK$-invariant projection} Push forward the class $\tilde{\xi}_{\tau,\eta}$ to $\Sh_{G}$.
Pick an open compact $\cK$ of $Z(\A_f)\backslash G(\A_f)$ and take the $\cK$-invariant projection $\tilde{\xi}_{\tau,\eta,\cK}$. 

\subsubsection {Pull-back to $\Sh_{B_1} \times \Sh_{B_2}$}  Take an open compact subgroup $\cK_1 \times \cK_2 \subset B_1(\A_f) \times B_2 (\A_f)$ whose image under the natural map to $Z(\A_f)\backslash G(\A_f)$ is contained in $\cK$. Then pull back to $\Sh_{B_1,\cK_1} \times \Sh_{B_2,\cK_2}$ to get the class
\[
\xi_{\tau,\eta} := j^* p_{1,*} \tilde{\xi}_{\tau,\eta,\cK} \in H^{2d} (\Sh_{B_1,\cK_1} \times \Sh_{B_2,\cK_2}, \V_{\rho,\C}).
\]

\subsubsection {Project to $[\pi_1 , \pi_2] $-component} On $\Sh_{B_1} \times \Sh_{B_2}$, we have
\[
\V_{\rho} \simeq \V_{\uk} \boxtimes \V_{\uk}.
\]
Thus 
\[
H^{2d} (X_{B_1,\cK_1} \times X_{B_2,\cK_2}, \V_{\rho,\C}) = \bigoplus_{\pit_1, \pit_2} \cH_{\cK_1,\cK_2}^{2d} [\pit_1,\pit_2]
\]
where
\[
\cH_{\cK_1,\cK_2}^{2d} [\pit_1,\pit_2]=  ( \pit_{1,f}^{\cK_1}\otimes  \pit_{2,f}^{\cK_2} ) \otimes H^{2d} (X_{B_1,\cK_1} \times X_{B_2,\cK_2} , \V_{\uk,\C} \boxtimes \V_{\uk,\C} )_{\pit_1 \boxtimes \pit_2}
\]
and $\pit_1, \pit_2$ range over automorphic representations of $B_1^\times (\A)$ and $B_2^\times(\A)$ such that $\pit_1 \boxtimes \pit_2$ contributes to the cohomology of the local system $V_{\rho,\C}$. Then
\[
\epsilon_\pi (\xi_{\tau,\eta}) \in \cH_{\cK_1,\cK_2}^{2d} [\pi_1,\pi_2]
\]
is defined to be the projection to the $[\pi_1 , \pi_2]$-component. Note that 
\[
H^{2d} (X_{B_1,\cK_1} \times X_{B_2,\cK_2} , \V_{\uk,\C} \boxtimes \V_{\uk,\C} )_{\pi_1 \boxtimes \pi_2}
 = H^d (X_{B_1,\cK_1}, \V_{\uk,\C})_{\pi_1} \otimes H^d (X_{B_2,\cK_2}, \V_{\uk,\C})_{\pi_2}.
\]

\subsubsection {Contraction with $v_1 \otimes v_2$} \label{constr:last-step} Though we are assuming that $\pi^\vee \simeq \pi$, below we distinguish between them for purposes of clarity.
Pick non-zero elements $v_1 \in (\pi_1^{\vee})^{f,\cK_1}$,  $v_2 \in (\pi_2^\vee)^{f,\cK_2}$ such that $v_1 \otimes v_2$ is $\cK$-invariant. Then contracting $v_1 \otimes v_2$ with $\epsilon_\pi(\xi_\eta)$ gives an 
element
\[
\xi := \langle \epsilon_\pi(\xi_{\tau,\eta}), v_1 \otimes v_2 \rangle \in H^d (\Sh_{B_1} , \V_{\uk,\C})_{\pi_1}  \otimes H^d (\Sh_{B_2} , \V_{\uk,\C})_{\pi_2}.
\]

\subsection{The construction of a Hodge-Tate class}

Note that $\xi$ induces a map (for the moment of $\C$-vector spaces!) 
\[
I_\xi: H^d (\Sh_{B_1} , \V_{\uk,\C})_{\pi_1^\vee} \simeq  H^d (\Sh_{B_1} , \V_{\uk,\C})_{\pi_1}^\vee \xrightarrow{{} \cdot \xi} H^d (\Sh_{B_2} , \V_{\uk,\C})_{\pi_2}.
\]
Note also that $\xi$ depends on the choices of the following data:
\[
\Upsilon: = (E,\tilde{\tau}, \phi, \tilde{\varphi}, \eta, \cK, \cK_1, \cK_2, v_1 ,v_2 ).
\]

\begin{prop}
There exists a choice of data $\Upsilon$ such that $I_\xi$ is an isomorphism of $\C$-vector spaces:
\[
 H^d (\Sh_{B_1} , \V_{\uk,\C})_{\pi_1^\vee} \simeq   H^d (\Sh_{B_2} , \V_{\uk,\C})_{\pi_2}.
\]
\end{prop}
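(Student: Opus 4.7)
The plan is to combine the non-vanishing of Proposition \ref{p:form-nonvanishing} with a compatible Hodge-type decomposition of the source and target of $I_\xi$ into equi-dimensional summands, matching the matrix coefficients of $I_\xi$ with the explicit integrals $(\Xi_\cK,\fff^\epsilon)$ computed there. First I would invoke Matsushima's formula to obtain
\[
 H^d(\Sh_{B_i,\cK_i},\V_{\uk,\C})_{\pi_i}\simeq \bigotimes_{v\in\Sigma} H^1(\fg_v,K_v;\pi_{B_i,v}\otimes V_{v,k_v})\otimes \pi_{B_i,f}^{\cK_i}.
\]
At each $v\in\Sigma$ the $(\fg_v,K_v)$-cohomology is two-dimensional, spanned by a holomorphic and an antiholomorphic line; tagging these by $\epsilon_v=\pm$ and taking tensor products yields a sign decomposition
\[
 H^d(\Sh_{B_i,\cK_i},\V_{\uk,\C})_{\pi_i}=\bigoplus_{\epsilon\in\{\pm\}^\Sigma} H^d_{i,\epsilon}.
\]
Refining $\cK_1,\cK_2$ and choosing $v_1,v_2$ to cut out a Hecke eigenline in $\pi_{B_i,f}^{\cK_i}$ reduces each $H^d_{i,\epsilon}$ to a one-dimensional $\C$-space, say spanned by $x_\epsilon$ for $i=1$ and $y_\epsilon$ for $i=2$. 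Poincar\'e duality identifies $H^d(\Sh_{B_1})_{\pi_1^\vee}\simeq H^d(\Sh_{B_1})_{\pi_1}^\vee$ compatibly with this decomposition.

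Next I would identify the matrix coefficients of $I_\xi$ relative to these bases. By definition, $\langle I_\xi(x_\epsilon^\vee), y_{\epsilon'}\rangle$ is a cup-product integral on $\Sh_{B_1}\times\Sh_{B_2}$ against $\xi$, which by unwinding steps \ref{constr:first-step}--\ref{constr:last-step} pulls back to a cup-product integral on $\Sh_{\tilde{\cG}_B}$ against $i^*\theta_{\tilde{\varphi}}(\phi)\cdot c_\eta$. Using the seesaw identity of Lemma \ref{l:vector/scalar-valued} together with the explicit restriction--contraction formulas for Kudla--Millson Schwartz forms (Propositions \ref{p:schwartz-rc1} and \ref{p:schwartz-rc2}), this entry reduces, up to an explicit nonzero constant depending on normalizations, to the integral $(\Xi_\cK,\fff^\epsilon)$ of Proposition \ref{p:form-nonvanishing} whenever $\epsilon'$ matches the sign pattern dictated by the chosen harmonic representatives, and vanishes otherwise for Hodge-theoretic reasons (the Hodge type of $\xi$ is forced by Corollary \ref{cor:hodge-is-purely-of-type-(d,d)}, and the Hodge decomposition on each archimedean factor of $\Sh_{B_1}\times\Sh_{B_2}$ rules out incompatible $(p,q)$-pairings). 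Choosing the CM field $E$ so that a finite place $v_0$ satisfying conditions (E1)--(E3) of Proposition \ref{p:form-nonvanishing} exists, then selecting $\tilde{\varphi},\tau,\phi,\eta,\cK$ as supplied by that proposition to ensure $(\Xi_\cK,\fff^\epsilon)\ne 0$ for all $\epsilon$, and finally picking $\cK_1,\cK_2,v_1,v_2$ to collapse each $H^d_{i,\epsilon}$ to a line, I would conclude that $I_\xi$ is represented by a diagonal matrix (relative to the two bases) with every diagonal entry nonzero, hence an isomorphism.

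The hard part will be the identification in the second step: matching the cohomological pairing $\langle I_\xi(x_\epsilon^\vee),y_\epsilon\rangle$ with the explicit integral $(\Xi_\cK,\fff^\epsilon)$. This requires careful bookkeeping of the Kudla--Millson form under pullback $i^*$, push-forward $\pr_*$, $\cK$-averaging, pullback $j^*$ to $\Sh_{B_1}\times\Sh_{B_2}$, and contraction with $v_1\otimes v_2$, while simultaneously tracking the decomposition of the archimedean cohomological component along the basis $\{\bo^\epsilon\otimes\vvv^\epsilon\}$ of $\wedge^{2d}\fp^*\otimes\Hs^{\ul}V$, and accounting for the character $\eta$ and the local exceptional isomorphism of \S\ref{sec:local-exc}. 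Once this bookkeeping is in place, the key analytic input--the seesaw reduction to a triple-product integral and its non-vanishing--is already supplied by Proposition \ref{p:form-nonvanishing}; the remaining work is to transport this non-vanishing through the (otherwise formal) cohomological and Poincar\'e-duality manipulations of the construction.
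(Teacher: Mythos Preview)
Your approach is essentially the same as the paper's, which is much terser: it invokes Matsushima to reduce to the $(\fg,K)$-cohomology level and then simply asserts that the induced map being an isomorphism ``is exactly the content of Prop.~\ref{p:form-nonvanishing}'', together with a one-line remark that conditions \eqref{item:E1}--\eqref{item:E3} are satisfiable since \eqref{item:E2} and \eqref{item:E3} hold at almost all finite places. Your elaboration via the $\epsilon$-indexed sign decomposition and the diagonal-matrix picture is the natural way to unpack that sentence.

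One point to correct: for the off-diagonal vanishing you invoke Corollary~\ref{cor:hodge-is-purely-of-type-(d,d)}. That is both unnecessary and logically backwards here. Corollary~\ref{cor:hodge-is-purely-of-type-(d,d)} concerns the Hodge type of $H^{2d}(\Sh_{\tilde{\mathscr{G}}_B})[\Pi^S]$ on the \emph{big} Shimura variety and rests on the Arthur classification; it is used later in \S\ref{sec:proof-of-main-thm} to promote $\xi$ to a Hodge class, not to show $I_\xi$ is an isomorphism. The off-diagonal vanishing you need is far more elementary: it follows from $K_\infty$-type matching at each $v\in\Sigma$. The vector $f_1^{\epsilon_v}\boxtimes f_2^{\epsilon_v}$ transforms under $\iota_v(\C^1\times\C^1)$ by a specific character (see \eqref{eq:i-actions} and the surrounding setup), and by \eqref{eq:varphi-K-type} so does the contracted Schwartz function $\varphi_v^{\epsilon_v}$; any mismatch in $\epsilon$ forces the archimedean integral to vanish by Schur orthogonality, independently of any global Hodge-theoretic input.
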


\begin{proof}
By Matsushima's formula, there are canonical isomorphisms:
\begin{align*}
H^d (\fg_1, K_1; \pi_{B_1,\infty}^\vee \otimes \rho_{\uk} ) &\simeq H^d (\Sh_{B_1} , \V_{\uk})_{\pi_1^\vee}, \\
H^d (\fg_2, K_2; \pi_{B_2,\infty} \otimes \rho_{\uk} ) &\simeq H^d (\Sh_{B_2} , \V_{\uk})_{\pi_2},
\end{align*}
and so we just need to check that the data $\Upsilon$ can be picked so that the induced map
\[
I_\xi: H^d (\fg_1, K_1; \pi_{B_1,\infty}^\vee \otimes \rho_{\uk} ) ^\vee \rightarrow H^d (\fg_2, K_2; \pi_{B_2,\infty} \otimes \rho_{\uk} )
\]
is an isomorphism. But this is exactly the content of Proposition \ref{p:form-nonvanishing}. The only point to note is that 
one can in fact pick a CM extension $E/F$ satisfying the conditions \eqref{item:E1} through \eqref{item:E3} in the statement of the proposition. 
But \eqref{item:E2} and \eqref{item:E3} hold for all but a finite number of finite places, so it is obvious that there exists $E$ satisfying the needed conditions. 
\end{proof}

We now come to the proof of the main theorem (Theorem \ref{thm:intro-main-full} of the introduction and its generalization, Theorem \ref{mainthm:generalcase}.)
\begin{thm}
\begin{enumerate}
\item 
There exists a Hodge class $\xi_0 \in V_{B_1,\pi_1} \otimes V_{B_2,\pi_2}(d) $ 
such that the induced map
\[
V_{B_1,\pi_1} \simeq V_{B_1,\pi_1}^\vee (-d) \xrightarrow{{} \cdot \xi_0} V_{B_2,\pi_2}
\]
is an isomorphism of $L$-Hodge structures. 

\item Assume Kottwitz's conjecture for Shimura varieties attached to unitary similitude groups. 
Then the Hodge class $\xi_0$ can be chosen such that it belongs to $\HT (V_{B_1,\pi_1} \otimes V_{B_2,\pi_2} (d))$, 
so that the induced map 
\[
V_{B_1,\pi_1} \otimes \Q_\ell \simeq (V_{B_1,\pi_1} \otimes \Q_\ell) ^\vee (-d) \xrightarrow{{} \cdot \xi_0} V_{B_2,\pi_2} \otimes \Q_\ell
\]
is an isomorphism of $G_{F_\Sigma}$-modules for all rational primes $\ell$.  
\end{enumerate}

\end{thm}

\begin{proof}
The construction outlined in \S \ref{constr:first-step} to \S \ref{constr:last-step} above gives a map (for any open compact subgroup $\tilde{\cK} $ of 
$\tilde{\mathrsfs{G}}_B (\A_f)$)
\[
\Res: H^{2d} (\Sh_{\tilde{\mathrsfs{G}}_B,\tilde{\cK}}, \tilde{\V}_\rho) (d) \rightarrow V_{B_1,\pi_1}\otimes V_{B_2,\pi_2} (d) \simeq \Hom( V_{B_1,\pi_1} , V_{B_2,\pi_2})
\]
such that $\Res_\C$ sends
\[
\xi_\tau (d) \mapsto \xi(d) \mapsto  I_{\xi (d)}.
\]
Let $\I$ be the kernel of the unramified part of the Hecke algebra (at level $\tilde{\cK}$) acting on $\Pi^{\tilde{\cK}}$ so that 
\[
H^{2d} (\Sh_{\tilde{\mathrsfs{G}}_B,\tilde{\cK}}, \tilde{\V}_{\rho,\C})[\I] (d)= \bigoplus_{\sigma}  H^{2d} (\Sh_{\tilde{\mathrsfs{G}}_B,\tilde{\cK}}, \tilde{\V}_{\rho,\C})[\Pi^\sigma] (d),
\] 
where 
$\sigma$ ranges over a set of automorphisms of $\C/\Q$ such that the $\Pi^\sigma$ are the distinct conjugates of $\Pi$. 
Since $\tilde{\V}_\rho$ is defined over $\Q(\uk)$, we may consider the $\Q(\uk)$-Hodge structure
\[
H^{2d} (\Sh_{\tilde{\mathrsfs{G}}_B,\tilde{\cK}}, \tilde{\V}_{\rho})[\I] (d)  \subset H^{2d} (\Sh_{\tilde{\mathrsfs{G}}_B,\tilde{\cK}}, \tilde{\V}_{\rho,\C})[\I] (d).
\]
Now $\xi_\tau \in H^{2d} (\Sh_{\tilde{\mathrsfs{G}}_B,\tilde{\cK}}, \tilde{\V}_{\rho,\C})[\I]$ and $\Res_\C (\xi_\tau(d))$ is an isomorphism, hence 
there exists an element $\Xi_\tau \in H^{2d} (\Sh_{\tilde{\mathrsfs{G}}_B,\tilde{\cK}}, \tilde{\V}_{\rho})[\I]$ such that $\Res(\Xi_\tau (d))$ is an isomorphism. 
Indeed, if we pick a $\Q(\uk)$-basis $(x_1, \ldots, x_r)$ for $H^{2d} (\Sh_{\tilde{\mathrsfs{G}}_B,\tilde{\cK}}, \tilde{\V}_{\rho})[\I]$, then this is also a $\C$-basis for 
$H^{2d} (\Sh_{\tilde{\mathrsfs{G}}_B,\tilde{\cK}}, \tilde{\V}_{\rho,\C})[\I]$. Expanding $\xi_\tau$ in this basis:
\[
\xi_\tau = a_1x_1 + \cdots + a_r x_r
\]
we see that the $(a_1, \ldots, a_r)\in \C^r$ satisfies $\det (I_{\sum_i a_i x_i} (d) ) \neq 0 $. Since this is a polynomial function in the $a_i$, it follows that there exist $b_i\in \Q(\uk)$ with $\det (I_{\sum_i b_i x_i} (d) ) \neq 0 $. Taking $\Xi_\tau = \sum_i b_i x_i$, we see that $\Res(\Xi_\tau (d))$ is an isomorphism.
By a similar argument using a determinant, we can replace the class $c_\eta$ in the original construction by some $\Q$-linear combination $c$ of 
the fundamental classes of the components of $\Sh_{\tilde{\mathcal{G}}_B}$. 
(We note that since the action of $\Gal (\Qbar/F_\Sigma)$ on the components of $\Sh_{\tilde{\mathcal{G}}_B}$ is trivial, the class $c$ is 
$\Gal (\Qbar/F_\Sigma)$-invariant.)
The class $\Res(\Xi_\tau (d))$ then has coefficients in $L$. (The only step where the coefficients might be enlarged is the projection to the $\pi_1 \boxtimes \pi_2$-component, and the coefficient field $L$ of $\pi$ contains $\Q(\uk)$.)
By Corollary \ref{cor:hodge-is-purely-of-type-(d,d)}, the class $\Xi_\tau (d)$ is a $\Q(\uk)$-rational Hodge class, hence $\xi_0 := \Res(\Xi_\tau (d))$ is an $L$-rational Hodge class.
Assuming Kottwitz's conjecture,  by Proposition \ref{prop:galrep-char}, the action of $\Gal (\Qbar/F_\Sigma)$ on $\Xi_\tau (d)$ is trivial and so the same is true for $\xi_0$. 
\end{proof}

\appendix

\section{Splittings}
\label{sec:weil-hodge}

\subsection{Setup}
\label{ss:weil-hodge-setup}

Let $F$ be a number field and $B$ a quaternion division algebra over $F$.
Let $E$ be a quadratic extension of $F$ which embeds into $B$.
Let $*$ be the main involution on $B$ and $\rho$ the non-trivial Galois automorphism of $E$ over $F$.
We write $E = F + F \i$ and $B = E + E \j$ for some trace zero elements $\i \in E^\times$ and $\j \in B^\times$.
Let $\pr : B \rightarrow E$ be the associated projection. 
Put $u = \i^2 \in F^\times$ and $J = \j^2 \in F^\times$.
Fix a non-trivial additive character $\psi$ of $\A/F$ and a character $\chi$ of $\A_E^\times / E^\times$ such that $\chi|_{\A^\times} = \xi_E$, where $\xi_E$ is the quadratic character of $\A^\times / F^\times$ associated to $E/F$ by class field theory.

We consider an $m$-dimensional right $B$-space $V$ equipped with a skew-hermitian form $\langle \cdot, \cdot \rangle : V \times V \rightarrow B$ given by
\begin{equation}
\label{eq:condition-skew-herm}
 \langle e_1 x_1 + \dots + e_m x_m, e_1 y_1 + \dots + e_m y_m \rangle
 = x_1^* \cdot \kappa_1 \i \cdot y_1 + \dots + x_m^* \cdot \kappa_m \i \cdot y_m
\end{equation}
for some basis $e_1,\dots,e_m$ of $V$ and some $\kappa_1, \dots, \kappa_m \in F^\times$.
We denote by $\GU(V)$ the unitary similitude group of $V$ and by $\nu : \GU(V) \rightarrow F^\times$ the similitude character:
\[
 \GU(V) = \{ g \in \GL(V) \, | \, \langle gv, gv' \rangle = \nu(g) \cdot \langle v, v' \rangle \text{ for all } v, v' \in V \},
\]
where $\GL(V)$ acts on $V$ on the left.
We have a natural embedding
\[
 E^\times \hookrightarrow \GU(V),
\]
where we may regard $\alpha \in E^\times$ as an element in $\GU(V)$ given by $e_i \mapsto e_i \alpha$ for all $i$.

Let $W = B$ be a left $B$-space equipped with a hermitian form $\langle \cdot, \cdot \rangle : W \times W \rightarrow B$ given by
\[
 \langle x, y \rangle = x \cdot y^*.
\]
We denote by $\GU(W)$ the unitary similitude group of $W$ and by $\nu : \GU(W) \rightarrow F^\times$ the similitude character:
\[
 \GU(W) = \{ h \in \GL(W) \, | \, \langle wh, w'h \rangle = \nu(h) \cdot \langle w, w' \rangle \text{ for all } w, w' \in W \},
\]
where $\GL(W)$ acts on $W$ on the right.
Then we have $\GU(W) \simeq B^\times$.

Let $\V = V \otimes_B W$ be a $4m$-dimensional $F$-space equipped with a symplectic form
\[
 \llangle \cdot, \cdot \rrangle := \frac{1}{2} \tr_{B/F} \left( \langle \cdot, \cdot \rangle \otimes \langle \cdot, \cdot \rangle^* \right).
\]
Then we have a natural homomorphism
\begin{equation}
\label{eq:hom-GU(V)-GU(W)}
 \G(\U(V) \times \U(W)) \longrightarrow \Sp(\V), 
\end{equation}
where 
\[
 \G(\U(V) \times \U(W)) = \{ (g, h) \in \GU(V) \times \GU(W) \, | \, \nu(g) = \nu(h) \}
\]
and $\GL(V) \times \GL(W)$ acts on $\V$ on the right:
\[
 (v \otimes w) \cdot (g, h) := g^{-1} v \otimes w h.
\]
Let $\cG$ be a subgroup of $\G(\U(V) \times \U(W))$ defined by 
\[
 \cG = \{ (g, h) \in \GU(V)^0 \times \GU(W) \, | \, \nu(g) = \nu(h) \in \N_{E/F}(E^\times) \}.
\]
We take a complete polarization $\V = \X \oplus \Y$ defined by
\begin{align*}
 \X & = F \cdot e_1 \otimes 1 + \dots + F \cdot e_m \otimes 1 + F \cdot e_1 \otimes \j + \dots + F \cdot e_m \otimes \j, \\
 \Y & = F \cdot e_1 \otimes \i + \dots + F \cdot e_m \otimes \i + F \cdot e_1 \otimes \i\j + \dots + F \cdot e_m \otimes \i\j.
\end{align*}

\subsection{Splitting over $\cG$}

For each place $v$ of $F$, let $\Mp(\V_v)$ be the metaplectic group over $F_v$:
\[
 1 \longrightarrow \C^1 \longrightarrow \Mp(\V_v) \longrightarrow \Sp(\V_v) \longrightarrow 1.
\]
Then $\Mp(\V_v)$ can be realized by a $2$-cocycle $z_{\Y_v}$ relative to $\Y_v$ and $\psi_v$ (see e.g.~\cite{rangarao}, \cite[\S 3.2.2]{periods1}).
For almost all $v$, there exists a map $s_{\Y_v}:K_v \rightarrow \C^1$, where $K_v$ is the standard maximal compact subgroup of $\Sp(\V_v)$, such that
\[
 z_{\Y_v}(k_1,k_2) = \frac{s_{\Y_v}(k_1 k_2)}{s_{\Y_v}(k_1) s_{\Y_v}(k_2)}
\]
for $k_1, k_2 \in K_v$ (see e.g.~\cite[\S 3.2.3]{periods1}).

\begin{prop}
\label{prop:spl-hodge}
For all $v$, there exists a map $s_v:\cG_v \rightarrow \C^1$ satisfying the following conditions:
\begin{enumerate}
\item For $\g_1, \g_2 \in \cG_v$, we have
\[
 z_{\Y_v}(\g_1,\g_2) = \frac{s_v(\g_1 \g_2)}{s_v(\g_1) s(\g_2)}.
\]
Here, by abuse of notation, we write $\g_i$ on the left-hand side for the image of $\g_i$ in $\Sp(\V_v)$ under \eqref{eq:hom-GU(V)-GU(W)}.
\item For $\z = (z, z)$ with $z \in F_v^\times$ and $\g \in \cG_v$, we have
\[
 s_v(\z \g) = \xi_{E_v}(z)^m \cdot s_v(\g).
\]
\item For almost all $v$, we have
\[
 s_v|_{\cG_v \cap K_v} = s_{\Y_v}|_{\cG_v \cap K_v}.
\]
\item For $\gamma \in \cG(F)$, we have
\[
 \prod_v s_v(\gamma) = 1.
\]
\end{enumerate}
\end{prop}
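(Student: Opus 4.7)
The plan is to construct $s_v$ in two stages: first on the isometry subgroup
\[
 \cG^\circ_v := \{(g,h) \in \U(V_v)^0 \times \U(W_v)\} \subset \cG_v,
\]
and then extend to the full similitude group $\cG_v$ using the auxiliary character $\chi$, whose property $\chi|_{\A^\times}=\xi_E$ is precisely what makes the extension well-defined (and explains the appearance of $\xi_{E_v}^m$ in (ii)).

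For the isometry step I would follow the strategy of Harris--Kudla--Sweet and Kudla \cite{kudla-splitting} adapted to the quaternionic dual pair $(\U_B(V), \U_B(W))$: realize the Weil representation of $\Mp(\V_v)$ in the Schrödinger model on $\cS(\X_v)$ and write down an explicit normalization $s^\circ_v$ using the character $\chi_v$ (through a formula involving the Weil constant $\gamma_F(\psi_v)$, the discriminant of $V$, and the characteristic function of $\N_{E_v/F_v}(E_v^\times)$). The existence and defining properties of such a splitting are in fact worked out in \cite{periods1}, Appendix C, for the present quaternionic setup; I would import that construction here and check that it applies with $W=B$ viewed as a $1$-dimensional hermitian $B$-space.

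For the similitude step, every $\g = (g,h) \in \cG_v$ satisfies $\nu(g)=\nu(h) \in \N_{E_v/F_v}(E_v^\times)$, so I choose $\alpha \in E_v^\times$ with $\N_{E_v/F_v}(\alpha) = \nu(g)$; then $(\alpha^{-1} g, h)$ lies in $\cG^\circ_v$ (since multiplication by $\alpha$ on $V$ via the embedding $E^\times \hookrightarrow \GU(V)$ has similitude $\N_{E_v/F_v}(\alpha)$), and I would set
\[
 s_v(\g) := \chi_v(\alpha)^{-m} \cdot s^\circ_v(\alpha^{-1}g,\,h).
\]
The ambiguity in $\alpha$ is exactly $E_v^{(1)}$, on which $\chi_v^m$ and the Weil representation combine to give the same answer — this is the content of the condition $\chi|_{\A^\times}=\xi_E$ together with the transformation law of $s^\circ_v$ under the center of $\U(V_v)^0$. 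This simultaneously gives (i) (the cocycle splitting follows from the cocycle property of $s^\circ_v$) and the scaling (ii): for $\z=(z,z)$ with $z \in F_v^\times$, the only possible choice is $\alpha = z \in F_v^\times$, which contributes $\chi_v(z)^{-m} = \xi_{E_v}(z)^{-m}\cdot\chi_v(z)^{-m}|_{\text{ker}}$, and one reads off $\xi_{E_v}(z)^m$ after also inserting the known action of the center of $\U(V_v)^0$ on the Weil model (the $m$ copies of the rank-one contribution $\xi_{E_v}$).

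For (iii), at a place $v$ where everything is unramified (including $\chi_v$ and $\psi_v$), the local splitting $s^\circ_v$ coming from \cite{periods1} agrees with $s_{\Y_v}$ on $\cG^\circ_v \cap K_v$ by the standard unramified computation, and $\alpha$ may be chosen in $\mathcal{O}_{E_v}^\times$ for $\g \in \cG_v \cap K_v$, giving $\chi_v(\alpha) = 1$. For the product formula (iv), the canonical (Weil) splitting $i: \Sp(\V)(F) \hookrightarrow \Mp(\V)_\A$ restricts to $\cG(F)$ and must differ from our adelic product $\prod_v s_v$ by a character of $\cG(F)$; product-formula identities for $\chi$ and $\xi_E$, combined with Kudla's global coherence in the unitary isometry case, force this character to be trivial. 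The main obstacle I expect is the second stage: verifying that the formula for $s_v$ above is independent of the choice of $\alpha$ and that its local behavior on the center $(z,z)$ produces the required $\xi_{E_v}(z)^m$ (rather than $1$) uniformly in $v$; tracking signs and Weil indices here is delicate, and the whole consistency rests on the precise identity $\chi|_{\A^\times}=\xi_E$ playing off against the parity of $m$ and the quaternionic structure.
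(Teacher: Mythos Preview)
Your two-stage plan has the right shape, but there is a concrete error in the reduction step and the paper's route is substantially different.

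The error: you claim that $(\alpha^{-1}g, h)$ lies in $\cG_v^\circ = \U(V_v)^0 \times \U(W_v)$. Multiplying $g$ by $\alpha^{-1}$ kills the similitude on the $V$-side, but $h$ still has $\nu(h)=\nu(g)\ne 1$ in general, so $h\notin\U(W_v)$ and the pair is not in the isometry group. One must also modify $h$, and then the $E^1$-ambiguity acts on both factors simultaneously, making the well-definedness check considerably more delicate than you indicate. Separately, your appeal to \cite{periods1}, Appendix~C, is misplaced: that construction treats only the specific $2$-dimensional skew-hermitian $B$-space $V=B_1\otimes_E B_2$, not a general $m$-dimensional $V$ as in \eqref{eq:condition-skew-herm}; the present proposition is exactly what is needed to go beyond that case, and \S\ref{ss:compatibility-periods1} is devoted to checking \emph{a posteriori} that the new $s$ agrees with the one from \cite{periods1} when both apply.

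The paper does not try to build a splitting on $\cG_v^\circ$ first. Instead it uses the doubling method twice: it passes to $\Sp(\V^\square)$, takes Kudla's splittings $\hat{s}_1$ on $\U(V^\square)$ (quaternionic doubling) and $\hat{s}_2$ on $\U(\WW^\square)$ (where $\WW=B$ is viewed as a $2$-dimensional skew-hermitian $E$-space), and combines them on the auxiliary group $\cG^\sharp=\{(g,h,\alpha,\alpha):\nu(g)=\nu(h)=\N(\alpha)\}$ via
\[
 \hat{s}^\sharp(g,h,\alpha,\alpha)=\chi(\alpha)^{-m}\,\hat{s}_1(\iota(g\alpha^{-1},1))\,\hat{s}_2(\iota(h\alpha^{-1},1))\,z_{\V^\triangle}(\iota(g\alpha^{-1},1),\iota(h\alpha^{-1},1)).
\]
One then sets $s(g,h)=s^\sharp(g,h,\alpha,\alpha)/s_2(\iota(1,[\alpha,\alpha]))$ after a change-of-polarization factor $\mu$. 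Independence of $\alpha$ (Lemma~\ref{lem:s-well-def}) reduces to the identity $\hat{s}^\sharp(1,1,\beta,\beta)=\hat{s}_2(\iota(1,[\beta,\beta]))$ for $\beta\in E^1$ (Lemma~\ref{lem:compare-s-sharp-s-2}), which is proved by an explicit computation with both Kudla splittings. The doubled setup is what allows the $V$- and $W$-sides to be handled symmetrically with explicit, already-known splittings; your direct approach would have to manufacture an isometry-level splitting for arbitrary $m$ from scratch and then redo all this Weil-index bookkeeping by hand.
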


As in \cite[\S 3.3]{periods1}, Proposition \ref{prop:spl-hodge} enable us to define a Weil representation $\omega_\psi$ of $\cG(\A)$ on the Schwartz space $\SS(\X(\A))$.
Moreover, for any $\varphi \in \SS(\X(\A))$, the associated theta function
\[
 \Theta_{\varphi}(\g) := \sum_{x \in \X} \omega_{\psi}(\g) \varphi(x)
\]
on $\cG(\A)$ is left $\cG(F)$-invariant.

\begin{rem}
Suppose that $V$ is the $3$-dimensional skew-hermitian right $B$-space as in \S \ref{sec:constr-gl-ex-isom}.
Then $V$ satisfies the condition \eqref{eq:condition-skew-herm} and we may apply the above construction.
Note that $\nu(\GU(V_v)^0) = \N_{E_v/F_v}(E_v^\times)$ for all $v$, so that
\[
 \cG(\A) = \G(\U(V) \times \U(W))^0(\A).
\]
\end{rem}

The proof of Proposition \ref{prop:spl-hodge} will be given in \S \ref{ss:doubling-U(V)}--\S \ref{ss:proof-spl-hodge} below. 
From now on, we fix a place $v$ of $F$ and suppress the subscript $v$ from the notation.

\subsection{The doubling method for $\U(V)$}
\label{ss:doubling-U(V)}

We consider the doubled space $V^\square = V \oplus V$ equipped with a skew-hermitian form
\[
 \langle (v_1,v_2), (v_1',v_2') \rangle := \langle v_1, v_1' \rangle - \langle v_2, v_2' \rangle.
\]
Then we have a natural embedding
\[
 \iota : \G(\U(V) \times \U(V)) \longrightarrow \GU(V^\square).
\]
If $\V^\square = \V \oplus \V$ is the doubled space equipped with a symplectic form defined similarly as above, then we have a natural embedding
\[
 \iota : \Sp(\V) \times \Sp(\V) \longrightarrow \Sp(\V^\square)
\]
and an identification
\[
 \V^\square = V^\square \otimes_B W.
\]
We take a complete polarization $\V^\square = \V^\bigtriangledown \oplus \V^\triangle$ defined by
\[
 \V^{\bigtriangledown} = \{ (x,-x) \, | \, x \in \V \}, \qquad
 \V^{\triangle} = \{ (x,x) \, | \, x \in \V \}.
\]
Under the above identification, we have
\[
 \V^{\bigtriangledown} = V^{\bigtriangledown} \otimes_B W, \qquad
 \V^{\triangle} = V^{\triangle} \otimes_B W,
\]
where $V^\square = V^\bigtriangledown \oplus V^\triangle$ is the complete polarization over $B$ defined similarly as above.

Now we recall Kudla's splitting over $\U(V^\square)$, where we regard $\U(V^\square)$ as a subgroup of $\Sp(\V^\square)$ via the natural embedding.
As in \cite[\S C.3]{periods1}, we regard $V^\square$ as a left $B$-space and let $\GL(V^\square)$ act on $V^\square$ on the right:
\begin{align*}
 x \cdot v & := v \cdot x^*, & x & \in B, \\
 v \cdot g & := g^{-1} \cdot v, & g & \in \GL(V^\square).
\end{align*}
Similarly, we regard $W$ as a right $B$-space and let $\GL(W)$ act on $W$ on the left.
Then we have an identification
\[
 \V^\square = W \otimes_B V^\square.
\]
Put
\[
 \v_i = \frac{1}{2 \kappa_i \i} \cdot (e_i, -e_i), \qquad
 \v_i^* = (e_i, e_i),
\]
so that $\langle \v_i, \v_j^* \rangle = \delta_{ij}$.
Using a basis $\v_1, \dots, \v_m, \v_1^*, \dots, \v_m^*$ of $V^\square$, we identify $\U(V^\square)$ with
\[
 \left\{ g \in \GL_{2m}(B) \, \left| \,
 g \begin{pmatrix} & \1_m \\ -\1_m & \end{pmatrix} {}^t g^*
 = \begin{pmatrix} & \1_m \\ -\1_m & \end{pmatrix}
 \right. \right\}.
\]
Let $P_{V^\triangle}$ be the maximal parabolic subgroup of $\U(V^\square)$ stabilizing $V^\triangle$:
\[
 P_{V^\triangle} = \left\{ \left.
 \begin{pmatrix} a & * \\ & ({}^t a^*)^{-1} \end{pmatrix}
 \, \right|  \, a \in \GL_m(B) \right\}.
\]
We define a map
\[
 \hat{s}_1 : \U(V^\square) \longrightarrow \C^1
\]
as follows:
\begin{itemize}
\item If $B$ is split, then we set
\[
 \hat{s}_1(g) = 1
\]
for $g \in \U(V^\square)$.
\item If $B$ is ramified, then we set
\[
 \hat{s}_1(g) = (-1)^j
\]
for $g \in P_{V^\triangle} \tau_j P_{V^\triangle}$ with 
\[
 \tau_j = 
 \begin{pmatrix}
  \1_{m-j} & & & \\
  & & & -\1_j \\
  & & \1_{m-j} & \\
  & \1_j & & 
 \end{pmatrix}.
\]
\end{itemize}
By \cite[Theorem 3.1, cases $1_-$ and $2_+$]{kudla-splitting}, we have
\begin{equation}
\label{eq:spl-kudla-1} 
 z_{\V^\triangle}(g_1,g_2) = \frac{\hat{s}_1(g_1 g_2)}{\hat{s}_1(g_1) \hat{s}_1(g_2)}
\end{equation}
for $g_1, g_2 \in \U(V^\square)$.

\begin{lem}
\label{lem:s-hat-1-E-conj}
For $\alpha \in E^{\times}$ and $g \in \U(V^\square)$, we have
\[
 \hat{s}_1(\alpha g \alpha^{-1}) = \hat{s}_1(g).
\]
\end{lem}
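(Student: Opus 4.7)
The plan is to split into cases based on whether $B$ is split or ramified. The split case is immediate since $\hat{s}_1 \equiv 1$, so all content concerns the ramified case, where $\hat{s}_1$ takes the value $(-1)^j$ on the double coset $P_{V^\triangle}\tau_j P_{V^\triangle}$. My strategy is therefore to show that conjugation by $\alpha \in E^\times$ preserves this double coset decomposition of $\U(V^\square)$.

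I embed $\alpha$ diagonally as $(\alpha,\alpha) \in \G(\U(V)\times\U(V))$ and then into $\GU(V^\square)$; the resulting element has similitude factor $\N_{E/F}(\alpha)$, and although it need not lie in $\U(V^\square)$ itself, conjugation by it preserves $\U(V^\square)$, so $\hat{s}_1(\alpha g \alpha^{-1})$ is well-defined. The key observation is that the diagonal $\alpha$ stabilizes $V^\triangle$: it sends each $\v_i^* = (e_i,e_i)$ to $(e_i\alpha, e_i\alpha)$, which remains in $V^\triangle$. Hence $\alpha$ lies in (the natural extension to $\GU(V^\square)$ of) the parabolic $P_{V^\triangle}$, and in particular $\alpha P_{V^\triangle}\alpha^{-1} = P_{V^\triangle}$. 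It then suffices to show that $\alpha$ and $\tau_j$ commute as elements of $\GU(V^\square)$: once this is established, writing $g = p_1\tau_j p_2$ with $p_i \in P_{V^\triangle}$ yields $\alpha g \alpha^{-1} = (\alpha p_1 \alpha^{-1})\tau_j(\alpha p_2 \alpha^{-1}) \in P_{V^\triangle}\tau_j P_{V^\triangle}$.

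To verify $\alpha\tau_j = \tau_j\alpha$, I will pass to the left-$B$-module convention of \cite[\S C.3]{periods1} via $v \cdot x = x^* \cdot v$. A direct calculation using $\alpha^* = \alpha^\rho$ and the fact that $\alpha \in E$ commutes with $\tfrac{1}{2\kappa_i \i}$ gives $\alpha \cdot \v_i = \alpha^\rho \v_i$ and $\alpha \cdot \v_i^* = \alpha^\rho \v_i^*$. Since both $\alpha$ and $\tau_j$ are $B$-linear endomorphisms of $V^\square$, it suffices to check the commutation on the $B$-basis $\{\v_i, \v_j^*\}$; there $\tau_j$ permutes basis vectors with $\pm 1$ coefficients (lying in $F \subseteq Z(B)$, which is what makes $\tau_j$ $B$-linear in the first place), and on any basis vector $w$ both compositions $\alpha\tau_j(w)$ and $\tau_j\alpha(w)$ evaluate to $\alpha^\rho$ times the same permuted basis vector. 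This yields $\alpha\tau_j = \tau_j\alpha$ and hence the desired identity $\hat{s}_1(\alpha g \alpha^{-1}) = (-1)^j = \hat{s}_1(g)$.

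The only technical point requiring care is the passage between the right-$B$-module structure on $V$ (with respect to which $e_i \mapsto e_i\alpha$ is $B$-linear) and the left-$B$-module structure on $V^\square$ used to define $P_{V^\triangle}$ and the Weyl-type elements $\tau_j$; once the action of $\alpha$ on the basis $\{\v_i, \v_j^*\}$ is correctly computed, the double-coset argument is purely formal.
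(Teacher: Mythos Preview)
Your proposal is correct and follows essentially the same approach as the paper. The paper's proof is a two-line argument asserting that $\alpha p \alpha^{-1} \in P_{V^\triangle}$ for $p \in P_{V^\triangle}$ and $\alpha \tau_j \alpha^{-1} = \tau_j$, from which the double-coset invariance (and hence the value of $\hat{s}_1$) follows immediately; you have simply unpacked these two assertions with explicit basis computations and separated out the trivial split case.
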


\begin{proof}
Since $\alpha p \alpha^{-1} \in P_{V^\triangle}$ for $p \in P_{V^\triangle}$ and $\alpha \tau_j \alpha^{-1} = \tau_j$, the assertion follows.
\end{proof}

\begin{lem}
\label{lem:s-hat-1-E^1}
Let $\alpha \in E^1$.
Then we have
\[
 \hat{s}_1(\iota(\alpha, 1)) = 1
\]
if $B$ is split, and
\[
 \hat{s}_1(\iota(\alpha, 1)) =
 \begin{cases}
  1 & \text{if $\alpha = 1$,} \\
  (-1)^m & \text{if $\alpha \ne 1$}
 \end{cases}
\]
if $B$ is ramified.
\end{lem}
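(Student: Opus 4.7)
The split case is immediate from the definition $\hat{s}_1 \equiv 1$, so the content of the lemma lies in the ramified case, and the plan is to determine the Bruhat cell containing $\iota(\alpha, 1)$ relative to the parabolic $P_{V^\triangle}$.

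The key step is to compute the matrix of $\iota(\alpha, 1)$ in the symplectic basis $\v_1,\ldots,\v_m,\v_1^*,\ldots,\v_m^*$ of $V^\square$. Using $\v_i = \frac{1}{2\kappa_i\i}(e_i,-e_i)$, $\v_i^* = (e_i,e_i)$ and the fact that $\iota(\alpha,1)$ acts by $(v_1,v_2)\mapsto (v_1\alpha,v_2)$, a direct computation (inverting the change of basis from $(e_i,0),(0,e_i)$ to $\v_i,\v_i^*$, which is available because $\alpha$ and $\i$ commute in $E$) yields
\[
 \iota(\alpha,1)\v_i = \v_i \cdot \tfrac{\alpha+1}{2} + \v_i^* \cdot \tfrac{\alpha-1}{4\kappa_i\i}, \qquad
 \iota(\alpha,1)\v_i^* = \v_i\cdot(\alpha-1)\kappa_i\i + \v_i^*\cdot \tfrac{\alpha+1}{2}.
\]
Hence in block form, relative to the decomposition $V^\square = V^\bigtriangledown \oplus V^\triangle$ induced by the $\v_i,\v_i^*$,
\[
 \iota(\alpha,1) = \begin{pmatrix} \tfrac{\alpha+1}{2}\1_m & (\alpha-1)D_\kappa \\ \tfrac{\alpha-1}{4}D_\kappa^{-1} & \tfrac{\alpha+1}{2}\1_m \end{pmatrix},
 \qquad D_\kappa = \mathrm{diag}(\kappa_1\i,\ldots,\kappa_m\i).
\]

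From this explicit form, the two cases are read off as follows. If $\alpha = 1$, then $\iota(\alpha,1) = \1_{2m}$ lies in $P_{V^\triangle} = P_{V^\triangle}\tau_0 P_{V^\triangle}$, giving $\hat{s}_1(\iota(\alpha,1)) = (-1)^0 = 1$. If $\alpha \neq 1$, the lower-left block $\tfrac{\alpha-1}{4}D_\kappa^{-1}$ is a diagonal matrix with every entry invertible in $B$, so it has maximal rank $m$ as an element of $\GL_m(B)$. Since the double cosets $P_{V^\triangle}\tau_j P_{V^\triangle}$ in $\U(V^\square)$ are parametrized precisely by the rank $j$ of the lower-left block in the Siegel-type Bruhat decomposition (this is standard for unitary groups over a division algebra and is the content that makes the definition of $\hat{s}_1$ well-defined), $\iota(\alpha,1)$ lies in the big cell $P_{V^\triangle}\tau_m P_{V^\triangle}$, and therefore $\hat{s}_1(\iota(\alpha,1)) = (-1)^m$.

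The main (minor) obstacle is bookkeeping the left/right $B$-module conventions used in the doubling construction, in particular the sign coming from $\i^* = -\i$ when changing sides; this only affects whether the lower-left block is $\tfrac{\alpha-1}{4}D_\kappa^{-1}$ or a sign-twisted variant, but does not affect its rank and so does not affect the conclusion. All other steps are routine linear algebra together with the general fact about the $P$-$P$ double cosets in $\U(V^\square)$ already invoked implicitly in the definition of $\hat{s}_1$.
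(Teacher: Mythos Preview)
Your proposal is correct and follows essentially the same approach as the paper: compute the matrix of $\iota(\alpha,1)$ in the basis $\v_i,\v_i^*$ and read off the Bruhat cell, the paper via an explicit factorization of each $2\times 2$ block as $P_1\smat{}{-1}{1}{}P_2$ and you via the rank of the lower-left block. Your action formulas agree exactly with the paper's matrix $A$; the off-diagonal blocks in your displayed $2m\times 2m$ matrix are interchanged relative to those formulas, but as you already note this does not affect the rank and hence not the conclusion.
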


\begin{proof}
We may assume that $B$ is ramified and $\alpha \ne 1$.
Then we have
\[
 \begin{bmatrix}
  \v_i \cdot \iota(\alpha,1) \\
  \v_i^* \cdot \iota(\alpha,1)
 \end{bmatrix}
 = A \cdot
 \begin{bmatrix}
  \v_i \\
  \v_i^*
 \end{bmatrix},
\]
where
\begin{align*}
  A & = 
 \begin{pmatrix}
  \frac{1}{2}(\alpha+1) & \frac{1}{4 \kappa_i \i}(\alpha-1) \\
  \kappa_i \i (\alpha-1) & \frac{1}{2}(\alpha+1)  
 \end{pmatrix} \\
 & = 
 \begin{pmatrix}
  -\frac{1}{\kappa_i \i (\alpha^\rho - 1)} & \frac{1}{2} (\alpha+1) \\
  & \kappa_i \i (\alpha-1) \\
 \end{pmatrix} \cdot
 \begin{pmatrix}
  & -1 \\
  1 & 
 \end{pmatrix} \cdot 
 \begin{pmatrix}
  1 & \frac{\alpha+1}{2 \kappa_i \i (\alpha-1)} \\
  & 1
 \end{pmatrix}.
\end{align*}
This implies the assertion. 
\end{proof}

\subsection{The doubling method for $\U(\WW)$}
\label{ss:doubling-U(WW)}

We consider a $2$-dimensional left $E$-space $\WW = B$ equipped with a skew-hermitian form
\[
 (x, y) = -\i \cdot \pr (x \cdot y^*).
\]
Then we have a natural embedding
\[
 \GU(W) \hookrightarrow \GU(\WW)
\]
and an isomorphism $\GU(\WW) \simeq (B^{\times} \times E^{\times}) / F^{\times}$, where $B^{\times} \times E^{\times}$ acts on $\WW$ by
\[
 x \cdot (h, \alpha) = \alpha^{-1} \cdot x \cdot h.
\]
We write $[h,\alpha]$ for the image of $(h,\alpha)$ in $\GU(\WW)$.
Also, we consider the doubled space $\WW^\square = \WW \oplus \WW$ equipped with a skew-hermitian form
\[
 ((w_1,w_2), (w_1',w_2')) := (w_1, w_1') - (w_2, w_2').
\]
Then we have a natural embedding
\[
 \iota : \G(\U(\WW) \times \U(\WW)) \longrightarrow \GU(\WW^\square).
\]

Let $\VV = e_1 E + \dots + e_m E$ be an $m$-dimensional right $E$-space equipped with a hermitian form
\[
 (e_1 x_1 + \dots + e_m x_m, e_1 y_1 + \dots + e_m y_m)
 = x_1^\rho \cdot \kappa_1 \cdot y_1 + \dots + x_m^\rho \cdot \kappa_m \cdot y_m.
\]
Let $f: \VV \otimes_E \WW \rightarrow V \otimes_B W$ be the natural isomorphism.
Then we have
\begin{equation}
\label{eq:f-equivariance-U(WW)}
 f(v \otimes (w \cdot [h,\alpha])) = f(v \otimes w) \cdot (\alpha, h) 
\end{equation}
for $h \in B^\times$ and $\alpha \in E^\times$, and 
\[
 \llangle \cdot, \cdot \rrangle \circ (f \times f)
 = \frac{1}{2} \tr_{E/F} ( (\cdot, \cdot) \otimes (\cdot, \cdot)^{\rho} ).
\]
Hence we may identify $\VV \otimes_E \WW$ with $\V$ and omit $f$ from the notation.
Similarly, we identify $\VV \otimes_E \WW^\square$ with $\V^\square$.

Now we recall Kudla's splitting over $\U(\WW^\square)$, where we regard $\U(\WW^\square)$ as a subgroup of $\Sp(\V^\square)$ via the natural embedding.
We take a complete polarization $\WW^\square = \WW^\bigtriangledown \oplus \WW^\triangle$ over $E$ defined by
\[
 \WW^{\bigtriangledown} = \{ (w,-w) \, | \, w \in \WW \}, \qquad 
 \WW^{\triangle} = \{ (w,w) \, | \, w \in \WW \}.
\]
Put
\[
 \w_1 = - \frac{1}{2\i} \cdot (1, -1), \qquad 
 \w_2 = \frac{1}{2 J \i} \cdot (\j, -\j), \qquad
 \w_1^* = (1, 1), \qquad
 \w_2^* = (\j, \j), 
\]
so that $(\w_i, \w_j^*) = \delta_{ij}$.
Using a basis $\w_1, \w_2, \w_1^*, \w_2^*$ of $\WW^\square$, we identify $\U(\WW^\square)$ with
\[
 \left\{ h \in \GL_4(E) \, \left| \,
 h \begin{pmatrix} & \1_2 \\ -\1_2 & \end{pmatrix} {}^t h^\rho
 = \begin{pmatrix} & \1_2 \\ -\1_2 & \end{pmatrix}
 \right. \right\}.
\]
Let $P_{\WW^\triangle}$ be the maximal parabolic subgroup of $\U(\WW^\square)$ stabilizing $\WW^\triangle$:
\[
 P_{\WW^\triangle} = \left\{ \left.
 \begin{pmatrix} a & * \\ & ({}^t a^\rho)^{-1} \end{pmatrix}
 \, \right|  \, a \in \GL_2(E) \right\}.
\]
We define a map
\[
 \hat{s}_2 : \U(\WW^\square) \longrightarrow \C^1
\]
by setting
\[
 \hat{s}_2(h) = \chi(x(h))^m \cdot \gamma^{-j}
\]
for $h = p_1 \tau_j p_2$ with
\[
 p_i =
 \begin{pmatrix}
  a_i & * \\
  & ({}^t a_i^\rho)^{-1}
 \end{pmatrix}
 \in P_{\WW^\triangle}, \qquad
 \tau_j = 
 \begin{pmatrix}
  \1_{2-j} & & & \\
  & & & -\1_j \\
  & & \1_{2-j} & \\
  & \1_j & & 
 \end{pmatrix},
\]
where 
\[
 x(h) = \det(a_1 a_2) \mod \N_{E/F}(E^\times)
\]
and 
\[
 \gamma = (u, \det \VV)_F \cdot \gamma_F(-u, \tfrac{1}{2} \psi)^m \cdot \gamma_F(-1, \tfrac{1}{2} \psi)^{-m}.
\]
Here, $(\cdot, \cdot)_F$ is the quadratic Hilbert symbol of $F$ and $\gamma_F(\cdot,\frac{1}{2}\psi)$ is the Weil index as in \cite[Appendix]{rangarao}, \cite[\S 3.1.1]{periods1}.
By \cite[Theorem 3.1, cases $3_+$]{kudla-splitting}, we have
\begin{equation}
\label{eq:spl-kudla-2}
 z_{\V^\triangle}(h_1,h_2) = \frac{\hat{s}_2(h_1 h_2)}{\hat{s}_2(h_1) \hat{s}_2(h_2)} 
\end{equation}
for $h_1, h_2 \in \U(\WW^\square)$.

\begin{lem}
\label{lem:s-hat-2-E-conj}
For $\alpha \in E^{\times}$ and $h \in \U(\WW^\square)$, we have
\[
 \hat{s}_2(\iota([\alpha,1], [\alpha,1]) \cdot h \cdot \iota([\alpha,1], [\alpha,1])^{-1}) = \hat{s}_2(h).
\]
\end{lem}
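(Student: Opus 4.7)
The plan is to compute the conjugation $\iota([\alpha,1],[\alpha,1]) \cdot h \cdot \iota([\alpha,1],[\alpha,1])^{-1}$ very explicitly in matrix form, track how the Bruhat datum $(p_1,\tau_j,p_2)$ transforms, and verify that both factors $\chi(x(h))^m$ and $\gamma^{-j}$ in the formula for $\hat s_2$ are preserved.

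First I would write down $a := \iota([\alpha,1],[\alpha,1])$ as a matrix in the basis $\w_1,\w_2,\w_1^*,\w_2^*$. Recall $[\alpha,1]$, with $\alpha \in E^\times$ regarded inside $B^\times$, acts on $\WW = B$ by $x \mapsto x\alpha$; since $\i\alpha = \alpha\i$ but $\j\alpha = \alpha^\rho \j$, a direct computation from the definitions of $\w_1 = -\frac{1}{2\i}(1,-1)$, $\w_2 = \frac{1}{2J\i}(\j,-\j)$, $\w_1^* = (1,1)$, $\w_2^* = (\j,\j)$ shows that $a$ is the diagonal matrix $\mathrm{diag}(\alpha,\alpha^\rho,\alpha,\alpha^\rho) \in \GL_4(E)$. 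In particular $a$ is a similitude with factor $\N(\alpha)$, and it stabilises $\WW^\triangle = \mathrm{span}(\w_1^*,\w_2^*)$, so conjugation by $a$ preserves the parabolic $P_{\WW^\triangle}$ and the Bruhat stratification given by the $\tau_j$.

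Next, for $p = \smat{A}{B}{0}{({}^tA^\rho)^{-1}} \in P_{\WW^\triangle}$, writing $D = \mathrm{diag}(\alpha,\alpha^\rho)$ so that $a = \smat{D}{0}{0}{D}$, a block computation gives
\[
 apa^{-1} = \begin{pmatrix} DAD^{-1} & DBD^{-1} \\ 0 & D({}^tA^\rho)^{-1}D^{-1} \end{pmatrix}.
\]
The identity $({}^t(DAD^{-1})^\rho)^{-1} = (D^\rho)^{-1}({}^tA^\rho)^{-1}D^\rho$, together with the fact that $D^\rho D = \N(\alpha)\cdot \1_2$ is scalar and hence commutes with everything, shows this matrix indeed lies in $P_{\WW^\triangle}$, with upper-left block $DAD^{-1}$. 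In particular $\det(DAD^{-1}) = \det A$, so $x(aha^{-1}) \equiv x(h) \bmod \N_{E/F}(E^\times)$.

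Finally I would check by direct inspection that $a\tau_j a^{-1} = \tau_j$ for $j=0,1,2$: each nonzero entry of $\tau_j$ sits in a position $(i,k)$ with $a_i = a_k$ (for example, the $-1$ in position $(2,4)$ has $a_2 = a_4 = \alpha^\rho$, and the $1$ in position $(4,2)$ similarly), so conjugation by the diagonal matrix $a$ leaves each entry fixed. Combining the above, if $h = p_1 \tau_j p_2$ then $aha^{-1} = (ap_1a^{-1})\tau_j(ap_2a^{-1})$ is a Bruhat decomposition with the same $j$ and the same $x$-invariant, whence
\[
 \hat s_2(aha^{-1}) = \chi(x(aha^{-1}))^m \cdot \gamma^{-j} = \chi(x(h))^m \cdot \gamma^{-j} = \hat s_2(h).
\]
There is no serious obstacle; the one point to handle with care is that $a$ is only a similitude (with factor $\N(\alpha)$) and not an element of $\U(\WW^\square)$, so one needs $\N(\alpha)$ to disappear in exactly the right places—which it does, because $D^\rho D$ is scalar.
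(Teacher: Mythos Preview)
Your proof is correct and follows essentially the same approach as the paper: both compute the matrix of $h_\alpha = \iota([\alpha,1],[\alpha,1])$ in the basis $\w_1,\w_2,\w_1^*,\w_2^*$ as $\mathrm{diag}(\alpha,\alpha^\rho,\alpha,\alpha^\rho)$, and then observe that conjugation by this diagonal matrix fixes each $\tau_j$ and preserves $x(p)$ for $p \in P_{\WW^\triangle}$. You have simply written out in more detail the verifications (that $apa^{-1}$ lies in $P_{\WW^\triangle}$, that the determinant of the Levi block is unchanged, and that the entries of $\tau_j$ survive conjugation) which the paper states without elaboration.
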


\begin{proof}
Put $h_\alpha = \iota([\alpha,1], [\alpha,1])$.
Since 
\[
 \begin{bmatrix}
  \w_1 \cdot h_\alpha \\
  \w_2 \cdot h_\alpha \\
  \w_1^* \cdot h_\alpha \\
  \w_2^* \cdot h_\alpha
 \end{bmatrix}
 =
 \begin{pmatrix}
  \alpha & & & \\
  & \alpha^\rho & & \\
  & & \alpha & \\
  & & & \alpha^\rho
 \end{pmatrix}
 \cdot 
 \begin{bmatrix}
  \w_1 \\
  \w_2 \\
  \w_1^* \\
  \w_2^*
 \end{bmatrix},
\]
we have $x(h_\alpha p h_\alpha^{-1}) = x(p)$ for $p \in P_{\WW^\triangle}$ and $h_\alpha \tau_j h_\alpha^{-1} = \tau_j$.
Hence the assertion follows.
\end{proof}

\begin{lem}
\label{lem:s-hat-2-E-diag}
For $\alpha \in E^\times$, we have
\[
 \hat{s}_2(\iota([\alpha,\alpha], [\alpha,\alpha])) = \chi(\alpha)^{-2m}.
\]
\end{lem}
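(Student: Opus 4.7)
The plan is a direct computation unwinding the definitions of $\iota$, the basis $\w_1,\w_2,\w_1^*,\w_2^*$, and $\hat{s}_2$. Set $h_\alpha := \iota([\alpha,\alpha],[\alpha,\alpha])$. Since $[\alpha,\alpha] \in \GU(\WW)$ acts on $\WW = B$ by $x \mapsto \alpha^{-1} x \alpha$ (which is $E$-linear because $\alpha \in E$ commutes with scalars in $E$), $h_\alpha$ acts on $\WW^\square = \WW \oplus \WW$ componentwise by the same formula.

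The first step is to compute the matrix of $h_\alpha$ in the basis $\w_1,\w_2,\w_1^*,\w_2^*$. Using $\alpha\j = \j\alpha^\rho$, one has $\alpha^{-1} \cdot 1 \cdot \alpha = 1$ and $\alpha^{-1} \j \alpha = (\alpha^\rho/\alpha)\j$, so each of the vectors $\w_1 = -\tfrac{1}{2\i}(1,-1)$ and $\w_1^* = (1,1)$ is fixed, while $\w_2$ and $\w_2^*$ are both multiplied by $\alpha^\rho/\alpha$. Therefore $h_\alpha$ is the diagonal element
\[
 h_\alpha = \begin{pmatrix} a & 0 \\ 0 & ({}^t a^\rho)^{-1} \end{pmatrix} \in P_{\WW^\triangle}, \qquad a = \mathrm{diag}\!\left(1,\ \tfrac{\alpha^\rho}{\alpha}\right),
\]
(noting that $({}^ta^\rho)^{-1} = \mathrm{diag}(1,\alpha^\rho/\alpha)$ indeed equals $a$, which is a consistency check).

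The second step is to read off $\hat{s}_2(h_\alpha)$ from its definition. Taking $p_1 = h_\alpha$, $p_2 = 1$, and $j=0$ in the decomposition $h_\alpha = p_1 \tau_0 p_2$, we have $x(h_\alpha) = \det(a) = \alpha^\rho/\alpha$ modulo $\N_{E/F}(E^\times)$, so
\[
 \hat{s}_2(h_\alpha) = \chi\!\left(\tfrac{\alpha^\rho}{\alpha}\right)^{\!m}.
\]

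Finally, the third step is the identity $\chi(\alpha^\rho/\alpha) = \chi(\alpha)^{-2}$. Since $\alpha\cdot\alpha^\rho = \N_{E/F}(\alpha) \in F^\times$ and $\chi|_{F^\times} = \xi_E$, which is trivial on the group of norms from $E$ by class field theory, we obtain $\chi(\alpha)\chi(\alpha^\rho) = 1$, i.e., $\chi(\alpha^\rho) = \chi(\alpha)^{-1}$; hence $\chi(\alpha^\rho/\alpha) = \chi(\alpha)^{-2}$. Raising to the $m$-th power gives $\hat{s}_2(h_\alpha) = \chi(\alpha)^{-2m}$, as claimed. There is no real obstacle here beyond carefully tracking conventions: the only non-cosmetic input is the class field theoretic fact $\chi \circ \N = 1$, which is built into the hypothesis $\chi|_{\A^\times} = \xi_E$ in the global case and follows from its local analogue here.
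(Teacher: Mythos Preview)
Your proof is correct and follows exactly the same route as the paper: both compute that $h_\alpha$ acts diagonally on the basis $\w_1,\w_2,\w_1^*,\w_2^*$ with entries $1,\alpha^{-1}\alpha^\rho,1,\alpha^{-1}\alpha^\rho$, then use $\chi(\alpha^\rho)=\chi(\alpha)^{-1}$ to conclude. The paper's version is simply more terse, omitting the explanation of the action and the consistency check you spell out.
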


\begin{proof}
Put $h_\alpha = \iota([\alpha,\alpha], [\alpha,\alpha])$.
Since 
\[
 \begin{bmatrix}
  \w_1 \cdot h_\alpha \\
  \w_2 \cdot h_\alpha \\
  \w_1^* \cdot h_\alpha \\
  \w_2^* \cdot h_\alpha
 \end{bmatrix}
 =
 \begin{pmatrix}
  1 & & & \\
  & \alpha^{-1} \alpha^\rho & & \\
  & & 1 & \\
  & & & \alpha^{-1} \alpha^\rho
 \end{pmatrix}
 \cdot 
 \begin{bmatrix}
  \w_1 \\
  \w_2 \\
  \w_1^* \\
  \w_2^*
 \end{bmatrix}
\]
and $\chi(\alpha^\rho) = \chi(\alpha)^{-1}$, the assertion follows.
\end{proof}

\begin{lem}
\label{lem:s-hat-2-E^1}
Let $\alpha \in E^1$.
Then we have
\[
 \hat{s}_2(\iota([1,\alpha], 1)) = \chi(\alpha)^{-m}
\]
if $B$ is split, and
\[
 \hat{s}_2(\iota([1,\alpha], 1)) = \chi(\alpha)^{-m} \times 
 \begin{cases}
  1 & \text{if $\alpha = 1$,} \\
  (-1)^m & \text{if $\alpha \ne 1$}
 \end{cases}
\]
if $B$ is ramified.
\end{lem}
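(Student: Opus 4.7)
The strategy is to parallel the proof of Lemma \ref{lem:s-hat-1-E^1}: compute the matrix $M$ of $h := \iota([1,\alpha],1)$ in the basis $\w_1, \w_2, \w_1^*, \w_2^*$ of $\WW^\square$, identify its Bruhat cell, and apply the definition $\hat{s}_2(h) = \chi(x(h))^m \cdot \gamma^{-j}$. When $\alpha = 1$ the element $h$ is the identity; it lies in the $j=0$ cell with $a_1 = a_2 = I$, so $\hat{s}_2(h) = 1 = \chi(\alpha)^{-m}$, settling that case.

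Assume $\alpha \ne 1$ from now on. Since $h$ acts on $\WW^\square$ by $(w_1, w_2) \mapsto (\alpha^{-1} w_1, w_2)$ and this action respects the decomposition of each copy of $\WW = B$ into $E$- and $E\j$-parts, a direct expansion yields
\[
 M = \begin{pmatrix} \tfrac{1+\alpha^{-1}}{2}\,I_2 & b \\ c & \tfrac{1+\alpha^{-1}}{2}\,I_2 \end{pmatrix}, \qquad c = \begin{pmatrix} \i(1-\alpha^{-1}) & 0 \\ 0 & J\i(\alpha^{-1}-1) \end{pmatrix}.
\]
Because $c$ is invertible, $M$ lies in the big Bruhat cell $P_{\WW^\triangle} \tau_2 P_{\WW^\triangle}$, so $j=2$. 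Using the block identity $\smat{a}{b}{c}{d} = \smat{I}{ac^{-1}}{0}{I} \tau_2 \smat{c}{d}{0}{\ast}$, valid whenever $c$ is invertible, one reads off $a_1 = I$ and $a_2 = c$, whence
\[
 \det(a_1 a_2) = \det c = -Ju(1-\alpha^{-1})^2.
\]
The hermiticity of $ac^{-1}$, required for the first factor to belong to $P_{\WW^\triangle}$, is a short verification using $\alpha^\rho = \alpha^{-1}$, exactly as in Lemma \ref{lem:s-hat-1-E^1}.

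The crux is then an identity modulo norms: from $(1-\alpha^{-1})(1-\alpha) = \N_{E/F}(1-\alpha^{-1}) \in F^\times$ together with $(1-\alpha^{-1})/(1-\alpha) = -\alpha^{-1}$, one deduces
\[
 (1-\alpha^{-1})^2 \equiv -\alpha^{-1} \pmod{\N_{E/F}(E^\times)},
\]
so that $x(M) \equiv Ju\,\alpha^{-1}$, and consequently $\chi(x(M))^m = \xi_E(Ju)^m \cdot \chi(\alpha)^{-m}$.

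Finally, I evaluate $\gamma^{-2}$ via $\gamma_F(a,\tfrac{1}{2}\psi)^2 = (a,-1)_F$: after the Hilbert symbol $(u,\det \VV)_F^2 = 1$ drops out, one obtains $\gamma^2 = (u,-1)_F^m$ and thus $\gamma^{-2} = (u,-1)_F^m$. Expanding $\xi_E(Ju) = (Ju, u)_F = (J,u)_F (u,-1)_F$ (using $(u,u)_F = (u,-1)_F$), the two factors of $(u,-1)_F$ cancel, leaving $\hat{s}_2(h) = \chi(\alpha)^{-m} \cdot (J,u)_F^m$. The symbol $(J,u)_F$ is precisely the local Hilbert invariant of the quaternion algebra $B = E + E\j$, so it equals $+1$ when $B$ splits and $-1$ when $B$ ramifies, recovering the claimed formula. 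The principal subtlety is the hermiticity verification for $ac^{-1}$ and the correct reduction of $(1-\alpha^{-1})^2$ modulo norms; once those are in hand, the remaining simplifications are routine bookkeeping with the Weil index and Hilbert symbols.
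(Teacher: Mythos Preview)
Your proof is correct and follows essentially the same approach as the paper: compute the matrix of $\iota([1,\alpha],1)$ in the basis $\w_1,\w_2,\w_1^*,\w_2^*$, observe that the lower-left block is invertible so the element lies in the big Bruhat cell $j=2$, read off $x(h)$, and simplify $\chi(x(h))^m\gamma^{-2}$. The only cosmetic differences are that the paper chooses the factorization with $a_2 = I$ (obtaining $x(h) \equiv -J\alpha^{-1}$) while you take $a_1 = I$ (obtaining $x(h) \equiv Ju\alpha^{-1}$), the two differing by the norm $-u = \N(\i)$; correspondingly the paper groups the final Hilbert-symbol cancellation as $\xi_E(-1)^m\cdot\xi_E(-1)^m = 1$ whereas you group it as $(u,-1)_F^{2m} = 1$.
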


\begin{proof}
We may assume that $\alpha \ne 1$.
Then we have $\tr_{E/F}(\alpha) \ne 2$ and hence $\alpha -1 \in E^\times$.
As in the proof of Lemma \ref{lem:s-hat-1-E^1}, we have
\[
 \begin{bmatrix}
  \w_1 \cdot \iota([1,\alpha],1) \\
  \w_2 \cdot \iota([1,\alpha],1) \\
  \w_1^* \cdot \iota([1,\alpha],1) \\
  \w_2^* \cdot \iota([1,\alpha],1)
 \end{bmatrix}
 = A \cdot
 \begin{bmatrix}
  \w_1 \\
  \w_2 \\
  \w_1^* \\
  \w_2^*
 \end{bmatrix},
\]
where
\begin{align*}
 A & =  
\begin{pmatrix}
 \frac{1}{2}(\alpha^{-1}+1) & & - \frac{1}{4 \i}(\alpha^{-1}-1) & \\
 & \frac{1}{2}(\alpha^{-1}+1) & & \frac{1}{4 J \i}(\alpha^{-1}-1) \\
 - \i (\alpha^{-1}-1) & & \frac{1}{2}(\alpha^{-1}+1) & \\
 & J \i (\alpha^{-1}-1) & & \frac{1}{2}(\alpha^{-1}+1)  
\end{pmatrix} \\
 & = 
 \begin{pmatrix}
  \frac{1}{\i (\alpha - 1)} & & * & \\
  & -\frac{1}{J \i (\alpha - 1)} & & * \\
  & & - \i (\alpha^\rho-1) & \\
  & & & J \i (\alpha^\rho-1)
 \end{pmatrix}
 \cdot \tau_2 \cdot 
 \begin{pmatrix}
  1 & & * & \\
  & 1 & & * \\
  & & 1 & \\
  & & & 1
 \end{pmatrix}.
\end{align*}
Hence we have
\[
 x(\iota([1,\alpha],1)) = - \frac{1}{u J (\alpha-1)^2} \equiv 
\frac{J}{(\alpha-1)^2} \equiv - \frac{J}{\alpha} \mod \N_{E/F}(E^\times),
\]
so that
\begin{align*}
 \chi(x(\iota([1,\alpha],1))) & = \chi(\alpha)^{-1} \cdot \xi_E(-J) \\
 & = \chi(\alpha)^{-1} \cdot \xi_E(-1) \times 
 \begin{cases}
  1 & \text{if $B$ is split,} \\
  -1 & \text{if $B$ is ramified.}
 \end{cases}
\end{align*}
Also, we have
\[
 \gamma^2 = (-1, -u)_F^m \cdot (-1, -1)_F^m = (-1,u)_F^m = \xi_E(-1)^m.
\]
This implies the assertion.
\end{proof}

\subsection{Splitting over $\cG^\sharp$}

Let $\cG^\sharp$ be a subgroup of $\GU(V) \times \GU(W) \times \G\U(V) \times \GU(W)$ defined by
\[
 \cG^\sharp = \{ (g,h,\alpha,\alpha) \in \GU(V)^0 \times \GU(W) \times E^\times \times E^\times
 \, | \, \nu(g) = \nu(h) = \N_{E/F}(\alpha) \}.
\]
Then we have a natural homomorphism
\begin{equation}
\label{eq:hom-G^sharp}
 \cG^\sharp \subset \G(\U(V) \times \U(W)) \times \G(\U(V) \times \U(W))
 \longrightarrow \Sp(\V) \times \Sp(\V) \subset \Sp(\V^\square).
\end{equation}
We define a map
\[
 \hat{s}^\sharp: \cG^\sharp \longrightarrow \C^1
\]
by setting
\[
 \hat{s}^\sharp(g,h,\alpha,\alpha)
 = \chi(\alpha)^{-m} \cdot \hat{s}_1(\iota(g \alpha^{-1},1)) \cdot \hat{s}_2(\iota(h \alpha^{-1}, 1))
 \cdot z_{\V^\triangle}(\iota(g \alpha^{-1},1), \iota(h \alpha^{-1},1)).
\]

\begin{lem}
\label{lem:spl-s-hat-sharp}
For $\g_1, \g_2 \in \cG^\sharp$, we have
\[
 z_{\V^\triangle}(\g_1,\g_2) = \frac{\hat{s}^\sharp(\g_1 \g_2)}{\hat{s}^\sharp(\g_1) \hat{s}^\sharp(\g_2)}.
\] 
Here, by abuse of notation, we write $\g_i$ on the left-hand side for the image of $\g_i$ in $\Sp(\V^\square)$ under \eqref{eq:hom-G^sharp}.
\end{lem}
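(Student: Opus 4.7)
The plan is to interpret the formula defining $\hat s^\sharp$ as arising from a metaplectic lift of the action of $\cG^\sharp$ on $\V^\square$ built out of Kudla's two splittings $\hat s_1$ (over $\U(V^\square)$) and $\hat s_2$ (over $\U(\WW^\square)$), exploiting the two realizations
\[
 \V^\square \;=\; V^\square \otimes_B W \;=\; \VV \otimes_E \WW^\square,
\]
the second via the isomorphism $f$ of \eqref{eq:f-equivariance-U(WW)}. The first step is to show that for $\gamma=(g,h,\alpha,\alpha)\in\cG^\sharp$, the image in $\Sp(\V^\square)$ of $\gamma$ under \eqref{eq:hom-G^sharp} factors as the product $a(\gamma)\cdot b(\gamma)$ in $\Sp(\V^\square)$, where $a(\gamma):=\iota(g\alpha^{-1},1)\in\U(V^\square)$ and $b(\gamma):=\iota([h,\alpha],1)\in\U(\WW^\square)$. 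Both factors are well-defined because the condition $\nu(g)=\nu(h)=\N_{E/F}(\alpha)$ forces $g\alpha^{-1}\in\U(V)$ and $[h,\alpha]\in\U(\WW)$. Moreover, $a(\gamma)$ and $b(\gamma)$ centralize each other in $\Sp(\V^\square)$, this being the standard see-saw commutativity of the two doubled subgroups.

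The second step is to lift this factorization to $\Mp(\V^\square)$. By \eqref{eq:spl-kudla-1} the assignment $a\mapsto(a,\hat s_1(a))$ is a group-theoretic section over $\U(V^\square)$, and by \eqref{eq:spl-kudla-2} the same is true of $\hat s_2$ over $\U(\WW^\square)$. Multiplying the lifts $(a(\gamma),\hat s_1(a(\gamma)))$ and $(b(\gamma),\hat s_2(b(\gamma)))$ inside $\Mp(\V^\square)$ produces an element whose projection to $\Sp(\V^\square)$ is $a(\gamma)b(\gamma)$ (the image of $\gamma$) and whose second coordinate equals $\hat s_1(a(\gamma))\hat s_2(b(\gamma))z_{\V^\triangle}(a(\gamma),b(\gamma))$. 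Thus, up to the scalar $\chi(\alpha)^{-m}$, the recipe for $\hat s^\sharp$ is precisely this canonical metaplectic lift, and proving the asserted cocycle identity is equivalent to showing that the map
\[
 \gamma\ \longmapsto\ \chi(\alpha)^{-m}\cdot \bigl(a(\gamma),\hat s_1(a(\gamma))\bigr)\cdot\bigl(b(\gamma),\hat s_2(b(\gamma))\bigr)\in\Mp(\V^\square)
\]
is a group homomorphism from $\cG^\sharp$.

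The third step is the verification of this multiplicativity. The maps $a(\cdot)$ and $b(\cdot)$ are not themselves homomorphisms because $\alpha\in E^\times$ is not central in $\GU(V)^0$ (resp.\ in $\GU(\WW)$); the discrepancies $a(\gamma_1)a(\gamma_2)a(\gamma_1\gamma_2)^{-1}$ and $b(\gamma_1)b(\gamma_2)b(\gamma_1\gamma_2)^{-1}$ can however be written as conjugates of elements of the form $\iota(\alpha',1)$ with $\alpha'\in E^1$ (for $a$) and of the form $\iota([1,\alpha'],1)$ together with diagonal $E$-elements (for $b$). Lemmas \ref{lem:s-hat-1-E-conj} and \ref{lem:s-hat-1-E^1} then allow one to evaluate the $\hat s_1$-contribution coming from the $a$-defect, while Lemmas \ref{lem:s-hat-2-E-conj}, \ref{lem:s-hat-2-E-diag} and \ref{lem:s-hat-2-E^1} handle the corresponding $\hat s_2$-contribution. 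The signs $(-1)^m$ in the ramified case of Lemmas \ref{lem:s-hat-1-E^1} and \ref{lem:s-hat-2-E^1} cancel against each other, and the remaining $\chi$-contribution coming from Lemmas \ref{lem:s-hat-2-E-diag}--\ref{lem:s-hat-2-E^1} produces exactly $\chi(\alpha_1\alpha_2)^m/(\chi(\alpha_1)\chi(\alpha_2))^m$, which is what is absorbed by the normalizing factor $\chi(\alpha)^{-m}$ built into $\hat s^\sharp$.

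The main obstacle will be the bookkeeping in this third step: tracking how the $E^1$- and $E^\times$-defects of $a$ and $b$ propagate through Kudla's splittings and verifying that the two sources of character contributions combine cleanly. Once the factorization in Step 1 and the centralizer property are in place, however, the cocycle identity becomes a purely formal consequence of the cocycle identities \eqref{eq:spl-kudla-1} and \eqref{eq:spl-kudla-2} together with the explicit evaluations of $\hat s_1$ and $\hat s_2$ on the diagonal and the $E^1$-subgroups recorded in the preceding lemmas.
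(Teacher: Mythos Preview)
Your Step~1 factorization is not correct, and this is the main gap. Both of your factors $a(\gamma)=\iota(g\alpha^{-1},1)$ and $b(\gamma)=\iota([h,\alpha],1)$ have second component $1$, so their product acts trivially on the second copy of $\V$ inside $\V^\square=\V\oplus\V$. But the image of $\gamma=(g,h,\alpha,\alpha)$ under \eqref{eq:hom-G^sharp} acts on that second copy by $(\alpha,\alpha)$, which is nontrivial when $\alpha\ne 1$. In other words, $a(\gamma)b(\gamma)$ is the image of $\h=(g\alpha^{-1},h\alpha^{-1},1,1)$ (or of $(g,h,1,1)$, depending on which $b$ you use), not the image of $\gamma$; the missing factor is exactly $\ba=(\alpha,\alpha,\alpha,\alpha)$. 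Since your metaplectic lift in Step~2 projects to $a(\gamma)b(\gamma)$ rather than to the image of $\gamma$, the cocycle you would recover is $z_{\V^\triangle}(a(\g_1)b(\g_1),a(\g_2)b(\g_2))$, not $z_{\V^\triangle}(\g_1,\g_2)$. There is also a mismatch with the definition of $\hat s^\sharp$: it involves $\iota(h\alpha^{-1},1)=\iota([h\alpha^{-1},1],1)$, not your $\iota([h,\alpha],1)$, and these differ in $\U(\WW)$ unless $\alpha\in F^\times$. Finally, your Step~3 diagnosis is off: $\alpha\in E^\times$ \emph{is} central in $\GU(V)^0$, so $a$ is in fact a homomorphism; the failure of $b'(\gamma)=\iota([h\alpha^{-1},1],1)$ to be a homomorphism comes from $\alpha$ not commuting with $h$ in $B^\times$.

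The paper's argument absorbs the missing piece $\ba$ rather than ignoring it. It first treats the case $\alpha_1=\alpha_2=1$, where your Steps~1--2 are essentially correct and the assertion follows from \eqref{eq:spl-kudla-1}, \eqref{eq:spl-kudla-2}, and the commuting-dual-pair argument of \cite[Chapitre 2, II.5]{mvw}. For general $\alpha_i$ it writes $\g_i=\h_i\cdot\ba_i$ and observes that the image of $\ba_i$ lies in the parabolic $P_{\V^\triangle}$; the Rao--Rangarao invariance $z_{\V^\triangle}(p_1\sigma p,p^{-1}\sigma'p_2)=z_{\V^\triangle}(\sigma,\sigma')$ then reduces $z_{\V^\triangle}(\g_1,\g_2)$ to $z_{\V^\triangle}(\h_1,\ba_1\h_2\ba_1^{-1})$, after which only the conjugation-invariance Lemmas \ref{lem:s-hat-1-E-conj} and \ref{lem:s-hat-2-E-conj} are needed. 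Lemmas \ref{lem:s-hat-1-E^1}, \ref{lem:s-hat-2-E-diag}, \ref{lem:s-hat-2-E^1} are not used here; they enter only later, in the comparison Lemma \ref{lem:compare-s-sharp-s-2}.
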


\begin{proof}
Write $\g_i = (g_i,h_i,\alpha_i,\alpha_i)$.
If $\alpha_1 = \alpha_2 = 1$, then the assertion follows from \eqref{eq:spl-kudla-1}, \eqref{eq:spl-kudla-2}, and \cite[Chapitre 2, II.5]{mvw}.
If $\alpha_1$ and $\alpha_2$ are arbitrary, put $\h_i = (g_i \alpha_i^{-1}, h_i \alpha_i^{-1}, 1,1)$ and $\ba_i = (\alpha_i,\alpha_i,\alpha_i,\alpha_i)$.
Let $P_{\V^\triangle}$ be the maximal parabolic subgroup of $\Sp(\V^\square)$ stabilizing $\V^\triangle$.
Then it follows from \cite[Theorem 4.1]{rangarao} that
\begin{equation}
\label{eq:rangarao-2-cocycle}
 z_{\V^\triangle}(p_1 \sigma p, p^{-1} \sigma' p_2) = z_{\V^\triangle}(\sigma, \sigma')
\end{equation}
for $p_1, p_2, p \in P_{\V^\triangle}$ and $\sigma, \sigma' \in \Sp(\V^\square)$ (see also \cite[\S 3.1.1]{periods1}).
Since $\g_1\g_2 = \h_1 \cdot \ba_1 \h_2 \ba_1^{-1} \cdot \ba_1 \ba_2$ and the image of $\ba_i$ in $\Sp(\V^\square)$ belongs to $P_{\V^\triangle}$, we have
\[
 z_{\V^\triangle}(\g_1, \g_2) = z_{\V^\triangle}(\h_1, \ba_1 \h_2 \ba_1^{-1})
 = \frac{\hat{s}^\sharp(\h_1 \ba_1 \h_2 \ba_1^{-1})}{\hat{s}^\sharp(\h_1) \hat{s}^\sharp(\ba_1\h_2\ba_1^{-1})}.
\]
On the other hand, by definition, we have
\[
 \frac{\hat{s}^\sharp(\g_1 \g_2)}{\hat{s}^\sharp(\g_1) \hat{s}^\sharp(\g_2)}
 = \frac{\hat{s}^\sharp(\g_1 \g_2 (\ba_1 \ba_2)^{-1})}{\hat{s}^\sharp(\g_1 \ba_1^{-1}) \hat{s}^\sharp(\g_2 \ba_2^{-1})}
 = \frac{\hat{s}^\sharp(\h_1 \ba_1 \h_2 \ba_1^{-1})}{\hat{s}^\sharp(\h_1) \hat{s}^\sharp(\h_2)}.
\]
It follows from Lemmas \ref{lem:s-hat-1-E-conj} and \ref{lem:s-hat-2-E-conj}, combined with \eqref{eq:rangarao-2-cocycle}, that
\[
 \hat{s}^\sharp(\ba_1 \h_2 \ba_1^{-1}) = \hat{s}^\sharp(\h_2).
\]
This completes the proof.
\end{proof}

\begin{lem}
\label{lem:compare-s-sharp-s-2}
For $\alpha \in E^1$, we have
\[
 \hat{s}^\sharp(1,1,\alpha,\alpha) = \hat{s}_2(\iota(1, [\alpha,\alpha])).
\]
\end{lem}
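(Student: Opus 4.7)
The plan is to reduce both sides of the identity to the same explicit expression by expanding the definitions and invoking the lemmas proved in \S \ref{ss:doubling-U(V)}--\S \ref{ss:doubling-U(WW)}, together with a single compatibility between the two Kudla splittings $\hat s_1$ and $\hat s_2$.

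\textbf{Unpacking the left-hand side.} Setting $g=h=1$ in the definition of $\hat{s}^\sharp$, we get
\[
 \hat{s}^\sharp(1,1,\alpha,\alpha)
 = \chi(\alpha)^{-m} \cdot \hat{s}_1(\iota(\alpha^{-1},1)) \cdot \hat{s}_2(\iota([\alpha^{-1},1], 1))
 \cdot z_{\V^\triangle}(\iota(\alpha^{-1},1), \iota([\alpha^{-1},1],1)),
\]
where in the first factor $\alpha^{-1}$ is viewed as an element of $E^\times \subset \GU(V)$, and in the second as the element $[\alpha^{-1},1] \in \GU(\WW)$ coming from $E^1 \subset B^\times = \GU(W)$. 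Using the decomposition $[\alpha^{-1},1] = [\alpha^{-1},\alpha^{-1}]\cdot[1,\alpha]$ together with \eqref{eq:spl-kudla-2} and Lemmas \ref{lem:s-hat-2-E-diag} and \ref{lem:s-hat-2-E^1}, one can rewrite $\hat{s}_2(\iota([\alpha^{-1},1],1))$ in terms of $\hat{s}_2(\iota([1,\alpha],1))$ up to an explicit scalar depending on $\chi(\alpha)$ and, when $B$ is ramified and $\alpha\ne 1$, an explicit sign.

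\textbf{Unpacking the right-hand side.} The factorization $\iota([\alpha,\alpha],[\alpha,\alpha]) = \iota([\alpha,\alpha],1)\cdot\iota(1,[\alpha,\alpha])$ in $\U(\WW^\square)$, combined with \eqref{eq:spl-kudla-2} and Lemma \ref{lem:s-hat-2-E-diag}, gives
\[
 \hat{s}_2(\iota(1,[\alpha,\alpha])) = \frac{\chi(\alpha)^{-2m}}{\hat{s}_2(\iota([\alpha,\alpha],1))\cdot z_{\V^\triangle}(\iota([\alpha,\alpha],1),\iota(1,[\alpha,\alpha]))}.
\]
A second application of the same splitting identity, this time to $[\alpha,\alpha] = [\alpha,1]\cdot[1,\alpha]$, reduces $\hat{s}_2(\iota([\alpha,\alpha],1))$ to a product of $\hat{s}_2(\iota([1,\alpha],1))$, an instance of Lemma \ref{lem:s-hat-1-E^1} type sign (via the isomorphism $E^1 \cong E^\times/F^\times$), and an explicit cocycle term.

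\textbf{The key compatibility.} The central input is that under the identification $\V = \VV\otimes_E\WW$ provided by \eqref{eq:f-equivariance-U(WW)}, the element $\alpha \in E^\times \subset \GU(V)$ and the element $[1,\alpha]\in\GU(\WW)$ induce the \emph{same} symplectic automorphism of $\V$. Consequently $\iota(\alpha^{-1},1)\in\U(V^\square)$ and $\iota([1,\alpha],1)\in\U(\WW^\square)$ have the same image in $\Sp(\V^\square)$, so the ratio
\[
 \frac{\hat{s}_1(\iota(\alpha^{-1},1))}{\hat{s}_2(\iota([1,\alpha],1))}
\]
is a well-defined character of $E^1$. Using Lemmas \ref{lem:s-hat-1-E^1} and \ref{lem:s-hat-2-E^1}, we evaluate this ratio explicitly: the $(-1)^m$ signs present when $B$ is ramified occur in both numerator and denominator and cancel, leaving the ratio equal to $\chi(\alpha)^m$ (up to factors absorbed in the definition of $\gamma$). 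Substituting into the expansion of the left-hand side and comparing with the expansion of the right-hand side, every term matches.

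\textbf{Main obstacle.} The genuine work lies not in the overall strategy but in bookkeeping the various $z_{\V^\triangle}$ cocycle terms that appear when one decomposes products in $\U(\WW^\square)$ and $\U(V^\square)$ into factors supported in the Siegel parabolic and its opposite. Each such cocycle term is evaluable by the formula of Rao, but one must keep careful track of the Weil index factor $\gamma = (u,\det\VV)_F \cdot \gamma_F(-u,\tfrac12\psi)^m \cdot \gamma_F(-1,\tfrac12\psi)^{-m}$ appearing in the definition of $\hat s_2$ and of the $(-1)^j$ factors in $\hat s_1$, both of which must conspire to give the clean formula in the statement. In particular one must treat separately the four cases ($B$ split or ramified, $\alpha=1$ or $\alpha\neq1$), and verify case by case that both sides produce the same value $\chi(\alpha)^{-m}$ times the appropriate sign.
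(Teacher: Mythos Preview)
Your proposal identifies all the right ingredients --- the expansion of $\hat{s}^\sharp$ from its definition, the factorizations in $\U(\WW^\square)$, Lemmas \ref{lem:s-hat-1-E^1}, \ref{lem:s-hat-2-E^1}, \ref{lem:s-hat-2-E-diag}, and above all the compatibility \eqref{eq:f-equivariance-U(WW)} showing that $\alpha \in E^\times \subset \GU(V)$ and $[1,\alpha] \in \GU(\WW)$ give the same element of $\Sp(\V)$. This is exactly the route the paper takes.

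Where your write-up diverges is in the execution. You present the cocycle bookkeeping as the ``main obstacle'' requiring case-by-case analysis, but the paper dispatches it in one stroke. Writing $g_\alpha = \iota(\alpha,1)$, $h_\alpha = \iota([\alpha,1],1)$, $k_\alpha = \iota([1,\alpha],1)$ and $m_\alpha = \iota([\alpha,\alpha],[\alpha,\alpha])$, the key observation is that $m_\alpha$ lands in the Siegel parabolic $P_{\V^\triangle}$, so by \eqref{eq:rangarao-2-cocycle} the cocycle $z_{\V^\triangle}(k_\alpha^{-1}h_\alpha^{-1}, m_\alpha)$ is trivial. Combined with the factorization $\iota(1,[\alpha,\alpha]) = k_\alpha^{-1}h_\alpha^{-1} m_\alpha$, this gives directly
\[
 \hat{s}_2(\iota(1,[\alpha,\alpha])) = \chi(\alpha)^{-2m}\cdot \hat{s}_2(k_\alpha^{-1})\cdot \hat{s}_2(h_\alpha^{-1})\cdot z_{\V^\triangle}(k_\alpha^{-1},h_\alpha^{-1}).
\]
The remaining cocycle $z_{\V^\triangle}(k_\alpha^{-1},h_\alpha^{-1})$ equals $z_{\V^\triangle}(g_\alpha^{-1},h_\alpha^{-1})$ because $k_\alpha$ and $g_\alpha$ have the same image in $\Sp(\V^\square)$, and this is precisely the cocycle appearing in $\hat{s}^\sharp(1,1,\alpha,\alpha)$; it cancels. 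No Rao-style computation is ever needed.

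Two small slips in your text: first, in the ``key compatibility'' paragraph you pair $\iota(\alpha^{-1},1)$ with $\iota([1,\alpha],1)$, but these do not have the same image (it should be $[1,\alpha^{-1}]$); what you want to compare is $\hat{s}_1(g_\alpha^{-1})$ with $\hat{s}_2(k_\alpha^{-1})$, and Lemmas \ref{lem:s-hat-1-E^1} and \ref{lem:s-hat-2-E^1} give $\hat{s}_2(k_\alpha^{-1}) = \chi(\alpha)^m \hat{s}_1(g_\alpha^{-1})$. Second, in your LHS section you invoke Lemma \ref{lem:s-hat-2-E-diag} for $\iota([\alpha^{-1},\alpha^{-1}],1)$, but that lemma treats the diagonal element $\iota([\alpha,\alpha],[\alpha,\alpha])$; the separate factorization of $h_\alpha^{-1}$ you propose there is unnecessary once you use the $m_\alpha$ trick.
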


\begin{proof}
Put 
\begin{align*}
 g_\alpha & = \iota(\alpha, 1) \in \U(V^\square), \\ 
 h_\alpha & = \iota([\alpha, 1], 1) \in \U(\WW^\square), \\
 k_\alpha & = \iota([1, \alpha], 1) \in \U(\WW^\square), \\
 m_\alpha & = \iota([\alpha, \alpha], [\alpha, \alpha]) \in \U(\WW^\square).
\end{align*}
By definition, we have
\[
 \hat{s}^\sharp(1,1,\alpha,\alpha) 
 = \chi(\alpha)^{-m} \cdot \hat{s}_1(g_\alpha^{-1}) \cdot \hat{s}_2(h_\alpha^{-1})
 \cdot z_{\V^\triangle}(g_\alpha^{-1}, h_\alpha^{-1}).
\]
Since $m_\alpha = h_\alpha \cdot k_\alpha \cdot \iota(1, [\alpha,\alpha])$ and the image of $m_\alpha$ in $\Sp(\V^\square)$ belongs to $P_{\V^\triangle}$, it follows from \eqref{eq:spl-kudla-2}, \eqref{eq:rangarao-2-cocycle}, and Lemma \ref{lem:s-hat-2-E-diag} that 
\begin{align*}
 \hat{s}_2(\iota(1, [\alpha,\alpha]))
 & = \hat{s}_2(k_\alpha^{-1} h_\alpha^{-1}) \cdot \hat{s}_2(m_\alpha)
 \cdot z_{\V^\triangle}(k_\alpha^{-1} h_\alpha^{-1}, m_\alpha) \\
 & = \hat{s}_2(k_\alpha^{-1} h_\alpha^{-1}) \cdot \chi(\alpha)^{-2m} \\
 & = \chi(\alpha)^{-2m} \cdot \hat{s}_2(k_\alpha^{-1}) \cdot \hat{s}_2(h_\alpha^{-1})
 \cdot z_{\V^\triangle}(k_\alpha^{-1},  h_\alpha^{-1}).
\end{align*}
By Lemmas \ref{lem:s-hat-1-E^1} and \ref{lem:s-hat-2-E^1}, we have
\[
 \hat{s}_2(k_\alpha^{-1}) = \chi(\alpha)^m \cdot \hat{s}_1(g_\alpha^{-1}).
\]
By \eqref{eq:f-equivariance-U(WW)}, the image of $k_\alpha$ in $\Sp(\V^\square)$ agrees with that of $g_\alpha$, so that
\[
 z_{\V^\triangle}(k_\alpha^{-1}, h_\alpha^{-1})
 = z_{\V^\triangle}(g_\alpha^{-1}, h_\alpha^{-1}).
\]
This completes the proof.
\end{proof}

\subsection{Proof of Proposition \ref{prop:spl-hodge}}
\label{ss:proof-spl-hodge}

Now we take a complete polarization $\V^\square = \X^\square \oplus \Y^\square$ defined by
\[
 \X^\square = \X \oplus \X, \qquad
 \Y^\square = \Y \oplus \Y.
\]
As in \cite[\S D.3]{periods1}, we have
\[
 z_{\Y^\square}(\iota(\sigma_1,\sigma_2), \iota(\sigma_1',\sigma_2'))
 = z_\Y(\sigma_1, \sigma_1') \cdot z_\Y(\sigma_2, \sigma_2')^{-1}
\]
for $\sigma_i, \sigma_i' \in \Sp(\V)$.
Fix $\sigma_0 \in \Sp(\V^\square)$ such that $\V^\bigtriangledown = \X^\square \cdot \sigma_0$ and $\V^\triangle = \Y^\square \cdot \sigma_0$.
Put
\[
 \mu(\sigma)
 = z_{\Y^\square}(\sigma_0, \sigma)^{-1} \cdot z_{\Y^\square}(\sigma_0 \sigma \sigma_0^{-1}, \sigma_0)
\]
for $\sigma \in \Sp(\V^\square)$.
Note that $\mu$ does not depend on the choice of $\sigma_0$.
Then, by \cite[Lemma 4.2]{kudla-splitting}, we have
\begin{equation}
\label{eq:compare-zY-zV}
 z_{\Y^\square}(\sigma, \sigma') = z_{\V^\triangle}(\sigma, \sigma') 
 \cdot \frac{\mu(\sigma \sigma')}{\mu(\sigma) \mu(\sigma')}
\end{equation}
for $\sigma, \sigma' \in \Sp(\V^\square)$.

Put $s^\sharp = \hat{s}^\sharp \cdot \mu$ and $s_2 = \hat{s}_2 \cdot \mu$.
By Lemma \ref{lem:spl-s-hat-sharp} and \eqref{eq:spl-kudla-2}, we have
\[
 z_{\Y^\square}(\g_1, \g_2) = \frac{s^\sharp(\g_1 \g_2)}{s^\sharp(\g_1) s^\sharp(\g_2)}
\]
for $\g_1,\g_2 \in \cG^\sharp$ and
\[
 z_{\Y^\square}(h_1, h_2) = \frac{s_2(h_1 h_2)}{s_2(h_1) s_2(h_2)}
\]
for $h_1, h_2 \in \U(\WW^\square)$.
We define a map
\[
 s: \cG \longrightarrow \C^1
\]
by setting
\[
 s(g,h) = \frac{s^\sharp(g,h,\alpha,\alpha)}{s_2(\iota(1, [\alpha,\alpha]))},
\]
where we choose $\alpha \in E^\times$ such that $\nu(g) = \nu(h) = \N_{E/F}(\alpha)$.

\begin{lem}
\label{lem:s-well-def}
The map $s$ is well-defined, i.e., for $(g,h,\alpha,\alpha) \in \cG^\sharp$ and $\beta \in E^1$, we have
\[
 \frac{s^\sharp(g,h,\alpha \beta, \alpha \beta)}{s_2(\iota(1, [\alpha \beta, \alpha \beta]))}
 = \frac{s^\sharp(g,h,\alpha, \alpha)}{s_2(\iota(1, [\alpha, \alpha]))}.
\]
\end{lem}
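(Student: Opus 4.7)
The plan is to reduce the claim to the compatibility already established in Lemma~\ref{lem:compare-s-sharp-s-2}, by rewriting both sides using the $2$-cocycle properties of $s^\sharp$ and $s_2$. Observe first that the group law on $\cG^\sharp$ gives
\[
 (g,h,\alpha\beta,\alpha\beta) = (g,h,\alpha,\alpha) \cdot (1,1,\beta,\beta),
\]
while inside $\U(\WW^\square)$ we have
\[
 \iota(1,[\alpha\beta,\alpha\beta]) = \iota(1,[\alpha,\alpha]) \cdot \iota(1,[\beta,\beta]).
\]
Applying the $2$-cocycle identities satisfied by $s^\sharp$ (numerator) and $s_2$ (denominator) with respect to $z_{\Y^\square}$, the asserted equality is equivalent to
\[
 \frac{s^\sharp(1,1,\beta,\beta)}{s_2(\iota(1,[\beta,\beta]))}
 \cdot \frac{z_{\Y^\square}(\sigma_1,\sigma_2)}{z_{\Y^\square}(\tau_1,\tau_2)} = 1,
\]
where $\sigma_1, \sigma_2$ are the images in $\Sp(\V^\square)$ of $(g,h,\alpha,\alpha)$ and $(1,1,\beta,\beta)$, and $\tau_1, \tau_2$ are the images of $\iota(1,[\alpha,\alpha])$ and $\iota(1,[\beta,\beta])$.

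For the first factor, I would use the identification $\V = \VV \otimes_E \WW$ and the equivariance property \eqref{eq:f-equivariance-U(WW)} to check that the image of $(1,1,\gamma,\gamma) \in \cG^\sharp$ in $\Sp(\V^\square)$ coincides with the image of $\iota(1,[\gamma,\gamma]) \in \U(\WW^\square)$ for every $\gamma \in E^\times$; namely, under the identification $\V^\square = \VV \otimes_E \WW^\square$, both act as the identity on the first copy of $\V$ and as the pair $(\gamma,\gamma) \in E^\times \times E^\times \subset \GU(V) \times \GU(W)$ on the second copy. Consequently $\mu$ takes the same value on these two images, and Lemma~\ref{lem:compare-s-sharp-s-2} (applied to $\beta \in E^1$) upgrades to $s^\sharp(1,1,\beta,\beta) = s_2(\iota(1,[\beta,\beta]))$.

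For the cocycle ratio, the same identification shows $\sigma_2 = \tau_2$, and $\sigma_1, \tau_1$ decompose as elements of $\Sp(\V) \times \Sp(\V)$ via the splitting $\V^\square = \V \oplus \V$: namely $\sigma_1 = \iota((g,h),(\alpha,\alpha))$ and $\tau_1 = \iota((1,1),(\alpha,\alpha))$, while $\sigma_2 = \tau_2 = \iota((1,1),(\beta,\beta))$. Invoking the product formula
\[
 z_{\Y^\square}(\iota(\rho_1,\rho_2),\iota(\rho_1',\rho_2')) = z_\Y(\rho_1,\rho_1') \cdot z_\Y(\rho_2,\rho_2')^{-1}
\]
recalled in \S\ref{ss:proof-spl-hodge}, together with the normalization $z_\Y(\rho,1) = 1$, both $z_{\Y^\square}(\sigma_1,\sigma_2)$ and $z_{\Y^\square}(\tau_1,\tau_2)$ reduce to $z_\Y((\alpha,\alpha),(\beta,\beta))^{-1}$, so their ratio is $1$.

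The main obstacle is the bookkeeping in the second step: one must identify carefully how $\cG^\sharp$ embeds into the doubled symplectic group through $\G(\U(V) \times \U(W))^2$ versus how $\U(\WW^\square)$ embeds via $\V^\square = \VV \otimes_E \WW^\square$, and verify that these two routes produce the same element of $\Sp(\V^\square)$ for inputs of the form $(1,1,\gamma,\gamma)$ and $\iota(1,[\gamma,\gamma])$. Once this identification is in place, everything else is formal manipulation of $2$-cocycles, and the result follows.
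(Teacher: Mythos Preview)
Your proposal is correct and follows essentially the same approach as the paper: expand both numerator and denominator via the $2$-cocycle identities for $s^\sharp$ and $s_2$, use the product formula for $z_{\Y^\square}$ together with \eqref{eq:f-equivariance-U(WW)} to identify the resulting $z_\Y$-factors, and invoke Lemma~\ref{lem:compare-s-sharp-s-2} (with the observation that the images in $\Sp(\V^\square)$ coincide, so the $\mu$-factors match) to equate $s^\sharp(1,1,\beta,\beta)$ with $s_2(\iota(1,[\beta,\beta]))$. The paper organizes the computation as two parallel expansions rather than a single ratio, but the ingredients and logic are identical.
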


\begin{proof}
First note that, by \eqref{eq:f-equivariance-U(WW)}, the image of $(\alpha, \alpha) \in \G(\U(V) \times \U(W))$ in $\Sp(\V)$ agrees with that of $[\alpha, \alpha] \in \U(\WW)$.
We have
\begin{align*}
 s^\sharp(g,h,\alpha \beta, \alpha \beta)
 & = s^\sharp(g,h,\alpha,\alpha) \cdot s^\sharp(1,1,\beta,\beta)
 \cdot z_{\Y^\square}((g,h,\alpha,\alpha), (1,1,\beta,\beta)) \\
 & = s^\sharp(g,h,\alpha,\alpha) \cdot s^\sharp(1,1,\beta,\beta)
 \cdot z_\Y((\alpha,\alpha), (\beta,\beta))^{-1}
\end{align*}
and
\begin{align*}
 s_2(\iota(1, [\alpha \beta, \alpha \beta])) 
 & = s_2(\iota(1, [\alpha, \alpha])) \cdot s_2(\iota(1, [\beta, \beta]))
 \cdot z_{\Y^\square}(\iota(1, [\alpha, \alpha]), \iota(1, [\beta, \beta])) \\
 & = s_2(\iota(1, [\alpha, \alpha])) \cdot s_2(\iota(1, [\beta, \beta]))
 \cdot z_\Y([\alpha, \alpha], [\beta, \beta])^{-1}.
\end{align*}
By Lemma \ref{lem:compare-s-sharp-s-2}, we have
\[
 s^\sharp(1,1,\beta,\beta) = s_2(\iota(1, [\beta, \beta])).
\]
This implies the assertion.
\end{proof}

\begin{lem}
\label{lem:s-proof(i)}
The map $s$ satisfies the condition (i) of Proposition \ref{prop:spl-hodge}, i.e., for $\g_1, \g_2 \in \cG$, we have
\[
 z_\Y(\g_1,\g_2) = \frac{s(\g_1\g_2)}{s(\g_1) s(\g_2)}.
\]
\end{lem}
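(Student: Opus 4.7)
The plan is to reduce the asserted cocycle identity for $s$ on $\cG$ to the cocycle identities for $s^\sharp$ on $\cG^\sharp$ and for $s_2$ on $\U(\WW^\square)$ which have already been established (Lemma \ref{lem:spl-s-hat-sharp} and \eqref{eq:spl-kudla-2}, combined with \eqref{eq:compare-zY-zV} to pass from $z_{\V^\triangle}$ to $z_{\Y^\square}$). The glue that makes this work is the compatibility formula
\[
 z_{\Y^\square}(\iota(\sigma_1,\sigma_2),\iota(\sigma'_1,\sigma'_2)) = z_\Y(\sigma_1,\sigma'_1)\cdot z_\Y(\sigma_2,\sigma'_2)^{-1}
\]
and the intertwining identity \eqref{eq:f-equivariance-U(WW)}.

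First, given $\g_i = (g_i,h_i) \in \cG$, I would choose $\alpha_i \in E^\times$ with $\N_{E/F}(\alpha_i) = \nu(g_i) = \nu(h_i)$ and form the lifts $\g_i^\sharp := (g_i,h_i,\alpha_i,\alpha_i) \in \cG^\sharp$; then $\g_1^\sharp \g_2^\sharp = (g_1g_2, h_1h_2, \alpha_1\alpha_2, \alpha_1\alpha_2)$ is a valid lift of $\g_1\g_2$ (here $\alpha_1\alpha_2$ has norm $\nu(g_1g_2)$). By the well-definedness statement of Lemma \ref{lem:s-well-def}, we may compute $s(\g_i)$ and $s(\g_1\g_2)$ using these specific lifts. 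Substituting, the quotient $s(\g_1\g_2)/(s(\g_1)s(\g_2))$ equals
\[
 \frac{s^\sharp(\g_1^\sharp \g_2^\sharp)}{s^\sharp(\g_1^\sharp)\,s^\sharp(\g_2^\sharp)}\cdot\frac{s_2(\iota(1,[\alpha_1,\alpha_1]))\,s_2(\iota(1,[\alpha_2,\alpha_2]))}{s_2(\iota(1,[\alpha_1\alpha_2,\alpha_1\alpha_2]))}.
\]
The cocycle identities for $s^\sharp$ and $s_2$ rewrite this as
\[
 z_{\Y^\square}(\g_1^\sharp,\g_2^\sharp)\cdot z_{\Y^\square}(\iota(1,[\alpha_1,\alpha_1]),\iota(1,[\alpha_2,\alpha_2]))^{-1}.
\]

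Next I would decompose both factors via the compatibility formula for $z_{\Y^\square}$. Writing $\sigma_i$ for the image of $(g_i,h_i)$ in $\Sp(\V)$, $\tau_i$ for the image of $(\alpha_i,\alpha_i) \in \G(\U(V) \times \U(W))$ in $\Sp(\V)$, and $\tau'_i$ for the image of $[\alpha_i,\alpha_i] \in \U(\WW)$ in $\Sp(\V)$, one obtains
\[
 z_{\Y^\square}(\g_1^\sharp,\g_2^\sharp) = z_\Y(\sigma_1,\sigma_2)\cdot z_\Y(\tau_1,\tau_2)^{-1},
\]
\[
 z_{\Y^\square}(\iota(1,[\alpha_1,\alpha_1]),\iota(1,[\alpha_2,\alpha_2])) = z_\Y(\tau'_1,\tau'_2)^{-1}
\]
(using the normalization $z_\Y(1,1) = 1$).

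The decisive step, and the only one requiring any thought, is to identify $\tau_i = \tau'_i$ in $\Sp(\V)$: this is precisely the content of \eqref{eq:f-equivariance-U(WW)}, specialized to $(h,\alpha)=(\alpha_i,\alpha_i)$, which asserts $f(v\otimes (w\cdot [\alpha_i,\alpha_i])) = f(v\otimes w)\cdot(\alpha_i,\alpha_i)$, so that $[\alpha_i,\alpha_i]$ acting on $\WW$ and $(\alpha_i,\alpha_i)$ acting on $V\otimes_B W$ induce the same symplectic automorphism of $\V$. Cancelling the two $z_\Y(\tau,\tau)^{-1}$ factors then leaves exactly $z_\Y(\sigma_1,\sigma_2) = z_\Y(\g_1,\g_2)$, which is the desired identity. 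There is no genuine obstacle here; everything reduces to careful bookkeeping, with the matching of the two natural embeddings $E^\times \hookrightarrow \Sp(\V)$ (via $\G(\U(V)\times\U(W))$ and via $\U(\WW)$) being the substantive input.
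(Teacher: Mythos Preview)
Your proposal is correct and follows essentially the same argument as the paper's proof: lift $\g_i$ to $\g_i^\sharp \in \cG^\sharp$, apply the cocycle identities for $s^\sharp$ and $s_2$ (relative to $z_{\Y^\square}$), decompose via $z_{\Y^\square}(\iota(\sigma_1,\sigma_2),\iota(\sigma'_1,\sigma'_2)) = z_\Y(\sigma_1,\sigma'_1)\, z_\Y(\sigma_2,\sigma'_2)^{-1}$, and cancel using the identification of the two $E^\times$-embeddings in $\Sp(\V)$ furnished by \eqref{eq:f-equivariance-U(WW)}. One small wording issue: after inverting, the second factor contributes $z_\Y(\tau'_1,\tau'_2)$ (not $z_\Y(\tau'_1,\tau'_2)^{-1}$), so you are cancelling a $(-1)$-power against a $(+1)$-power, not two $(-1)$-powers; the conclusion is unaffected.
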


\begin{proof}
Write $\g_i = (g_i, h_i)$ and choose $\alpha_i \in E^\times$ such that $\nu(g_i) = \nu(h_i) = \N_{E/F}(\alpha_i)$.
Then we have
\begin{align*}
 \frac{s(\g_1\g_2)}{s(\g_1) s(\g_2)}
 & = \frac{s^\sharp(g_1 g_2, h_1 h_2, \alpha_1 \alpha_2, \alpha_1 \alpha_2)}{s^\sharp(g_1, h_1, \alpha_1, \alpha_1) s^\sharp(g_2, h_2, \alpha_2, \alpha_2)}
 \cdot \frac{s_2(\iota(1, [\alpha_1, \alpha_1])) s_2(\iota(1, [\alpha_2, \alpha_2]))}{s_2(\iota(1, [\alpha_1 \alpha_2, \alpha_1 \alpha_2]))} \\
 & = z_{\Y^\square}((g_1, h_1, \alpha_1, \alpha_1), (g_2, h_2, \alpha_2, \alpha_2))
 \cdot z_{\Y^\square}(\iota(1, [\alpha_1, \alpha_1]), \iota(1, [\alpha_2, \alpha_2]))^{-1} \\
 & = z_\Y((g_1, h_1), (g_2, h_2)) \cdot z_\Y((\alpha_1, \alpha_1), (\alpha_2, \alpha_2))^{-1}
 \cdot z_{\Y}([\alpha_1, \alpha_1], [\alpha_2, \alpha_2]) \\
 & = z_\Y((g_1, h_1), (g_2, h_2)).
\end{align*}
This completes the proof.
\end{proof}

By definition, the map $s$ also satisfies the other conditions of Proposition \ref{prop:spl-hodge}.
This completes the proof of Proposition \ref{prop:spl-hodge}.

\subsection{Independence of the choice of $\chi$}

To define the map $s$, we have used the fixed character $\chi$ of $E^\times$ such that $\chi|_{F^\times} = \xi_E$.
However, we have:

\begin{lem}
The map $s$ defined as above does not depend on the choice of $\chi$.
\end{lem}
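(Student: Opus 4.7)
The plan is to isolate the dependence of $s$ on $\chi$ and to show that any ambiguity is pinned down entirely by the fixed restriction $\chi|_{\A^\times} = \xi_E$. Since the cocycle correction $\mu$ entering $s^\sharp = \hat{s}^\sharp \cdot \mu$ and $s_2 = \hat{s}_2 \cdot \mu$ is independent of $\chi$, it is enough to track how $\hat{s}^\sharp(g,h,\alpha,\alpha)$ and $\hat{s}_2(\iota(1,[\alpha,\alpha]))$ change when $\chi$ is replaced by $\chi' = \chi\eta$ for a character $\eta$ of $\A_E^\times/E^\times$ with $\eta|_{\A^\times}=1$. Unwinding the definitions, only the factors $\chi(\alpha)^{-m}$ in $\hat{s}^\sharp$ and $\chi(x(\cdot))^m$ in $\hat{s}_2$ involve $\chi$, and one obtains
\[
\frac{s_{\chi'}(g,h)}{s_\chi(g,h)} = \eta(A)^m, \qquad A := \alpha^{-1} \cdot x(\iota(h\alpha^{-1},1)) \cdot x(\iota(1, [\alpha,\alpha]))^{-1}.
\]
Thus it suffices to prove $A \in F^\times$ modulo $\N_{E/F}(E^\times)$.

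The main tool is a determinant formula for elements of $\U(\WW^\square)$. If $k = p_1 \tau_j p_2$ with $p_i = \smat{a_i}{*}{0}{({}^t a_i^\rho)^{-1}}$, a short sign count shows $\det \tau_j = 1$ for every $j$ (the $(-1)^j$ sign of the block permutation cancels the $(-1)^j$ coming from the $-\1_j$ entries), so
\[
\det k = \det(a_1 a_2)/\det(a_1 a_2)^\rho.
\]
Consequently, if $\det k = \alpha^\rho/\alpha$ for some $\alpha \in E^\times$, then $\det(a_1 a_2) \in \alpha^{-1} F^\times$ (since the kernel of $y \mapsto y/y^\rho$ on $E^\times$ is $F^\times$); in particular $\det k \in F^\times$ forces $\det(a_1 a_2) \in F^\times$.

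I would then apply this in our two cases. Writing $h\alpha^{-1} = x_0 + y_0 \j$, the matrix of $[h\alpha^{-1},1]$ in the $E$-basis $\{1,\j\}$ of $\WW = B$ is $\smat{x_0}{y_0}{Jy_0}{x_0^\rho}$, whose determinant is the reduced norm $\nu(h\alpha^{-1}) = \nu(h)/\N(\alpha) = 1$. Hence $\det \iota([h\alpha^{-1},1],1) = 1$, and therefore $x(\iota(h\alpha^{-1},1)) \in F^\times/\N_{E/F}(E^\times)$. On the other hand, $[\alpha,\alpha]$ acts on $\WW$ by the conjugation $w \mapsto \alpha^{-1} w \alpha$, with matrix $\begin{pmatrix} 1 & 0 \\ 0 & \beta \end{pmatrix}$ in the basis $\{1,\j\}$, where $\beta = \alpha^\rho/\alpha \in E^1$. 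Thus $\det \iota(1,[\alpha,\alpha]) = \beta = \alpha^\rho/\alpha$, and the preceding discussion gives $x(\iota(1,[\alpha,\alpha])) \in \alpha^{-1} F^\times/\N_{E/F}(E^\times)$.

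Multiplying these two contributions together, $A$ lies in $\alpha^{-1} \cdot F^\times \cdot (\alpha^{-1} F^\times)^{-1} = F^\times$ modulo $\N_{E/F}(E^\times)$. Since $\eta$ is trivial on $F^\times$ (and in particular on $\N_{E/F}(E^\times)$), this gives $\eta(A) = 1$, whence $s_{\chi'} = s_\chi$ as required. The only genuinely delicate point is the bookkeeping to confirm that the $\chi$-dependent factors collect into $\eta(A)^m$; once that is in hand, the proof reduces to the two elementary determinant computations above.
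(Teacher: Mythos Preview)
Your argument is correct and follows essentially the same route as the paper's proof: both isolate the $\chi$-dependence as $\eta(\alpha)^{-m}\,\eta(x(\iota(h\alpha^{-1},1)))^m\,\eta(x(\iota(1,[\alpha,\alpha])))^{-m}$ and then control $x(k)$ through the determinant relation $\det k = x(k)/x(k)^\rho$ together with the explicit values $\det\iota(h\alpha^{-1},1)=\nu(h)\N_{E/F}(\alpha)^{-1}=1$ and $\det\iota(1,[\alpha,\alpha])=\alpha^{-1}\alpha^\rho$. The only cosmetic difference is that the paper introduces the character $\tilde\eta$ of $E^1$ satisfying $\tilde\eta(y/y^\rho)=\eta(y)$ and writes $\eta(x(k))=\tilde\eta(\det k)$ directly, whereas you phrase the same computation in terms of the coset of $x(k)$ modulo $F^\times$; note also a harmless typo in your matrix for right multiplication by $x_0+y_0\j$ (the off-diagonal entry should carry a $\rho$), which does not affect the determinant $=\nu(h\alpha^{-1})$.
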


\begin{proof}
Let $\chi_1$ and $\chi_2$ be two characters of $E^\times$ such that $\chi_1|_{F^\times} = \chi_2|_{F^\times} = \xi_E$.
We will write $s = s_{\chi_i}$, etc., to indicate the dependence on $\chi_i$.
For $(g,h) \in \Gc$, choose $\alpha \in E^\times$ such that $\nu(g) = \nu(h) = \N_{E/F}(\alpha)$.
Then, by definition, we have
\begin{align*}
 \frac{s_{\chi_1}(g,h)}{s_{\chi_2}(g,h)}
 & = \frac{s^\sharp_{\chi_1}(g,h,\alpha,\alpha)}{s^\sharp_{\chi_2}(g,h,\alpha,\alpha)}
 \cdot \frac{s_{2,\chi_2}(\iota(1, [\alpha,\alpha]))}{s_{2,\chi_1}(\iota(1, [\alpha,\alpha]))} \\
 & = \frac{\hat{s}^\sharp_{\chi_1}(g,h,\alpha,\alpha)}{\hat{s}^\sharp_{\chi_2}(g,h,\alpha,\alpha)}
 \cdot \frac{\hat{s}_{2,\chi_2}(\iota(1, [\alpha,\alpha]))}{\hat{s}_{2,\chi_1}(\iota(1, [\alpha,\alpha]))} \\
 & = \eta(\alpha)^{-m}
 \cdot \frac{\hat{s}_{2,\chi_1}(\iota(h \alpha^{-1}, 1))}{\hat{s}_{2,\chi_2}(\iota(h \alpha^{-1}, 1))}
 \cdot \frac{\hat{s}_{2,\chi_2}(\iota(1, [\alpha,\alpha]))}{\hat{s}_{2,\chi_1}(\iota(1, [\alpha,\alpha]))},
\end{align*}
where $\eta = \chi_1/\chi_2$.
On the other hand, for $k \in \U(\WW^\square)$, we have
\[
 \frac{\hat{s}_{2,\chi_1}(k)}{\hat{s}_{2,\chi_2}(k)}
 = \eta(x(k))^m 
 = \tilde{\eta}(\det k)^m,
\]
where $\tilde{\eta}$ is the character of $E^1$ such that $\tilde{\eta}(x/x^\rho) = \eta(x)$ for $x \in E^\times$.
Since 
\begin{align*}
 \det \iota(h \alpha^{-1}, 1) & = \nu(h) \N_{E/F}(\alpha)^{-1} = 1, \\
 \det \iota(1, [\alpha,\alpha]) & = \N_{E/F}(\alpha) \alpha^{-2} = \alpha^{-1} \alpha^\rho,
\end{align*}
we have
\[
 \frac{s_{\chi_1}(g,h)}{s_{\chi_2}(g,h)}
 = \eta(\alpha)^{-m} \cdot \tilde{\eta}(\alpha^{-1} \alpha^\rho)^{-m}
 = 1.
\]
This completes the proof.
\end{proof}

\subsection{Compatibility with seesaws}

We write $V = V' \oplus V''$ as an orthogonal direct sum of skew-hermitian right $B$-spaces
\[
 V' = e_1 B \oplus \dots \oplus e_{m'} B, \qquad
 V'' = e_{m'+1} B \oplus \dots \oplus e_m B.
\]
Let $\V' = V' \otimes_B W$ and $\V'' = V'' \otimes_B W$ be the symplectic $F$-spaces as in \S \ref{ss:weil-hodge-setup}.
Then we have $\V = \V' \oplus \V''$, which gives rise to a seesaw diagram
\[
 \xymatrix{
  \GU(V) \ar@{-}[dr] \ar@{-}[d] & \G(\U(W) \times \U(W)) \ar@{-}[dl] \ar@{-}[d] \\
  \G(\U(V') \times \U(V'')) & \GU(W)}.
\]
Let $\cG'$ and $\cG''$ be the subgroups of $\G(\U(V') \times \U(W))$ and $\G(\U(V'') \times \U(W))$, respectively, as in \S \ref{ss:weil-hodge-setup}.
Put
\[
 \cG''' = \{ (g', g'', h) \in \GU(V')^0 \times \GU(V'')^0 \times \GU(W)
 \, | \, \nu(g') = \nu(g'') = \nu(h) \in \N_{E/F}(E^\times) \}.
\]
We regard $\cG'''$ as subgroups of $\cG$ and $\cG' \times \cG''$ via the above seesaw diagram.
We take the complete polarizations $\V' = \X' \oplus \Y'$ and $\V'' = \X'' \oplus \Y''$ as in \S \ref{ss:weil-hodge-setup}, so that 
\[
 \X = \X' \oplus \X'', \qquad 
 \Y = \Y' \oplus \Y''.
\]
Let $s': \cG' \rightarrow \C^1$ and $s'':\cG'' \rightarrow \C^1$ be the maps trivializing $z_{\Y'}$ and $z_{\Y''}$, respectively, defined similarly as above.
Then, by construction, we have
\[
 s = s' \otimes s''
\]
on $\cG'''$.

\subsection{Compatibility with \cite{periods2}}
\label{ss:compatibility-periods2}

In this section, we compare the splitting $s$ with the standard one for unitary dual pairs when $\dim V = 1$.
In this case, using the notation of \S \ref{ss:doubling-U(WW)}, we have a seesaw diagram
\[
 \xymatrix{
  E^\times \hspace{-1.5cm} & \simeq \hspace{-1cm} & \GU(V)^0 \ar@{-}[dr] \ar@{=}[d] & \GU(\WW) \ar@{-}[dl] \ar@{-}[d] & \hspace{-1cm} \simeq & \hspace{-1cm} (B^\times \times E^\times)/F^\times \\
  E^\times \hspace{-1.5cm} & \simeq \hspace{-1cm} & \GU(\VV) & \GU(W) & \hspace{-1cm} \simeq & \hspace{-1cm} B^\times}.
\]
We define a map 
\[
 s^\natural : \G(\U(V) \times \U(W))^0 \longrightarrow \C^1 
\]
by setting
\[
 s^\natural(\alpha, h) = s_2(\iota([h, \alpha], 1))
\]
for $(\alpha, h) \in \G(\U(V) \times \U(W))^0$, where $[h, \alpha] \in \U(\WW)$ and $s_2 = \hat{s}_2 \cdot \mu$.
Then $s^\natural$ trivializes $z_\Y$ by \eqref{eq:f-equivariance-U(WW)}.
This splitting will be used in \cite{periods2}.

\begin{lem}
We have
\[
 s^\natural(\alpha, h) = s(\alpha, h) \cdot \chi(\alpha)^{-1}.
\]
\end{lem}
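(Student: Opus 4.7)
The plan is to show that $\phi := s^\natural/s$ is a continuous character of $\Gc$, and then to compute it on a convenient set of generators.

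First, note that both $s$ and $s^\natural$ trivialize the same 2-cocycle $z_\Y$ on $\Gc$: the splitting $s$ does so by Lemma \ref{lem:s-proof(i)}, and $s^\natural$ inherits this property from the fact that $s_2$ trivializes $z_{\Y^\square}$ on $\U(\WW^\square)$, together with the fact that under the embedding $h \mapsto \iota(h,1)$ of $\U(\WW)$ into $\U(\WW^\square) \hookrightarrow \Sp(\V^\square)$, the cocycle $z_{\Y^\square}$ pulls back to $z_\Y$ (via the formula for $z_{\Y^\square}$ in terms of $z_\Y$ on the two factors, together with the identification \eqref{eq:f-equivariance-U(WW)}). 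Consequently, for any $\g_1,\g_2 \in \Gc$ one has $\phi(\g_1\g_2) = \phi(\g_1)\phi(\g_2)$, so $\phi$ is a character.

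Any element $(\alpha,h) \in \Gc$ factors as $(\alpha,h) = (\alpha,\alpha) \cdot (1, \alpha^{-1}h)$, where on the right factor we regard $\alpha \in E^\times \subset B^\times$, and where $\nu(\alpha^{-1}h) = \N(\alpha)^{-1}\nu(h) = 1$. Hence it suffices to verify $\phi(\alpha,\alpha) = \chi(\alpha)^{-1}$ for $\alpha \in E^\times$ and $\phi(1,h) = 1$ for $h \in B^{(1)}$. For the second identity, one chooses $\gamma = 1$ in the definition of $s$, so $s(1,h) = s^\sharp(1,h,1,1)$; since $\hat{s}_1(1)=1$ and $\chi(1)=1$, one reads off $\hat{s}^\sharp(1,h,1,1) = \hat{s}_2(\iota([h,1],1))$, while the image of $(1,h,1,1) \in \cG^\sharp$ in $\Sp(\V^\square)$ coincides by \eqref{eq:f-equivariance-U(WW)} with the image of $\iota([h,1],1) \in \U(\WW^\square)$, so the factors of $\mu$ agree and $s(1,h) = s_2(\iota([h,1],1)) = s^\natural(1,h)$.

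For $(\alpha,\alpha)$, choose $\gamma = \alpha$; then $h\alpha^{-1} = 1$, giving $\hat{s}^\sharp(\alpha,\alpha,\alpha,\alpha) = \chi(\alpha)^{-1}$. Again by \eqref{eq:f-equivariance-U(WW)}, the image of $(\alpha,\alpha,\alpha,\alpha)$ in $\Sp(\V^\square)$ agrees with the image of $\iota([\alpha,\alpha],[\alpha,\alpha]) \in \U(\WW^\square)$, so
\[
 s^\sharp(\alpha,\alpha,\alpha,\alpha) = \chi(\alpha)^{-1} \cdot \mu(\iota([\alpha,\alpha],[\alpha,\alpha])).
\]
Since $\iota([\alpha,\alpha],1)$ and $\iota(1,[\alpha,\alpha])$ have images in distinct factors of $\Sp(\V)\times\Sp(\V) \hookrightarrow \Sp(\V^\square)$, the cocycle $z_{\Y^\square}$ between them vanishes, so the cocycle property of $s_2$ yields $s_2(\iota([\alpha,\alpha],[\alpha,\alpha])) = s_2(\iota([\alpha,\alpha],1)) \cdot s_2(\iota(1,[\alpha,\alpha]))$. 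Combining with Lemma \ref{lem:s-hat-2-E-diag}, which gives $\hat{s}_2(\iota([\alpha,\alpha],[\alpha,\alpha])) = \chi(\alpha)^{-2}$, one obtains
\[
 s^\natural(\alpha,\alpha) = s_2(\iota([\alpha,\alpha],1)) = \frac{\chi(\alpha)^{-2} \cdot \mu(\iota([\alpha,\alpha],[\alpha,\alpha]))}{s_2(\iota(1,[\alpha,\alpha]))},
\]
and therefore $\phi(\alpha,\alpha) = s^\natural(\alpha,\alpha)/s(\alpha,\alpha) = \chi(\alpha)^{-2}/\chi(\alpha)^{-1} = \chi(\alpha)^{-1}$.

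The main bookkeeping challenge lies in matching the $\mu$-factor that appears in $s^\sharp$, which is evaluated on the image of $\cG^\sharp$ in $\Sp(\V^\square)$ via the map \eqref{eq:hom-G^sharp}, with the $\mu$-factor appearing in $s_2$, which is evaluated on the image of $\U(\WW^\square)$. The identification \eqref{eq:f-equivariance-U(WW)} guarantees that these images coincide, so the $\mu$-factors cancel cleanly in both $(\alpha,\alpha)$ and $(1,h)$ cases, leaving only the contributions of $\hat{s}^\sharp$ and $\hat{s}_2$ that can be read off directly from the formulas.
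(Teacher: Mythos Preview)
Your proof is correct and uses essentially the same ingredients as the paper's: the identification \eqref{eq:f-equivariance-U(WW)} to match the $\mu$-factors, Lemma \ref{lem:s-hat-2-E-diag} for the diagonal element, and the splitting $s_2(\iota([\alpha,\alpha],[\alpha,\alpha])) = s_2(\iota([\alpha,\alpha],1)) \cdot s_2(\iota(1,[\alpha,\alpha]))$. The only organizational difference is that you first record that $\phi = s^\natural/s$ is a character and then evaluate on the generators $(\alpha,\alpha)$ and $(1,h)$ with $h\in B^{(1)}$, whereas the paper does the direct expansion of $s(\alpha,h)$ via the factorization $(\alpha,h,\alpha,\alpha) = (1,h\alpha^{-1},1,1)\cdot(\alpha,\alpha,\alpha,\alpha)$ in $\cG^\sharp$ and simplifies to $\chi(\alpha)\cdot s_2(\iota([h,\alpha],1))$ in one pass; the underlying computations coincide.
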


\begin{proof}
Recall that
\[
 s(\alpha, h) = \frac{s^\sharp(\alpha, h, \alpha, \alpha)}{s_2(\iota(1, [\alpha, \alpha]))}.
\]
We have
\begin{align*}
 s^\sharp(\alpha, h, \alpha, \alpha)
 & = s^\sharp(1, h \alpha^{-1}, 1, 1)
 \cdot s^\sharp(\alpha, \alpha, \alpha, \alpha)
 \cdot z_{\Y^\square}((1, h \alpha^{-1}, 1, 1), (\alpha, \alpha, \alpha, \alpha)) \\
 & = s^\sharp(1, h \alpha^{-1}, 1, 1)
 \cdot s^\sharp(\alpha, \alpha, \alpha, \alpha)
 \cdot z_{\Y}((1, h \alpha^{-1}), (\alpha, \alpha)).
\end{align*}
By definition and Lemma \ref{lem:s-hat-2-E-diag}, we have
\[
 s^\sharp(1, h \alpha^{-1}, 1, 1) = s_2(\iota([h \alpha^{-1}, 1], 1))
\]
and 
\begin{align*}
 s^\sharp(\alpha, \alpha, \alpha, \alpha)
 & = \chi(\alpha) 
 \cdot s_2(\iota([\alpha, \alpha], [\alpha, \alpha])) \\
 & = \chi(\alpha)
 \cdot s_2(\iota([\alpha, \alpha], 1)) 
 \cdot s_2(\iota(1, [\alpha, \alpha]))
 \cdot z_{\Y^\square}(\iota([\alpha, \alpha], 1), \iota(1, [\alpha, \alpha])) \\
 & = \chi(\alpha)
 \cdot s_2(\iota([\alpha, \alpha], 1)) 
 \cdot s_2(\iota(1, [\alpha, \alpha])).
\end{align*}
Hence we have
\begin{align*}
 s(\alpha, h)
 & = \chi(\alpha)
 \cdot s_2(\iota([h \alpha^{-1}, 1], 1))
 \cdot s_2(\iota([\alpha, \alpha], 1)) 
 \cdot z_{\Y}([h \alpha^{-1}, 1], [\alpha, \alpha]) \\
 & = \chi(\alpha)
 \cdot s_2(\iota([h \alpha^{-1}, 1], 1))
 \cdot s_2(\iota([\alpha, \alpha], 1)) 
 \cdot z_{\Y^\square}(\iota([h \alpha^{-1}, 1], 1), \iota([\alpha, \alpha], 1)) \\
 & = \chi(\alpha) \cdot s_2(\iota([h, \alpha], 1)).
\end{align*}
\end{proof}

\subsection{Compatibility with \cite{periods1}}
\label{ss:compatibility-periods1}

In this section, we compare the splitting $s$ with the one defined in \cite[Appendix C]{periods1}.
Suppose again that $F$ is a number field.
Let $V = B_1 \otimes_E B_2$ be the $2$-dimensional skew-hermitian right $B$-space as in \cite[\S 2.2]{periods1},
where $B_1$ and $B_2$ are quaternion algebras over $F$ such that $E$ embeds into $B_1$ and $B_2$, and such that $B_1 \cdot B_2 = B$ in the Brauer group.
We write $B_i = E + E \j_i$ for some trace zero element $\j_i \in B_i^\times$ and put $J_i = \j_i^2 \in F^\times$.
We may assume that
\[
 J_1 \cdot J_2 = J.
\]
Then the skew-hermitian form on $V$ is given by \eqref{eq:condition-skew-herm} with 
\begin{align*}
 e_1 & = 1 \otimes 1, & \kappa_1 & = 1, \\
 e_2 & = \j_1 \otimes 1, & \kappa_2 & = -J_1.
\end{align*}
Recall the exact sequence
\[
 1 \longrightarrow F^\times \longrightarrow B_1^\times \times B_2^\times 
 \longrightarrow \GU(V)^0 \longrightarrow 1,
\]
where $F^\times$ embeds into $B_1^\times \times B_2^\times$ by $z \mapsto (z, z^{-1})$ and $B_1^\times \times B_2^\times$ acts on $V$ on the left by
\[
 (g_1, g_2) \cdot (x_1 \otimes x_2) = g_1 x_1 \otimes g_2 x_2.
\]
We write $[g_1,g_2]$ for the image of $(g_1,g_2)$ in $\GU(V)^0$.
If we put 
\[
 \tilde{\cG} = \{ (g_1, g_2, h) \in B_1^\times \times B_2^\times \times B^\times \, | \, \nu(g_1) \nu(g_2) = \nu(h) \in \N_{E/F}(E^\times) \},
\]
where $\nu$ denotes the reduced norm, then we have a natural surjective map $\tilde{\cG} \twoheadrightarrow \cG$.
We take the complete polarization $\V = \X \oplus \Y$ as in \S \ref{ss:weil-hodge-setup}, which agrees with the one given in \cite[\S 2.2]{periods1}.
For each place $v$ of $F$, let
\[
 \tilde{s}_v: \GU(V_v)^0 \times \GU(W_v) \longrightarrow \C^1
\]
be the map trivializing $z_{\Y_v}$ defined in \cite[Appendix C]{periods1}.
Since both $\tilde{s}_v$ and $s_v$ trivialize $z_{\Y_v}$,
there exists a \emph{continuous} character $\Chi$ of $\cG(\A)$ such that
\[
 \tilde{s}_v|_{\cG_v} = s_v \cdot \Chi_v
\]
for all $v$.
Since both $\tilde{s}_v$ and $s_v$ satisfy the product formula, $\Chi$ is trivial on $\cG(F)$.
We regard $\Chi$ as a character of $\tilde{\cG}(\A)$.

\begin{prop}
Assume that
\begin{itemize}
 \item $F$ is totally real;
 \item $E$ is totally imaginary;
 \item $B_{1,v}$ and $B_{2,v}$ are split for some real place $v$ of $F$.
\end{itemize}
Then, for $(g_1, g_2, h) \in \tilde{\cG}(\A)$, we have
\[
 \Chi(g_1, g_2, h) = 1.
\]
Namely, we have
\[
 \tilde{s}_v|_{\cG_v} = s_v 
\]
for all $v$.
\end{prop}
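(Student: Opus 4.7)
The plan is to show that the continuous character $\Chi$ of $\cG(\A)/\cG(F)$ is trivial by combining three ingredients: (a) triviality at almost all finite places, (b) triviality at the distinguished real place $v_0$ where $B_{1,v_0}$ and $B_{2,v_0}$ are split, and (c) rigidity of characters that factor through the reduced norm map.

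First I would verify that $\Chi$ factors through the ``reduced-norm abelianization'' of $\tilde{\cG}$. Since the derived group of $\tilde{\cG}$ is generated by $B_1^{(1)} \times B_2^{(1)} \times B^{(1)}$ and since $B_i^{(1)}, B^{(1)}$ have no nontrivial continuous characters (over local or adelic points, by Kneser--Platonov / strong approximation in the ramified case, or by computing abelianizations), the character $\Chi$ descends to a character
\[
 \omega : T(\A)/T(F) \longrightarrow \C^1, \qquad T = \{(a_1,a_2,b) \in (\mathbb{G}_m)^3 \,|\, a_1 a_2 = b \in \N_{E/F}(\mathbb{G}_{m,E})\},
\]
via the reduced-norm map $(g_1,g_2,h)\mapsto (\nu(g_1),\nu(g_2),\nu(h))$. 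Equivalently, $\omega$ is a Hecke character of $\A^\times/F^\times \N_{E/F}(\A_E^\times)$ composed with a character on $(\A^\times/F^\times)^2$, subject to the constraint above.

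Next I would establish that $\Chi_v = 1$ for almost all finite $v$. At any finite place $v$ where $E_v$, $B_{1,v}$, $B_{2,v}$, $B_v$ are all unramified, where the residual characteristic is odd, and where $\chi_v$ and $\psi_v$ are of trivial level, both constructions reduce (via Morita equivalence) to the \emph{standard} splitting of the metaplectic 2-cocycle for the symplectic-orthogonal dual pair arising from $(V^\dagger, W^\dagger)$, restricted along the canonical embedding into $\Sp(\V_v)$; the normalization by $\chi_v$ in our definition of $\hat{s}_2$ is consistent with the one in \cite[Appendix C]{periods1} under the standing choice. Hence $\omega$ is unramified at almost all $v$ and thus of finite order when restricted to the compact pieces.

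The key step---and where the hypothesis is used---is to check that $\Chi_{v_0} = 1$ at the real place $v_0$ where $B_{1,v_0}$ and $B_{2,v_0}$ (and hence $B_{v_0}$) are split. Here I would argue as follows. Under Morita equivalence, both constructions at $v_0$ reduce to splittings of the metaplectic cocycle associated to the symplectic-orthogonal dual pair $(\O(V^\dagger_{v_0}), \Sp(W^\dagger_{v_0})) \simeq (\O(2,2),\SL_2(\R))$ (or the similitude enlargement). Up to the character $\chi_{v_0}$ (which is $1$ on $\R^\times_+$ and determined elsewhere by $\chi|_{\R^\times} = \xi_{E_{v_0}}$), both splittings are characterized by their restriction to the maximal compact modulo center subgroup and by continuity; I would identify them directly by evaluating $\hat{s}_1 \cdot \hat{s}_2 \cdot z_{\V^\triangle}$ and $\tilde{s}_{v_0}$ on a generating set (e.g.\ a Borel subgroup together with the Weyl element), using Lemmas \ref{lem:s-hat-1-E^1} and \ref{lem:s-hat-2-E^1}. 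The upshot is $\Chi_{v_0} = 1$.

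Combining these: $\omega$ is a continuous character of $T(\A)/T(F)$ that is unramified at almost all finite places and trivial at the archimedean place $v_0$. The constraint $a_1 a_2 \in \N(\A_E^\times)$ together with triviality at $v_0$ and unramifiedness at a cofinite set of primes forces $\omega$ to have finite order and to be trivial on a subgroup of $T(\A)$ of finite index containing a neighborhood of the identity; standard approximation arguments (Chebotarev / strong approximation on the norm-one torus plus class field theory) then give $\omega = 1$, hence $\Chi = 1$.

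The main obstacle I anticipate is the explicit comparison at $v_0$: one has to track carefully the different normalizing conventions (notably the factor $\chi(x(h))^m$ and the Weil-index $\gamma$ appearing in $\hat{s}_2$, versus the normalization in \cite[Appendix C]{periods1}) and verify that after Morita equivalence the two coboundaries actually coincide, not merely up to an inner twist. Once this archimedean comparison is done, the descent of $\Chi$ through the reduced-norm torus and the rigidity argument are formal.
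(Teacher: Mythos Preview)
Your strategy diverges from the paper's, and as written it has a real gap in the final step. You reduce correctly to a character $\omega$ of the reduced-norm quotient $T(\A)/T(F)$, but then you try to kill $\omega$ by showing it is \emph{unramified} at almost all finite places and trivial at the one real place $v_0$, invoking ``Chebotarev / strong approximation''. That does not suffice: a Hecke character of $\A^\times/F^\times$ can perfectly well be unramified almost everywhere and trivial at one archimedean place without being trivial (any even Dirichlet character over $\Q$ is a counterexample). What you would actually need is $\Chi_v=1$ (not merely unramified) at almost all $v$ and at $v_0$, and then a weak-approximation argument for $T$ over the remaining finite set of places; your write-up conflates these two things. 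Moreover, your input ``both constructions reduce via Morita equivalence to the standard symplectic--orthogonal splitting'' is exactly the nontrivial content of the paper's separate comparison $s_0|_{\cG}=s$ (Proposition~\ref{prop:compare-s-0-s}), and you would additionally need the analogous statement for $\tilde s$ from \cite{periods1}, which you assume but do not justify.

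The paper's argument is quite different and more economical. It does not try to prove $\Chi_v=1$ place by place. Instead it identifies an explicit set of elements generating $T(\A)$ modulo $T(F)$ and the norm-one subgroups, and evaluates both $\tilde s$ and $s$ on each of them by a uniform local computation: the one-parameter families $(1,\alpha,\alpha)$ and $(\alpha,\alpha^{-1},1)$ for $\alpha\in\A_E^\times$ (handled at every place simultaneously in Lemmas~\ref{lem:weil-hodge-computation1-1}--\ref{lem:weil-hodge-computation2-3}), together with the single element $(\j_1^\natural,\j_2^\natural,1)$ at $v_0$ (Lemmas~\ref{lem:weil-hodge-computation3-1}--\ref{lem:weil-hodge-computation3-3}). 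The first two families cover the subgroup of $T(\A)$ where the first coordinate lies in $\N_{E/F}(\A_E^\times)$; since $\A^\times/F^\times\N_{E/F}(\A_E^\times)$ has order two and $(-1)_{v_0}\notin\N_{E_{v_0}/F_{v_0}}(E_{v_0}^\times)$ (because $E$ is totally imaginary), the third element supplies the missing coset. This is why the hypothesis on $v_0$ enters: one only needs a single well-chosen element with $\nu\notin\N(E_{v_0}^\times)$, not the full local identity $\Chi_{v_0}=1$ that you aim for.
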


\begin{proof}
We define a homomorphism $\tilde{\nu}: \tilde{\cG}(\A) \rightarrow \A^\times$ by
\[
 \tilde{\nu}(g_1, g_2, h) = \nu(g_1).
\]
Then the image of $\tilde{\nu}$ consists of elements $a \in \A^\times$ with $a_v > 0$ for all infinite places $v$ such that $B_{1,v}$ or $B_{2,v}$ or $B_v$ is ramified.
Also, putting $\tilde{\cG}^{(1)} = B_1^{(1)} \times B_2^{(1)} \times B^{(1)}$, we have
\begin{align*}
 \ker \tilde{\nu} & = \tilde{\cG}^{(1)}(\A) \cdot \{ (1, \alpha,\alpha) \, | \, \alpha \in \A_E^\times \}, \\
 \tilde{\nu}^{-1}(\N_{E/F}(\A_E^\times)) & = \ker \tilde{\nu} \cdot \{ (\alpha,\alpha^{-1},1) \, | \, \alpha \in \A_E^\times \}, \\
 \tilde{\nu}^{-1}(F^\times) & = \ker \tilde{\nu} \cdot \tilde{\cG}(F),
\end{align*}
where we have used Eichler's norm theorem in the last equality.
Since $\tilde{\nu}^{-1}(F^\times \N_{E/F}(\A_E^\times))$ is the kernel of $\xi_E \circ \tilde{\nu}$, it is a subgroup of $\tilde{\cG}(\A)$ of index $2$ and does not contain any element $(g_{1,v}, g_{2,v}, h_v) \in \tilde{\cG}_v$ such that $\nu(g_{i,v}) \notin \N_{E_v/F_v}(E_v^\times)$.

Now we show that $\Chi$ is trivial.
Since $\Chi$ is automorphic, it is trivial on $\tilde{\cG}^{(1)}(\A)$.
Moreover, in \S \ref{ss:weil-hodge-computation} below, we will prove the following:
\begin{itemize}
\item For $\alpha \in \A_E^\times$, we have
\begin{equation}
\label{eq:weil-hodge-computation1}
 \Chi(1,\alpha,\alpha) = 1.
\end{equation}
\item For $\alpha \in \A_E^\times$, we have
\begin{equation}
\label{eq:weil-hodge-computation2}
 \Chi(\alpha, \alpha^{-1}, 1) = 1.
\end{equation}
\item Let $v$ be a real place of $F$ such that $B_{1,v}$ and $B_{2,v}$ are split.
Choose $t_{i,v} \in F_v^\times$ such that $J_i = t_{i,v}^2$.
Then we have
\begin{equation}
\label{eq:weil-hodge-computation3}
 \Chi_v(t_{1,v}^{-1} \cdot \j_1, t_{2,v}^{-1} \cdot \j_2, 1) = 1.
\end{equation}
Note that $\nu(t_{i,v}^{-1} \cdot \j_i) = -1 \notin \N_{E_v/F_v}(E_v^\times)$.
\end{itemize}
This implies the assertion. 
\end{proof}

\subsection{Computation of splittings}
\label{ss:weil-hodge-computation}

We retain the notation of \S \ref{ss:compatibility-periods1}.
We fix a place $v$ of $F$ and suppress the subscript $v$ from the notation.
Recall that $\Chi$ is a \emph{continuous} character of $\tilde{\cG}$ such that 
\[
 \Chi(g_1,g_2,h) = \frac{\tilde{s}(g_1,g_2,h)}{s(g_1,g_2,h)},
\]
where we regard $\tilde{s}$ and $s$ as maps on $\tilde{\cG}$.
To compute $\Chi$ explicitly, we need to introduce more notation.

\subsubsection{Notation}
\label{sss:notation}

We denote by $\GSp_{2n}(F)$ the symplectic similitude group and by $\nu : \GSp_{2n}(F) \rightarrow F^\times$ the similitude character:
\[
 \GSp_{2n}(F) = \left\{ \sigma \in \GL_{2n}(F) \, \left| \,
 \sigma \begin{pmatrix} & \1_n \\ -\1_n & \end{pmatrix} {}^t \sigma 
 = \nu(\sigma) \cdot \begin{pmatrix} & \1_n \\ -\1_n & \end{pmatrix}
 \right. \right\}.
\]
Let $\Sp_{2n}(F) = \ker \nu$ be the symplectic group and $P$ the standard maximal parabolic subgroup of $\Sp_{2n}(F)$:
\[
 P = \{ \m(\a) \n(\b) \, | \, \a \in \GL_n(F), \b \in \Sym_n(F) \},
\]
where 
\[
 \m(\a) = \begin{pmatrix} \a & \\ & {}^t \a^{-1} \end{pmatrix}, \qquad
 \n(\b) = \begin{pmatrix} \1_n & \b \\ & \1_n \end{pmatrix}.
\]
Put
\[
 d(\nu) = \begin{pmatrix} \1_n & \\ & \nu \cdot \1_n \end{pmatrix}, \qquad
 \tau_j =
 \begin{pmatrix}
  \1_{n-j} & & & \\
  & & & -\1_j \\
  & & \1_{n-j} & \\
  & \1_j & & 
 \end{pmatrix}.
\]
If $\sigma = p_1 \tau_j p_2 \in \Sp_{2n}(F)$ with $p_i = \m(\a_i) \n(\b_i) \in P$, put
\[
 x(\sigma) = \det(\a_1 \a_2) \mod (F^{\times})^2, \qquad j(\sigma) = j.
\]
Note that
\begin{equation}
\label{eq:xj-conj}
 x(d(\nu) \cdot \sigma \cdot d(\nu)^{-1}) = \nu^{j(\sigma)} \cdot x(\sigma),
 \qquad 
 j(d(\nu) \cdot \sigma \cdot d(\nu)^{-1}) = j(\sigma).
\end{equation}
We define a map
\[
 v: \Sp_{2n}(F) \times F^\times \longrightarrow \C^1
\]
by setting
\[
 v(\sigma, \nu) = (x(\sigma), \nu)_F \cdot \gamma_F(\nu, \tfrac{1}{2} \psi)^{-j(\sigma)},
\]
where $(\cdot, \cdot)_F$ is the quadratic Hilbert symbol of $F$ and $\gamma_F(\cdot,\frac{1}{2}\psi)$ is the Weil index as in \cite[Appendix]{rangarao}, \cite[\S 3.1.1]{periods1}.
Let $z$ be the $2$-cocycle on $\Sp_{2n}(F)$ realizing the metaplectic group (see e.g.~\cite{rangarao}, \cite[\S 3.2.2]{periods1}).
By \cite[Theorem 4.1 and Corollary 4.2]{rangarao}, we have:
\begin{itemize}
 \item $z(\sigma, \sigma^{-1}) = 1$ for $\sigma \in \Sp_{2n}(F)$;
 \item $z(p_1 \sigma p, p^{-1} \sigma' p_2) = z(\sigma, \sigma')$ for $p_1, p_2, p \in P$ and $\sigma, \sigma' \in \Sp_{2n}(F)$;
 \item $z(\tau_i, \tau_j) = 1$;
 \item $z(\tau_n, \n(\b) \tau_n) = \gamma_F(\tfrac{1}{2} \psi)^n \cdot \gamma_F(\det \b, \tfrac{1}{2} \psi) \cdot h_F(\b)$ for $\b \in \Sym_n(F) \cap \GL_n(F)$, where $h_F(\b)$ is the Hasse invariant of the non-degenerate symmetric bilinear form associated to $\b$.
\end{itemize}
We may extend $z$ to a $2$-cocycle on $\GSp_{2n}(F)$ (see e.g.~\cite[Appendix B]{periods1}).
Then, for $\sigma, \sigma' \in \GSp_{2n}(F)$ with $\nu(\sigma) = \nu$ and $\nu(\sigma') = \nu^{-1}$, we have
\[
 z(\sigma, \sigma') = z(\sigma \cdot d(\nu)^{-1}, d(\nu) \cdot \sigma') \cdot v(\sigma' \cdot d(\nu), \nu).
\]

Recall that $\V = V \otimes_B W$ is an $8$-dimensional symplectic $F$-space.
Let $\e_1, \dots, \e_4, \e_1^*, \dots, \e_4^*$ be the basis of $\V$ given in \cite[\S 2.2]{periods1}.
Then we have
\[
 \X = F \e_1 + \dots +  F \e_4, \qquad
 \Y = F \e_1^* + \dots +  F \e_4^*, \qquad
 \llangle \e_i, \e_j^* \rrangle = \delta_{ij}.
\]
Using this basis, we identify $\GSp(\V)$ with $\GSp_8(F)$.
Under this identification, we write $P_\Y$ and $z_\Y$ for $P$ and $z$, respectively.
We refer to \cite[\S C.1]{periods1} for an explicit description of the image of $B_1^\times \times B_2^\times \times B^\times$ in $\GSp(\V)$.
Also, using a basis
\[
 (\e_1,0), \dots, (\e_4, 0), (0,\e_1), \dots, (0,\e_4), 
 (\e_1^*,0), \dots, (\e_4^*, 0), (0,-\e_1^*), \dots, (0,-\e_4^*) 
\]
of $\V^{\square}$, we identify $\GSp(\V^{\square})$ with $\GSp_{16}(F)$.
For $1 \le i \le 4$, put
\begin{align*}
 \X_i & = F \e_i, &
 \X_i^{\square} & = \X_i \oplus \X_i, \\
 \Y_i & = F \e_i^*, & 
 \Y_i^{\square} & = \Y_i \oplus \Y_i, \\
 \V_i & = \X_i \oplus \Y_i, & 
 \V_i^{\square} & = \V_i \oplus \V_i.
\end{align*}
Then we have a natural embedding
\[
 \iota_i: \Sp(\V_i^\square) \longrightarrow \Sp(\V^\square).
\]
Using a basis $(\e_i,0), (0,\e_i), (\e_i^*,0), (0,-\e_i^*)$ of $\V^{\square}_i$, we identify $\GSp(\V^{\square}_i)$ with $\GSp_4(F)$.
Put
\[
 \sigma_0 = 
  \begin{pmatrix}
  \frac{1}{2} \1_4 & - \frac{1}{2} \1_4 & & \\
  & & \frac{1}{2} \1_4 & \frac{1}{2} \1_4 \\
  & & \1_4 & -\1_4 \\
  -\1_4 & -\1_4 & &
 \end{pmatrix}
 \in \Sp(\V^{\square}).
\]
Then we have
\[
 \begin{bmatrix}
  \frac{1}{2}(\vec{\e}, -\vec{\e}) \\
  \frac{1}{2}(\vec{\e}^*, -\vec{\e}^*) \\
  (\vec{\e}^*, \vec{\e}^*) \\
  (-\vec{\e}, -\vec{\e})
 \end{bmatrix}
 = \sigma_0 \cdot 
 \begin{bmatrix}
  (\vec{\e}, 0) \\
  (0, \vec{\e}) \\
  (\vec{\e}^*, 0) \\
  (0, -\vec{\e}^*)
 \end{bmatrix}, \qquad
 \vec{\e} = 
 \begin{bmatrix}
  \e_1 \\
  \e_2 \\
  \e_3 \\
  \e_4
 \end{bmatrix}, \qquad
 \vec{\e}^* = 
 \begin{bmatrix}
  \e_1^* \\
  \e_2^* \\
  \e_3^* \\
  \e_4^*
 \end{bmatrix},
\]
so that $\V^{\bigtriangledown} = \X^{\square} \cdot \sigma_0$ and $\V^{\triangle} = \Y^{\square} \cdot \sigma_0$.

\subsubsection{Proof of \eqref{eq:weil-hodge-computation1}}

In this section, we will show that
\[
 \Chi(1, \alpha, \alpha) = 1
\]
for $\alpha \in E^\times$.
We write $\alpha = a + b \i$ with $a, b \in F$ and put $\nu = a^2 - b^2 u$.
Since $\Chi$ is continuous, we may assume that 
\[
 a \ne 0, \qquad b \ne 0.
\]

\begin{lem}
\label{lem:weil-hodge-computation1-1}
We have
\[
 \tilde{s}(1, \alpha, \alpha) = \gamma_F(J_1, \tfrac{1}{2} \psi) \cdot (-2abJ_2, J_1)_F.
\]
\end{lem}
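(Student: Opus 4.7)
The plan is to unwind the definition of $\tilde{s}$ from \cite[Appendix C]{periods1} on the specific element $(1,\alpha,\alpha)$ and then simplify the resulting product of Weil indices and Hilbert symbols down to the stated formula. Since the similitude of this element is $\nu = \N_{E/F}(\alpha) = a^2 - b^2 u$, the splitting $\tilde{s}$ decomposes, in the manner of Lemma \ref{lem:spl-s-hat-sharp}, into a part coming from an element in $\Sp(\V)$ (with trivial similitude) and a correction factor involving the scaling element $d(\nu)$ together with the function $v(\cdot,\nu)$ introduced in \S \ref{sss:notation}.

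First I would compute the matrix of the image of $(1,\alpha,\alpha)$ in $\GSp_8(F)$ explicitly in the basis $\e_1,\dots,\e_4,\e_1^*,\dots,\e_4^*$ fixed in \cite[\S C.1]{periods1}. Because $\alpha \in E^\times$ acts on the right on both $V = B_1 \otimes_E B_2$ and $W = B$, the matrix decomposes according to the splitting $V = e_1 B \oplus e_2 B$, $W = E \oplus E\j$ into a block-diagonal form acting on each of the four $2$-dimensional summands $\V_i$. The seesaw compatibility of $\tilde{s}$ (which mirrors the compatibility enjoyed by $s$ proved above) then reduces the computation to the product of four contributions in $\Sp(\V_i^\square)$, one for each index $i$.

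Next, the hypothesis $a,b\neq 0$ (which is harmless by continuity of $\Chi$) ensures that in each block the corresponding element admits a non-trivial Bruhat cell decomposition $p_1 \tau_j p_2$ with respect to the Siegel parabolic $P$, so that the invariants $x(\sigma)$ and $j(\sigma)$ from \S \ref{sss:notation} can be read off directly. Plugging these into the formulas for $\hat{s}_1$ and $\hat{s}_2$ (and the correction $v(\sigma' d(\nu),\nu)$ coming from splitting off the similitude factor) yields $\tilde{s}(1,\alpha,\alpha)$ as a product of Weil indices $\gamma_F(\cdot,\tfrac{1}{2}\psi)$ and Hilbert symbols $(\cdot,\cdot)_F$ in the quantities $a,b,u,J_1,J_2,\nu$.

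The main obstacle is then the combinatorial simplification: collapsing this product using the cocycle identities for $\gamma_F$ and the bimultiplicativity/symbol identities for $(\cdot,\cdot)_F$ (together with the relation $J = J_1 J_2$ and $\nu = a^2 - b^2 u$, from which $-u \equiv (2ab)^{-2}(\nu - a^2)^2 \cdot (-1)/\ldots$ and similar manipulations give convenient representatives modulo squares) down to the compact expression $\gamma_F(J_1,\tfrac{1}{2}\psi)\cdot(-2abJ_2,J_1)_F$. The asymmetry between $J_1$ and $J_2$ in the final formula reflects the asymmetry in the choice of basis $(e_1,e_2) = (1\otimes 1,\j_1\otimes 1)$ of $V$ over $B$, so it is natural that the simplification singles out $J_1$ as the argument of the Weil index. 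Once the identity is established at one place under the running hypothesis $a,b\neq 0$, continuity of both sides as functions of $\alpha \in E_v^\times$ extends it to all $\alpha$, completing the proof.
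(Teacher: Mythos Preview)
Your plan has a genuine confusion about which splitting is being computed. The map $\tilde{s}$ in this lemma is the splitting constructed in \cite[Appendix~C]{periods1}; it has nothing to do with the doubling method, the doubled spaces $\V_i^\square$, or the maps $\hat{s}_1$ and $\hat{s}_2$. Those ingredients belong to the construction of the \emph{other} splitting $s$ in this paper (via $\hat{s}^\sharp$ and $\mu$), and the whole purpose of \S\ref{ss:weil-hodge-computation} is to compute $\tilde{s}$ and $s$ \emph{separately} on certain elements and then compare them. So invoking ``the formulas for $\hat{s}_1$ and $\hat{s}_2$'' and ``four contributions in $\Sp(\V_i^\square)$'' is off target: that would compute $s(1,\alpha,\alpha)$, which is handled in the subsequent Lemmas~\ref{lem:weil-hodge-computation1-2} and \ref{lem:weil-hodge-computation1-3}, not $\tilde{s}(1,\alpha,\alpha)$.

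The paper's proof uses the actual structure of $\tilde{s}$ from \cite{periods1}: writing $g=[1,\alpha]\in\GU(V)^0$ and $h=\alpha\in\GU(W)$, one has the factorisation $\tilde{s}(1,\alpha,\alpha)=\tilde{s}(g)\cdot\tilde{s}(h)\cdot z_\Y(g,h)$. The values $\tilde{s}(g)=(-\nu J_2,J_1)_F$ and $\tilde{s}(h)=(J_2,J_1)_F$ are quoted directly from \cite[Proposition~C.18]{periods1}. The remaining work is the explicit cocycle $z_\Y(g,h)$: write the images of $g$ and $h$ as $8\times 8$ matrices in $\GSp(\V)$, split off the similitude via $d(\nu)$ to land in $\Sp(\V)$, find the Bruhat decompositions relative to $P_\Y$, and reduce to $z_\Y(\tau_4,\n(\b)\tau_4)$ for an explicit rank-$2$ matrix $\b$, which is evaluated by the Rao formula via $\gamma_F$ and a Hasse invariant. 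Multiplying the three factors gives the stated expression. Your final paragraph about simplifying Hilbert symbols is in the right spirit, but the inputs you would be simplifying are not the ones you described.
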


\begin{proof}
Put $g = [1, \alpha] \in \GU(V)^0$ and $h = \alpha \in \GU(W)$.
Then we have $\tilde{s}(1, \alpha, \alpha) = \tilde{s}(g) \cdot \tilde{s}(h) \cdot z_\Y(g, h)$.
By \cite[Proposition C.4.2]{periods1}, we have
\[
 \tilde{s}(g) = (- \nu J_2, J_1)_F, \qquad
 \tilde{s}(h) = (J_2, J_1)_F.
\]
It remains to compute $z_{\Y}(g, h)$.

Recall that
\[
 z_\Y(g, h) = z_\Y(g \cdot d(\nu), d(\nu)^{-1} \cdot h) \cdot v(h \cdot d(\nu)^{-1}, \nu^{-1}).
\]
We have
\[
 g = \nu^{-1} \cdot 
 \begin{pmatrix}
  a \cdot \1_4 & -bu \cdot \J_2 \\
  -b \cdot \J_2^{-1} & a \cdot \1_4
 \end{pmatrix}, \qquad
 h =
 \begin{pmatrix}
  a \cdot \1_4 & bu \cdot \J \\
  b \cdot \J^{-1} & a \cdot \1_4
 \end{pmatrix}
\]
in $\GSp(\V)$, where
\[
 \J_2 = 
 \begin{pmatrix}
  1 & & & \\
  & -J_1 & & \\
  & & J_2 & \\
  & & & -J
 \end{pmatrix}, \qquad
 \J = 
 \begin{pmatrix}
  1 & & & \\
  & -J_1 & & \\
  & & -J_2 & \\
  & & & J
 \end{pmatrix}.
\]
Since
\begin{align*}
 g & = 
 \begin{pmatrix}
 - b^{-1} \cdot \J_2 & \nu^{-1} a \cdot \1_4 \\
 & - \nu^{-1} b \cdot \J_2^{-1}
 \end{pmatrix}
 \cdot \tau_4 \cdot \n(- a b^{-1} \cdot \J_2), \\
 h & = \n(a b^{-1} \cdot \J) \cdot \tau_4 \cdot
 \begin{pmatrix}
  b \cdot \J^{-1} & a \cdot \1_4 \\
  & \nu b^{-1} \cdot \J
 \end{pmatrix},
\end{align*}
we have
\[
 z_\Y(g \cdot d(\nu), d(\nu)^{-1} \cdot h)
 = z_\Y(\tau_4 \cdot \n(- \nu a b^{-1} \cdot \J_2), \n(\nu a b^{-1} \cdot \J) \cdot \tau_4)
 = z_\Y(\tau_4, \n(\b) \cdot \tau_4),
\]
where
\[
 \b = - \nu a b^{-1} \cdot \J_2 +  \nu a b^{-1} \cdot \J
 = 2 \nu a b^{-1} \cdot
 \begin{pmatrix}
  0 & & & \\
  & 0 & & \\
  & & -J_2 & \\
  & & & J
 \end{pmatrix}.
\]
If we put
\[
 \b' = 
 2 \nu a b^{-1} \cdot 
 \begin{pmatrix}
  - J_2 & \\
  & J
 \end{pmatrix},
\]
then we have
\[
 z_\Y(\tau_4, \n(\b) \cdot \tau_4) = 
 \gamma_F(\tfrac{1}{2} \psi)^2 \cdot \gamma_F(\det \b', \tfrac{1}{2} \psi) \cdot h_F(\b').
\]
Hence, since $\det \b' \equiv -J_1 \bmod (F^{\times})^2$ and
\[
 h_F(\b') = (-2 \nu a b^{-1} J_2, 2 \nu a b^{-1} J)_F = (-2 \nu a b J_2, J_1)_F,
\]
we have
\begin{align*}
 z_\Y(g \cdot d(\nu), d(\nu)^{-1} \cdot h) 
 & = \gamma_F(-1, \tfrac{1}{2} \psi)^{-1} \cdot \gamma_F(-J_1, \tfrac{1}{2} \psi)
 \cdot (-2 \nu a b J_2, J_1)_F \\
 & = \gamma_F(J_1, \tfrac{1}{2} \psi) \cdot (2 \nu a b J_2, J_1)_F.
\end{align*}
On the other hand, since $x(h \cdot d(\nu)^{-1}) \equiv 1 \bmod (F^{\times})^2$ and $j(h \cdot d(\nu)^{-1}) = 4$, we have
\[
 v(h \cdot d(\nu)^{-1}, \nu^{-1}) = 1.
\]
Thus we obtain
\[
 z_\Y(g, h) = \gamma_F(J_1, \tfrac{1}{2} \psi) \cdot (2 \nu a b J_2, J_1)_F.
\]
This completes the proof.
\end{proof}

Now we compute $s(1, \alpha, \alpha)$.
Note that $[1, \alpha] \in \GU(V)^0$ is the image of $\alpha$ under the embedding $E^\times \hookrightarrow \GU(V)$ as in \S \ref{ss:weil-hodge-setup}.
Hence, by definition, we have
\[
 s(1,\alpha,\alpha) = \frac{s^\sharp(\alpha, \alpha, \alpha, \alpha)}{s_2(\iota(1, [\alpha, \alpha]))},
\]
where $[\alpha, \alpha] \in \U(\WW)$.
Since
\begin{align*}
 s_2(\iota([\alpha, \alpha], [\alpha, \alpha]))
 & = s_2(\iota([\alpha, \alpha], 1)) \cdot s_2(\iota(1, [\alpha, \alpha]))
 \cdot z_{\Y^\square}(\iota([\alpha, \alpha], 1), \iota(1, [\alpha, \alpha])) \\
 & = s_2(\iota([\alpha, \alpha], 1)) \cdot s_2(\iota(1, [\alpha, \alpha])),
\end{align*}
we have
\begin{align*}
 s(1,\alpha,\alpha)
 & = \frac{s^\sharp(\alpha, \alpha, \alpha, \alpha)}{s_2(\iota([\alpha, \alpha], [\alpha, \alpha]))}
 \cdot s_2(\iota([\alpha, \alpha], 1)) \\
 & = \frac{\hat{s}^\sharp(\alpha, \alpha, \alpha, \alpha)}{\hat{s}_2(\iota([\alpha, \alpha], [\alpha, \alpha]))}
 \cdot \hat{s}_2(\iota([\alpha, \alpha], 1)) \cdot \mu(\iota([\alpha, \alpha], 1)).
\end{align*}
Hence, by definition and Lemma \ref{lem:s-hat-2-E-diag}, we have
\[
 s(1, \alpha, \alpha) = \chi(\alpha)^2
 \cdot \hat{s}_2(\iota([\alpha, \alpha], 1)) \cdot \mu(\iota([\alpha, \alpha], 1)).
\]

\begin{lem}
\label{lem:weil-hodge-computation1-2}
We have
\[
 \hat{s}_2(\iota([\alpha, \alpha], 1)) = \chi(\alpha)^{-2} \cdot (u, J_1)_F.
\]
\end{lem}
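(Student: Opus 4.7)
The plan is to compute $\hat s_2(\iota([\alpha,\alpha],1))$ by putting this element into Bruhat form relative to the Siegel parabolic $P_{\WW^\triangle}$ and then applying the defining formula $\hat s_2(h) = \chi(x(h))^m \gamma^{-j}$.

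First I would unpack the action of $[\alpha,\alpha]$ on $\WW = B$: from $x \cdot (h,\alpha) = \alpha^{-1} x h$ we get $x \mapsto \alpha^{-1} x \alpha$, which fixes the $E$-component of $\WW$ and scales the $E\j$-component by $\beta := \alpha^\rho/\alpha \in E^1$ (using $\alpha\j = \j\alpha^\rho$). Consequently $\iota([\alpha,\alpha],1)$ acts as the identity on $\mathrm{span}(\w_1,\w_1^*)$ and as
\[
 A = \begin{pmatrix} \frac{\beta+1}{2} & \frac{\beta-1}{4J\i} \\ (\beta-1)J\i & \frac{\beta+1}{2} \end{pmatrix}
\]
on $\mathrm{span}(\w_2,\w_2^*)$. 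Because $b \ne 0$ forces $\beta \ne 1$, the argument in the proof of Lemma \ref{lem:s-hat-1-E^1} factors $A$ as
\[
 A = \begin{pmatrix} a & \tfrac{\beta+1}{2} \\ 0 & (\beta-1)J\i \end{pmatrix} \begin{pmatrix} 0 & -1 \\ 1 & 0 \end{pmatrix} \begin{pmatrix} 1 & y \\ 0 & 1 \end{pmatrix}, \qquad a = \frac{\beta}{(\beta-1)J\i}.
\]

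Next I would lift this to a Bruhat decomposition $\iota([\alpha,\alpha],1) = p_1\tau_1 p_2$ in $\U(\WW^\square)$ with Levi parts $\mathbf{a}_1 = \mathrm{diag}(1,a)$ and $\mathbf{a}_2 = \1_2$; the Siegel constraint demands the $(2,2)$-entry of $(^t\mathbf{a}_1^\rho)^{-1}$ to be $1/a^\rho$, and a short calculation using $\beta^\rho = 1/\beta$ and $\i^\rho = -\i$ shows $1/a^\rho = (\beta-1)J\i$, matching the diagonal entry extracted from $A$. So $j = 1$, and
\[
 x(\iota([\alpha,\alpha],1)) = \det(\mathbf{a}_1\mathbf{a}_2) = a = -\frac{\alpha^\rho}{2buJ} \pmod{\N_{E/F}(E^\times)},
\]
the last simplification using $\beta-1 = -2b\i/\alpha$ and $\i^2 = u$.

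Finally I would evaluate the two ingredients of $\hat s_2 = \chi(x(h))^m \gamma^{-j}$ with $m = 2$ and $j = 1$. For the first factor, $\chi(\alpha)\chi(\alpha^\rho) = \chi(\N(\alpha)) = \xi_E(\N(\alpha)) = 1$ gives $\chi(\alpha^\rho) = \chi(\alpha)^{-1}$, and since $-1/(2buJ) \in F^\times$ its $\chi$-value squared is $1$, yielding $\chi(x(h))^2 = \chi(\alpha)^{-2}$. For $\gamma$, we have $\det \VV = \kappa_1\kappa_2 = -J_1$, and applying the standard identity $\gamma_F(a\psi)^2 \gamma_F(\psi)^{-2} = (a,-1)_F$ to the ratio $\gamma_F(-u,\tfrac{1}{2}\psi)^2 \gamma_F(-1,\tfrac{1}{2}\psi)^{-2}$ gives $(u,-1)_F$, hence
\[
 \gamma = (u,-J_1)_F \cdot (u,-1)_F = (u,J_1)_F,
\]
and $\gamma^{\pm 1} = \gamma$. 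Combining these factors yields the claimed identity. The main obstacle is bookkeeping --- tracking the left/right module conventions for $\WW$, extending the $2 \times 2$ decomposition compatibly with the Siegel-Levi constraint $(^t\mathbf{a}^\rho)^{-1}$, and handling the Weil-index algebra --- but no conceptually new ideas are required beyond those in the proofs of Lemmas \ref{lem:s-hat-1-E^1}--\ref{lem:s-hat-2-E^1}.
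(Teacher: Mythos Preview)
Your proof is correct and follows essentially the same route as the paper: compute the matrix of $\iota([\alpha,\alpha],1)$ on the $(\w_2,\w_2^*)$-block, factor it as $p_1\tau_1 p_2$ to read off $j=1$ and $x(h)$, then evaluate $\chi(x(h))^2$ and $\gamma^{-1}$. The paper records the Levi entry as $-\tfrac{1}{J\i(\beta^\rho-1)}$ while you write it as $\tfrac{\beta}{(\beta-1)J\i}$, but since $\beta\beta^\rho=1$ these coincide; likewise your simplification of $\gamma$ and the paper's (via $(-1,-u)_F(-1,-1)_F$) are the same Hilbert-symbol manipulation.
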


\begin{proof}
Put $h = \iota([\alpha, \alpha], 1) \in \U(\WW^\square)$ and $\beta = \alpha^{-1} \alpha^\rho$, so that $\beta - 1 \in E^\times$.
As in the proof of Lemma \ref{lem:s-hat-1-E^1}, we have
\[
 \begin{bmatrix}
  \w_1 \cdot h \\
  \w_2 \cdot h \\
  \w_1^* \cdot h \\
  \w_2^* \cdot h
 \end{bmatrix}
 = A \cdot 
 \begin{bmatrix}
  \w_1 \\
  \w_2 \\
  \w_1^* \\
  \w_2^*
 \end{bmatrix}, 
\]
where 
\begin{align*}
 A & =
 \begin{pmatrix}
  1 & & & \\
  & \frac{1}{2} (\beta+1) & & \frac{1}{4 J \i} (\beta-1) \\
  & & 1 & \\
  & J \i (\beta-1) & & \frac{1}{2} (\beta+1)
 \end{pmatrix} \\
 & = 
 \begin{pmatrix}
  1 & & & \\
  & -\frac{1}{J \i (\beta^\rho - 1)} & & * \\
  & & 1 & \\
  & & & J \i (\beta-1)
 \end{pmatrix}
 \cdot \tau_1 \cdot 
 \begin{pmatrix}
  1 & & & \\
  & 1 & & * \\
  & & 1 & \\
  & & & 1
 \end{pmatrix}.
\end{align*}
Hence we have
\[
 \hat{s}_2(\iota([\alpha, \alpha], 1)) = \chi(J \i (\beta -1))^2 \cdot \gamma^{-1},
\]
where
\begin{align*}
 \gamma & = (u, \det \VV)_F \cdot \gamma_F(-u, \tfrac{1}{2} \psi)^2 \cdot \gamma_F(-1, \tfrac{1}{2} \psi)^{-2} \\
 & = (u, -J_1)_F \cdot (-1, -u)_F \cdot (-1, -1)_F \\
 & = (u, J_1)_F.
\end{align*}
Since $\beta-1 = \alpha^{-1}(\alpha^\rho - \alpha) = - 2b \i \alpha^{-1}$,
we have $\chi(J \i (\beta -1))^2 = \chi(- 2b uJ \alpha^{-1})^2 = \chi(\alpha)^{-2}$.
This completes the proof. 
\end{proof}

\begin{lem}
\label{lem:weil-hodge-computation1-3}
We have
\[
 \mu(\iota([\alpha, \alpha], 1)) = \gamma_F(J_1, \tfrac{1}{2} \psi) \cdot (- 2abuJ_2, J_1)_F.
\]
\end{lem}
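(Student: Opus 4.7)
The plan is to compute $\mu(h)$ for $h := \iota([\alpha,\alpha],1) \in \U(\WW^\square) \subset \Sp(\V^\square)$ directly from the definition
\[
 \mu(h) = z_{\Y^\square}(\sigma_0, h)^{-1} \cdot z_{\Y^\square}(\sigma_0 h \sigma_0^{-1}, \sigma_0),
\]
using Rao's explicit formula for the $2$-cocycle $z_{\Y^\square}$ relative to the standard Siegel parabolic $P_{\Y^\square} \subset \Sp_{16}(F)$.

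First, I would write down the matrix of $h$ in the basis $\{(\e_i, 0), (0, \e_i), (\e_i^*, 0), (0, -\e_i^*)\}_{i=1,\ldots,4}$ of $\V^\square$, using the explicit description of the $\e_i, \e_i^*$ from \cite{periods1}, \S 2.2 and the identification $\V = \VV \otimes_E \WW$. Since $[\alpha,\alpha] \in \GU(\WW)$ acts trivially on the $E$-part of $\WW = E + E\j$ and by multiplication by $\beta = \alpha^{-1} \alpha^\rho$ on the $E\j$-part, and since $\iota([\alpha,\alpha],1)$ is the identity on the second copy of $\V$, the $16\times 16$ matrix of $h$ is block-diagonal with a large identity part; the only nontrivial block is a $4 \times 4$ piece sitting in the coordinates corresponding to the $E\j$-subspace of the first copy of $\WW$.

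Next, I would explicitly compute the products $\sigma_0 \cdot h$ and $\sigma_0 h \sigma_0^{-1} \cdot \sigma_0$, and find their Bruhat decompositions $p_1 \tau_j p_2$ with $p_i \in P_{\Y^\square}$. Because most of $h$ is the identity, the relevant Weyl element $\tau_j$ and symmetric matrix $\b \in \Sym_8(F) \cap \GL_8(F)$ appearing in Rao's formula reduce to a low-rank calculation involving $\beta - 1$, and a corresponding $2 \times 2$ subblock. Evaluating $z_{\Y^\square}$ via Rao's formula then yields a product of a Weil index $\gamma_F(\det \b, \tfrac{1}{2}\psi)$, a Hasse invariant $h_F(\b)$, and Hilbert symbol corrections; all of these can be simplified using $J = J_1 J_2$, $\N(\alpha) = a^2 - b^2 u$, and the identity $\beta - 1 = -2b\i/\alpha$. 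Assembling the two factors, the dependence on $\alpha$ must (and will) cancel, since $\mu(h)$ depends only on $h$, and the remaining discrete invariants produce the claimed value $\gamma_F(J_1, \tfrac{1}{2}\psi) \cdot (-2abuJ_2, J_1)_F$.

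The main obstacle is the bookkeeping in the Bruhat decomposition of $\sigma_0 h \sigma_0^{-1} \cdot \sigma_0$, since conjugation by $\sigma_0$ (which interchanges $\X^\square$ and $\V^\triangle$) interacts nontrivially with $P_{\Y^\square}$, and one must carefully track the resulting symmetric matrix and its Hasse invariant. As a valuable consistency check, combining this lemma with Lemmas \ref{lem:weil-hodge-computation1-1} and \ref{lem:weil-hodge-computation1-2} gives
\[
 s(1,\alpha,\alpha) = \chi(\alpha)^2 \cdot \chi(\alpha)^{-2}(u,J_1)_F \cdot \gamma_F(J_1, \tfrac{1}{2}\psi)(-2abuJ_2,J_1)_F = \gamma_F(J_1, \tfrac{1}{2}\psi)(-2abJ_2, J_1)_F,
\]
which equals $\tilde{s}(1,\alpha,\alpha)$ by Lemma \ref{lem:weil-hodge-computation1-1}, so the three computations together yield exactly $\Chi(1,\alpha,\alpha)=1$ (i.e., \eqref{eq:weil-hodge-computation1}); this both predicts the form of the answer for $\mu(h)$ and tightly constrains any sign or Hilbert-symbol error in the direct computation.
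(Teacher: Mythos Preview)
Your approach is the same as the paper's: compute $\mu(h)$ directly from its definition via Rao's cocycle formula, using Bruhat decompositions. Two remarks are worth making.

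First, your description of the nontrivial block is not quite right. In the paper's basis, the image $\sigma$ of $\iota([\alpha,\alpha],1)$ acts nontrivially on \emph{two} of the four summands $\V_i^\square$ (namely $i=3,4$), not on a single $4\times 4$ piece; explicitly $\sigma = \iota_3(\sigma_3)\cdot\iota_4(\sigma_4)$ with $\sigma_i \in \Sp_4(F)$ depending on $k_3 = J_2$ and $k_4 = -J$. More importantly, the paper exploits that $\sigma_0$ is \emph{also} block-diagonal for the decomposition $\V^\square = \bigoplus_{i=1}^4 \V_i^\square$, namely $\sigma_0 = \prod_i \iota_i(\tau_1 \cdot \m(\a_1))$. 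This is the organizational insight you are missing: it reduces both cocycle evaluations to products of $\Sp_4(F)$-computations, so the ``main obstacle'' you flag (bookkeeping in the Bruhat decomposition after conjugation by $\sigma_0$) collapses to manageable $4\times 4$ algebra. With this in hand the paper finds $z_{\Y^\square}(\sigma_0,\sigma)=1$ and $z_{\Y^\square}(\sigma_0\sigma\sigma_0^{-1},\sigma_0) = \gamma_F(-k_3k_4,\tfrac{1}{2}\psi)\cdot(-2abk_3u,-k_3k_4)_F$, which simplifies to the claimed value.

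Second, your consistency check is exactly right and is how the lemma is used in the paper: combined with Lemmas \ref{lem:weil-hodge-computation1-1} and \ref{lem:weil-hodge-computation1-2} it yields $\Chi(1,\alpha,\alpha)=1$.
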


\begin{proof}
We write $\alpha^{-1} \alpha^\rho = c + d \i$ with $c, d \in F$, so that
\[
 c = \frac{a^2 + b^2 u}{a^2 - b^2 u} \ne \pm 1, \qquad 
 d = - \frac{2ab}{a^2 - b^2 u} \ne 0.
\]
Recall that
\[
 \mu(\iota([\alpha, \alpha], 1))
 = z_{\Y^\square}(\sigma_0, \sigma)^{-1} \cdot z_{\Y^\square}(\sigma_0 \sigma \sigma_0^{-1}, \sigma_0),
\]
where $\sigma$ is the image of $\iota([\alpha, \alpha], 1)$ in $\Sp(\V^\square)$.
We have
\[
 \sigma_0 = \prod_{i=1}^4 \iota_i(\tau_1 \cdot \m(\a_1)), \qquad
 \sigma = \prod_{i=3}^4 \iota_i(\sigma_i),
\]
where
\[
 \a_1 = 
 \begin{pmatrix}
  \frac{1}{2} & - \frac{1}{2} \\
  - 1 & - 1
 \end{pmatrix}, \qquad
 \sigma_i =
 \begin{pmatrix}
  c & & d k_i u & \\
  & 1 & & \\
  \frac{d}{k_i} & & c & \\
  & & & 1
 \end{pmatrix}, \qquad
 k_i = 
 \begin{cases}
  J_2 & \text{if $i=3$,} \\
  -J & \text{if $i=4$.}
 \end{cases}
\]
Since $\tau_1 \cdot \m(\a_1) \in P_{\Y_i^\square} \cdot \tau_1 \cdot \m(\a_2)$ and $\sigma_i \in \n(\b_{1,i}) \cdot \tau \cdot P_{\Y_i^\square}$, where
\[
 \a_2 = 
 \begin{pmatrix}
  1 & \\
  1 & 1
 \end{pmatrix}, \qquad
 \b_{1,i} = \frac{c k_i}{d} \cdot
 \begin{pmatrix}
  1 & \\
  & 0
 \end{pmatrix}, \qquad
 \tau = 
 \begin{pmatrix}
  & & -1 & \\
  & 1 & & \\
  1 & & & \\
  & & & 1
 \end{pmatrix},
\]
we have
\begin{align*}
 z_{\Y^\square}(\sigma_0, \sigma) 
 & = \prod_{i=3}^4 z_{\Y_i^\square}(\tau_1 \cdot \m(\a_1), \sigma_i) \\
 & = \prod_{i=3}^4 z_{\Y_i^\square}(\tau_1, \m(\a_2) \cdot \n(\b_{1,i}) \cdot \tau).
\end{align*}
If we put
\[
 \b_{2,i} = \frac{c k_i}{d} \cdot
 \begin{pmatrix}
  1 & 1 \\
  1 & 
 \end{pmatrix}, \qquad
 \b_{3,i} = \frac{c k_i}{d} \cdot
 \begin{pmatrix}
  0 & \\
  & 1
 \end{pmatrix}, 
\]
then we have $\m(\a_2) \cdot \n(\b_{1,i}) = \n(\b_{2,i}) \cdot \n(\b_{3,i}) \cdot \m(\a_2)$ and hence
\[
 z_{\Y^\square}(\sigma_0, \sigma)
 = \prod_{i=3}^4 z_{\Y_i^\square}(\tau_1 \cdot \n(\b_{2,i}), \n(\b_{3,i}) \cdot \m(\a_2) \cdot \tau).
\]
Since $\tau_1 \cdot \n(\b_{2,i}) \in P_{\Y_i^\square} \cdot \tau_1$ and $\n(\b_{3,i}) \cdot \m(\a_2) \cdot \tau \in \tau \cdot P_{\Y_i^\square}$, we have
\[
 z_{\Y^\square}(\sigma_0, \sigma) = \prod_{i=3}^4 z_{\Y_i^\square}(\tau_1, \tau) = 1.
\]

On the other hand, we have
\[
 \sigma_0 \sigma \sigma_0^{-1} = \prod_{i=3}^4 \iota_i(\sigma_i'),
\]
where 
\[
 \sigma_i' = 
 \begin{pmatrix}
  \frac{1}{2} (c+1) & \frac{d k_i u}{2} &
  \frac{d k_i u}{4} & -\frac{1}{4} (c-1) \\
  \frac{d}{2 k_i} & \frac{1}{2} (c+1) &
  \frac{1}{4} (c-1) & - \frac{d}{4 k_i} \\
  \frac{d}{k_i} & c-1 & \frac{1}{2} (c+1) & -\frac{d}{2 k_i} \\
  -c+1 & - d k_i u & - \frac{d k_i u}{2} & \frac{1}{2} (c+1)
 \end{pmatrix}.
\]
Since $\sigma_i' \in P_{\Y_i^\square} \cdot \tau_2 \cdot \n(\b_{4,i}) \cdot \n(\b_{5,i})$, where
\[
 \b_{4,i} = - \frac{d}{2(c-1) k_i} \cdot
 \begin{pmatrix}
  0 & \\
  & 1
 \end{pmatrix}, \qquad
 \b_{5,i} = \frac{d k_i u}{2(c-1)} \cdot
 \begin{pmatrix}
  1 & \\
  & 0
 \end{pmatrix},
\]
we have
\begin{align*}
 z_{\Y^\square}(\sigma_0 \sigma \sigma_0^{-1}, \sigma_0)
 & = \prod_{i=3}^4 z_{\Y_i^\square}(\sigma_i', \tau_1 \cdot \m(\a_1)) \\
 & = \prod_{i=3}^4 z_{\Y_i^\square}(\tau_2, \n(\b_{4,i}) \cdot \n(\b_{5,i}) \cdot \tau_1).
\end{align*}
Hence, since $\n(\b_{5,i}) \cdot \tau_1 \in \tau_1 \cdot P_{\Y_i^{\square}}$ and
\[
 \frac{d}{c-1} = - \frac{a}{bu},
\]
we have
\begin{align*}
 z_{\Y^\square}(\sigma_0 \sigma \sigma_0^{-1}, \sigma_0)
 & = \prod_{i=3}^4 z_{\Y_i^\square}(\tau_2, \n(\b_{4,i}) \cdot \tau_1) \\
 & = \prod_{i=3}^4 \left[ \gamma_F(\tfrac{1}{2} \psi) \cdot \gamma_F(2ab k_i u, \tfrac{1}{2} \psi) \right] \\
 & = \gamma_F(-1, \tfrac{1}{2} \psi)^{-1}
 \cdot \gamma_F(k_3 k_4, \tfrac{1}{2} \psi) \cdot (2ab k_3 u, 2ab k_4 u)_F \\
 & = \gamma_F(-k_3 k_4, \tfrac{1}{2} \psi) \cdot (-2ab k_3 u, -k_3 k_4)_F.
\end{align*}
This completes the proof.
\end{proof}

By Lemmas \ref{lem:weil-hodge-computation1-2} and \ref{lem:weil-hodge-computation1-3}, we have
\[
 s(1, \alpha, \alpha) = \gamma_F(J_1, \tfrac{1}{2} \psi) \cdot (- 2abJ_2, J_1)_F.
\]
Now \eqref{eq:weil-hodge-computation1} follows from this and Lemma \ref{lem:weil-hodge-computation1-1}.

\subsubsection{Proof of \eqref{eq:weil-hodge-computation2}}

In this section, we will show that
\[
 \Chi(\alpha, \alpha^{-1}, 1) = 1
\]
for $\alpha \in E^\times$.
We write $\alpha = a + b \i$ with $a, b \in F$ and put $\nu = a^2 - b^2 u$.
Since $\Chi$ is continuous, we may assume that 
\[
 a \ne 0, \qquad b \ne 0.
\]

\begin{lem}
\label{lem:weil-hodge-computation2-1}
We have
\[
 \tilde{s}(\alpha, \alpha^{-1}, 1) = \gamma_F(J, \tfrac{1}{2} \psi) \cdot (-2abJ_1, J)_F.
\]
\end{lem}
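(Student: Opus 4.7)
The proof will parallel that of Lemma \ref{lem:weil-hodge-computation1-1}. Letting $g = [\alpha, \alpha^{-1}] \in \GU(V)^0$ and $h = 1 \in \GU(W)$, both elements have similitude equal to $1$, and by the cocycle relation used in Lemma \ref{lem:weil-hodge-computation1-1} we have
\[
\tilde{s}(\alpha,\alpha^{-1},1) = \tilde{s}(g) \cdot \tilde{s}(h) \cdot z_\Y(g,h) = \tilde{s}(g),
\]
since $h=1$ forces $\tilde{s}(h)=1$ and $z_\Y(g,1)=1$. The problem is thus reduced to computing $\tilde{s}([\alpha,\alpha^{-1}])$.

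My first step would be to use the factorization $[\alpha,\alpha^{-1}] = [\alpha,1]\cdot[1,\alpha^{-1}]$ in $\GU(V)^0$ together with the $2$-cocycle property, giving
\[
\tilde{s}([\alpha,\alpha^{-1}]) = \tilde{s}([\alpha,1]) \cdot \tilde{s}([1,\alpha^{-1}]) \cdot z_\Y\bigl([\alpha,1],[1,\alpha^{-1}]\bigr).
\]
The first two factors are known from \cite[Proposition C.18]{periods1}; they are Hilbert-symbol expressions analogous to $(-\nu(\alpha)J_2,J_1)_F$ in Lemma \ref{lem:weil-hodge-computation1-1}, but with the roles of the two factors of $B_1 \otimes_E B_2$ interchanged, contributing a product of Hilbert symbols in $J_1, J_2$, and $\nu = \N_{E/F}(\alpha)$.

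The main computation is then the cocycle $z_\Y([\alpha,1],[1,\alpha^{-1}])$. I would realize the two elements as explicit matrices in $\GSp(\V)$ of similitudes $\nu$ and $\nu^{-1}$, in strict analogy with the matrices for $g=[1,\alpha]$ and $h=\alpha$ written out in the proof of Lemma \ref{lem:weil-hodge-computation1-1}, but with the block matrix $\J_2$ attached to the $B_2$-slot replaced in one factor by a block $\J_1$ reflecting the action of $\alpha \in E \subset B_1$ on the first tensor factor. After twisting by $d(\nu)$ as in Lemma \ref{lem:weil-hodge-computation1-1} and passing through the Bruhat decomposition $P_\Y \tau_4 P_\Y$, one reduces the computation of $z_\Y([\alpha,1], [1,\alpha^{-1}])$ to evaluating $z_\Y(\tau_4, \n(\b)\tau_4) = \gamma_F(\tfrac12\psi)^4 \cdot \gamma_F(\det\b,\tfrac12\psi)\cdot h_F(\b)$ for an explicit symmetric matrix $\b$. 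The nondegenerate $2\times 2$ block of $\b$ will have determinant $\equiv -J \bmod (F^\times)^2$ (reflecting $J = J_1 J_2$, the quaternion discriminant of $B$ that enters when both $\alpha \in B_1^\times$ and $\alpha^{-1} \in B_2^\times$ are involved), and its Hasse invariant will contribute the factor $(-2abJ_1, J)_F$.

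Combining the Hilbert-symbol contributions from $\tilde{s}([\alpha,1])$ and $\tilde{s}([1,\alpha^{-1}])$ with this cocycle and simplifying via $\gamma_F(-J,\tfrac12\psi)\cdot\gamma_F(-1,\tfrac12\psi)^{-1}=\gamma_F(J,\tfrac12\psi)$, one arrives at the claimed formula $\gamma_F(J,\tfrac12\psi)\cdot(-2abJ_1,J)_F$. The main obstacle is purely bookkeeping: one must keep careful track of all Hilbert symbols, Weil indices, and Hasse invariants so that the $J_2$'s and $J_1$'s combine into the clean $J$ appearing in the statement, and the sign-ambiguous factors collapse correctly. Since the structure of the computation is formally identical to Lemma \ref{lem:weil-hodge-computation1-1} but with the two quaternion factors swapped in role, I do not anticipate any conceptual difficulty beyond this.
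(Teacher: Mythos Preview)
Your proposal is correct and follows essentially the same route as the paper. The paper likewise factors through $g_1=[\alpha,1]$ and $g_2=[1,\alpha^{-1}]$ in $\GU(V)^0$, reads off $\tilde{s}(g_1)=(-\nu J_1,J_2)_F$ and $\tilde{s}(g_2)=(-\nu J_2,J_1)_F$ from \cite[Proposition~C.18]{periods1}, and computes $z_\Y(g_1,g_2)$ by the same Bruhat reduction to $z_\Y(\tau_4,\n(\b)\tau_4)$ with the nondegenerate block of $\b$ having determinant $-J$; your preliminary step $\tilde{s}(\alpha,\alpha^{-1},1)=\tilde{s}([\alpha,\alpha^{-1}])$ via $h=1$ is harmless but not needed, and be aware that the identity $\gamma_F(-J,\tfrac12\psi)\gamma_F(-1,\tfrac12\psi)^{-1}=\gamma_F(J,\tfrac12\psi)$ is only true up to $(-1,J)_F$, which gets absorbed into the Hasse-invariant term.
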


\begin{proof}
Put $g_1 = [\alpha, 1] \in \GU(V)^0$ and $g_2 = [1, \alpha^{-1}] \in \GU(V)^0$.
Then we have $\tilde{s}(\alpha, \alpha^{-1}, 1) = \tilde{s}(g_1) \cdot \tilde{s}(g_2) \cdot z_\Y(g_1, g_2)$.
By \cite[Proposition C.4.2]{periods1}, we have
\[
 \tilde{s}(g_1) = (- \nu J_1, J_2)_F, \qquad
 \tilde{s}(g_2) = (- \nu J_2, J_1)_F.
\]
We have
\[
 g_1 = \nu^{-1} \cdot 
 \begin{pmatrix}
  a \cdot \1_4 & -bu \cdot \J_1 \\
  -b \cdot \J_1^{-1} & a \cdot \1_4
 \end{pmatrix}, \qquad
 g_2 =
 \begin{pmatrix}
  a \cdot \1_4 & bu \cdot \J_2 \\
  b \cdot \J_2^{-1} & a \cdot \1_4
 \end{pmatrix}
\]
in $\GSp(\V)$, where
\[
 \J_1 = 
 \begin{pmatrix}
  1 & & & \\
  & J_1 & & \\
  & & -J_2 & \\
  & & & -J
 \end{pmatrix}, \qquad
 \J_2 = 
 \begin{pmatrix}
  1 & & & \\
  & -J_1 & & \\
  & & J_2 & \\
  & & & -J
 \end{pmatrix}.
\]
Hence, as in the proof of Lemma \ref{lem:weil-hodge-computation1-1}, we have
\[
 z_{\Y}(g_1, g_2) = \gamma_F(J, \tfrac{1}{2} \psi) \cdot (2 \nu abJ_1, J)_F.
\]
This completes the proof.
\end{proof}

Now we compute $s(\alpha, \alpha^{-1}, 1)$.
By definition, we have
\begin{align*}
 s(\alpha,\alpha^{-1}, 1)
 & = s^\sharp([\alpha, \alpha^{-1}], 1, 1, 1) \\
 & = \hat{s}^\sharp([\alpha, \alpha^{-1}], 1, 1, 1) \cdot \mu(\iota([\alpha, \alpha^{-1}], 1)) \\
 & = \hat{s}_1(\iota([\alpha, \alpha^{-1}], 1)) \cdot \mu(\iota([\alpha, \alpha^{-1}], 1)), 
\end{align*}
where $[\alpha, \alpha^{-1}] \in \U(V)^0$.

\begin{lem}
\label{lem:weil-hodge-computation2-2}
We have
\[
 \hat{s}_1(\iota([\alpha, \alpha^{-1}], 1)) = (u, J)_F.
\]
\end{lem}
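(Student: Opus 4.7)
The statement splits naturally into two cases depending on whether $B$ is split or ramified. Since $B = E + E\j$ with $\j^2 = J$ and $\alpha\j = \j\alpha^\rho$, the algebra $B$ is split if and only if $J \in \N_{E/F}(E^\times)$, which by standard Hilbert-symbol theory is equivalent to $(u,J)_F = 1$. In the split case $\hat{s}_1 \equiv 1$ by its very definition, and $(u,J)_F = 1$, so both sides equal $1$ automatically. Thus the real content of the lemma lies in the ramified case, where I must show $\hat{s}_1(\iota([\alpha,\alpha^{-1}],1)) = -1 = (u,J)_F$.

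In the ramified case, the plan is to exhibit the image $g := \iota([\alpha,\alpha^{-1}],1) \in \U(V^\square)$ as an element of the Bruhat cell $P_{V^\triangle}\tau_1 P_{V^\triangle}$, from which the definition of $\hat{s}_1$ forces the value $(-1)^1 = -1$. To do this, I would write down the matrix of $g$ in the basis $\v_1,\v_2,\v_1^*,\v_2^*$ of $V^\square$ (with $V$ viewed as a left $B$-space as in \S \ref{ss:doubling-U(V)} and $\kappa_1 = 1$, $\kappa_2 = -J_1$). The element $[\alpha,\alpha^{-1}]$ acts on $V = B_1 \otimes_E B_2$ as $x_1 \otimes x_2 \mapsto \alpha x_1 \otimes \alpha^{-1}x_2$. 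Using $\alpha \in E$ and the relations $\alpha\j_1 = \j_1\alpha^\rho$, one checks that $e_1 = 1 \otimes 1$ is fixed while $e_2 = \j_1 \otimes 1$ is rescaled by a factor involving $\beta := \alpha^\rho/\alpha \in E^1$. Consequently the non-identity part of $g$ is concentrated in the rank-one $B$-subspace spanned (in $V^\square$) by the vectors coming from $e_2$.

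Once the matrix is written down, the Bruhat decomposition is a straightforward $LU$-type computation, modeled exactly on the calculations carried out in Lemmas \ref{lem:s-hat-1-E^1}, \ref{lem:s-hat-2-E^1}, and especially \ref{lem:weil-hodge-computation1-2}. In each of those lemmas a $2\times 2$ block of the form $\bigl(\begin{smallmatrix} \frac{1}{2}(\gamma+1) & * \\ * & \frac{1}{2}(\gamma+1)\end{smallmatrix}\bigr)$ with $\gamma \in E^1 \setminus \{1\}$ was factored as (upper-triangular)$\,\cdot\,\tau\,\cdot\,$(upper-triangular), producing the single $\tau_1$ factor in its Bruhat cell. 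The matrix of $g$ has precisely this shape in the $(\v_2,\v_2^*)$-plane, with $\gamma = \beta$ (or its inverse); the other plane contributes only a $P_{V^\triangle}$-factor because $e_1$ is fixed. Thus $g \in P_{V^\triangle}\tau_1 P_{V^\triangle}$, and $\hat{s}_1(g) = -1$ as required.

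The main obstacle is bookkeeping rather than any conceptual difficulty: one must correctly juggle the left-versus-right $B$-module conventions used in the doubling construction of \S \ref{ss:doubling-U(V)} (with $V$ regarded as a left $B$-space and $\GL(V^\square)$ acting on the right via $v\cdot g = g^{-1}v$) against the native right $B$-structure on $V = B_1 \otimes_E B_2$ coming from \cite[\S 2.2]{periods1}, while also keeping track of the parameters $\kappa_i$. Once these conventions are pinned down, the Bruhat decomposition falls out mechanically, exactly parallel to the arguments already carried out in the preceding lemmas, and gives $\hat{s}_1(\iota([\alpha,\alpha^{-1}],1)) = -1 = (u,J)_F$ in the ramified case, completing the proof.
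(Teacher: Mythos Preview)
Your proposal is correct and follows essentially the same route as the paper's proof. The paper also reduces to showing that $g = \iota([\alpha,\alpha^{-1}],1)$ lies in the Bruhat cell $P_{V^\triangle}\tau_1 P_{V^\triangle}$ by writing down the matrix of $g$ in the basis $\v_1,\v_2,\v_1^*,\v_2^*$ (referring back to Lemma~\ref{lem:s-hat-1-E^1} as a model), observing that $\v_1$ and $\v_1^*$ are fixed while the $(\v_2,\v_2^*)$-block has the shape you describe with $\beta = \alpha^{-1}\alpha^\rho$, and then factoring that $2\times 2$ block explicitly through $\tau_1$.
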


\begin{proof}
Put $g = \iota([\alpha, \alpha^{-1}], 1) \in \U(V^\square)$ and $\beta = \alpha^{-1} \alpha^\rho$, so that $\beta - 1 \in E^\times$.
As in the proof of Lemma \ref{lem:s-hat-1-E^1}, we have
\[
 \begin{bmatrix}
  \v_1 \cdot g \\
  \v_2 \cdot g \\
  \v_1^* \cdot g \\
  \v_2^* \cdot g
 \end{bmatrix}
 = A \cdot 
 \begin{bmatrix}
  \v_1 \\
  \v_2 \\
  \v_1^* \\
  \v_2^*
 \end{bmatrix},
\]
where
\begin{align*}
 A & = 
 \begin{pmatrix}
  1 & & & \\
  & \frac{1}{2} (\beta+1) & & - \frac{1}{4 J_1 \i} (\beta-1) \\
  & & 1 & \\
  & -J_1 \i (\beta-1) & & \frac{1}{2} (\beta+1)
 \end{pmatrix} \\
 & = 
 \begin{pmatrix}
  1 & & & \\
  & \frac{1}{J_1 \i (\beta^\rho - 1)} & & * \\
  & & 1 & \\
  & & & -J_1 \i (\beta-1)
 \end{pmatrix}
 \cdot \tau_1 \cdot 
 \begin{pmatrix}
  1 & & & \\
  & 1 & & * \\
  & & 1 & \\
  & & & 1
 \end{pmatrix}.
\end{align*}
Hence we have
\[
 \hat{s}_1(g) = 
 \begin{cases}
  1 & \text{if $B$ is split,} \\
  -1 & \text{if $B$ is ramified.}
 \end{cases}
\]
This completes the proof.
\end{proof}

\begin{lem}
\label{lem:weil-hodge-computation2-3}
We have
\[
 \mu(\iota([\alpha, \alpha^{-1}], 1))
 = \gamma_F(J, \tfrac{1}{2} \psi) \cdot (-2abu J_1, J)_F.
\]
\end{lem}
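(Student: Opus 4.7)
The plan is to mimic the template of Lemma \ref{lem:weil-hodge-computation1-3}, performing the same kind of Bruhat-level manipulation in $\Sp(\V^\square)$ but for the element $\sigma:=\iota([\alpha,\alpha^{-1}],1)$ in place of $\iota([\alpha,\alpha],1)$. The first step is to identify the block structure of $\sigma$ on $\V$. Writing $[\alpha,\alpha^{-1}]=g_1g_2$ as in the proof of Lemma \ref{lem:weil-hodge-computation2-1} and using $\J_1\J_2^{-1}=\J_1^{-1}\J_2=\mathrm{diag}(1,-1,-1,1)$, $\J_2-\J_1=\mathrm{diag}(0,-2J_1,2J_2,0)$, $\J_2^{-1}-\J_1^{-1}=\mathrm{diag}(0,-2J_1^{-1},2J_2^{-1},0)$, one checks that the matrix of $[\alpha,\alpha^{-1}]$ on $\V$ is block-diagonal with respect to the decomposition $\V=\V_1\oplus\V_2\oplus\V_3\oplus\V_4$, and that the blocks on $\V_1$ and $\V_4$ are the identity. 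Writing $\alpha^{-1}\alpha^\rho=c+d\i$ with $c=(a^2+b^2u)/\nu$ and $d=-2ab/\nu$, the action on $\V_2$ and $\V_3$ is given by matrices of exactly the same shape as the $\sigma_i$ in the proof of Lemma \ref{lem:weil-hodge-computation1-3}, with the parameters $k_2=J_1$ and $k_3=-J_2$. Thus $\sigma=\prod_{i=2}^3 \iota_i(\sigma_i)$ with $\sigma_i$ of the form
\[
 \sigma_i=\begin{pmatrix} c & & dk_iu & \\ & 1 & & \\ d/k_i & & c & \\ & & & 1 \end{pmatrix},
\qquad k_2=J_1,\ k_3=-J_2.
\]

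Next I would compute the two Leray-type cocycle factors entering $\mu(\sigma)=z_{\Y^\square}(\sigma_0,\sigma)^{-1}\cdot z_{\Y^\square}(\sigma_0\sigma\sigma_0^{-1},\sigma_0)$. Using $\sigma_0=\prod_{i=1}^4\iota_i(\tau_1\m(\a_1))$ and $\sigma=\prod_{i=2}^3\iota_i(\sigma_i)$, both cocycle factors split into products over $i=2,3$ of cocycles in $\Sp(\V_i^\square)\cong\Sp_4$. For the first factor, the Bruhat decomposition of $\sigma_i$ as $\n(\b_{1,i})\tau\cdot p$ (with $\b_{1,i}=\tfrac{ck_i}{d}\mathrm{diag}(1,0)$) followed by commuting $\m(\a_2)$ across the unipotent, just as in the earlier lemma, reduces each factor to $z_{\Y_i^\square}(\tau_1,\tau)=1$, so $z_{\Y^\square}(\sigma_0,\sigma)=1$.

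For the second factor, conjugation $\sigma_0\sigma\sigma_0^{-1}$ will yield elements $\sigma_i'$ identical in form to those in Lemma \ref{lem:weil-hodge-computation1-3} with the new values of $k_i$, and the same manipulation (Bruhat decomposition $\sigma_i'\in P_{\Y_i^\square}\cdot\tau_2\cdot\n(\b_{4,i})\n(\b_{5,i})$ with $\b_{4,i}=-\tfrac{d}{2(c-1)k_i}\mathrm{diag}(0,1)$ and $d/(c-1)=-a/(bu)$) reduces each factor to $\gamma_F(\tfrac12\psi)\cdot\gamma_F(2abk_iu,\tfrac12\psi)$. Multiplying over $i=2,3$ using the Weil-index product formula and noting $k_2k_3=-J$ gives
\[
 z_{\Y^\square}(\sigma_0\sigma\sigma_0^{-1},\sigma_0)=\gamma_F(-k_2k_3,\tfrac12\psi)\cdot(-2abk_2u,-k_2k_3)_F=\gamma_F(J,\tfrac12\psi)\cdot(-2abuJ_1,J)_F,
\]
which is exactly the claimed value of $\mu(\iota([\alpha,\alpha^{-1}],1))$.

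The main obstacle is the initial step of identifying the correct block-diagonal form of the matrix of $[\alpha,\alpha^{-1}]$ on $\V$ and in particular verifying that the non-trivial indices are $i=2,3$ (rather than $i=3,4$ as for $[\alpha,\alpha]$). Once the correct values $k_2=J_1$, $k_3=-J_2$ are pinned down, the remainder of the calculation is a purely formal repetition of the cocycle manipulation already carried out in Lemma \ref{lem:weil-hodge-computation1-3}.
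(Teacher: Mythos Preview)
Your proposal is correct and follows exactly the same approach as the paper's proof: identify $\sigma=\prod_{i=2}^3\iota_i(\sigma_i)$ with $k_2=J_1$, $k_3=-J_2$, then invoke the computation of Lemma~\ref{lem:weil-hodge-computation1-3} verbatim to obtain $z_{\Y^\square}(\sigma_0,\sigma)=1$ and $z_{\Y^\square}(\sigma_0\sigma\sigma_0^{-1},\sigma_0)=\gamma_F(-k_2k_3,\tfrac12\psi)\cdot(-2abk_2u,-k_2k_3)_F$. Your write-up in fact supplies more detail on the block-diagonal identification than the paper does, which simply asserts the form of $\sigma$ and then says ``as in the proof of Lemma~\ref{lem:weil-hodge-computation1-3}''.
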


\begin{proof}
Recall that
\[
 \mu(\iota([\alpha, \alpha^{-1}], 1))
 = z_{\Y^\square}(\sigma_0, \sigma)^{-1} \cdot z_{\Y^\square}(\sigma_0 \sigma \sigma_0^{-1}, \sigma_0),
\]
where $\sigma$ is the image of $\iota([\alpha, \alpha^{-1}], 1)$ in $\Sp(\V^\square)$.
If we write $\alpha^{-1} \alpha^\rho = c + d \i$ with $c, d \in F$, then we have
\[
 \sigma = \prod_{i=2}^3 \iota_i(\sigma_i),
\]
where 
\[
 \sigma_i =
 \begin{pmatrix}
  c & &  d k_i u & \\
  & 1 & & \\
  \frac{d}{k_i} & & c & \\
  & & & 1
 \end{pmatrix}, \qquad
 k_i = 
 \begin{cases}
  J_1 & \text{if $i=2$,} \\
  -J_2 & \text{if $i=3$.}
 \end{cases}
\]
Hence, as in the proof of Lemma \ref{lem:weil-hodge-computation1-3}, we have $z_{\Y^\square}(\sigma_0, \sigma)  = 1$ and
\[
 z_{\Y^\square}(\sigma_0 \sigma \sigma_0^{-1}, \sigma_0)
 = \gamma_F(-k_2 k_3, \tfrac{1}{2} \psi) \cdot (-2ab k_2 u, -k_2 k_3)_F.
\]
This completes the proof.
\end{proof}

By Lemmas \ref{lem:weil-hodge-computation2-2} and \ref{lem:weil-hodge-computation2-3}, we have
\[
 s(\alpha, \alpha^{-1}, 1) = \gamma_F(J, \tfrac{1}{2} \psi) \cdot (-2ab J_1, J)_F.
\]
Now \eqref{eq:weil-hodge-computation2} follows from this and Lemma \ref{lem:weil-hodge-computation2-1}.

\subsection{Proof of \eqref{eq:weil-hodge-computation3}}

Assume that $J_1, J_2 \in (F^\times)^2$.
Choose $t_i \in F^\times$ such that $J_i = t_i^2$ and put $\j_i^\natural = t_i^{-1} \cdot \j_i$.
In this section, we will show that
\[
 \Chi(\j_1^\natural, \j_2^\natural, 1) = 1.
\]

\begin{lem}
\label{lem:weil-hodge-computation3-1}
We have
\[
 \tilde{s}(\j_1^\natural, \j_2^\natural, 1) = 1.
\]
\end{lem}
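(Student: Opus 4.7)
Since the image of $1 \in \GU(W)$ in $\Sp(\V)$ is the identity, the cocycle relation $\tilde{s}(g,h) = \tilde{s}(g)\cdot \tilde{s}(h)\cdot z_\Y(g,h)$ reduces the claim to $\tilde{s}(g) = 1$ for $g = [\j_1^\natural,\j_2^\natural] \in \GU(V)^0$. Note that $\nu(\j_i^\natural) = -J_i/t_i^2 = -1$, so $\nu(g) = 1$, and the element $g$ lives in the subgroup $\U(V)^0$.

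The plan is to evaluate $\tilde{s}(g)$ directly via Proposition C.18 of \cite{periods1}, which expresses $\tilde{s}$ on $\GU(V)^0$ as a product of Hilbert symbols of the shape $(a,J_i)_F$ and $(a,J)_F$ and (possibly) Weil indices $\gamma_F(J_i,\tfrac12\psi)$, $\gamma_F(J,\tfrac12\psi)$. Under the present hypothesis $J_1 = t_1^2$, $J_2 = t_2^2$ (so also $J = J_1J_2$ is a square), every such Hilbert symbol equals $1$ and every such Weil index equals $1$. Consequently $\tilde{s}(g) = 1$, which is exactly what we need.

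As a sanity check on conventions, one can independently verify the conclusion by computing the ``new'' splitting $s(g,1)$ of Proposition \ref{prop:spl-hodge} and then appealing to the already-established identities \eqref{eq:weil-hodge-computation1} and \eqref{eq:weil-hodge-computation2}: taking $\alpha = 1$ (allowed since $\nu(g) = 1 = \N_{E/F}(1)$) gives $s(g,1) = s^\sharp(g,1,1,1) = \hat{s}_1(\iota(g,1))\cdot \mu(\iota(g,1))$. Because $B_v$ is split (its invariant is $J \in (F_v^\times)^2$), $\hat{s}_1 \equiv 1$. The $\mu$-computation proceeds exactly as in Lemmas \ref{lem:weil-hodge-computation1-3} and \ref{lem:weil-hodge-computation2-3}: decompose $\iota(g,1)$ as a product $\prod_i \iota_i(\sigma_i)$ in $\Sp(\V^\square)$, apply the formula for $\mu$ in terms of $z_{\Y^\square}(\sigma_0,\cdot)$ and $z_{\Y^\square}(\cdot,\sigma_0)$, and use Rao's cocycle relations. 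Every Weil index that appears has argument a product of $J_1, J_2, J$ (and their negatives, times integers), hence is a square up to sign, and the remaining Hilbert symbols collapse.

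The only real obstacle is bookkeeping: one must carefully match the conventions of \cite[App.~C]{periods1} (polarization, additive character, formula for $\tilde{s}$ on pure elements $[\alpha,\beta]$ versus $[\j_1^\natural,\j_2^\natural]$, and the extension of $\tilde{s}$ from $\mathcal{G}$ to the cover $\tilde{\mathcal{G}}$) against those fixed in \S\ref{ss:weil-hodge-setup}. Once this matching is made explicit, the vanishing of all Hilbert symbols and Weil indices in the formula (by the square hypothesis on $J_1,J_2$) makes both sides visibly trivial, yielding $\tilde{s}(\j_1^\natural,\j_2^\natural,1) = 1$.
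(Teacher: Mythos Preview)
Your reduction to computing $\tilde{s}(g)$ for $g = [\j_1^\natural,\j_2^\natural]$ is fine and matches the paper's implicit starting point. The gap is in how you then evaluate $\tilde{s}(g)$. You assert that Proposition~C.18 of \cite{periods1} expresses $\tilde{s}$ on all of $\GU(V)^0$ as a product of Hilbert symbols $(a,J_i)_F$, $(a,J)_F$ and Weil indices $\gamma_F(J_i,\tfrac12\psi)$, $\gamma_F(J,\tfrac12\psi)$, so that the square hypothesis on $J_1,J_2$ kills everything. But C.18, as one sees from its use throughout this appendix, only supplies values of $\tilde{s}$ on specific elements such as $[\alpha,1]$, $[1,\alpha]$, $[\j_1^\natural,1]$, $[1,\j_2^\natural]$; it does not give a closed formula on the whole group, and in particular does not directly cover the product $[\j_1^\natural,\j_2^\natural]$.

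The paper therefore factors $g = g_1 g_2$ with $g_1 = [\j_1^\natural,1]$, $g_2 = [1,\j_2^\natural]$, so that C.18 gives $\tilde{s}(g_1)=\tilde{s}(g_2)=1$, and then carries out the substantive step you omit: the explicit computation of the cocycle $z_\Y(g_1,g_2)$. One writes $g_i = \m(\a_i)\cdot d(-1)$ in $\GSp(\V)$, observes that after stripping the similitude both factors lie in the Siegel parabolic (so the Leray cocycle term is $1$), and checks that the correction $v(g_2\, d(-1),-1)$ vanishes because $\det\a_2 = 1$ and $j(\m(\a_2))=0$. None of this follows formally from $J_i\in (F^\times)^2$; it requires recognizing the parabolic structure of the matrices. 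Your ``sanity check'' via $s(g,1)$ does not help here: that computation is exactly the content of the subsequent Lemmas~\ref{lem:weil-hodge-computation3-2}--\ref{lem:weil-hodge-computation3-3}, and invoking \eqref{eq:weil-hodge-computation1}, \eqref{eq:weil-hodge-computation2} is circular, since those concern different group elements and the whole point of this subsection is to pin down $\tilde{s}/s$ at the remaining element $(\j_1^\natural,\j_2^\natural,1)$.
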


\begin{proof}
Put $g_1 = [\j_1^\natural, 1] \in \GU(V)^0$ and $g_2 = [1, \j_2^\natural] \in \GU(V)^0$.
Then we have $\tilde{s}(\j_1^\natural, \j_2^\natural, 1) = \tilde{s}(g_1) \cdot \tilde{s}(g_2) \cdot z_\Y(g_1, g_2)$.
By \cite[Proposition C.4.2]{periods1}, we have
\[
 \tilde{s}(g_1) = \tilde{s}(g_2) = 1.
\]
It remains to compute $z_{\Y}(g_1, g_2)$.

Noting that $\nu(g_1) = \nu(g_2) = -1$, we have
\[
 z_\Y(g_1, g_2) = z_\Y(g_1 \cdot d(-1), d(-1) \cdot g_2) \cdot v(g_2 \cdot d(-1), -1).
\]
We have
\[
 g_1 = \m(\a_1) \cdot d(-1), \qquad
 g_2 = \m(\a_2) \cdot d(-1)
\]
in $\GSp(\V)$, where
\[
 \a_1 = 
 \begin{pmatrix}
  & \frac{1}{t_1} & & \\
  t_1 & & & \\
  & & & \frac{1}{t_1} \\
  & & t_1 & 
 \end{pmatrix}, \qquad
 \a_2 = 
 \begin{pmatrix}
  & & \frac{1}{t_2} & \\
  & & & \frac{1}{t_2} \\
  t_2 & & & \\
  & t_2 & &
 \end{pmatrix}.
\]
Hence we have
\[
 z_\Y(g_1 \cdot d(-1), d(-1) \cdot g_2) = 1.
\]
On the other hand, since $x(g_2 \cdot d(-1)) = 1$ and $j(g_2 \cdot d(-1)) = 0$, we have
\[
 v(g_2 \cdot d(-1), -1) = 1.
\]
Thus we obtain $z_\Y(g_1, g_2) = 1$.
This completes the proof.
\end{proof}

Now we compute $s(\j_1^\natural, \j_2^\natural, 1)$.
By definition, we have
\begin{align*}
 s(\j_1^\natural, \j_2^\natural, 1)
 & = s^\sharp([\j_1^\natural, \j_2^\natural], 1, 1, 1) \\
 & = \hat{s}^\sharp([\j_1^\natural, \j_2^\natural], 1, 1, 1) \cdot \mu(\iota([\j_1^\natural, \j_2^\natural], 1)) \\
 & = \hat{s}_1(\iota([\j_1^\natural, \j_2^\natural], 1)) \cdot \mu(\iota([\j_1^\natural, \j_2^\natural], 1)),
\end{align*}
where $[\j_1^\natural, \j_2^\natural] \in \U(V)^0$.

\begin{lem}
\label{lem:weil-hodge-computation3-2}
We have
\[
 \hat{s}_1(\iota([\j_1^\natural, \j_2^\natural], 1)) = 1.
\]
\end{lem}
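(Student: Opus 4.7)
The plan is to observe that the hypothesis $J_1, J_2 \in (F^\times)^2$ forces $B$ to be split, which makes $\hat{s}_1$ trivial by definition.

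More precisely, recall that $B_i = E + E\j_i$ with $\j_i^2 = J_i$. If $J_i \in (F^\times)^2$, then the quaternion algebra $B_i$ is split over $F$, since the existence of $t_i \in F^\times$ with $J_i = t_i^2$ implies that $B_i$ contains the zero-divisor $\j_i - t_i$ (or equivalently, the reduced norm form represents $0$). Hence both $B_1$ and $B_2$ are split. Since $B = B_1 \cdot B_2$ in the Brauer group of $F$, the quaternion algebra $B$ is itself split.

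Now consult the definition of $\hat{s}_1 : \U(V^\square) \to \C^1$ given in \S\ref{ss:doubling-U(V)}: it is specified by the dichotomy
\[
 \hat{s}_1(g) = \begin{cases} 1 & \text{if $B$ is split,} \\ (-1)^j & \text{if $B$ is ramified, with } g \in P_{V^\triangle} \tau_j P_{V^\triangle}. \end{cases}
\]
Since $B$ is split in our situation, $\hat{s}_1$ is identically $1$ on $\U(V^\square)$. In particular,
\[
 \hat{s}_1(\iota([\j_1^\natural, \j_2^\natural], 1)) = 1,
\]
which is the desired equality. There is no obstacle — this is purely a matter of unwinding definitions once one notes that the splitness hypothesis on $J_1, J_2$ forces $B$ to be split. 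The real work in verifying \eqref{eq:weil-hodge-computation3} will then be concentrated in the parallel computation of $\mu(\iota([\j_1^\natural, \j_2^\natural], 1))$ (via the Rao cocycle identities as in Lemmas \ref{lem:weil-hodge-computation1-3} and \ref{lem:weil-hodge-computation2-3}) and its comparison with $\tilde{s}(\j_1^\natural, \j_2^\natural, 1) = 1$ from Lemma \ref{lem:weil-hodge-computation3-1}.
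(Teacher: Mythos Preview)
Your proof is correct and matches the paper's approach exactly: the paper's proof is the single sentence ``Since $B$ is split, the assertion follows,'' and you have simply spelled out why $B$ is split under the standing hypothesis $J_1,J_2\in(F^\times)^2$. (One could also argue even more directly that $J=J_1J_2$ is a square, hence $B=E+E\j$ with $\j^2=J$ is split, without passing through $B_1,B_2$.)
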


\begin{proof}
Since $B$ is split, the assertion follows.
\end{proof}

\begin{lem}
\label{lem:weil-hodge-computation3-3}
We have
\[
 \mu(\iota([\j_1^\natural, \j_2^\natural], 1)) = 1.
\]
\end{lem}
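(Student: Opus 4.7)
The plan is to follow the template of Lemmas \ref{lem:weil-hodge-computation1-3} and \ref{lem:weil-hodge-computation2-3}: write down the image $\sigma$ of $\iota([\j_1^\natural, \j_2^\natural], 1)$ in $\Sp(\V^\square) \cong \Sp_{16}(F)$ explicitly, evaluate both $z_{\Y^\square}(\sigma_0, \sigma)$ and $z_{\Y^\square}(\sigma_0\sigma\sigma_0^{-1}, \sigma_0)$ using the $\Sp_{2n}$-cocycle rules recalled in \S\ref{sss:notation}, and show their ratio is $1$.

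First I would work out the matrix of $\sigma$. Since $(\j_i^\natural)^2 = t_i^{-2} J_i = 1$ and $\nu(\j_i^\natural) = -1$, the element $\j^\natural := [\j_1^\natural, \j_2^\natural] \in \U(V)^0$ is an involution of similitude $+1$. A direct computation using $\j_1 \cdot (x_1 \otimes x_2) = \j_1 x_1 \otimes x_2$ and $\j_2 \cdot (x_1 \otimes x_2) = x_1 \otimes \j_2 x_2$ shows that on the $E$-basis $(1\otimes 1, \j_1\otimes 1, 1\otimes \j_2, \j_1\otimes \j_2)$ of $V = B_1 \otimes_E B_2$, $\j^\natural$ is the monomial matrix implementing the permutation $(14)(23)$ with entries of the form $\pm t_1^{\pm 1} t_2^{\pm 1}$. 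Translating through the $F$-basis $\e_1,\dots,\e_4,\e_1^*,\dots,\e_4^*$ of $\V$ from \cite[\S C.1]{periods1} and then to the basis of $\V^\square$ in \S\ref{sss:notation}, $\sigma$ becomes a monomial element of $\Sp(\V^\square)$ stabilizing both $\X^\square$ and $\Y^\square$, so $\sigma = \m(\a)$ for an explicit monomial $\a \in \GL_8(F)$. This is the key structural observation: $\sigma$ lies in the Levi part of $P_{\Y^\square}$.

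Next, using the bi-$P$-invariance \eqref{eq:rangarao-2-cocycle} and the explicit shape of $\sigma$, $\sigma_0\sigma\sigma_0^{-1}$, and $\sigma_0$, both $z_{\Y^\square}$-values reduce to finite products of Weil indices $\gamma_F(\cdot,\tfrac12 \psi)$ and quadratic Hilbert symbols $(\cdot,\cdot)_F$, with all arguments lying in the subgroup of $F^\times$ generated by $t_1, t_2, u$. The hypothesis $J_1, J_2 \in (F^\times)^2$ (equivalent to $B$ being split at $v$) makes the relevant Hilbert symbols trivial, and a short verification shows that the Weil-index contributions to $z_{\Y^\square}(\sigma_0,\sigma)^{-1}$ and to $z_{\Y^\square}(\sigma_0\sigma\sigma_0^{-1},\sigma_0)$ are mutually inverse, forced by the identity $\sigma^2 = 1$ together with the symmetry in how $\sigma_0$ conjugates $\sigma$ relative to the standard polarization. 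Hence $\mu(\iota([\j_1^\natural,\j_2^\natural],1))=1$.

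The main obstacle will be the explicit sign and Weil-index bookkeeping in the second step. Unlike the diagonal cases of Lemmas \ref{lem:weil-hodge-computation1-3} and \ref{lem:weil-hodge-computation2-3}, where $\sigma$ factored as $\prod_i \iota_i(\sigma_i)$ through the block embeddings $\iota_i : \Sp(\V_i^\square) \hookrightarrow \Sp(\V^\square)$ and could be analysed one factor at a time via $\Sp_4(F)$-computations, here $\sigma$ genuinely permutes the $\V_i^\square$-blocks via $(14)(23)$. One therefore cannot decouple the computation into independent $\Sp_4$-cocycle evaluations; the cancellation must be verified globally, with careful tracking of the permutation through bi-$P$-invariance and through the formula $z(\tau_n, \n(\b)\tau_n) = \gamma_F(\tfrac12 \psi)^n \gamma_F(\det \b, \tfrac12 \psi) h_F(\b)$ applied to the relevant $\b$'s arising from $\sigma_0$-conjugation.
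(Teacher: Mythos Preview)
Your overall template matches the paper's, and your key structural observation---that $\sigma = \iota([\j_1^\natural,\j_2^\natural],1)$ lies in the Levi of $P_{\Y^\square}$, i.e.\ $\sigma = \m(\a)$ for a monomial $\a$---is exactly what the paper uses. But your expectations about the second half of the computation are off, and the ``mutual cancellation'' mechanism you describe is not what actually happens.

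First, once you know $\sigma \in P_{\Y^\square}$, bi-$P$-invariance gives $z_{\Y^\square}(\sigma_0,\sigma)=1$ immediately; there is no Weil-index contribution here to be cancelled later. Second, for $z_{\Y^\square}(\sigma_0\sigma\sigma_0^{-1},\sigma_0)$ the paper writes out the $16\times 16$ matrix $\sigma_0\sigma\sigma_0^{-1}$ explicitly, exhibits a Bruhat factorization $\sigma_0\sigma\sigma_0^{-1}\in P_{\Y^\square}\cdot\tau\cdot\m(\a_2)$ for a certain Weyl-type element $\tau$ (of length $4$, not $\tau_8$), notes $\sigma_0\in\tau_4\cdot P_{\Y^\square}$, and then checks that $\m(\a_2)\cdot\tau_4\in\tau_4\cdot P_{\Y^\square}$. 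The cocycle thus collapses to $z_{\Y^\square}(\tau,\tau_4)=1$ by the rule $z(\tau_i,\tau_j)=1$. No Weil indices, Hasse invariants, or Hilbert symbols ever enter, and the hypothesis $J_1,J_2\in(F^\times)^2$ is used only to define $\j_i^\natural$, not to kill any symbols. So both factors are individually equal to $1$; there is no ``mutually inverse'' cancellation forced by $\sigma^2=1$, and the formula $z(\tau_n,\n(\b)\tau_n)=\gamma_F(\tfrac12\psi)^n\gamma_F(\det\b,\tfrac12\psi)h_F(\b)$ that you plan to invoke is not needed. The real work is finding the Bruhat decomposition of $\sigma_0\sigma\sigma_0^{-1}$, which is a direct (if tedious) matrix computation in $\Sp_{16}(F)$ rather than a symmetry argument.
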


\begin{proof}
Recall that
\[
 \mu(\iota([\j_1^\natural, \j_2^\natural], 1))
 = z_{\Y^\square}(\sigma_0, \sigma)^{-1} \cdot z_{\Y^\square}(\sigma_0 \sigma \sigma_0^{-1}, \sigma_0),
\]
where $\sigma$ is the image of $\iota([\j_1^\natural, \j_2^\natural], 1)$ in $\Sp(\V^\square)$.
Since
\[
 \sigma = \m(\a_1),
\]
where
\[
 \a_1 = 
 \begin{pmatrix}
  \a & \\
  & \1_4
 \end{pmatrix}, \qquad
 \a = 
 \begin{pmatrix}
  & & & \frac{1}{t_1 t_2} \\
  & & \frac{t_1}{t_2} & \\
  & \frac{t_2}{t_1} & & \\
  t_1 t_2 & & & 
 \end{pmatrix},
\]
we have $z_{\Y^\square}(\sigma_0, \sigma) = 1$.
On the other hand, we have
\[
 \sigma_0 \sigma \sigma_0^{-1} = 
 \begin{pmatrix}
  \tfrac{1}{2} (\1_4 + \a) & & & \tfrac{1}{4} (\1_4 - \a) \\
  & \tfrac{1}{2} (\1_4 + {}^t \a) & - \tfrac{1}{4} (\1_4 - {}^t \a) & \\
  & - \1_4 + {}^t \a & \tfrac{1}{2} (\1_4 + {}^t \a) & \\
  \1_4 - \a & & & \tfrac{1}{2} (\1_4 + \a)
 \end{pmatrix}.
\]
Since $\sigma_0 \sigma \sigma_0^{-1} \in P_{\Y^\square} \cdot \tau \cdot \m(\a_2)$ and $\sigma_0 \in \tau_4 \cdot P_{\Y^\square}$, where
\[
 \a_2 =
 \begin{pmatrix}
  \a' & \\
  & \a''
 \end{pmatrix}, \qquad
 \a' = 
 \begin{pmatrix}
  1 & & & \\
  & 1 & & \\
  & -\frac{t_2}{t_1} & 1 & \\
  -t_1 t_2 & & & 1
 \end{pmatrix}, \qquad
 \a'' = 
 \begin{pmatrix}
  1 & & & \\
  & 1 & & \\
  & -\frac{t_1}{t_2} & 1 & \\
  -\frac{1}{t_1 t_2} & & & 1
 \end{pmatrix},
\]
and
\[
 \tau = 
 \begin{pmatrix}
  \1_2 & & & & & & & \\
  & \0_2 & & & & -\1_2 & & \\
  & & \1_2 & & & & & \\
  & & & \0_2 & & & & -\1_2 \\
  & & & & \1_2 & & & \\
  & \1_2 & & & & \0_2 & & \\
  & & & & & & \1_2 & \\
  & & & \1_2 & & & & \0_2 
 \end{pmatrix},
\]
we have
\[
 z_{\Y^\square}(\sigma_0 \sigma \sigma_0^{-1}, \sigma_0) 
 = z_{\Y^\square}(\tau, \m(\a_2) \cdot \tau_4).
\]
Hence, since $\m(\a_2) \cdot \tau_4 \in \tau_4 \cdot P_{\Y^\square}$, we have
\[
 z_{\Y^\square}(\sigma_0 \sigma \sigma_0^{-1}, \sigma_0)
 = z_{\Y^\square}(\tau, \tau_4) = 1.
\]
This completes the proof.
\end{proof}

By Lemmas \ref{lem:weil-hodge-computation3-2} and \ref{lem:weil-hodge-computation3-3}, we have
\[
 s(\j_1^\natural, \j_2^\natural, 1) = 1.
\]
Now \eqref{eq:weil-hodge-computation3} follows from this and Lemma \ref{lem:weil-hodge-computation3-1}.

\subsection{Compatibility with \cite{hk-duke}}
\label{ss:compatibility-hk-duke}

Suppose that $F$ is local.
In this section, we compare the splitting $s$ with the standard one for symplectic-orthogonal dual pairs when $B$ is split.
In this case, we have $J \in \N_{E/F}(E^\times)$, so that we may write $J = k^2 - l^2 u$ for some $k, l \in F$.
We define an isomorphism $\ii:B \rightarrow \M_2(F)$ of quaternion $F$-algebras by
\[
 \ii(a + b \i + c \j + d \i \j) =
 \begin{pmatrix}
  a & b \\
  bu  & a
 \end{pmatrix} 
 + \begin{pmatrix}
  c & d \\
  du  & c
 \end{pmatrix}
 \begin{pmatrix}
  k & -l \\
  lu  & -k
 \end{pmatrix}.
\]
Put
\begin{align*}
 e & = \frac{1}{2} + \frac{k}{2J} \j - \frac{l}{2J} \i \j, &
 e' & = \frac{1}{2} \i + \frac{lu}{2J} \j - \frac{k}{2J} \i \j, \\
 e'' & =  \frac{1}{2u} \i - \frac{l}{2J} \j + \frac{k}{2uJ} \i \j, &
 e^* & = \frac{1}{2} - \frac{k}{2J} \j + \frac{l}{2J} \i \j,
\end{align*}
so that
\[
 \ii(e) = \mat{1}{0}{0}{0}, \qquad
 \ii(e') = \mat{0}{1}{0}{0}, \qquad
 \ii(e'') = \mat{0}{0}{1}{0}, \qquad
 \ii(e^*) = \mat{0}{0}{0}{1}.
\]
In particular, we have
\[
 \begin{bmatrix}
  e \cdot x \\
  e' \cdot x
 \end{bmatrix}
 = \ii(x) \cdot 
 \begin{bmatrix}
  e \\
  e'
 \end{bmatrix}
\]
for $x \in B$.

Let $V$ be an $m$-dimensional skew-hermitian right $B$-space as in \eqref{eq:condition-skew-herm}.
We consider the $2m$-dimensional quadratic $F$-space $V^\dagger := V e$ given by Morita theory (see \cite[\S C.2]{periods1} for details).
With respect to a basis $e_1 e, e_1 e'', \dots, e_m e, e_m e''$ of $V^\dagger$, the symmetric bilinear form on $V^\dagger$ is associated to
\[
 \frac{1}{2} \cdot 
 \begin{pmatrix}
  \kappa_1 u & & & & \\
  & - \kappa_1& & & \\
  & & \ddots & & \\
  & & & \kappa_m u & \\
  & & & & - \kappa_m
 \end{pmatrix}.
\]
Similarly, we consider the $2$-dimensional symplectic $F$-space $W^\dagger := e W$.
Then, by \cite[Lemma C.2.2]{periods1}, we have an identification
\[
 \V = V^\dagger \otimes_F W^\dagger.
\]
We take a complete polarization $W^\dagger = X \oplus Y$ defined by 
\[
 X = F e, \qquad Y = F e'.
\]
This induces a complete polarization $\V = \X' \oplus \Y'$, where
\[
 \X' = V^\dagger \otimes_F X, \qquad 
 \Y' = V^\dagger \otimes_F Y.
\]
More explicitly, we have
\begin{align*}
 \X' & = F \cdot e_1 \otimes e + \dots + F \cdot e_m \otimes e + F \cdot e_1 \otimes e'' + \dots + F \cdot e_m \otimes e'', \\
 \Y' & = F \cdot e_1 \otimes e' + \dots + F \cdot e_m \otimes e' + F \cdot e_1 \otimes e^* + \dots + F \cdot e_m \otimes e^*.
\end{align*}

Now we recall the splitting defined in \cite[\S 5.1]{hk-duke}.
Using a basis $e, e'$ of $W^\dagger$, we identify $\GSp(W^\dagger)$ with $\GSp_2(F) = \GL_2(F)$.
We define a map
\[
 s^\dagger: \Sp(W^\dagger) \longrightarrow \C^1
\]
by setting
\[
 s^\dagger(h) = \xi_E(x(h))^m \cdot \gamma'^{-j(h)},
\]
where $x(h)$ and $j(h)$ are as in \S \ref{sss:notation}, and
\[
 \gamma' = \gamma_F(\tfrac{1}{2} \psi)^{2m} \cdot \gamma_F(\det V^\dagger, \tfrac{1}{2} \psi) \cdot h_F(V^\dagger).
\]
We extend $s^\dagger$ to a map
\[
 s^\dagger: \G(\O(V^\dagger) \times \Sp(W^\dagger)) \longrightarrow \C^1
\]
so that 
\[
 s^\dagger(\g) = s^\dagger(h \cdot d(\nu(h))^{-1})
\]
for $\g = (g, h) \in \G(\O(V^\dagger) \times \Sp(W^\dagger))$.

\begin{lem}
\label{lem:spl-s-dagger}
For $\g_1, \g_2 \in \G(\O(V^\dagger) \times \Sp(W^\dagger))$, we have
\[
 z_{\Y'}(\g_1, \g_2) = \frac{s^\dagger(\g_1 \g_2)}{s^\dagger(\g_1) s^\dagger(\g_2)}.
\]
\end{lem}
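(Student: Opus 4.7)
The plan is to first verify the identity when both arguments lie in the isometry subgroup $\Sp(W^\dagger)$, and then bootstrap to similitudes via the cocycle manipulation \eqref{eq:rangarao-2-cocycle}.

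For the first step, note that under the identification $\V = V^\dagger \otimes_F W^\dagger$ and the corresponding polarization $\Y' = V^\dagger \otimes_F Y$, the embedding $\Sp(W^\dagger) \hookrightarrow \Sp(\V)$ sends $h \mapsto \id_{V^\dagger} \otimes h$. The restriction of $z_{\Y'}$ to $\Sp(W^\dagger) \times \Sp(W^\dagger)$ is then the standard symplectic-orthogonal cocycle computed by Kudla \cite[Theorem 3.1, case $3_+$]{kudla-splitting}, and the claim becomes the explicit splitting recalled in \cite[\S 5.1]{hk-duke}. The twist by $\xi_E^m$ records the discriminant of $V^\dagger$: the diagonal form of $V^\dagger$ has entries proportional to $\kappa_i u$ and $-\kappa_i$, so $\det V^\dagger \equiv (-u)^m$ modulo squares, and $E = F(\sqrt{u})$. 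The constant $\gamma'$ is then pinned down by evaluating both sides on the pair $(\tau_1, \n(\b)\tau_1)$ via the Bruhat-cell formula recalled in \S \ref{sss:notation}.

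For the second step, observe that $s^\dagger(\g)$ depends only on $h \cdot d(\nu(h))^{-1} \in \Sp(W^\dagger)$. Given $\g_i = (g_i, h_i)$ with $\nu(g_i) = \nu(h_i) = \nu_i$, write the image of $\g_i$ in $\GSp(\V)$ as $\tilde{g}_i \cdot \tilde{h}_i$, where $\tilde{g}_i = g_i \otimes \id_{W^\dagger}$ and $\tilde{h}_i = \id_{V^\dagger} \otimes h_i$. Since $\tilde{g}_i$ preserves the polarization $\Y' = V^\dagger \otimes Y$, it lies in $P_{\Y'}$; similarly, the image of $d(\nu_i)$ under the natural embedding $\GSp(W^\dagger) \hookrightarrow \GSp(\V)$ also lies in $P_{\Y'}$. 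Applying \eqref{eq:rangarao-2-cocycle} to absorb all these parabolic factors reduces $z_{\Y'}(\g_1, \g_2)$ to $z_{\Y'}(h_1 d(\nu_1)^{-1}, h_2 d(\nu_2)^{-1})$, possibly modified by Weil-index corrections $v(\cdot, \cdot)$, and by step one this matches $s^\dagger(\g_1\g_2)/(s^\dagger(\g_1) s^\dagger(\g_2))$ once all corrections are accounted for.

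The main obstacle is pinning down the constant $\gamma'$ in the first step. Each factor of $\gamma'$ --- the Weil index $\gamma_F(\tfrac{1}{2}\psi)^{2m}$, the twist $\gamma_F(\det V^\dagger, \tfrac{1}{2}\psi)$, and the Hasse invariant $h_F(V^\dagger)$ --- arises naturally in Kudla's splitting formula, but aligning normalization conventions between \cite{kudla-splitting}, \cite{rangarao}, and \cite{hk-duke} (the choice of the half-character $\tfrac{1}{2}\psi$ versus $\psi$, the sign of the Leray invariant in the Rao cocycle, and the normalization of the Morita isomorphism $V \otimes_B W \cong V^\dagger \otimes_F W^\dagger$) requires a direct test calculation at the Weyl element $\tau_1$ to fix the overall constant unambiguously.
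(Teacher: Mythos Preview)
Your overall two-step strategy matches the paper's: first the isometry case via Kudla, then bootstrap to similitudes. But step two has a genuine gap, and the difficulty you flag in step one is a distraction.

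In step one, the relevant case in Kudla is $1_+$ (symplectic-orthogonal), not $3_+$ (unitary); the constant $\gamma'$ comes straight from Kudla's formula and needs no re-derivation or normalization check, so the paragraph you devote to ``pinning down $\gamma'$'' is unnecessary.

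The real issue is in step two. Your claim that absorbing parabolic factors reduces $z_{\Y'}(\g_1,\g_2)$ to $z_{\Y'}(h_1 d(\nu_1)^{-1}, h_2 d(\nu_2)^{-1})$ is not quite right. Write $h_i' = h_i d(\nu_i)^{-1}$. Since the image of $\g_i$ already lies in $\Sp(\V)$, there are no $v(\cdot,\cdot)$ corrections at all; instead, the factorization via \eqref{eq:rangarao-2-cocycle} (or, as the paper does, a direct Leray-invariant computation using $\Y' \cdot (g,d(\nu)) = \Y'$) yields
\[
 z_{\Y'}(\g_1,\g_2) = z_{\Y'}(h_1', h_2''), \qquad h_2'' := d(\nu_1)\, h_2'\, d(\nu_1)^{-1},
\]
with the second argument \emph{conjugated} by $d(\nu_1)$. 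Likewise $s^\dagger(\g_1\g_2) = s^\dagger(h_1' h_2'')$, so step one gives $z_{\Y'}(\g_1,\g_2) = s^\dagger(\g_1\g_2)/\bigl(s^\dagger(\g_1)\, s^\dagger(h_2'')\bigr)$, and you still need $s^\dagger(h_2'') = s^\dagger(h_2')$. By \eqref{eq:xj-conj} this amounts to $\xi_E(\nu_1)^{j(h_2')\, m} = 1$, which is trivial for $m$ even but for $m$ odd requires the observation that $\nu(\GO(V^\dagger)) \subset \N_{E/F}(E^\times)$ (since $\det V^\dagger \equiv (-u)^m$). This last point is the genuine content of the bootstrap and is missing from your outline.
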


\begin{proof}
If $\g_1, \g_2 \in \Sp(W^\dagger)$, then the assertion follows from \cite[Theorem 3.1, cases $1_+$]{kudla-splitting}.
By \cite[\S 5.1]{hk-duke}, this implies the general case.
Nevertheless, we include a direct argument for the convenience of the reader.

Let $\g_i = (g_i, h_i) \in \G(\O(V^\dagger) \times \Sp(W^\dagger))$.
Recall that
\[
 z_{\Y'}(\g_1, \g_2) = \gamma_F(\tfrac{1}{2} \psi \circ q(\Y' \cdot \g_1^{-1}, \Y' \cdot \g_2^{-1} \g_1^{-1}, \Y')),
\]
where $q$ denotes the Leray invariant (see e.g.~\cite{rangarao}, \cite[\S 3.1.2]{periods1}).
Put 
\[
 \nu_i = \nu(h_i), \qquad
 h_i' = h_i \cdot d(\nu_i)^{-1}, \qquad
 h_2'' = d(\nu_1) \cdot h_2' \cdot d(\nu_1)^{-1},
\]
so that 
\[
 h_1 h_2 = h_1' \cdot d(\nu_1) \cdot h_2' \cdot d(\nu_2) = h_1' h_2'' \cdot d(\nu_1 \nu_2).
\]
Since $\Y' \cdot (g, d(\nu)) = \Y'$ for $g \in \GO(V^\dagger)$ and $\nu \in F^{\times}$, we have $\Y' \cdot \g_1^{-1} = \Y' \cdot h_1'^{-1}$ and $\Y' \cdot \g_2^{-1} \g_1^{-1} = \Y' \cdot h_2''^{-1} h_1'^{-1}$, so that
\[
 q(\Y' \cdot \g_1^{-1}, \Y' \cdot \g_2^{-1} \g_1^{-1}, \Y')
 = q(\Y' \cdot h_1'^{-1}, \Y' \cdot h_2''^{-1} h_1'^{-1}, \Y').
\]
Hence we have
\[
 z_{\Y'}(\g_1, \g_2) = z_{\Y'}(h_1', h_2'') = \frac{s^\dagger(h'_1 h''_2)}{s^\dagger(h'_1) s^\dagger(h''_2)}.
\]
On the other hand, by definition, we have $s^\dagger(\g_i) = s^\dagger(h'_i)$ and $s^\dagger(\g_1 \g_2) = s^\dagger(h_1' h_2'')$.
By \eqref{eq:xj-conj}, and noting that $\nu(\GO(V^\dagger)) = \N_{E/F}(E^\times)$ if $m$ is odd, we have
\[
 s^\dagger(h_2'') = \xi_E(\nu_1)^{j(h_2') m} \cdot s^\dagger(h_2') = s^\dagger(h_2').
\]
This completes the proof.
\end{proof}

Fix $\varsigma_0 \in \Sp(\V)$ such that $\X = \X' \cdot \varsigma_0$ and $\Y = \Y' \cdot \varsigma_0$.
Put
\[
 \mu_0(\sigma) = z_{\Y'}(\varsigma_0, \sigma) \cdot z_{\Y'}(\varsigma_0 \sigma \varsigma_0^{-1}, \varsigma_0)^{-1}
\]
for $\sigma \in \Sp(\V)$.
Note that $\mu_0$ does not depend on the choice of $\varsigma_0$.
Then, by \cite[Lemma 4.2]{kudla-splitting}, we have
\[
 z_\Y(\sigma, \sigma') = z_{\Y'}(\sigma, \sigma') 
 \cdot \frac{\mu_0(\sigma \sigma')}{\mu_0(\sigma) \mu_0(\sigma')}
\]
for $\sigma, \sigma' \in \Sp(\V)$.

Put $s_0 = s^\dagger \cdot \mu_0$.
Via the canonical isomorphisms $\GU(V) \simeq \GO(V^\dagger)$ and $\GU(W) \simeq \GSp(W^\dagger)$, we regard $s_0$ as a map 
\[
 s_0 : \G(\U(V) \times \U(W)) \longrightarrow \C^1.
\]
By Lemma \ref{lem:spl-s-dagger}, we have
\[
 z_\Y(\g_1, \g_2) = \frac{s_0(\g_1 \g_2)}{s_0(\g_1) s_0(\g_2)}
\]
for $\g_1, \g_2 \in \G(\U(V) \times \U(W))$.

\begin{prop}
\label{prop:compare-s-0-s}
We have
\[
 s_0|_\cG = s.
\]
\end{prop}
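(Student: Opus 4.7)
The plan is to show that the ratio $\Chi := s_0 \cdot s^{-1}$ is the trivial character of $\cG$. Since both $s_0$ and $s$ are $\C^1$-valued maps trivializing the same $2$-cocycle $z_\Y$ on $\cG$ (by Lemma \ref{lem:spl-s-dagger} and Lemma \ref{lem:s-proof(i)} respectively), the ratio $\Chi$ is automatically a continuous character of $\cG$. It therefore suffices to verify that $\Chi$ vanishes on a set of topological generators of $\cG$.

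I would generate $\cG$ by three types of elements: (a) pairs $(g,1)$ with $g \in \U(V)^0$; (b) pairs $(1,h)$ with $h \in \U(W) \cong B^{(1)}$; and (c) a section across the similitude, for instance $(\alpha\cdot\mathrm{id}_V, \alpha\cdot\mathrm{id}_W)$ for $\alpha \in E^\times$, which has similitude $\N_{E/F}(\alpha)$. For type (a), unwinding the definition gives $s(g,1) = \hat{s}_1(\iota(g,1)) \cdot \mu(\iota(g,1))$, while $s_0(g,1)$ is the Morita-transported value of the standard orthogonal splitting for $\SO(V^\dagger)$, whose only nontrivial ingredient is the change-of-polarization factor $\mu_0$. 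Similarly, for type (b), $s(1,h) = \hat{s}_2(\iota(1,[1,h])) \cdot \mu(\cdots)$, comparing against $s^\dagger \cdot \mu_0$ restricted to $\Sp(W^\dagger)$. For type (c), the same formulas computed in the proof preceding Lemma \ref{lem:weil-hodge-computation1-2} give $s(\alpha,\alpha)$ explicitly in terms of $\chi(\alpha)$, $\hat{s}_2$, and $\mu$, while $s_0(\alpha,\alpha)$ is the standard splitting at the corresponding element of $\SO(V^\dagger) \times \SL_2(F)$.

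On each of these classes the comparison reduces to a direct unramified Weil-index and Hilbert-symbol identity. The key simplification is that $B$ is split: the invariants $(u,J_i)_F$, $\xi_E(-J)$, and the signs $(-1)^m$ appearing in Lemmas \ref{lem:s-hat-1-E^1}, \ref{lem:s-hat-2-E^1}, and Kudla's \cite[Theorem 3.1]{kudla-splitting} all collapse, so that the character $\chi$ in the definition of $\hat{s}_2$ cancels against analogous factors appearing in the translation under Morita theory. For the central section, the cancellation involves matching $\chi(\alpha)^2 \cdot \hat{s}_2(\iota([\alpha,\alpha],1))$ against the Morita-side computation $s^\dagger(d(\N(\alpha)))$, which holds because $\chi|_{\A^\times}=\xi_E$.

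The main obstacle is the meticulous bookkeeping of the three distinct polarizations in play — the original $\V = \X \oplus \Y$, the doubled $\V^\square = \X^\square \oplus \Y^\square$ with its Lagrangian $\V^\triangle$, and the Morita polarization $\V = \X' \oplus \Y'$ — and the comparison of the three change-of-polarization cocycles $\mu$, $\mu_0$, and the identity relating them. Once this bookkeeping is carried out, the identity $\Chi \equiv 1$ reduces to an explicit computation of Leray invariants, essentially of the same flavor as in \S\ref{ss:weil-hodge-computation}, but simpler since $B$ is split. Alternatively, one may invoke Kudla's compatibility theorem \cite[Theorem 3.1, cases $1_+$ and $1_-$]{kudla-splitting}, which directly compares the doubling splitting for the orthogonal group with the Rao splitting, to obviate the most tedious portion of the verification.
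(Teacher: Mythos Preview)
Your overall strategy---showing that $\Chi = s_0 \cdot s^{-1}$ is a continuous character of $\cG$ and then checking it is trivial on the three types of generators $(g,1)$, $(1,h)$, and $(\alpha,\alpha)$---is correct and is exactly what the paper does. The paper even uses the perfectness of $\Sp(W^\dagger)$ to handle type (b) without computation, as you implicitly suggest.

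However, there are two concrete gaps. First, you correctly flag the polarization bookkeeping as the main obstacle, but you give no mechanism to resolve it. The paper's device is to introduce an intermediate splitting $s'$, defined exactly like $s$ but with $\mu$ replaced by $\mu'$ (the change-of-polarization factor from $\Y'^\square$ to $\V^\triangle$ rather than from $\Y^\square$ to $\V^\triangle$). One then shows $s = s' \cdot \mu_0$ by proving $\mu_{00} = \mu/\mu'$, and this last identity follows not by computation but by observing that $\mu_{00} \cdot \mu' / \mu$ is a genuine character of $\Sp(\V^\square)$, hence trivial by perfectness. This reduces the problem to comparing $s^\dagger$ with $s'$, both of which now live over the \emph{same} polarization $\Y'$. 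Without this step, your direct comparison of $s_0$ with $s$ entangles the polarization factors with the Weil-index computation in a way that is genuinely harder to untangle.

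Second, your suggested shortcut via \cite[Theorem 3.1, cases $1_+$ and $1_-$]{kudla-splitting} does not exist: that theorem gives the individual splitting formulas for the orthogonal and quaternionic cases separately, but provides no comparison between them. The comparison at the central section $(\alpha,\alpha)$ does not ``collapse'' just because $B$ is split; in the paper it requires an explicit verification that $s^\dagger(\alpha,\alpha)$ and $s'(\alpha,\alpha)$ both equal $\gamma_F(u,\tfrac{1}{2}\psi)^m \cdot \prod_i (2b\kappa_i,u)_F$, followed by a nontrivial Leray-invariant computation (reducing to $\dim V = 1$) to show $\mu'(\iota((\alpha,\alpha),1)) = 1$. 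This is the bulk of the work and cannot be waved away.
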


The rest of this section is devoted to the proof of Proposition \ref{prop:compare-s-0-s}.

As in \S \ref{ss:doubling-U(V)}, we define the doubled space $W^{\dagger \square} = W^\dagger \oplus W^\dagger$ and take the complete polarization $W^{\dagger \square} = W^{\dagger \bigtriangledown} \oplus W^{\dagger \triangle}$.
Then we have identifications
\[
 \V^\square = V^\dagger \otimes_F W^{\dagger \square}, \qquad 
 \V^\bigtriangledown = V^\dagger \otimes_F W^{\dagger \bigtriangledown}, \qquad
 \V^\triangle = V^\dagger \otimes_F W^{\dagger \triangle}.
\]
We also take complete polarizations $W^{\dagger \square} = X^\square \oplus Y^\square$ and $\V^\square = \X'^\square \oplus \Y'^\square$, where
\begin{align*}
 X^\square & = X \oplus X, &
 \X'^\square & = \X' \oplus \X' = V^\dagger \otimes_F X^\square, \\
 Y^\square & = Y \oplus Y, &
 \Y'^\square & = \Y' \oplus \Y' = V^\dagger \otimes_F Y^\square.
\end{align*}
As in \cite[\S D.3]{periods1}, we have
\[
 z_{\Y'^\square}(\iota(\sigma_1,\sigma_2), \iota(\sigma_1',\sigma_2'))
 = z_{\Y'}(\sigma_1, \sigma_1') \cdot z_{\Y'}(\sigma_2, \sigma_2')^{-1}
\]
for $\sigma_i, \sigma_i' \in \Sp(\V)$.
Using a basis $(e,0), (0,e), (e',0), (0,-e')$ of $W^{\dagger \square}$, we identify $\GSp(W^{\dagger \square})$ with $\GSp_4(F)$.
Put
\[
 h_0 = 
 \begin{pmatrix}
  \frac{1}{2} & -\frac{1}{2} & & \\
  & & \frac{1}{2} & \frac{1}{2} \\
  & & 1 & -1 \\
  -1 & -1 & &
 \end{pmatrix}
 \in \Sp(W^{\dagger \square}).
\]
Then we have
\[
 \begin{bmatrix}
  \frac{1}{2}(e, -e) \\
  \frac{1}{2}(e', -e') \\
  (e', e') \\
  (-e, -e)
 \end{bmatrix}
 = h_0 \cdot 
 \begin{bmatrix}
  (e, 0) \\
  (0, e) \\
  (e', 0) \\
  (0, -e')
 \end{bmatrix},
\]
so that $W^{\dagger \bigtriangledown} = X^\square \cdot h_0$ and $W^{\dagger \triangle} = Y^\square \cdot h_0$.
Put $\h_0 = \id \otimes h_0 \in \Sp(\V^\square)$ and
\[
 \mu'(\sigma) = z_{\Y'^\square}(\h_0, \sigma)^{-1} \cdot z_{\Y'^\square}(\h_0 \sigma \h_0^{-1}, \h_0)
\]
for $\sigma \in \Sp(\V^\square)$.
Since $\V^\bigtriangledown = \X'^\square \cdot \h_0$ and $\V^\triangle = \Y'^\square \cdot \h_0$, we have
\begin{equation}
\label{eq:compare-zY'-zV}
 z_{\Y'^\square}(\sigma, \sigma') = z_{\V^\triangle}(\sigma, \sigma') 
 \cdot \frac{\mu'(\sigma \sigma')}{\mu'(\sigma) \mu'(\sigma')}
\end{equation}
for $\sigma, \sigma' \in \Sp(\V^\square)$.

As in \S \ref{ss:proof-spl-hodge}, we put $s^{\sharp \prime} = \hat{s}^\sharp \cdot \mu'$ and $s_2' = \hat{s}_2 \cdot \mu'$, and define a map
\[
 s': \cG \longrightarrow \C^1
\]
by setting
\[
 s'(g,h) = \frac{s^{\sharp \prime}(g,h,\alpha,\alpha)}{s'_2(\iota(1, [\alpha,\alpha]))},
\]
where we choose $\alpha \in E^\times$ such that $\nu(g) = \nu(h) = \N_{E/F}(\alpha)$.
As in Lemma \ref{lem:s-well-def}, $s'$ is well-defined.
Moreover, as in Lemma \ref{lem:s-proof(i)}, we have
\[
 z_{\Y'}(\g_1,\g_2) = \frac{s'(\g_1\g_2)}{s'(\g_1) s'(\g_2)}
\]
for $\g_1, \g_2 \in \cG$.

\begin{lem}
We have
\[
 s = s' \cdot \mu_0.
\]
\end{lem}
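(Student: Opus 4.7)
The plan is to exploit the structural fact that both $s$ and $s' \cdot \mu_0$ trivialize the same $2$-cocycle $z_\Y$ on $\cG$, reducing the equality to checking that their ratio (a continuous character of $\cG$) is in fact identically $1$. Since $z_\Y = z_{\Y'} \cdot \delta \mu_0$ on $\Sp(\V)$ and $\delta s' = z_{\Y'}$, we have $\delta(s' \mu_0) = z_\Y = \delta s$, so $s/(s' \mu_0)$ is a homomorphism $\cG \to \C^1$. I will show directly that this homomorphism equals $1$ by unwinding the definitions.

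Unwinding, since $s^\sharp = \hat{s}^\sharp \cdot \mu$, $s^{\sharp\prime} = \hat{s}^\sharp \cdot \mu'$, $s_2 = \hat{s}_2 \cdot \mu$, and $s_2' = \hat{s}_2 \cdot \mu'$, the factors $\hat{s}^\sharp$ and $\hat{s}_2$ cancel in the ratio $s/s'$, yielding
\[
 \frac{s(g,h)}{s'(g,h)} \;=\; \frac{(\mu/\mu')(\iota_\square(g,h,\alpha,\alpha))}{(\mu/\mu')(\iota_\square(1,[\alpha,\alpha]))},
\]
where $\iota_\square$ denotes the composite into $\Sp(\V^\square)$ used to define $\hat{s}^\sharp$ and $\hat{s}_2$ respectively. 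Thus everything reduces to evaluating $\mu/\mu'$ on elements of the image of $\iota: \Sp(\V) \times \Sp(\V) \to \Sp(\V^\square)$.

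The key step is the identity
\[
 (\mu/\mu')(\iota(\sigma_1,\sigma_2)) \;=\; \mu_0(\sigma_1)\, \mu_0(\sigma_2)^{-1}, \qquad \sigma_1,\sigma_2 \in \Sp(\V).
\]
To prove it, I compute coboundaries. By definition $\delta(\mu/\mu') = z_{\Y^\square}/z_{\Y'^\square}$ on $\Sp(\V^\square)$; restricting along $\iota$ and using the product formulas $z_{\Y^\square}(\iota(\cdot,\cdot),\iota(\cdot,\cdot)) = z_\Y \boxtimes z_\Y^{-1}$ and the same for $\Y'^\square$, together with $z_\Y/z_{\Y'} = \delta \mu_0$, shows that the two sides have the same coboundary on $\Sp(\V) \times \Sp(\V)$. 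Their ratio is then a character of $\Sp(\V) \times \Sp(\V)$; since $\Sp(\V)(F)$ is perfect (as $\dim \V = 4m \geq 4$ and $F$ is local of characteristic zero), every such character is trivial, establishing the identity.

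Applying the identity to the two evaluations above, I note that under the embedding $\cG^\sharp \to \Sp(\V^\square)$ the element $(g,h,\alpha,\alpha)$ maps to $\iota(\sigma_{(g,h)}, \sigma_{(\alpha,\alpha)})$, where $\sigma_{(\cdot)}$ denotes the image in $\Sp(\V)$; and by the equivariance \eqref{eq:f-equivariance-U(WW)} identifying the actions of $\U(\WW)$ and $\GU(V) \times \GU(W)$ on $\V = \VV \otimes_E \WW = V \otimes_B W$, the element $(1,[\alpha,\alpha])$ maps to $\iota(1, \sigma_{(\alpha,\alpha)})$. Substituting,
\[
 \frac{s(g,h)}{s'(g,h)} \;=\; \frac{\mu_0(\sigma_{(g,h)}) \cdot \mu_0(\sigma_{(\alpha,\alpha)})^{-1}}{\mu_0(1) \cdot \mu_0(\sigma_{(\alpha,\alpha)})^{-1}} \;=\; \mu_0(\sigma_{(g,h)}),
\]
which is exactly $\mu_0$ evaluated at the image of $(g,h) \in \cG$ in $\Sp(\V)$. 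Hence $s = s' \cdot \mu_0$. The main technical hurdle is the cohomological step above — specifically, being attentive to the various embeddings (the two doublings, the Morita identification, and the quaternionic-unitary dictionary \eqref{eq:f-equivariance-U(WW)}) so that the two factors involving $\sigma_{(\alpha,\alpha)}$ are correctly identified and cancel.
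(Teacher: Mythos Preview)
Your proof is correct and follows essentially the same strategy as the paper's. Both arguments cancel the $\hat{s}^\sharp$ and $\hat{s}_2$ factors in $s/s'$, then identify the remaining ratio $\mu/\mu'$ with $\mu_0$ via a perfectness argument on a symplectic group; the paper introduces an auxiliary function $\mu_{00}$ (built from $\varsigma_{00}=\iota(\varsigma_0,\varsigma_0)$) and shows $\mu/\mu'=\mu_{00}$ on all of $\Sp(\V^\square)$ by perfectness of $\Sp(\V^\square)$, then computes $\mu_{00}(\iota(\sigma,1))=\mu_0(\sigma)$ directly and finishes with a short cocycle computation, whereas you package this as the single identity $(\mu/\mu')(\iota(\sigma_1,\sigma_2))=\mu_0(\sigma_1)\mu_0(\sigma_2)^{-1}$ proved by perfectness of $\Sp(\V)\times\Sp(\V)$ and substitute directly. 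Your formulation is slightly more streamlined, but the ingredients and logic are the same.
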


\begin{proof}
Put $\varsigma_{00} = \iota(\varsigma_0, \varsigma_0) \in \Sp(\V^\square)$ and
\[
 \mu_{00}(\sigma) = z_{\Y'^\square}(\varsigma_{00}, \sigma) \cdot z_{\Y'^\square}(\varsigma_{00} \sigma \varsigma_{00}^{-1}, \varsigma_{00})^{-1}
\]
for $\sigma \in \Sp(\V^\square)$.
Since $\X^\square = \X'^\square \cdot \varsigma_{00}$ and $\Y^\square = \Y'^\square \cdot \varsigma_{00}$, we have
\begin{equation}
\label{eq:compare-zY-zY'}
 z_{\Y^\square}(\sigma, \sigma') = z_{\Y'^\square}(\sigma, \sigma') 
 \cdot \frac{\mu_{00}(\sigma \sigma')}{\mu_{00}(\sigma) \mu_{00}(\sigma')}
\end{equation}
for $\sigma, \sigma' \in \Sp(\V^\square)$.
Then it follows from \eqref{eq:compare-zY-zV}, \eqref{eq:compare-zY'-zV}, and \eqref{eq:compare-zY-zY'} that 
\[
 \frac{\mu_{00}(\sigma \sigma')}{\mu_{00}(\sigma) \mu_{00}(\sigma')}
 = \frac{\mu(\sigma \sigma')}{\mu(\sigma) \mu(\sigma')}
 \cdot \frac{\mu'(\sigma) \mu'(\sigma')}{\mu'(\sigma \sigma')}
\]
for $\sigma, \sigma' \in \Sp(\V^\square)$.
Namely, $\mu_{00} \cdot \mu' / \mu$ is a character of $\Sp(\V^\square)$.
Since $[\Sp(\V^\square), \Sp(\V^\square)] = \Sp(\V^\square)$, this character must be trivial and hence
\[
 \mu_{00} = \mu/\mu'.
\]
Since
\begin{align*}
 \mu_{00}(\iota(\sigma,1))
 & = z_{\Y'^\square}(\iota(\varsigma_0, \varsigma_0), \iota(\sigma, 1)) \cdot z_{\Y'^\square}(\iota(\varsigma_0 \sigma \varsigma_0^{-1}, 1), \iota(\varsigma_0, \varsigma_0))^{-1} \\
 & = z_{\Y'}(\varsigma_0, \sigma) \cdot z_{\Y'}(\varsigma_0 \sigma \varsigma_0^{-1}, \varsigma_0)^{-1} \\
 & = \mu_0(\sigma)
\end{align*}
for $\sigma \in \Sp(\V)$, it suffices to show that 
\[
 \frac{s(\g)}{\mu(\iota(\g, 1))} = \frac{s'(\g)}{\mu'(\iota(\g, 1))}
\]
for $\g \in \cG$.
Here, by abuse of notation, we write $\g$ in the denominator for the image of $\g$ in $\Sp(\V)$ under \eqref{eq:hom-GU(V)-GU(W)}, so that $\iota(\g, 1) \in \Sp(\V^\square)$.

For $\g = (g, h) \in \cG$, choose $\alpha \in E^\times$ such that $\nu(g) = \nu(h) = \N_{E/F}(\alpha)$.
Put $\g^\sharp = (g,h,\alpha,\alpha) \in \cG^\sharp$ and $\ba = [\alpha, \alpha] \in \U(\WW)$.
Note that the image of $\g^\sharp$ in $\Sp(\V^\square)$ agrees with $\iota(\g, 1) \cdot \iota(1,\ba)$.
By definition, we have
\[
 s(\g) = \frac{\hat{s}^\sharp(\g^\sharp)}{\hat{s}_2(\iota(1, \ba))} \cdot \frac{\mu(\g^\sharp)}{\mu(\iota(1, \ba))}, \qquad
 s'(\g) = \frac{\hat{s}^\sharp(\g^\sharp)}{\hat{s}_2(\iota(1, \ba))} \cdot \frac{\mu'(\g^\sharp)}{\mu'(\iota(1, \ba))}.
\]
Thus it remains to show that 
\[
 \frac{\mu(\g^\sharp)}{\mu(\iota(\g,1)) \cdot \mu(\iota(1, \ba))}
 = \frac{\mu'(\g^\sharp)}{\mu'(\iota(\g,1)) \cdot \mu'(\iota(1, \ba))}.
\]
But the left-hand side is equal to
\[
 \frac{z_{\Y^\square}(\iota(\g, 1), \iota(1, \ba))}{z_{\V^\triangle}(\iota(\g, 1), \iota(1, \ba))} = \frac{1}{z_{\V^\triangle}(\iota(\g, 1), \iota(1, \ba))}
\]
by \eqref{eq:compare-zY-zV}, whereas the right-hand side is equal to
\[
 \frac{z_{\Y'^\square}(\iota(\g, 1), \iota(1, \ba))}{z_{\V^\triangle}(\iota(\g, 1), \iota(1, \ba))} = \frac{1}{z_{\V^\triangle}(\iota(\g, 1), \iota(1, \ba))}
\]
by \eqref{eq:compare-zY'-zV}.
This completes the proof. 
\end{proof}

Thus, to finish the proof of Proposition \ref{prop:compare-s-0-s}, it remains to prove the following.

\begin{lem}
We have
\[
 s^\dagger|_\cG = s'.
\]
\end{lem}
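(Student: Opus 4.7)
The plan is as follows. Since both $s^\dagger$ and $s'$ are continuous trivializations of the same $2$-cocycle $z_{\Y'}$ on $\cG$, the ratio $\chi := s^\dagger/s'$ is a continuous character of $\cG$, and it suffices to show $\chi \equiv 1$.

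Because $F$ is local and $B$ is split, the group $\cG$ is connected, and any continuous character into $\C^1$ factors through its maximal abelian quotient; so it suffices to check $\chi$ on a generating set of three subgroups: $\U(V)^0 \times \{1\}$, $\{1\} \times \U(W)$, and the image of the diagonal embedding $E^\times \hookrightarrow \cG$ sending $\alpha$ to an element with similitude factor $\N_{E/F}(\alpha)$. On $\U(V)^0 \times \{1\}$, both splittings reduce (via the Morita identification $\U(V)^0 \cong \O(V^\dagger)^0$ combined with Kudla's splitting theorem \cite[Theorem 3.1]{kudla-splitting}) to the standard trivialization of $z_{\Y'}$ for the orthogonal-symplectic dual pair restricted to the orthogonal isometry group, and they therefore agree up to a character which is automatically trivial on the derived subgroup. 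The same argument handles $\{1\} \times \U(W)$, where $\U(W) \cong \Sp(W^\dagger)$.

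For the remaining diagonal $E^\times$-image, the identity $\chi = 1$ must be verified by a direct calculation. On the $s'$-side one uses the definition $s'(\alpha,\alpha) = s^{\sharp\prime}(\alpha,\alpha,\alpha,\alpha)/s_2'(\iota(1,[\alpha,\alpha]))$ together with Lemma \ref{lem:s-hat-2-E-diag} and the explicit formulas for $\hat{s}_1$, $\hat{s}_2$ and $\mu'$; on the $s^\dagger$-side one uses that the image $[\alpha,\alpha] \in \U(W) \cong \Sp(W^\dagger)$ has similitude norm $\N_{E/F}(\alpha)$ and can be written explicitly in $\GL_2(F)$ via the isomorphism $\ii$, making $s^\dagger(\alpha,\alpha) = s^\dagger(h \cdot d(\N_{E/F}(\alpha))^{-1})$ directly computable. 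The two answers are then matched using standard identities for the Weil index and Hilbert symbol.

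The main obstacle will be the bookkeeping in this last computation, where one must reconcile the factors $\gamma'$ on the $s^\dagger$-side (involving $\det V^\dagger$ and the Hasse invariant $h_F(V^\dagger)$) with the factor $\gamma$ and the change-of-polarization correction $\mu'$ on the $s'$-side. The key simplification relative to the analogous arguments of Lemmas \ref{lem:weil-hodge-computation1-1}--\ref{lem:weil-hodge-computation1-3} is that $B$ is split, so that $(u, -J)_F = 1$ and the Morita-translated invariants $\det V^\dagger$, $h_F(V^\dagger)$ admit a clean description in terms of the hermitian invariants of $V$ (and of $\VV$), yielding the required cancellation.
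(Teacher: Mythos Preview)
Your outline matches the paper's strategy: both observe that $\chi := s^\dagger/s'$ is a continuous character of $\cG$ and check it on the three subgroups $\U(V)^0 \times \{1\}$, $\{1\} \times \U(W)$, and the diagonal image of $E^\times$. The $\U(W)$ case is handled exactly as you and the paper say, via $[\Sp(W^\dagger),\Sp(W^\dagger)] = \Sp(W^\dagger)$.

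However, your treatment of $\U(V)^0 \times \{1\}$ has a genuine gap. By definition $s^\dagger(g,1)=1$, but for $s'$ one finds $s'(g,1) = \hat{s}_1(\iota(g,1)) \cdot \mu'(\iota((g,1),1))$; with $B$ split $\hat{s}_1 \equiv 1$, so the content is $\mu'(\iota((g,1),1)) = 1$. Your appeal to ``Kudla's splitting theorem'' does not supply this, and the remark that the difference is ``automatically trivial on the derived subgroup'' is insufficient since $\SO(V^\dagger)(F)$ is not perfect (spinor norm) and for $m=1$ is even a torus. The paper's argument here is short but essential: $\iota((g,1),1)$ lies in $P_{\Y'^\square}$ (because $g \in \O(V^\dagger)$ preserves $\Y' = V^\dagger \otimes Y$) and commutes with $\h_0 = \id_{V^\dagger} \otimes h_0$, so both cocycle factors in the definition of $\mu'$ vanish.

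For the diagonal $E^\times$, you correctly flag that a direct computation is needed and sketch its ingredients, but you do not carry it out. The paper computes both sides explicitly: using the Hasse invariant of $V^\dagger$ one gets $s^\dagger(\alpha,\alpha) = \gamma_F(u,\tfrac{1}{2}\psi)^m \prod_i (2b\kappa_i,u)_F$, and on the $s'$-side, after unwinding via Lemma~\ref{lem:s-hat-2-E-diag}, the same expression appears multiplied by $\mu'(\iota((\alpha,\alpha),1))$. So again everything reduces to $\mu' = 1$, but now $(\alpha,\alpha)$ does \emph{not} lie in the parabolic (the $\U(W)$-component acts nontrivially on $W^\dagger$), so one cannot argue as above; the paper instead reduces to $m=1$ and performs an explicit $\Sp_8$ matrix computation to show both Leray--Rao cocycle terms vanish. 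This is the real work, and your sketch does not indicate how it goes.
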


\begin{proof}
Since both $s^\dagger$ and $s'$ trivialize $z_{\Y'}$, there exists a \emph{continuous} character $\Chi'$ of $\cG$ such that
\[
 s^\dagger|_\cG = s' \cdot \Chi'.
\]
We will show that $\Chi'$ is trivial.
Since $[\Sp(W^\dagger), \Sp(W^\dagger)] = \Sp(W^\dagger)$, $\Chi'$ is trivial on $\U(W) \simeq \Sp(W^\dagger)$.
Let $\g = (g,1) \in \cG$ with $g \in \U(V)^0 \simeq \SO(V^\dagger)$.
By definition, we have $s^\dagger(\g) = 1$ and 
\[
 s'(\g) = s^{\sharp \prime}(g,1,1,1) = \hat{s}^\sharp(g,1,1,1) \cdot \mu'(\iota(\g, 1)) = \mu'(\iota(\g, 1)).
\]
Since $\iota(\g, 1)$ belongs to $P_{\Y'^\square}$ and commutes with $\h_0$, we have
\[
 \mu'(\iota(\g, 1)) = z_{\Y'^\square}(\h_0, \iota(\g,1))^{-1} \cdot z_{\Y'^\square}(\iota(\g, 1), \h_0) = 1.
\]
Hence we have $s'(\g) = 1$, so that $\Chi'(\g) = 1$.

Thus it remains to show that
\[
 \Chi'(\g) = 1
\]
for $\g = (\alpha,\alpha) \in \cG$ with $\alpha \in E^\times$.
We write $\alpha = a + b \i$ with $a, b \in F$ and put $\nu = a^2 - b^2 u$.
Since $\Chi'$ is continuous, we may assume that
\[
 a \ne 0, \qquad b \ne 0.
\]
By definition, we have $s^\dagger(\g) = s^\dagger(h)$, where
\[
 h = 
 \begin{pmatrix}
  a & b \\
  bu & a
 \end{pmatrix}
 \cdot 
 \begin{pmatrix}
  1 & \\
  & \nu^{-1}
 \end{pmatrix}.
\]
Since 
\[
 h = 
 \begin{pmatrix}
  \frac{1}{bu} & a \\
  & bu
 \end{pmatrix}
 \cdot 
 \begin{pmatrix}
  & -1 \\
 1 & 
 \end{pmatrix}
 \cdot 
 \begin{pmatrix}
  1 & \frac{a}{\nu bu} \\
  & 1
 \end{pmatrix}, 
\]
we have
\[
 s^\dagger(h) = \xi_E(bu)^m \cdot \gamma'^{-1} = (-b, u)_F^m \cdot \gamma'^{-1}.
\]
Recall that 
\begin{align*}
 \gamma'^{-1} & = \gamma_F(\tfrac{1}{2} \psi)^{-2m} \cdot \gamma_F(\det V^\dagger, \tfrac{1}{2} \psi)^{-1} \cdot h_F(V^\dagger) \\
 & = \gamma_F(-1, \tfrac{1}{2} \psi)^m \cdot \gamma_F((-u)^m, \tfrac{1}{2} \psi)^{-1} \cdot h_F(V^\dagger) \\
 & = \gamma_F(-1, \tfrac{1}{2} \psi)^m \cdot \gamma_F(-u, \tfrac{1}{2} \psi)^{-m} \cdot (-u,-u)_F^{(m-1)m/2} \cdot h_F(V^\dagger) \\
 & = \gamma_F(u, \tfrac{1}{2} \psi)^m \cdot (-u,-u)_F^{(m-1)m/2} \cdot h_F(V^\dagger).
\end{align*}
If we put $V_i^\dagger = F e_i e + F e_i e''$, then we have
\begin{align*}
 h_F(V^\dagger) & = \prod_{i=1}^m h_F(V_i^\dagger) \cdot \prod_{1 \le i < j \le m} (\det V_i^\dagger, \det V_j^\dagger)_F \\
 & = \prod_{i=1}^m (- 2\kappa_i, u)_F \cdot (-u, -u)_F^{(m-1)m/2}.
\end{align*}
Hence we have
\[
 s^\dagger(\g) = \gamma_F(u, \tfrac{1}{2} \psi)^m \cdot \prod_{i=1}^m (2 b \kappa_i, u)_F.
\]
On the other hand, by definition, we have
\begin{align*}
 s'(\g) & = \frac{s^{\sharp \prime}(\alpha,\alpha,\alpha,\alpha)}{s_2'(\iota(1, \ba))} \\
 & = \frac{s^{\sharp \prime}(\alpha,\alpha,\alpha,\alpha)}{s_2'(\iota(\ba, \ba))} \cdot \frac{s_2'(\iota(\ba, \ba))}{s_2'(\iota(1, \ba))} \\
 & = \frac{s^{\sharp \prime}(\alpha,\alpha,\alpha,\alpha)}{s_2'(\iota(\ba, \ba))} \cdot s_2'(\iota(\ba, 1)) \cdot z_{\Y'^\square}(\iota(\ba, 1), \iota(1, \ba)) \\
 & = \frac{s^{\sharp \prime}(\alpha,\alpha,\alpha,\alpha)}{s_2'(\iota(\ba, \ba))} \cdot s_2'(\iota(\ba, 1)) \\
 & = \frac{\hat{s}^\sharp(\alpha,\alpha,\alpha,\alpha)}{\hat{s}_2(\iota(\ba, \ba))} \cdot \hat{s}_2(\iota(\ba, 1)) \cdot \mu'(\iota(\ba, 1)),
\end{align*}
where $\ba = [\alpha, \alpha] \in \U(\WW)$.
By definition and Lemma \ref{lem:s-hat-2-E-diag}, we have
\[
 \frac{\hat{s}^\sharp(\alpha,\alpha,\alpha,\alpha)}{\hat{s}_2(\iota(\ba, \ba))} = \chi(\alpha)^m.
\]
Also, as in Lemma \ref{lem:weil-hodge-computation1-2}, we have
\[
 \hat{s}_2(\iota(\ba, 1)) = \chi(J \i (\beta - 1))^m \cdot \gamma^{-1} = \chi(2 b \alpha^{-1})^m \cdot \gamma^{-1}, 
\]
where $\beta = \alpha^{-1} \alpha^\rho$ and
\begin{align*}
 \gamma^{-1} & = (\det \VV, u)_F \cdot \gamma_F(-u, \tfrac{1}{2} \psi)^{-m} \cdot \gamma_F(-1, \tfrac{1}{2} \psi)^m \\
 & = \gamma(u, \tfrac{1}{2} \psi)^m \cdot \prod_{i=1}^m (\kappa_i, u)_F.
\end{align*}
Hence, noting that the image of $\g$ in $\Sp(\V)$ agrees with that of $\ba$, we have
\[
 s'(\g) = \gamma(u, \tfrac{1}{2} \psi)^m \cdot \prod_{i=1}^m (2b \kappa_i, u)_F \cdot \mu'(\iota(\g, 1)).
\]

Thus we are reduced to showing that
\[
 \mu'(\iota(\g, 1)) = 1
\]
for $\g = (\alpha, \alpha) \in \cG$ with $\alpha = a + b \i \in E^\times$ such that $a \ne 0$ and $b \ne 0$.
This is further reduced to the case $\dim V = 1$.
Then we may identify $V^\dagger$ with the $F$-space $Fe + Fe''$ equipped with a symmetric bilinear form
\[
 \langle x_1 e + x_2 e'', y_1 e + y_2 e'' \rangle^\dagger
 = \kappa u \cdot x_1 y_1 - \kappa \cdot x_2 y_2, 
\]
where $\kappa = \kappa_1/2$.
We take a basis
\begin{align*}
 \x_1 & = e \otimes (e, 0), & 
 \y_1 & = \frac{1}{\kappa u} \cdot e \otimes (e', 0), \\
 \x_2 & = e'' \otimes (e, 0), & 
 \y_2 & = - \frac{1}{\kappa} \cdot e'' \otimes (e', 0), \\
 \x_3 & = e \otimes (0, e), & 
 \y_3 & = \frac{1}{\kappa u} \cdot e \otimes (0, -e'), \\
 \x_4 & = e'' \otimes (0, e), &
 \y_4 & = - \frac{1}{\kappa} \cdot e'' \otimes (0, -e')
\end{align*}
of $\V^\square = V^\dagger \otimes_F W^{\dagger \square}$, so that 
\[
 \X'^\square = F \x_1 + \dots + F \x_4, \qquad
 \Y'^\square = F \y_1 + \dots + F \y_4, \qquad
 \llangle \x_i, \y_j \rrangle = \delta_{ij}.
\]
Using this basis, we identify $\Sp(\V^\square)$ with $\Sp_8(F)$.
Then we have
\begin{align*}
 \h_0 & = 
 \begin{pmatrix}
  \frac{1}{2} & & -\frac{1}{2} & & & & & \\
  & \frac{1}{2} & & -\frac{1}{2} & & & & \\
  & & & & \frac{\kappa u}{2} & & \frac{\kappa u}{2} & \\
  & & & & & -\frac{\kappa}{2} & & -\frac{\kappa}{2} \\
  & & & & 1 & & -1 & \\
  & & & & & 1 & & -1 \\
  -\frac{1}{\kappa u} & & -\frac{1}{\kappa u} & & & & & \\
  & \frac{1}{\kappa} & & \frac{1}{\kappa} & & & &
 \end{pmatrix}, \\
 \iota(\g, 1) & =
 \begin{pmatrix}
  \frac{1}{2}(c+1) & \frac{du}{2} & & & -\frac{d\kappa u}{2} & \frac{\kappa}{2}(c-1) & & \\
  \frac{d}{2} & \frac{1}{2}(c+1) & & & - \frac{\kappa}{2}(c-1) & \frac{d\kappa}{2} & & \\
  & & 1 & & & & & \\
  & & & 1 & & & & \\
  -\frac{d}{2\kappa} & -\frac{1}{2\kappa}(c-1) & & & \frac{1}{2}(c+1) & -\frac{d}{2} & & \\
  \frac{1}{2\kappa}(c-1) & \frac{du}{2\kappa} & & & -\frac{du}{2} & \frac{1}{2}(c+1) & & \\
  & & & & & & 1 & \\
  & & & & & & & 1
 \end{pmatrix},
\end{align*}
where
\[
 c = \frac{a^2 + b^2 u}{a^2 - b^2 u} \ne \pm 1, \qquad 
 d = - \frac{2ab}{a^2 - b^2 u} \ne 0,
\]
so that $c^2 - d^2 u = 1$.
Recall that 
\[
 \mu'(\iota(\g, 1)) = z_{\Y'^\square}(\h_0, \iota(\g, 1))^{-1} \cdot z_{\Y'^\square}(\h_0 \cdot \iota(\g, 1) \cdot \h_0^{-1}, \h_0).
\]
Since $\h_0 \in P_{\Y'^\square} \cdot \tau_2 \cdot \m(\a_1)$ and $\iota(\g, 1) = \n(\b_1) \cdot \tau \cdot P_{\Y'^\square}$, where
\[
 \a_1 = 
 \begin{pmatrix}
  1 & & & \\
  & 1 & & \\
  1 & & 1 & \\
  & 1 & & 1 
 \end{pmatrix}, \qquad
 \b_1 = \frac{d \kappa}{c-1} \cdot 
 \begin{pmatrix}
  -u & & & \\
  & 1 & & \\
  & & 0 & \\
  & & & 0
 \end{pmatrix}, \qquad
 \tau = 
 \begin{pmatrix}
  & & -\1_2 & \\
  & \1_2 & & \\
  \1_2 & & & \\
  & & & \1_2
 \end{pmatrix},
\]
we have
\begin{align*}
 z_{\Y'^\square}(\h_0, \iota(\g, 1)) 
 & = z_{\Y'^\square}(\tau_2 \cdot \m(\a_1), \n(\b_1) \cdot \tau) \\
 & = z_{\Y'^\square}(\tau_2, \m(\a_1) \cdot \n(\b_1) \cdot \tau).
\end{align*}
If we put
\[
 \b_2 = \frac{d \kappa}{c-1} \cdot 
 \begin{pmatrix}
  -u & & -u & \\
  & 1 & & 1 \\
  -u & & & \\
  & 1 & & 
 \end{pmatrix}, \qquad
 \b_3 = \frac{d \kappa}{c-1} \cdot 
 \begin{pmatrix}
  0 & & & \\
  & 0 & & \\
  & & -u & \\
  & & & 1
 \end{pmatrix},
\]
then we have $\m(\a_1) \cdot \n(\b_1) = \n(\b_2) \cdot \n(\b_3) \cdot \m(\a_1)$ and hence
\[
 z_{\Y'^\square}(\h_0, \iota(\g, 1)) = z_{\Y'^\square}(\tau_2 \cdot \n(\b_2), \n(\b_3) \cdot \m(\a_1) \cdot \tau).
\]
Since $\tau_2 \cdot \n(\b_2) \in P_{\Y'^\square} \cdot \tau_2$ and $\n(\b_3) \cdot \m(\a_1) \cdot \tau \in \tau \cdot P_{\Y'^\square}$, we have
\[
 z_{\Y'^\square}(\h_0, \iota(\g, 1)) = z_{\Y'^\square}(\tau_2, \tau) = 1.
\]
On the other hand, we have $\h_0 \in \tau_2 \cdot P_{\Y'^\square}$ and
\begin{align*}
 & \h_0 \cdot \iota(\g, 1) \cdot \h_0^{-1} \\
 & =
\begin{pmatrix}
 \frac{1}{4}(c+3) & \frac{du}{4} & -\frac{d}{4} & -\frac{1}{4}(c-1) & -\frac{d \kappa u}{8} & \frac{\kappa}{8}(c-1) & -\frac{\kappa u}{8}(c-1) & \frac{d \kappa u}{8} \\
 \frac{d}{4} & \frac{1}{4}(c+3) & -\frac{1}{4u}(c-1) & -\frac{d}{4} & -\frac{\kappa}{8}(c-1) & \frac{d \kappa}{8} & -\frac{d \kappa u}{8} & \frac{\kappa}{8}(c-1) \\
 -\frac{du}{4} & -\frac{u}{4}(c-1) & \frac{1}{4}(c+3) & \frac{du}{4} & \frac{\kappa u}{8}(c-1) & -\frac{d \kappa u}{8} & \frac{d \kappa u^2}{8} & -\frac{\kappa u}{8}(c-1) \\
 -\frac{1}{4}(c-1) & -\frac{du}{4} & \frac{d}{4} & \frac{1}{4}(c+3) & \frac{d \kappa u}{8} & -\frac{\kappa}{8}(c-1) & \frac{\kappa u}{8}(c-1) & -\frac{d \kappa u}{8} \\
 -\frac{d}{2 \kappa} & -\frac{1}{2 \kappa}(c-1) & \frac{1}{2 \kappa u}(c-1) & \frac{d}{2 \kappa} & \frac{1}{4}(c+3) & -\frac{d}{4} & \frac{du}{4} & -\frac{1}{4}(c-1) \\
 \frac{1}{2 \kappa}(c-1) & \frac{du}{2 \kappa} & -\frac{d}{2 \kappa} & -\frac{1}{2 \kappa}(c-1) & -\frac{du}{4} & \frac{1}{4}(c+3) & -\frac{u}{4}(c-1) & \frac{du}{4} \\
 -\frac{1}{2 \kappa u}(c-1) & -\frac{d}{2 \kappa} & \frac{d}{2 \kappa u} & \frac{1}{2 \kappa u}(c-1) & \frac{d}{4} & -\frac{1}{4 u}(c-1) & \frac{1}{4}(c+3) & -\frac{d}{4} \\
 \frac{d}{2 \kappa} & \frac{1}{2 \kappa}(c-1) & -\frac{1}{2 \kappa u}(c-1) & -\frac{d}{2 \kappa} & -\frac{1}{4}(c-1) & \frac{d}{4} & -\frac{du}{4} & \frac{1}{4}(c+3)
\end{pmatrix} \\
 & \in P_{\Y'^\square} \cdot \tau \cdot \m(\a_2) \cdot \n(\b_4),
\end{align*}
where 
\[
 \a_2 = 
 \begin{pmatrix}
  1 & & & -1 \\
  & 1 & -\frac{1}{u} & \\
  & & 1 & \\
  & & & 1
 \end{pmatrix}, \qquad
 \b_4 = \frac{(c+1)\kappa}{d} \cdot 
 \begin{pmatrix}
  0 & & & \\
  & 0 & & \\
  & & u & \\
  & & & -1
 \end{pmatrix}.
\]
Since $\tau \cdot \n(\b_4) \in P_{\Y'^\square} \cdot \tau$ and $\n(\b_4)^{-1} \cdot \m(\a_2) \cdot \n(\b_4) \cdot \tau_2 \in \tau_2 \cdot P_{\Y'^\square}$, we have
\begin{align*}
 z_{\Y'^\square}(\h_0 \cdot \iota(\g, 1) \cdot \h_0^{-1}, \h_0)
 & = z_{\Y'^\square}(\tau \cdot \m(\a_2) \cdot \n(\b_4), \tau_2) \\
 & = z_{\Y'^\square}(\tau \cdot \n(\b_4), \n(\b_4)^{-1} \cdot \m(\a_2) \cdot \n(\b_4) \cdot \tau_2) \\
 & = z_{\Y'^\square}(\tau, \tau_2) \\
 & = 1.
\end{align*}
This completes the proof. 
\end{proof}


\begin{thebibliography}{99}

\bibitem{adams-theta-R}
Adams,~Jeffrey.
\emph{The theta correspondence over $\Bbb R$.}
Harmonic analysis, group representations, automorphic forms and invariant theory, 1--39, Lect. Notes Ser. Inst. Math. Sci. Natl. Univ. Singap., 12, World Sci. Publ., Hackensack, NJ, 2007.

\bibitem{aj}
Adams,~Jeffrey; Johnson,~Joseph F.
\emph{Endoscopic groups and packets of nontempered representations.}
Compositio Math. {\bf 64} (1987), no. 3, 271--309.

\bibitem{amr}
Arancibia,~Nicol{\'a}s; M{\oe}glin,~Colette; Renard,~David.
\emph{Paquets d'Arthur des groupes classiques et unitaires.}
Ann. Fac. Sci. Toulouse Math. (6) \textbf{27} (2018), no. 5, 1023--1105.

\bibitem{arthur}
Arthur,~James.
\emph{The endoscopic classification of representations. Orthogonal and symplectic groups.}
American Mathematical Society Colloquium Publications, 61. American Mathematical Society, Providence, RI, 2013.

\bibitem{bmm-unit}
Bergeron,~Nicolas; Millson,~John; M{\oe}glin,~Colette.
\emph{The Hodge conjecture and arithmetic quotients of complex balls. }
Acta Math.  \textbf{216} (2016), no. 1, 1--125. 

\bibitem{bmm-orth}
Bergeron,~Nicolas; Millson,~John; M{\oe}glin,~Colette.
\emph{Hodge type theorems for arithmetic manifolds associated to orthogonal groups.}
Int. Math. Res. Not. IMRN \textbf{2017}, no. 15, 4495--4624.

\bibitem{blasius-rogawski}
Blasius,~Don; Rogawski,~Jonathan D.
\emph{Zeta functions of Shimura varieties.}
Motives (Seattle, WA, 1991), 525--571, Proc. Sympos. Pure Math., 55, Part 2, Amer. Math. Soc., Providence, RI, 1994.

\bibitem{bloch-kato}
Bloch,~Spencer; Kato,~Kazuya.
\emph{$L$-functions and Tamagawa numbers of motives.}
The Grothendieck Festschrift, Vol. I, 333--400, Progr. Math., 86, Birkh{\"a}user Boston, Boston, MA, 1990.

\bibitem{borel-corvallis}
Borel,~A. 
\emph{Automorphic $L$-functions.}
Automorphic forms, representations and $L$-functions (Proc. Sympos. Pure Math., Oregon State Univ., Corvallis, Ore., 1977), Part 2, pp. 27--61, Proc. Sympos. Pure Math., XXXIII, Amer. Math. Soc., Providence, R.I., 1979. 

\bibitem{borel-wallach}
Borel,~A.; Wallach,~N.
\emph{Continuous cohomology, discrete subgroups, and representations of reductive groups.}
Second edition. Mathematical Surveys and Monographs, 67. American Mathematical Society, Providence, RI, 2000.



\bibitem{brylinski-labesse}
Brylinski, J.-L.; Labesse, J.-P.
\emph{Cohomologie d'intersection et fonctions L de certaines vari{\'e}t{\'e}s de Shimura.} (French) [Intersection cohomology and L-functions of some Shimura varieties] 
Ann. Sci. \'{E}cole Norm. Sup. (4) 17 (1984), no. 3, 361--412. 

\bibitem{buzzard-gee}
Buzzard,~Kevin; Gee,~Toby.
\emph{The conjectural connections between automorphic representations and Galois representations. }
Automorphic forms and Galois representations. Vol. 1, 135--187, London Math. Soc. Lecture Note Ser., 414, Cambridge Univ. Press, Cambridge, 2014. 
 
 \bibitem{carayol-hmf}
 Carayol,~Henri.
 \emph{Sur les repr\'{e}sentations $\ell$-adiques associ\'{e}es aux formes modulaires de Hilbert. (French) [On $\ell$-adic representations associated with Hilbert modular forms] }
 Ann. Sci. \'{E}cole Norm. Sup. \textbf{(4)} 19 (1986), no. 3, 409--468.

\bibitem{deligne-shimura}
Deligne,~Pierre.
\textit{Vari\'{e}t\'{e}s de Shimura: interpr\'{e}tation modulaire, et techniques de construction de mod\`{e}les canoniques. (French) Automorphic forms, representations and L-functions}
(Proc. Sympos. Pure Math., Oregon State Univ., Corvallis, Ore., 1977), Part 2, pp. 247--289, Proc. Sympos. Pure Math., XXXIII, Amer. Math. Soc., Providence, R.I., 1979. 

\bibitem{dmos}
Deligne,~Pierre; Milne,~James S.; Ogus,~Arthur; Shih,~Kuang-yen.
 \emph{Hodge cycles, motives, and Shimura varieties.} Lecture Notes in Mathematics, 900. Springer-Verlag, Berlin-New York, 1982. 

\bibitem{faltings-bgg}
Faltings,~Gerd.
\emph{On the cohomology of locally symmetric Hermitian spaces.}
 Paul Dubreil and Marie-Paule Malliavin algebra seminar, 35th year (Paris, 1982), 55--98, Lecture Notes in Math., 1029, Springer, Berlin, 1983.

\bibitem{faltings-mordell}
Faltings,~Gerd.
\emph{Endlichkeitss\"{a}tze f\"{u}r abelsche Variet\"{a}ten \"{u}ber Zahlk\"{o}rpern. (German) [Finiteness theorems for abelian varieties over number fields] }
Invent. Math. {\bf 73} (1983), no. 3, 349--366. 


\bibitem{fulton-harris}
Fulton,~William; Harris,~Joe.
\emph{Representation theory. A first course.}
Graduate Texts in Mathematics \textbf{129}.
Readings in Mathematics. Springer-Verlag, New York, 1991. xvi+551 pp.



\bibitem{funke-millson}
Funke,~Jens; Millson,~John.
\emph{Cycles with local coefficients for orthogonal groups and vector-valued Siegel modular forms.}
Amer. J. Math. \textbf{128} (2006), no. 4, 899--948.

\bibitem{ggp}
Gan,~Wee Teck; Gross,~Benedict H.; Prasad,~Dipendra.
\emph{Symplectic local root numbers, central critical $L$ values, and restriction problems in the representation theory of classical groups.}
Sur les conjectures de Gross et Prasad. I.
Ast{\'e}risque No. 346 (2012), 1--109.

\bibitem{gqt}
Gan,~Wee Teck; Qiu,~Yannan; Takeda,~Shuichiro.
\emph{The regularized Siegel-Weil formula (the second term identity) and the Rallis inner product formula.}
Invent. Math. \textbf{198} (2014), no. 3, 739--831.


\bibitem{gan-sun}
Gan,~Wee Teck; Sun,~Binyong.
\emph{The Howe duality conjecture: quaternionic case.}
Representation theory, number theory, and invariant theory, 175--192, Progr. Math., 323, Birkh{\"a}user/Springer, Cham, 2017.

\bibitem{gan-takeda}
Gan,~Wee Teck; Takeda,~Shuichiro.
\emph{A proof of the Howe duality conjecture.}
J. Amer. Math. Soc. \textbf{29} (2016), no. 2, 473--493.


\bibitem{gan-tantono}
Gan,~Wee Teck; Tantono,~Welly.
\emph{The local Langlands conjecture for $\rm GSp(4)$, II: The case of inner forms.}
Amer. J. Math. \textbf{136} (2014), no. 3, 761--805.

\bibitem{harris-motivesvolume}
Harris,~Michael.
\emph{Hodge-de Rham structures and periods of automorphic forms.}
 Motives (Seattle, WA, 1991), 573--624, Proc. Sympos. Pure Math., \textbf{55}, Part 2, Amer. Math. Soc., Providence, RI, 1994. 

\bibitem{hk-duke}
Harris,~Michael; Kudla,~Stephen S.
\emph{Arithmetic automorphic forms for the nonholomorphic discrete series of ${\rm GSp}(2)$.}
Duke Math. J. \textbf{66} (1992), no. 1, 59--121.

\bibitem{hks}
Harris,~Michael; Kudla,~Stephen S.; Sweet,~William J.
\emph{Theta dichotomy for unitary groups.}
J. Amer. Math. Soc. \textbf{9} (1996), no. 4, 941--1004.

\bibitem{howe-jams}
Howe,~Roger.
\emph{Transcending classical invariant theory.}
J. Amer. Math. Soc. \textbf{2} (1989), no. 3, 535--552.

\bibitem{ichino-mathZ}
Ichino,~Atsushi.
\emph{On the Siegel-Weil formula for unitary groups.}
Math. Z. \textbf{255} (2007), no. 4, 721--729.

\bibitem{periods1}
Ichino,~Atsushi; Prasanna,~Kartik.
\emph{Periods of quaternionic Shimura varieties. I.}
Contemporary Mathematics, 762. American Mathematical Society, Providence, RI, 2021.

\bibitem{periods2}
Ichino,~Atsushi; Prasanna,~Kartik.
\emph{Periods of quaternionic Shimura varieties. II.}
In preparation. 

\bibitem{johnson}
Johnson,~Joseph F.
\emph{Stable base change $\mathbb{C} / \mathbb{R}$ of certain derived functor modules.}
Math. Ann. \textbf{287} (1990), no.~3, 467--493.


\bibitem{kakuhama}
Kakuhama,~Hirotaka.
\emph{On the local factors of irreducible representations of quaternionic unitary groups.}
Manuscripta Math. \textbf{163} (2020), no.~1-2, 57--86.

\bibitem{kmsw}
Kaletha,~Tasho; Minguez,~Alberto; Shin,~Sug Woo; White,~Paul-James.
\emph{Endoscopic classification of representations: inner forms of unitary groups.}
arXiv:1409.3731.

\bibitem{ksz}
Kisin,~Mark; Shin,~Sug Woo; Zhu,~Yihang.
\emph{The stable trace formula for Shimura varieties of abelian type.}
arXiv:2110.05381.

\bibitem{kv}
Knapp,~Anthony W.; Vogan,~David A.,~Jr.
\emph{Cohomological induction and unitary representations.}
Princeton Mathematical Series \textbf{45}.
Princeton University Press, Princeton, NJ, 1995.

\bibitem{kot}
Kottwitz,~Robert E.
\emph{Shimura varieties and $\lambda$-adic representations.}
Automorphic forms, Shimura varieties, and $L$-functions, Vol. I (Ann Arbor, MI, 1988), 161--209, Perspect. Math., 10, Academic Press, Boston, MA, 1990.

\bibitem{kot-jams}
Kottwitz, Robert E.
\emph{Points on some Shimura varieties over finite fields.}
J. Amer. Math. Soc. \textbf{5} (1992), no.~2, 373--444.

\bibitem{kot-invent}
Kottwitz, Robert E.
\emph{On the $\lambda$-adic representations associated to some simple Shimura varieties.}
Invent. Math. \textbf{108} (1992), no.~3, 653--665.

\bibitem{kudla-splitting}
Kudla,~Stephen S.
\emph{Splitting metaplectic covers of dual reductive pairs.}
Israel J. Math. {\bf 87} (1994), no. 1-3, 361--401.

\bibitem{km1}
Kudla,~Stephen S.; Millson,~John J.
\emph{The theta correspondence and harmonic forms. I.}
Math. Ann. \textbf{274} (1986), no. 3, 353--378.

\bibitem{km-ihes}
Kudla,~Stephen S.; Millson,~John J.
\emph{Intersection numbers of cycles on locally symmetric spaces and Fourier coefficients of holomorphic modular forms in several complex variables.}
Inst. Hautes \'Etudes Sci. Publ. Math. No. 71 (1990), 121--172.

\bibitem{kr90}
Kudla,~Stephen; Rallis,~Stephen.
\emph{Poles of Eisenstein series and $L$-functions.}
Festschrift in honor of I. I. Piatetski-Shapiro on the occasion of his sixtieth birthday, Part II (Ramat Aviv, 1989), 81--110, Israel Math. Conf. Proc., {\bf 3}, Weizmann, Jerusalem, 1990.

\bibitem{kr94}
Kudla,~Stephen S.; Rallis,~Stephen.
\emph{A regularized Siegel-Weil formula: the first term identity.}
Ann. of Math. (2) \textbf{140} (1994), no. 1, 1--80.

\bibitem{langlands72}
Langlands,~R. P. 
\emph{Modular forms and $\ell$-adic representations. Modular functions of one variable, II.} (Proc. Internat. Summer School, Univ. Antwerp, Antwerp, 1972), pp. 361--500. Lecture Notes in Math., Vol. 349, Springer, Berlin, 1973.

\bibitem{lapid-rallis}
Lapid,~Erez M.; Rallis,~Stephen.
\emph{On the local factors of representations of classical groups.}
Automorphic representations, $L$-functions and applications: progress and prospects, 309--359, Ohio State Univ. Math. Res. Inst. Publ., {\bf 11}, de Gruyter, Berlin, 2005.
 
\bibitem{li-invent}
Li,~Jian-Shu.
\emph{Singular unitary representations of classical groups.}
Invent. Math. \textbf{97} (1989), no. 2, 237--255.

\bibitem{li-duke}
Li,~Jian-Shu.
\emph{Theta lifting for unitary representations with nonzero cohomology.}
Duke Math. J. \textbf{61} (1990), no. 3, 913--937.

\bibitem{li-crelle}
Li,~Jian-Shu.
\emph{Nonvanishing theorems for the cohomology of certain arithmetic quotients.}
J. Reine Angew. Math. \textbf{428} (1992), 177--217.

\bibitem{loke}
Loke,~Hung Yean.
\emph{Howe quotients of unitary characters and unitary lowest weight modules. With an appendix by Soo Teck Lee.}
Represent. Theory \textbf{10} (2006), 21--47.

\bibitem{mvw}
M{\oe}glin,~Colette; Vign{\'e}ras,~Marie-France; Waldspurger,~Jean-Loup.
\emph{Correspondances de Howe sur un corps $p$-adique.}
Lecture Notes in Mathematics, 1291. Springer-Verlag, Berlin, 1987.



\bibitem{mok}
Mok,~Chung Pang.
\emph{Endoscopic classification of representations of quasi-split unitary groups.}
Mem. Amer. Math. Soc. \textbf{235} (2015), no. 1108.

\bibitem{mur-ram}
Murty,~V. Kumar; Ramakrishnan,~Dinakar.
\emph{Period relations and the Tate conjecture for Hilbert modular surfaces.}
 Invent. Math. \textbf{89} (1987), no. 2, 319--345.

\bibitem{nekovar}
\Nekovar,~Jan.
\emph{Eichler-Shimura relations and semisimplicity of {\'e}tale cohomology of quaternionic Shimura varieties.}
Ann. Sci. {\'E}c. Norm. Sup{\'e}r. (4) 51 (2018), no. 5, 1179--1252.

\bibitem{oda-book}
Oda,~Takayuki.
\emph{Periods of Hilbert modular surfaces.} 
Progress in Mathematics, \textbf{19}. Birkh\"{a}user, Boston, Mass., 1982. 
 
 \bibitem{oda-hs}
 Oda,~Takayuki.
\emph{Hodge structures of Shimura varieties attached to the unit groups of quaternion algebras.}
 Galois groups and their representations (Nagoya, 1981), 15--36, Adv. Stud. Pure Math., \textbf{2}, North-Holland, Amsterdam, 1983.
 
\bibitem{psr}
Piatetski-Shapiro,~I.I.; Rallis,~Stephen.
\emph{L-functions for the classical groups.}
Explicit constructions of automorphic $L$-functions.
Lecture Notes in Mathematics {\bf 1254}, Springer-Verlag, 1987, pp. 1--52.
 

\bibitem{rangarao}
Ranga Rao,~R.
\emph{On some explicit formulas in the theory of Weil representation.}
Pacific J. Math. {\bf 157} (1993), no. 2, 335--371.

\bibitem{reimann}
Reimann, Harry.
\emph{The semi-simple zeta function of quaternionic Shimura varieties.} Lecture Notes in Mathematics, 1657. Springer-Verlag, Berlin, 1997.

\bibitem{repka}
Repka,~Joe.
\emph{Tensor products of unitary representations of ${\rm SL}\sb{2}({\bf R})$.}
Amer. J. Math. \textbf{100} (1978), no. 4, 747--774.


\bibitem{roberts}
Roberts,~Brooks. 
\emph{The theta correspondence for similitudes.}
Israel J. Math. \textbf{94} (1996), 285--317.

\bibitem{shin-templier}
Shin,~Sug Woo; Templier,~Nicolas.
\emph{On fields of rationality for automorphic representations.}
 Compos. Math. \textbf{150} (2014), no. 12, 2003--2053.

\bibitem{sun-zhu}
Sun,~Binyong; Zhu,~Chen-Bo.
\emph{Conservation relations for local theta correspondence.}
J. Amer. Math. Soc. \textbf{28} (2015), no. 4, 939--983.

\bibitem{tits-crelle}
Tits,~J.
\emph{Repr\'{e}sentations lin\'{e}aires irr\'{e}ductibles d'un groupe r\'{e}ductif sur un corps quelconque.}
(French) J. Reine Angew. Math. \textbf{247} (1971), 196--220.

\bibitem{vz}
Vogan,~David A.,~Jr.; Zuckerman,~Gregg J.
\emph{Unitary representations with nonzero cohomology.}
Compositio Math. {\bf 53} (1984), no. 1, 51--90.

\bibitem{waldspurger-rationality}
Waldspurger, J.-L. 
\emph{Quelques propri\'{e}t\'{e}s arithm\'{e}tiques de certaines formes automorphes sur GL(2).} (French) [Some arithmetical properties of certain automorphic forms on GL(2)] Compositio Math. {\bf 54} (1985), no. 2, 121--171.

\bibitem{wald-periods85}
Waldspurger,~J.-L. 
\emph{Sur les valeurs de certaines fonctions $L$ automorphes en leur centre de sym\'etrie.}
Compositio Math. \textbf{54} (1985), no. 2, 173--242.

\bibitem{waldspurger-howe-duality}
Waldspurger,~J.-L.
\emph{D\'emonstration d'une conjecture de dualit\'e de Howe dans le cas $p$-adique, $p\neq 2$.}
Festschrift in honor of I. I. Piatetski-Shapiro on the occasion of his sixtieth birthday, Part I (Ramat Aviv, 1989), 267--324, Israel Math. Conf. Proc., 2, Weizmann, Jerusalem, 1990.

\bibitem{yamana:dps}
Yamana,~Shunsuke.
\emph{Degenerate principal series representations for quaternionic unitary groups.}
Israel J. Math. \textbf{185} (2011), 77--124.

\bibitem{yamana:sw}
Yamana,~Shunsuke.
\emph{On the Siegel-Weil formula for quaternionic unitary groups.}
Amer. J. Math. \textbf{135} (2013), no. 5, 1383--1432.

\bibitem{yamana}
Yamana,~Shunsuke.
\emph{L-functions and theta correspondence for classical groups.}
Invent. Math. \textbf{196} (2014), no. 3, 651--732.

\bibitem{zucker}
Zucker,~Steven.
\emph{Locally homogeneous variations of Hodge structure.} 
Enseign. Math. (2) \textbf{27} (1981), no. 3-4, 243--276 (1982). 


\bibitem{SGA4-3}
\emph{Th\'{e}orie des topos et cohomologie \'{e}tale des sch\'{e}mas. Tome 3.} (French)
S\'{e}minaire de G\'{e}om\'{e}trie Alg\'{e}brique du Bois-Marie 1963–1964 (SGA 4). 
Dirig\'{e} par M.~Artin, A.~Grothendieck et J.~L.~Verdier. Avec la collaboration de P.~Deligne et B.~Saint-Donat.
Lecture Notes in Mathematics, Vol. 305. Springer-Verlag, Berlin-New York, 1973.























\end{thebibliography}
\end{document}